\documentclass[11pt]{amsart}
\input{SHE-macros}
\usepackage{datetime}



\begin{document}

\title[Ergodicity and synchronization of KPZ]{Ergodicity and synchronization of \\ the Kardar-Parisi-Zhang equation}

\author[C.~Janjigian]{Christopher Janjigian}
\address{Christopher Janjigian\\ Purdue University\\  Department of Mathematics\\ 150 N University St. 
\\   West Lafayette, IN 47907\\ USA.}
\email{cjanjigi@purdue.edu}
\urladdr{http://www.math.purdue.edu/~cjanjigi}
\thanks{C.\ Janjigian was partially supported by the National Science Foundation grant DMS-2125961.}

\author[F.~Rassoul-Agha]{Firas Rassoul-Agha}
\address{Firas Rassoul-Agha\\ University of Utah\\  Mathematics Department\\ 155S 1400E\\   Salt Lake City, UT 84112\\ USA.}
\email{firas@math.utah.edu}
\urladdr{http://www.math.utah.edu/~firas}
\thanks{F.\ Rassoul-Agha was partially supported by National Science Foundation grants DMS-1811090 and DMS-2054630}
\thanks{Part of this material is based upon work supported by the National Science Foundation under Grant No.\ 1440140, while F.\ Rassoul-Agha was in residence at the Mathematical Sciences Research Institute in Berkeley, California, during the fall semester of 2021.}
\author[T.~Sepp\"al\"ainen]{Timo Sepp\"al\"ainen} \address{Timo Sepp\"al\"ainen\\ University of Wisconsin--Madison\\  Department of Mathematics\\ 480 Lincoln Drive \\  Madison, WI 53706\\ USA.}  \email{seppalai@math.wisc.edu}\urladdr{http://www.math.wisc.edu/~seppalai} \thanks{T.~Sepp\"al\"ainen was partially supported by National Science Foundation grants  DMS-1854619 and DMS-2152362 and by the Wisconsin Alumni Research Foundation.}  

\keywords{Busemann function, cocycle, continuum polymer, corrector, ergodic measure, hyperbolicity, infinite volume, Kardar-Parisi-Zhang equation, one force--one solution, random dynamical system, shape theorem, stationary distribution, stochastic homogenization, stochastic heat equation, synchronization}
\subjclass[2020]{60H15, 60K35, 60K37, 37H15, 37L55, 35R60}

\date{\usdate\today}

\begin{abstract}   
The Kardar-Parisi-Zhang (KPZ) equation on the real line is well-known to admit Brownian motion with a linear drift as a stationary distribution (modulo additive  constants). We show that these solutions are attractive, a result known as a one force--one solution (1F1S) principle or synchronization: the solution to the KPZ equation started in the distant past from an initial condition with a given slope will converge almost surely to a Brownian motion with that drift, which shows in particular that these invariant measures are totally ergodic. Our proof constructs the \emph{Busemann process} for the equation, which gives the natural jointly stationary coupling of all of these stationary solutions. Synchronization then holds simultaneously (on a single full probability event) for all but an at most  countable random set of asymptotic slopes. This set of exceptional slopes of instability for which synchronization fails is either almost surely empty or almost surely dense. Along the way, we prove a shape theorem which implies almost sure stochastic homogenization of the KPZ equation, for which the Busemann process gives the process of correctors. We also show that the forward and backward point-to-point and point-to-line continuum polymers converge to semi-infinite continuum polymers whose transitions are Doob transforms via Busemann functions of the transitions of the finite length polymers. 
\end{abstract}
\maketitle
\tableofcontents

\section{Introduction}

The Kardar-Parisi-Zhang (KPZ) equation  
	\begin{align}\label{KPZ}
	\partial_t \KPZ =\tfrac12\partial_{yy} \KPZ + \tfrac12(\partial_y \KPZ)^2 +  W,
	\end{align}
driven by space-time white noise $W$, first appeared in the physics literature in 1986 \cite{Kar-Par-Zha-86} as a prototypical model of the height interface  $\KPZ=\KPZ(t,y)$ of a growing surface in 1+1 dimensions.  It was  motivated by the expectation that a wide class of such models should exhibit universality, meaning that certain properly re-scaled statistics converge to model-independent limits. Over the intervening decades, analysis of the KPZ equation and related growth models has been an important source of new ideas in mathematics and physics.  Through many examples, the expectation of universality has been borne out, with the class now understood to encompass a wide variety of strongly interacting systems. See the recent surveys \cite{Cor-12,Qua-12,Qua-Spo-15,Hal-Tak-15}.

The analysis of how one should re-scale the solution of \eqref{KPZ} to see a non-trivial limit predates the introduction of the model, tracing back at least to Forster, Nelson, and Stephen in 1977 \cite{For-Nel-Ste-77}. As part of a broader study of randomly forced models in fluid dynamics, they undertook a dynamical renormalization group analysis of the closely related stochastic Burger's equation 
	\begin{align}\label{SBE}
	\partial_t u=\tfrac12\partial_{yy} u + \tfrac12\partial_y(u^2) +  \partial_y W. 
	\end{align}
 The connection between \eqref{KPZ} and \eqref{SBE} comes by ignoring the distributional nature of $W$ and differentiating formally  to see that if $h$ solves \eqref{KPZ}, then $u=\partial_y h$ should solve \eqref{SBE}.

 A basic observation in  \cite{For-Nel-Ste-77} is that one expects space-time white noise to be an invariant measure for \eqref{SBE}. This was proved  rigorously in \cite[Proposition B.2]{Ber-Gia-97}.  Correspondingly, Brownian motion (with drift) is invariant for the evolution of \eqref{KPZ} modulo an additive constant. Viewing \eqref{KPZ} as describing the free energy of a directed polymer model, this leads to the prediction of approximate local Brownianity.  As observed in \cite{Hus-Hen-Fis-85},  in 1+1 dimensions this suggests the KPZ scaling relation $2\chi = \xi$ for the free energy fluctuation exponent  $\chi$ and  the transverse path fluctuation exponent $\xi$. Combined with the KPZ scaling relation $\chi=2\xi-1$, this leads to the prediction $\chi = 1/3$ and $\xi = 2/3$  \cite{Kar-Par-Zha-86}.  Under this scaling  models in the class are expected to converge to universal limits described by the renormalization fixed point of the KPZ universality class. This fixed point was recently constructed in \cite{Mat-Qua-Rem-21,Dau-Ort-Vir-22-}. The convergence of one-point statistics of the KPZ equation started from the narrow wedge initial condition to the predicted Tracy-Widom limit was proven over a decade ago in \cite{Ami-Cor-Qua-11}, with process-level convergence of the KPZ equation recently shown in \cite{Vir-20-,Qua-Sar-20-}.

To reach these universal limits,   one typically centers and normalizes the height of the interface with  model-dependent (i.e., non-universal) terms, analogous  to the  mean and variance in the classical central limit theorem. 
The values of these non-universal quantities are predicted by the KPZ scaling theory  \cite{Kru-Mea-Hal-92, Spo-14} which describes the centering and scaling as functions of the   spatially-ergodic and temporally (increment-) stationary and ergodic measures of the model. These predictions underscore the importance of understanding the structure of stationary and ergodic distributions of growth models in the KPZ class in general and of the KPZ equation itself in particular. These topics are, in a sense to be described in more detail shortly, the main focus of the present paper. 

\subsection{Main contributions}
With the above context in mind, we briefly summarize our main contributions, before informally explaining their meaning in more detail and connecting our work to the rest of the literature.

\begin{enumerate}[label={\rm(\roman*)}, ref={\rm\roman*}]  
    \item We construct the \emph{Busemann process} of the KPZ equation, which provides the natural monotone and jointly stationary coupling of all the previously-known invariant measures modulo additive constants,  given by Brownian motions plus drift.  We prove that the finite dimensional marginals (in the drift parameter) of this coupling give the unique  couplings of Brownian motions with drift that are jointly stationary  and ergodic under the KPZ solution semi-group. 
    In particular, this resolves the conjecture that Brownian motion with drift is an ergodic (i.e.~extremal) stationary distribution for KPZ.
    \item  We show that for a fixed value of the conserved quantity $\lambda$ (asymptotic spatial slope at $\pm \infty$), synchronization by noise and a one force--one solution principle (1F1S) hold almost surely among all initial conditions with appropriate asymptotic slopes, with the pullback attractors provided by the Busemann process. We strengthen this to a \textit{quenched} 1F1S principle that considers all $\lambda$ 
    simultaneously in a typical realization of the driving white noise. In this setting we establish that synchronization and 1F1S 
    hold for all values $\lambda$ at which the Busemann process is continuous. We show that at the exceptional discontinuity values $\lambda$, there are at least two  pullback attractors. This random set of exceptional values is either empty or a countable dense subset of $\R$. As far as we are aware, this marks the first time that such a 1F1S principle has been proven for a stochastic Hamilton-Jacobi equation in a fully continuous and non-compact setting with rough forcing.
    
    \item We prove an almost sure locally uniform free energy density limit, known as a \textit{shape theorem}.  Almost sure stochastic homogenization of the KPZ equation follows as a corollary, with effective Hamiltonian $\overline{H}(p) = -1/24 + p^2/2.$ The (centered) Busemann process furnishes the associated stochastic process of correctors.
     \item We show that for any ergodic stationary distribution $\PMPinit$ for the KPZ equation modulo additive constants, there exists $\lambda>0$ so that $\PMPinit$ is the distribution of Brownian motion with drift $\lambda$ or else $\PMPinit$ is  supported on equivalence classes of continuous functions such that 
    \be\label{int100} \lim_{x\to-\infty}\frac{f(x)}{x}=-\lambda  \qquad\text{and}\qquad  \lim_{x\to\infty}\frac{f(x)}{x}=\lambda. \ee
    This implies a prediction implicit in the KPZ scaling theory \cite{Kru-Mea-Hal-92} (and more explicit in \cite{Spo-14}) that all spatially translation invariant and time ergodic stationary distributions of the KPZ equation are Brownian motions  with drift. It has also been conjectured that these are the  \textit{only} ergodic stationary distributions of the KPZ equation; see, for example, \cite[Remark 1.1]{Fun-Qua-15}. Settling this conjecture is now reduced to resolving the existence of an exceptional measure supported by functions of the  type in \eqref{int100}. This is Open Problem \ref{prob:Ergodic} in Section \ref{sec:OP}.
\end{enumerate}

\subsection{Ergodicity, one force--one solution, and pullback attraction}

Shortly after the early breakthroughs on the KPZ class in the physics literature, a group around Ya.~Sinai started a program on the ergodic theory of the forced Burgers equation and related stochastic Hamilton-Jacobi equations, beginning with the seminal work \cite{Sin-91}. That initial paper proved existence and uniqueness of stationary solutions to the (viscous) stochastic Burgers equation \eqref{SBE}, where the forcing  $\partial_x W$ is replaced by a term which is either a regular and periodic function of space and time or else periodic and regular in space and white in time.
It showed that solutions to the equations started from different initial conditions can be coupled to a process defined for all time which at each time level has marginal given by the stationary distribution in such a way that this process serves as a pullback attractor in the sense of Definition 9.3.1 in \cite{Arn-98}. 
The existence of a  unique globally defined  stochastic process which is measurable with respect to the history of the noise is commonly known as a \emph{one force--one solution} (1F1S) principle (see, e.g., the introduction of \cite{E-etal-00}).  Some authors (e.g., \cite{Bak-Kha-18}) include the pullback attractor property mentioned above as part of the definition.

There have been two main technical obstacles that require significant effort to overcome in the study of the ergodic theory, 1F1S principles, and pullback attraction in models of this type: working on non-compact spaces and working with rough noise. Both of these issues are present in our setting.  After \cite{Sin-91}, many subsequent works, e.g., \cite{Sin-93,Gom-etal-05,Dir-Sou-05,Bak-Kha-10,Bak-13,E-etal-00,Hoa-Kha-03,Kif-97,Dri-etal-22-}, proved similar results  for other viscous and inviscid Hamilton-Jacobi equations, including higher-dimensional ones, in compact or essentially compact settings and with noise more regular than what we consider. 

The first paper to prove  1F1S in a genuinely non-compact setting was \cite{Bak-Cat-Kha-14}, which studied the inviscid
 Burgers equation with a space-time Poissonian forcing. Several subsequent works \cite{Bak-16,Bak-Li-19,Dri-etal-22-} considered viscous and inviscid models in non-compact settings with what is known as kick forcing, i.e., forcing that has a product form and is Dirac at certain special (typically integer) times. Kick forcing   makes the model essentially semi-discrete, because the evolution between these pre-specified times is deterministic. In a similar sense, the Poissonian forcing in \cite{Bak-Cat-Kha-14} essentially pushes the model onto a (random) lattice.  In both of these cases, the induced discrete structure brings access to the many tools developed for lattice and semi-discrete growth models.
 
 Thanks in large part to recent advances in defining solutions to stochastic partial differential equations forced by rough noise, a handful of recent papers have made progress on models with rough forcing similar to the one we consider. Notably, \cite{Hai-Mat-18} proved ergodicity of the Brownian bridge measure and \cite{Ros-21-} proved a 1F1S principle for \eqref{KPZ} on the torus. Recently, \cite{Kni-Mat-22-} proved ergodicity of the open KPZ equation using the general results of \cite{Hai-Mat-18}. Ergodicity of a certain martingale problem formulation of \eqref{SBE} on the line was also recently shown in \cite{Gub-Per-20}, without considering the 1F1S or attractiveness questions.

 One particularly fruitful approach to proving 1F1S principles in some previous works, including \cite{Bak-Cat-Kha-14,Bak-16,Bak-Li-19}, follows the program pioneered by Newman and collaborators \cite{New-97,Lic-New-96,How-New-97,How-New-01} in the context of first-passage percolation, where straightness estimates for geodesics lead to directedness and coalescence, which essentially leads directly to the 1F1S principle.
This same method was also applied to other $1+1$ dimensional percolation models \cite{Cat-Pim-11,Cat-Pim-12,Cat-Pim-13,Fer-Pim-05,Wut-02}. 

\subsection{Busemann process}


We follow another 
method, also originally developed in the percolation and polymer literature  \cite{Dam-Han-14,Geo-etal-15,Geo-Ras-Sep-17-ptrf-1,Jan-Ras-20-aop,Jan-Ras-Sep-22-}. As far as we know, this approach is the first to prove a 1F1S principle in a fully continuous non-compact Hamilton-Jacobi equation like \eqref{KPZ} with rough forcing. Our method centers on (analogues of) quantities known as Busemann functions, which were originally introduced in the metric geometry literature by Busemann \cite{Bus-55}.

To describe the broad outlines of the argument, we begin with the observation that if $h$ satisfies  \eqref{KPZ}, then the asymptotic slopes at $\infty$ and $-\infty$,
\begin{align}
\lim_{x\to\infty}\frac{h(t,x)}{x}=\PMPslopep \qquad \text{ and } \qquad \lim_{x\to-\infty} \frac{h(t,x)}{x}=\PMPslopem, \label{eq:CCslope}
\end{align} are conserved by the dynamics of the equation. This is proven to hold for \eqref{KPZ} in the companion paper \cite{Alb-etal-22-spde-} and the existence of such asymptotic conserved quantities is typical for models of this type. Because of the existence of these conserved quantities, it is natural to expect that for each value of $\lambda=\PMPslopep=\PMPslopem$, there exists a unique stationary distribution with this asymptotic slope.  In the case of the KPZ equation \eqref{KPZ}, this corresponds to the natural prediction that the stationary distributions modulo additive  constants consist precisely of Brownian motions with linear drift; see also \cite[Remark 1.1]{Fun-Qua-15}.  The conjecture that this one-parameter family completely describes spatially \textit{translation invariant} stationary measures is one prediction implicit in the KPZ scaling theory, see \cite{Spo-14} and the arguments in \cite{Kru-Mea-Hal-92}.

An analogue of a Busemann function in this setting is given by the limit
\begin{align}\label{infBus}
   \Bus^\lambda(s,x,t,y) = \lim_{r\to -\infty} \bigl(h_r(t,y)-h_r(s,x)\bigr),
\end{align}
where $h_r$ solves \eqref{KPZ} on $[r,\infty)\times\R$  from the time-$r$ initial condition 
\begin{align}\label{infBusIC}
    h_r(r,z) = f(z) \qquad \text{ such that  }\qquad \lim_{\abs{z}\to\infty} \frac{f(z)}{z}=\lambda.
\end{align}
That is, the analogues of Busemann functions correspond precisely to the pullback attractors discussed above. We refer the reader to \cite{Jan-Ras-Sep-22-} for some discussion of the analogy to Busemann functions on the lattice in the setting of directed last-passage percolation, as well as some discussion of the connection to inviscid Hamilton-Jacobi equations. 

Our proof of   the limits \eqref{infBus} comes via a coupling constructed  from the known \cite{Ber-Gia-97,Fun-Qua-15} invariant measures given by Brownian motion with linear drift. The correct coupling  arises as a  weak   limit point of  Ces\`aro averages of the joint distribution of stationary solutions to \eqref{KPZ} coupled to the environment. This  produces a field of candidate Busemann functions on an extended space. These candidates are then shown to satisfy the limits \eqref{infBus} for a countable dense set of values of $\lambda$,  utilizing  the stochastic monotonicity of the associated semi-infinite polymer measures.

A consequence of our proof of  1F1S   and attractiveness   is a confirmation of the conjecture that the only ergodic and spatially translation-invariant stationary distributions (modulo constants) for the KPZ equation are Brownian motions with linear drift.  In particular we  verify the ergodicity of  these distributions under the KPZ evolution. Our results leave open the possibility of the existence of only one exceptional class of ergodic measures. These are measures supported on functions for which $\PMPslopep \in (0,\infty)$ and $\PMPslopem=-\PMPslopep$ in \eqref{eq:CCslope}. See Open Problem \ref{prob:Ergodic}.


Most  of the previously mentioned works which study the 1F1S principle prove the existence of analogues of the limit in \eqref{infBus} on an event of full probability which \textit{depends} on the value of $\lambda$. Because there are uncountably many such values, this leaves open the existence of  exceptional values of the conserved quantity for which the 1F1S principle fails. A central observation in \cite{Jan-Ras-20-aop,Jan-Ras-Sep-22-} (in the viscous and inviscid cases on the lattice, respectively) is that questions of this type are encoded into continuity properties of the \emph{Busemann process}. This  is the function-valued stochastic process obtained by extending the family $(\Bus^\lambda(s,x,t,y) : s,x,t,y \in \bbR)$, initially defined by \eqref{infBus} for a countable dense set of $\lambda$, to  left- or right-continuous  
processes of continuous functions $(\Bus^{\lambda\pm}(s,x,t,y) : s,x,t,y \in \bbR)$ indexed by $\lambda \in \bbR$. We construct this process and show that  1F1S  holds \textit{simultaneously} for all values of $\lambda$ for which $\Bus^{\lambda+}=\Bus^{\lambda-}$. We also show that in an appropriate sense, the 1F1S principle \emph{fails} off of this set if the complement is non-empty. 

Monotonicity of the Busemann process implies that the discontinuity set of $\lambda$ for  which $\Bus^{\lambda+}\neq\Bus^{\lambda-}$ is at most countable. Consequently,   the 1F1S principle holds simultaneously for all but at most countably many values of the conserved quantity $\lambda$. We show later that if this set of exceptional values of $\lambda$ for which the Busemann process is discontinuous is non-empty, then it is dense. We leave the question of whether or not this set is non-empty to future work. We do, however, note that a striking feature of \emph{all} previously studied models in the KPZ class for which exact computation is possible \cite{Fan-Sep-20,Bat-Fan-Sep-22-,Bus-Sep-Sor-22-,Sep-Sor-21-}, including both the lattice log-gamma polymer and the KPZ fixed point, is that the analogue of the Busemann process has in each case exhibited discontinuities. This implies that in those other settings, the 1F1S principle fails for a random, dense set of slopes.

\subsection{Solving the KPZ equation and our coupling}\label{sec:int:sol}
Direct well-posedness of an appropriately renormalized version of \eqref{KPZ} was shown only recently, 
first on the torus \cite{Hai-13,Hai-14,Gub-Imk-Per-15,Gub-Per-17} and then on the line \cite{Per-Ros-19}. 
For the case of \eqref{SBE} on the torus, see \cite{Gub-Per-17,Gub-Per-18-review}. For existence of solutions to \eqref{SBE} on the line see
\cite{Ber-Can-Jon-94}. Uniqueness of stationary energy solutions of \eqref{SBE}, as defined by \cite{Gon-Jar-10-},
was shown by \cite{Gub-Per-18}. 

While these methods define what it means to solve \eqref{KPZ} or \eqref{SBE}, the physically relevant notion of a solution to \eqref{KPZ} or \eqref{SBE} has been known for over thirty years and is given by what is called the Hopf-Cole solution. This solution to \eqref{KPZ} is defined by starting with the well-posed stochastic heat equation (SHE) with multiplicative white noise forcing,
 	\begin{align}\label{SHE}
	\partial_t\She=\tfrac12\partial_{yy} \She + \She  W,
	\end{align}
 and then defining $h(t,y) = \log \She(t,y)$ and $u(t,y) = \partial_y \log \She(t,y)$. These definitions agree with formal computations which ignore the distributional structure of $W$.  
 
 The Hopf-Cole solution arises as a limit of lattice and continuum models which lie in the KPZ class \cite{Ber-Gia-97,Alb-Kha-Qua-14-aop,Hai-Qua-18} and is the standard notion of solution to \eqref{KPZ} in both the physics and mathematics literature. See the surveys \cite{Cor-16,Cor-12,Hal-Tak-15,Qua-Spo-15,Qua-12}, the references therein, the discussion in the introduction of \cite{Hai-13}, and indeed \cite[equation (2)]{Kar-Par-Zha-86}. For agreement of some of the direct definitions of solutions to \eqref{KPZ} with the Hopf-Cole solution under certain hypotheses, see \cite[Theorem 3.19]{Per-Ros-19}, \cite[Theorem 1.1]{Hai-13} (on the torus), and \cite[Theorem 2.10]{Gub-Per-18} (up to a non-random additive term).

The Hopf-Cole transformation connecting \eqref{KPZ} and \eqref{SHE} is central to our work. While it may be possible to generalize some aspects of our method to settings which do not connect to a linear equation like \eqref{SHE}, the full strength of our results uses the linearity of \eqref{SHE} in an essential way. This comes through   a coupling based on the superposition principle.  We can simultaneously study all solutions to \eqref{SHE} started from all initial conditions and all initial times and to prove that the resulting process satisfies strong continuity properties. The details of this part are in the companion paper \cite{Alb-etal-22-spde-} that  constructs and analyses the Green's function of \eqref{SHE}. Some aspects of this construction previously appeared in \cite{Alb-Kha-Qua-14-aop,Alb-Kha-Qua-14-jsp}. This coupling lies behind the uniformity of our results, which essentially all hold on a single event of full probability, simultaneously for all choices of all parameters of the model.

The coupling of solutions to \eqref{SHE} in \cite{Alb-etal-22-spde-} also allows us to construct a coupling of the full field of continuum directed polymer measures. These are the finite length polymer measures for which \eqref{SHE} describes the evolution of the partition function and \eqref{KPZ} describes the evolution of the free energy. As will be seen in the sequel, there is essentially an equivalence between stationary distributions (modulo constants) to \eqref{KPZ}, Busemann functions, and semi-infinite length polymer measures.  Semi-infinite length polymer measures are probability measures on semi-infinite paths,  with  distinguished start or end times, that  are consistent with the finite-length polymer measures in the sense of Gibbs conditioning. 

The equivalence between stationary distributions and Busemann functions more properly holds for generalized Busemann functions, which need not arise as limits. Such objects are typically known as either correctors, by analogy with  the corresponding objects in stochastic homogenization, or else as covariant recovering cocycles. See the discussion in \cite{Jan-Ras-20-aop,Jan-Ras-20-jsp}. In the present paper, stationary distributions modulo constants for \eqref{KPZ} are understood through their equivalence to ratio stationary solutions to \eqref{SHE}, which are easier to work with in our analysis. See the discussion in Section  \ref{subsec:ergodic}. In various guises, these equivalences between these different objects lie behind many of the existing approaches to studying the 1F1S principle for models of this type, including the approach tracing back to Newman's method, where semi-infinite geodesics or characteristics play the role of infinite length polymer measures.

Returning back to \eqref{KPZ}, we make one note about the scaling properties of the KPZ equation. In principle, one could consider the more general class of models \[\partial_t h =\tfrac\nu 2\partial_{yy} h + \tfrac\lambda 2(\partial_y h)^2 +  \beta W,\]
where $\nu>0$ and $\lambda,\beta\neq 0$ are free parameters. Our choice to restrict attention to the case where $\lambda=\nu=\beta=1$ is justified by the scaling relations of the KPZ equation, which we record in the following remark. The following computations are purely formal, but are nevertheless correct.
\begin{remark}\label{rem:scaling}
Given a strictly positive $\nu$ and nonzero $\lambda$ and $\beta$,  let $h$ solve \eqref{KPZ} and call 
\[\wt h(t,y)=h_{\nu,\lambda,\beta}(t,y)=\nu\lambda^{-1} h(\nu^{-5}\lambda^4\beta^4t,\nu^{-3}\lambda^2\beta^2y).\]
Then $\wt h$ solves
 	\[\partial_t \wt h=\tfrac\nu2\partial_{yy} \wt h + \tfrac\lambda2(\partial_y \wt h)^2 + \beta \wt W,\]
where $\wt W(t,y)=\nu^{4}\lambda^{-3}\beta^{-3}W(\nu^{-5}\lambda^4\beta^4t,\nu^{-3}\lambda^2\beta^2y)$ is a new space-time white noise. 
\end{remark}
Because of this scaling property, it is without loss of generality to restrict attention to \eqref{KPZ}.

\subsection{Integrable inputs and the shape theorem}
Our general approach applies to non-solvable models. We do, however, use inputs from integrable probability at one  particular stage of our argument. This occurs in our proof of a locally uniform version of the free energy density limit (Lemma \ref{lm:Z-grid}), which is known as a \textit{shape theorem} in the the percolation and polymer literature. 

It was shown in \cite{Ami-Cor-Qua-11} that if $h$ is started from the ``narrow-wedge" initial condition at the time-space point $(0,0)$, then, in probability,
\begin{align}
    \lim_{t\to\infty} \frac{h(t,ty)}{t} = -\frac{1}{24} - \frac{y^2}{2}.\label{eq:probshp}
\end{align}
There have been significant refinements of this result since then, which we do not survey. Our methods require a locally uniform almost sure extension of this limit, so we show the following almost surely for all $C>0$:
\begin{align}\label{eq:shapeintro}
\lim_{n\to\infty}n^{-1}\sup_{\substack{(s,x,t,y)\tspb\in\tspb\varsets: \\[2pt] s,x,t,y\tspb\in\tspb[-Cn,Cn]}}\,\Bigl|\,h(t,y\viiva s,x)+\frac{t-s}{24}-\log \heat(t-s,y-x)\,\Bigr|=0,
\end{align}
where $\heat(t,y)$ is the heat kernel,
\begin{align}
    \heat(t,y) &= \frac{1}{\sqrt{2\pi t}}\tspb  e^{-\frac{y^2}{2t}}\tspb\ind_{(0,\infty)}(t),\label{eq:heat}
\end{align}
$h(t,y\viiva s,x)$ denotes the Hopf-Cole
solution to KPZ \eqref{KPZ} started from the ``narrow-wedge" initial condition at $(s,x)$ and $\varsets = \{(s,x,t,y) \in \bbR^4 : s < t\}$. The coupling of solutions which makes this statement possible is the one   developed in the companion paper \cite{Alb-etal-22-spde-}. The quantity on the right-hand side of the limit in \eqref{eq:probshp} often goes by the name of (\emph{quenched}) \emph{Lyapunov exponent} or (\emph{limiting}) \emph{free energy density}. Once the existence of the limit is proved,  applying shear transformations to the KPZ equation \eqref{KPZ}  implies that the form of the right-hand side is $a_0 - y^2/2$ for some $a_0 \in \R$. Knowing this, without the explicit value $a_0=-1/24$,  is in fact more than sufficient for the arguments in this paper where we apply \eqref{eq:shapeintro} and its consequences. See Remark \ref{rk:shear}.

We know of no method of computing the precise value  $-1/24$ on the right-hand side of \eqref{eq:probshp} without  integrable probability inputs.  To prove  \eqref{eq:shapeintro}, we use  
 strong tail estimates from  \cite{Cor-Gho-Ham-21,Cor-Gho-20-ejp,Cor-Gho-20-duke}. 
 In all lattice and semi-discrete models we are aware of, integrability is not needed for the shape theorem to hold (with a potentially unknown centering). It may be possible to develop a purely stochastic analytic method of proving \eqref{eq:shapeintro} with the sharp $-1/24$ term replaced by an unknown constant. As noted above, such arguments exist on the lattice under extremely mild assumptions. See, for example, \cite{Jan-Nur-Ras-22}. 

\subsection{Stochastic homogenization}
If $U$ is a bounded and uniformly continuous function and $\e>0$, call $u_\e(t,x)=-\e \KPZ\bigl(t/\e,x/\e\bigr)$, where $\KPZ$ solves \eqref{KPZ} with $h(0,x)=-\e^{-1}U(\e x)$. Then $u_\e$ solves the viscous Hamilton-Jacobi 
equation
\[\partial_t u_\e(t,x)-\tfrac{\e}{2}\partial_{xx}u_\e(t,x)+H(t/\e,x/\e,\partial_x u_\e)=0, \qquad u_\e(0,x)=U(x),\] 
with Hamiltonian $H(t,x,p)=\frac{p^2}2+W(t,x)$. 
As a consequence of \eqref{eq:shapeintro}, almost surely simultaneously for all $U$ and $u_\e$ as above, $u_\e$ converges locally uniformly as $\e\to0$ to the viscosity solution of the \textit{effective} Hamilton-Jacobi equation
\[\partial_t\overline u+{\overline H}(\partial_x \overline u)=0, \qquad \overline{u}(0,x) = U(x)\]
with effective Hamiltonian ${\overline H}(p)=\frac{p^2}2-\frac1{24}$. This is stochastic homogenization in the sense of \cite{Jin-Sou-Tra-17,Kos-Var-08}. 
The Busemann process furnishes the associated stochastic process of correctors, with
\[v(t,x)=-\Bus^{-p}(0,0,t,x)-px+\Bigl(\frac{p^2}2-\frac1{24}\Bigr)t\] 
solving the corresponding  corrector equation (with velocity $p$),
\[\partial_t v-\frac12\partial_{xx}v+\frac12(p+\partial_x v)^2+W=\frac{p^2}2-\frac1{24}.\]
See Remark \ref{rem:corrector} for an informal derivation of this equation following the treatment in \cite{Lio-Sou-03}.

\subsection{Notation and conventions} \label{sec:notation}
The integers are $\bbZ$, $\bbZ_+$ $=\{0,1,2,$ $\dots\}$, $\bbN=\{1,2,\dots\}$, the real numbers in $d$ dimensions are $\bbR^d$, the rational numbers are $\bbQ^d$. $\R_- =(-\infty,0]$ and $\R_+=[0,\infty)$. $\Udense$ denotes an arbitrary fixed countable dense subset of $\R$.  $\abs A$ is the cardinality of a finite set $A$. 
Directed time-space domains in $\R^4$ are denoted by $\varsets = \{(s,x,t,y)\in \bbR^4 : s<t\}$  {and} 
$\varset = \{(s,x,t,y) \in \bbR^4 : s \leq t\}$.  

When $X$ and $Y$ are metrizable spaces, $\cC(X,Y)$ denotes the space of continuous functions from $X$ to $Y$. We equip $\cC(X,Y)$ with the topology of uniform convergence on compact sets and with the corresponding Borel $\sigma$-algebra. 

$\sB(X)$ is the Borel $\sigma$-algebra of the metrizable space $X$. $\cM_1(X)$ is the space of Borel probability measures on $X$, equipped with the topology of weak convergence.  Given a signed Borel measure $\mu$ on $X$, the total variation measure $|\mu|$ is given by the sum of the positive and negative parts of the Jordan-Hahn decomposition of $\mu$ (see Definition 3.1.4 in \cite{Bog-07}). The total variation distance between signed Borel measures $\mu$ and $\nu$ on $X$ is $\norm{\mu-\nu}_{\rm TV} = |\mu-\nu|(X)$. In general, $(1/2)\norm{\mu}_{\rm TV} \leq \sup_A|\mu(A)| \leq \norm{\mu}_{\rm TV}$.  

$\cM_+(\R)$ is the space of non-negative measures on the real line, which we equip with the vague topology (i.e., the test functions are compactly supported and continuous).

Paths are denoted by  $x_{r:t}=\{x_s:r\le s\le t\}$, $x_{-\infty:t}=\{x_s:s\le t\}$, and similarly for $x_{m:\infty}$ and $x_{-\infty:\infty}$. For $t\le s\le s'\le t'$ in $[-\infty,\infty]$, let  $\Paths_{s:s'}$ be the $\sigma$-algebra on $\cC([t,t'],\R)$ generated by $X_{s:s'}=\{X_r:s\le r\le s'\}$, where $X$ is the coordinate random variable.

Given $s<t$, a function $F:\cC([s,t],\R)\to\R$ is nondecreasing if $F(X_{s:t})\le F(Y_{s:t})$ whenever $X_r\le Y_r$ for all $r\in[s,t]$.
Given two probability measures $Q_1$ and $Q_2$ on $\cC([s,t],\R)$, $Q_1$ is stochastically dominated by $Q_2$, abbreviated $Q_1\std Q_2$, if $E^{Q_1}[F]\le E^{Q_2}[F]$ for all bounded measurable nondecreasing functions $F:\cC([s,t],\R)\to\R$.


$\{B(x):x\in\R\}$ denotes two-sided standard Brownian motion with $B(0)=0$. 
If $\sA$ is an index set and $F$ and $G$ are $\sA$-indexed stochastic processes on a complete probability space $(\Omega,\sF,\bbP)$,  then   $F$ and $G$ are 
modifications
of one another if $\bbP\{F(\alpha) = G(\alpha)\}=1$ for each $\alpha \in \sA$, and 
indistinguishable if
$\bbP\{\forall \alpha \in \sA:  F(\alpha) = G(\alpha)\}=1$. 
 
If $\sF$ and $\sG$ are $\sigma$-algebras, $\sF\vee\sG=\sigma(\sF,\sG)$ is the smallest $\sigma$ algebra containing $\sF$ and $\sG$.

\textit{Physical solutions} to the KPZ equation \eqref{KPZ} and the SHE \eqref{SHE} are defined via superposition through \eqref{NEind:m9} and \eqref{eq:genKPZ}.

\section{Solutions of SHE and KPZ}
\label{sec:setting}
We turn to the discussion of our setting  and coupling
and then summarize some results   from the companion paper \cite{Alb-etal-22-spde-}. 

\subsection{Probability space}\label{sec:prob-space}
We work on a complete Polish probability space $(\Omega,\sF,\bbP)$ that supports a space-time white noise $W$ on $L^2(\bbR^2)$ and a  group of continuous measure-preserving automorphisms  described  momentarily. A white noise $W$ is a mean zero Gaussian process indexed by $f \in L^2(\bbR^2)$ that  satisfies
\[ \bbP\bigl\{W(a f+b g) = a W(f) + b W(g)\bigr\}=1 \quad\text{and}\quad  \mathbb{E}[W(f)W(g)] = \int_{\bbR^2}f(t,x)\tspa g(t,x)\tspa dt\tspa dx\]
for $a,b \in \bbR$ and $f,g \in L^2(\bbR^2)$. 

Denote by $\nulls$ the $\sigma$-algebra generated by the $\bbP$-null events in $\sF$. For $-\infty \leq a < b \leq \infty$, let $\timefun{a}{b}$ denote the $\sB([a,b]\times\R)$ measurable functions in $L^2(\R^2)$. 
Let $\filt{W,0}_{s:t} = \sigma(W(f) : f \in \timefun{s}{t})\vee \nulls$ be the $\sigma$-algebra generated by the white noise restricted to time interval $[s,t]$ 
and $\nulls$. For   $s \leq t$,   $\fil_{s:t} = \filt{W,0}_{s-:t+} = \bigcap_{a < s \leq t < b} \filt{W,0}_{a:b}$ is the augmented filtration of the white noise.  Abbreviate $\fil=\fil_{-\infty:\infty}= \sigma(W(f) : f \in L^2(\R^2))\vee \nulls$.  

On the plane, the shift maps $\shiftf{s}{y}$, the shear  $\shearf{s}{a}$ by $a$ relative to time $s$,   time and space reflection maps  $\reff_1$ and $\reff_2$,   rescaled dilation maps $\dif{\alpha}{\lambda}$, and the negation map $\nef$ act on  $f\in L^2(\bbR^2)$ as follows: 
\be\label{S7}\begin{aligned}
\shiftf{s}{y}  f (t,x) &= f(t+s,x+y) \quad \text{for} \quad  s,y \in \bbR; \\
\shearf{s}{a} f (t,x) &= f(t, x+ a(t-s)) \quad \text{for} \quad  s,a \in \bbR; \\
\reff_1 f (t,x) &= f(-t,x) \qquad\text{and}\qquad  \reff_2 f  (t,x) = f(t,-x); \\
\dif{\alpha}{\lambda} f(t,x) &= \sqrt{\alpha \lambda\tspa} \tspb f(\alpha t, \lambda x) \quad \text{for} \quad \alpha,\lambda > 0;\\
\nef f (t,x) &= - f(t,x). 
\end{aligned}\ee
Their    inverses are $\shiftf{t}{x}^{-1} = \shiftf{-t}{-x}, \reff_1^{-1} = \reff_1, \reff_2^{-1}=\reff_2,$ $\shearf{s}{a}^{-1} = \shearf{s}{-a}$, $\dif{\alpha}{\lambda}^{-1} = \dif{\alpha^{-1}}{\lambda^{-1}}$, and $\nef^{-1} = \nef$. 

We assume that $(\Omega,\sF,\bbP)$ is equipped with a group (under composition) of continuous measure-preserving automorphisms generated by  
translations $\{\shiftd{s}{y} : s,y \in \bbR\}$,
shears $\{\sheard{s}{a} : s,a \in \bbR\}$, reflections $\refd_1$ and $\refd_2$,  dilations $\{\did{\alpha}{\lambda} : \alpha,\lambda >0\}$, and negation $\ned$,
that act on $W$ by 
\be\label{S9}
\begin{aligned}
W \circ \shiftd{s}{y}(f) &= W(\shiftf{-s}{-y} f ),\\
W \circ \refd_1(f) &= W(\reff_1 f),  \\
W \circ \did{\alpha}{\lambda}(f) &= W(\dif{\alpha^{-1}}{\lambda^{-1}} f),
\end{aligned}
\qquad \ \ 
\begin{aligned} 
W \circ \sheard{s}{a}(f) &= W(\shearf{s}{-a} f), \\
W \circ \refd_2(f) &= W(\reff_2 f),\\
W \circ \ned(f) &= W(\nef f). \\
\end{aligned}
\ee
%
%
We require that this group of automorphisms also be measure-preserving for $(\Omega,\fil,\P)$ (in particular, the automorphisms preserve $\fil$) and that (i) if $(s,y) \neq (0,0)$ then $\shiftd{s}{y}$ is strongly mixing on $(\Omega,\fil,\P)$, and (ii)  if $a \neq 0$, then $\sheard{s}{a}$ is strongly mixing on $(\Omega,\fil,\P)$. A concrete example of a space that satisfies all of these requirements is described  in Appendix \ref{sec:WNsetting}. In that setting, $\fil=\sF$.

The setting described above places us into the framework of the companion paper \cite{Alb-etal-22-spde-}, where the construction of our coupling of solutions to the SHE \eqref{SHE} appears. We next discuss this coupling and summarize some of the key results from that work and their connection to the rest of the literature. 
We refer the reader to \cite{Alb-etal-22-spde-} for the proofs and a more in-depth discussion.

\subsection{Solving SHE}
\label{solve-SHE}
Let $\cM_+(\R)$ denote the space of locally finite positive Borel measures on $\R$ endowed with the vague topology. 
It is shown in Theorem 2.6 of \cite{Alb-etal-22-spde-}  that the  state space on which \eqref{SHE} admits non-explosive, non-negative, and not identically zero solutions is the subspace
\begin{align}
\ICM &= \Bigl\{f\in\cM_+(\R):   0<f(\R)\le\infty \text{ and }  \forall a >0:  \int_{\bbR} e^{-a x^2}\,f(dx) <\infty \Bigr\}. \label{eq:ICM}
\end{align}
The zero measure is excluded to avoid accounting for trivialities in our results and, in any case, the physical solution from a zero initial condition is identically zero. 
\begin{remark}
$\ICM$ contains each point mass $\delta_x$ for $x\in\R$. Moreover, $\ICM$  contains all measures of the form  $f(x)\tspa dx$ for Borel functions $f:\R\to (0,\infty)$ for which there exist $C\ge0$ and $a<2$ such that  $f(x)\le Ce^{C\abs{x}^a}$ for all $x\in\R$. In particular, by the law of the iterated logarithm, almost every sample path of geometric Brownian motion lies in $\ICM$.
\end{remark}
For a fixed initial time $s\in\R$ and a fixed (i.e.\ non-random) initial condition $f\in\ICM$, \cite{Che-Dal-14,Che-Dal-15} showed the existence and uniqueness of a continuous and $\{\fil_{s:t}\}$-adapted 
solution to the Duhamel formulation of \eqref{SHE},
\begin{align}
\She(t,y) = \int_{\bbR} \heat(t,y-z)\, f(dz) + \int_s^t \int_{\R} \heat(t-r,y-z) \She(r,z)\,W(dz\,dr), 
\quad y\in\R,\ t\in(s,\infty). \label{eq:genmild}
\end{align}
As in \eqref{eq:heat}, $\heat$ denotes the heat kernel. A fixed point of the mapping in \eqref{eq:genmild} satisfying appropriate measurability and moment conditions is  known as a \emph{mild solution} of the SHE. Previously, \cite{Ber-Can-95, Ber-Gia-97} proved the existence and uniqueness of a continuous adapted mild solution for certain random initial conditions taking values in a subspace of $\ICM$. 
Most importantly, the Hopf-Cole solution of KPZ \eqref{KPZ}, which arises by taking logs of the mild formulation of SHE \eqref{SHE}, is the physically relevant solution for fixed initial conditions and fixed initial times. 


As discussed in the introduction, our results require a coupling that (i) couples the solutions of the SHE from all initial times and all initial conditions simultaneously, (ii) agrees with the physically relevant mild solution for each given initial time and  deterministic or random initial condition, and (iii) satisfies pathwise continuity properties as a function of its arguments, including the measure-valued initial condition. The mild formulation is not obviously well-suited to such a coupling, because the fixed point problem in \eqref{eq:genmild} is usually formulated to prove almost sure existence and uniqueness  for a fixed initial time and initial condition. Because there are uncountably many initial times and initial conditions, one needs to prove that it is possible to glue these together consistently. The main idea we use to build the coupling is to construct the Green's function $\She(t,y\viiva s,x)$ of the equation, which solves
\be\begin{aligned}
    \partial_t \She(t,y\viiva s,x) &= \frac{1}{2} \partial_{yy} \She(t,y\viiva s,x) + \She(t,y\viiva s,x) W(t,y), \\
    \She(s,y\viiva s,x) &= \delta_x(y)
\end{aligned}\label{eq:FSol}\ee
for all $(s,x,t,y)\in\varsets$ and then define solutions to \eqref{SHE} from general initial conditions $f\in\sM_+(\R)$ via the superposition principle: 
\begin{align}\label{NEind:m9}
\begin{split}
&\She(t,y\viiva s,f) = \int_{-\infty}^\infty  \She(t,y\viiva s,x)\,f(dx)
\quad\text{and}\quad 
\She(s,dy\viiva s,f) = f(dy).
\end{split}
\end{align}
When the measure in \eqref{NEind:m9} is given by $f(x)\tspa dx$ we write
\begin{align}\label{NEind}
\begin{split}
&\She(t,y\viiva s,f) = \int_{-\infty}^\infty  \She(t,y\viiva s,x)f(x)\,dx\quad\text{and}\quad \She(s,y\viiva s,f) = f(y).
\end{split}
\end{align}
It will always be clear from context whether $f$ is a measure or a function.

The initial value problem  \eqref{eq:FSol} is first treated  through its mild formulation for dyadic rational   $s,x \in \bbR$.  The resulting solutions are then  glued together to define the field $\She(t,y\viiva s,x)$. A similar construction of the Green's function, without most of the estimates we need for the present work, appeared previously in \cite{Alb-Kha-Qua-14-aop,Alb-Kha-Qua-14-jsp}. 
 Our starting point is stated in the next theorem. It is a corollary of Theorem 2.2 and Lemma A.5 of \cite{Alb-etal-22-spde-}.
\begin{theorem}\label{thm:She1} 
There exists 
a process $\She=\{\She(t,y\viiva s, x): (s, x, t, y)\in\varsets\}$ taking values in 
$\sC(\varsets,\bbR)$ with $\She(t,\aabullet\viiva s,\aabullet)$ $\fil_{s:t}$-measurable which satisfies the following: for each $f\in\ICM$ and $s\in\R$, 
the function $\She(\aabullet, \aabullet)= \She(\aabullet,\aabullet \viiva s, f)$ defined through  \eqref{NEind:m9} is indistinguishable from the solution to \eqref{eq:genmild} constructed in {\rm\cite{Che-Dal-14,Che-Dal-15}}. 
\end{theorem}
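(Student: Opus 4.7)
The plan is to build $\She(t,y\viiva s,x)$ first on a countable dense set of base points $(s,x)$ using the established single-initial-condition mild solution theory, then to glue and extend by joint continuity, and finally to verify the superposition formula \eqref{NEind:m9} against Chen–Dalang via a stochastic Fubini argument.

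\textbf{Step 1: point solutions on a countable dense set.} For each dyadic rational $(s,x) \in \bbD^2$, apply the Chen–Dalang existence and uniqueness theorem cited after \eqref{eq:genmild} to the initial condition $\delta_x \in \ICM$ at time $s$ to obtain an almost surely unique continuous $\{\fil_{s:t}\}_{t\ge s}$-adapted mild solution $\She^{s,x}(t,y)$ to \eqref{eq:FSol}. These processes live on a common full-measure event $\Omega_0$ obtained by intersecting countably many events.

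\textbf{Step 2: moment bounds and joint continuity.} The main technical step is to establish locally uniform moment estimates of the form
\begin{equation*}
\bbE\bigl[\,|\She^{s,x}(t,y) - \She^{s',x'}(t',y')|^{p}\,\bigr] \le C_{K,p}\,\bigl(|s-s'| + |x-x'| + |t-t'| + |y-y'|\bigr)^{\alpha_p}
\end{equation*}
for $(s,x,t,y),(s',x',t',y')$ in a compact subset $K$ of $\varsets$ (hence bounded away from the diagonal $\{s=t\}$). These follow from the Feynman–Kac representation, or equivalently from iterating the Duhamel formula and using Burkholder–Davis–Gundy together with bounds on the heat kernel and its space-time derivatives. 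By Kolmogorov's continuity criterion applied on compacts, the restriction of $(s,x,t,y)\mapsto \She^{s,x}(t,y)$ to $\bbD^2 \times \varsets$ (with the natural constraint $s<t$) admits an almost surely continuous extension to all of $\varsets$. Set $\She(t,y\viiva s,x)$ to be that extension on $\Omega_0$, and zero off $\Omega_0$.

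\textbf{Step 3: identifying the extension as the Green's function and adaptedness.} By continuity, for each fixed $(s,x) \in \R^2$ the extended field $\She(\,\cdot\, ,\,\cdot\, \viiva s,x)$ satisfies the Duhamel equation \eqref{eq:genmild} with $f = \delta_x$: pass to the limit along $\bbD^2 \ni (s_n,x_n) \to (s,x)$ in both sides of \eqref{eq:genmild}, using dominated convergence together with the moment bounds on the stochastic integral (BDG plus heat-kernel control). For adaptedness, the approximating $\She^{s_n,x_n}$ are $\fil_{s_n:t}$-measurable, hence $\fil_{s_n-\e:t+\e}$-measurable for any $\e>0$; after a.s.\ passage to the limit and using the augmented, right-continuous character of $\fil$, we obtain $\fil_{s:t}$-measurability of $\She(t,\aabullet \viiva s,\aabullet)$.

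\textbf{Step 4: superposition for general $f \in \ICM$.} Given $f \in \ICM$ and $s \in \R$, define $\She(t,y) = \int \She(t,y \viiva s,x)\,f(dx)$. One must show this is a.s.\ finite and continuous, and solves \eqref{eq:genmild} with initial condition $f$. Finiteness and the Gaussian-tail integrability in the definition \eqref{eq:ICM} of $\ICM$ combine to give absolute convergence of the integral, using moment estimates of the form $\bbE[\She(t,y\viiva s,x)^p] \le C_p\,\heat(t-s,y-x)^p e^{c_p(t-s)}$. Stochastic Fubini (Walsh-type, justified by the $L^2$ bounds on the stochastic integrands in \eqref{eq:FSol} weighted against $f$) interchanges the $f(dx)$ integral with the space-time white-noise integral, yielding \eqref{eq:genmild} for $\She(\,\cdot\,,\,\cdot\,\viiva s,f)$. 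Uniqueness from Chen–Dalang then identifies it with their solution up to indistinguishability.

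\textbf{Main obstacle.} The delicate step is \textbf{Step 2}: proving moment bounds with enough joint regularity in all four variables $(s,x,t,y)$ simultaneously, including in the base point $(s,x)$, that Kolmogorov continuity applies and yields a single continuous field on $\varsets$. The heat kernel $\heat(t-s,y-x)$ diverges as $s\to t$, so the bounds must be calibrated to stay finite on compacts in $\varsets$ while tracking dependence on the initial point; this is where the explicit moment formulas and the local uniformity on $\varsets$ (rather than $\varset$) are essential, and where the bulk of the technical work in the companion paper \cite{Alb-etal-22-spde-} is invested.
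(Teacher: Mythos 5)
Your proposal matches the paper's approach: the paper explicitly describes the construction as first solving the mild formulation \eqref{eq:FSol} for dyadic rational base points $(s,x)$ and then gluing via joint continuity, which is exactly your Steps 1--2, and the superposition/Fubini identification in your Step 4 is how the companion paper verifies agreement with the Chen--Dalang solution. The paper itself defers the detailed estimates to Theorem 2.2 and Lemma A.5 of \cite{Alb-etal-22-spde-}, and you correctly flag the joint moment bounds near the diagonal $\{s=t\}$ as the technical crux.
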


We continue to discuss the properties of the field $\She(t,y\viiva s,f)$ informally, with references to precise statements in \cite{Alb-etal-22-spde-}. 

The automorphisms in \eqref{S9} act on the Green's function via the following identities, which are contained in Proposition 2.3 of \cite{Alb-etal-22-spde-} and which hold for fixed $a,r,z \in \bbR$ and all $(s,y,t,x) \in \varsets$ simultaneously on an event of full probability that depends only on $a,r,$ and $z$:
\begin{align} 
&\She(t+r,y+z\viiva s+r,x+z) \circ \shiftd{-r}{-z} = \She(t,y\viiva s,x)\label{eq:cov.shift} \\
&\She(-s,x\viiva-t,y)\circ \refd_1 = \She(t,y\viiva s,x)
\text{ and }\She(t,-y\viiva s,-x)\circ \refd_2 = \She(t,y\viiva s,x)\label{eq:cov.ref}\\
&e^{a(y-x)+\frac{a^2}2(t-s)}\She(t,y+a(t-r)\viiva s, x+a(s-r))\circ \sheard{r}{-a} = \She(t,y\viiva s, x)\label{eq:cov.shear}
\end{align}
Because $\bbP$ is invariant under the automorphisms in \eqref{S9}, these identities induce distributional symmetries. In particular, at the level of $\rnShe$, the invariance under shear transformations in \eqref{eq:cov.shear} implies that $y \mapsto \rnShe(t,y\viiva s,x)$ and $x\mapsto \rnShe(t,y\viiva s,x)$ are stationary processes. See Corollary 2.4 in \cite{Alb-etal-22-spde-} or Proposition 1.4 in \cite{Ami-Cor-Qua-11}.

The field $\She$ satisfies a self-consistency which is essentially the Chapman-Kolmogorov identity for the continuum directed polymer, which will be introduced in Section \ref{sec:poly}: for all $s<r<t$, and all $x,y \in \R$,
	\begin{align}\label{eq:CK}
	\She(t,y\viiva r,x)=\int_{-\infty}^\infty\She(t,y\viiva s,z)\She(s,z\viiva r,x)\,dz.
	\end{align}
See Lemma 3.12 in \cite{Alb-etal-22-spde-} or Theorem 3.1(vii) in \cite{Alb-Kha-Qua-14-jsp}. Tonelli's theorem then implies that for all $s<r<t$, all $y$, and all $f\in\sM_+(\R)$,
\begin{align}\label{CK}
\She(t,y\viiva r,f) = \int_{-\infty}^\infty  \She(t,y\viiva s,z)\tspb \She(s,z\viiva r,f)\, dz.
\end{align}
This identity is recorded as Theorem 2.6(v) in \cite{Alb-etal-22-spde-}. \eqref{CK} is the \emph{cocycle property} for the solution semi-group to \eqref{SHE} defined by superposition: for all $s<r<t$, all $y \in \R$, and all $f \in \sM_+(\R)$,
\be\label{CK8}  \She(t,y\viiva r,f)=\She\bigl(t,y\hspace{0.8pt}\big\vert\hspace{0.8pt}s,\She(s,\aabullet\,\viiva r,f)\bigr).
\ee
By way of analogy with the ordinary heat equation, where the heat semi-group defines physical solutions, and for reasons we elaborate more on in the context of the KPZ equation below, we call \eqref{NEind:m9} the \textit{physical solution} of the stochastic heat equation.

A key observation in \cite{Alb-etal-22-spde-} is contained the following remark, which motivates most of the properties of this solution semi-group which we discuss in the sequel.
\begin{remark}
The singularity in the solution to \eqref{eq:FSol} as $t\searrow s$ is fully captured by the heat kernel in the following precise sense. Define the process 
    \be\label{women} 
    \rnShe(t,y\viiva s,x)=\begin{cases}
    \frac{\She(t,y\viiva s,x)}{\heat(t-s,y-x)}&\text{if $t>s$ and}\\[2pt]  
    1&\text{if $t=s$.}
    \end{cases}\ee
Then  with $\P$-probability one, $\rnShe$ is continuous and everywhere strictly positive on $\varset=\{(s,x,t,y)\in\R^4 : s \leq t\}$. Moreover, it is locally H\"older $1/2-$ in $(x,y)$ and $1/4-$ in $(s,t)$ on $\varsets=\{(s,x,t,y)\in\R^4 : s < t\}$. Moreover, for each $T>0$, there exists $C=C(\omega,T)>0$ such that
\[
C^{-1}(1+|x|^4+|y|^4)^{-1} \leq \rnShe(t,y\viiva s,x) \leq C(1+|x|^4+|y|^4).\]
 These claims follow from Theorem 2.2 and Corollary 3.10 in \cite{Alb-etal-22-spde-}. The H\"older estimates in that paper become suboptimal near the boundary where $t=s$, which is why we do not discuss precise regularity on $\varset$.
\end{remark}
In words, the above remark says that $\She(t,y\viiva s,x) = \rnShe(t,y\viiva s,x)\heat(t-s,x-y)$ is, uniformly on compact sets in time and uniformly in all of space, just the heat kernel up to a small (sub-polynomial growth and decay), rough, multiplicative perturbation. It therefore inherits many regularity properties as a solution semi-group from the heat semi-group.

It is immediate from everywhere strict positivity of $\She(t,y\viiva s,x)$ and \eqref{NEind:m9} that for all non-zero $f \in \sM_+(\R)$, all $s<t$, and all $y \in\R$,
\begin{align}\label{eq:allpos}
    \She(t,y\viiva s,f)\in(0,\infty].
\end{align}
In Theorem 2.6 of \cite{Alb-etal-22-spde-}, it is shown that the space $\ICM$ in \eqref{eq:ICM} is a natural domain for \eqref{SHE} in the sense solutions which start in $\ICM$ are well-behaved and remain in $\ICM$ for all time: for all $f\in \ICM$, $s<t$, and all $y$ in $\R$, 
\begin{align}\label{pr:IC:1}
\She(t,y\viiva s,f)\in(0,\infty) 
\quad\text{and}\quad  \She(t,x\viiva s,f)\tspa dx\in\ICM.
\end{align}
Theorem 2.6 in \cite{Alb-etal-22-spde-} shows that if $f \in \ICM$, then the function  $(s,y,t) \mapsto \She(t,y\viiva s,f)$  is 
continuous on $\{(s,y,t) \in \bbR^3 : s<t\}$. It is shown in Appendix D of \cite{Alb-etal-22-spde-} that the space of strictly positive continuous density functions of  measures in $\ICM$,
\be\label{CICM}   \CICM=\Bigl\{ f\in \sC(\R,(0,\infty)):       \forall a >0,  \int_{\bbR} e^{-a x^2} f(x)\tspb dx <\infty \Bigr\},  \ee
admits a Polish topology, where the convergence is characterized by uniform convergence of $f$ 
on compact sets, as well as convergence of the integrals in the definition.  Theorem 2.9 in \cite{Alb-etal-22-spde-} shows that with this topology on $\CICM$,
the map $(f,s,t) \mapsto \She(t,\abullet \viiva s,f)\in\CICM$ is continuous on $\{(s,t) : s \leq t\}\times\CICM$. In particular, this solution semi-group induces a Feller process on $\CICM$. See Remark 2.12 in \cite{Alb-etal-22-spde-}.

Conversely, Theorem 2.6 of \cite{Alb-etal-22-spde-} shows that $\ICM$ is sharp as a domain for \eqref{SHE} in the sense that all other non-zero initial conditions are explosive: if $f \in \sM_+(\R)\backslash \ICM$, then
    \begin{align}
\frac1{2(t-s)}<\sup\Bigl\{a>0:\int e^{-ax^2}\,f(dx)=\infty\Bigr\}\quad\text{implies }\She(t,y\viiva s,f) = \infty\ \  \forall y\in\R.\label{pr:IC:ICMsh}
    \end{align}
To summarize these observations: $\ICM$ is preserved by the dynamics of \eqref{SHE}, all initial conditions in $\ICM$ become strictly positive and jointly continuous instantaneously and remain so for all time. All other non-zero initial conditions explode in finite time.

\subsection{Physical solutions of KPZ} 
\label{sec:KPZ}
At the level of the KPZ equation \eqref{KPZ} started from an initial condition $f$ at initial time $s$, the Hopf-Cole transformation, combined with the previous observations,  provides a coupling of all solutions which are started from a measurable function via the identification for $t \geq s$, $x,y\in\R$, and $f$ Borel measurable,
\begin{align}
\KPZ(t,y \viiva s, f) = \log \She (t,y \viiva s, e^f) = \log \int_{\R} \She(t,y \viiva s,x)e^{f(x)}dx.\label{eq:genKPZ}
\end{align}
This coupling shows that the non-explosive initial conditions $f$ for KPZ are precisely those satisfying
\[
\int_{\R} e^{f(x) - a x^2}dx <\infty \text{ for all }a>0. 
\]
By Theorem 2.6 in \cite{Alb-etal-22-spde-}, any such initial condition becomes instantaneously continuous. 

Calling
\[
\CKPZ=\bigg\{f \in \sC(\R,\R) : \int_{\R} e^{f(x) - a x^2}dx <\infty \text{ for all }a>0 \bigg\},
\]
one can check that $\CKPZ$ is Polish in the topology in which convergence is characterized by uniform convergence on compact sets combined with convergence of all integrals of the form $ \int_{\R} e^{f(x) - a x^2}dx$, for $a>0$. Indeed, this is just the topology on $\CKPZ$ induced by insisting that the bijection $f \mapsto \log f$ define a homeomorphism between $\CICM$ and $\CKPZ$.

It is shown in Corollary 2.10 of \cite{Alb-etal-22-spde-} that the map $(f,s,t) \mapsto \KPZ(t,\abullet \viiva s, f) \in \CKPZ$ is jointly continuous on $\CKPZ\times \{(s,t) : s \leq t\}$ and satisfies $h(t,\abullet\viiva t,f)=f(\abullet)$ for all $t \in \R$.  As a consequence, the solution map defined in \eqref{eq:genKPZ} induces a Feller process on the Polish space $\CKPZ$. See Remark 2.12 in \cite{Alb-etal-22-spde-}.

As mentioned above,    the expression in \eqref{NEind:m9} is indistinguishable from the physically relevant mild solution to \eqref{SHE} under the most general assumptions for which uniqueness of the mild solutions to \eqref{SHE} have been proven in the literature. This includes non-random initial conditions in $\CICM$. Because of this observation, the fact that the Hopf-Cole transformation of a mild solution defines the physically relevant solution to \eqref{KPZ} for a fixed initial condition, and the just-observed continuity, we will call the expression in \eqref{eq:genKPZ} the \emph{physical solution} to \eqref{KPZ}. To justify this, note that by separability of $\CKPZ$, this field is the unique (up to indistinguishability) jointly continuous extension of the classical Hopf-Cole solutions, as can be seen by starting the field at rational times from non-random functions coming from a countable dense subset of $\CKPZ$. 

The above continuity also shows that $\eqref{eq:genKPZ}$ defines a Feller process on the Polish space
\[
\CKPZt = \CKPZ/\!\sim,
\]
where $\sim$ is the equivalence relation on $\CKPZ$ defined by  $f \sim g$ if $f(x) = g(x) +c$ for some $c \in \R$ and all $x \in \R$. This process is the one which admits stationary distributions given by Brownian motion with drift and it is on this space that we study questions of ergodicity. We discuss Feller continuity and ergodicity in detail at the level of solutions to \eqref{SHE} in Section \ref{sec:erg}.

As is often the case in models of this type in non-compact settings, questions concerning ergodicity of stationary distributions are complicated by the presence of a conservation law in the dynamics of \eqref{KPZ}. It is shown in Proposition 2.13 of \cite{Alb-etal-22-spde-} that if  $\PMPslopem,\PMPslopep\in\R$ and $f$ is a locally bounded Borel-measurable  function  such that 
	\begin{align}\label{eq:growth-rate}
	\lim_{x\to \infty} \frac{f(x)}{x} = \PMPslopep \qquad \text{ and }\qquad \lim_{x\to -\infty} \frac{f(x)}{x} = \PMPslopem,
	\end{align}
then for all $s<t$, 
	\begin{align}\label{eq:cons}\lim_{x\to \infty} \frac{\KPZ(t,x\viiva s,f)}{x} = \PMPslopep \qquad \text{ and }\qquad \lim_{x\to -\infty} \frac{\KPZ(t,x\viiva s,f)}{x} = \PMPslopem.\end{align}
As noted previously, it has been known since \cite{Ber-Gia-97} that there is a one-parameter family of increment-stationary distributions modulo constants, i.e., on $\CKPZt$ (see Section \ref{subsec:ergodic}) for \eqref{KPZ}, understood through the Hopf-Cole transform of the mild formulation of the SHE in \eqref{eq:genmild}, given by Brownian motion with drift $\lambda$. It is natural to suspect that these form attractors for \eqref{KPZ} started from initial conditions satisfying \eqref{eq:growth-rate} if, for example, $\PMPslopem=\PMPslopep=\lambda$.

\subsection{Continuum directed polymer}\label{sec:CDRP} 
The Feynman-Kac interpretation of solution of the SHE \eqref{SHE} is as the partition function of the continuum directed polymer measure with a boundary condition given by the initial condition $f$ and a Dirac mass as a terminal condition. Because the SHE dynamics proceeds forward in time, this Feynman-Kac interpretation is as a backward Markov chain. Concretely, the continuum directed polymer measures are defined as follows:
For  $r<t$ in $\R$ and $f\in\ICM$, 
the backward continuum point-to-line polymer that starts at $(t,y)\in\R^2$ and terminates at time $r$ with terminal condition $f$ is the time-inhomogeneous 
Markov process with initial position $(t,y)$ and the 
following  transition probability density from time $s'$ to time 
$s$:  
	\be\label{pi-rf} \begin{aligned}    \pi_{r,f}(s,w\viiva s',w')
	&=\frac{\She(s',w'\viiva s,w)\She(s,w\viiva r,f)}{\She(s',w'\viiva r,f)} \\
	&=\frac{\She(s',w'\viiva s,w)\int_\R\She(s,w\viiva r,z)\,f(dz)}{\int_\R\She(s',w'\viiva r,z)\,f(dz)},\quad \text{for } s'>s>r   \text{ and }  w,w'\in\R.
	\end{aligned}\ee 
We denote the path distribution of this process on the space of continuous functions $\sC([s,t],\R)$, equipped with its Borel $\sigma$-algebra, by $\Poly_{(t,y),(r,f)}$. Many of our arguments involve analysis of these measures and their infinite-volume counterparts, which we introduce and study in Section \ref{sec:poly}.

These measures were originally introduced in \cite{Alb-Kha-Qua-14-jsp}.  The construction in that paper builds the measures for fixed initial and terminal conditions on an event  of full probability that depends on those conditions. Theorem 2.14 of \cite{Alb-etal-22-spde-} uses the coupling of solutions to \eqref{SHE} discussed in Section \ref{solve-SHE} to couple all these measures together on a single event of full probability. \cite{Alb-etal-22-spde-} also proves many basic properties of the measures, including that sample paths under these measures are all almost surely H\"older 1/2$-$ continuous.

The {\it  point-to-point quenched polymer} is the special case $\Poly_{(t,y),(r,x)}=\Poly_{(t,y),(r,\delta_x)}$.   This is the distribution of a path between the time-space points  $(t,y)$ and $(r,x)$ whose Markovian transition probability density from    
$(s',w')$ to $(s,w)$  is  
	\begin{align}\label{pi-rx}
	\pi_{r,x}(s,w\viiva s',w')=\frac{\She(s',w'\viiva s,w)\She(s,w\viiva r,x)}{\She(s',w'\viiva r,x)},\quad  s'>s>r\text{ and }w,w'\in\R. 
	\end{align}
	
\section{Main results}\label{sec:main}

We turn to the statements of the main results, with the proofs coming in the subsequent sections.
\subsection{Busemann process}\label{sub:Bus}
The main tool of this paper for studying the solutions of the SHE \eqref{SHE} and the KPZ equation \eqref{KPZ} is the \emph{Busemann process}. This  is  the jointly stationary monotone coupling of the spatially homogeneous stationary distributions of the KPZ equation that arises naturally from the dynamics itself. 
This process is real-valued and  parameterized by two  time-space pairs, slopes $\lambda\in\R$, and signs $\pm$. 
The first theorem gives  the existence and basic properties of the Busemann  process. 

\begin{theorem}\label{main:Bus}
There exists a stochastic process $\bigl\{\Bus^{\lambda\sig}(s,x,t,y):s,x,t,y,\lambda\in\R,\,\sigg\in\{-,+\}\bigr\}$ defined on the probability space $(\Omega,\sF,\P)$ and 
satisfying the following properties.
\begin{enumerate}[label={\rm(\alph*)}, ref={\rm\alph*}] \itemsep=3pt
\item\label{Z-meas} The process $\bigl\{\Bus^{\lambda\sig}(s,x,t,y):s,x,t,y,\lambda\in\R,\,\sigg\in\{-,+\}\bigr\}$ is a measurable function of the Green's function 
 $\She(\aabullet,\aabullet\tsp\viiva\aabullet,\aabullet)$.
\item\label{Bproc.indep} For any $T\in\R$, 
$\bigl\{\Bus^{\lambda\sig}(s,x,t,y):\sigg\in\{-,+\},x,y,\lambda\in\R,s,t\le T\bigr\}$ is $\fil_{-\infty:T}$-measurable and hence independent of 
$\fil_{T:\infty}$.
\item\label{Bproc.-=+} For each $\lambda\in\R$, $\P\{\Bus^{\lambda-}(s,x,t,y)=\Bus^{\lambda+}(s,x,t,y)\ \forall s,x,t,y\}=1$. 
\item\label{Bproc.BM} For each $t,\lambda\in\R$ and $\sigg\in\{-,+\}$,  the process  $\{\Bus^{\lambda\sig}(t,x,t,y):x,y\in\R\}$
has the same distribution under $\P$ as $B(y)-B(x)+\lambda(y-x)$, where $B$ is a 
two-sided standard Brownian motion.
\end{enumerate}
There exists an event $\Omega_0$ such that $\P(\Omega_0)=1$
and the following hold for all $\w\in\Omega_0$.
\begin{enumerate}[resume,label={\rm(\alph*)}, ref={\rm\alph*}] \itemsep=3pt
\item\label{Bproc.cont} For each  $\lambda\in\R$ and $\sigg\in\{-,+\}$, $\Bus^{\lambda\sig}\in\cC(\R^4,\R)$.
\item\label{Bproc.a} For all $x<y$,  $t$, and $\mu<\lambda$, 
	\begin{align}\label{Busmono}
	\begin{split}
	&\Bus^{\mu-}(t,x,t,y)\le\Bus^{\mu+}(t,x,t,y)\le\Bus^{\lambda-}(t,x,t,y)\le\Bus^{\lambda+}(t,x,t,y)\quad\text{and}\\
	&\Bus^{\mu-}(t,y,t,x)\ge\Bus^{\mu+}(t,y,t,x)\ge\Bus^{\lambda-}(t,y,t,x)\ge\Bus^{\lambda+}(t,y,t,x).
	\end{split}
	\end{align} 
\item\label{Bproc.b} For all $r,x, s, y, t, z, \lambda$ and all $\sigg\in\{-,+\}$
	\begin{align}\label{cocycle}
	\Bus^{\lambda\sig}(r,x,s,y)+\Bus^{\lambda\sig}(s,y,t,z)=\Bus^{\lambda\sig}(r,x,t,z).
	\end{align}
 \item\label{Bproc.c} For all $s,x,t,y,\lambda$  and all $\sigg\in\{-,+\}$
	\begin{align}\label{Buscont}
	\Bus^{\lambda-}(s,x,t,y)
	=\lim_{\mu\nearrow\lambda}\Bus^{\mu\sig}(s,x,t,y)
	\quad\text{and}\quad 
	\Bus^{\lambda+}(s,x,t,y)=\lim_{\mu\searrow\lambda}\Bus^{\mu\sig}(s,x,t,y).
	\end{align}
\item\label{Bproc.d} For all  $t>r$, all $s,x,y, \lambda$, and all $\sigg\in\{-,+\}$
	\begin{align}\label{NEBus2}
	e^{\Bus^{\lambda\sig}(s,x,t,y)}
	&= \int_{-\infty}^\infty  \She(t,y\viiva r,z)\tspa e^{\Bus^{\lambda\sig}(s,x,r,z)}\, dz.
	\end{align}
\end{enumerate}
\end{theorem}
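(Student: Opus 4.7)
The plan is to construct the Busemann process as a Ces\`aro-averaged weak limit of the joint laws of the known Brownian-with-drift stationary solutions, following the strategy of \cite{Jan-Ras-20-aop,Jan-Ras-Sep-22-}. Fix a countable dense set $\Udense \subset \R$. For each $\lambda \in \Udense$ and each $n \in \N$, take a two-sided standard Brownian motion $B^\lambda$, independent of $W$ and independent across $\lambda$, and let $g^\lambda(z) = \exp(B^\lambda(z) + \lambda z)$. Define
\[
\Stat^\lambda_n(s,x,t,y) = \log \She(t,y \viiva -n, g^\lambda) - \log \She(s,x \viiva -n, g^\lambda), \qquad s,t \geq -n.
\]
By the Bertini--Giacomin invariance, the equal-time increments $x,y \mapsto \Stat^\lambda_n(t,x,t,y)$ are distributed as $B(y)-B(x) + \lambda(y-x)$; by construction $\Stat^\lambda_n$ satisfies the cocycle identity \eqref{cocycle}; and from the SHE semi-group property \eqref{CK} it obeys the exponentiated evolution equation in the form of \eqref{NEBus2}.

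Next, consider the joint law $\mathbf{Q}_n$ of $(W, (\Stat^\lambda_n)_{\lambda \in \Udense})$ on an appropriate Polish space, and form Ces\`aro averages $\overline{\mathbf{Q}}_N = N^{-1}\sum_{n=1}^N \mathbf{Q}_n$. Tightness of $\{\overline{\mathbf{Q}}_N\}$ follows from the Gaussian law at equal times, the cocycle identity, and the locally uniform shape theorem \eqref{eq:shapeintro} controlling $\Stat^\lambda_n(s,x,t,y)$ for $|t-s|$ large. Extract a subsequential weak limit $\overline{\mathbf{Q}}$ and realize a process $(W, (\Stat^\lambda)_{\lambda \in \Udense})$ on an extended probability space whose $W$-marginal is preserved. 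The cocycle \eqref{cocycle}, the BM-plus-drift equal-time marginals, the evolution equation \eqref{NEBus2}, and monotonicity across $\Udense$ (inherited from FKG-type stochastic ordering of the drift-parameterized polymer transitions \eqref{pi-rf}) all pass through the weak limit.

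The main obstacle is property~(\ref{Z-meas}): showing that $\Stat^\lambda$ descends to a measurable function of $\She$ on $(\Omega,\sF,\P)$, rather than depending on the auxiliary randomness introduced by the Skorokhod extension. This requires a uniqueness/1F1S statement of the following form: any $\sF$-measurable cocycle satisfying \eqref{NEBus2} with drift-$\lambda$ equal-time BM marginals is a.s.\ unique. One natural route is to combine the shape theorem \eqref{eq:shapeintro} with tail bounds on the point-to-point partition function to show that two such cocycles driven by the same $W$ must coalesce in the infinite past, and then use \eqref{NEBus2} to propagate equality to all times. Once (\ref{Z-meas}) is in hand, property~(\ref{Bproc.indep}) is immediate from the $\fil_{s:t}$-measurability of $\She(t,\abullet\viiva s,\abullet)$.

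Finally, on a full-probability event $\Omega_0$ where $\Udense \ni \mu \mapsto \Stat^\mu(s,x,t,y)$ is monotone in the sense of \eqref{Busmono} for all rational $(s,x,t,y)$, extend to all $\lambda \in \R$ by
\[
\Bus^{\lambda+}(s,x,t,y) = \lim_{\mu \searrow \lambda,\, \mu \in \Udense} \Stat^\mu(s,x,t,y), \qquad \Bus^{\lambda-}(s,x,t,y) = \lim_{\mu \nearrow \lambda,\, \mu \in \Udense} \Stat^\mu(s,x,t,y),
\]
with monotonicity guaranteeing existence of the limits. The cocycle \eqref{cocycle}, monotonicity \eqref{Busmono}, and one-sided continuity \eqref{Buscont} pass through immediately. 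The equal-time BM marginal~(\ref{Bproc.BM}) follows from continuity in $\lambda$ of the Gaussian family $B(y)-B(x)+\lambda(y-x)$. The evolution equation \eqref{NEBus2} is preserved by monotone convergence inside the integral, using integrability of $\She$ against initial data of the form $e^{\Bus^{\lambda\sig}(s,x,r,\cdot)} \in \CICM$ from \cite{Alb-etal-22-spde-}. Joint continuity~(\ref{Bproc.cont}) in $(s,x,t,y)$ is then recovered from \eqref{NEBus2} itself, combining joint continuity of $\She$ with the $\CICM$-continuity of the SHE semi-group in its initial datum. Finally, property~(\ref{Bproc.-=+}) follows because both $\Bus^{\lambda-}$ and $\Bus^{\lambda+}$ satisfy \eqref{NEBus2} with the same drift-$\lambda$ equal-time BM marginals, so the uniqueness argument above forces their a.s.\ equality for each fixed $\lambda$.
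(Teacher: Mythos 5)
Your plan follows the paper's strategy closely (Cesàro-averaged weak limit of stationary-solution increments, monotone extension to all $\lambda$, then Busemann limits to revert to the white-noise filtration and establish measurability of $\Bus$ in $\She$). But there is a genuine error at the very start that breaks the argument: you take $B^\lambda$ to be \emph{independent across $\lambda$}, whereas the construction requires a \emph{single} two-sided Brownian motion $B$, shared across all slopes, with $g^\lambda(z)=e^{B(z)+\lambda z}$. This coupling is not cosmetic. The pathwise monotonicity $\Stat^{\mu}_n(t,x,t,y)\le\Stat^{\lambda}_n(t,x,t,y)$ for $\mu<\lambda$ and $x<y$ is proved in the paper (Lemma~\ref{lm:mono}) by \emph{differentiating} $\Bus^\lambda_S(t,x,t,y)$ in $\lambda$ with the common $B$ and identifying the derivative as a difference of polymer means $E^{\Poly_{(t,y),(S,f_\lambda)}}[X_S]-E^{\Poly_{(t,x),(S,f_\lambda)}}[X_S]\ge0$; the stochastic ordering in \eqref{eq:stochmon} is an ordering in the \emph{endpoints} $x,y$ at fixed $\lambda$, not in $\lambda$. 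With independent $B^\lambda$'s, the $g^\lambda$ are not pathwise ordered, the $\lambda$-derivative makes no sense, and the prelimit objects $\Stat^\lambda_n$ are conditionally independent across $\lambda$ given $\She$; no Cesàro averaging or weak-limit passage will produce an almost-sure monotone family from them. Since monotonicity is what makes the one-sided extension $\Bus^{\lambda\pm}$ by $\Udense\ni\mu\to\lambda$ well defined (part~(\ref{Bproc.a})) and underlies parts (\ref{Bproc.c}) and (\ref{Bproc.-=+}), the whole extension step collapses.

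Once you replace the independent family with the single monotone coupling $f_\lambda(z)=e^{B(z)+\lambda z}$, the rest of your outline aligns with the paper. One further caveat on tightness: the paper's tightness argument (Lemma~\ref{lm:tight}) does not need the shape theorem; it follows directly from the fact that, by the Bertini--Giacomin stationarity (Lemma~\ref{lm:stat}), the law of $\Bus^\lambda_S$ restricted to $[T,\infty)$ is \emph{independent of $S\le T$}, so each factor of the countable product is tight trivially. Invoking \eqref{eq:shapeintro} here is both unnecessary and circular at that stage of the argument, since the shape theorems in Section~\ref{sec:shape} are proved \emph{after} the Busemann process is constructed on the extended space. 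Finally, your route to property~(\ref{Z-meas}) via a standalone uniqueness/coalescence statement is somewhat stronger than what is actually used: the paper gets measurability more directly by proving the a.s.\ limit \eqref{eq:buslim} (Proposition~\ref{pr:buslim}) for $\lambda\in\Udense$, which exhibits $\Bus^\lambda$ as a pointwise limit of measurable functions of $\She$, and then uses \eqref{Buscont} to extend to all $\lambda$; no separate uniqueness lemma is required for this step.
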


Part \eqref{Bproc.indep} says that the Busemann process is adapted to the filtration of the white noise. Part \eqref{Bproc.-=+} says that when $\lambda$ is fixed, there is no $\sigg=\pm$ distinction. 
Part \eqref{Bproc.BM}
says that the Busemann process gives a coupling of the known invariant measures.
Parts \eqref{Bproc.cont} and \eqref{Bproc.a} say that the process is continuous in the time-space parameters and monotone in the $\lambda$ and $\sigg$ parameters. Part \eqref{Bproc.c} gives a continuity in the latter two parameters. The possible jumps of the process when $\sigg=-$ is switched to $\sigg=+$ are captured by the set $\Bruno$, defined and studied further down (see \eqref{Bruno}). Part \eqref{Bproc.b} says the process is an additive \emph{cocycle} and implies in particular that $\Bus^{\lambda\sig}(t,y,s,x)=-\Bus^{\lambda\sig}(s,x,t,y)$.

Comparison of \eqref{NEind} and 
\eqref{NEBus2} shows that, for $\P$-almost every $\w$, for each $\lambda\in\R$ and each $\sigg\in\{-,+\}$,
\begin{align}\label{Shela}
\Glob^{\lambda\sig}(t,x)=e^{\Bus^{\lambda\sig}(0,0,t,x)}
\end{align}
is a solution of the SHE \eqref{SHE} 
 defined for all times $t\in\R$. 
Then  $\Bus^{\lambda\sig}(0,0,t,x)$ is a physical solution of the KPZ equation \eqref{KPZ}   for all time $t$. Such solutions are called \emph{eternal}.
The next section explains 
how  these processes yield pullback attractors in the sense described in the introduction. See \eqref{eq:buslim} and \eqref{eq:buslim2}.

The Busemann process is initially constructed with a weak convergence argument  in Section \ref{sec:stat-coc} 
on an extended probability space and Theorem \ref{main:Bus} is proved in that setting. Later, in Corollary \ref{cor:what->w} in Section \ref{sec:erg}, we revert to the original probability space with the help of the almost sure limits   \eqref{eq:buslim} in the next section.

The Busemann process inherits  symmetries from the Green's function, which were previously recorded as \eqref{eq:cov.shift}, \eqref{eq:cov.ref}, and \eqref{eq:cov.shear}. 
These properties are proved in Section \ref{sec:erg}. 


\begin{theorem}\label{thm:bcov}
The process $\bigl\{\Bus^{\lambda\sig}(s,x,t,y):s,x,t,y,\lambda\in\R,\sigg\in\{-,+\}\bigr\}$ satisfies the following covariance properties.
\begin{enumerate} [label={\rm(\roman*)}, ref={\rm\roman*}]   \itemsep=3pt  
\item\label{thm:bcov.i} {\rm(Shift)} For any $r,z\in\R$ there exists an event $\Omega_{r,z}$ with $\P(\Omega_{r,z})=1$ such that for all $s,x,t,y,\lambda\in\R$, $\sigg\in\{-,+\}$, and $\w\in\Omega_{r,z}$,
	\[\Bus^{\lambda\sig}(s+r,x+z,t+r,y+z) \circ \shiftd{-r}{-z}=\Bus^{\lambda\sig}(s,x,t,y).\]
\item\label{thm:bcov.ii} {\rm(Reflection)} There exists an event $\Omega_0$ with $\P(\Omega_0)=1$ such that for all $s,x,t,y,\lambda\in\R$, all $\sigg\in\{-,+\}$, and $\w\in\Omega_0$,
	\[\Bus^{(-\lambda)(-\sig)}(s,-x,t,-y) \circ \refd_2=\Bus^{\lambda\sig}(s,x,t,y).\]
\item\label{thm:bcov.iii} {\rm(Shear)}   For any $r,c\in\R$ there exists an event $\Omega_{r,c}$ with $\P(\Omega_{r,c})=1$ such that for all $x,y,\lambda\in\R$, $\sigg\in\{-,+\}$, and $\w\in\Omega_{r,c}$, 
	\begin{align}\label{bus-shear}
	\Bus^{(\lambda+c)\sig}(s,x-c(s-r),t,y-c(t-r)) \circ\sheard{r}{c}=\Bus^{\lambda\sig}(s,x,t,y;\w)+c(y-x)-\frac{c^2}2(t-s).
	\end{align}
\end{enumerate}
\end{theorem}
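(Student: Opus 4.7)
The plan is to transfer each covariance identity in Theorem~\ref{thm:bcov} from the corresponding identity \eqref{eq:cov.shift}--\eqref{eq:cov.shear} for the Green's function $\She$. The foundation is Theorem~\ref{main:Bus}\eqref{Z-meas}: each $\Bus^{\lambda\sig}$ is a measurable function of $\She$, and, as alluded to after Theorem~\ref{main:Bus}, on the original probability space it is realized as an almost-sure limit of differences of logarithms of $\She$-integrals against slope-$\lambda$ initial data (via the weak-convergence construction of Section~\ref{sec:stat-coc} followed by Corollary~\ref{cor:what->w}). Since each automorphism $\shiftd{\aabullet}{\aabullet}$, $\refd_2$, and $\sheard{\aabullet}{\aabullet}$ preserves $\P$ and acts on $\She$ by an explicit identity, those identities will transfer to $\Bus^{\lambda\sig}$ once one tracks how the reference slope-$\lambda$ initial condition transforms.

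I would first verify the identities for $\lambda$ in a countable dense set $\Udense\subset\R$ at which $\Bus^{\lambda-}=\Bus^{\lambda+}$, so that the $\sigg$ distinction plays no role. For the \emph{shift}, translation invariance of the law of the reference Brownian-with-drift-$\lambda$ initial data, combined with \eqref{eq:cov.shift}, gives the identity directly. For the \emph{shear}, taking logarithms of \eqref{eq:cov.shear} with $a=-c$ produces
\[
[\log\She(t,y-c(t-r)\viiva s,x-c(s-r))]\circ\sheard{r}{c}=\log\She(t,y\viiva s,x)+c(y-x)-\tfrac{c^2}{2}(t-s);
\]
moreover, shearing a slope-$\lambda$ initial condition adds $c$ to its slope (once the exponential correction is absorbed into the initial data), which explains the superscript shift from $\lambda$ to $\lambda+c$ in \eqref{bus-shear}, while the linear correction in \eqref{bus-shear} is inherited directly from the display above. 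For the \emph{reflection}, \eqref{eq:cov.ref} together with the fact that spatial reflection sends a slope-$\lambda$ function to a slope-$(-\lambda)$ function yields the claim with $\lambda$ replaced by $-\lambda$ on the left-hand side.

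Next I would extend the identities to all $\lambda\in\R$ and both $\sigg\in\{-,+\}$ using the one-sided continuity and monotonicity \eqref{Busmono}--\eqref{Buscont}. For the shift and shear, the transformations preserve the monotone ordering in $\lambda$, so the limits $\mu\nearrow\lambda$ and $\mu\searrow\lambda$ through $\mu\in\Udense$ commute with the automorphism and recover the identity for each of $\sigg=-$ and $\sigg=+$. For the reflection, the spatial reflection reverses the ordering of increments ($x<y$ becomes $-x>-y$), which inverts the direction of monotonicity in $\lambda$: this is precisely the mechanism that swaps the left and right $\lambda$-limits and forces the sign switch $\sigg\mapsto-\sigg$ in the reflection identity. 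Concretely, if $\Bus^{\lambda+}=\lim_{\mu\searrow\lambda}\Bus^{\mu}$ over $\mu\in\Udense$, then $[\Bus^{\lambda+}(s,x,t,y)]\circ\refd_2=\lim_{\mu\searrow\lambda}\Bus^{-\mu}(s,-x,t,-y)=\Bus^{(-\lambda)-}(s,-x,t,-y)$, matching the theorem. Finally, by the joint continuity \eqref{Bproc.cont} in $(s,x,t,y)$, it suffices to establish the identity on rational time-space inputs; combined with the countable dense set $\Udense$, this produces a single full-probability event $\Omega_{r,z}$ (or $\Omega_{r,c}$, $\Omega_0$) on which the identity holds simultaneously in every parameter.

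The main obstacle is the careful bookkeeping for the reflection identity, in particular the verification that the reversal of the spatial ordering under $\refd_2$ really does interchange the one-sided $\lambda$-limits. A secondary technical point is to confirm that the almost-sure-limit construction of $\Bus^\lambda$ commutes with each automorphism on a single full-probability event; this should be a direct consequence of $\P$-invariance of the automorphism together with the pathwise nature of the approximating finite-$r$ differences.
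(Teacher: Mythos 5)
Your proposal is essentially the same argument the paper uses: transfer the covariance identities from the Green's function $\She$ (namely \eqref{eq:cov.shift}--\eqref{eq:cov.shear}) to the Busemann process via the almost sure Busemann limit, first for $\lambda$ in the countable dense set $\Udense$ where $\Bruno$ is avoided, and then extend to all $\lambda$ and both $\sigg$ by the one-sided monotone limits \eqref{Buscont}. The paper spells out only the shear case and delegates the rest to ``direct consequences,'' so your explicit treatment of the reflection --- noting that $\refd_2$ reverses the spatial order of increments, hence inverts the $\lambda$-monotonicity and swaps the one-sided limits $\lambda\pm \leftrightarrow (-\lambda)\mp$ --- is a correct and useful elaboration of something the paper leaves unsaid.

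Two small imprecisions worth flagging. First, you describe the Busemann function on the original probability space as ``an almost-sure limit of differences of logarithms of $\She$-integrals against slope-$\lambda$ initial data.'' The mechanism the paper actually uses is the point-to-point limit \eqref{eq:buslim}, $e^{\Bus^\lambda(s,x,t,y)}=\lim_{R\to-\infty,\, u/R\to-\lambda}\She(t,y\viiva R,u)/\She(s,x\viiva R,u)$, which involves no initial data or integrals; this is cleaner because the covariance identities for $\She$ then apply literally to the finite-$R$ ratios. Your integral version (via \eqref{eq:buslim2} with $f(r,z)=e^{\lambda z}$) also works, but then one must additionally check that the reference function only changes by a multiplicative constant under the automorphism so the ratio is unaffected. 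Second, your phrase ``translation invariance of the law of the reference initial data'' suggests a distributional argument, but the identity asserted in Theorem~\ref{thm:bcov} is pathwise on a full-probability event $\Omega_{r,z}$; you correct for this at the end when you observe that what is really needed is that the automorphism commutes with the pathwise finite-$R$ approximants, which holds on the intersection of the full-probability event of \eqref{eq:cov.shift} (depending on $r,z$) with the full-probability event of \eqref{eq:buslim} over $\Udense$.
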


 Theorem \ref{main:Bus}\eqref{Bproc.d}, Theorem \ref{thm:bcov}\eqref{thm:bcov.i}, 
and the invariance of $\P$ under the action of $\shiftd{t}{0}$ imply that the Busemann process itself is invariant under the SHE evolution. Applying the temporal reflection $\refd_1$ corresponds to working with terminal conditions instead of initial conditions in \eqref{KPZ} and results in a different Busemann process, coming from sending the terminal time to $\infty$ instead of the initial time to $-\infty$, with the same distribution as the one we study. See Remark \ref{forwardBus}.

Our goal is to describe attractors simultaneously for all slopes $\lambda$. For this we have to identify the set of exceptional slopes at which the Busemann process jumps: 
	\begin{align}\label{Bruno}
	\Bruno=\{\lambda\in\R:\exists(s,x,t,y)\in\R^4\text{ with }\Bus^{\lambda-}(s,x,t,y)\ne\Bus^{\lambda+}(s,x,t,y)\}.
	\end{align}

When $\lambda\not\in\Bruno$ we write $\Bus^{\lambda}$ to denote the common function $\Bus^{\lambda+}=\Bus^{\lambda-}$ and similarly   $\Glob^\lambda=\Glob^{\lambda+}=\Glob^{\lambda-}$.
If $\lambda\in\Bruno$, there are  two different
eternal solutions 
having  the same conserved quantity $\lambda$.
The next result implies that in this case   $\Glob^{\lambda-}(0,x)\ne\Glob^{\lambda+}(0,x)$ for all $x\ne0$. 

\begin{theorem}\label{L-char}
The following hold $\P$-almost surely.
\begin{enumerate} [label={\rm(\roman*)}, ref={\rm\roman*}]   \itemsep=3pt  
\item\label{L-char:i} For each $\lambda\in\Bruno$ and for each $t,x,y\in\R$ with $x\ne y$, $\Bus^{\lambda-}(t,x,t,y)\ne\Bus^{\lambda+}(t,x,t,y)$.  
\item\label{L-char:ii} For each $t\in\R$, $x<y$, and $\kappa<\mu$, $\Bus^{\kappa+}(t,x,t,y)<\Bus^{\mu-}(t,x,t,y)$.
\end{enumerate}
\end{theorem}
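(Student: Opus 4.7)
The plan is to translate both claims into rigidity statements about ratios of eternal SHE solutions, exploiting the representation \eqref{NEBus2} together with the cocycle property \eqref{cocycle} and the monotonicity \eqref{Busmono}.

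Fix a reference point $(s_0, x_0)\in\R^2$. For each $\lambda\in\R$ and $\sigg\in\{-,+\}$, by Theorem~\ref{main:Bus}\eqref{Bproc.d} the function
\[
\Glob^{\lambda\sig}_{s_0, x_0}(t, y) := \exp\bigl(\Bus^{\lambda\sig}(s_0, x_0, t, y)\bigr)
\]
is a strictly positive eternal solution of the SHE on $\R\times\R$. The cocycle identity then yields, for all $t,x,y$,
\[
\Bus^{\lambda+}(t, x, t, y) - \Bus^{\lambda-}(t, x, t, y) = \log R^\lambda(t, y) - \log R^\lambda(t, x), \qquad R^\lambda(t, z) := \frac{\Glob^{\lambda+}_{s_0, x_0}(t, z)}{\Glob^{\lambda-}_{s_0, x_0}(t, z)},
\]
and \eqref{Busmono} forces $z\mapsto R^\lambda(t, z)$ to be non-decreasing for every $t$. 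Thus equality of $\Bus^{\lambda+}$ and $\Bus^{\lambda-}$ at some $(t,x,y)$ with $x<y$ is equivalent to $R^\lambda(t, \cdot)$ being constant on $[x, y]$.

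The key technical step is a rigidity principle: if $R^\lambda(t_1, \cdot)$ is constant on any nontrivial interval at a single time $t_1$, then in fact $R^\lambda\equiv c$ globally on $\R\times\R$ for some $c>0$, and therefore $\Bus^{\lambda+} - \Bus^{\lambda-}$ is a constant cocycle, necessarily zero (by evaluation at $(s_0, x_0, s_0, x_0)$). This yields part (i) directly: $\lambda\in\Bruno$ is incompatible with equality of $\Bus^{\lambda+}$ and $\Bus^{\lambda-}$ at any spatial pair. For part (ii), by \eqref{Busmono} and Theorem~\ref{main:Bus}\eqref{Bproc.-=+} one sandwiches $\Bus^{\kappa+}\le \Bus^{\nu_1}\le \Bus^{\nu_2}\le \Bus^{\mu-}$ for any $\nu_1<\nu_2$ in $\Udense\cap(\kappa,\mu)$, and applies the same rigidity principle to the ratio $\Glob^{\nu_1}_{s_0,x_0}/\Glob^{\nu_2}_{s_0,x_0}$: an equality $\Bus^{\nu_1}(t,x,t,y)=\Bus^{\nu_2}(t,x,t,y)$ would force $\Bus^{\nu_1}\equiv\Bus^{\nu_2}$, which is impossible since Theorem~\ref{main:Bus}\eqref{Bproc.BM} gives their marginal means at $(t,0,t,y)$ as $\nu_1 y$ and $\nu_2 y$. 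A reduction in all parameters to a countable dense set, combined with continuity (Theorem~\ref{main:Bus}\eqref{Bproc.cont},~\eqref{Bproc.c}), assembles the probability-one statements into a single full-probability event.

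The main obstacle is the rigidity principle itself. Forward-in-time propagation of full spatial proportionality $\Glob^{\lambda+}_{s_0,x_0}(t_1,\cdot)\equiv c\,\Glob^{\lambda-}_{s_0,x_0}(t_1,\cdot)$ is immediate by linearity of SHE. The harder task is upgrading local interval-proportionality at time $t_1$ to global spatial proportionality at that same time; this step I expect to require the polymer interpretation developed in Section~\ref{sec:poly}, where the two Busemann cocycles correspond to distinct semi-infinite polymer Gibbs measures whose coincidence as transition kernels on a single spatial interval forces coalescence and hence measure-theoretic coincidence. Once global spatial proportionality at one time is secured, backward-in-time extension is a zero-one argument: the event that proportionality fails at time $t$ is monotone in $t$ with $t$-invariant probability under the invariance of $\bbP$ under $\shiftd{s}{0}$ (Theorem~\ref{thm:bcov}\eqref{thm:bcov.i}), so it either holds for all $t$ simultaneously or for no $t$ at all.
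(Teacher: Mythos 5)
Your conceptual framing — studying the ratio of the two eternal solutions $\Glob^{\lambda\pm}$, observing that its monotonicity forces constancy on $[x,y]$ whenever the Busemann values agree, and then trying to upgrade this to global rigidity — is in the right ballpark, and you correctly intuit that the polymer interpretation must do the heavy lifting. However, the crux of the argument is the rigidity step, which you leave unproved and which in fact the paper does \emph{not} establish in the form you propose. You aim to show: constancy of $R^\lambda(t_1,\cdot)$ on an interval at a single time implies global spatial constancy at that \emph{same} time, then propagate forward by linearity, and then extend backward by a zero-one law. The paper's mechanism is genuinely different: it propagates \emph{backward in time}. Starting from $\Bus^{\lambda-}(t,x,t,y)=\Bus^{\lambda+}(t,x,t,y)$ at a single pair, the paper quantile-couples the endpoint distributions $F_{r,\pm}$ of the two semi-infinite polymers $\Poly^{\lambda\pm}_{(t,x)}$ at time $r<t$, uses the \emph{strict} comparison inequality \eqref{crossing} in the expectation identity for $e^{\Bus^{\lambda\sig}(t,x,t,y)}$ to force the coupled endpoints to coincide almost surely, deduces equality of the CDFs $F_{r,\pm}$, hence equality of $\Bus^{\lambda\pm}(t,x,r,\cdot)$ for \emph{all} $z\in\R$ at time $r$, and then lets $r<t$ vary to get equality on $(-\infty,t]\times\R\times(-\infty,t]\times\R$. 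This is a pathwise implication that holds on a single full-probability event simultaneously for all $\lambda$, $t$, $x$, $y$, which is exactly what lets it apply to the random set $\Bruno$.

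Two of your proposed sub-steps are problematic. First, I do not see how to prove your spatial rigidity (constant on $[x,y]$ at time $t_1$ implies constant on all of $\R$ at time $t_1$) without first running backward in time as the paper does; your vague appeal to ``coincidence as transition kernels on a single spatial interval forces coalescence'' is not an argument, and you acknowledge as much. Second, your backward extension via a zero-one law will not produce the statement you need. Shift-invariance of $\P$ gives, for each \emph{fixed} $\lambda$, that the event ``proportionality holds at time $t$'' (which is increasing in $t$) has $t$-independent probability, hence is a.s.\ constant in $t$. But the theorem requires the conclusion simultaneously for all $\lambda$ in the random set $\Bruno$, and you cannot pass to a countable dense set of slopes here, because for each $\lambda\in\Udense$ one has $\P(\lambda\in\Bruno)=0$ by Theorem~\ref{th:La}\eqref{th:La:b}; the exceptional set $\Bruno$ lives precisely off any pre-chosen countable set. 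The paper sidesteps both issues because its backward propagation is a deterministic implication valid simultaneously for all $\lambda,t,x,y$ once the comparison and monotonicity lemmas hold, and it finishes part~\eqref{L-char:ii} with the growth estimate \eqref{bus:exp3'} rather than the marginal-mean argument you propose (which, while sound modulo the missing rigidity, is less direct).
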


Part \eqref{L-char:i} above reduces the definition of $\Bruno$ to checking $\lambda$-continuity at a single spatial point:

\begin{corollary}\label{L-char2}
With $\P$-probability one, for any $a\ne0$,
\[\Bruno=\{\lambda:\Bus^{\lambda-}(0,0,0,\aabullet)\ne\Bus^{\lambda+}(0,0,0,\aabullet)\}=\{\lambda:\Bus^{\lambda-}(0,0,0,a)\ne\Bus^{\lambda+}(0,0,0,a)\}.\]
\end{corollary}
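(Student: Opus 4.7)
The plan is to deduce Corollary \ref{L-char2} directly from Theorem \ref{L-char}\eqref{L-char:i}. Fix $a\ne 0$ and work on the full-probability event on which Theorem \ref{L-char} holds.

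Two of the inclusions in the claimed chain of equalities
\[\{\lambda:\Bus^{\lambda-}(0,0,0,a)\ne\Bus^{\lambda+}(0,0,0,a)\}\subseteq\{\lambda:\Bus^{\lambda-}(0,0,0,\aabullet)\ne\Bus^{\lambda+}(0,0,0,\aabullet)\}\subseteq\Bruno\]
are immediate from the definition \eqref{Bruno}: a value of $\lambda$ on the left-hand side of either inclusion exhibits a specific quadruple $(0,0,0,y)$ (namely $y=a$, or some $y$ at which the two continuous functions $\Bus^{\lambda\pm}(0,0,0,\aabullet)$ differ) that witnesses $\lambda\in\Bruno$.

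To close the chain, I would take an arbitrary $\lambda\in\Bruno$ and apply Theorem \ref{L-char}\eqref{L-char:i} with $t=0$, $x=0$, $y=a$ (permitted since $a\ne 0$) to conclude $\Bus^{\lambda-}(0,0,0,a)\ne\Bus^{\lambda+}(0,0,0,a)$, placing $\lambda$ in the leftmost set. This gives both equalities.

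There is no substantive obstacle; the corollary is a convenient reformulation of Theorem \ref{L-char}\eqref{L-char:i}, recording that membership in the exceptional slope set $\Bruno$ can be tested by checking $\sigg$-continuity of $\lambda\mapsto\Bus^{\lambda\sig}(0,0,0,a)$ at a single nonzero displacement $a$. The hypothesis $a\ne 0$ cannot be dropped: the cocycle identity \eqref{cocycle}, applied with $r=s=t$ and $x=y=z$, forces $\Bus^{\lambda\sig}(t,x,t,x)=0$ for both signs.
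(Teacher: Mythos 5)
Your proof is correct and takes the same route the paper intends: the paper states the corollary immediately after Theorem \ref{L-char} with the one-line remark that part \eqref{L-char:i} "reduces the definition of $\Bruno$ to checking $\lambda$-continuity at a single spatial point," which is precisely your argument. The two trivial inclusions from the definition of $\Bruno$, together with the application of Theorem \ref{L-char}\eqref{L-char:i} at $t=0$, $x=0$, $y=a$, close the chain exactly as you describe, and your observation that $a\ne 0$ is necessary (via the degenerate cocycle identity) is a nice clarification.
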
  

The next theorem describes properties of the set $\Bruno$.


\begin{theorem}\label{th:La}
The following statements hold.
\begin{enumerate}[label={\rm(\alph*)}, ref={\rm\alph*}] \itemsep=3pt
\item\label{th:La:sym} For any $t,x,c\in\R$ we have for $\P$-almost every $\w$,
$\Bruno=\Lambda^{\shiftd{t}{x}\w}=\Lambda^{\sheard{t}{c}\w}-c=-\Lambda^{\refd_2\w}.$
\item\label{th:La:b} For each  $\lambda\in\R$, $\P(\lambda\in\Bruno)=0$. 
\item\label{th:La:e} Either $\P\{\Bruno=\varnothing\}=1$ or $\P\{\Bruno\text{ is countable and dense in $\R$}\}=1$.
\end{enumerate}
\end{theorem}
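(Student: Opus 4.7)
Each of the three symmetry identities in part (a) follows from the corresponding covariance identity of Theorem \ref{thm:bcov}, applied to both $\Bus^{\lambda+}$ and $\Bus^{\lambda-}$. The crucial point is that the additive correction appearing in those identities (for instance $c(y-x) - \tfrac{c^2}{2}(t-s)$ in the shear formula \eqref{bus-shear}) is independent of $\sig \in \{-,+\}$ and so cancels from the comparison $\Bus^{\lambda+}\ne\Bus^{\lambda-}$. Thus for the shear, $\lambda+c \in \Lambda^{\sheard{r}{c}\w}$ iff $\lambda \in \Bruno$, giving $\Bruno = \Lambda^{\sheard{r}{c}\w} - c$; the shift and reflection identities are analogous. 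Part (b) is immediate from Theorem \ref{main:Bus}\eqref{Bproc.-=+}: for each fixed $\lambda$, $\P\{\Bus^{\lambda-}(s,x,t,y)=\Bus^{\lambda+}(s,x,t,y)\ \forall s,x,t,y\}=1$, so $\P(\lambda\in\Bruno)=0$.

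\textbf{Part (c), setup.} Countability of $\Bruno$ is immediate from Corollary \ref{L-char2} (applied with $a=1$): $\Bruno$ is the discontinuity set of the nondecreasing function $\lambda\mapsto \Bus^{\lambda+}(0,0,0,1)$, and any monotone function has at most countably many jumps. For the dichotomy I invoke a zero-one law. By part (a), both events $E_1 = \{\Bruno=\varnothing\}$ and $E_2 = \{\Bruno\text{ is dense in }\R\}$ are invariant under every shear $\sheard{r}{c}$ with $c\ne 0$, since the shear translates $\Bruno$ by $c$ and translation preserves both emptiness and density. Because $\sheard{r}{c}$ is strongly mixing on $(\Omega, \fil, \P)$ by the standing assumption in Section \ref{sec:prob-space}, both $E_1$ and $E_2$ have $\P$-probability in $\{0,1\}$.

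\textbf{Part (c), main obstacle.} It remains to rule out the intermediate possibility $\P(E_1)=0$ and $\P(E_2)=0$, i.e.\ $\Bruno$ is almost surely non-empty but almost surely not dense. This cannot follow from stationarity and mixing alone: Poisson-type stationary ergodic point processes on $\R$ can easily be non-dense. The plan is to exploit model-specific structure. By \eqref{Shela}, each $\lambda\in\Bruno$ corresponds to two distinct eternal SHE solutions $\Glob^{\lambda\pm}$ (and, via the semi-infinite polymer measures developed in later sections, to two inequivalent Doob transforms with asymptotic drift $\lambda$). Under the contradictory hypothesis, shear-invariance together with mixing forces the supremum $\ell_0$ of gap lengths in $\Bruno$ to be an a.s.\ constant in $(0,\infty]$. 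The technical heart of the argument is to exclude both $\ell_0\in(0,\infty)$ and $\ell_0=\infty$: a finite positive $\ell_0$ is incompatible with the shear-covariant cocycle structure and the continuity properties of the Busemann process (Theorem \ref{main:Bus}\eqref{Bproc.cont},\eqref{Bproc.c}), essentially because any ``typical length scale'' in the set of exceptional slopes ought to be scale invariant, while $\ell_0=\infty$ combined with positive intensity is to be excluded using the branching/coalescence of the associated semi-infinite polymer measures. Pushing this final step through is the main difficulty, and is where the later development of the paper (semi-infinite polymers, 1F1S, continuity of the Busemann process) is expected to feed back into the proof.
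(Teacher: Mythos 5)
Parts (a) and (b) of your proposal match the paper's argument.

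Part (c) has a real gap. Your instinct that the two events $E_1=\{\Bruno=\varnothing\}$ and $E_2=\{\Bruno\text{ dense}\}$ are both $0$--$1$ is correct, but, as you yourself observe, this alone cannot rule out the intermediate possibility that both have probability zero, and your sketch for closing that gap (gap-length scale invariance, branching of semi-infinite polymers) is not what works and is not in the paper. The idea you are missing is to apply the zero-one law not to $E_1$ and $E_2$ but to the \emph{interval-intersection events}: for each fixed $\kappa<\mu$, the event $\{[\kappa,\mu]\cap\Bruno\ne\varnothing\}=\{\Bus^{\kappa-}(0,0,0,1)<\Bus^{\mu+}(0,0,0,1)\}$ is invariant under every space-time shift $\shiftd{t}{x}$ by part (a), hence by ergodicity of $(\Omega,\fil,\P)$ under such shifts its probability is in $\{0,1\}$. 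Independently, the shear-covariance in part (a) combined with shear-invariance of $\P$ gives that this probability depends only on the \emph{length} $\mu-\kappa$, not on the position of the interval. Now run the dichotomy: if $\P\{\Bruno\ne\varnothing\}>0$, then for each $n$ at least one interval $[m/n,(m+1)/n]$ meets $\Bruno$ with positive probability; by the zero-one law that probability is $1$; by shear covariance it equals $1$ for \emph{every} $m$; intersecting over all $m\in\Z$ and $n\in\N$ gives that every such dyadic-scale interval meets $\Bruno$ almost surely, i.e.\ $\Bruno$ is dense. This is a much more elementary argument than the one you outline, and it requires only the two symmetries you already have in hand.
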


\begin{remark}\label{rk:Bruno-dense}
Regarding the dichotomy in part \eqref{th:La:e},   in all the solvable models where the distribution of the Busemann process has been described explicitly, the set $\Bruno$ is a countable dense subset of the parameter space \cite{Fan-Sep-20,Sep-Sor-21-,Bus-Sep-Sor-22-,Bat-Fan-Sep-22-}. 
In particular, \cite{Bus-Sep-Sor-22-} and the forthcoming \cite{Bat-Fan-Sep-22-} 
prove this for the KPZ fixed point and the log-gamma polymer models, respectively. Solutions to the KPZ equation converge to the KPZ fixed point under the KPZ scalings \cite{Vir-20-,Qua-Sar-20-} and the log-gamma polymer free energy converges to a solution to the KPZ equation \eqref{KPZ} upon appropriate rescaling \cite{Alb-Kha-Qua-14-aop}. 
This suggests Open Problem \ref{prob1} in Section \ref{sec:OP}.
\end{remark}

The proofs of Theorems \ref{L-char} and \ref{th:La} are in Section \ref{sec:Bruno}.
\begin{remark}\label{rk:Bus-prop}
We close this section by collecting the properties of the Busemann process as a process in $\lambda$. Giving an explicit description of the process $\Bus^\bbullet(0,0,0,1)$ 
is left as Open Problem \ref{prob1}.

\begin{enumerate}[label={\rm(\alph*)}, ref={\rm\alph*}] \itemsep=3pt
\item Theorems \ref{main:Bus}\eqref{Bproc.BM} and \ref{L-char}\eqref{L-char:ii} say that for any $t\in\R$, $\bigl\{\Bus^{\lambda\sig}(t,0,t,\cbullet):\lambda\in\R,\,\sigg\in\{-,+\}\bigr\}$ is a coupling of strictly ordered two-sided Brownian motions with linear drifts and  unit diffusion coefficient. 
If one fixes $\lambda\in\R$, then $\Bus^{\lambda\pm}(t,x,t,y)$ are both normally distributed with mean $\lambda(y-x)$ and variance $y-x$. 
Then the ordering \eqref{Busmono} implies that the two $\pm$ processes match and this is consistent with Theorem \ref{th:La}\eqref{th:La:b}.

\item  The Busemann process of the KPZ fixed point, called the {\it stationary horizon} (SH), is also a coupling of   ordered two-sided Brownian motions with drifts, parameterized by their drifts \cite{Bus-21-, Bus-Sep-Sor-22-}. In SH, each pair of Brownian motions coincide in a nondegenerate neighborhood of the origin. Thus the  Busemann process we study is not the same as SH. 

\item By the shear-covariance in Theorem \ref{thm:bcov}\eqref{thm:bcov.iii} and the shear-invariance of $\P$, for each $t$ and $x<y$ in $\R$ and for each $\sigg\in\{-,+\}$, the process $\lambda\mapsto\Bus^{\lambda\sig}(t,x,t,y)-\lambda(y-x)$ is stationary and, therefore, $\lambda\mapsto\Bus^{\lambda\sig}(t,x,t,y)$ has stationary increments. 
\item\label{not-indep} 
The central limit theorem shows that for any $t$ and $x<y$ in $\R$ and any $\sigg\in\{-,+\}$, the process $\lambda\mapsto\Bus^{\lambda\sig}(t,x,t,y)$ does not have independent increments.
See Appendix \ref{app:misc}.
\item By Theorem \ref{L-char}\eqref{L-char:ii}, the process $\lambda\mapsto\Bus^{\lambda\sig}(0,0,0,1)$ is almost surely strictly increasing.
\item Theorem \ref{thm:bcov}(\ref{thm:bcov.i})--(\ref{thm:bcov.ii}) and the reflection and translation invariance of $\bbP$ imply that the process $\lambda\mapsto\Bus^{\lambda\sig}(0,0,0,1)$ has the same distribution as the process $\lambda\mapsto-\Bus^{(-\lambda)(-\sig)}(0,0,0,1)$.
\end{enumerate}
\end{remark}

\subsection{Shape theorems and stochastic homogenization}\label{subsec:shape}

We show the following shape theorem for the fundamental solution $\She$,  proved in Section \ref{sec:shape}.

\begin{theorem}\label{thm:Z-cont} 
On an event of $\P$-probability one, the following holds: for any $C>0$, \begin{align}\label{Z-cont}
\lim_{n\to\infty}n^{-1}\sup_{\substack{(s,x,t,y)\tspb\in\tspb\varset: \\[2pt] s,x,t,y\tspb\in\tspb[-Cn,Cn]}}\,\Bigl|\,\log \She(t,y\viiva s,x)+\frac{t-s}{24}-\log\heat(t-s,y-x)\,\Bigr|=0.
\end{align}
\end{theorem}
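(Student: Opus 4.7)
The plan is to work with the renormalized Green's function $\rnShe$ from \eqref{women}. Writing
\[\log\She(t,y\viiva s,x)=\log\heat(t-s,y-x)+\log\rnShe(t,y\viiva s,x),\]
the conclusion \eqref{Z-cont} is equivalent to the almost sure statement
\[n^{-1}\sup_{(s,x,t,y)\in[-Cn,Cn]^4\cap\varset}\Bigl|\log\rnShe(t,y\viiva s,x)+\tfrac{t-s}{24}\Bigr|\xrightarrow[n\to\infty]{}0.\]
The integrand is continuous on $\varset$ by the remark following Theorem~\ref{thm:She1} and vanishes on the diagonal $\{t=s\}$, so the real content is the bulk regime $t-s\gg 1$.

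First I would use covariance to reduce the four parameters to one. Setting $r=s$ and $a=(y-x)/(t-s)$ in \eqref{eq:cov.shear} and then normalizing by the heat kernel produces the pathwise identity $\log\rnShe(t,y\viiva s,x)=\log\rnShe(t,x\viiva s,x)\circ\sheard{s}{a}$, and the shift identity \eqref{eq:cov.shift} further gives $\log\rnShe(t,x\viiva s,x)=\log\rnShe(t-s,0\viiva 0,0)\circ\shiftd{-s}{-x}$. Because $\sheard{\cdot}{\cdot}$ and $\shiftd{\cdot}{\cdot}$ preserve $\bbP$, this yields the distributional reduction $\log\rnShe(t,y\viiva s,x)\deq\log\rnShe(T,0\viiva 0,0)$ with $T=t-s$.

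Next I would prove the one-point a.s.\ limit $T^{-1}\log\rnShe(T,0\viiva 0,0)\to -1/24$. The in-probability statement follows from \cite{Ami-Cor-Qua-11} combined with $\log\heat(T,0)=-\tfrac12\log(2\pi T)$. To upgrade to a.s.\ convergence along the integer grid $T\in\bbN$ I would invoke the one-point tail bounds
\[\bbP\Bigl(\Bigl|\log\rnShe(T,0\viiva 0,0)+\tfrac{T}{24}\Bigr|\ge\delta T\Bigr)\le e^{-c(\delta)T^{3/2}}\quad\text{for $T$ large},\]
combining the upper-tail bound of \cite{Cor-Gho-20-duke} with the (much harder) matching cubic lower-tail estimate of \cite{Cor-Gho-20-ejp,Cor-Gho-Ham-21}. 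These probabilities are summable in $T$; composing with the covariance reductions and taking a union bound over a mesh-$n^\alpha$ discretization of the box (which contains only polynomially many quadruples) yields a single full-probability event on which the centering is $o(n)$ simultaneously at every grid point of $[-Cn,Cn]^4\cap\varset$.

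Finally, the passage from grid control to uniform control uses the local H\"older $1/4-$ in time and $1/2-$ in space regularity of $\rnShe$ along with the polynomial upper and lower bounds on $\rnShe$ recorded after Theorem~\ref{thm:She1}; these limit the oscillation of $\log\rnShe$ inside a cube of side $n^\alpha$ to $o(n)$. I expect the main obstacle to lie precisely here: the local regularity bounds come with constants that depend on the compact set, so applying them across $[-Cn,Cn]^4$ requires partitioning the box into pieces of bounded relative geometry and transporting the modulus from one piece to another using shear and shift covariance, all while keeping the union-bound error probabilities summable in $n$. The $e^{-cT^{3/2}}$ lower-tail decay is exactly what makes the Borel--Cantelli step succeed in the barely-linear regime and is the specific integrable input responsible for the sharp $-1/24$ centering.
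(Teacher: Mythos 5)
Your covariance reduction to the one-parameter family $\rnShe(T,0\viiva 0,0)$ is correct as a distributional identity, and the idea of deriving a union bound over a polynomially dense mesh from the tail estimates of \cite{Cor-Gho-Ham-21,Cor-Gho-20-ejp,Cor-Gho-20-duke} is precisely the content of the paper's Lemma~\ref{lm:Z-grid}. (Two minor slips: the shear identity \eqref{eq:cov.shear} holds pathwise only on a full-probability event that depends on the shear parameter, so it is only a distributional reduction for the full continuum, though it is enough for a union bound over a grid; and the lower-tail probability you want is of order $e^{-cT}$ at large-deviation scale $\delta T$, not $e^{-cT^{3/2}}$ — the linear rate is still summable, so nothing breaks.)

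The genuine gap is in your final interpolation step, and it is more serious than the caveat you attach to it. You want to pass from grid control to uniform control over cells of side $n^\alpha$ using the ``locally H\"older $1/4-$ / $1/2-$'' regularity of $\rnShe$ together with the polynomial bounds recorded after Theorem~\ref{thm:She1}. But both of those statements carry random constants that depend on the compact set: the H\"older constants are only local, and the polynomial sandwich $C^{-1}(1+|x|^4+|y|^4)^{-1} \le \rnShe \le C(1+|x|^4+|y|^4)$ carries a constant $C(\omega,T)$ whose $T$-dependence is unspecified and so could in principle grow faster than any polynomial over the boxes $[-Cn,Cn]^4$. Nothing in the cited remark quantifies the growth of these constants, so ``the oscillation of $\log\rnShe$ inside a cube of side $n^\alpha$ is $o(n)$'' does not follow. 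Moreover the H\"older modulus degenerates as $t-s\downarrow 0$, so the near-diagonal regime must be split off and handled separately. The paper avoids H\"older continuity entirely: it first handles $0<t-s\le 1$ via the explicit moment bounds on $\sup\Shenorm$ and $\sup\Shenorm^{-1}$ over unit cells from the companion paper's Corollary~3.10, and for $t-s>1$ it uses the Chapman--Kolmogorov factorization \eqref{eq:CK} together with the comparison inequality \eqref{crossing} to sandwich $\She(t,y\viiva s,x)$ between products of grid-point values multiplied by $\sup$/$\inf$ of $\Shenorm$ over unit cells; all the random constants that appear are then controlled by the same moment bounds plus Borel--Cantelli. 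That mechanism — moment bounds on cell-wise sups and the monotone comparison principle, not H\"older regularity — is what your argument is missing. As written, the passage from the discrete grid to the continuum is not closed.
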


\begin{remark}   We allow $t=s$ in \eqref{Z-cont} by taking the convention that $\log \She(t,y\viiva s,x)-\log\heat(t-s,y-x)\!=\!\log\Shenorm(t,y\viiva s,x)$ for all parameter values and recalling that $\Shenorm(s,y\viiva s,x)=1$. %
\end{remark}

As a corollary of Theorem \ref{thm:Z-cont} and easier versions of tail bounds which appear in various proofs below (see, e.g., the  proof of Theorem \ref{int-shape}), we obtain the following result. Remark \ref{rem:StHomKPZ} explains how this result implies stochastic homogenization of KPZ. 

\begin{corollary}\label{cor:StHom}
On an event of $\P$-probability one, the following holds: for all bounded uniformly continuous $U \in \sC(\R,\R)$, all $t>0$, and all compact $K \subset \R$,
\[\lim_{\e\searrow0}\;\sup_{x\in K}\;\big|\tspb\e \tspa\KPZ(\e^{-1}t, \e^{-1}x\viiva 0,-U_\e) +\overline{u}(t,x)\tspa \big|=0\]
where $U_\e(x) = \e^{-1}U(\e x)$, $h(\aabullet,\aabullet\viiva\aabullet,\aabullet)$ is defined via \eqref{eq:genKPZ}, and
\[
\overline{u}(t,x) = \frac{t}{24} + \inf_{z\in\R}\biggl\{\frac{(x-z)^2}{2t} + U(z)\biggr\}.
\]
\end{corollary}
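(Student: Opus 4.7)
The plan is a Laplace/Varadhan-type computation at the level of the partition function, using Theorem~\ref{thm:Z-cont} to identify the rate function and separate tail estimates to handle the unbounded domain of integration.

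First, by the Hopf–Cole formula \eqref{eq:genKPZ} and the change of variables $w=\e z$,
\[
\e\,h(\e^{-1}t,\e^{-1}x\viiva 0,-U_\e)
\;=\;\e\log\!\int_\R \She(\e^{-1}t,\e^{-1}x\viiva 0,\e^{-1}w)\,e^{-\e^{-1}U(w)}\,\e^{-1}\,dw.
\]
Fix a compact $K\subset\R$, a fixed $t>0$, and a large $R$ (to be chosen). Apply Theorem~\ref{thm:Z-cont} with $n=\e^{-1}$ and $C$ large enough that $K\cup[-R,R]\subset[-Cn,Cn]$. Using the explicit form of $\heat$ and noting $\e\log(2\pi\e^{-1}t)^{1/2}\to 0$, this yields, uniformly over $x\in K$ and $|w|\le R$,
\[
\e\log \She(\e^{-1}t,\e^{-1}x\viiva 0,\e^{-1}w) \;=\; -\tfrac{t}{24}-\tfrac{(x-w)^2}{2t}+o(1)
\qquad\text{as }\e\to 0.
\]
Set $F_x(w)=-\tfrac{(x-w)^2}{2t}-U(w)$, so that $\sup_{w\in\R}F_x(w)=-\overline u(t,x)+\tfrac{t}{24}$. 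I would then carry out a standard Laplace argument on $[-R,R]$: the upper bound comes from replacing the integrand by its uniform upper envelope and absorbing the polynomial volume factor into $\e\log(\cdot)=o(1)$; the lower bound from restricting the integral to a small neighborhood of an approximate maximizer of $F_x$, which exists uniformly in $x\in K$ by continuity and uniform continuity of $U$. This gives
\[
\e\log\!\int_{|w|\le R}\She\,e^{-\e^{-1}U(w)}\e^{-1}dw \;\xrightarrow[\e\to 0]{}\; -\tfrac{t}{24}+\sup_{|w|\le R}F_x(w),
\]
uniformly in $x\in K$, and this $R$-truncated supremum converges to $-\overline u(t,x)+\tfrac{t}{24}$ as $R\to\infty$ uniformly in $x\in K$ because $U$ is bounded and $(x-w)^2/(2t)\to\infty$.

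The remaining step is to show that the contribution from $|w|>R$ is negligible: there exists $R_0$ so that for $R\ge R_0$ and $\e$ small,
\[
\e\log\!\int_{|w|>R}\She(\e^{-1}t,\e^{-1}x\viiva 0,\e^{-1}w)\,e^{-\e^{-1}U(w)}\,\e^{-1}dw
\;\le\; -\overline u(t,x)-1,
\]
uniformly in $x\in K$. Since $\|U\|_\infty<\infty$, it suffices to bound $\int_{|w|>R}\She(\e^{-1}t,\e^{-1}x\viiva 0,\e^{-1}w)\e^{-1}dw$. I would split this into an ``inner tail'' $R<|w|\le L$ (with $L$ large but fixed), handled by applying Theorem~\ref{thm:Z-cont} on the enlarged window and exploiting the resulting Gaussian factor $e^{-\e^{-1}(x-w)^2/(2t)}$, and an ``outer tail'' $|w|>L$, handled by the upper-tail type estimates for $\log\She$ alluded to in the corollary's statement (``easier versions of tail bounds...as in the proof of Theorem \ref{int-shape}''), which rest on the inputs from \cite{Cor-Gho-Ham-21,Cor-Gho-20-ejp,Cor-Gho-20-duke}. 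Combining the interior Varadhan bound with the tail bound yields the claimed uniform limit on the full-probability event supplied by Theorem~\ref{thm:Z-cont}.

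The main obstacle is the tail control in the previous paragraph: the shape theorem alone is only uniform on windows of size $O(\e^{-1})$, so to integrate out to $|w|=\infty$ one must supplement it with a stochastic upper bound on $\log\She$ with Gaussian decay in the transverse coordinate that holds globally and uniformly in the relevant parameters. The rest of the argument is a fairly routine Laplace/Varadhan computation once the rate function $\frac{(x-w)^2}{2t}+U(w)+\tfrac{t}{24}$ has been identified; uniformity in $x\in K$ is free from uniform continuity of $U$ (which transfers to $\overline u(t,\abullet)$) together with the uniformity already present in Theorem~\ref{thm:Z-cont}.
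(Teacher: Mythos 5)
Your proposal is correct and takes essentially the same route the paper intends: the paper gives only a one-sentence proof sketch for Corollary~\ref{cor:StHom}, namely that it follows from the locally uniform shape theorem \eqref{Z-cont} together with tail bounds of the sort used in the proof of Theorem~\ref{int-shape}, which is precisely your two-part decomposition (Laplace on a growing-but-bounded window via \eqref{Z-cont}, plus a super-polynomial tail cutoff). Your identification of the rate function and the bookkeeping with $\heat(\e^{-1}t,\e^{-1}(x-w))$, the vanishing $\e\log\e$ terms, and the uniformity in $x\in K$ from uniform continuity of $U$ are all as the paper has in mind. One small correction: the outer-tail bound does not rest on the integrable inputs from \cite{Cor-Gho-Ham-21,Cor-Gho-20-ejp,Cor-Gho-20-duke} (those enter only in Lemma~\ref{lm:Z-grid}); in the proof of Theorem~\ref{int-shape} the tail control instead comes from moment estimates on the renormalized Green's function $\Shenorm$ (Corollary~3.10 of \cite{Alb-etal-22-spde-}) combined with the explicit Gaussian decay of $\heat$, which furnish the global super-exponential decay in $w$ that the shape theorem alone cannot give. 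That substitution aside, the argument is sound.
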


\begin{remark}\label{rem:StHomKPZ}\textup{(Stochastic homogenization of KPZ)}
In the setting of Corollary \ref{cor:StHom}, call $u_\e(t,x)$ $= -\e \tspa\KPZ(\e^{-1}t, \e^{-1}x\viiva 0,-U_\e)$. Then for each $\e>0$ and each such $U$, $u_\e$ is indistinguishable from the Hopf-Cole solution to the viscous stochastic Hamilton-Jacobi 
equation
\[\partial_t u_\e(t,x)-\tfrac{\e}{2}\partial_{xx}u_\e(t,x)+H(\e^{-1}t, \e^{-1}x,\partial_x u_\e)=0, \qquad u_\e(0,x)=U(x),\] 
with Hamiltonian $H(t,x,p)=\frac{p^2}2+W(t,x)$. 
Corollary \ref{cor:StHom} implies that on a single event of full probability, for all bounded uniformly continuous $U \in \sC(\R,\R)$ and all $u_\e$ defined as above, 
$u_\e$ converges locally uniformly as $\e\to0$ to the viscosity solution of the \textit{effective} Hamilton-Jacobi equation
\[\partial_t\overline u+{\overline H}(\partial_x \overline u)=0, \qquad \overline{u}(0,x) = U(x)\]
with effective Hamiltonian ${\overline H}(p)=\frac{p^2}2-\frac1{24}$. This is the usual definition of stochastic homogenization as in \cite{Jin-Sou-Tra-17,Kos-Var-08}. 
\end{remark}

We also prove a shape theorem for $\Bus^\lambda$.  Due to Theorem \ref{main:Bus}\eqref{Bproc.-=+}, 
there is no need for a $\Bus^{\lambda\pm}$ distinction.

\begin{theorem}\label{thm:b-shape}
Fix $\lambda\in\R$. The following holds on an event of  $\P$-probability one.  For all $C>0$,
	\begin{align}\label{b-shape}
	\lim_{n\to\infty}n^{-1}\!\!\!\sup_{s,x,t,y\tspb\in\tspb[-Cn, Cn]}\,\Bigl|\tspb\Bus^\lambda(s,x,t,y)-\Bigl(\frac{\lambda^2}2-\frac1{24}\Bigr)(t-s)-\lambda(y-x)\Bigr|=0.
	\end{align}
\end{theorem}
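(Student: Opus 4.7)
The plan is to reduce the claim to a Laplace-type estimate by combining the cocycle identity \eqref{cocycle}, the Hopf--Cole-type identity \eqref{NEBus2}, the spatial Brownian marginals of Theorem \ref{main:Bus}\eqref{Bproc.BM}, and the shape theorem for the Green's function (Theorem \ref{thm:Z-cont}). By Theorem \ref{main:Bus}\eqref{Bproc.-=+} I may drop the $\sig$ superscript. Applying \eqref{cocycle} twice, with intermediate points $(s,0)$ and $(t,0)$, gives
\begin{align*}
\Bus^\lambda(s,x,t,y) = \Bus^\lambda(s,x,s,0) + \Bus^\lambda(s,0,t,0) + \Bus^\lambda(t,0,t,y),
\end{align*}
and the deterministic target $\lambda(y-x)+(\lambda^2/2-1/24)(t-s)$ decomposes correspondingly as $-\lambda x + \lambda y + (\lambda^2/2 - 1/24)(t-s)$. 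It therefore suffices to prove the shape statement for each of the three pieces uniformly over $[-Cn, Cn]$. The two spatial pieces follow from Theorem \ref{main:Bus}\eqref{Bproc.BM}, which asserts that $y \mapsto \Bus^\lambda(s,0,s,y) - \lambda y$ is a standard two-sided Brownian motion for each fixed $s$, combined with the classical LIL.

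The main step handles the temporal piece $\Bus^\lambda(s,0,t,0)$ for $s \le t$ in $[-Cn, Cn]$. Applying \eqref{NEBus2} with $r = s$ and writing $\She = \heat \cdot \Shenorm$,
\begin{align*}
e^{\Bus^\lambda(s,0,t,0)} = \int_{\bbR} \heat(t-s, -z)\, \Shenorm(t,0\viiva s,z)\, e^{\Bus^\lambda(s,0,s,z)}\, dz.
\end{align*}
Theorem \ref{thm:Z-cont} gives $\log \Shenorm(t,0\viiva s,z) = -(t-s)/24 + o(n)$ uniformly for $(s,t,z) \in [-Cn, Cn]^3$, and the spatial piece gives $\Bus^\lambda(s,0,s,z) = \lambda z + o(n)$ on the same box. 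Completing the square,
\begin{align*}
\heat(t-s, -z)\, e^{\lambda z} = e^{\lambda^2(t-s)/2}\, \heat(t-s, \lambda(t-s) - z),
\end{align*}
so on the main region $|z| \le C'n$, with $C'$ large depending on $C$ and $|\lambda|$, the integrand equals $e^{(\lambda^2/2 - 1/24)(t-s) + o(n)}$ times a Gaussian probability density centered at $\lambda(t-s)$ with variance $t-s$. Integrating and taking logarithms gives the desired estimate.

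The two main obstacles are the uniform-in-$s$ spatial LLN and the tail of the integral above for $|z| > C' n$. For the former, I would work on a polynomial grid in $s \in [-Cn, Cn]$, use the time-shift covariance of Theorem \ref{thm:bcov}\eqref{thm:bcov.i} together with the stationarity of $\bbP$ under $\shiftd{s}{0}$ to reduce to a stationary copy at each grid point, apply a Borel--Cantelli argument across the grid, and interpolate using the continuity of $\Bus^\lambda$ in all four variables (Theorem \ref{main:Bus}\eqref{Bproc.cont}). For the tail, the Gaussian factor $\heat(t-s, \lambda(t-s) - z)$ decays like $e^{-\Omega(n)}$ uniformly over the box once $C'$ is chosen large in terms of $C$ and $|\lambda|$, and this must dominate the tail growth of $\Shenorm(t,0\viiva s,z)\, e^{\Bus^\lambda(s,0,s,z)}$; I would bound this growth using the a.s.\ polynomial upper bound for $\Shenorm$ on compact time windows from the Remark following Theorem \ref{thm:She1} (with a constant that grows at worst like $e^{o(n)}$) together with the Brownian LIL. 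I expect the tail estimate and the uniformity in $s$ for the spatial LLN to be where the most technical care is required; the essential conceptual input is \eqref{NEBus2}, which converts the temporal growth rate of $\Bus^\lambda$ into a Laplace transform of the Green's function, and the $-1/24$ correction is inherited directly from Theorem \ref{thm:Z-cont}.
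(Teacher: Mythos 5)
Your decomposition into two spatial Brownian pieces plus a temporal piece, and the Laplace-transform treatment of the temporal piece via \eqref{NEBus2} and Theorem \ref{thm:Z-cont}, is a genuinely different route from the paper's proof. The paper (Proposition \ref{pr:b-shape}) instead appeals to a black-box shape theorem for shift-covariant cocycles, Lemma \ref{lm:Bus54} (a Boivin--Derriennic type result plus Lemma B.4 of \cite{Jan-Ras-18-arxiv}), reducing the whole problem to verifying the $L^p$/$L^2$-supremum moment conditions \eqref{sup b tmp}; it then identifies the constant $a_\lambda=\lambda^2/2-1/24$ separately in Lemma \ref{a_mu} via shear covariance, Birkhoff, and the one-point limit from \cite{Bor-etal-15}. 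Your approach is more hands-on and, if completed, would arguably give a self-contained derivation of the constant directly from Theorem \ref{thm:Z-cont} (as the paper also notes in Remark \ref{rk:amu}); the paper's approach buys modularity: the cocycle machinery handles the uniformity-in-time once and for all.

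That said, the two ``obstacles'' you flag are where your plan as written falls short, and both ultimately require the very moment estimates the paper verifies in \eqref{sup b tmp}. First, the uniform-in-$s$ spatial LLN cannot be closed by ``interpolate using continuity'': Theorem \ref{main:Bus}\eqref{Bproc.cont} is qualitative and gives no rate. Writing out the interpolation via the cocycle property, for $s\in[\ell,\ell+\delta]$ one has
\[\Bus^\lambda(s,0,s,z)-\Bus^\lambda(\ell,0,\ell,z)=-\Bus^\lambda(\ell,0,s,0)+\Bus^\lambda(\ell,z,s,z),\]
so you need $\sup_{|z|\le C'n}\sup_{s\in[\ell,\ell+\delta]}|\Bus^\lambda(\ell,z,s,z)|=o(n)$ uniformly over $O(n^2)$ grid points $\ell$. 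After shifting this is exactly a control on $\sup_{|z|\le C'n}\sup_{s\in[0,\delta]}|\Bus^\lambda(0,z,s,z)|$, which requires an $L^p$ bound with $p$ large enough to close Borel--Cantelli over the grid --- precisely what the paper proves via \eqref{Eeb} and \eqref{Eebinv}. Second, the tail bound over $|z|>C'n$ cites the Remark after Theorem \ref{thm:She1}, but that remark only asserts the existence of $C(\omega,T)$ for each fixed $T$ with no control over its $T$-dependence; the parenthetical ``with a constant that grows at worst like $e^{o(n)}$'' is asserted, not established. To make this rigorous you would instead need the explicit, quantified moment bounds on $\Shenorm$ over growing windows from the companion paper (Corollary 3.10 of \cite{Alb-etal-22-spde-}, used repeatedly in Section \ref{sec:shape}), together with a Borel--Cantelli argument; this is exactly the style of tail estimate carried out in the proof of Theorem \ref{int-shape}. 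In short, your plan is conceptually sound and the Laplace reduction is correct, but the two steps you defer are not minor and end up requiring the same technical inputs the paper packages into Lemma \ref{lm:Bus54} and the estimates verifying \eqref{sup b tmp}.
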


We also have the following version of the shape theorem that works simultaneously for all $\lambda$.

\begin{theorem}\label{bus:exp'}
The following holds $\P$-almost surely: for all $t, \lambda\in\R$ and $\sigg\in\{-,+\}$,
	\begin{align}
\lim_{\abs{x}\to\infty}\abs{x}^{-1}\abs{\Bus^{\lambda\sig}(0,0,t,x)-\lambda x}=0.
\label{bus:exp3'}
	\end{align}
\end{theorem}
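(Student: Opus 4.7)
My plan is to upgrade the fixed-$\lambda$ shape theorem (Theorem~\ref{thm:b-shape}) to the simultaneous-in-$\lambda$ statement via a standard sandwich argument along a countable dense set, using $\lambda$-monotonicity \eqref{Busmono} and the cocycle identity \eqref{cocycle}. Since these are the three key structural ingredients we have in hand, the proof should be essentially bookkeeping.

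Let $\Udense\subset\R$ be a countable dense set. By Theorem~\ref{th:La}\eqref{th:La:b}, for each $\mu\in\Udense$ the event $\{\mu\notin\Bruno\}$ has full probability, and by Theorem~\ref{thm:b-shape} the shape theorem for $\Bus^\mu$ holds on a full-probability event. Intersecting these countably many events with $\Omega_0$ from Theorem~\ref{main:Bus} yields a full-probability event $\Omega_1$ on which $\Bus^{\mu-}=\Bus^{\mu+}=:\Bus^\mu$ for every $\mu\in\Udense$, and
\[
\lim_{n\to\infty} n^{-1}\!\!\sup_{s,x,t,y\in[-Cn,Cn]}\!\bigl|\Bus^\mu(s,x,t,y)-(\tfrac{\mu^2}{2}-\tfrac{1}{24})(t-s)-\mu(y-x)\bigr|=0
\]
for all $\mu\in\Udense$ and all $C>0$. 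Fix $\omega\in\Omega_1$, $t,\lambda\in\R$, and $\sigg\in\{-,+\}$. By the cocycle property \eqref{cocycle},
\[
\Bus^{\lambda\sig}(0,0,t,x) = \Bus^{\lambda\sig}(0,0,t,0) + \Bus^{\lambda\sig}(t,0,t,x),
\]
and the first summand is a fixed constant in $x$; hence the claim reduces to showing $\lim_{|x|\to\infty}|x|^{-1}|\Bus^{\lambda\sig}(t,0,t,x)-\lambda x|=0$.

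For $\mu,\nu\in\Udense$ with $\mu<\lambda<\nu$ and $x>0$, the first line of \eqref{Busmono} gives
\[
\Bus^\mu(t,0,t,x)\le \Bus^{\lambda\sig}(t,0,t,x)\le \Bus^\nu(t,0,t,x).
\]
Specializing the shape theorem for $\Bus^\mu$ to the configuration $(s,x,t,y)=(t,0,t,x)$ (with the ``$t$''s equal to our fixed $t$ and the final entry equal to our varying $x$), and choosing $n=\lceil|x|/C\rceil$ with $|t|\le Cn$, one has
\[
|x|^{-1}|\Bus^\mu(t,0,t,x)-\mu x|\le (n/|x|)\,\e_n\longrightarrow 0
\]
where $\e_n\to 0$ is the shape-theorem error. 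Consequently $\lim_{x\to\infty}x^{-1}\Bus^\mu(t,0,t,x)=\mu$ for every $\mu\in\Udense$. Dividing the sandwich by $x$ and passing to limits,
\[
\mu\le \liminf_{x\to\infty} x^{-1}\Bus^{\lambda\sig}(t,0,t,x)\le \limsup_{x\to\infty} x^{-1}\Bus^{\lambda\sig}(t,0,t,x)\le \nu.
\]
Sending $\mu\nearrow\lambda$ and $\nu\searrow\lambda$ along $\Udense$ (possible since $\Udense$ is dense) collapses both bounds to $\lambda$, proving the desired limit as $x\to+\infty$. The case $x\to-\infty$ is symmetric: the second line of \eqref{Busmono} gives $\Bus^\nu(t,0,t,x)\le \Bus^{\lambda\sig}(t,0,t,x)\le \Bus^\mu(t,0,t,x)$ for $x<0$, and dividing by the negative $x$ reverses the chain before one again sandwiches the $\liminf/\limsup$ between $\mu$ and $\nu$.

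I do not anticipate a substantial obstacle. The only point requiring any care is the application of the shape theorem \eqref{b-shape} to the one-parameter family $\{(t,0,t,x):x\in\R\}$ with $t$ fixed, which is handled by choosing $n$ comparable to $|x|/C$ so that $[-Cn,Cn]^4$ always contains this configuration. Everything else is a direct consequence of $\lambda$-monotonicity, the cocycle identity, and density of $\Udense$.
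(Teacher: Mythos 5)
Your proof is correct and uses essentially the same argument as the paper: the cocycle decomposition reduces to the $s=t$ case, and then a sandwich over a countable dense set of slopes combined with the fixed-$\lambda$ shape theorem and $\lambda$-monotonicity gives the claim simultaneously in $\lambda$. The paper packages the $s=t$ step as part of the more general Theorem~\ref{bus:exp} (whose bulk concerns limits as $r\to-\infty$ and which is proved via the same sandwich on the extended space before being transferred to $(\Omega,\sF,\P)$), and then dismisses \eqref{bus:exp3} as ``an easier version of the first two''; you have simply spelled out that easier version directly, which is cleaner for this specific theorem. No gap.
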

Recall from \eqref{eq:cons} that the SHE \eqref{SHE} conserves the spatial exponential growth rate. The limit \eqref{bus:exp3'} says that for $\P$-almost every $\w$, for all $\lambda\in\R$
the conserved quantity for both solutions 
$\Glob^{\lambda-}$ and $\Glob^{\lambda+}$, defined in \eqref{Shela},
is $\lambda$. 

These results also combine to show that the Busemann process defines correctors in the language of stochastic homogenization.

\begin{remark}\label{rem:corrector}\textup{(Busemann functions as correctors)}
Consider $u_\e$ defined as in Corollary \ref{cor:StHom}. We follow the corrector derivation in \cite{Lio-Sou-03} and assume that we may expand around $\overline u$ by writing  $u_\e(t,x)=\overline u(t,x)+\e v(t/\e,x/\e)+O(\e^2)$. 
Substitute in the equations for $u_\e$ and $\overline u$ 
and set the coefficients of the different powers of $\e$ equal to $0$.
This leads to the \emph{corrector equation}
\[\partial_t v-\frac12\partial_{xx}v+\frac12(p+\partial_x v)^2+W=\frac{p^2}2-\frac1{24},\]
for each fixed $p$. 
\eqref{NEBus2} implies that $\Bus^{-p}(0,0,\aabullet,\aabullet)$ is a solution to the KPZ equation \eqref{KPZ}. Therefore, the above corrector equation has a solution given by \[v(t,x)=-\Bus^{-p}(0,0,t,x)-px+\Bigl(\frac{p^2}2-\frac1{24}\Bigr)t,\]
on all of $\R^2$. Theorem \ref{thm:b-shape} says that $\e v(t/\e,x/\e)\to0$ locally uniformly as $\e\searrow0$, which is consistent with the convergence $u_\e\to\overline u$. 
\end{remark}


Combining the control \eqref{bus:exp3'} gives on the exponential growth rate of the Busemann process with results from   \cite{Alb-etal-22-spde-} gives  the following regularity of the Busemann process. 

\begin{theorem}\label{b cont}
The following holds $\P$-almost surely: for all $\lambda\in\R$ and $\sigg\in\{-,+\}$, for all $\alpha\in(0,1/4)$ and $\gamma\in(0,1/2)$, 
$\Bus^{\lambda\sig}(s,x,t,y)$ is locally $\alpha$-H\"older-continuous in $s$ and in $t$ and locally $\gamma$-H\"older-continuous in $x$ and in $y$.
\end{theorem}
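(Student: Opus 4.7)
The plan is to exploit the SHE representation \eqref{NEBus2} together with the cocycle \eqref{cocycle} to reduce H\"older regularity of $\Bus^{\lambda\sig}$ to H\"older regularity of SHE solutions from $\CICM$ initial data, an input from the companion paper \cite{Alb-etal-22-spde-}. Fix $\lambda \in \R$, $\sigg \in \{-,+\}$, and a compact set $K \subset \R^4$. Choose rationals $r_0 < r_1$ strictly below every $s$ and $t$ appearing in $K$ and set $z_0 = 0$. The cocycle gives
\[\Bus^{\lambda\sig}(s,x,t,y) \;=\; \Bus^{\lambda\sig}(r_0, z_0, t, y) \;-\; \Bus^{\lambda\sig}(r_0, z_0, s, x),\]
so it suffices to establish local $\alpha$-H\"older regularity in $u$ and local $\gamma$-H\"older regularity in $v$ for the function $(u, v) \mapsto \Bus^{\lambda\sig}(r_0, z_0, u, v)$ on the relevant compact subset of $(r_1, \infty) \times \R$.

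Define $g(z) := \exp\bigl(\Bus^{\lambda\sig}(r_0, z_0, r_1, z)\bigr)$. By Theorem \ref{main:Bus}\eqref{Bproc.cont} and Theorem \ref{bus:exp'}, on a single full-probability event valid for every $\lambda$ and $\sigg$, the function $g$ is continuous, strictly positive, and satisfies $\log g(z) = \lambda z + o(|z|)$ as $|z| \to \infty$; hence $g \in \CICM$ (see \eqref{CICM}). The identity \eqref{NEBus2} then yields
\[e^{\Bus^{\lambda\sig}(r_0, z_0, u, v)} \;=\; \She(u, v \viiva r_1, g) \qquad \text{for } u > r_1, \ v \in \R.\]
Results of \cite{Alb-etal-22-spde-} promote the local H\"older regularity of the Green's function $\rnShe$ (stated in the remark after \eqref{women}) to the solution $\She(u, v \viiva r_1, g)$ for $g \in \CICM$ via the superposition formula \eqref{NEind:m9}, yielding local H\"older exponents $\alpha < 1/4$ in $u$ and $\gamma < 1/2$ in $v$ on $(r_1, \infty) \times \R$. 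By \eqref{eq:allpos} this solution is strictly positive and it is continuous, hence locally bounded away from $0$, so passing to logarithms preserves the H\"older exponents and transfers them to $\Bus^{\lambda\sig}(r_0, z_0, u, v)$.

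To handle all compacts and all pairs $(\lambda, \sigg)$ simultaneously, I would exhaust $\R^4$ by compacts $K_n$, choose rational anchors $(r_0^n, r_1^n)$ with $r_0^n \to -\infty$, and intersect the countably many resulting full-probability events; the uniformity in $(\lambda, \sigg)$ is already delivered by Theorems \ref{main:Bus} and \ref{bus:exp'}. The main obstacle is the SHE regularity input for general $\CICM$-valued initial data: one must interchange H\"older differences with the superposition integral \eqref{NEind:m9}, which rests on the moment and tail bounds for $\rnShe$ combined with the Gaussian decay of the heat kernel, the latter dominating the at most exponential growth of $g$ at infinity.
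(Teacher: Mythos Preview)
Your proposal is correct and follows essentially the same route as the paper: use the growth control from Theorem~\ref{bus:exp'} to place $e^{\Bus^{\lambda\sig}(r,0,r,\aabullet)}$ in $\CICM$, invoke \eqref{NEBus2} to represent $e^{\Bus^{\lambda\sig}(r,0,\aabullet,\aabullet)}$ as an SHE solution, import the H\"older regularity from \cite[Theorem~2.6]{Alb-etal-22-spde-}, and then use the cocycle to handle all four variables. The paper's proof is slightly more streamlined in that it uses a single anchor time $r$ rather than your pair $r_0<r_1$, and it cites Theorem~2.6 of the companion paper directly for the regularity of $\She(\aabullet,\aabullet\viiva r,g)$ rather than framing the superposition interchange as an obstacle to be overcome---that work is already done there.
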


 These shape theorems are first proved in Section \ref{sec:shape} for the Busemann process on an extended probability space. The proofs, and the proof of Theorem \ref{b cont}, are completed in Section \ref{sec:erg} where we return back  to the original probability space. 

\subsection{Busemann limits}\label{sec:Buslims} 
In Section \ref{sub:Bus}, we saw how the Busemann process produces solutions of the SHE \eqref{SHE} and KPZ equation \eqref{KPZ} that are defined for all time $t\in\R$. 
In this section we show how these solutions can be recovered as almost sure limits as $r\to-\infty$ of solutions to \eqref{SHE} that start at time $r$ with appropriate initial conditions.
Then Section \ref{sub:1F1S} explains how to interpret these results from a random dynamical systems point of view.
The main results of this section, Theorems \ref{thm:buslim} and \ref{l2p-buslim}, are proved first on the extended probability space in Section \ref{sec:BusLim} and then in their final form on the original probability space in Section \ref{sec:erg}.  The first result Theorem \ref{thm:buslim} treats  Dirac $\delta$, or narrow-wedge, initial conditions.

\begin{theorem}\label{thm:buslim}
The following hold $\P$-almost surely. Let  $\lambda\in\R$,   $C>0$, $\e>0$, and $\tau>0$. 
Then  there exist {\rm(}possibly random{\rm)} $R<0$ and $\delta>0$ 
such that for all  $r\le R$,  $z$ such that $\abs{\frac{z}r+\lambda}<\delta$, 
and for all $s,x,t,y\in[-C,C]$ with $t-s\ge\tau$, 
	\[\frac{\She(t,y\viiva r,z)}{\She(s,x\viiva r,z)}
	\le (1+\e)^2\int_x^\infty\She(t,y\viiva s,w)e^{\Bus^{\lambda+}(s,x,s,w)}\,dw+(1+\e)^2\int_{-\infty}^x\She(t,y\viiva s,w)e^{\Bus^{\lambda-}(s,x,s,w)}\,dw
	\]
and 
	\[\frac{\She(t,y\viiva r,z)}{\She(s,x\viiva r,z)}
	\ge (1+\e)^{-3}\int_x^\infty\She(t,y\viiva s,w)e^{\Bus^{\lambda-}(s,x,s,w)}\,dw+(1+\e)^{-3}\int_{-\infty}^x\She(t,y\viiva s,w)e^{\Bus^{\lambda+}(s,x,s,w)}\,dw.
	\]
In particular, with $\P$-probability one, for any $\lambda\not\in\Bruno$, 
	\begin{align}\label{eq:buslim}
	\lim_{\substack{r\to-\infty\\ z/r\to-\lambda}}\frac{\She(t,y\viiva r,z)}{\She(s,x\viiva r,z)}=e^{\Bus^\lambda(s,x,t,y)}\quad
	\text{locally uniformly in $(s,x,t,y)\in\R^4$.}
	\end{align}
\end{theorem}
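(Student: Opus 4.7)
The plan is to reduce the claim, via Chapman--Kolmogorov \eqref{CK}, to pointwise sandwich bounds on the equal-time ratio $\She(s,w\viiva r,z)/\She(s,x\viiva r,z)$, to obtain those bounds by combining an almost sure pointwise limit at a countable dense set of slopes with monotonicity of the ratio in the starting point $z$, and then to integrate back. The identity \eqref{CK} gives
\[
\frac{\She(t,y\viiva r,z)}{\She(s,x\viiva r,z)}=\int_{-\infty}^{\infty}\She(t,y\viiva s,w)\,\frac{\She(s,w\viiva r,z)}{\She(s,x\viiva r,z)}\,dw.
\]
Substituting pointwise sandwiches of the form $(1+\e)e^{\Bus^{\lambda+}(s,x,s,w)}$ for $w>x$ and $(1+\e)e^{\Bus^{\lambda-}(s,x,s,w)}$ for $w<x$ on the inner ratio (for the upper bound; lower bound with signs swapped and $(1+\e)^{-1}$), and invoking the identity $\int\She(t,y\viiva s,w)e^{\Bus^{\lambda\sig}(s,x,s,w)}\,dw=e^{\Bus^{\lambda\sig}(s,x,t,y)}$ that is the special case of \eqref{NEBus2} with $r$ replaced by $s$, yields the stated integral bounds. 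For $\lambda\notin\Bruno$ the two integrals in each bound collapse to a single $e^{\Bus^\lambda(s,x,t,y)}$, and letting $\e\downarrow 0$ gives the locally uniform limit \eqref{eq:buslim}.

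The pointwise sandwich at a general $\lambda$ rests on two ingredients. First, I take as given an almost sure pointwise limit at slopes $\mu$ in a fixed countable dense set $\Udense\subset\R$: $\She(s,w\viiva r,z)/\She(s,x\viiva r,z)\to e^{\Bus^\mu(s,x,s,w)}$ as $r\to-\infty$ with $z/r\to-\mu$. This is established earlier in the paper from the Ces\`aro weak-limit construction of the Busemann process outlined in the introduction. Second, the quenched log-ratio $z\mapsto \log\She(s,w\viiva r,z)-\log\She(s,x\viiva r,z)$ is monotone in $z$ -- nondecreasing if $w>x$ and nonincreasing if $w<x$. This is the TP2/log-concavity property of the Green's function, a consequence of the FKG structure of the continuum polymer measures built in \cite{Alb-etal-22-spde-} and the continuum analogue of the polymer monotonicity exploited in \cite{Jan-Ras-Sep-22-}. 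For $\mu_1<\lambda<\mu_2$ in $\Udense$ and $r<0$ very negative, auxiliary starting points $z_1<z<z_2$ with $z_i/r\to-\mu_i$ sandwich the ratio at $z$ between the ratios at $z_1$ and $z_2$; passing $r\to-\infty$, then letting $\mu_1\nearrow\lambda$ and $\mu_2\searrow\lambda$ through $\Udense$, and invoking the one-sided continuity \eqref{Buscont}, pinches the limit between $e^{\Bus^{\lambda-}(s,x,s,w)}$ and $e^{\Bus^{\lambda+}(s,x,s,w)}$ when $w>x$, and between $e^{\Bus^{\lambda+}(s,x,s,w)}$ and $e^{\Bus^{\lambda-}(s,x,s,w)}$ when $w<x$.

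The hard part is the last step: promoting the pointwise sandwich to one that holds simultaneously for all $(s,x,t,y)\in[-C,C]^4$ with $t-s\ge\tau$, with a single random threshold $R$ and a single $\delta$. For this I would use equicontinuity -- the H\"older regularity of $\log\She$ and of the Busemann process (Theorem \ref{b cont} together with the estimates of \cite{Alb-etal-22-spde-}) reduces the compact parameter box to a finite $\e$-net, absorbing the discretization error into one factor of $(1+\e)$. A second factor of $(1+\e)$ absorbs the $|w|$-tail of the integral: Theorem \ref{bus:exp'} forces $\Bus^{\lambda\pm}(s,x,s,w)=\lambda w+o(|w|)$ as $|w|\to\infty$, while the Gaussian-type estimates on $\Shenorm$ from \cite{Alb-etal-22-spde-} give Gaussian decay of $\She(t,y\viiva s,w)$ in $w$, so that the tail contributions to both the true ratio and the Busemann-weighted integrals are simultaneously negligible. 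Two $(1+\e)$ factors yield $(1+\e)^2$ on the upper side; the lower side picks up an additional reciprocal from a matching tail lower-bound argument, producing $(1+\e)^{-3}$. These uniformity and tail estimates, together with a careful justification of the TP2 monotonicity in the continuum setting, are the technically heaviest inputs and lean most on the companion paper \cite{Alb-etal-22-spde-}.
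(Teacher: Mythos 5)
Your overall plan — reduce via Chapman--Kolmogorov to equal-time ratios, sandwich those using monotonicity in the starting point $z$, then integrate back using \eqref{NEBus2} — is structurally in the same spirit as the paper's proof. However, there is a genuine circularity at the heart of the argument.

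You write that the pointwise almost-sure limit $\She(s,w\viiva r,z)/\She(s,x\viiva r,z)\to e^{\Bus^\mu(s,x,s,w)}$ for $\mu\in\Udense$ is ``established earlier in the paper from the Ces\`aro weak-limit construction.'' That is not so. The Ces\`aro construction in Section \ref{sec:stat-coc} produces a measure $\bfP$ on an extended space $\Omhat$ under which the coupling $(\She,\Bus^{\aabullet})$ has the right distributional and algebraic properties (cocycle, monotonicity, update rule \eqref{NEBus2}) — but it does not produce any almost-sure convergence of narrow-wedge ratios. Those limits, including the case $\lambda\in\Udense$, are precisely what this theorem establishes; they are not available as a prior input. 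Your sandwich argument via auxiliary starting points $z_1<z<z_2$ with $z_i/r\to-\mu_i$ therefore has no base case: you need the limits at $\mu_1,\mu_2\in\Udense$ to deduce the limit at $\lambda$, but you have no independent source for them.

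The paper gets around this by never passing through pointwise limits at dense slopes. Lemma \ref{Bus-bnds} compares the narrow-wedge ratio $\She(t,y\viiva r,z)/\She(t,x\viiva r,z)$ not to another narrow-wedge ratio, but to a \emph{line-initialized} ratio via the comparison inequality \eqref{comp} with $f(w)=e^{\Bus^\mu(r,0,r,w)}$; the latter ratio equals $e^{\Bus^\mu(t,x,t,y)}$ exactly, by the update rule \eqref{NEBus2}, up to a correction factor $\int_{-\infty}^\infty/\int_z^\infty$ which is controlled by the shape theorem (Theorem \ref{int-shape}) through a concentration argument. This chain uses only the algebraic properties of the constructed Busemann process and the quenched large-deviation estimate, never an a priori pointwise limit, which is why it is not circular. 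You would need to replace your assumed input by something like this mechanism. Two smaller points: the uniformity over $(s,x,t,y)\in[-C,C]^4$ is obtained in Lemma \ref{Bus-bnds2} and the proof of Proposition \ref{pr:buslim}(a) by monotone convergence and Dini's theorem on the tail integrals, not by an $\e$-net/H\"older argument; and the monotonicity in $z$ comes from the strict total positivity of the Green's function (Theorem 2.15 of \cite{Alb-etal-22-spde-}, used in Lemma \ref{lm:comp}), which is a sharper input than ``FKG structure.''
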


 The gap $\tau>0$  is a technical artifact of the proof, where shrinking $\tau$ drives $R$ towards $-\infty$.  The  explicit $\e$ bounds are included in the theorem to cover all slopes, as the  limit \eqref{eq:buslim} works simultaneously only  for all  slopes $\lambda\not\in\Bruno$.  If  the conclusion is relaxed to a $\lambda$-dependent full-probability event, we can assert the limit for each fixed slope.  
This  comes from Theorem \ref{thm:buslim} and Theorem \ref{th:La}\eqref{th:La:b}.

\begin{corollary}\label{cor:Busp2p}
For each $\lambda\in\R$, there exists an event $\Omega_\lambda$ with $\P(\Omega_\lambda)=1$ such that \eqref{eq:buslim} holds for all $\w\in\Omega_\lambda$.
\end{corollary}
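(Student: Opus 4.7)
The plan is very short: combine the ``simultaneous in $\lambda$'' statement of Theorem \ref{thm:buslim} with the pointwise fact $\P(\lambda\in\Bruno)=0$ from Theorem \ref{th:La}\eqref{th:La:b}. First, I would fix an event $\Omega_0$ of full probability on which the conclusion of Theorem \ref{thm:buslim} holds, so that on $\Omega_0$ the limit \eqref{eq:buslim} is valid, locally uniformly in $(s,x,t,y)\in\R^4$, for every slope $\mu\notin\Bruno$. Next, for the particular $\lambda\in\R$ of interest, I would set $E_\lambda:=\{\omega:\lambda\notin\Bruno\}$; Theorem \ref{th:La}\eqref{th:La:b} asserts $\P(E_\lambda)=1$.

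Finally, define $\Omega_\lambda:=\Omega_0\cap E_\lambda$. This is a full-probability event as an intersection of two such events, and on it the hypothesis $\lambda\notin\Bruno$ is satisfied, so the conclusion of Theorem \ref{thm:buslim} specialises at the slope $\mu=\lambda$ to give exactly \eqref{eq:buslim}. There is no substantive obstacle: the corollary is a Fubini-style quantifier swap that converts the $\omega$-dependent statement ``for each $\omega\in\Omega_0$ and every $\mu\notin\Bruno(\omega)$, \eqref{eq:buslim} holds'' into the fixed-$\lambda$-first statement ``for each $\lambda\in\R$ there is a full-probability event on which \eqref{eq:buslim} holds.'' The swap is justified precisely because the random exceptional set $\Bruno$ charges no deterministic slope, and all analytic content---the sandwich bounds, the convergence along $z/r\to-\lambda$, and the local uniformity in $(s,x,t,y)$---has already been performed in Theorem \ref{thm:buslim}.
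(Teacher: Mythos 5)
Your proof is correct and matches the paper's argument exactly: the paper states just before the corollary that it "comes from Theorem \ref{thm:buslim} and Theorem \ref{th:La}\eqref{th:La:b}," which is precisely the intersection of events you describe.
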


\begin{remark}
Equation \eqref{eq:buslim} gives $\Bus^\lambda$ as a limit of differences of the height function $\log\She$. In the zero temperature (or zero viscosity) models of first- or last-passage percolation, the analogous expression is a limit of differences of passage times. In the particular example of first-passage percolation, which corresponds to a random pseudo-metric, this limit coincides with the classical definition of \emph{Busemann functions}. See \cite[p. 131]{Bus-55} and \cite{New-95}.
\end{remark}

\begin{remark}
\cite{Das-Zhu-22-} proved that as $t\to\infty$,  $\log\She(t,\aabullet\tsp\viiva0,0)-\log\She(t,0\viiva0,0)$ converges in distribution to a standard two-sided Brownian motion $B$. 
The shear-covariance of the Green's function then implies that for any $\lambda\in\R$,
the process
\[\bigl\{\log\She(0,y\viiva r,-\lambda r)-\log\She(0,x\viiva r,-\lambda r):x,y\in\R\bigr\}\]
converges in distribution to $B(y)-B(x)+\lambda(y-x)$, as $r\to-\infty$. 
This weak limit also follows from our Corollary \ref{cor:Busp2p} and Theorem \ref{main:Bus}\eqref{Bproc.BM}.
\end{remark}

%

Next we treat  function-valued initial conditions. For  $\lambda\in\R$,      $\initF_\lambda$ will be a space  of initial conditions \emph{attracted} to the solution $\Glob^\lambda$ defined in \eqref{Shela}.  At a minimum, the basin of attraction should contain those functions whose logarithms  satisfy the limit in \eqref{infBusIC}. Furthermore, the attractor itself should be a member of the basin of attraction. To accommodate this,  $\initF_\lambda$ must admit time-dependent functions.

\begin{definition}\label{def:initF}
For $\lambda\in\R$, define the spaces $\initF_\lambda$ as follows.
\begin{enumerate}[label={\rm(\roman*)}, ref={\rm\roman*}]  
\item For $\lambda\ne0$, $\initF_\lambda$ is the space of functions $f:\R_-\times\R\to(0,\infty)$ such that $f(r,\aabullet)$ is Borel-measurable  for each $r\in\R_-$,  there exists a $\delta_0\in(0,\abs{\lambda})$ such that  
\begin{align}\label{W1}
\varliminf_{r\to-\infty}\frac1{\abs{r}}\inf_{x\tsp:\tsp\abs{\frac{x}r+\lambda}\le \delta_0}\bigl(\log f(r,x)-\lambda x\bigr)\;\in\;[0,\infty]
\end{align}
\begin{align}\label{W2}
\text{and}\quad \varlimsup_{r\to-\infty}\sup_{\substack{x\tsp:\tsp\lambda x\ge0\\[2pt]  \abs{\frac{x}r+\lambda}\ge\delta}}\frac{\log f(r,x)-\lambda x}{\abs{r}+\abs{x}}\;\in\;[-\infty,0]
\qquad\text{for all }\delta\in(0,\delta_0],
\end{align}
and there exists  $\mu\in\R$  such that $\mu/\lambda>-1$ and
\begin{align}\label{W3}
\varlimsup_{r\to-\infty}\sup_{x\tsp:\tsp\lambda x\le0}\frac{\log f(r,x)-\mu x}{\abs{r}+\abs{x}}\;\in\;[-\infty,0].
\end{align}

\item For $\lambda=0$, $\initF_0$ is the space of functions $f:\R_-\times\R\to(0,\infty)$ such that $f(r,\aabullet)$ is Borel-measurable  for each $r\in\R_-$ and for which there exist a $\delta_0>0$ and a $c>0$ such that 
for all $\delta\in(0,\delta_0]$,
\begin{align}\label{W''}
\varlimsup_{r\to-\infty}\sup_{\abs{x}\ge\delta\abs{r}}\frac{\log f(r,x)}{\abs{r}+\abs{x}}\;\in\;[-\infty,0]
\quad\text{and}\quad
\varliminf_{r\to-\infty}\abs{r}^{-1}\log\int_{\abs{x}\le c}f(r,x)\,dx\;\in\;[0,\infty].
\end{align}
\end{enumerate}
\end{definition}

\begin{remark}
Theorem \ref{thm:b-shape} (more precisely,  \eqref{bus:exp1} and \eqref{bus:exp2} in Theorem \ref{bus:exp} below) 
 implies 
that $\P$-almost surely, for each $\lambda\in\R$ and $\sigg\in\{-,+\}$,   $\initF_\lambda$ contains the function 
\begin{align}\label{def:fla}
f_{\lambda\sig}(r,x)=\frac{\Glob^{\lambda\sig}(r,x)}{\Glob^{\lambda\sig}(r,0)}=
e^{\Bus^{\lambda\sig}(r,0,r,x)}. 
\end{align}
We will see later in Theorem \ref{thm:1f1s}\eqref{thm:1f1s.att} that this is a pullback attractor for the SHE \eqref{SHE}.
\end{remark}

When the time variable is not present, conditions for membership in $\initF_\lambda$ simplify to the ones in 
Lemma \ref{specialW} below.  
This lemma is proved in Appendix \ref{app:misc}.
In particular, 
for each $\lambda\in\R$, $\initF_\lambda$ contains the function
$f(r,x)= e^{\lambda x}$. 

\begin{lemma}\label{specialW}
Let $g:\R\to(0,\infty)$ be a
Borel function such that $\log g$ is locally bounded. For  $(r,x)\in\R_-\times\R$, set $f(r,x)=g(x)$.  Then $f\in\initF_\lambda$ if and only if 
\begin{align}
-\infty\le\varlimsup_{x\to-\infty}\frac{\log g(x)}{\abs{x}}&<\lambda=\lim_{x\to\infty}\frac{\log g(x)}{x}\qquad\text{when $\lambda>0$,}\label{G1}\\
\lim_{x\to-\infty}\frac{\log g(x)}{\abs{x}}&=\abs{\lambda}>\varlimsup_{x\to\infty}\frac{\log g(x)}{x}\ge-\infty\qquad\text{when $\lambda<0$, and}\label{G2}\\
-\infty\le\varlimsup_{\abs{x}\to\infty}\frac{\log g(x)}{\abs{x}}&\le0\qquad\text{when $\lambda=0$.}\label{G3}
\end{align}
\end{lemma}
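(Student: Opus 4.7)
The plan is to show that when $f(r,x)=g(x)$ is time-independent, each of the conditions \eqref{W1}--\eqref{W3} (or \eqref{W''}) translates, under the local boundedness of $\log g$, into a single asymptotic growth statement on $g$. I would split into three cases ($\lambda>0$, $\lambda<0$, $\lambda=0$) and, in each case, prove both implications by direct computation.

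Take first $\lambda>0$. The set $\{x:|\tfrac{x}{r}+\lambda|\le\delta_0\}$ equals the interval $[(\lambda-\delta_0)|r|,(\lambda+\delta_0)|r|]$ (using $r<0$), so as $r\to-\infty$ it sweeps through arbitrarily large positive $x$. I would argue that \eqref{W1} is equivalent to $\varliminf_{x\to\infty}\log g(x)/x\ge\lambda$: the forward direction picks an $x$ in the window and bounds $|r|$ by $x/(\lambda-\delta_0)$, while the converse uses that $\log g(x)\ge(\lambda-\varepsilon)x$ eventually forces $\log g(x)-\lambda x\ge -\varepsilon(\lambda+\delta_0)|r|$ uniformly on the window. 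Similarly, \eqref{W2} restricted to $x\ge0$ decomposes into the region $x\ge(\lambda+\delta)|r|$ (forcing $\varlimsup_{x\to\infty}\log g(x)/x\le\lambda$ by letting $r\to-\infty$ with $|r|\asymp x$) and the region $0\le x\le(\lambda-\delta)|r|$ (which is automatic if the growth bound $\le\lambda$ holds, since then $\log g(x)-\lambda x\le\varepsilon x\le\varepsilon(|r|+|x|)$). Combining gives $\lim_{x\to\infty}\log g(x)/x=\lambda$.

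For \eqref{W3}, with $x\le0$, writing $-\mu x=\mu|x|$ shows that the supremum behaves like $(\beta+\mu+\varepsilon)|x|/(|r|+|x|)$ asymptotically, where $\beta:=\varlimsup_{x\to-\infty}\log g(x)/|x|$, plus a vanishing contribution from bounded $x$. Thus existence of $\mu>-\lambda$ making the $\varlimsup$ non-positive is equivalent to existence of $\mu\in(-\lambda,-\beta]$, which by elementary interval analysis is equivalent to $\beta<\lambda$. Together with the previous step this recovers exactly \eqref{G1}. The case $\lambda<0$ is carried out by the same argument with the roles of $+\infty$ and $-\infty$ exchanged (the window in \eqref{W1} then slides to large negative $x$), giving \eqref{G2}. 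For $\lambda=0$, the first relation in \eqref{W''} translates, exactly as in the W2 analysis above, into $\varlimsup_{|x|\to\infty}\log g(x)/|x|\le 0$, i.e.\ \eqref{G3}; the second relation is automatic, since local boundedness of $\log g$ makes $\int_{|x|\le c}g(x)\,dx$ a fixed element of $(0,\infty)$, so $|r|^{-1}\log(\cdot)\to 0$.

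The only step that requires any care is the equivalence between \eqref{W3} and the strict inequality $\beta<\lambda$: one has to juggle two small parameters ($\varepsilon$ coming from $\beta$, and the distance from $\mu$ to the endpoints $-\lambda$ and $-\beta$) and also verify that bounded-$x$ contributions are killed by the $|r|$ in the denominator, using local boundedness of $\log g$. Everything else is bookkeeping of $r$-dependent windows. The ``in particular'' statement follows immediately: for $g(x)=e^{\lambda x}$ one has $\log g(x)/x=\lambda$ identically, so \eqref{G1}--\eqref{G3} hold trivially in each sign regime of $\lambda$.
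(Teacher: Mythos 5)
Your proposal is correct and follows essentially the same route as the paper: split on the sign of $\lambda$, use the geometry of the window $\{x:|x/r+\lambda|\le\delta_0\}=[(\lambda-\delta_0)|r|,(\lambda+\delta_0)|r|]$ to turn \eqref{W1}--\eqref{W2} into the one-sided growth limit, and observe that \eqref{W3} is equivalent to the existence of $\mu\in(-\lambda,-\beta]$ where $\beta=\varlimsup_{x\to-\infty}\log g(x)/|x|$, which happens iff $\beta<\lambda$. The only cosmetic difference is that you frame the three conditions as individual equivalences while the paper proves the two directions of the biconditional separately; the computations, including the use of local boundedness of $\log g$ to kill the bounded-$x$ contributions via the $|r|$ in the denominator and to trivialize the integral condition when $\lambda=0$, are the same.
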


 The next theorem is the function-to-point version of Theorem \ref{thm:buslim} and the last main result in this section.


\begin{theorem}\label{l2p-buslim}
The following hold $\P$-almost surely.  For each $\lambda\in\R$, $f\in\initF_\lambda$, $C>0$, $\e>0$, and $\tau>0$ 
there exists an $R<0$ such that for all  $r\le R$, 
for all $s,x,t,y\in[-C,C]$ with $t-s\ge\tau$, we have
	\begin{align*}
	\frac{\int_\R\She(t,y\viiva r,z)\,f(r,z)\,dz}{\int_\R\She(s,x\viiva r,z)\,f(r,z)\,dz}
	&\le (1+\e)^3\int_x^\infty\She(t,y\viiva s,w)e^{\Bus^{\lambda+}(s,x,s,w)}\,dw\\
	&\qquad\qquad +(1+\e)^3\int_{-\infty}^x\She(t,y\viiva s,w)e^{\Bus^{\lambda-}(s,x,s,w)}\,dw
	\end{align*} 
and 
	\begin{align*}
	\frac{\int_\R\She(t,y\viiva r,z)\,f(r,z)\,dz}{\int_\R\She(s,x\viiva r,z)\,f(r,z)\,dz}
	&\ge (1+\e)^{-4}\int_x^\infty\She(t,y\viiva s,w)e^{\Bus^{\lambda-}(s,x,s,w)}\,dw\\
	&\qquad\qquad
	+(1+\e)^{-4}\int_{-\infty}^x\She(t,y\viiva s,w)e^{\Bus^{\lambda+}(s,x,s,w)}\,dw.
	\end{align*} 
In particular, on a single event of  $\P$-probability one, for each $\lambda\not\in\Bruno$ and  $f\in\initF_\lambda$,
	\begin{align}\label{eq:buslim2}
	\lim_{r\to-\infty}\frac{\int_\R\She(t,y\viiva r,z)\,f(r,z)\,dz}{\int_\R\She(s,x\viiva r,z)\,f(r,z)\,dz}=e^{\Bus^\lambda(s,x,t,y)} \quad\text{locally uniformly in $(s,x,t,y)\in\R^4$.}
	\end{align}
\end{theorem}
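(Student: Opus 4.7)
The plan is to reduce Theorem \ref{l2p-buslim} to its point-to-point predecessor, Theorem \ref{thm:buslim}, via a Laplace-type concentration argument that localizes both the numerator and the denominator onto the ``good window'' $G_\delta(r) := \{z \in \R : \abs{z/r + \lambda} \le \delta\}$ as $r \to -\infty$. On this window the pointwise ratio bounds of Theorem \ref{thm:buslim} apply uniformly in the compact parameter set, so once the concentration is established, the claimed inequalities follow from taking ratios and absorbing a vanishing error into one extra factor of $(1+\e)$. Denote by
\[\Phi_+(s,x,t,y) = \int_x^\infty \She(t,y\viiva s,w)e^{\Bus^{\lambda+}(s,x,s,w)}\,dw + \int_{-\infty}^x \She(t,y\viiva s,w)e^{\Bus^{\lambda-}(s,x,s,w)}\,dw\]
the Busemann expression on the right-hand side of Theorem \ref{thm:buslim}, and define $\Phi_-$ analogously with $\pm$ swapped. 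Both are continuous and strictly positive, hence bounded above and below by positive constants on the compact set $\{(s,x,t,y)\in[-C,C]^4 : t-s \ge \tau\}$.

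Given $\lambda,f,\e,\tau,C$, I first apply Theorem \ref{thm:buslim} to choose $\delta \in (0,\delta_0]$ (with $\delta_0$ the constant from Definition \ref{def:initF}) and $R_0 < 0$ so that for $r \le R_0$, $z \in G_\delta(r)$, and $(s,x,t,y)$ in the prescribed range, the ratio $\She(t,y\viiva r,z)/\She(s,x\viiva r,z)$ lies between $(1+\e)^{-3}\Phi_-(s,x,t,y)$ and $(1+\e)^2\Phi_+(s,x,t,y)$. Writing
\[N(r) = \int_\R \She(t,y\viiva r,z) f(r,z)\,dz, \qquad D(r) = \int_\R \She(s,x\viiva r,z) f(r,z)\,dz,\]
I split each integral across $G_\delta(r)$ and $G_\delta(r)^c$, insert the pointwise bound on the good window, and bound $D(r)$ from below by its good-window contribution to obtain
\[\frac{N(r)}{D(r)} \le (1+\e)^2 \Phi_+(s,x,t,y) + \frac{\int_{G_\delta(r)^c} \She(t,y\viiva r,z) f(r,z)\,dz}{\int_{G_\delta(r)} \She(s,x\viiva r,z) f(r,z)\,dz}.\]
The sought upper bound of Theorem \ref{l2p-buslim} will follow once the last ratio tends to $0$ as $r \to -\infty$, uniformly over the compact parameter set, since then it is eventually at most $\e \inf \Phi_+$ and $(1+\e)^2 + \e \le (1+\e)^3$. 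The lower bound and the matching upper bound on $D(r)$ are handled symmetrically.

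Negligibility of the bad-region contribution is proved by combining the shape theorem (Theorem \ref{thm:Z-cont}) with the $\initF_\lambda$-growth conditions. With $R = -r$, Theorem \ref{thm:Z-cont} gives
\[\log \She(u,v\viiva r,z) = -\frac{(v-z)^2}{2(u-r)} - \frac{1}{2}\log(2\pi(u-r)) - \frac{u-r}{24} + o(R)\]
uniformly over $(u,v) \in [-C,C]^2$ and $z$ of linear order in $R$. When $\lambda \neq 0$, inserting this expansion into $D(r)$ and using the lower bound \eqref{W1} on a sub-window of $G_\delta(r)$ of width $O(\sqrt R)$ centered at $z = -\lambda r$ yields $D(r) \ge \exp(\tfrac{\lambda^2 R}{2} - \tfrac{R}{24} + o(R))$ up to polynomial factors. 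For the numerator's bad integral, the region $\{\lambda z \ge 0,\,\abs{z/r+\lambda} \ge \delta\}$ is controlled by \eqref{W2}: the exponent $-(v-z)^2/(2(u-r)) + \lambda z$ is maximized at $z = -\lambda r$ with value $\tfrac{\lambda^2 R}{2}$, and moving $\ge \delta R$ away costs a Gaussian gap of at least $\tfrac{\delta^2 R}{2}$. The region $\{\lambda z \le 0\}$ is controlled by \eqref{W3}: the optimum of $-z^2/(2R) + \mu z$ over that half-line equals $\tfrac{\mu^2 R}{2}$ when $\mu$ has the matching sign and $0$ otherwise, and the condition $\mu/\lambda > -1$ combined with the sign restriction $\lambda z \le 0$ forces this value to be strictly less than $\tfrac{\lambda^2 R}{2}$. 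The $\lambda = 0$ case is handled analogously using \eqref{W''}: the fixed-window lower bound gives $D(r) \ge \exp(-\tfrac{R}{24} - o(R))$, while the complement $\{\abs{z} \ge \delta R\}$ is exponentially negligible because the Gaussian decay dominates the at-most sub-exponential growth of $f$ from \eqref{W''}.

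The main technical obstacle is making the shape theorem error terms and the $\initF_\lambda$ growth bounds uniform simultaneously over the compact parameter set and over both choices $(u,v) \in \{(s,x),(t,y)\}$, on the single full-probability event already afforded by Theorems \ref{thm:Z-cont} and \ref{thm:buslim}. Once the uniform vanishing of the bad ratio is established, the two inequalities of Theorem \ref{l2p-buslim} follow, and the locally uniform limit \eqref{eq:buslim2} is then obtained by sending $\e \searrow 0$ after recalling that $\Phi_+ = \Phi_-$ whenever $\lambda \notin \Bruno$.
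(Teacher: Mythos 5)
Your proposal is correct and takes essentially the same route as the paper. Both arguments localize the two integrals onto the good window $\{z : \abs{z/r+\lambda}\le\delta\}$ using the $\initF_\lambda$ growth conditions plus the shape theorem, and then apply the point-to-point bounds of Theorem \ref{thm:buslim} on that window; the paper merely packages the Laplace-type asymptotics for the bad regions into Lemma \ref{int-shape2} and states the concentration as the inequality $\int_\R\She(t,y\viiva r,z)f(r,z)\,dz\le(1+\e)\int_{\text{good}}\She(t,y\viiva r,z)f(r,z)\,dz$ (one time-space point at a time, \eqref{Zf-bnd1}--\eqref{Zf-bnd2}), whereas you split the ratio $N(r)/D(r)$ directly and show the bad-over-good ratio (with mismatched time-space points, which costs only an $O(1)$ discrepancy in the exponent, dominated by the $O(\abs r)$ separation) tends to zero. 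One minor omission: your two displayed inequalities are proved only for $t-s\ge\tau$, so passing to the locally uniform limit \eqref{eq:buslim2} on all of $\R^4$ requires the cocycle extension trick the paper uses at the end of the proof of Proposition \ref{pr:buslim}\eqref{buslimhat1} (apply the limit to $\She(t,y\viiva r,z)/\She(s',0\viiva r,z)$ and $\She(s,x\viiva r,z)/\She(s',0\viiva r,z)$ separately for $s'<-C-1$ and take a ratio); this is routine but should be stated.
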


The following comes from Theorem \ref{l2p-buslim} and Theorem \ref{th:La}\eqref{th:La:b}.

\begin{corollary}\label{cor:Busp2l}
For each $\lambda\in\R$ there exists an event $\Omega_\lambda$ with $\P(\Omega_\lambda)=1$ and such that \eqref{eq:buslim2} holds for all $\w\in\Omega_\lambda$ and $f\in\initF_\lambda$.  
\end{corollary}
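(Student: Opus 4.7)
The plan is to derive the corollary by a one-line measure-theoretic manipulation that combines the two quoted results, with no additional estimates required. First I would invoke Theorem \ref{l2p-buslim}, which produces a single event $\Omega_0$ of full probability on which, simultaneously for every $\lambda\notin\Bruno(\w)$ and every $f\in\initF_\lambda$, the limit \eqref{eq:buslim2} holds locally uniformly in $(s,x,t,y)\in\R^4$. The only gap between this and the desired conclusion is that, on $\Omega_0$, the exceptional (random) set $\Bruno(\w)$ could, a priori, contain the particular deterministic slope $\lambda$ we have been handed, in which case Theorem \ref{l2p-buslim} delivers nothing at that $\lambda$.

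Next I would close this gap by invoking Theorem \ref{th:La}\eqref{th:La:b}, which states that for every fixed $\lambda\in\R$ one has $\P(\lambda\in\Bruno)=0$. Setting $\Omega'_\lambda=\{\lambda\notin\Bruno\}$, this is a full-probability event, and then defining $\Omega_\lambda=\Omega_0\cap\Omega'_\lambda$ yields $\P(\Omega_\lambda)=1$. For any $\w\in\Omega_\lambda$, we have $\lambda\notin\Bruno(\w)$, so applying the property of $\Omega_0$ at this specific slope produces \eqref{eq:buslim2} for every $f\in\initF_\lambda$. I do not anticipate any obstacle: the two ingredients are perfectly matched, and the corollary is essentially the observation that a fixed $\lambda$ almost surely avoids the countable random set $\Bruno$, so the $\lambda$-uniform conclusion of Theorem \ref{l2p-buslim} upgrades from ``for each $\lambda\notin\Bruno$'' on a single event to ``for each deterministic $\lambda$'' on a $\lambda$-dependent event.
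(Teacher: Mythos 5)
Your argument is exactly what the paper has in mind: it states the corollary ``comes from Theorem \ref{l2p-buslim} and Theorem \ref{th:La}\eqref{th:La:b}'' without further elaboration, and your intersection $\Omega_\lambda=\Omega_0\cap\{\lambda\notin\Bruno\}$ is precisely that combination. Correct, and the same approach as the paper.
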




\begin{remark}\label{forwardBus}
The $\refd_1$ reflection symmetry of the white noise and the Green's function \eqref{eq:cov.ref} implies that, as $r\to\infty$, 
analogues of Theorems \ref{thm:buslim} and \ref{l2p-buslim} hold for the ratios 
\[\frac{\She(r,z\viiva t,y)}{\She(r,z\viiva s,x)}\quad\text{and}\quad\frac{\int_\R\She(r,z\viiva t,y)\,f(r,z)\,dz}{\int_\R\She(r,z\viiva s,x)\,f(r,z)\,dz}.\]
\end{remark}

\medskip

 \subsection{Ergodicity}\label{subsec:ergodic}

We now turn to the structure of temporally stationary and ergodic initial conditions. We begin with  the SHE \eqref{SHE} and then address the KPZ equation \eqref{KPZ}.  

\subsubsection{Ergodicity of SHE} The cocycle property \eqref{CK8} and the independence structure of white noise imply that SHE \eqref{SHE} defines a Markov process.  There are several natural choices of state space for this process.  The most general space is the collection of all locally finite positive Borel measures with an additional cemetery state $\infimeas$ that  accounts for the possibility of finite-time blowup described in \eqref{pr:IC:ICMsh}.  
Denote this space by 
  $\overline\sM_+=\sM_+(\R)\cup\{\infimeas\}$.  It is Polish with the vague topology on $\sM_+(\R)$ and with $\infimeas$ as an isolated point.  By definition, for any $t\ge0$ such that   the solution $\She(t,dx\viiva 0,f)$ started from $f\in\overline\sM_+$ is not a locally finite positive measure,   we stipulate that  $\She(t,dx\viiva 0,f)=\infimeas$. 
  


 


Introduce 
an equivalence relation on $\overline\sM_+$:  $f\sim g$ if $f=cg$ for some $c>0$. 
Again, $f$ can be either a measure or the density function of a measure, since multiplication by a constant preserves the relationship of a measure and its density.   
Let $\eqcl{f}=\{g\in\overline\sM_+: g\sim f\}$ denote the equivalence class of $f\in\overline\sM_+$.  The cemetery state is alone in its equivalence class:   $\eqcl{\infimeas}=\{\infimeas\}$. Linearity ensures that the SHE evolution \eqref{SHE} is well-defined on  equivalence classes, that is,   $\eqcl{f}=\eqcl{g}$ implies $\eqcl{\She(t,\aabullet\tsp\viiva s,f)}=\eqcl{\She(t,\aabullet\tsp\viiva s,g)}$. 

To define a convenient Polish state space of equivalence classes, we restrict from $\sM_+(\R)$ 
to the space
	\be\label{MP509}  \MP=\{\eta\in\sM_+(\R):\supp(\eta)=\R\}\ee
of positive Radon measures on $\R$ whose closed support equals the entire real line, and add in the cemetery state to define  $\radon = \MP \cup\{\infimeas\}$. $\radon$ is a Polish space in its subspace topology inherited from $\overline\sM_+$ (see Appendix \ref{sec:quotient}).   The space $\radon$ is sufficient for studying stationarity of nonzero solutions because, as can be seen from \eqref{eq:allpos}, for all $f\in\overline\sM_+$ other than the zero measure, either (i) $\She(t,dx\viiva 0,f)=\infimeas$ for all sufficiently large $t$ or (ii) $f \in \ICM$ and the support of $\She(t,dx\viiva 0,f)$  is all of $\R$ for all $t>0$. 

The space of equivalence classes is the quotient space 
\be\label{Radon509} \Radon=\radon/\!\!\sim\;=\{\eqcl f:  f\in\radon\}.  \ee 
  We show in Appendix \ref{sec:quotient} that $\Radon$ is a Polish space in its quotient  topology.  Denote generic elements of   $\Radon$  by $\clf$, so that $f\in\clf$ is equivalent to $\eqcl f=\clf$.  
 We take $\Radon$ as 
the state space of the SHE evolution $\Sols_{0,t}\clf=\Sols^\w_{0,t}\clf=\eqcl{\She^\w(t,\aabullet\tsp\viiva 0,f)}$ 
on equivalence classes, well-defined for any representative $f\in\clf$ of the initial state $\clf\in\Radon$.  

We recall briefly the standard notions of invariance, ergodicity, and total ergodicity. 
Given  an initial probability measure $\PMPinit$ on $\Radon$, let  the initial state $\clf$ have distribution $\PMPinit$ and denote  by  $\PMP$ the  probability distribution  of the Markov process $\{\Sols_{0,t}\clf: t\ge0\}$.  $\PMP$ is a probability measure  on the product space $\Radon^{\R_+}$ equipped with its product $\sigma$-algebra.    
 $\PMPinit$ is a {\it stationary measure} or {\it invariant distribution} for the evolution if the distribution of $\{\Sols_{0,t}\clf: t\ge0\}$ is invariant under time shifts, that is, if the processes $\{\Sols_{0,t}\clf: t\ge0\}$ and $\{\Sols_{0,s+t}\clf: t\ge0\}$ have the same distribution for all $s\ge0$. 
 In this case   $\PMP$ extends to   the  space $\Radon^{\R}$ of bi-infinite paths and $\PMP$ is invariant under the  time shift group $\{\shiftp_t: t\in\R\}$.     
 Time shifts act on elements $\clf_{\tspb\bbullet}=(\clf_{\tspa s})_{s\in\R}$ of $\Radon^{\R}$ by $(\shiftp_t\clf_{\,\bbullet})_s=\clf_{s+t}$. 
 
A stationary measure $\PMPinit$ is {\it ergodic} if  $\PMP$ is ergodic under the group of time shifts. 
This means that   $\PMP(A)\in\{0,1\}$ for every measurable set $A\subset\Radon^{\R}$ that satisfies  for each $t\in\R$ that  
$\shiftp_t^{-1}A=A$ $\PMP$-almost surely.    Stationary measures form a convex set  whose  extreme points are precisely the ergodic measures.  
Finally,  $\PMPinit$ is {\it totally ergodic} if  $\PMP$ is ergodic under each individual shift 
$\shiftp_t$ with $t \neq 0$. That is, for each $t\in\R\setminus\{0\}$ separately, $\PMP(A)\in\{0,1\}$ for every measurable set $A\subset\Radon^{\R}$ that satisfies 
$\shiftp_t^{-1}A=A$ $\PMP$-almost surely. 

The product space $\Radon^{\R}$ can be replaced by the space $\cC(\R,\Radon)$ of continuous $\Radon$-valued paths when blowup does not happen.  That is, 
 if the initial distribution $\PMPinit$ is  supported on equivalence classes of measures from the subspace  $\ICM$ of \eqref{eq:ICM},  
then the process $\Sols_{0,t}\clf$ has continuous paths. See Lemma \ref{lm:SHE-MC}. Indeed, this continuity holds in a much more restrictive topology.

It is shown in Theorem 2.6 of \cite{Alb-etal-22-spde-} that when blowups are ruled out by choosing an initial measure $f\in\ICM$, for any $t>0$,
the process $\She(t,\aabullet\viiva 0,f)$ is a strictly positive continuous function.  Hence perhaps the most natural smaller space that supports invariant distributions on locally finite measures is $\CICM$, which was previously introduced in \eqref{CICM},
\[  \CICM=\Bigl\{ f\in \sC(\R,(0,\infty)):       \forall a >0,  \int_{\bbR} e^{-a x^2} f(x)\tspb dx <\infty \Bigr\},  \]
  the space  of  strictly positive continuous densities of measures in $\ICM$. Measures represented by a density in  
  $\CICM$ form a Borel subset of $\sM_+(\R)$ (Lemma \ref{lm:density}). $\CICM$ is Polish in its natural topology, in which convergence is equivalent to uniform convergence on compact sets combined with convergence of the integrals appearing in the definition of $\CICM$. We completely metrize this topology explicitly in equation \eqref{d_CICM}.   If started from $f\in\CICM$, the process $t\mapsto\She(t,\aabullet\viiva 0,f)$ has continuous $\CICM$-valued paths  
  by Theorem 2.9 of  \cite{Alb-etal-22-spde-}. 

The space of equivalence classes of functions in $\CICM$  is denoted by  $\CICP=\{ \eqcl{f}: f\in\CICM\}$.
 With its quotient topology, $\CICP$ is homeomorphic to the closed subspace $\{f\in\CICM: f(0)=1\}$ of $\CICM$ and hence is itself Polish.  It follows that with initial state  $\clf\in\CICP$, paths of the equivalence class process $t\mapsto\Sols_{0,t}\clf$ are continuous and hence reside  in the space $\sC(\R_+, \CICP)$.  The Markov process defined in this way is Feller in the sense that the finite dimensional marginals are weakly continuous in the initial condition. Indeed, the structure of our coupling shows that the full path distribution is weakly continuous in the initial condition. See Remark \ref{rem:Feller} below.


\begin{theorem}\label{thm:unique}
Let $B$ denote a standard two-sided Brownian motion.    
\begin{enumerate} [label={\rm(\roman*)}, ref={\rm\roman*}]   \itemsep=3pt  
\item\label{thm:unique.i} For each  $\lambda\in\R$, the distribution of $\eqcl{e^{B(\aabullet)+\lambda\aabullet}}$  is stationary and  totally ergodic  for the $\CICP$-valued process $t\mapsto\Sols_{0,t}\clf$. 

\item\label{thm:unique.ii}   
Let $\PMPinit$ be a probability measure on $\Radon$ that is ergodic for the $\Radon$-valued process $t\mapsto\Sols_{0,t}\clf$.    Then either   $\PMPinit=\delta_{\eqcl\infimeas}$  or statements 
{\rm\ref{P.iia}--\ref{P.iib}} below hold:   
\begin{enumerate} [label={\rm(ii.\alph*)}, ref={\rm(ii.\alph*)}]   \itemsep=3pt  
\item \label{P.iia}   $\PMPinit$ is supported on $\CICP$ and there are deterministic finite constants $\PMPslopem\le \PMPslopep$ such that for $\PMPinit$-almost every $\clf\in\CICP$ and all $f\in\clf$,   \begin{align}\label{growth-rates-SHE}
\ddd\lim_{x\to-\infty} x^{-1}\log f(x) =\PMPslopem\quad\text{and}\quad \ddd\lim_{x\to\infty} x^{-1}\log f(x) =\PMPslopep.
\end{align}
  
  
 \item  \label{P.iib}  Assume that either $\PMPslopem\ne-\PMPslopep$  or  $\PMPslopem=\PMPslopep=0$. 
 Then $\PMPslopem=\PMPslopep$ and $\PMPinit$ is the distribution of $\eqcl{e^{B(\aabullet)+\lambda\aabullet}}$ with $\lambda=\PMPslopem=\PMPslopep$.   
 \end{enumerate} 
\end{enumerate}
\end{theorem}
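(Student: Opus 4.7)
For part (i), the plan is to realize the Brownian-with-drift stationary measure as a measurable image of the noise and transfer total ergodicity from the noise. By Theorem \ref{main:Bus}\eqref{Z-meas}, \eqref{Bproc.d}, and \eqref{Bproc.BM}, the eternal solution $\Glob^\lambda(t,x) = e^{\Bus^\lambda(0,0,t,x)}$ is measurable in the noise, evolves under SHE, and has equivalence-class marginal $\eqcl{\Glob^\lambda(t,\aabullet)} \deq \eqcl{e^{B(\aabullet)+\lambda\aabullet}}$ at every $t$, which gives stationarity. The cocycle identity \eqref{cocycle} and the shift covariance of Theorem \ref{thm:bcov}\eqref{thm:bcov.i} (the factor $e^{\Bus^\lambda(0,0,s,0)}$ drops out under the quotient) show that the map $\w \mapsto \eqcl{\Glob^\lambda(\aabullet,\aabullet)(\w)}$ from $\Omega$ into $\CICP^{\R}$ intertwines the noise shift $\shiftd{s}{0}$ with the path-time shift $\shiftp_s$. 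Strong mixing of $\shiftd{s}{0}$ on $(\Omega,\fil,\P)$ for every $s\ne 0$ then yields total ergodicity.

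For part (ii), I pass to a bi-infinite stationary ergodic extension $(\clf_t)_{t\in\R}$ with $\clf_0\sim\PMPinit$. If $\PMPinit\ne\delta_{\eqcl\infimeas}$, then ergodicity together with the absorbing character of $\eqcl\infimeas$ gives $\PMPinit(\{\eqcl\infimeas\})=0$; the blow-up dichotomy \eqref{pr:IC:ICMsh} keeps representatives in $\ICM$, and Feller regularization (Theorem 2.9 of \cite{Alb-etal-22-spde-}) puts $\Sols_{0,t}\clf_0\in\CICP$ for $t>0$, so by stationarity $\PMPinit$ is supported on $\CICP$. The asymptotic growth rates $\PMPslopem(f),\PMPslopep(f)$ are conserved by the SHE semigroup \eqref{eq:cons}, hence shift-invariant, hence $\PMPinit$-a.s.\ deterministic by ergodicity; their a.s.\ existence and the ordering $\PMPslopem\le\PMPslopep$ come from a saddle-point analysis of $\She(t,y\viiva r,f_r) = \int\She(t,y\viiva s,z)f_s(z)\,dz$ based on the shape theorem (Theorem \ref{thm:Z-cont}).

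Under the hypothesis of \ref{P.iib}, I rule out $\PMPslopem<\PMPslopep$ by pullback. The hypothesis excludes $\PMPslopem+\PMPslopep=0$; suppose first $\PMPslopem+\PMPslopep>0$, so $\PMPslopep>0$ and $\PMPslopem>-\PMPslopep$. Ergodic averaging of bounded observables of the stationary family $(\clf_r)_{r\le 0}$ upgrades the pointwise slope statements for the representatives $\{f_r\}$ into the uniform liminf/limsup conditions \eqref{W1}--\eqref{W3}, placing $\{f_r\}$ in $\initF_{\PMPslopep}$ $\P$-a.s. Theorem \ref{l2p-buslim} identifies the pullback limit of the ratios as $e^{\Bus^{\PMPslopep}(s,x,t,y)}$, while Chapman-Kolmogorov \eqref{CK} makes the same ratios equal to $f_t(y)/f_s(x)$ for every $r\le s$; letting $r\to-\infty$ forces $\clf_0 = \eqcl{e^{\Bus^{\PMPslopep}(0,0,0,\aabullet)}}$, which $\P$-a.s.\ has slopes $(\PMPslopep,\PMPslopep)$ at $\pm\infty$ by Theorem \ref{main:Bus}\eqref{Bproc.BM}, contradicting $\PMPslopem<\PMPslopep$. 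The case $\PMPslopem+\PMPslopep<0$ is symmetric via $\initF_{\PMPslopem}$. Therefore $\PMPslopem=\PMPslopep=\lambda$, and the same pullback argument applied in $\initF_\lambda$ identifies $\PMPinit$ with the distribution of $\eqcl{e^{B(\aabullet)+\lambda\aabullet}}$.

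The main obstacle lies in the technical underpinning needed for these steps: showing the slope limits exist and satisfy the ordering $\PMPslopem\le\PMPslopep$ $\PMPinit$-a.s., and then upgrading the pointwise slope information for the stationary family $\{f_r\}_{r\le 0}$ to the uniform $r\to-\infty$ estimates \eqref{W1}--\eqref{W3} required by Theorem \ref{l2p-buslim}. Both rely on combining the shape theorem with ergodic averaging along the bi-infinite extension.
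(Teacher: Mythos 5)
Part (i) matches the paper's approach: realize the stationary measure as $\eqcl{e^{\Bus^\lambda(t,0,t,\aabullet)}}$, which is a measurable function of $\She$ by Corollary \ref{cor:what->w}, and transfer total ergodicity from the mixing of $\shiftd{s}{0}$ on $(\Omega,\fil,\P)$ (Theorem \ref{thm:tot-erg}). That part is fine.

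For part (ii), your high-level roadmap --- rule out $\eqcl\infimeas$ and get $\PMPinit(\CICP)=1$, show deterministic slopes exist, then identify $\PMPinit$ through a Busemann pullback --- agrees with the paper's plan, but there are two genuine gaps exactly where you flag ``the main obstacle.''

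First, the existence of the limits $\lim_{x\to\pm\infty}x^{-1}\log f(x)$ $\PMPinit$-a.s. The conservation law \eqref{eq:cons} is only useful once existence is established; it does not deliver existence. A ``saddle-point analysis of $\She(t,y\viiva r,f_r)$ via the shape theorem'' is not a proof: the shape theorem \eqref{Z-cont} controls $\log\She$, but $\log f_r$ is only known to be in $\CICM$, and nothing in that space forces the $x\to\pm\infty$ limit to exist. The paper instead builds a cocycle $\bbus$ from $\PMPinit$ on a bi-infinite extension (Proposition \ref{pr:bfP3}), defines polymer measures $\Poly_{(t,y)}^{\bShe,\bbus}$ from $\bbus$, shows the polymer velocity $\Xvelo=\lim_s s^{-1}X_s$ exists by a backward-martingale argument, and then derives the slope limits \eqref{la-pm} by comparing $\bbus$ to the Busemann process across velocity slices (the implications \eqref{conf1}--\eqref{conf5} and Lemma \ref{lm:bb555}). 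This machinery is what actually produces the existence and ordering $\PMPslopem\le\PMPslopep$; there is no short-cut to it from \eqref{eq:cons} alone.

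Second, the ``ergodic averaging upgrade'' of pointwise slope statements into the time-dependent conditions \eqref{W1}--\eqref{W3} is where the plan breaks. You would need uniform-in-$r$ control to place the family $r\mapsto f_r(\aabullet)$ in $\initF_\lambda$, and ergodic averages of bounded observables only give Ces\`aro-type statements, not the $\varliminf/\varlimsup$ bounds required by Definition \ref{def:initF}. The paper avoids this entirely: under the hypotheses of \ref{P.iib}, any fixed representative $f\in\clf$ satisfies the \emph{time-independent} conditions \eqref{G1}--\eqref{G3} of Lemma \ref{specialW}, so $f\in\initF_\lambda$. Theorem \ref{l2p-buslim} then gives the a.s.\ limit \eqref{bb407} for this static $f$. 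To transfer to $\bbus$, the paper notes that $f(\aabullet)/f(0)$ has the same distribution as $e^{\bbus(r,0,r,\aabullet)}$ and the latter is independent of $\{\bShe(t,\aabullet\viiva r,\aabullet):t>r\}$; this gives weak convergence of the ratio process, and combined with \eqref{NEBus-tmp} (which says the ratio equals $e^{\bbus(s,x,t,y)}$, constant in $r$), yields equality in distribution of $\bbus$ and $\Bus^\lambda$. This distributional/independence step is the crucial idea you are missing; without it, your pullback argument does not close.
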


The proof of this theorem in Section \ref{sec:unique} relies crucially on the results on the Busemann process.   We add some remarks to it.  

Concretely, part (\ref{thm:unique.i})  says that with initial function $f(x)=e^{B(x)+\lambda x}$ defined in terms of a Brownian motion $B(\aabullet)$ independent of the white noise, the   process $t\mapsto\frac{\She(t,\aabullet\viiva 0,f)}{\She(t,0\viiva 0,f)}$, indexed by $\R_+$ and with values in the Polish space $\{f\in \CICM: f(0)=1\}$, is stationary and ergodic under each individual  nonzero time shift.




An important corollary of part (\ref{thm:unique.ii})  is the characterization of stationary  distributions that are invariant under spatial translations, and more generally, those whose left and right asymptotic logarithmic growth rates $\PMPslopem$ and $\PMPslopep$ agree.  


 Spatial translations of measures and their equivalence classes are defined in the obvious way.   For $a\in\R$ and $f\in\MP$ let $\shiftx_a\eqcl{f}$ be the equivalence class $\eqcl{\shiftx_af}$  of the measure $\shiftx_af$ defined by $\shiftx_af(A)=f(A+a)$ for Borel $A\subset\R$.  This applies to functions similarly:  if $f$ is the density function  of the measure $\eta$, then $\shiftx_af(x)=f(x+a)$ is the density function  of  $\shiftx_a\eta$.   The distribution of 
$ \eqcl{e^{B(\acbullet) +\lambda\acbullet}}$
  is invariant under every spatial shift, as seen by recentering the Brownian motion: 
 \begin{align*}
\shiftx_a \eqcl{e^{B(\acbullet) +\lambda\acbullet}} 
= \eqcl{e^{B(a+\acbullet)+\lambda(a+\acbullet)}} 
  = \eqcl{e^{B(a+\acbullet)-B(a) +\lambda\acbullet}} 
 \deq  \eqcl{e^{B(\acbullet) +\lambda\acbullet}} . 
\end{align*}
 The corollary below follows from Theorem \ref{thm:unique}\eqref{thm:unique.ii} and the fact that spatial invariance implies equal  left and right growth rates. 
 
\begin{corollary}\label{cor:erg}     Let $\PMPinit\ne\delta_{\eqcl\infimeas}$ be a probability measure on $\Radon$ that is stationary and ergodic for the $\Radon$-valued process $t\mapsto\Sols_{0,t}\clf$.    
\begin{enumerate} [label={\rm(\roman*)}, ref={\rm\roman*}]   \itemsep=3pt  
\item\label{cor:erg.i}
 Suppose  $\PMPinit$ has equal  left and right  growth rates $\lambda=\PMPslopem=\PMPslopep$.  Then  $\PMPinit$ is the distribution of $\eqcl{e^{B(\acbullet)+\lambda\acbullet}}$. 
 
 \item\label{cor:erg.ii}   Suppose $\PMPinit$  is invariant under at least one spatial translation $\shiftx_a$ for some  $a\ne 0$.   Then $\PMPslopep=\PMPslopem$   and $\PMPinit$ is the distribution of $\eqcl{e^{B(\acbullet)+\lambda\acbullet}}$ for $\lambda=\PMPslopep=\PMPslopem$. 

 \end{enumerate}  
\end{corollary}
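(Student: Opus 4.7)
The plan is to deduce both statements from Theorem \ref{thm:unique}(\ref{thm:unique.ii}), whose part (\ref{P.iib}) identifies $\PMPinit$ as the Brownian-with-drift law whenever either $\PMPslopem \neq -\PMPslopep$ or $\PMPslopem = \PMPslopep = 0$. Part (\ref{cor:erg.i}) is immediate: if $\PMPslopem = \PMPslopep = \lambda$, then either $\lambda = 0$ (so the second alternative holds) or $\lambda \neq 0$ (so $\PMPslopem = \lambda \neq -\lambda = -\PMPslopep$ and the first alternative holds), and (\ref{P.iib}) identifies $\PMPinit$ with the law of $\eqcl{e^{B(\acbullet) + \lambda \acbullet}}$.

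For part (\ref{cor:erg.ii}), the task reduces to showing $\PMPslopep = \PMPslopem$ under $\shiftx_a$-invariance, after which part (\ref{cor:erg.i}) closes the argument. By Theorem \ref{thm:unique}(\ref{P.iia}) the asymptotic slopes $\PMPslopem \le \PMPslopep$ are deterministic constants. Note that $\shiftx_a$ preserves these slopes (they are asymptotic features of $\log f$), so they alone cannot produce a contradiction; a finer equivariant observable must be built.

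Suppose for contradiction that $\sigma := (\PMPslopep - \PMPslopem)/2 > 0$, set $\rho = (\PMPslopem + \PMPslopep)/2$, and for any representative $f \in \clf$ define
\[
g_f(x) = \log f(x) - \rho x, \qquad \Xi(\clf) = \inf\bigl\{x \in \R : g_f(x) = \inf\nolimits_{y \in \R} g_f(y)\bigr\}.
\]
Since $g_{cf} = g_f + \log c$ for $c>0$, the function $\Xi$ is well-defined on $\CICP$. On the $\PMPinit$-full-measure event where $\log f$ has slopes $\PMPslopem$ and $\PMPslopep$ at $-\infty$ and $+\infty$, we have $g_f(x)/|x| \to \sigma > 0$ as $|x| \to \infty$, so $g_f$ is continuous and coercive, making $\Xi(\clf)$ a finite, measurable $\R$-valued random variable. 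A direct computation using $\shiftx_a f(x) = f(x+a)$ gives $g_{\shiftx_a f}(x) = g_f(x+a) + \rho a$, which yields the equivariance
\[
\Xi(\shiftx_a \clf) = \Xi(\clf) - a.
\]
The $\shiftx_a$-invariance of $\PMPinit$ then forces the law of $\Xi$ on $\R$ to be invariant under translation by $-a \neq 0$. Since no probability measure on $\R$ is invariant under a nonzero translation, we conclude $\sigma = 0$ and hence $\PMPslopem = \PMPslopep$.

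The main obstacle is engineering the equivariant random variable $\Xi$: because the growth rates themselves are $\shiftx_a$-invariant, the contradiction must come from a finer, location-sensitive functional of $\clf$. The centered log $g_f$ is tailored to this: in the exceptional regime $\sigma > 0$ it automatically grows to $+\infty$ at both ends of $\R$, producing a bona fide argmin in $\R$ whose equivariance converts the hypothesis $\shiftx_a$-invariance into the impossible translation invariance of a probability measure on the line.
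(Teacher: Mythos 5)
Your reduction of both parts to Theorem~\ref{thm:unique}\eqref{thm:unique.ii} is exactly the paper's route. Part \eqref{cor:erg.i} is handled correctly: the case split $\lambda=0$ versus $\lambda\neq0$ cleanly verifies the hypothesis of \eqref{P.iib}.

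For part \eqref{cor:erg.ii}, the paper dispatches the implication ``spatial invariance $\Rightarrow\PMPslopem=\PMPslopep$'' with a one-sentence remark and no proof, so you are filling in a step the authors treated as routine rather than departing from their plan. Your argument is correct: $g_f(x)=\log f(x)-\rho x$ is continuous and coercive on the $\PMPinit$-full event where the slopes are attained (since $g_f(x)/|x|\to\sigma>0$ at both ends), so the leftmost minimizer $\Xi(\clf)$ is a well-defined finite random variable; it descends to $\CICP$ because $g_{cf}=g_f+\log c$ shifts $g_f$ by a constant; the equivariance $\Xi(\shiftx_a\clf)=\Xi(\clf)-a$ follows from $g_{\shiftx_a f}(x)=g_f(x+a)+\rho a$; and $\shiftx_a$-invariance of $\PMPinit$ then makes the law of $\Xi$ on $\R$ translation-invariant by $-a\neq0$, which no probability measure can be. One small point worth acknowledging explicitly is measurability of $\Xi:\CICP\to\R$: after identifying $\CICP$ with the section $\{f\in\CICM:f(0)=1\}$, this is a standard exercise (approximate the argmin over compact windows and use coercivity to justify passing to the limit), so the step is sound but not quite ``free''. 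The equivariant-argmin device is a clean way to convert the two unequal slopes into a location-sensitive observable, which is precisely what is needed since the slopes themselves are shift-invariant and alone carry no information.
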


We generalize these results to the joint   evolution of multiple initial measures under a common realization of the white noise. For  $n\in\N$  let $\clf^{1:n}=(\clf^1,\dotsc,\clf^n)$ denote an element of the $n$-fold Cartesian product $\Radon^n$.  On the space $\Omega\times\Radon^n$ define the evolution 
\be\label{Sols-n}   \Sols^{(n)}_{0,t}\clf^{1:n}
=( \Sols_{0,t}\clf^1, \dotsc, \Sols_{0,t}\clf^n)
= \bigl( \tspb \eqcl{\She^\w(t,\aabullet\tsp\viiva 0,f^1)}, \dotsc, \eqcl{\She^\w(t,\aabullet\tsp\viiva 0,f^n)}\tspb\bigr) 
\ee 
where  $f^i\in\clf^i$ and $t>0$. The common superscript  $\w$ signals that each initial condition $f^i$ is updated with the same  $\She^\w$ in \eqref{NEind:m9}. 

\begin{theorem}\label{thm:inv-n}
Let $n\in\N$.    
\begin{enumerate} [label={\rm(\roman*)}, ref={\rm\roman*}]   \itemsep=3pt  
\item\label{thm:inv-n.i} Fix    $\lambda_1,\dotsc,\lambda_n\in\R$. Then  the distribution of $\bigl(\tspb\eqcl{e^{\Bus^{\lambda_1}(0,0,0,\aabullet)}}, \dotsc, \eqcl{e^{\Bus^{\lambda_n}(0,0,0,\aabullet)}}\tspb\bigr) $  is stationary and  totally ergodic  for the $\nCICP$-valued process $t\mapsto\Sols^{(n)}_{0,t}\clf^{1:n}$ of \eqref{Sols-n}.  In particular, the $\nCICM$-valued process
$t\mapsto(\tspb e^{\Bus^{\lambda_1}(t,0,t,\aabullet)}, \dotsc, e^{\Bus^{\lambda_n}(t,0,t,\aabullet)}\tspb) $ 
  is stationary and  totally ergodic. 

\item\label{thm:inv-n.ii}   
Let $\PMPinit^{(n)}$ be a probability measure on $\Radon^n$ that is ergodic for the  process of \eqref{Sols-n} and suppose that     $\PMPinit^{(n)}\{ \clf^{1:n}: \clf^i={\eqcl\infimeas}\}=0$ for each $i$.  Then  
{\rm\ref{inv-n.iia}--\ref{inv-n.iib}} below hold: \\[-6pt]   
\begin{enumerate} [label={\rm(ii.\alph*)}, ref={\rm(ii.\alph*)}]   \itemsep=4pt  
\item \label{inv-n.iia}   $\PMPinit^{(n)}$ is supported on $\nCICP$ and the deterministic finite asymptotic slopes exist: 
\be\label{inv-n.9}   \PMPslopem^i=\ddd\lim_{x\to-\infty} x^{-1}\log f^i(x) \quad\text{ and }\quad  \PMPslopep^i=\ddd\lim_{x\to\infty} x^{-1}\log f^i(x)\ee 
  for $\PMPinit^{(n)}$-almost every $\clf^{1:n}\in\nCICP$, each $i$  and   all $f^i\in\clf^i$. Moreover, for each $i$, $-\infty <\PMPslopem^i \leq \PMPslopep^i <\infty$.

  
 \item  \label{inv-n.iib}  Assume that for each $i$,  either $\PMPslopem^i\ne-\PMPslopep^i$  or  $\PMPslopem^i=\PMPslopep^i=0$. 
 Then $\PMPslopem^i=\PMPslopep^i$ for each $i$ and $\PMPinit^{(n)}$ is the distribution of $\bigl(\tspb\eqcl{e^{\Bus^{\lambda_1}(0,0,0,\aabullet)}}, \dotsc, \eqcl{e^{\Bus^{\lambda_n}(0,0,0,\aabullet)}}\tspb\bigr) $  with $\lambda_i=\PMPslopem^i=\PMPslopep^i$.
 \end{enumerate} 
\end{enumerate}
\end{theorem}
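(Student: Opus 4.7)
\textbf{Part \eqref{thm:inv-n.i}.} The plan is to identify the SHE evolution started from the Busemann coupling with a shift on $(\Omega,\fil,\P)$. Using \eqref{NEBus2} and the cocycle property \eqref{cocycle} we get
\[\Sols^{(n)}_{0,t}\tspa \clf^{1:n}(\w) \;=\; \bigl(\tspb\eqcl{e^{\Bus^{\lambda_i}(0,0,t,\acbullet;\w)}}\bigr)_{i=1}^{n} \;=\; \bigl(\tspb\eqcl{e^{\Bus^{\lambda_i}(t,0,t,\acbullet;\w)}}\bigr)_{i=1}^n,\]
and shift covariance (Theorem \ref{thm:bcov}\eqref{thm:bcov.i}) supplies the identity $\Bus^{\lambda_i}(t,0,t,y;\w)=\Bus^{\lambda_i}(0,0,0,y;\shiftd{t}{0}\w)$ simultaneously for all $i,y$ on a full-measure event (a single $\Omega_{t,0}$ for the finite collection $\lambda_1,\dots,\lambda_n$). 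Hence the time-$t$ state equals $\clf^{1:n}(\shiftd{t}{0}\w)$, and stationarity follows from $\shiftd{t}{0}$-invariance of $\P$. For total ergodicity I will use that $\shiftd{t}{0}$ is strongly mixing on $(\Omega,\fil,\P)$ for every $t\ne 0$ and that the Busemann process is $\fil$-measurable by Theorem \ref{main:Bus}\eqref{Bproc.indep}: any $\shiftp_t$-invariant path-space event pulls back to a $\shiftd{t}{0}$-invariant subset of $\Omega$ modulo $\P$-null sets, hence is $\P$-trivial. The $\nCICM$-valued assertion follows by taking the canonical representative $e^{\Bus^{\lambda_i}(t,0,t,\acbullet)}$, normalized to $1$ at the origin.

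\textbf{Part \eqref{thm:inv-n.ii}\eqref{inv-n.iia}.} For fixed $i$, the $i$-th coordinate marginal $\PMPinit^i$ is stationary for $\Sols_{0,t}$, so its ergodic decomposition consists of stationary ergodic measures falling under Theorem \ref{thm:unique}\eqref{thm:unique.ii}. The hypothesis forces $\PMPinit^i\{\eqcl\infimeas\}=0$, so no component is $\delta_{\eqcl\infimeas}$. Consequently $\PMPinit^i$ is supported on $\CICP$ and the log-growth rates in \eqref{inv-n.9} exist and are finite $\PMPinit^i$-almost surely. These rates are conserved under the SHE flow via the Hopf--Cole transform and the KPZ conservation law \eqref{eq:cons}, so they are invariant observables for the joint ergodic process $\PMPinit^{(n)}$ and are thus $\PMPinit^{(n)}$-almost surely deterministic constants.

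\textbf{Part \eqref{thm:inv-n.ii}\eqref{inv-n.iib}.} Under the non-degeneracy hypothesis, Theorem \ref{thm:unique}\eqref{thm:unique.ii}\eqref{P.iib} applied within the ergodic decomposition of $\PMPinit^i$ forces $\lambda_i := \PMPslopem^i = \PMPslopep^i$ and identifies $\PMPinit^i$ as the law of $\eqcl{e^{B(\acbullet)+\lambda_i\acbullet}}$. To pin down the joint law I will extend $\PMPinit^{(n)}$ to a bi-infinite stationary trajectory $(\clf^{1:n}_t)_{t\in\R}$ on a probability space carrying the white noise, with $\Sols^{(n)}_{s,t}\clf^{1:n}_s = \clf^{1:n}_t$ for $s\le t$. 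For each $r>0$ and each $i$, the marginal of $\clf^i_{-r}$ is $\eqcl{e^{B(\acbullet)+\lambda_i\acbullet}}$, and by Lemma \ref{specialW} its representatives lie almost surely in $\initF_{\lambda_i}$. Theorem \ref{th:La}\eqref{th:La:b} gives $\P(\lambda_i\in\Bruno)=0$ for each fixed $\lambda_i$, so Corollary \ref{cor:Busp2l} applies and yields, on a single full-measure event,
\[\clf^i \;=\; \clf^i_0 \;=\; \Sols_{-r,0}\tspa \clf^i_{-r} \;\xrightarrow[r\to\infty]{}\; \eqcl{e^{\Bus^{\lambda_i}(0,0,0,\acbullet)}}.\]
Since the left-hand side does not depend on $r$, we conclude $\clf^i = \eqcl{e^{\Bus^{\lambda_i}(0,0,0,\acbullet)}}$ almost surely, jointly for all $i$, so $\PMPinit^{(n)}$ coincides with the distribution of the Busemann coupling.

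The main obstacle will be constructing the bi-infinite stationary extension on a probability space that simultaneously carries the driving white noise and the stationary Markov trajectory $(\clf^{1:n}_t)_{t\in\R}$, with the independence between past initial data $\clf^i_{-r}$ and future increments of the noise over $[-r,0]$ that is required to apply Corollary \ref{cor:Busp2l} to the random $\clf^i_{-r}$. Fortunately, the good event in that corollary depends only on the white noise and not on the particular choice of $f \in \initF_{\lambda_i}$, so once the extension is in place the convergence will hold for any $\fil_{-\infty:-r}$-measurable initial data whose marginal is supported on $\initF_{\lambda_i}$, which is precisely the situation here.
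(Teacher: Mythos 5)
Your treatment of part \eqref{thm:inv-n.i} is essentially the paper's argument spelled out: you deduce the skew-product identity $\Sols^{(n)}_{0,t}\clf^{1:n}(\w)=\clf^{1:n}(\shiftd{t}{0}\w)$ from \eqref{NEBus2}, \eqref{cocycle}, and shift covariance, and then pass to total ergodicity via mixing of $\shiftd{t}{0}$ on $(\Omega,\fil,\P)$. The paper simply invokes Theorem \ref{thm:tot-erg} (established by the same mixing considerations) and the computation \eqref{mc780}; these are the same route.

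Your argument for part \eqref{inv-n.iia} runs through the ergodic decomposition of the single-coordinate marginal $\PMPinit^i$, applies Theorem \ref{thm:unique}\ref{P.iia} to each ergodic component, and then uses joint ergodicity of $\PMPinit^{(n)}$ to conclude the slopes are deterministic constants. The paper instead builds $\bfP^{(n)}$ on $\Gamma^{(n)}$ directly and runs the single-coordinate analysis on each $(\bShe,\bbus^i)$ marginal. Your variant is legitimate and arguably more transparent (it isolates what joint ergodicity buys — constancy of invariant observables — from what each coordinate analysis buys — existence and finiteness of slopes for each ergodic component), so long as you note that a.e.\ ergodic component of $\PMPinit^i$ avoids $\eqcl\infimeas$ by the stated hypothesis.

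There is, however, a genuine gap in your part \eqref{inv-n.iib}. After identifying $\PMPinit^i$ as the law of $\eqcl{e^{B(\acbullet)+\lambda_i\acbullet}}$, you write $\clf^i_0 = \Sols_{-r,0}\clf^i_{-r}\to\eqcl{e^{\Bus^{\lambda_i}(0,0,0,\acbullet)}}$ and want to close via Corollary \ref{cor:Busp2l}. But that corollary demands that the \emph{time-dependent} family $(r,x)\mapsto g_r(x)$ — $g_r$ a representative of $\clf^i_{-r}$ — lie in $\initF_{\lambda_i}$, i.e.\ satisfy the uniform-in-$r$ conditions \eqref{W1}--\eqref{W3} of Definition \ref{def:initF}. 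Lemma \ref{specialW}, which you invoke, is strictly a criterion for \emph{time-independent} initial conditions; having the correct one-time marginal does not give the required uniformity. Concretely, \eqref{W1} asks that $\inf_{x\sim\lambda_i|r|}(\bbus^i(r,0,r,x)-\lambda_i x)$ not drop below $-\e|r|$ for any subsequence $r\to-\infty$, which is a shape-theorem statement for the stationary cocycle $\bbus^i$ and is not a consequence of the one-time Gaussian marginal. (The paper's remark just below Definition \ref{def:initF} deduces exactly this membership for $\Bus^{\lambda\sig}$ from Theorem \ref{bus:exp}, which rests on Proposition \ref{pr:b-shape} and the moment and cocycle machinery — none of which you have established for $\bbus^i$.) The paper sidesteps this entirely: it fixes a \emph{time-independent} representative $f\in\clf$, applies Theorem \ref{l2p-buslim} to get the almost-sure limit \eqref{bb407}, and then transfers to the time-dependent $e^{\bbus^i(r,0,r,\acbullet)}$ only at the level of \emph{distributions}, exploiting that the ratio $\int\bShe(t,y\viiva r,z)e^{\bbus^i(r,0,r,z)}dz\,/\,\int\bShe(s,x\viiva r,z)e^{\bbus^i(r,0,r,z)}dz$ is identically $e^{\bbus^i(s,x,t,y)}$, constant in $r$, while its distribution at each fixed $r$ equals that of the fixed-$f$ ratio. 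Your almost-sure route can in principle be repaired by first proving a shape theorem for $\bbus^i$ (for example by transferring the moment bounds of Lemma \ref{lm:Bus54} now that the distribution of $\bbus^i$ is known), but as written the invocation of Lemma \ref{specialW} does not supply the needed $\initF_{\lambda_i}$ membership, and the remark you append about the good event depending only on the white noise does not address this: the problem is not measurability or the choice of null set, but the failure of the growth conditions \eqref{W1}--\eqref{W3} to follow from the one-time marginals.
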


There is an immediate corollary analogous to Corollary \ref{cor:erg}. We omit the statement. 

\subsubsection{Ergodicity of KPZ}\label{sec:KPZer} Next, we explain some immediate consequences of the above results for the KPZ equation. Recall the space
\[
\CKPZ=\bigg\{f \in \sC(\R,\R) : \int_{\R} e^{f(x) - a x^2}dx <\infty \text{ for all }a>0 \bigg\},
\]
which was introduced in Section \ref{sec:KPZ}. $\CICM$ is homeomorphic to $\CKPZ$ through the bijection $f \in \CICM \mapsto \log f \in \CKPZ$. Convergence in $\CKPZ$ is equivalent to uniform convergence on compact sets combined with convergence of the integrals appearing in the definition of $\CKPZ$. Recall also the space $\CKPZt$, which is $\CKPZ$ modulo equivalence up to additive constants. We denote equivalence classes under this identification via $\langle f \rangle = \{g \in \CKPZ : \exists c \in \R \text{ such that } f(\abullet) = g(\abullet) + c\}$. It is straightforward to see that the map $f \in \CICM \mapsto \log f \in \CKPZ$ induces a homeomorphism between $\CICP$ and $\CKPZt$. 

Let $\clf \in \CKPZt$ and let $f \in \clf$ be arbitrary. The KPZ evolution is defined through \eqref{eq:genKPZ} via
\[
\Solk_{0,t} \clf (\abullet) = \langle \KPZ(t, \abullet \viiva 0, f) \rangle \in \CKPZt
\]
It follows from the definition that this evolution is well-defined. The resulting Markov process taking values in $\sC([0,\infty),\CKPZt)$ is Feller. Indeed, this is just the Markov process obtained from the one constructed above through $\Sols$ on $\CICP$ by applying the homeomorphism between $\CICP$ and $\CKPZt$ induced by the homeomorphism  $f \in \CICM \mapsto \log f \in \CKPZ$. By this observation, the following is an immediate consequence of Theorem \ref{thm:unique}.
\begin{theorem}\label{thm:KPZunique}
Let $B$ denote a standard two-sided Brownian motion.    
\begin{enumerate} [label={\rm(\roman*)}, ref={\rm\roman*}]   \itemsep=3pt  
\item\label{thm:uniqueKPZ.i} For each  $\lambda\in\R$, the distribution of $\langle B(\aabullet)+\lambda\aabullet\rangle$  is stationary and  totally ergodic  for the $\CKPZt$-valued process $t\mapsto\Solk_{0,t}\clf$. 
\item\label{thm:uniqueKPZ.ii}   
Let $\PMPinit$ be a probability measure on $\CKPZt$ that is ergodic for the $\CKPZt$-valued process $t\mapsto\Solk_{0,t}\clf$.    Then  statements 
{\rm\ref{P.KPZ.iia}--\ref{P.KPZ.iib}} below hold:   
\begin{enumerate} [label={\rm(ii.\alph*)}, ref={\rm(ii.\alph*)}]   \itemsep=3pt  
\item \label{P.KPZ.iia}   There are deterministic finite constants $\PMPslopem\le \PMPslopep$ such that for $\PMPinit$-almost every $\clf\in\CKPZt$,    $\ddd\lim_{x\to-\infty} x^{-1}f(x) =\PMPslopem$ and $\ddd\lim_{x\to\infty} x^{-1}f(x) =\PMPslopep$ for  all $f\in\clf$.
  
 \item  \label{P.KPZ.iib}  Assume that either $\PMPslopem\ne-\PMPslopep$  or  $\PMPslopem=\PMPslopep=0$. 
 Then $\PMPslopem=\PMPslopep$ and $\PMPinit$ is the distribution of $\langle B(\aabullet)+\lambda\aabullet\rangle$ with $\lambda=\PMPslopem=\PMPslopep$.   
 \end{enumerate} 
\end{enumerate}
\end{theorem}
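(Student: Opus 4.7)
The plan is to deduce Theorem \ref{thm:KPZunique} directly from Theorem \ref{thm:unique} via the homeomorphism $\Phi:\CICP\to\CKPZt$ defined by $\Phi(\clf)=\langle \log f\rangle$ for any representative $f\in\clf$, where $f \in \CICM$. As noted in the paragraph preceding the theorem, $\Phi$ intertwines the two evolutions in the sense that $\Solk_{0,t}\circ\Phi=\Phi\circ\Sols_{0,t}$ for every $t\ge 0$, a consequence of the Hopf-Cole identity \eqref{eq:genKPZ}. Because $\Phi$ is a Borel isomorphism of Polish spaces commuting with the dynamics, a Borel probability measure $\PMPinit$ on $\CKPZt$ is stationary (respectively ergodic, totally ergodic) for $\Solk$ if and only if $\Phi^{-1}_\ast\PMPinit$ has the corresponding property for $\Sols$ restricted to $\CICP$.

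For part \eqref{thm:uniqueKPZ.i}, I would simply observe that $\Phi(\eqcl{e^{B(\abullet)+\lambda\abullet}})=\langle B(\abullet)+\lambda\abullet\rangle$, so the total ergodicity of the Brownian-with-drift law on $\CICP$ furnished by Theorem \ref{thm:unique}\eqref{thm:unique.i} transfers immediately.

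For part \eqref{thm:uniqueKPZ.ii}, start with a $\Solk$-ergodic probability measure $\PMPinit$ on $\CKPZt$ and set $\PMPinit':=\Phi^{-1}_\ast\PMPinit$. Extending $\Phi^{-1}$ to the larger space $\Radon$ by leaving the cemetery state $\eqcl{\infimeas}$ fixed, $\PMPinit'$ becomes a $\Sols$-ergodic measure on $\Radon$ supported on $\CICP$, which in particular is disjoint from $\{\eqcl{\infimeas}\}$. Theorem \ref{thm:unique}\eqref{thm:unique.ii} therefore forces conclusions \ref{P.iia}--\ref{P.iib}: there exist deterministic constants $\PMPslopem\le\PMPslopep$ such that for $\PMPinit'$-a.e.\ $\clg\in\CICP$ and every $g\in\clg$, $\lim_{x\to-\infty} x^{-1}\log g(x)=\PMPslopem$ and $\lim_{x\to\infty} x^{-1}\log g(x)=\PMPslopep$. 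Pushing forward by $\Phi$, a representative $f\in\clf$ of a $\PMPinit$-typical class has $f=\log g+c$ for some $g\in\Phi^{-1}(\clf)$ and $c\in\R$, so the same growth rates hold with $f$ in place of $\log g$, yielding \eqref{P.KPZ.iia}. Under the hypothesis of \eqref{P.KPZ.iib}, Theorem \ref{thm:unique}\ref{P.iib} identifies $\PMPinit'$ as the law of $\eqcl{e^{B(\abullet)+\lambda\abullet}}$ with $\lambda=\PMPslopem=\PMPslopep$, and applying $\Phi$ identifies $\PMPinit$ as the law of $\langle B(\abullet)+\lambda\abullet\rangle$.

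There is essentially no technical obstacle: the entire content of the KPZ theorem is that the Hopf-Cole transform is an equivariant homeomorphism between the SHE dynamics on $\CICP$ and the KPZ dynamics on $\CKPZt$, and the only step requiring care is verifying that the $\infimeas$ alternative in Theorem \ref{thm:unique}\eqref{thm:unique.ii} cannot occur after pullback, which follows because $\Phi(\CICP)=\CKPZt$ and $\PMPinit$ is a probability measure on $\CKPZt$.
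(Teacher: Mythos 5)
Your proposal is correct and takes essentially the same route as the paper, which treats Theorem \ref{thm:KPZunique} as an immediate consequence of Theorem \ref{thm:unique} via the Hopf-Cole homeomorphism between $\CICP$ and $\CKPZt$. Your extra care in noting that the cemetery-state alternative in Theorem \ref{thm:unique}\eqref{thm:unique.ii} is ruled out because $\PMPinit$ is a probability measure on $\CKPZt=\Phi(\CICP)$ is the right observation to make explicit.
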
 
We have the following consequence of the previous result. 
\begin{corollary}\label{cor:KPZerg}     Let $\PMPinit$ be a probability measure on $\CKPZt$ that is stationary and ergodic for the $\CKPZt$-valued process $t\mapsto\Solk_{0,t}\clf$.    
\begin{enumerate} [label={\rm(\roman*)}, ref={\rm\roman*}]   \itemsep=3pt  
\item\label{cor:KPZPerg.i}
 Suppose  $\PMPinit$ has equal  left and right  growth rates $\lambda=\PMPslopem=\PMPslopep$.  Then  $\PMPinit$ is the distribution of $\langle B(\acbullet)+\lambda\acbullet\rangle$. 
 
 \item\label{cor:KPZerg.ii}   Suppose $\PMPinit$  is invariant under at least one spatial translation.   Then $\PMPslopep=\PMPslopem$   and $\PMPinit$ is the distribution of $\langle B(\acbullet)+\lambda\acbullet\rangle$ for $\lambda=\PMPslopep=\PMPslopem$.
 \end{enumerate}  
\end{corollary}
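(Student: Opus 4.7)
The plan is to leverage the homeomorphism $\CICP\leftrightarrow\CKPZt$ induced by $f\mapsto\log f$, which by \eqref{eq:genKPZ} intertwines the SHE semigroup $\Sols$ with the KPZ semigroup $\Solk$, so that Theorem \ref{thm:KPZunique} applies directly to the given stationary ergodic $\PMPinit$. For part \eqref{cor:KPZPerg.i}, the hypothesis $\PMPslopem=\PMPslopep=\lambda$ automatically triggers the dichotomy in Theorem \ref{thm:KPZunique}\ref{P.KPZ.iib}: if $\lambda\ne 0$ then $\PMPslopem=\lambda\ne-\lambda=-\PMPslopep$, while if $\lambda=0$ then $\PMPslopem=\PMPslopep=0$. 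In either case Theorem \ref{thm:KPZunique}\ref{P.KPZ.iib} identifies $\PMPinit$ as the distribution of $\langle B(\abullet)+\lambda\abullet\rangle$.

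For part \eqref{cor:KPZerg.ii} the core task is to show that invariance under some spatial translation $\shiftx_a$ with $a\ne 0$ forces $\PMPslopep=\PMPslopem$; part \eqref{cor:KPZPerg.i} will then close the argument. By Theorem \ref{thm:KPZunique}\ref{P.KPZ.iia} the asymptotic slopes are deterministic constants $\PMPslopem\le\PMPslopep$. Supposing for contradiction that $\PMPslopem<\PMPslopep$, I would fix any representative $f\in\clf$ and consider the tilted function
\[
\wt f(x)=f(x)-\tfrac12(\PMPslopem+\PMPslopep)\,x,
\]
which satisfies $\wt f(x)\to+\infty$ as $|x|\to\infty$ and hence attains its minimum on a nonempty compact subset of $\R$. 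Taking $m(\clf)$ to be the leftmost minimizer of $\wt f$ produces a map $m:\CKPZt\to\R$ that is well-defined on equivalence classes (adding a constant to $f$ shifts $\wt f$ by the same constant and preserves its argmin set) and transforms under spatial shifts as $m(\shiftx_a\clf)=m(\clf)-a$. Consequently $\shiftx_a$-invariance of $\PMPinit$ would force the pushforward $m_*\PMPinit$ to be a Borel probability measure on $\R$ invariant under a nontrivial translation, which is impossible.

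The step I expect to require the most care is verifying the Borel measurability of $m$ on $\CKPZt$. Since $\wt f$ is continuous and coercive, standard arguments identify $m$ as a Borel function on $\cC(\R,\R)$ with the compact-open topology, and the topology on $\CKPZt$ is at least as strong, so measurability transfers; still, the passage to the quotient should be checked carefully. If this proves cumbersome, an alternative is to fix a measurable section of $\CKPZ\to\CKPZt$ (e.g.\ by normalizing $f(0)=0$) and run the same argument on $\CKPZ$ before projecting back down.
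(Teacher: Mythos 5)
Your proof is correct, and for part \eqref{cor:KPZPerg.i} it matches the paper's route exactly: equal slopes $\PMPslopem=\PMPslopep=\lambda$ always satisfy the dichotomy hypothesis of Theorem \ref{thm:KPZunique}\ref{P.KPZ.iib} (either $\lambda\ne-\lambda$ or $\lambda=0$), and that theorem then identifies $\PMPinit$. For part \eqref{cor:KPZerg.ii} the paper (via the parallel SHE statement, Corollary \ref{cor:erg}) merely asserts that spatial invariance forces $\PMPslopem=\PMPslopep$ and gives no further detail, so you are supplying an argument where the paper supplies none. Your argmin construction is sound: coercivity of $\wt f$ follows from $\PMPslopem<\PMPslopep$, the leftmost minimizer $m$ descends to equivalence classes, transforms as $m(\shiftx_a\clf)=m(\clf)-a$, and no Borel probability measure on $\R$ is invariant under a nontrivial translation. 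The measurability of $m$ that you flag does go through, e.g.\ $\{g : m(g)\le b\}=\{g : \inf_{x\le b}g(x)=\min_{x\in\R}g(x)\}$, with both sides expressible as countable infima of point evaluations, and the quotient is handled by the section $f(0)=0$ as you suggest. That said, a more economical argument sidesteps the measurability bookkeeping entirely: for $\PMPinit$-a.e.\ $\clf$ and any $f\in\clf$, set $Y_n=\frac{f(na)-f(0)}{na}$ and $Z_n=\frac{f(0)-f(-na)}{na}$; these are well-defined on equivalence classes, $\shiftx_a$-invariance gives $Y_n\deq Z_n$ for every $n$, and Theorem \ref{thm:KPZunique}\ref{P.KPZ.iia} gives $Y_n\to\PMPslopep$ and $Z_n\to\PMPslopem$ almost surely, forcing the two deterministic constants to coincide. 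Your argmin route is a valid but heavier alternative that trades a one-line distributional identity for a measurable-selection argument; the distributional argument is almost certainly what the authors had in mind.
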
  

Note that the previous result implies the prediction implicit in the KPZ scaling theory \cite{Kru-Mea-Hal-92} (and more explicit in \cite{Spo-14}) that the spatially translation invariant and temporally ergodic stationary distributions form a one parameter family indexed, for example, by the mean of a fixed increment. 

\begin{remark}\label{rem:wesuck}
Theorem \ref{thm:KPZunique} leaves open the question of whether or not there exist stationary and ergodic measures $\PMPinit$ for the KPZ equation (modulo additive constants) supported on (equivalence classes of) functions for which $-\PMPslopem= \PMPslopep$ for some $\PMPslopep>0$. Proving that such measures either do or do not exist is Open Problem \ref{prob:Ergodic} below. If such measures are shown not to exist, this would confirm the prediction in \cite[Remark 1.1]{Fun-Qua-15}.
\end{remark}

By the same logic as above, we have the following analogue (and consequence) of Theorem \ref{thm:inv-n}. For  $n\in\N$  let $\clf^{1:n}=(\clf^1,\dotsc,\clf^n)$ denote an element of the $n$-fold Cartesian product $\CKPZt^n$.  On the space $\Omega\times\CKPZt^n$ define the evolution 
\be\label{Solk-n}   \Solk^{(n)}_{0,t}\clf^{1:n}
=( \Solk_{0,t}\clf^1, \dotsc, \Solk_{0,t}\clf^n)
= \bigl( \tspb \langle\KPZ^\w(t,\aabullet\tsp\viiva 0,f^1)\rangle, \dotsc, \langle \KPZ^\w(t,\aabullet\tsp\viiva 0,f^n)\rangle\tspb\bigr) 
\ee 
where  $f^i\in\clf^i$ and $t>0$. The common superscript  $\w$ signals that each initial condition $f^i$ is updated with the same $\She^\w$ in \eqref{eq:genKPZ}. 

\begin{theorem}\label{thm:KPZinv-n}
Let $n\in\N$.    
\begin{enumerate} [label={\rm(\roman*)}, ref={\rm\roman*}]   \itemsep=3pt  
\item\label{thm:KPZinv-n.i} Fix    $\lambda_1,\dotsc,\lambda_n\in\R$. Then  the distribution of $\bigl(\tspb \langle \Bus^{\lambda_1}(0,0,0,\aabullet)\rangle, \dotsc, \langle\Bus^{\lambda_n}(0,0,0,\aabullet)\rangle\tspb\bigr) $  is stationary and  totally ergodic  for the $\nCKPZt$-valued process $t\mapsto\Solk^{(n)}_{0,t}\clf^{1:n}$ of \eqref{Solk-n}.  In particular, the $\nCKPZ$-valued process
$t\mapsto(\tspb \Bus^{\lambda_1}(t,0,t,\aabullet), \dotsc, \Bus^{\lambda_n}(t,0,t,\aabullet)\tspb) $ 
  is stationary and  totally ergodic. 

\item\label{thm:KPZinv-n.ii}   
Let $\PMPinit^{(n)}$ be a probability measure on $\CKPZt^n$ that is ergodic for the  process of \eqref{Solk-n}.  Then  
{\rm\ref{KPZinv-n.iia}--\ref{KPZinv-n.iib}} below hold: \\[-6pt]   
\begin{enumerate} [label={\rm(ii.\alph*)}, ref={\rm(ii.\alph*)}]   \itemsep=4pt  
\item \label{KPZinv-n.iia}  Deterministic finite asymptotic slopes  exist: 
\be\label{KPZinv-n.9}   \PMPslopem^i=\ddd\lim_{x\to-\infty} x^{-1}f^i(x) \quad\text{ and }\quad  \PMPslopep^i=\ddd\lim_{x\to\infty} x^{-1} f^i(x)\ee 
  for $\PMPinit^{(n)}$-almost every $\clf^{1:n}\in\nCKPZt$, each $i$  and   all $f^i\in\clf^i$. Moreover, for each $i$, $-\infty <\PMPslopem^i \leq \PMPslopep^i <\infty$.

  
 \item  \label{KPZinv-n.iib}  Assume that for each $i$,  either $\PMPslopem^i\ne-\PMPslopep^i$  or  $\PMPslopem^i=\PMPslopep^i=0$. 
 Then $\PMPslopem^i=\PMPslopep^i$ for each $i$ and $\PMPinit^{(n)}$ is the distribution of $\bigl(\tspb\langle \Bus^{\lambda_1}(0,0,0,\aabullet)\rangle, \dotsc, \langle \Bus^{\lambda_n}(0,0,0,\aabullet)\rangle \tspb\bigr) $  with $\lambda_i=\PMPslopem^i=\PMPslopep^i$.
 \end{enumerate} 
\end{enumerate}
\end{theorem}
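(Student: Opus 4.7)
The plan is to reduce Theorem \ref{thm:KPZinv-n} to Theorem \ref{thm:inv-n} by transporting the SHE dynamics to the KPZ dynamics through the canonical homeomorphism induced by the logarithm. Define $\Phi:\CICP\to\CKPZt$ by $\Phi(\eqcl{f})=\langle\log f\rangle$, with inverse $\Phi^{-1}(\langle g\rangle)=\eqcl{e^g}$; this is a homeomorphism by the discussion in Section \ref{sec:KPZer}. Let $\Phi^{(n)}:\nCICP\to\nCKPZt$ denote its componentwise extension. Comparing the definition \eqref{eq:genKPZ} of physical KPZ solutions with \eqref{NEind:m9} yields, for every $\w\in\Omega$, $t\ge 0$ and $\clf^{1:n}\in\nCICP$, the pathwise conjugacy
\begin{equation*}
\Phi^{(n)}\circ\Sols^{(n)}_{0,t}=\Solk^{(n)}_{0,t}\circ\Phi^{(n)}.
\end{equation*}
Since conjugacies preserve path-level distributions under each individual time-shift, stationarity and ergodicity (hence total ergodicity, applied to each $\shiftp_t$ separately) transfer between the two Markov processes.

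For part \eqref{thm:KPZinv-n.i}, Theorem \ref{thm:inv-n}\eqref{thm:inv-n.i} states that the law $\meas$ of $\bigl(\eqcl{e^{\Bus^{\lambda_i}(0,0,0,\aabullet)}}\bigr)_{i=1}^n$ on $\nCICP$ is stationary and totally ergodic for $\Sols^{(n)}_{0,\aabullet}$. The pushforward $\Phi^{(n)}_*\meas$ is exactly the law of $\bigl(\langle\Bus^{\lambda_i}(0,0,0,\aabullet)\rangle\bigr)_{i=1}^n$, and so inherits stationarity and total ergodicity for $\Solk^{(n)}_{0,\aabullet}$. For the supplementary $\nCKPZ$-valued claim, the cocycle property \eqref{cocycle} implies $\Bus^{\lambda_i}(t,0,t,0)=0$, so $\Bus^{\lambda_i}(t,0,t,\aabullet)$ is the canonical representative in the equivalence class $\Solk_{0,t}\langle\Bus^{\lambda_i}(0,0,0,\aabullet)\rangle$ pinned to vanish at the origin; since pinning is a continuous map $\CKPZt\to\CKPZ$, stationarity and total ergodicity are preserved.

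For part \eqref{thm:KPZinv-n.ii}, given $\PMPinit^{(n)}$ ergodic on $\nCKPZt$, let $\widetilde\PMPinit^{(n)}=(\Phi^{(n)})^{-1}_*\PMPinit^{(n)}$, which is a probability measure on $\nCICP\subset\Radon^n$. Since $\nCICP$ and $\{\eqcl{\infimeas}\}$ are disjoint, the hypothesis $\widetilde\PMPinit^{(n)}\{\clf^i=\eqcl{\infimeas}\}=0$ of Theorem \ref{thm:inv-n}\eqref{thm:inv-n.ii} is automatic, and by the conjugacy $\widetilde\PMPinit^{(n)}$ is ergodic for $\Sols^{(n)}_{0,\aabullet}$. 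Applying Theorem \ref{thm:inv-n}\ref{inv-n.iia} to $\widetilde\PMPinit^{(n)}$ yields the asymptotic slope limits \eqref{inv-n.9} for $\widetilde\PMPinit^{(n)}$-almost every $(\eqcl{f^i})_i$; rewriting through $g^i=\log f^i$ translates these into the KPZ limits \eqref{KPZinv-n.9} for $\PMPinit^{(n)}$-almost every $(\clf^i)_i$, and the finiteness $-\infty<\PMPslopem^i\le\PMPslopep^i<\infty$ passes over verbatim. Under the non-degeneracy hypothesis \ref{KPZinv-n.iib}, Theorem \ref{thm:inv-n}\ref{inv-n.iib} identifies $\widetilde\PMPinit^{(n)}$ as the law of $\bigl(\eqcl{e^{\Bus^{\lambda_i}(0,0,0,\aabullet)}}\bigr)_i$ with $\lambda_i=\PMPslopem^i=\PMPslopep^i$; pushing forward by $\Phi^{(n)}$ gives the desired identification of $\PMPinit^{(n)}$.

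There is no genuine obstacle; all content is already in Theorem \ref{thm:inv-n} and the only task is bookkeeping the homeomorphism $f\mapsto\log f$. The one point worth double-checking is the measurability of the pinning map in the supplementary claim of \eqref{thm:KPZinv-n.i}, but this is immediate from the continuity of evaluation at $0$ on $\CKPZ$.
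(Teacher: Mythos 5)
Your proof is correct and takes essentially the same approach as the paper: the paper derives Theorem \ref{thm:KPZinv-n} from Theorem \ref{thm:inv-n} by the same observation, namely that the Hopf--Cole map $f\mapsto\log f$ induces a homeomorphism $\CICP\to\CKPZt$ that conjugates $\Sols^{(n)}_{0,t}$ to $\Solk^{(n)}_{0,t}$ (this is stated just before Theorem \ref{thm:KPZunique}, and the paper introduces Theorem \ref{thm:KPZinv-n} with the phrase ``by the same logic as above''). Your write-up simply spells out the bookkeeping the paper leaves implicit, and your handling of the cemetery-state hypothesis and the pinning map is accurate.
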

 In words, the previous result implies in particular that the joint law of the Busemann process gives the unique ergodic jointly stationary coupling of the invariant measures (modulo additive constants) for the KPZ equation given by Brownian motion with drift.
 
\subsection{Synchronization}\label{sub:1F1S}
In this section, we recast some of our results in the language of random dynamical systems (RDS). We state these reformulations in terms of the SHE. Similar to Section \ref{sec:KPZer}, the results below imply analogous results for the KPZ equation. See Remark \ref{rem:KPZSynch}.

Recall the Polish quotient space $\CICP$ and the operators $\Sols_{s,t}$, $s\le t$, defined in Section \ref{subsec:ergodic}. Define the mapping 
	\begin{align}\label{phi-def}
	\varphi:[0,\infty)\times\Omega\times\CICP\to\CICP,\quad (t,\w,\clf)\mapsto \varphi(t,\w,\clf)=\Sols_{0,t}\clf=\eqcl{\She(t,\aabullet\tsp\viiva 0,f)}
	\end{align}
	for $f\in\clf$. 
By \eqref{NEind:m9} we have $\varphi(0,\w,\clf)=\clf$ for all $\clf\in\CICP$ and all $\w$.
For each $(t,\clf)$, $\varphi$ is measurable in the $\w$ variable and by Theorem 2.9 of \cite{Alb-etal-22-spde-}, there is a full $\P$-probability event $\Omega_0$ such that     $(t,\clf)\mapsto\varphi(t,\w,\clf)$ is continuous for $\w\in\Omega_0$.

We abbreviate temporal shifts as $\shiftp_t=\shiftd{t}{0}$. 
By the cocycle property \eqref{CK8}  and the shift-covariance \eqref{eq:cov.shift} of $\She$, for each $s\ge0$ the exists an event $\Omega^{(s)}$ such that $\P(\Omega^{(s)})=1$ and for each  $\w\in\Omega^{(s)}$ the cocycle property holds:
for all $t\in [s,\infty)$ and $\clf\in\CICP$, 
\be\label{fi-coc} \varphi\bigl(t-s,\shiftp_s\w,\varphi(s,\w,\clf)\bigr)=\varphi(t,\w,\clf).\ee
$\varphi$ is called a \emph{crude cocycle} and it defines a (crude) measurable random dynamical system (RDS) on $\CICP$ over $(\Omega,\sF,\P,\{\shiftp_t:t\ge0\})$. See 
Definition 1.1.1 in \cite{Arn-98} or Definition 6.2 in \cite{Cra-02}. 
$\varphi$ defines a continuous RDS 
under 
both Definition 1.1.2 in \cite{Arn-98} and the definition in the paragraph following Remark 6.5 in \cite{Cra-02}.



A random variable $\w\mapsto\clf^\w$  
from $\Omega$ into $\CICP$  
is said to be (strictly) {\it $\varphi$-invariant}  if 
\be\label{fi-inv} 
\text{for each $t>0$ we have   $\varphi(t,\w,\clf^\w)=\clf^{\shiftp_t\w}$ $\P$-almost surely. } 
\ee
See Definition 6.9(iii) in \cite{Cra-02}.
$\clf$ is {\it Markovian}  if  
$\clf^\w$ is $\fil_{-\infty:0}$-measurable. If $\clf$ is $\varphi$-invariant and Markovian, then the distribution of $\clf^\w$ under $\P$ is invariant and totally ergodic for the Markov process $t\mapsto\Sols_{0,t}\clf$.  See Lemma \ref{lm:attr2}. 

\begin{lemma}\label{lm:u-f} 
There is a one-to-one correspondence between $\varphi$-invariant $\CICP$-valued random variables $\clf^\w$ and 
$\sC(\R, \CICM)$-valued random variables 
  $\Glob^\w$  that satisfy these properties: 
\begin{enumerate} [label={\rm(\roman*)}, ref={\rm\roman*}]   \itemsep=3pt
    \item\label{u-f.i}  $\P$-almost surely, for all pairs $t\ge s$ in $\R^2$ and all $y\in\R$, 
$\She^\w\bigl(t,y\viiva s,\Glob^\w(s,\aabullet)\bigr)=\Glob^\w(t,y)$.
\item $\Glob^\w(0,0)=1$.
\item The equivalence class is  time-shift covariant in the following sense:  for each $r\in\R$ there exists an event $\Omega_r$ such that $\P(\Omega_r)=1$ and  $\eqcl{\Glob^{\shiftp_r\w}(t,\aabullet)}=\eqcl{\Glob^{\w}(t+r,\aabullet)}$ for all $\w\in\Omega_r$ and $t\in\R$. 
\end{enumerate}  
This  $\clf^\w\leftrightarrow u^\w$ correspondence is given by the  equations
   \begin{align}\label{f->u1}
    \Glob^\w(t,x)=\frac{\She^\w(t,x\viiva s,f^{\shiftp_s\w})}{\She^\w(0,0\viiva s,f^{\shiftp_s\w})}
    \qquad\text{and}\qquad 
    \clf^\w=\eqcl{\Glob^\w(0,\aabullet)}
    \end{align}
where on the left $s$ can be any rational $s<\min(t,0)$ and $f:\Omega\to\CICM$ is the function defined by $f^\w\in\clf^\w$ and 
$f^\w(0)=1$.      
\end{lemma}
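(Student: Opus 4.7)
The plan is to construct the two maps separately and verify they are mutual inverses, with most of the effort going into careful consolidation of null sets via countable-density arguments. Given a $\varphi$-invariant random variable $\clf^\w$, select the unique measurable representative $f^\w \in \clf^\w$ with $f^\w(0)=1$. For each rational $s<0$, the invariance relation \eqref{fi-inv} applied to time $-s$ at the shifted environment $\theta_s\w$, combined with the shift-covariance \eqref{eq:cov.shift} of the Green's function, yields a full-probability event $\Omega_s$ on which $\eqcl{\She^\w(0,\aabullet\viiva s,f^{\theta_s\w})} = \clf^\w$, and more generally $\eqcl{\She^\w(t,\aabullet\viiva s,f^{\theta_s\w})} = \clf^{\theta_t\w}$ for each rational $t\geq s$. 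On $\Omega_0 = \bigcap_{s\in\bbQ_-}\Omega_s$ (a full-probability event) define $\Glob^\w(t,x)$ by the formula in \eqref{f->u1}. Independence from the choice of $s$ follows from the Chapman-Kolmogorov identity \eqref{CK}: for rationals $s_1<s_2<\min(t,0)$, writing $\She^\w(t,x\viiva s_1,f^{\theta_{s_1}\w})$ as an integral through time $s_2$ shows that $\She^\w(s_2,\aabullet\viiva s_1,f^{\theta_{s_1}\w})$ is a strictly positive multiple of $f^{\theta_{s_2}\w}$, and this multiplicative constant cancels between numerator and denominator.

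Next, verify properties (i)-(iii) of the constructed $\Glob^\w$. Property (ii) is immediate. For property (i), substitute the defining formula for $\Glob^\w(s,\aabullet)$ into the left-hand side, factor the constant $\She^\w(0,0\viiva r,f^{\theta_r\w})^{-1}$ out of the integral, and collapse the resulting double integral using \eqref{CK} at an intermediate time $r<\min(s,0)$. Property (iii) follows from the shift-covariance \eqref{eq:cov.shift} applied at each rational $r$: writing out $\Glob^{\theta_r\w}(t,x)$ with parameter $s$ and $\Glob^\w(t+r,x)$ with parameter $s+r$ produces identical numerators and denominators that differ only by an $x$-independent factor, which vanishes on passage to equivalence classes. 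Extension of (iii) to all real $t$ on a single event uses the continuity of $\Glob^\w$ in $t$ as a $\CICM$-valued path, which follows from the joint continuity of $(s',y)\mapsto \She^\w(s',y\viiva s,f^{\theta_s\w})$ in $\CICM$ guaranteed by Theorem 2.6 of \cite{Alb-etal-22-spde-} since $f^{\theta_s\w}\in\CICM$ for rational $s<0$.

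Direction two is more direct: given $\Glob^\w$ satisfying (i)-(iii), set $\clf^\w = \eqcl{\Glob^\w(0,\aabullet)}$, which is $\fil$-measurable via continuity of the quotient map on $\CICM$. For $\varphi$-invariance at fixed $t>0$, compute
\[\Sols^\w_{0,t}\clf^\w = \eqcl{\She^\w(t,\aabullet\viiva 0,\Glob^\w(0,\aabullet))} = \eqcl{\Glob^\w(t,\aabullet)} = \eqcl{\Glob^{\theta_t\w}(0,\aabullet)} = \clf^{\theta_t\w}\]
$\P$-almost surely, where the second equality is property (i) with $s=0$ and the third is property (iii). The two constructions are mutual inverses: starting from $\clf^\w$, the round trip is trivial since $\Glob^\w(0,x)$ is by construction the unique representative of $\clf^\w$ with value $1$ at $0$; starting from $\Glob^\w$, the formula rebuilt from $\clf^\w$ equals $\Glob^\w$ by applying properties (i), (ii), and (iii) together with linearity of $\She$ in its initial condition. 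The principal technical obstacle is not any single calculation but rather the passage from the per-parameter almost sure statements (invariance at each rational $t$, cocycle and shift identities at each rational shift) to a single full-probability event on which the continuous-parameter version of the correspondence holds; this consolidation relies essentially on the continuity properties of the coupling of the Green's function constructed in \cite{Alb-etal-22-spde-}.
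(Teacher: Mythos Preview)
Your proposal is correct and follows essentially the same approach as the paper: both directions are built the same way, well-definedness of $\Glob^\w$ in the rational parameter $s$ is established via the cocycle/Chapman--Kolmogorov identity combined with $\varphi$-invariance (the paper's display \eqref{u999} is exactly your cancellation-of-constants argument), and the converse and round-trip verifications match. The only minor difference is that for property (iii) you invoke continuity of $t\mapsto\Glob^\w(t,\aabullet)$ to pass from rational to real $t$, whereas the paper instead reuses \eqref{u999} to handle the case where $s+r$ is irrational; both are valid.
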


Item \eqref{u-f.i} states that 
   $\Glob$ is an eternal physical solution of SHE \eqref{SHE}. 
The definition of $\Glob$ in \eqref{f->u1} does not depend on the choice of the representative from $\clf^{\shiftp_s\w}$ or on the rational  $s<\min(t,0)$.  The lemma is proved in Section \ref{sec:1F1S}.



By Lemma \ref{lm:u-f}, if 
one had an almost surely unique $\varphi$-invariant random variable, then there would be a unique way to measurably map almost every realization of the white noise forcing to a covariant eternal physical solution. 
In such a setting, \cite{E-etal-00} (page 879), among others, say that the \emph{one force--one solution principle} (1F1S) holds.

In contrast, in a situation such as the SHE \eqref{SHE} with  multiple ergodic invariant measures, one expects  distinct $\varphi$-invariant random variables corresponding to distinct  ergodic measures. 
Given a probability measure $\PMPinit$ on $\CICP$ that is invariant and ergodic for the Markov process $t\mapsto\Sols_{0,t}\clg$ we  say that the \textit{1F1S principle holds for $\PMPinit$} if there exists a $\P$-almost surely unique $\varphi$-invariant $\CICP$-valued random variable $\clf$ such that the distribution of $\w\mapsto\clf^\w$ under $\P$ is $\PMPinit$. This means that there is a unique way to measurably map almost every realization of the white noise forcing $\w$ to a covariant eternal physical solution $\Glob^\w$ 
such that the distribution of $\w\mapsto\eqcl{\Glob^\w(0,\aabullet)}$ under $\P$ is $\PMPinit$.

\emph{$\PMPinit$-synchronization} occurs when there exists a $\varphi$-invariant random variable $\clf:\Omega\to\CICP$, a countable set $\ttset\subset\R$ with $\sup\ttset=\infty$, and two events $\wt\cC_0\subset\CICP$ and $\Omega_0\subset\Omega$ such that $\PMPinit(\wt\cC_0)=\P(\Omega_0)=1$ and
\begin{align}\label{attractor-prob}
\lim_{\ttset\ni t\to\infty}\dCICP\bigl(\varphi(t,\shiftp_{-t}\w,\clg),\clf^\w\bigr)=0 
\qquad\text{for $\w\in\Omega_0$ and $\clg\in\wt\cC_0$.}  
\end{align}
Here and in the sequel, $\dCICP$ is a complete metric for the topology of $\CICP$. 
The idea is  that for initial conditions  $\clg\in\wt\cC_0$, the solution started from $\clg$ in the remote past ``synchronizes'' with $\clf^\w$. The random variable $\clf$ is a \emph{random \textup{(}point\textup{)} $\PMPinit$-attractor}. 

\begin{remark}
When \eqref{attractor-prob} holds for \textit{all} $\clg\in\CICP$, $\clf$ is said to be a \emph{global} attractor. 
This does not happen in our setting. 
\end{remark}

$\PMPinit$-synchronization 
implies the 1F1S holds for $\PMPinit$ by Proposition \ref{syn=>uniq}. 
Showing 
$\PMPinit$-synchronization is a common approach for proving 1F1S, as in
\cite{E-etal-00,Ros-21-,Bak-Cat-Kha-14,Bak-16,Bak-Li-19,Bak-Kha-18}.  



For $\lambda\in\R$ and $\sigg\in\{-,+\}$  (and $\w$ from the full $\P$-probability event on which the Busemann process is defined) let
    \be\label{f-la6}f_{\lambda\sig}^\w(x)=e^{\Bus^{\lambda\sig}(0,0,0,x)}=\Glob^{\lambda\sig}(0,x)
    \qquad\text{and}\qquad 
    \clf^\w_{\lambda\sig}=\eqcl{f^\w_{\lambda\sig}},\ee
where $\Glob^{\lambda\pm}$ are the solutions defined in \eqref{Shela}.
%
%
%
Recall the set $\Bruno$ of exceptional slopes defined in \eqref{Bruno}. By Corollary \ref{L-char2} we have
    \[\Bruno=\{\lambda\in\R:f^\w_{\lambda-}\ne f^\w_{\lambda+}\}=\{\lambda\in\R:\clf^\w_{\lambda-}\ne\clf^\w_{\lambda+}\}.\] 
As for the Busemann process, when $\lambda\not\in\Bruno$ we write
$\clf^\w_\lambda=\clf^\w_{\lambda+}=\clf^\w_{\lambda-}$ and $f^\w_\lambda=f^\w_{\lambda+}=f^\w_{\lambda-}$. 
This is the case with probability one when $\lambda$ is fixed, since $\P(\lambda\in\Bruno)=0$ by 
 Theorem \ref{th:La}\eqref{th:La:b}.

The following theorem implies that for any fixed value of the slope $\lambda$, $\PMPinit_\lambda$-synchronization and the corresponding 1F1S principle hold,  where $\PMPinit_\lambda$ is the distribution of $\eqcl{e^{B(\abullet)+\lambda \abullet}}$ and $B$ is a standard two-sided Brownian motion.

\begin{theorem}\label{cor:1f1s}
Fix $\lambda\in\R$. Then $\clf_\lambda$ of \eqref{f-la6} is the {\rm(}$\P$-almost surely{\rm)} unique $\varphi$-invariant $\CICP$-valued random variable such that,  with $\P$-probability one, any representative  $f\in\clf_\lambda$ satisfies  \eqref{G1}--\eqref{G3}. Furthermore, the following synchronization statements hold.
\begin{enumerate}[label={\rm(\alph*)}, ref={\rm\alph*}] \itemsep=3pt
\item For any Borel function $g:\R\to(0,\infty)$ such that $\log g$ is locally bounded and satisfies \eqref{G1}--\eqref{G3}, we have 	\begin{align}\label{1f1s-weak}
\lim_{t\to\infty} \dCICP\bigl(\varphi(t,\w,\eqcl{g}),\clf_\lambda^{\shiftp_t\w}\bigr)=0\quad\text{in $\P$-probability}.
\end{align}
\item\label{cor:1f1s.b} For any countable subset $\ttset\subset[0,\infty)$ with $\sup\ttset=\infty$, there exists an event $\Omega_{\lambda,\ttset}$ such that $\P(\Omega_{\lambda,\ttset})=1$ and for any $\w\in\Omega_{\lambda,\ttset}$ and any Borel function $g:\R\to(0,\infty)$ such that $\log g$ is locally bounded and satisfies \eqref{G1}--\eqref{G3}, 
\begin{align}\label{1f1s-tmp}
\lim_{\ttset\ni t\to\infty} \dCICP\bigl(\varphi(t,\shift_{-t}\w,\eqcl{g}),\clf^\w_\lambda\bigr)=0.
\end{align}
\end{enumerate}
\end{theorem}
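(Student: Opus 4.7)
My plan is to prove part \eqref{cor:1f1s.b} as the core result using the function-to-point Busemann limit, then derive part (a) and uniqueness as consequences. Since Theorem \ref{th:La}\eqref{th:La:b} gives $\P(\lambda\in\Bruno)=0$ for the fixed $\lambda$, I work on an almost sure event where $\Bus^{\lambda-}=\Bus^{\lambda+}=\Bus^\lambda$, removing the $\pm$ distinction. First, I verify that $\clf_\lambda$ satisfies the claimed structural properties. Identity \eqref{NEBus2} from Theorem \ref{main:Bus}\eqref{Bproc.d} gives $\She^\w(t,y\viiva 0,f_\lambda^\w)=e^{\Bus^\lambda(0,0,t,y;\w)}$, and combining the cocycle \eqref{cocycle} with the shift covariance of Theorem \ref{thm:bcov}\eqref{thm:bcov.i} factors this as $e^{\Bus^\lambda(0,0,t,0;\w)}\tspa e^{\Bus^\lambda(0,0,0,y;\shiftp_t\w)}$, whose $\CICP$-equivalence class is $\clf_\lambda^{\shiftp_t\w}$; hence $\clf_\lambda$ is $\varphi$-invariant. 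The asymptotic linear growth $\Bus^\lambda(0,0,0,x)/x\to\lambda$ from Theorem \ref{thm:b-shape} then verifies \eqref{G1}-\eqref{G3} for every representative of $\clf_\lambda$.

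For part \eqref{cor:1f1s.b}, fix countable $\ttset\subset[0,\infty)$ with $\sup\ttset=\infty$ and let $\Omega_{\lambda,\ttset}$ be the intersection of $\Omega_\lambda$ from Corollary \ref{cor:Busp2l} with the full probability events above. Fix $\w\in\Omega_{\lambda,\ttset}$ and a Borel $g$ satisfying \eqref{G1}-\eqref{G3} with $\log g$ locally bounded. By Lemma \ref{specialW}, the time-independent function $f(r,z)=g(z)$ lies in $\initF_\lambda$. Combining the cocycle \eqref{fi-coc} with the shift covariance \eqref{eq:cov.shift} of the Green's function,
\begin{align*}
\varphi(t,\shiftp_{-t}\w,\eqcl{g})=\eqcl{\She^{\shiftp_{-t}\w}(t,\abullet\viiva 0,g)}=\eqcl{\She^\w(0,\abullet\viiva -t,g)}.
\end{align*}
I take the normalized representative $F_t(y)=\She^\w(0,y\viiva -t,g)/\She^\w(0,0\viiva -t,g)$, and Corollary \ref{cor:Busp2l} delivers $F_t(y)\to e^{\Bus^\lambda(0,0,0,y;\w)}=f_\lambda^\w(y)$ locally uniformly in $y$ as $\ttset\ni t\to\infty$.

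Promoting this local uniform convergence to convergence in $\dCICP$ requires convergence of the Gaussian-weighted integrals $\int e^{-ay^2}F_t(y)\,dy$ for every $a>0$. To this end, I split
\begin{align*}
F_t(y)=\frac{\She^\w(0,y\viiva -t,g)}{\She^\w(-\tau,0\viiva -t,g)}\cdot\frac{\She^\w(-\tau,0\viiva -t,g)}{\She^\w(0,0\viiva -t,g)}
\end{align*}
at an intermediate time $-\tau<0$, apply the upper bound of Theorem \ref{l2p-buslim} with $(s,x)=(-\tau,0)$ to the first factor and the lower bound with $(s,x,y)=(-\tau,0,0)$ to the denominator of the second factor, and simplify using \eqref{NEBus2} together with the cocycle \eqref{cocycle}. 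This produces a dominating envelope $F_t(y)\le(1+\e)^7 f_\lambda^\w(y)$ valid on each compact $[-C,C]$ for $t$ large enough depending on $C$. Since $f_\lambda^\w\in\CICM$ has Gaussian-integrable tails by Theorem \ref{bus:exp'}, a truncation-plus-dominated-convergence argument passing $C\to\infty$ along the pullback limit delivers the required integral convergence and closes the $\dCICP$ statement.

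For part (a), $\shiftp_t$ is measure-preserving, so $\dCICP(\varphi(t,\w,\eqcl{g}),\clf_\lambda^{\shiftp_t\w})$ and $\dCICP(\varphi(t,\shiftp_{-t}\w,\eqcl{g}),\clf_\lambda^\w)$ have the same distribution; part \eqref{cor:1f1s.b} drives the latter to zero almost surely along any countable $\ttset$, hence the former converges to zero in probability. For uniqueness, let $\clf'$ be any $\varphi$-invariant $\CICP$-valued random variable whose representatives almost surely satisfy \eqref{G1}-\eqref{G3}. By measure preservation, on a $\P$-full event $\clf'^{\shiftp_{-t}\w}$ admits representatives satisfying these growth conditions simultaneously for every $t\in\Z_+$. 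Applying part \eqref{cor:1f1s.b} with $g$ a chosen representative of $\clf'^{\shiftp_{-t}\w}$ (the statement accommodates $\w$-dependent $g$ as long as the growth conditions hold), together with the identity $\varphi(t,\shiftp_{-t}\w,\clf'^{\shiftp_{-t}\w})=\clf'^\w$ from strict $\varphi$-invariance, forces $\clf'^\w=\clf_\lambda^\w$ almost surely. The principal obstacle is the upgrade from the pointwise limit of Corollary \ref{cor:Busp2l} to convergence in the stronger $\dCICP$-topology: this needs a dominating envelope with uniform-in-$t$ Gaussian integrability, and the quantitative $(1+\e)$-bounds of Theorem \ref{l2p-buslim} combined with the linear growth control from Theorem \ref{bus:exp'} are precisely the inputs that make this possible.
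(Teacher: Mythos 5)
Your treatment of parts (a) and (b) is essentially sound in spirit, though you are unpacking Theorem \ref{thm:1f1s}\eqref{thm:1f1s.att} rather than citing it as the paper does. One imprecision: the bounds in Theorem \ref{l2p-buslim} hold only for $s,x,t,y\in[-C,C]$, so the dominating envelope $F_t(y)\le(1+\e)^7 f_\lambda^\w(y)$ you extract is restricted to compact $y$-sets and does not directly control the Gaussian-weighted integrals $\int e^{-ay^2}F_t(y)\,dy$ needed for the $\dCICP$ convergence. What you actually need is Lemma \ref{l2p-bnds}, whose one-time-slice bounds are valid for \emph{all} $y\in\R$ simultaneously (with envelope $(1+\e)^2\max\{e^{\Bus^\mu(0,0,0,y)},e^{\Bus^\kappa(0,0,0,y)}\}$ for $\kappa<\lambda<\mu$), which is Gaussian-integrable by Theorem \ref{bus:exp'}. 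Your reduction of part (a) to part (b) via measure preservation and the characterization of convergence in probability through subsequences is correct.

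The uniqueness argument has a genuine gap. You try to apply part \eqref{cor:1f1s.b} (or implicitly Theorem \ref{thm:1f1s}\eqref{thm:1f1s.att}) with the $t$- and $\w$-dependent family $g_t^\w=f'^{\shiftp_{-t}\w}$, but part (b) is stated for a single fixed Borel $g$, and Theorem \ref{thm:1f1s}\eqref{thm:1f1s.att} covers $\w$-dependent initial data only when $\clf'\in\HH_{\lambda,\ttset}$, which means the two-variable function $(r,x)\mapsto f'^{\shiftp_r\w}(x)$ must satisfy the time-uniform conditions \eqref{W1}--\eqref{W3} of Definition \ref{def:initF}. Your hypothesis gives only that each fixed slice $f'^{\shiftp_{-t}\w}$ satisfies \eqref{G1}--\eqref{G3}; by Lemma \ref{specialW} this places each slice, viewed as a \emph{time-independent} element, in $\initF_\lambda$, but says nothing about the uniformity across $t$ that $\HH_{\lambda,\ttset}$ demands (the rate at which $x^{-1}\log f'^{\shiftp_{-t}\w}(x)\to\lambda$ could degrade as $t\to\infty$). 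The parenthetical "the statement accommodates $\w$-dependent $g$ as long as the growth conditions hold" is precisely the non-trivial step that needs proof and is not furnished. The paper circumvents this obstacle by a distributional argument: build the cocycle $\Bus(s,x,t,y)=\log\She(t,y\viiva r,f'^{\shiftp_r\w})-\log\She(s,x\viiva r,f'^{\shiftp_r\w})$ from $\clf'$ (well-defined by the cocycle property and $\varphi$-invariance); from equality of the laws of $\clf'$ and $\clf_\lambda$ (which flows through the ergodic-measure classification in Theorem \ref{thm:unique}), $\Bus$ and $\Bus^\lambda$ are equal in law; hence $\Bus$ satisfies the same shape theorem and the uniform-in-$r$ bounds \eqref{bus:exp1}--\eqref{bus:exp2} that give genuine $\initF_\lambda$ membership; and \eqref{eq:buslim2} then forces $\Bus\equiv\Bus^\lambda$ almost surely, whence $\clf'=\clf_\lambda$. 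This distributional detour is exactly what supplies the missing uniformity, and your argument does not reproduce it.
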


\begin{remark}
Results analogous to Theorem \ref{cor:1f1s} have been shown previously in non-compact settings for the Burgers equation with Poisson forcing \cite{Bak-Cat-Kha-14} and smooth random kick forcing \cite{Bak-Li-19,Bak-16}. 
 It is expected that this form of synchronization and 1F1S hold for a general class of stochastic Hamilton-Jacobi equations which includes the KPZ equation \eqref{KPZ}.
See Conjecture 1 in \cite{Bak-Kha-18}.
\end{remark}



\begin{remark}\label{rem:discont}
The almost sure limit in \eqref{1f1s-tmp} is restricted to a fixed sequence $\ttset$ because we cannot define uncountably many time-shifts of $\She$ on a single event of full probability on our probability space $(\Omega,\sF,\bbP)$. Recalling \eqref{eq:cov.shift}, the essential issue here is that our Green's function was originally built using (the chaos expansion of) the mild formulation of \eqref{eq:FSol} in \eqref{eq:genmild} and such objects are only defined up to $L^2(\Omega,\sF,\bbP)$ equivalence classes. Remark \ref{rk:ttset} discusses options for removing this restriction.
\end{remark}

Theorem \ref{cor:1f1s} describes the behavior of the RDS for a class of initial conditions that supports a given ergodic measure.
A natural follow-up question is to 
describe the (quenched) long-term behavior of the RDS for a fixed,  typical realization $\w$ of the random environment.   
Therefore, we drop the idea of working with one ergodic measure at a time and instead describe a family of random attractors 
 and basins of attraction,  
for each $\w$ outside a single $\P$-null event. 

 The basins of attraction considered in Theorem \ref{cor:1f1s}  include only functions that depend on the space variable $x$ but not on $\w$, since the focus there is on one ergodic measure at a time. 
While these spaces include important cases such as 
all (equivalence classes of) locally bounded Borel-measurable functions $f:\R\to(0,\infty)$
satisfying \eqref{growth-rates-SHE} with $\PMPslopem=\PMPslopep=\lambda$, 
they are not rich enough to study the quenched problem in the previous paragraph for the following reasons.
\begin{enumerate}
\item The pullback attractors $\clf_{\lambda}$ in Theorem \ref{cor:1f1s} do depend on $\w$ and a basin of attraction should contain its pullback attractor. 
\item Studying the RDS for a ($\P$-almost surely) fixed realization of the forcing $\w$ means the basins of attraction should be allowed to 
contain initial conditions that 
depend on $\w$. See Remark 3.2(iv) in \cite{Cra-01} for a discussion of a similar point for general RDS. 
\end{enumerate}

Let $\HH$ denote the space of all functions  $\w\mapsto\clg^\w$ from $\Omega$ into $\CICP$, with no measurability requirement. 
For a subset $\HH'\subset\HH$ and $\w\in\Omega$, let $\HH'(\w)=\{\clg^\w:\clg\in\HH'\}\subset\CICP$. 
For $\lambda\in\R$ and a time set $\ttset\subset[0,\infty)$ with $\sup\ttset=\infty$, define $\initF_{\lambda,\ttset}$ exactly as  $\initF_\lambda$ was defined 
in Definition \ref{def:initF} except that $f\in\initF_{\lambda,\ttset}$ requires that  the limits $r\to-\infty$ in \eqref{W1}--\eqref{W''} are taken only along times    $r\in-\ttset$. 
Define the  subspace $\HH_{\lambda,\ttset}$ of $\HH$ by 
	\begin{align}\label{def:HHla}
	\HH_{\lambda,\ttset}=\bigl\{\eqcl{f}:f: \Omega\to\CICM\text{ and }\forall\w\in\Omega,\ (-\ttset)\times\R\ni(r,x)\mapsto f^{\shiftp_r\w}(x)\text{ is in }\initF_{\lambda,\ttset}\bigr\}. 
	\end{align}

%

The next theorem reformulates results from Theorems 
\ref{L-char}, \ref{l2p-buslim}, and \ref{bus:exp} in the context of RDS.

\begin{theorem}\label{thm:1f1s}
The stochastic process
$\bigl\{f_{\lambda\sig}:\lambda\in\R,\sigg\in\{-,+\}\bigr\}$  of \eqref{f-la6} 
is $\sigma\{\She(t,\aabullet \viiva s,\aabullet) : s<t \leq 0\}$ measurable.
For any countable set $\ttset\subset[0,\infty)$ with $\sup\ttset=\infty$, there exists an event $\Omega_{\ttset}$ such that $\P(\Omega_{\ttset})=1$ and for all $\w\in\Omega_{\ttset}$, all $\lambda\in\R$, and both signs  $\sigg\in\{-,+\}$:
\begin{enumerate} [label={\rm(\roman*)}, ref={\rm\roman*}]   \itemsep=3pt  
\item\label{thm:1f1s.finH}  $\clf_{\lambda\sig}^{\w}\in\HH_{\lambda,\ttset}(\w)$. 
\item\label{thm:1f1s.f>0} $f^\w_{\lambda\sig}$ is  strictly positive, continuous, and $f^\w_{\lambda\sig}(0)=1$.
\item\label{thm:1f1s.phi-inv} For all $t\ge s$ in $\ttset$, 
$\varphi(t-s,\shiftp_s\w,\clf_{\lambda\sig}^{\shiftp_s\w})=\clf_{\lambda\sig}^{\shiftp_t\w}$. 
\item\label{thm:1f1s.att} If $\lambda\not\in\Bruno$, then for any $\clg\in\HH_{\lambda,\ttset}$,
	\begin{align}\label{1f1s-cv}
	\lim_{\ttset\ni t\to\infty} \dCICP\bigl(\varphi(t,\shiftp_{-t}\w,\clg^{\shiftp_{-t}\w}),\clf^\w_\lambda\bigr)=0.
	\end{align}
\item\label{thm:1f1s.different} If $\lambda\in\Bruno$, then $f^\w_{\lambda-}(x)<f^\w_{\lambda+}(x)$ for all $x>0$ and $f^\w_{\lambda-}(x)>f^\w_{\lambda+}(x)$ for all $x<0$.
\end{enumerate}
\end{theorem}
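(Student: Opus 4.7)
The plan is to assemble the theorem from results already at hand: the Busemann-process structure of Theorem \ref{main:Bus}, its covariance in Theorem \ref{thm:bcov}, the strict separation in Theorem \ref{L-char}, the shape theorems of Theorems \ref{thm:b-shape} and \ref{bus:exp'}, and the pullback convergence of Theorem \ref{l2p-buslim}. Fix the countable time set $\ttset$ and define $\Omega_\ttset$ as the intersection of the corresponding full-probability events, taking the shape theorem \eqref{b-shape} for $\lambda$ ranging over a countable dense $\Udense\subset\R$ and the shift-covariance \eqref{thm:bcov.i} at every $s\in\ttset\cup(-\ttset)$. The stated $\sigma\{\She(t,\aabullet\viiva s,\aabullet):s<t\le 0\}$-measurability of $\{f_{\lambda\sig}\}$ follows by sending $r\searrow-\infty$ in \eqref{NEBus2} at $(s,x,t,y)=(0,0,0,y)$ and invoking the pullback limit \eqref{eq:buslim}, which realizes $e^{\Bus^{\lambda\sig}(0,0,0,y)}$ as a limit built from $\She(0,\aabullet\viiva r,\aabullet)$ with $r<0$.

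Items \eqref{thm:1f1s.f>0} and \eqref{thm:1f1s.different} are quick. Strict positivity and continuity follow from Theorem \ref{main:Bus}\eqref{Bproc.cont}, while $f^\w_{\lambda\sig}(0)=1$ is the cocycle identity \eqref{cocycle} at the coincident point. Item \eqref{thm:1f1s.different} combines the monotonicity \eqref{Busmono} with the strict version supplied by Theorem \ref{L-char}\eqref{L-char:i} whenever $\lambda\in\Bruno$ and $x\ne 0$, and then exponentiates. For \eqref{thm:1f1s.finH}, shift-covariance \eqref{thm:bcov.i} gives $f^{\shiftp_r\w}_{\lambda\sig}(x)=e^{\Bus^{\lambda\sig}(r,0,r,x)(\w)}$, so each of the estimates \eqref{W1}--\eqref{W''} reduces to the asymptotic $\Bus^{\lambda\sig}(r,0,r,x)=\lambda x+o(\abs{r}+\abs{x})$ as $\abs{r}+\abs{x}\to\infty$ along $r\in-\ttset$. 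This is Theorem \ref{thm:b-shape} applied with $s=t=r$ and $y=x$, which is available on $\Omega_\ttset$ for every rational $\lambda$; for irrational $\lambda$ one sandwiches $\Bus^{\lambda\sig}(r,0,r,x)$ between $\Bus^{\mu_1}(r,0,r,x)$ and $\Bus^{\mu_2}(r,0,r,x)$ for rationals $\mu_1<\lambda<\mu_2$ using \eqref{Busmono} and lets $\mu_1,\mu_2\to\lambda$. In \eqref{W3} one may take $\mu=\lambda$, so $\mu/\lambda=1>-1$.

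For the invariance \eqref{thm:1f1s.phi-inv}, combining \eqref{NEBus2} at $(0,0,t,y)$ with $r=s$ and the cocycle \eqref{cocycle} used to split $\Bus^{\lambda\sig}(0,0,t,y)=\Bus^{\lambda\sig}(0,0,t,0)+\Bus^{\lambda\sig}(t,0,t,y)$ and similarly at $s$ yields
\begin{align*}
e^{\Bus^{\lambda\sig}(t,0,t,y)}=\frac{e^{\Bus^{\lambda\sig}(0,0,s,0)}}{e^{\Bus^{\lambda\sig}(0,0,t,0)}}\int_{\R}\She(t,y\viiva s,z)\,e^{\Bus^{\lambda\sig}(s,0,s,z)}\,dz.
\end{align*}
After shift-covariance \eqref{eq:cov.shift} is applied at $\shiftp_s$, this rewrites as $\She^{\shiftp_s\w}(t-s,y\viiva 0,f^{\shiftp_s\w}_{\lambda\sig})=C(\w,s,t)\,f^{\shiftp_t\w}_{\lambda\sig}(y)$ with a $y$-independent constant $C$, so passing to $\CICP$ equivalence classes removes $C$ and delivers the cocycle identity uniformly on $\Omega_\ttset$ for all $s\le t$ in $\ttset$, $\lambda\in\R$, and $\sigg\in\{-,+\}$.

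Part \eqref{thm:1f1s.att} is a transcription of Theorem \ref{l2p-buslim} into the RDS language: shift-covariance \eqref{eq:cov.shift} rewrites $\varphi(t,\shiftp_{-t}\w,\clg^{\shiftp_{-t}\w})$ as the equivalence class of $y\mapsto\int_\R\She^\w(0,y\viiva -t,z)\,g^{\shiftp_{-t}\w}(z)\,dz$, while membership of $\clg$ in $\HH_{\lambda,\ttset}$ states exactly that $f(r,z):=g^{\shiftp_r\w}(z)$ belongs to $\initF_{\lambda,\ttset}$. Theorem \ref{l2p-buslim} then yields locally uniform convergence of the normalized ratio to $e^{\Bus^\lambda(0,0,0,\aabullet)}=f^\w_\lambda(\aabullet)$ along $r=-t\to-\infty$ with $t\in\ttset$, and the explicit upper and lower envelopes in that theorem provide the tail control required to upgrade locally uniform pointwise convergence to $\CICM$-convergence, hence $\dCICP$-convergence after quotienting. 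The main obstacle across the argument is organizational: ensuring the shape-theorem bounds and the pullback limits hold simultaneously for all $\lambda$ and $\sigg$ on one full-probability event. This is handled by the reduction to rational $\lambda$ and the monotonicity argument described above.
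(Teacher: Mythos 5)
Your proof tracks the paper's own argument closely and is correct in substance: item (iii) from the propagation identity \eqref{NEBus2} plus the cocycle \eqref{cocycle}, item (v) from Theorem \ref{L-char}, item (i) from the shape estimates and shift-covariance, item (iv) from Theorem \ref{l2p-buslim} restricted to $r\in-\ttset$, and the measurability statement from the Busemann limit \eqref{eq:buslim} and the left/right limits \eqref{Buscont}.

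Two references, however, are slightly off, and a reader checking them verbatim would hit a snag. First, for item (i) you attribute the uniform-in-$x$ asymptotic $\Bus^{\lambda\sig}(r,0,r,x)=\lambda x+o(\abs{r}+\abs{x})$ to Theorem \ref{thm:b-shape}, but \eqref{b-shape} bounds the error only on boxes $\abs{r},\abs{x}\le Cn$, which does not cover the supremum over all $x$ in \eqref{W2}--\eqref{W3}. The uniform version is \eqref{bus:exp2} of Theorem \ref{bus:exp}, which the paper cites directly; one can in fact extract it from \eqref{b-shape} by taking $n\asymp\max(\abs{r},\abs{x})/C$ and then sending $C\to\infty$, but that intermediate step should be made explicit. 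Second, for item (iv) the envelopes in Theorem \ref{l2p-buslim} are stated only for $y\in[-C,C]$, so by themselves they do not dominate the Gaussian-weighted integrals appearing in $d_{\CICM}$. The dominating function comes from Lemma \ref{l2p-bnds}, which bounds $\She(0,y\viiva r,f(r,\aabullet))/\She(0,0\viiva r,f(r,\aabullet))$ by $(1+\e)^2\bigl[e^{\Bus^\mu(0,0,0,y)}+e^{\Bus^\kappa(0,0,0,y)}\bigr]$ uniformly over all $y\in\R$ for rationals $\kappa<\lambda<\mu$; both envelopes lie in $\CICM$, so dominated convergence together with the locally uniform limit gives the $\dCICP$-convergence in \eqref{1f1s-cv}. (The paper's own proof also elides this last step, so you are in good company; the point is merely that Theorem \ref{l2p-buslim} alone is not the right citation.) Your route to the measurability claim via \eqref{eq:buslim} is arguably more direct for the specific $\sigma$-algebra claimed than the paper's appeal to Theorem \ref{main:Bus}\eqref{Bproc.indep}, since it exhibits $e^{\Bus^{\lambda\sig}(0,0,0,\aabullet)}$ explicitly as a limit built from $\She(0,\aabullet\viiva r,\aabullet)$, $r<0$.
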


\begin{remark}
The metric $\dCICP$ controls convergence only on 
compact sets, so there is no contradiction between (i) the range of growth rates at $x\to\infty$ and $x\to-\infty$ permitted by
$\clg\in\HH_{\lambda,\ttset}$, (ii) the conservation law \eqref{eq:cons}, which says for example that if $\clg^{\w}=\clg$ does not depend on $\w$ and every $g\in\clg$ has exponential growth rates which satisfy \eqref{G1}--\eqref{G3}, then  $\varphi(t,\shiftp_{-t}\w,\clg)$ has the same growth rates as $\clg$ for all $t\in\ttset$, and (iii) the convergence in \eqref{1f1s-cv} to $f_\lambda^\w(x)$, which has exponential growth rate $\lambda$ in both directions, as $x\to\infty$ and as $x\to-\infty$, by \eqref{bus:exp3'}.
\end{remark}

Part \eqref{thm:1f1s.phi-inv} says that for each $\lambda\in\R$ and $\sigg\in\{-,+\}$, $\clf^\w_{\lambda\sig}$ is a random attractor and Theorem \ref{main:Bus}\eqref{Bproc.BM} 
the definition  \eqref{f-la6} imply that it satisfies \eqref{growth-rates-SHE} with $\PMPslopem=\PMPslopep=\lambda$.
Parts \eqref{thm:1f1s.finH} and \eqref{thm:1f1s.att} say
that for $\lambda\not\in\Bruno$, $\HH_{\lambda,\ttset}(\w)$ is a \emph{basin of attraction} of $\clf^\w_\lambda$.
See Definition 9.3.1 in \cite{Arn-98}.
Since  $\P(\lambda\in\Bruno)=0$ for each given $\lambda\in\R$, Theorem \ref{cor:1f1s} is in fact a corollary of Theorem \ref{thm:1f1s}. Both theorems are proved in Section \ref{sec:1F1S}.

Recall the dichotomy from Theorem \ref{th:La}\eqref{th:La:e}:  
either $\Bruno$ is $\P$-almost surely empty,  or $\Bruno$ is  $\P$-almost surely countable and dense in $\R$. If it is the case that $\P(\Bruno=\varnothing)=1$, then, by \eqref{Buscont}, $\lambda\mapsto f^\w_\lambda$ 
is continuous, $\P$-almost surely.
On the other hand, if $\P(\Bruno\text{ is countable and dense})=1$, then 
part \eqref{thm:1f1s.different} implies that the process has discontinuities in $\lambda$ on every  interval $(\lambda',\lambda'')\in\R$. 
This suggests Open Problems \ref{prob3}-\ref{prob8} in Section \ref{sec:OP}.

\begin{remark}\label{rk:ttset}
(Dependence on $\ttset$) The dependence on the countable   time set  $\ttset$ in the full probability events in Theorems  \ref{cor:1f1s} and \ref{thm:1f1s} can be removed in two ways. One option is to avoid working with shifts $\shiftp_t$ and work directly with the operators $\Sols_{s,t}$. This involves changing some definitions, e.g.\ using the definition of an RDS in Section 2.1 of \cite{Cra-Deb-Fla-97} and, similarly to the definitions in Section 1 of that paper,
using the notion of attractors $\clf^\w_t\in\CICP$ at time $t$, instead of $\clf^{\shiftp_t\w}$, and replacing  $\varphi$-invariance \eqref{fi-inv} with the condition $\Sols_{s,t}\clf^\w_s=\clf^\w_t$ for all $t>s$. 
Alternatively, one could push forward the distribution of $\She(\aabullet,\aabullet\viiva\aabullet,\aabullet)$ and work on its state space $\OmZ=\cC(\varsets,\R_+)$ instead of $\Omega$. On $\OmZ$, the temporal shift map is continuous and so the issue mentioned in Remark \ref{rem:discont} is not present.  Either of these approaches will result in a continuous RDS and $\varphi$ becomes a \emph{perfect cocycle} (see e.g.\ Definition 6.2 in \cite{Cra-02}). Theorems \ref{cor:1f1s} and \ref{thm:1f1s} then hold with $\ttset=[0,\infty)$.
\end{remark}

\begin{remark}\label{rem:KPZSynch}
(Synchronization of KPZ) Because the Hopf-Cole transformation $\CICM\ni f \mapsto \log f \in \CKPZ$ defines a homeomorphism between $\CICP$ and $\CKPZt$, each result above is equivalent to an analogous result for the KPZ equation, with minimal notational changes. In particular, one would replace all instances of $\CICP$ with $\CKPZt$, $\varphi$ with $\psi :[0,\infty)\times\Omega\times\CKPZt\to\CKPZt$ defined by $\psi(t,\w,\clf)=\langle \KPZ(t,\aabullet\viiva 0,f)\rangle$, and would need to take logarithms of the expressions in equations like \eqref{f->u1}  and \eqref{f-la6}. A handful of other similar changes are needed which we do not enumerate.
\end{remark}

\begin{remark}\label{rmk:hyp} 
\textup{(Hyperbolicity)} Stochastic Hamilton-Jacobi equations like KPZ are expected to be associated to certain generalized directed polymer measures. See Section 3 of \cite{Bak-Kha-18}. In this correspondence, $\varphi$-invariant random variables are associated to infinite-volume polymer measures. Fairly generally, one expects that when synchronization occurs, the paths (with different terminal points) under these measures should coalesce either at a finite time or else asymptotically. This property is known as \emph{hyperbolicity}. See Conjecture 2 and Section 5 of \cite{Bak-Kha-18} for further discussion of what is expected to hold.  We introduce the infinite-volume measures in Section 9 and prove locally uniform (in the pair of space-time terminal points) quenched hyperbolicity in total variation norm in Theorem \ref{thm:hyp}.
\end{remark}


\section{Open problems}
\label{sec:OP}
\begin{problem}\label{prob:Ergodic}
  Does there exist $\lambda>0$ and a stationary and ergodic probability measure $\PMPinit$ on $\CKPZt$ for the  $\CKPZt$-valued process $t\mapsto\Solk_{0,t}\clf$ with the property that for $\PMPinit$-almost every $\clf\in\CKPZt$,    $\ddd\lim_{x\to-\infty} x^{-1}f(x) =-\lambda$ and $\ddd\lim_{x\to\infty} x^{-1}f(x) =\lambda$ for  all $f\in\clf$? 
\end{problem}

\begin{problem}\label{prob1}
Determine whether the set $\Bruno$ of discontinuities  is empty or dense. It suffices to prove or disprove that 
$\P\{\exists\lambda\in\R:\Bus^{\lambda+}(0,0,0,1)>\Bus^{\lambda-}(0,0,0,1)\}>0$.
More generally, describe the distribution of the process $\lambda\mapsto\Bus^{\lambda+}(0,0,0,1)$. 
\end{problem}



\begin{problem}\label{prob3}
For $\lambda\in\Bruno$, are $\Glob^{\lambda\pm}$ from \eqref{Shela} the only continuous functions $u:\R^2\to(0,\infty)$ such that $u(0,\aabullet)$ satisfies \eqref{G1}--\eqref{G3} and $u(t,x)=\She\bigl(t,x\viiva s,u(s,\aabullet)\bigr)$ for all $x$ and $t>s$?
If not, are there finitely, countably, or uncountably many such solutions? 
\end{problem}


\begin{problem}\label{prob:Domain}
For $\lambda\in\Bruno$, what is the basin of attraction of $\clf^\w_{\lambda\sig}$? In particular, is it only the singleton $\{\clf^\w_{\lambda\sig}\}$?  
The same questions can be asked about the solutions in Problem \ref{prob3}, if they do exist.
\end{problem}

\begin{problem}\label{prob:limpts}
When $\lambda\in\Bruno$ and $f\in\initF_\lambda$, find the limit points of $\frac{\She(0,\aabullet\viiva r,f(r,\aabullet))}{\She(0,0\viiva r,f(r,\aabullet))}$ as $r\to-\infty$.
\end{problem}

Take $\PMPslopem,\PMPslopep\in\R$ and consider a locally bounded
Borel function $g:\R\to(0,\infty)$ such that 
\[\lim_{x\to-\infty}x^{-1}\log g(x)=\PMPslopem\quad\text{and}\quad
\lim_{x\to-\infty}x^{-1}\log g(x)=\PMPslopep.\]
Define $f:\R_-\times\R\to(0,\infty)$ by $f(r,x)=g(x)$.
Then it follows from Lemma \ref{specialW} that 
either $\PMPslopep>0\vee(-\PMPslopem)$ and $f\in\initF_{\PMPslopep}$,
or $\PMPslopem<0\vee(-\PMPslopep)$ and $f\in\initF_{\PMPslopem}$,
or $\PMPslopep\le0\le\PMPslopem$ and $f\in\initF_0$,
or $\PMPslopep=-\PMPslopem>0$. In the last case, there is no $\lambda\in\R$ such that $f\in\initF_\lambda$. This suggests the following.

\begin{problem}
Take $\lambda>0$ and suppose $g$ is a locally bounded strictly positive Borel function satisfying  
$\lim_{\abs{x}\to\infty}\abs{x}^{-1}\log g(x)=\lambda$.
Find the limit points of
\begin{align}\label{pr:lim}
\frac{\int_\R\She(t,y\viiva r,z)\,g(z)\,dz}{\int_\R\She(s,x\viiva r,z)\,g(z)\,dz}
\quad\text{as $r\to-\infty$.} 
\end{align}
\end{problem}


\begin{problem}
Find the limit points in \eqref{pr:lim}
for $\lambda\in\Bruno$ and $g$ as in the previous problem.
\end{problem}

\begin{problem}\label{prob8}
Find the limit points of $\frac{\She(0,\aabullet\viiva r,f(r,\aabullet))}{\She(0,0\viiva r,f(r,\aabullet))}$ as $r\to-\infty$, for $f:\R_-\times\R\to(0,\infty)$ such that $f(r,\aabullet)$ is Borel-measurable  for each $r\in\R_-$ but
$f\not\in\bigcup_{\lambda\in\R}\initF_\lambda$.
\end{problem}

\begin{problem}\label{prob:LDP}
Show that for all $t,y,\lambda \in \R$ and $\sig \in \{+,-\}$, the distribution of $\{r^{-1}X_{\tau r}:\tau\ge0\}$ under the semi-infinite path measure $\Poly_{(t,y)}^{\lambda\sig}$ introduced in Section \ref{sec:poly} satisfies a large deviation principle with rate function
	\[I^\lambda(f)=\frac12\int_0^\infty (f'(\tau)+\lambda)^2\,d\tau,\]
when $f:[0,\infty)\to\R$ is such that $f(0)=0$  and $(f'+\lambda)\in L^2([0,\infty))$ and $I^\lambda(f) =\infty$ otherwise.
\end{problem}



\section{Construction of the Busemann process}\label{sec:stat-coc}
This section constructs the Busemann process.
The starting point  is to build a monotone coupling of the known (ratio-)stationary   distributions of the SHE \eqref{SHE} given by geometric Brownian motion with drift $\lambda\in\R$, all of which are started in the infinite past. To construct the process, we start the equation from a countable collection of coupled stationary distributions driven by the same Brownian motion and then take subsequential weak limits of these measures as the initial time tends to the infinite past.

Because the Busemann process is initially defined through a weak limit, its construction will initially require working on an extended probability space $(\Omhat,\cFhat,\bfP)$. This is a temporary convenience. Once the limit \eqref{eq:buslim} is proved in Section \ref{sec:BusLim}, we know that the Busemann process is measurable with respect to the Green's function and hence the driving white noise. Using this fact, in Section \ref{sec:erg}, we revert to   $(\Omega,\sF,\P)$ and the  technical details concerning $\Omhat$  will become immaterial. 

Let $\Udense$ be a countable dense subset of $\R$. 
Define the product space
	\be\label{Omhat2}   \Omhat=\Omega\times\cC(\R^4,\R)^{\Udense}.\ee
$\cC(\R^4,\R)$ comes with its Polish topology of uniform convergence on compact sets and   $\cC(\R^4,\R)^{\Udense}$ with the product topology and the corresponding Borel $\sigma$-algebra.   
A generic element of $\Omhat$ is denoted by $\what$.
The projection from $\Omhat$ onto $\Omega$ is denoted by $\w$ and the projection onto $\cC(\R^4,\R)^{\Udense}$ by
$\{\Bus^\lambda(s,x,t,y):\lambda\in\Udense,s,x,t,y\in\R\}$.
Define the white noise $W$ on the new space $\Omhat$ by 
$W(\what)=W(\w)$. 
We equip $\Omhat$ with the completion $\cFhat$ of the product $\sigma$-algebra under $\bfP$.

Denote by $\widehat\nulls$ the $\sigma$-algebra generated by the $\bfP$-null sets. 
Let $\filthat{W,0}_{s,t} = \sigma(W(f) : f \in \timefun{s}{t})\vee \widehat\nulls$. For each $s \leq t$, we define $\filhat_{s,t} = \filthat{W,0}_{s-,t+} = \bigcap_{(a,b): \tspa a < s \leq t < b} \filthat{W,0}_{a,b}$ to be the associated natural augmented filtration, which satisfies the ``usual conditions''.   

\begin{proposition}\label{Bproc1}  
There exists a probability measure $\bfP$ on $(\Omhat, \cFhat)$ that satisfies the following.
\begin{enumerate}[label={\rm(\alph*)}, ref={\rm\alph*}]
\item The $\Omega$-marginal of $\bfP$ is $\P$.
\item 
For each  $r,z\in\R$, 
\[ \{\w,\Bus^\lambda(s,x,t,y):\lambda\in\Udense,s,x,t,y\in\R\}\deq\{\shiftd{r}{z}\w,\Bus^\lambda(r+s,z+x,r+t,z+y):\lambda\in\Udense,s,x,t,y\in\R\}.\]
\item For any $T\in\R$, 
$\{\Bus^\lambda(s,x,t,y):\lambda\in\Udense,x,y\in\R,s,t\le T\}$ is independent of 
$\filhat_{T:\infty}$.
\end{enumerate}
\end{proposition}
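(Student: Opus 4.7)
The plan is to construct $\bfP$ as a subsequential weak limit as $r\to-\infty$ of the joint distributions of $(\w,\{\Bus^{\lambda,r}\}_{\lambda\in\Udense})$, where $\Bus^{\lambda,r}$ is a pre-Busemann field built from a ratio-stationary initial condition placed at the past time $r$. First, enlarge $(\Omega,\sF,\P)$ by forming a product with a probability space supporting a standard two-sided Brownian motion $B$ independent of $W$; I retain the names $\Omega$, $\sF$, $\P$ for the enlargement. For each $\lambda\in\Udense$, set $f_\lambda(x)=e^{B(x)+\lambda x}$; then $f_\lambda \in \initF_\lambda$ almost surely by Lemma \ref{specialW}. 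Using the Green's function of \eqref{SHE}, define
\[
\Bus^{\lambda,r}(s,x,t,y)=\log\She(t,y\viiva r,f_\lambda)-\log\She(s,x\viiva r,f_\lambda), \qquad s,t>r,
\]
and extend continuously to all of $\R^4$ by any convention (e.g.\ freezing at $\{s=r\}\cup\{t=r\}$), since these extension values are irrelevant in the eventual limit.

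The Bertini-Giacomin invariance \cite{Ber-Gia-97,Fun-Qua-15} asserts that the equivalence class of $f_\lambda$ modulo additive constants is a stationary distribution for the SHE semigroup on $\CICP$. In particular, for each fixed $s>r$ the spatial process $x\mapsto\log\She(s,x\viiva r,f_\lambda)-\log\She(s,0\viiva r,f_\lambda)$ is distributed as $B(x)+\lambda x$. This uniform spatial control, combined with the local H\"older regularity and polynomial pointwise bounds on $\log\Shenorm$ proved in \cite{Alb-etal-22-spde-}, yields modulus-of-continuity estimates on $\Bus^{\lambda,r}$ over any compact $K\subset\R^4$ that are uniform in $r$ sufficiently negative (depending on $K$). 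Since $\Udense$ is countable, coordinatewise tightness gives joint tightness of $\{\Bus^{\lambda,r}\}_{\lambda\in\Udense}$ in $\cC(\R^4,\R)^{\Udense}$, and Prokhorov's theorem furnishes a subsequential weak limit $\bfP$ of the joint law of $(\w,\{\Bus^{\lambda,r}\}_{\lambda\in\Udense})$ along some $r_n\to-\infty$.

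Verification of (a)--(c) is then routine. Property (a) is immediate since $\P$ is the $\Omega$-marginal of every pre-limit distribution. For (b), combining the shift covariance \eqref{eq:cov.shift} of $\She$ with the translation invariance in law of $B$ shows that the joint distribution of $(\w,\Bus^{\lambda,r})$, composed with the natural spacetime shift, equals the joint distribution of $(\w,\Bus^{\lambda,r-t_0})$ evaluated in translated coordinates; since $r_n-t_0\to-\infty$ along the same subsequence, the two weak limits agree, so $\bfP$ inherits joint shift invariance. For (c), note that for each fixed $r$ and $T$, the values $\{\Bus^{\lambda,r}(s,x,t,y):s,t\le T,\lambda\in\Udense\}$ are measurable with respect to the past white noise up to time $T$ together with $B$, which is independent of $\filhat_{T:\infty}$; independence against a fixed sigma-algebra is preserved under weak limits.

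The main obstacle is the tightness step, which requires modulus-of-continuity bounds on $\Bus^{\lambda,r}$ that do not degenerate as $r\to-\infty$ despite the initial condition receding arbitrarily far into the past. Overcoming it leans on the fact that $\log\She(s,\aabullet\viiva r,f_\lambda)$ equals a spatial Brownian motion with drift $\lambda$ (modulo the additive constant $\log\She(s,0\viiva r,f_\lambda)$) for every $s>r$, so the full log-process decomposes into a stationary spatial piece plus local H\"older fluctuations in the time variables controlled by the quantitative estimates on $\Shenorm$ from \cite{Alb-etal-22-spde-}, and this decomposition is stable as $r$ recedes to $-\infty$.
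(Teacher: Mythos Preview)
Your strategy—pre-Busemann fields from Brownian initial data at time $r$, Bertini--Giacomin stationarity for control, subsequential weak limit as $r\to-\infty$—is the paper's. The gap is in (b). The sentence ``since $r_n-t_0\to-\infty$ along the same subsequence, the two weak limits agree'' is not justified: $(r_n-t_0)$ is a \emph{different} sequence from $(r_n)$, and you have only established tightness, not convergence of the full family. The joint law of $(\omega,\{\Bus^{\lambda,r}\}_{\lambda\in\Udense})$ genuinely depends on $r$: although each single-$\lambda$ marginal restricted to times $\ge T$ is $r$-independent (this is the paper's Lemma~\ref{lm:stat}), the joint law across different $\lambda$ is not. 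For instance, at time $r$ the processes $x\mapsto\Bus^{\lambda,r}(r,0,r,x)=B(x)+\lambda x$ differ only by deterministic linear drifts across $\lambda$, a degeneracy that is broken after evolving forward. So different subsequences may give different limits, and your shifted subsequence need not reproduce $\bfP$.

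The paper closes this gap by taking Ces\`aro averages $\frac{1}{T-S_j}\int_{S_j}^T\bfP_R|_{\Omhat_T}\,dR$ before extracting subsequential limits (Lemma~\ref{lm:tight}). Under the shift $\shiftd{r}{z}$ this average becomes $\frac{1}{T-S_j}\int_{S_j-r}^{T-r}\bfP_R\,dR$, which differs from the original in total variation by at most $2|r|/|T-S_j|\to0$; hence any subsequential limit of the Ces\`aro family is automatically shift-invariant. Tightness is unaffected since the single-$\lambda$ marginals are constant in $R$. As a side remark, this same constancy makes your modulus-of-continuity sketch for tightness unnecessary: each coordinate in the countable product $\cC(\R^4,\R)^{\Udense}$ is a constant sequence of laws, so tightness is immediate.
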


In the course of the proof, we establish a monotonicity property (see Lemma \ref{lm:mono}) that allows us to define the full Busemann process 
$\bigr\{\Bus^{\lambda\sig}(s,x,t,y):s,x,t,y,\lambda\in\R,\,\sigg\in\{-,+\}\bigl\}$ on $(\Omhat,\cFhat)$
by taking left and right limits in the slope parameter:
  	\[\Bus^{\lambda-}(s,x,t,y)
	=\lim_{\Udense\ni\mu\nearrow\lambda}\Bus^{\mu}(s,x,t,y)
	\quad\text{and}\quad 
	\Bus^{\lambda+}(s,x,t,y)=\lim_{\Udense\ni\mu\searrow\lambda}\Bus^{\mu}(s,x,t,y),\quad s,x,t,y,\lambda\in\R.\]
Lemma \ref{b-=b+} says that for $\lambda\in\Udense$, $\Bus^{\lambda\pm}\equiv\Bus^\lambda$,
i.e.,\ the new definitions extend the old one.
The next result gives the properties of the above process.

\begin{proposition}\label{Bproc2}  
The measure $\bfP$ satisfies:
\begin{enumerate}[label={\rm(\alph*)}, ref={\rm\alph*}]
\item\label{Bproc.-=+hat} For each $\lambda\in\R$, $\bfP\{\Bus^{\lambda-}(s,x,t,y)=\Bus^{\lambda+}(s,x,t,y)\ \forall s,x,t,y\}=1$. 
\item\label{Bproc.BMhat} For each $t,\lambda\in\R$ and $\sigg\in\{-,+\}$,  the process  $\{\Bus^{\lambda\sig}(t,x,t,y):x,y\in\R\}$
has the same distribution under $\bfP$ as $B(y)-B(x)+\lambda(y-x)$, where $B$ is a 
two-sided standard Brownian motion.
\end{enumerate}
There exists an event $\Omhat_0\in\, \cFhat$ such that $\bfP(\Omhat_0)=1$
and the following hold for all $\what\in\Omhat_0$.
\begin{enumerate}[resume,label={\rm(\alph*)}, ref={\rm\alph*}]
\item\label{Bproc.ahat} For all $x<y$, all $t$, and $\mu<\lambda$, 
	\begin{align}\label{Busmonohat}
	\begin{split}
	&\Bus^{\mu-}(t,x,t,y)\le\Bus^{\mu+}(t,x,t,y)\le\Bus^{\lambda-}(t,x,t,y)\le\Bus^{\lambda+}(t,x,t,y)\quad\text{and}\\
	&\Bus^{\mu-}(t,y,t,x)\ge\Bus^{\mu+}(t,y,t,x)\ge\Bus^{\lambda-}(t,y,t,x)\ge\Bus^{\lambda+}(t,y,t,x).
	\end{split}
	\end{align} 
\item\label{Bproc.bhat} For all $r,s,x,t,y,z$ and all $\sigg\in\{-,+\}$
	\begin{align}\label{cocyclehat}
	\Bus^{\lambda\sig}(r,x,s,y)+\Bus^{\lambda\sig}(s,y,t,z)=\Bus^{\lambda\sig}(r,x,t,z).
	\end{align}
 \item\label{Bproc.chat} For all $s,x,t,y,\lambda$  and all $\sigg\in\{-,+\}$
	\begin{align}\label{Busconthat}
	\Bus^{\lambda-}(s,x,t,y)
	=\lim_{\mu\nearrow\lambda}\Bus^{\mu\sig}(s,x,t,y)
	\quad\text{and}\quad 
	\Bus^{\lambda+}(s,x,t,y)=\lim_{\mu\searrow\lambda}\Bus^{\mu\sig}(s,x,t,y).
	\end{align}
\item\label{Bproc.dhat} For all  $t>r$, all $s,x,y$, and all $\sigg\in\{-,+\}$
	\begin{align}\label{NEBus2hat}
	e^{\Bus^{\lambda\sig}(s,x,t,y)}
	&= \int_{-\infty}^\infty  \She(t,y\viiva r,z)e^{\Bus^{\lambda\sig}(s,x,r,z)}\, dz.
	\end{align}
\end{enumerate}
\end{proposition}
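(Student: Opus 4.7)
The plan is to leverage the pre-limit properties encoded in the construction of $\bfP$ in Proposition~\ref{Bproc1}, combined with the slope monotonicity of Lemma~\ref{lm:mono}, to verify (a)--(f), handling the countable-slope case first and then bootstrapping to all $\lambda \in \R$.

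\textbf{Properties for $\lambda \in \Udense$.} The pre-limit Busemann increments are differences of logs of SHE solutions driven by a common realization of $\She$ from coupled geometric Brownian initial conditions with drifts $\lambda \in \Udense$, the couplings chosen so that the drifted Brownian increments $B(y)-B(x)+\mu(y-x)$ for $x<y$ are stochastically monotone in $\mu$. In the pre-limit: the cocycle \eqref{cocyclehat} is tautological from telescoping differences of logs; the harmonic identity \eqref{NEBus2hat} is the Chapman--Kolmogorov relation \eqref{CK} divided through by a common factor; the Brownian-with-drift marginal of (b) at $s=t$ is the known ratio-stationarity of geometric Brownian motion with drift under SHE \cite{Ber-Gia-97,Fun-Qua-15}; and the spatial monotonicity
\[
\Bus^\mu(t,x,t,y) \le \Bus^\lambda(t,x,t,y) \qquad (\mu<\lambda\text{ in }\Udense,\ x<y)
\]
follows from the initial-condition ordering together with stochastic monotonicity of the continuum point-to-line polymer measures (Section~\ref{sec:CDRP}) in their terminal conditions. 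Each of these is a closed relation that survives the weak limit defining $\bfP$.

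\textbf{Extension to $\lambda \in \R$ and continuity.} I use \eqref{NEBus2hat} to write
\[
e^{\Bus^\mu(s,x,t,y)} = \int_\R \She(t,y\viiva s,w)\,e^{\Bus^\mu(s,x,s,w)}\,dw,
\]
which promotes equal-time monotonicity in $\mu$ to the time-displaced setting. Hence the pointwise monotone limits $\Bus^{\lambda\pm}(s,x,t,y)$ as $\Udense \ni \mu \to \lambda^\pm$ exist, defining a priori semi-continuous candidates. To upgrade them to members of $\cC(\R^4,\R)$, I would derive Kolmogorov-type moment estimates for the increments of $\Bus^\mu$ in $(s,x,t,y)$ that are \emph{uniform} over $\mu$ in bounded slope intervals: at equal times this is immediate from the Brownian-with-drift marginals; at general times the integral representation above combined with the H\"older regularity of $\rnShe$ recalled in Section~\ref{solve-SHE} and the shape bound in Theorem~\ref{thm:Z-cont} supply the uniformity. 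Equicontinuity plus monotone convergence then force continuous locally uniform limits and allow \eqref{cocyclehat}, \eqref{Busconthat}, and \eqref{NEBus2hat} to extend from $\Udense$ to all $\lambda \in \R$, the last by dominated convergence against a Gaussian-times-exponential dominator.

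\textbf{Parts (a) and (b); main obstacle.} For $\lambda \notin \Udense$, the equal-time increments $\Bus^{\lambda\pm}(t,x,t,y)$ are monotone $L^2$-limits of $B(y)-B(x)+\mu(y-x)$ along $\Udense \ni \mu \to \lambda^\pm$, with limits $B(y)-B(x)+\lambda(y-x)$, yielding (b). For (a), fix $\lambda$: the extended monotonicity gives $\Bus^{\lambda-}(t,x,t,y) \le \Bus^{\lambda+}(t,x,t,y)$, while (b) gives equal laws, forcing $\bfP$-a.s.\ equality for each fixed $(t,x,y)$ and hence on a countable dense set. Substituting the common equal-time profile into \eqref{NEBus2hat} for both signs yields the same time-displaced expression, and continuity in $(s,x,t,y)$ eliminates the remaining null set. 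The principal technical hurdle is the uniform-in-$\mu$ continuity estimate in the extension step: the equal-time Brownian modulus is classical, but transferring it to displaced times via \eqref{NEBus2hat} with bounds uniform over compact slope ranges requires tight tail control on $\log \She$, and keeping this control uniform as $\mu$ varies is the main bookkeeping burden needed to ensure the monotone limits are genuinely continuous rather than only semi-continuous.
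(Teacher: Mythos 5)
Your outline is structurally sound for $\lambda \in \Udense$ — cocycle, propagation, monotonicity, and the Brownian marginal all pass to the weak limit, just as in the paper — and your argument for part \eqref{Bproc.-=+hat} (the $X \le Y$ a.s.\ together with $X \overset{d}{=} Y$ forces $X=Y$ a.s.) is a valid alternative to the paper's Fubini-on-expectations argument. The overall shape is right, but two points diverge meaningfully from the paper's route and deserve comment.

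First, the extension step. You propose to define $\Bus^{\lambda\pm}(s,x,t,y)$ for $\lambda\notin\Udense$ by taking pointwise monotone limits of $\Bus^{\mu}(s,x,t,y)$ over all four variables simultaneously and then upgrading semicontinuity to continuity via a Kolmogorov moment estimate that is uniform in $\mu$; you acknowledge this is ``the main bookkeeping burden'' and leave it unverified. The paper avoids this entirely: it takes the monotone limits \emph{only at equal times} (where the sandwich between two continuous processes $\Bus^{\kappa}$, $\Bus^{\kappa'}$ with $\kappa<\lambda<\kappa'$ in $\Udense$ already supplies all necessary control), and then \emph{defines} the displaced-time Busemann directly by the propagation identity
\[
e^{\Bus^{\lambda\pm}(s,x,t,y)}=\int_{-\infty}^{\infty}\She(t,y\viiva s,z)\,e^{\Bus^{\lambda\pm}(s,x,s,z)}\,dz ,
\]
as in equation \eqref{b:def2}. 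Continuity in $\lambda$ (part \eqref{Bproc.chat}) then follows from monotone convergence, the cocycle and the propagation identity hold by construction, and no moment estimates are needed. Note also that joint continuity of $\Bus^{\lambda\pm}$ in $(s,x,t,y)$ is not actually part of the statement of Proposition~\ref{Bproc2}; it appears only in Theorem~\ref{main:Bus} and is derived separately. Your Kolmogorov-estimate plan is therefore both harder than necessary and proving more than is asked.

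Second, the order of \eqref{Bproc.-=+hat} and \eqref{Bproc.BMhat}. You prove (b) first and use it to prove (a); the paper proves (a) first (by showing the $\pm$ difference has zero mean via Lemma~\ref{lm:marginal} with $S=T$ and then integrating against $dz$ over a bounded interval) and then obtains (b) from Lemma~\ref{b-=b+} and the arbitrariness of $\Udense$. Your order works, but the wording ``with limits $B(y)-B(x)+\lambda(y-x)$'' is misleading: the pre-limit variables $\Bus^{\mu}(t,x,t,y)$ are \emph{not} the increments of a single Brownian motion with varying drift, so the a.s.\ monotone limit is not literally of that form; what you actually get is that the limit process has the same finite-dimensional distributions as $B(y)-B(x)+\lambda(y-x)$, which is the intended content. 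Finally, once you have $\bfP$-a.s.\ equality of $\Bus^{\lambda+}(t,x,t,y)$ and $\Bus^{\lambda-}(t,x,t,y)$ for each fixed triple, the cleanest bridge is Fubini (giving a.s.\ equality for Lebesgue-a.e.\ $(x,y)$ at each fixed $t$), after which the integral definition transfers the identity to all displaced times and the cocycle propagates it to all of $\R^4$; invoking continuity in $(s,x,t,y)$ here is circular, since that continuity is itself established downstream.
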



The rest of this section is dedicated to the construction of $\bfP$ and the proofs of Propositions \ref{Bproc1} and \ref{Bproc2}.\medskip
%

Let $\BM$ be the distribution on $\cC(\R,\R)$ of two-sided standard Brownian motion $\{B(x):x\in\R\}$ with $B(0)=0$. 
Let $\P\otimes\BM$ be the product probability measure on the space $\Omega\times\cC(\R,\R)$, equipped with the completion of the product Borel $\sigma$-algebra.
On this space, for $z,\lambda\in\R$, define 
\be\label{f-la}  f_\lambda(z)=e^{B(z)+\lambda z}
\quad\text{for }  z\in\R. \ee
Then  $f_\lambda\in\ICM$ almost surely. For $t\ge S$  and $x \in \R$,
define $\She(t,x\viiva S,f_\lambda)$ on $\Omega\times\cC(\R,\R)$ as per \eqref{NEind}.   
By \cite[Theorem 2.6]{Alb-etal-22-spde-} and \eqref{pr:IC:1},
we have that $\P\otimes\BM$-almost surely, 
$\She(t,y\viiva S,f_\lambda)$ is positive and continuous on $(t,y)\in[S,\infty)\times\R$.
For $x,y\in\R$ and $s,t\ge S$, let 
	\begin{align}\label{def:bS}
	\Bus^\lambda_S(s,x,t,y)=\log \She(t,y\viiva S,f_\lambda)-\log \She(s,x\viiva S,f_\lambda).
	\end{align}
Note that \eqref{CK} implies that for $S$ fixed, $\P\otimes\BM$-almost surely,
simultaneously for all  $t>r>S$, $s>S$, and $x,y$,
	\begin{align}
	\Bus^\lambda_S(s,x,t,y)
	&=\log \frac{ \int_{-\infty}^\infty  \She(t,y\viiva r,z) \She(r,z\viiva S,f_\lambda)\,dz}{\She(s,x\viiva S,f_\lambda)} = \log\int_{-\infty}^\infty  \She(t,y\viiva r,z)e^{\Bus^\lambda_S(s,x,r,z)}\, dz.	\label{NEBus}
	\end{align}
This implies that for any $r\ge S$, one can compute $\{\Bus^\lambda_S(s,x,t,y):s,t\in(r,\infty),x,y\in\R\}$ from 
$\{\Bus^\lambda_S(r,x,r,y):x,y\in\R\}$ and $\{\She(t,y\viiva r,z): t\in(r,\infty), y,z\in\R\}$.

\begin{lemma}\label{lm:mono}
Take $S\in\R$ and $\lambda>\mu$. Then $\P\otimes\BM$-almost surely, 
for all real $x<y$ and $t\ge S$, $\Bus^\lambda_S(t,x,t,y)\ge\Bus^\mu_S(t,x,t,y)$. 
\end{lemma}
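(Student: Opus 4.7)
The plan is to reduce the monotonicity in $\lambda$ to a total positivity of order two (TP2) property of the stochastic Green's function, and then to establish the latter by mollifying the noise and exploiting a Brownian bridge reflection argument.

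First, the inequality $\Bus^\lambda_S(t,x,t,y)\ge\Bus^\mu_S(t,x,t,y)$ is equivalent to the non-negativity of
\[
N:=\She(t,y\viiva S,f_\lambda)\She(t,x\viiva S,f_\mu)-\She(t,y\viiva S,f_\mu)\She(t,x\viiva S,f_\lambda).
\]
Writing each factor as a double integral via \eqref{NEind} and then antisymmetrizing the combined integrand under $z_1\leftrightarrow z_2$, one obtains
\[
2N=\iint\bigl[\She(t,y\viiva S,z_1)\She(t,x\viiva S,z_2)-\She(t,y\viiva S,z_2)\She(t,x\viiva S,z_1)\bigr]\bigl[f_\lambda(z_1)f_\mu(z_2)-f_\mu(z_1)f_\lambda(z_2)\bigr]dz_1 dz_2.
\]
The second bracket equals $e^{B(z_1)+B(z_2)+\mu(z_1+z_2)}\bigl(e^{(\lambda-\mu)z_1}-e^{(\lambda-\mu)z_2}\bigr)$, which has the sign of $z_1-z_2$ since $\lambda>\mu$. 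Thus the monotonicity reduces to the TP2 inequality
\begin{equation}\label{pl:TP2}
\She(t,y\viiva S,z_1)\She(t,x\viiva S,z_2)\ge\She(t,y\viiva S,z_2)\She(t,x\viiva S,z_1)\quad\text{for }x<y,\ z_1>z_2,
\end{equation}
which must hold $\P$-almost surely. By the joint continuity of $\She$ from Theorem \ref{thm:She1}, it suffices to verify \eqref{pl:TP2} for each fixed quadruple of rationals $(x,y,z_1,z_2)$ with $x<y$, $z_1>z_2$ on a full-probability event and extend by continuity.

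The heat kernel $\heat$ satisfies the analogue of \eqref{pl:TP2} by the elementary identity $(y-z_2)^2+(x-z_1)^2-(y-z_1)^2-(x-z_2)^2=2(y-x)(z_1-z_2)\ge 0$. To lift this to $\She$, I would mollify the noise: set $W^\e=W*\varrho_\e$ for a smooth compactly supported spacetime mollifier $\varrho_\e$, and let $\She^\e$ denote the resulting (classical) Green's function, which admits the Feynman-Kac representation
\[
\She^\e(t,y\viiva S,z)=\heat(t-S,y-z)\,\bfE^{(S,z)\to(t,y)}\!\Bigl[\exp\!\int_S^t W^\e(r,B_r)\,dr\Bigr],
\]
where $\bfE^{(S,z)\to(t,y)}$ denotes expectation over a Brownian bridge from $(S,z)$ to $(t,y)$. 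Given two independent Brownian bridges $B^{(1)}\colon(S,z_1)\to(t,y_2)$ and $B^{(2)}\colon(S,z_2)\to(t,y_1)$ with $z_1<z_2$, $y_1<y_2$, the paths cross almost surely. Reflecting each across their first common crossing time defines a measure-preserving bijection onto the subset of pairs of bridges from $(S,z_1)$ to $(t,y_1)$ and $(S,z_2)$ to $(t,y_2)$ that cross. The total potential $\int W^\e(r,B^{(1)}_r)dr+\int W^\e(r,B^{(2)}_r)dr$ is invariant under this swap, so dropping the sub-unit crossing indicator in the swapped representation yields \eqref{pl:TP2} for $\She^\e$. Passing $\e\downarrow 0$ using the known convergence of mollified Green's functions to $\She$ preserves the inequality.

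The main obstacle is establishing \eqref{pl:TP2} for the rough-noise Green's function. The bridge-reflection argument is clean in the smooth setting, but the limit $\e\downarrow 0$ requires $\P$-a.s.\ pointwise control of $\She^\e\to\She$ on a single full-probability event, which must be handled carefully through a suitable continuity statement. A cleaner alternative is to approximate $\She$ by the partition function of a semi-discrete polymer (for example, the O'Connell-Yor model), for which the analogue of \eqref{pl:TP2} is an elementary combinatorial consequence of an up-right lattice-path swap, and then to invoke the known convergence of the rescaled polymer partition function to $\She$.
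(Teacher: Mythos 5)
Your reduction is correct, and it is genuinely different from the paper's route. The paper differentiates $\Bus^\lambda_S(t,x,t,y)$ in $\lambda$, recognizes the derivative as the difference $E^{\Poly_{(t,y),(S,f_\lambda)}}[X_S]- E^{\Poly_{(t,x),(S,f_\lambda)}}[X_S]$ of polymer expectations, and concludes by invoking the stochastic monotonicity $\Poly_{(t,x),(S,f_\lambda)}\std\Poly_{(t,y),(S,f_\lambda)}$ from Proposition~2.18 of \cite{Alb-etal-22-spde-}, which is itself a Karlin--McGregor consequence. You instead symmetrize the cross product of the two partition functions and reduce directly to the two-by-two total positivity inequality for $\She$, isolating the sign of $(e^{(\lambda-\mu)z_1}-e^{(\lambda-\mu)z_2})$. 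This is a tidier and more elementary reduction; the paper's differentiation route buys the nice probabilistic formula $\partial_\lambda\Bus^\lambda_S(t,x,t,y)=E^{\Poly_{(t,y),(S,f_\lambda)}}[X_S]- E^{\Poly_{(t,x),(S,f_\lambda)}}[X_S]$, but both approaches bottom out on total positivity of the Green's function.

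Where you overshoot is in proposing to prove the TP2 inequality \eqref{pl:TP2} from scratch. That inequality is exactly the $n=2$ case of Theorem~2.15 of the companion paper \cite{Alb-etal-22-spde-} (strict total positivity of $\She$ on a single full-probability event), which this paper already cites and records as \eqref{crossing} in Lemma~\ref{lm:comp}. You can simply invoke it and the argument closes immediately; there is no need for a mollification or semi-discrete polymer approximation scheme. If you did want to prove it independently, the mollification plus bridge-reflection route has the genuine difficulty you flag: Wong--Zakai-type convergence of $\She^\e$ to $\She$ does not, in the standard form, give pointwise almost-sure convergence on one event of full probability simultaneously for all parameters, so extracting the limiting pointwise inequality requires more than a one-line passage. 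The semi-discrete (O'Connell--Yor) approximation is indeed the cleaner path, and is essentially the mechanism behind the cited result. One small caution for the rigor of your symmetrization step: you should justify the absolute convergence of the double integral before antisymmetrizing under $z_1\leftrightarrow z_2$; this follows from $f_\lambda, f_\mu\in\ICM$ almost surely together with \eqref{pr:IC:1}, as the paper also needs when it justifies differentiating under the integral.
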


\begin{proof}

By \eqref{NEind},  
\begin{align*}   
\Bus^\lambda_S(t,x,t,y) &= 
\log \She(t,y\viiva S,f_\lambda)-\log \She(t,x\viiva S,f_\lambda)  \\
&=\log  \int_{-\infty}^\infty e^{B(z)+\lambda z} \She(t,y\viiva S,z)\, dz
- \log \int_{-\infty}^\infty e^{B(z)+\lambda z} \She(t,x\viiva S,z)\, dz. 
\end{align*} 
Differentiate with respect to  $\lambda$ to get 
	\[\partial_\lambda\Bus^\lambda_S(t,x,t,y)
	=\frac{\int_{-\infty}^\infty z \,e^{B(z)+\lambda z}\She(t,y\viiva S,z)\,dz}
	{\int_{-\infty}^\infty e^{B(z)+\lambda z}\She(t,y\viiva S,z)\,dz}
	-\frac{\int_{-\infty}^\infty z \,e^{B(z)+\lambda z}\She(t,x\viiva S,z)\,dz}
	{\int_{-\infty}^\infty e^{B(z)+\lambda z}\She(t,x\viiva S,z)\,dz}\,.\]
The differentiation is justified by \eqref{pr:IC:1}: 
since $B$ does not grow faster than linearly, $\P\otimes\BM$ almost surely,
\[   \int_{-\infty}^\infty   \She(t,y\vert S,z) \,\abs z e^{B(z)+c\abs{z}}  \,dz <\infty\]
for all $c\ge0$ and all $S,t,y$ with $t>S$.

Consider the Markov process $\Poly_{(t,x),(S,f_\lambda)}$ defined in Section \ref{sec:CDRP}.
Denote the position of the Markov process at times $s$ by $X_s$. 
By Proposition 2.18 in \cite{Alb-etal-22-spde-}, we have that if $x<y$, then $\Poly_{(t,x),(S,f_\lambda)}$ is stochastically dominated by 
$\Poly_{(t,y),(S,f_\lambda)}$. Therefore, $E^{\Poly_{(t,y),(S,f_\lambda)}}[X_S]\ge E^{\Poly_{(t,x),(S,f_\lambda)}}[X_S]$.

Now compute 
\begin{align*}
E^{\Poly_{(t,x),(S,f_\lambda)}}[X_S] &=\int_{-\infty}^\infty z \,\pi_{S,f_\lambda}(S,z\viiva t,x)\,dz
=\int_{-\infty}^\infty z \,\frac{\She(t,x\viiva S,z)\She(S,z\viiva S,f_\lambda)}{\She(t,x\viiva S,f_\lambda)}\,dz \\
&=\int_{-\infty}^\infty z \,\frac{\She(t,x\viiva S,z)e^{B(z)+\lambda z}}{\She(t,x\viiva S,f_\lambda)}\,dz 
= \frac{\int_{-\infty}^\infty z \,e^{B(z)+\lambda z}\She(t,x\viiva S,z)\,dz}
	{\int_{-\infty}^\infty e^{B(z)+\lambda z}\She(t,x\viiva S,z)\,dz}\,. 
\end{align*} 
From this, 
	\[\partial_\lambda\Bus^\lambda_S(t,x,t,y)
	=E^{\Poly_{(t,y),(S,f_\lambda)}}[X_S]- E^{\Poly_{(t,x),(S,f_\lambda)}}[X_S]\ge0.\]
%
This proves that $\Bus^\lambda_S(t,x,t,y)$ is nondecreasing in $\lambda$ and the proof is complete.
\end{proof}

We turn to extending the distribution of $(\w, \Bus_S)$ as we let $S\searrow-\infty$. Note that the distributional equality in \eqref{PS=PT} below  is not valid without restricting $\She$ to $[T, \infty)$ because $\Bus^\lambda_S$ on $[T,\infty)$ depends on $\She$ in the time interval $(S,T)$. We get around this inconsistency  by averaging over $S\in(-\infty,T)$.

\begin{lemma}\label{lm:stat}
Let $\lambda\in\R$ and $S\le T$. Then, under $\P\otimes\BM$,
\begin{align}\label{PS=PT}
\bigl(\She\big|_{([T,\infty)\times\R\times[T,\infty)\times\R)\cap\varsets}\,,\,\Bus^\lambda_S\big|_{[T,\infty)\times\R\times[T,\infty)\times\R} \bigr)\overset{d}=
\bigl(\She\big|_{([T,\infty)\times\R\times[T,\infty)\times\R)\cap\varsets}\,,\,\Bus^\lambda_T\bigr).
\end{align} 
In particular,
\begin{align}\label{blam-dist}
\{ \Bus^{\lambda}_S(T,x,T,y): x,y\in\R\}  \;\overset{d}=  \;
\{  B(y)-B(x)+\lambda(y-x):  x,y\in\R\}  . 
\end{align}
\end{lemma}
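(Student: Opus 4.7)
The plan is to use the cocycle property \eqref{CK} to decompose $\Bus^\lambda_S$ restricted to $[T,\infty)\times\R\times[T,\infty)\times\R$ as a Busemann function built from an ``initial data at time $T$,'' and then to identify the joint law of this initial data with the original $f_\lambda$.

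First, by \eqref{CK}, for all $s,t \ge T$ and $x,y\in\R$,
\[
\She(t,y\viiva S,f_\lambda)=\int_\R \She(t,y\viiva T,z)\,\She(T,z\viiva S,f_\lambda)\,dz.
\]
Set
\[
\clh(z):=\frac{\She(T,z\viiva S,f_\lambda)}{\She(T,0\viiva S,f_\lambda)}, \qquad z\in\R,
\]
so that, by linearity of \eqref{NEind:m9}, the normalizing constant $\She(T,0\viiva S,f_\lambda)$ cancels in the ratio defining $\Bus^\lambda_S(s,x,t,y)$. Consequently, for all $s,t\ge T$ and $x,y\in\R$,
\[
\Bus^\lambda_S(s,x,t,y)=\log \She(t,y\viiva T,\clh)-\log \She(s,x\viiva T,\clh).
\]
Thus $\Bus^\lambda_S|_{[T,\infty)\times\R\times[T,\infty)\times\R}$ is a measurable functional of the pair $(\She|_{([T,\infty)\times\R)^2\cap\varsets},\clh)$, via exactly the same functional that produces $\Bus^\lambda_T$ from $(\She|_{([T,\infty)\times\R)^2\cap\varsets},f_\lambda)$.

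Next I would establish two distributional facts about $\clh$ under $\P\otimes\BM$:
\begin{enumerate}[label=\textup{(\alph*)}]
\item $\clh$ is independent of the restriction of $\She$ to $([T,\infty)\times\R)^2\cap\varsets$.
\item $\clh$ has the same law as $f_\lambda=e^{B(\acbullet)+\lambda\acbullet}$.
\end{enumerate}
Item (a) follows from the independence structure of the white noise together with the construction of $\She$ in \cite{Alb-etal-22-spde-}: $\She(T,z\viiva S,f_\lambda)$ is determined by the white noise on $[S,T]\times\R$ and the Brownian motion $B$, both of which are independent of the white noise on $[T,\infty)\times\R$. Since the restriction of the Green's function to $([T,\infty)\times\R)^2\cap\varsets$ is $\fil_{T:\infty}$-measurable, independence follows.

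Item (b) is the known Bertini--Giacomin stationarity of Brownian motion with drift for the SHE modulo multiplicative constants; see \cite[Proposition B.2]{Ber-Gia-97} and the Hopf--Cole transfer recalled in \cite{Fun-Qua-15}. In the present coupling this says precisely that, as a (nonrandom) time-$T$-measurable function on $\R$, the equivalence class $\eqcl{\She(T,\acbullet\viiva S,f_\lambda)}$ has the same law as $\eqcl{f_\lambda}$, which, after normalization at $z=0$, is the statement $\clh\deq f_\lambda$.

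Combining (a), (b), and the functional identity above gives
\[
\bigl(\She|_{([T,\infty)\times\R)^2\cap\varsets},\,\Bus^\lambda_S|_{[T,\infty)\times\R\times[T,\infty)\times\R}\bigr)
\deq
\bigl(\She|_{([T,\infty)\times\R)^2\cap\varsets},\,\Bus^\lambda_T\bigr),
\]
which is \eqref{PS=PT}. The specialization \eqref{blam-dist} then follows by evaluating both sides at $s=t=T$, observing that $\Bus^\lambda_T(T,x,T,y)=\log f_\lambda(y)-\log f_\lambda(x)=B(y)-B(x)+\lambda(y-x)$.

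I expect the main subtlety to be justifying item (b) at the level of the precise coupling used here, i.e.~passing between the Bertini--Giacomin stationarity statement (originally formulated via the mild solution for a fixed random initial condition) and the current Green's function coupling of \cite{Alb-etal-22-spde-}. This is where indistinguishability of the mild and Green's function solutions for fixed initial data in $\ICM$, which is guaranteed by Theorem \ref{thm:She1}, plays the decisive role; once this identification is in hand, independence in (a) and the distributional identity in (b) give the result.
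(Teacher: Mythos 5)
Your proposal is correct and follows essentially the same route as the paper's proof: rewrite $\Bus^\lambda_S$ on $[T,\infty)$ as the Busemann functional built from the time-$T$ data $\clh(z)=e^{\Bus^\lambda_S(T,0,T,z)}$ via the propagation identity, then argue that $\clh$ has the same law as $f_\lambda$ and is independent of the Green's function on $[T,\infty)$. The only cosmetic difference is that you split the key input into independence (from the white noise adaptedness structure) and stationarity (from Bertini--Giacomin), whereas the paper cites \cite[Proposition 3.19]{Fun-Qua-15} or \cite[Remark 8.3]{Gub-Per-17} for both at once; you also correctly anticipate that the bridge between the mild formulation and the Green's-function coupling (via Theorem \ref{thm:She1}, i.e.\ Lemma A.5 of \cite{Alb-etal-22-spde-}) is the point requiring care, which is exactly where the paper's proof begins.
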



\begin{proof}
By Lemma A.5 in \cite{Alb-etal-22-spde-},
for each fixed $S$ and $\lambda$,   $\She(\aabullet,\aabullet\tsp\viiva S,f_\lambda)$ is indistinguishable from the unique adapted continuous solution to the mild equation 
\[\She(t,y\viiva S,f_\lambda) = \int_{-\infty}^\infty \heat(t-S,y-x) f_\lambda(x)\,(dx) + \int_S^t \int_{-\infty}^\infty \heat(t-u, y-z) \She(u,z\viiva S,f_\lambda) W(du\, dz).\]
%

By Proposition 3.19 on page 218 in \cite{Fun-Qua-15} or Remark 8.3 in \cite{Gub-Per-17}, we have that under $\P\otimes\BM$, 
\[\Bigl\{e^{\Bus^\lambda_S(T,0,T,y)}=\frac{\She(T,y\viiva S,f_\lambda)}{\She(T,0\viiva S,f_\lambda)}:y\in\R\Bigr\}
\;\overset{d}=\;\{f_\lambda(y):y\in\R\}\]
and the processes on both sides of the above equality are independent of $\fil_{T:\infty}$.
Furthermore, by \eqref{NEBus} with $s=r=T$ and $x=0$,
\[\Bigl\{e^{\Bus^\lambda_S(T,0,t,y)}=\frac{\She(t,y\viiva S,f_\lambda)}{\She(T,0\viiva S,f_\lambda)}:t\ge T,y\in\R\Bigr\}\]
satisfies 
	\begin{align*}
	e^{\Bus^\lambda_S(T,0,t,y)}
	&= \int_{-\infty}^\infty  \She(t,y\viiva T,z)e^{\Bus^\lambda_S(T,0,T,z)}\, dz.
	\end{align*}
By \eqref{NEind}, $\{\She(t,y\viiva T,f_\lambda):t\ge T,y\in\R\}$ satisfies the same formula, with
$e^{\Bus^\lambda_S(T,0,T,\aabullet)}$ replaced by $f_\lambda$. 
Consequently, for each $S\le T$,
\begin{align*}
\bigl(\She\big|_{([T,\infty)\times\R\times[T,\infty)\times\R)\cap\varsets}\,,\,e^{\Bus^\lambda_S(T,0,\aabullet,\aabullet)}\big|_{[T,\infty)\times\R}\bigr)
\overset{d}=\bigl(\She\big|_{([T,\infty)\times\R\times[T,\infty)\times\R)\cap\varsets}\,,\,\She(\aabullet,\aabullet\tsp\viiva T,f_\lambda)\big|_{[T,\infty)\times\R}\bigr).
\end{align*} 
In particular, the distribution of the left-hand side does not depend on $S$ and \eqref{PS=PT} follows from this and from
	\[\Bus^\lambda_S(s,x,t,y)=\Bus^\lambda_S(T,0,t,y)-\Bus^\lambda_S(T,0,s,x).\]
Claim \eqref{blam-dist} follows because
 $\Bus^\lambda_T(T,x,T,y)=\log f_\lambda(y)-\log f_\lambda(x)$ by \eqref{NEind} and \eqref{def:bS}.
\end{proof}

Recall that $\Udense$ is a countable dense subset of $\R$. Equip $\cC\bigl([S,\infty)\times\R\times[S,\infty)\times\R,\R\bigr)$
with the Polish topology of uniform convergence on compact sets.
Let $\bfP_S$ be the joint distribution of $\w$ 
 and $\{\Bus^\lambda_S(s,x,t,y):x,y\in\R,s,t\in[S,\infty),\lambda\in\Udense\}$, induced  by $\P\otimes\BM$, \eqref{NEind}, and \eqref{def:bS}, 
 on the product space 
 	\[\Omhat_S=\Omega\times\cC\bigl([S,\infty)\times\R\times[S,\infty)\times\R,\R\bigr)^{\Udense},\] 
equipped with the product topology and Borel $\sigma$-algebra.  Recall that $\Omega$ is a Polish space. Therefore, $\Omhat_S$ is also Polish.

\begin{lemma}\label{lm:tight}
There exists a measure $\bfP$ on $(\Omhat,\cFhat)$ and a sequence $(S_j : j \in \N)$ satisfying $S_j \to -\infty$ with the property that for every $T \in \R$,
\[\frac1{T-S_j}\int_{S_j}^T\bfP_R{|}_{\Omhat_T}\,dR \underset{j\to\infty}\longrightarrow  \bfP{|}_{\Omhat_T} \text{ on the space $\Omhat_T$}. \] 
\end{lemma}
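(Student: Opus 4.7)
The overall strategy is to combine a tightness argument with Cesàro averaging and a diagonal extraction. Tightness is needed to extract weak limits of $\bfP_R|_{\Omhat_T}$, while the averaging over the initial time $R$ is what will eventually endow the limit with the time-shift invariance recorded in Proposition~\ref{Bproc1}(b); a single weak subsequential limit from a fixed initial time will not manifestly have that property, but an average over a long initial-time window will. To organize everything on one probability space $\Omhat$, a diagonal argument over an increasing countable family of cutoff times $T$ is then used to produce a single sequence $S_j\to-\infty$ that works for every $T\in\R$.

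Fix $T\in\R$ and consider the family $\{\bfP_R|_{\Omhat_T}:R\le T\}$. The $\Omega$-marginal of each $\bfP_R$ is $\P$, a fixed probability measure on the Polish space $\Omega$, so that marginal is automatically tight. For each $\lambda\in\Udense$, Lemma~\ref{lm:stat} (specifically \eqref{blam-dist}) pins down the law of the spatial slice $(x,y)\mapsto \Bus^\lambda_R(T,x,T,y)$ as the law of $B(y)-B(x)+\lambda(y-x)$, independently of $R\le T$; this gives tightness of this slice in $\cC(\R^2,\R)$ by standard Kolmogorov continuity estimates. For $s,t\ge T$, identity~\eqref{NEBus} expresses $\Bus^\lambda_R(s,x,t,y)$ in terms of that slice at time $T$ together with the Green's function $\She(\aabullet,\aabullet\viiva\aabullet,\aabullet)$ restricted to $\{(r,z,t,y)\in\varsets:r\ge T\}$. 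By~\eqref{PS=PT}, the joint law of these two ingredients is independent of $R\le T$, and the regularity properties of $\rnShe$ from \cite{Alb-etal-22-spde-} (local H\"older continuity and polynomial bounds) transfer to a modulus-of-continuity estimate for $\Bus^\lambda_R$ on any compact subset of $[T,\infty)\times\R\times[T,\infty)\times\R$, uniformly in $R\le T$. Taking products over the countable index set $\Udense$ yields tightness of $\{\bfP_R|_{\Omhat_T}:R\le T\}$ on the Polish space $\Omhat_T$, and hence tightness of the Cesàro averages $\mu_{S,T}:=\frac{1}{T-S}\int_S^T \bfP_R|_{\Omhat_T}\,dR$ as $S\to -\infty$.

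Next, fix a countable increasing sequence $T_k\nearrow\infty$. By tightness on $\Omhat_{T_1}$, extract a subsequence along which $\mu_{S,T_1}$ converges weakly; then, within that subsequence, further extract to get convergence on $\Omhat_{T_2}$; iterate and diagonalize to obtain a single sequence $S_j\to-\infty$ such that $\mu_{S_j,T_k}$ converges weakly on $\Omhat_{T_k}$ for every $k$. Call the limit on $\Omhat_{T_k}$ by $\bfP^{(k)}$. The families are consistent: for $k<k'$, the natural restriction $\Omhat_{T_k}\hookleftarrow\Omhat_{T_{k'}}$ is continuous, so pushing $\bfP^{(k')}$ forward yields $\bfP^{(k)}$. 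Kolmogorov extension (the projective limit of Polish spaces along a countable chain) therefore furnishes a probability measure $\bfP$ on $(\Omhat,\cFhat)$ whose restriction to $\Omhat_{T_k}$ is $\bfP^{(k)}$ for each $k$. For an arbitrary $T\in\R$, choose $k$ with $T_k\ge T$; the difference $\mu_{S_j,T}-\mu_{S_j,T_k}|_{\Omhat_T}$ is of order $(T_k-T)/(T_k-S_j)\to 0$ in total variation, giving convergence of $\mu_{S_j,T}$ on $\Omhat_T$ to $\bfP|_{\Omhat_T}$.

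The main technical obstacle is the uniform-in-$R$ tightness of the Busemann coordinates as processes on the full four-dimensional domain. Distributional identity at the initial time $T$ and the representation~\eqref{NEBus} reduce this to moduli-of-continuity control for functionals of the Green's function, which is exactly the input provided by the regularity results in \cite{Alb-etal-22-spde-}; the rest is a careful bookkeeping of the diagonal extraction and of the fact that Cesàro averaging over a window that eventually covers any finite shift is what will (in the sequel) give the shift-invariance of $\bfP$.
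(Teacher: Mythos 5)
Your overall architecture — Cesàro averaging for eventual shift invariance, tightness of the averaged family, diagonalization, and a projective (Kolmogorov) extension — is the same as the paper's, but there is one genuine error of direction in the diagonalization step, and the tightness argument is more roundabout than it needs to be.

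The directional error: you take $T_k\nearrow\infty$ and, at the end, choose $k$ with $T_k\ge T$. This is backwards. The space $\Omhat_T = \Omega\times\cC([T,\infty)\times\R\times[T,\infty)\times\R,\R)^{\Udense}$ \emph{shrinks} as $T$ increases, because the domain $[T,\infty)$ shrinks. So weak convergence on $\Omhat_{T_1}$ already implies weak convergence on $\Omhat_{T_2}$ for $T_2>T_1$ by continuous restriction, and extracting along an increasing $T_k$ never buys you convergence on $\Omhat_T$ for $T<T_1$. The projective limit you invoke at the end also goes the wrong way: with $T_k\nearrow\infty$ the chain $\Omhat_{T_1}\supset\Omhat_{T_2}\supset\cdots$ is decreasing, and its inverse limit is not $\Omhat=\Omega\times\cC(\R^4,\R)^{\Udense}$ but rather something trivial. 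Moreover, the expression $\mu_{S_j,T_k}|_{\Omhat_T}$ with $T_k\ge T$ does not make sense, since $\Omhat_{T_k}\subset\Omhat_T$ and one cannot restrict a measure from a smaller space to a bigger one. The fix is to run the diagonal argument over $T_k\searrow-\infty$ (the paper in effect uses $T\in\Z$), so that the $\Omhat_{T_k}$ form an increasing chain whose projective limit is $\Omhat$, and then for arbitrary $T\in\R$ pick $T_k\le T$ and restrict down to $\Omhat_T$; the discrepancy in the Cesàro windows is then of the order $(T-T_k)/(T-S_j)\to 0$ as $j\to\infty$.

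On tightness: the paper's route is shorter. By \eqref{PS=PT}, for each $\lambda\in\Udense$ the law of $\Bus^{\lambda}_R\big|_{[T,\infty)\times\R\times[T,\infty)\times\R}$ under $\P\otimes\BM$ is literally the same for every $R\le T$; a single probability measure on the Polish space $\cC\bigl([T,\infty)\times\R\times[T,\infty)\times\R,\R\bigr)$ is automatically tight (Ulam), and tightness of each factor plus tightness of the $\Omega$-marginal $\P$ gives tightness on the countable product $\Omhat_T$. Your additional steps — isolating the time-$T$ Brownian slice via \eqref{blam-dist}, then invoking the regularity of $\rnShe$ to manufacture a uniform modulus-of-continuity estimate — are not wrong, and would in principle yield tightness via Arzelà--Ascoli, but they require controlling integrals against $e^{\Bus^\lambda_R(T,0,T,z)}$ (an unbounded geometric-Brownian-type weight), which is exactly the kind of estimate the paper develops later in Lemma \ref{int-cont} and does not need here. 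The constancy-in-$R$ of the law already contains the needed tightness without any quantitative modulus.
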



\begin{proof}
Our first claim is that for each $T\in\R$, the family $\bigl\{\frac1{T-S}\int_S^T\bfP_R{|}_{\Omhat_T}\,dR:S\in(-\infty,T) \bigr\}$ is tight on $\Omhat_T$. In order to show tightness on a countable product space, it is enough to prove the tightness on each of the factors.
Lemma  \ref{lm:stat} implies that for any $\lambda\in\Udense$, the distribution of 
$\{\Bus^\lambda_R(s,x,t,y):x,y\in\R,s,t\ge T\}$ under $\P\otimes\BM$ does not depend on $R\in(-\infty, T]$. Since $\P$ is a probability measure on a Polish space, it is also tight. Hence, the family is tight on $\Omhat_T$.

Using Prohorov's theorem and the diagonal trick, we may find a sequence $S_j\searrow-\infty$ such that for all $T\in\Z$, there exists a weak limit 
\be\label{eq:weaklim}\begin{aligned}\frac1{T-S_j}\int_{S_j}^T\bfP_R{|}_{\Omhat_T}\,dR \underset{j\to\infty}\longrightarrow \wh \bfP_T \end{aligned}\ee on the space $\Omhat_T$. Define for any $T'\in\R$ and $T\in \Z$ with  $T<T'$ a measure on $\Omhat_{T'}$ via $\wh\bfP_{T'}=\wh\bfP_T\vert_{\Omhat_{T'}}$.  This definition is consistent because for $T<T'$ in $\Z$, 
\[    \wh\bfP_T\vert_{\Omhat_{T'}} = \lim_{j\to\infty} \frac1{T-S_j}\int_{S_j}^T\bigl(\bfP_R{|}_{\Omhat_T}\bigr)\vert_{\Omhat_{T'}}\,dR   
 = \lim_{j\to\infty} \frac1{T'-S_j}\int_{S_j}^{T'}  \bfP_R\vert_{\Omhat_{T'}}\,dR = \wh\bfP_{T'}. 
\]
A similar computation shows that \eqref{eq:weaklim} holds for all $T\in\R$. The above consistency and the projective version of Kolmogorov's extension theorem, \cite[Corollary 8.22]{Kal-21}, 
then imply the existence of a measure $\bfP$ on $(\Omhat,\cFhat)$ with the property that $\wh \bfP_T = \bfP{|}_{\Omhat_T}$ as measures on $\Omhat_T$.
\end{proof}

Recall that the coordinate projection onto $\cC(\R^4,\R)^{\Udense}$ is denoted by
$\{\Bus^\lambda(s,x,t,y):\lambda\in\Udense,s,x,t,y\in\R\}$. 
%

\begin{lemma}\label{lm:marginal}
For each real pair  $S\le T$ and $\lambda\in\Udense$,
\begin{align*}
&\text{the joint distribution of  } \ \bigl(\She\big|_{([T,\infty)\times\R\times[T,\infty)\times\R)\cap\varsets}\,,\,\Bus^\lambda\big|_{[T,\infty)\times\R\times[T,\infty)\times\R}\bigr) \ \text{ under $\bfP$ } \\
&\quad =  
\text{the joint distribution of  } \  \bigl(\She\big|_{([T,\infty)\times\R\times[T,\infty)\times\R)\cap\varsets}\,,\,\Bus^\lambda_S\big|_{[T,\infty)\times\R\times[T,\infty)\times\R}\bigr) \ \text{ under $\P\otimes\BM$.  }  
\end{align*} 
In particular, the joint distribution of  $\bigl(\She\big|_{([T,\infty)\times\R\times[T,\infty)\times\R)\cap\varsets}\,,\,\Bus^\lambda\big|_{[T,\infty)\times\R\times[T,\infty)\times\R}\bigr)$ under $\bfP$  is ergodic under shifts in space.
\end{lemma}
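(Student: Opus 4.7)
The lemma has two parts, which will be addressed separately.

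\emph{Identification of the marginal.} Fix $T\in\R$ and $\lambda\in\Udense$. Lemma~\ref{lm:stat} provides, for every $R\le T$, the distributional identity
\[\bigl(\She\big|_{([T,\infty)\times\R\times[T,\infty)\times\R)\cap\varsets},\,\Bus^\lambda_R\big|_{[T,\infty)\times\R\times[T,\infty)\times\R}\bigr)\overset{d}=\bigl(\She\big|_{([T,\infty)\times\R\times[T,\infty)\times\R)\cap\varsets},\,\Bus^\lambda_T\bigr)\text{ under }\P\otimes\BM.\]
By the definition of $\bfP_R$ preceding Lemma~\ref{lm:tight}, this $R$-independence descends to the coordinate projection of $\bfP_R|_{\Omhat_T}$ onto the pair $\bigl(\w,\,\Bus^\lambda|_{[T,\infty)\times\R\times[T,\infty)\times\R}\bigr)$. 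Consequently, the Ces\`aro-averaged measure $\frac{1}{T-S_j}\int_{S_j}^T\bfP_R|_{\Omhat_T}\,dR$ has a constant projection onto these coordinates. Since this coordinate projection is continuous on $\Omhat_T$, the weak convergence in Lemma~\ref{lm:tight} identifies the corresponding projection of $\bfP|_{\Omhat_T}$ with the same constant. Combining this with the measurability of $\w\mapsto\She^\w|_{([T,\infty)\times\R\times[T,\infty)\times\R)\cap\varsets}$ then yields the claimed joint-distribution identity.

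\emph{Ergodicity under spatial shifts.} By the identification, it suffices to prove that the joint law of $\bigl(\She|_{([T,\infty)\times\R\times[T,\infty)\times\R)\cap\varsets},\Bus^\lambda_S|_{[T,\infty)\times\R\times[T,\infty)\times\R}\bigr)$ under $\P\otimes\BM$ is ergodic under the spatial shift $(s,x,t,y)\mapsto(s,x+a,t,y+a)$ for each $a\ne0$. The plan is to lift this shift on the process to a measure-preserving transformation of $(\Omega\times\cC(\R,\R),\P\otimes\BM)$. Changing variable $z=z'+a$ in \eqref{NEind} for $\She(t,y+a\viiva S,f_\lambda)$ and invoking the shift-covariance \eqref{eq:cov.shift} of the Green's function gives
\[\She(t,y+a\viiva S,f_\lambda)(\w,B)=e^{B(a)+\lambda a}\,\She(t,y\viiva S,f_\lambda)\bigl(\shiftd{0}{a}\w,\,B(\aabullet+a)-B(a)\bigr).\]
The prefactor $e^{B(a)+\lambda a}$ is independent of $(t,y)$ and therefore cancels in the log-difference \eqref{def:bS}, producing the spatial-shift covariance of $\Bus^\lambda_S$. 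The spatial shift on the process thus corresponds to the joint action $(\w,B)\mapsto\bigl(\shiftd{0}{a}\w,\,B(\aabullet+a)-B(a)\bigr)$, each factor of which is measure-preserving. For $a\ne0$, $\shiftd{0}{a}$ is strongly mixing on $(\Omega,\fil,\P)$ by the assumptions of Section~\ref{sec:prob-space}; strong mixing of one factor of an independent product lifts to mixing of the joint action, and hence ergodicity is inherited by the pushforward.

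\emph{Main obstacle.} The technical core is the spatial-shift covariance of $\Bus^\lambda_S$: the multiplicative factor $e^{B(a)+\lambda a}$ arising from the reparameterization of the initial condition $f_\lambda$ must cancel inside the log-difference \eqref{def:bS}. Without this cancellation the spatial shift on the process would not correspond to any measure-preserving map on $(\w,B)$, and ergodicity could not be deduced from the assumed mixing of $\shiftd{0}{a}$.
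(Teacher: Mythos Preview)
Your identification of the marginal is correct and matches the paper's argument: the $(\w,\Bus^\lambda)$-coordinate of $\bfP_R|_{\Omhat_T}$ is $R$-independent by Lemma~\ref{lm:stat}, so the Ces\`aro averages and their weak limit inherit the same marginal, and then the measurability of $\She$ in $\w$ yields the claim for the pair $(\She,\Bus^\lambda)$.

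The ergodicity argument, however, has a genuine gap. The assertion that ``strong mixing of one factor of an independent product lifts to mixing of the joint action'' is false in general: take the second factor to be the identity on a two-point space with uniform measure; the product is then not even ergodic. Your joint action $(\w,B)\mapsto(\shiftd{0}{a}\w,\,B(\aabullet+a)-B(a))$ is the right one, and the cancellation you highlight is indeed the key covariance, but to conclude ergodicity of the product you must establish that the \emph{second} factor is also mixing. This is true and easy: the recentered shift $B\mapsto B(\aabullet+a)-B(a)$ corresponds, under the identification of Brownian motion with integrated one-dimensional white noise, to a spatial shift of that white noise, which is strongly mixing for $a\ne0$. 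With both factors strongly mixing, the product of independent mixing systems is mixing (this is the fact the paper cites), hence ergodic, and ergodicity then passes to the pushforward. This is exactly the route the paper takes; once you add the missing observation about the Brownian factor, your argument coincides with it.
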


At this point, we cannot claim any ergodicity of $\bfP$ under shifts in time. This  comes  in Section \ref{sec:erg} as a consequence of the almost sure limit \eqref{eq:buslim}.

\begin{proof}[Proof of Lemma \ref{lm:marginal}]
From \eqref{PS=PT}, we have that for $R<T$, the distribution of 
\begin{align}\label{auxvar1}
\bigl(\She\big|_{([T,\infty)\times\R\times[T,\infty)\times\R)\cap\varsets}\,,\,\Bus^\lambda\bigr)
\end{align}
under $\bfP_R{|}_{\Omhat_T}$ 
is the same as that of 
\begin{align}\label{auxvar2}
\bigl(\She\big|_{([T,\infty)\times\R\times[T,\infty)\times\R)\cap\varsets}\,,\,\Bus^\lambda_T\bigr)
\end{align} 
under $\P\otimes\BM$. 
In the expression above, the projection $\Bus^\lambda$ is restricted to  $\Bus^\lambda\in\cC\bigl([T,\infty)\times\R\times[T,\infty)\times\R,\R\bigr)$
because $\bfP_R{|}_{\Omhat_T}$ is a probability measure on $\Omhat_T$.
Consequently, for $S<T$, the distribution of \eqref{auxvar1}  under 
$(T-S)^{-1}\int_S^T\bfP_R{|}_{\Omhat_T}\,dR$  is the same as
 that of \eqref{auxvar2} under $\P\otimes\BM$. 
 Since $\bfP{|}_{\Omhat_T}$ is a limit point of these Ces\`aro averages,  the distribution of  \eqref{auxvar1} 
 under $\bfP{|}_{\Omhat_T}$ 
is the same as  that of \eqref{auxvar2} under $\P\otimes\BM$. 

Recall that Brownian motion is the integral of a one-dimensional white noise, which is mixing under non-trivial shifts. We have assumed that for $a \neq 0$, $\shiftd{0}{a}$ is mixing on $(\Omega,\fil,\bbP)$. For $b \neq 0$, call $\tau_b f(\cdot) = f(b+\cdot)$ the shift by $b$ on $\sC(\R,\R)$. Since independent mixing processes are jointly mixing \cite[Theorem 5.1(a)]{Bra-05}, 
it follows that for $a,b\neq0$, $\shiftd{0}{a}\,\otimes\,\tau_b$ is ergodic on $(\Omega\,\times\, \sC(\R,\R),$ $\fil\, \otimes\,\sB(\sC(\R,\R)),\bbP\,\otimes\,\PMPinit)$. The claimed ergodicity then follows from the $\fil\, \otimes\,\sB(\sC(\R,\R))$ measurability of $\bigl(\She\big|_{([T,\infty)\times\R\times[T,\infty)\times\R)\cap\varsets}\,,\,\Bus^\lambda_S\big|_{[T,\infty)\times\R\times[T,\infty)\times\R}\bigr)$.
%
\end{proof}

\begin{proof}[Proof of Propositions \ref{Bproc1} and \ref{Bproc2}]
Proposition \ref{Bproc1} follows from the construction of $\bfP$. We prove Proposition \ref{Bproc2}.

As a consequence of Lemma \ref{lm:marginal}, the monotonicity proved in Lemma \ref{lm:mono} transfers to a monotonicity $\bfP$-almost surely and now for all times $t\in\R$. 
This allows us to take left and right limits in the parameter $\lambda$ to extend $\{\Bus^\lambda(t,x,t,y):t,x,y\in\R,\lambda\in\Udense\}$ to $\lambda\in\R$ by defining
	\begin{align}\label{foofifoo}
	\Bus^{\lambda-}(t,x,t,y)
	=\lim_{\Udense\ni\mu\nearrow\lambda}\Bus^\mu(t,x,t,y)
	\quad\text{and}\quad 
	\Bus^{\lambda+}(t,x,t,y)=\lim_{\Udense\ni\mu\searrow\lambda}\Bus^\mu(t,x,t,y),
	\end{align}
for all $t,x,y\in\R$ with $x<y$ and setting $\Bus^{\lambda\pm}(t,x,t,x)=0$ and 
$\Bus^{\lambda\pm}(t,x,t,y)=-\Bus^{\lambda\pm}(t,x,t,y)$ when $x>y$.
Then \eqref{Busmonohat} is satisfied and part \eqref{Bproc.ahat} is proved. 
We also have for all $s,x,y,\lambda\in\R$ and $\sigg\in\{-,+\}$,
	\begin{align}\label{Buscont-tmp}
	\Bus^{\lambda-}(s,x,s,y)
	=\lim_{\mu\nearrow\lambda}\Bus^{\mu\sig}(s,x,s,y)
	\quad\text{and}\quad 
	\Bus^{\lambda+}(s,x,s,y)=\lim_{\mu\searrow\lambda}\Bus^{\mu\sig}(s,x,s,y).
	\end{align}

For $s<t$ and $\lambda\in\R$, define
	\begin{align}\label{b:def2}
	\begin{split} 
	\Bus^{\lambda\pm}(s,x,t,y)
	&=\log\int_{-\infty}^\infty\She(t,y\viiva s,z)e^{\Bus^{\lambda\pm}(s,x,s,z)}\,dz\\
	&=\log\Bigl(\;\int_{-\infty}^x\She(t,y\viiva s,z)e^{\Bus^{\lambda\pm}(s,x,s,z)}\,dz+\int_x^\infty\She(t,y\,|\,s,z)e^{\Bus^{\lambda\pm}(s,x,s,z)}\,dz\Bigr).
	\end{split}
	\end{align}
\eqref{Busconthat} follows from monotone convergence and \eqref{Buscont-tmp}.
For $s>t$, let $\Bus^{\lambda\pm}(s,x,t,y)=-\Bus^{\lambda\pm}(t,y,s,x)$. The limits in \eqref{Busconthat} still hold for this case.   
Part \eqref{Bproc.chat} is proved.

Part \eqref{Bproc.bhat} follows from the definition \eqref{def:bS}, Lemma \ref{lm:marginal} (which transfers the property to $\Bus^\lambda$, $\lambda\in\Udense$), and the limits \eqref{Busconthat}. 

For $t>r>q$ \eqref{b:def2} and 
\eqref{eq:CK} imply
	\begin{align*}
	e^{\Bus^{\lambda\sig}(q,x,t,y)}
	&=\int_{-\infty}^\infty\int_{-\infty}^\infty\She(t,y\viiva r,z)\She(r,z\viiva q,w)e^{\Bus^{\lambda\sig}(q,x,q,w)}\,dz\,dw\\	
	&=\int_{-\infty}^\infty\She(t,y\viiva r,z)\int_{-\infty}^\infty\She(r,z\viiva q,w)e^{\Bus^{\lambda\sig}(q,x,q,w)}\,dw\,dz\\
	&=\int_{-\infty}^\infty\She(t,y\viiva r,z)e^{\Bus^{\lambda\sig}(q,x,r,z)}\,dz.
	\end{align*}
Multiplying both sides by $e^{\Bus^{\lambda\sig}(s,x,q,x)}$ and using the cocycle property \eqref{cocyclehat} gives \eqref{NEBus2hat} and part \eqref{Bproc.dhat}.

Next, we prove part \eqref{Bproc.-=+hat}. 
By monotone convergence, \eqref{foofifoo}, and Lemma \ref{lm:marginal} with $S=T=r$, 
we have, for $r,x,y,\lambda\in\R$,
	\[\bfE[\Bus^{\lambda-}(r,x,r,y)]
	=\lim_{\Udense\ni\mu\nearrow\lambda}\bfE[\Bus^\mu(r,x,r,y)]
	=\lim_{\Udense\ni\mu\nearrow\lambda}\mu(y-x)=\lambda(y-x)\]
and similarly
	\[\bfE[\Bus^{\lambda+}(r,x,r,y)]=\lambda(y-x).\]
Thus, for any given $r,x,y,\lambda\in\R$ with $x<y$ we have $\bfP$-almost surely 
	\[\int_x^y (\Bus^{\lambda+}(r,x,r,z)-\Bus^{\lambda-}(r,x,r,z))\,dz\ge0\]
and 	\[\bfE\Bigl[\int_x^y (\Bus^{\lambda+}(r,x,r,z)-\Bus^{\lambda-}(r,x,r,z))\,dz\Bigr]
	=\int_x^y \bfE\bigl[\Bus^{\lambda+}(r,x,r,z)-\Bus^{\lambda-}(r,x,r,z)\bigr]\,dz=0.\]
Consequently, for all $r\in\R$, $\bfP$-almost surely,
	\[\Bus^{\lambda+}(r,x,r,y)=\Bus^{\lambda-}(r,x,r,y)\quad\text{for Lebesgue-almost every }x,y\in\R.\]

But now, \eqref{NEBus2hat}  implies that for all $r,\lambda\in\R$, there exists an event $\Omhat_{r,\lambda}$ such that $\bfP(\Omhat_r)=1$ and for each $\what\in\Omhat_{r,\lambda}$, $\Bus^{\lambda-}(r,x,t,y)=\Bus^{\lambda+}(r,x,t,y)$ 
for all $t>r$ and $x,y\in\R$. By the cocycle property \eqref{cocyclehat} we have that $\Bus^{\lambda\sig}(s,x,t,y)=\Bus^{\lambda\sig}(r,0,t,y)-\Bus^{\lambda\sig}(r,0,s,x)$ for all $\sigg\in\{-,+\}$, $r\in\Z$, and $s,t>r$. This implies that for each $\lambda\in\R$, 
the following holds $\bfP$-almost surely:
	\[\Bus^{\lambda-}(s,x,t,y)=\Bus^{\lambda+}(s,x,t,y)\quad\text{for all $s,x,t,y\in\R$}.\]
Part \eqref{Bproc.-=+hat} is proved.

Lemma \ref{b-=b+} below shows that when $\lambda\in\Udense$, we actually have 
$\bfP(\Bus^{\lambda-}\equiv\Bus^{\lambda+}\equiv\Bus^{\lambda})=1$.  This and Lemma \ref{lm:marginal} with $S=T=t$ imply the claim in part \eqref{Bproc.BMhat} when $\lambda\in\Udense$. Since $\Udense$ is arbitrary and any given $\lambda\in\R$ can be thrown into $\Udense$, the distributional 
claim in fact holds for all $\lambda\in\R$.  The proposition is proved.
\end{proof}

The next lemma shows that when $\lambda\in\Udense$, the newly defined  $\Bus^{\lambda\pm}$ are the same as the old $\Bus^\lambda$.

\begin{lemma}\label{b-=b+}
If $\lambda\in\Udense$, then $\bfP$-almost surely  $\Bus^{\lambda-}(s,x,t,y)=\Bus^{\lambda+}(s,x,t,y)=\Bus^{\lambda}(s,x,t,y)$ for all $s,x,t,y\in\R$.
\end{lemma}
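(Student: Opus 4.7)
By the symmetric argument, it suffices to establish $\Bus^{\lambda-} \equiv \Bus^\lambda$ on $\R^4$, $\bfP$-almost surely. First, fix $(t, x, y)$ with $x < y$. Lemma \ref{lm:marginal} together with \eqref{blam-dist} gives $\bfE[\Bus^\mu(t,x,t,y)] = \mu(y-x)$ for each $\mu \in \Udense$. By the monotonicity of $\mu \mapsto \Bus^\mu(t,x,t,y)$ on $\Udense$ (Lemma \ref{lm:mono}, transferred to $\bfP$ via Lemma \ref{lm:marginal}), the monotone convergence theorem applied to $\Bus^\mu - \Bus^{\mu_0} \ge 0$ for a fixed $\mu_0 \in \Udense$ with $\mu_0 < \lambda$ gives $\bfE[\Bus^{\lambda-}(t,x,t,y)] = \lambda(y-x) = \bfE[\Bus^\lambda(t,x,t,y)]$. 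Since $\Bus^{\lambda-}(t,x,t,y) \le \Bus^\lambda(t,x,t,y)$ $\bfP$-a.s.\ by the same monotonicity, equality of expectations forces $\Bus^{\lambda-}(t,x,t,y) = \Bus^\lambda(t,x,t,y)$ $\bfP$-a.s.\ at each such fixed triple.

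Next, I would upgrade this pointwise equality to a functional equality on the diagonal $s = t$. For fixed $(t, x) \in \R^2$, set $g(z) := \Bus^\lambda(t,x,t,z) - \Bus^{\lambda-}(t,x,t,z)$; by the monotonicity in \eqref{Busmonohat} and the antisymmetry convention used to extend \eqref{foofifoo} to the case $z < x$, $g$ has constant sign on each of the half-lines $\{z > x\}$ and $\{z < x\}$. Since $\bfE[g(z)] = 0$ for every $z$ by the previous step, Tonelli's theorem applied to $|g|$ gives $\bfE \int_I |g(z)|\,dz = \int_I \bfE[|g(z)|]\,dz = 0$ on any bounded interval $I$, so $g = 0$ Lebesgue-a.e.\ on $\R$ $\bfP$-almost surely. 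Choosing a countable dense set $D \subset \R$ and intersecting the resulting full-probability events over $(t, x) \in D \times D$ yields an event $\Omhat^\star$ of full $\bfP$-measure on which $\Bus^\lambda(t, x, t, \aabullet) = \Bus^{\lambda-}(t, x, t, \aabullet)$ Lebesgue-a.e.\ for every $(t, x) \in D \times D$ simultaneously.

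Finally, working on $\Omhat^\star$ intersected with the full-probability event of Proposition \ref{Bproc2}, the integral equation \eqref{NEBus2hat} applied to both $\Bus^\lambda$ and $\Bus^{\lambda-}$ at $(t, x) \in D \times D$ gives, for all $T > t$ and $y \in \R$,
\[
e^{\Bus^\lambda(t,x,T,y)} \;=\; \int_\R \She(T,y\viiva t,z)\,e^{\Bus^\lambda(t,x,t,z)}\,dz \;=\; \int_\R \She(T,y\viiva t,z)\,e^{\Bus^{\lambda-}(t,x,t,z)}\,dz \;=\; e^{\Bus^{\lambda-}(t,x,T,y)},
\]
the central identity holding because the integrands agree Lebesgue-a.e.\ in $z$. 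For an arbitrary $(s, x, t, y) \in \R^4$, choose $(t_0, x_0) \in D \times D$ with $t_0 < \min(s, t)$; both $\Bus^\lambda$ and $\Bus^{\lambda-}$ satisfy the cocycle \eqref{cocyclehat}, so each of $\Bus^\lambda(s,x,t,y)$ and $\Bus^{\lambda-}(s,x,t,y)$ can be written as the difference $\Bus^\bullet(t_0, x_0, t, y) - \Bus^\bullet(t_0, x_0, s, x)$, and the two right-hand sides agree by the previous display. The only delicate step is the passage from pointwise $\bfP$-a.s.\ equality to functional $\bfP$-a.s.\ equality, which is exactly what the Tonelli-plus-integral-equation combination accomplishes; the rest is standard bookkeeping.
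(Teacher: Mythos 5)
Your argument is correct, but it takes a considerably longer route than the paper's one-line proof and, in effect, you have re-derived the content of Proposition~\ref{Bproc2}\eqref{Bproc.-=+hat} rather than citing it. The paper simply observes that for $\lambda\in\Udense$ the monotonicity and the limit definitions \eqref{foofifoo} give the pointwise sandwich $\Bus^{\lambda-}(r,x,r,y)\le\Bus^{\lambda}(r,x,r,y)\le\Bus^{\lambda+}(r,x,r,y)$ for $x<y$, and then invokes Proposition~\ref{Bproc2}\eqref{Bproc.-=+hat} (which says $\Bus^{\lambda-}=\Bus^{\lambda+}$ holds $\bfP$-a.s.\ as functions of $(s,x,t,y)$, and whose proof does not depend on Lemma~\ref{b-=b+}) to squeeze $\Bus^{\lambda}$ between two a.s.\ identical cocycles. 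That kills the proof in two lines. Your mean-value argument (finite expectation, monotone convergence from $\mu\nearrow\lambda$, a.s.\ ordering, equality of expectations $\Rightarrow$ a.s.\ pointwise equality), followed by Tonelli to get Lebesgue-a.e.\ equality on the diagonal and the update rule \eqref{NEBus2hat} plus the cocycle property to upgrade to equality on all of $\R^4$, is exactly the mechanism the paper uses inside the proof of Proposition~\ref{Bproc2}\eqref{Bproc.-=+hat}; you have reproduced it correctly but redundantly. One small benefit of your version is that it is self-contained in the $\lambda\in\Udense$ case and shows explicitly how the ``finite mean $\Rightarrow$ continuity at rational slopes'' argument works; the cost is duplication of a proposition that was already available. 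If you streamline, note that since Proposition~\ref{Bproc2}\eqref{Bproc.-=+hat} precedes Lemma~\ref{b-=b+} in the logical order and does not rely on it (only part~\eqref{Bproc.BMhat} does), you are free to cite it and then the lemma becomes immediate.
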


\begin{proof}
%
%
If $\lambda\in\Udense$, then monotonicity and the limits \eqref{foofifoo} give 
	\[\Bus^{\lambda-}(r,x,r,y)
	=\lim_{\Udense\ni\mu\nearrow\lambda}\Bus^\mu(r,x,r,y)
	\le\Bus^{\lambda}(r,x,r,y)
	\le \lim_{\Udense\ni\mu\searrow\lambda}\Bus^\mu(r,x,r,y)
	=\Bus^{\lambda+}(r,x,r,y)\]
for all $r,x,y\in\R$ with $x<y$.   This and Proposition \ref{Bproc2}\eqref{Bproc.-=+hat} give that $\bfP$-almost surely, $\Bus^{\lambda-}(s,x,t,y)=\Bus^{\lambda+}(s,x,t,y)=\Bus^{\lambda}(s,x,t,y)$ for all $s,x,t,y\in\R$.
\end{proof}

\section{Shape theorems}\label{sec:shape}

The next item on the way to the limits \eqref{eq:buslim} and \eqref{eq:buslim2} are shape theorems for the Green's function and the Busemann process. These are Theorem \ref{thm:Z-cont} above and Proposition \ref{pr:b-shape} below. The proofs begin with the next preliminary version of the shape theorem for the Green's function, where some of the variables are restricted to a lattice.
The distinction between statements \eqref{Z-grid} and \eqref{Z-grid2} below lies in which spatial variable, $x$ or $y$, is restricted to a discrete set.

\begin{lemma}\label{lm:Z-grid} 
There exists a finite constant $c_0>0$ such that  the following holds.  If the 
 sets $\Sbset_n\subset\R$, $\Tbset_n\subset\R$, and  $\Vbset_n\subset\R$ have no accumulation points and the constant  $C>0$ satisfies  
\begin{align}\label{decay}
\sum_{n\in\N}n\tspb\bigl|\{(s,t,v)\in\Sbset_n\times\Tbset_n\times\Vbset_n:s<t,\abs{s}+\abs{t}+\abs{v}\le Cn\}\bigr|\tspb e^{-c_0n}<\infty,
\end{align}
then the following hold $\P$-almost surely: for any $M>0$,
\begin{align}\label{Z-grid}
\lim_{n\to\infty}n^{-1}\sup_{\substack{(s,x,t,y)\,\in\,\Sbset_n\times\R\times\Tbset_n\times\Vbset_n:\, t>s,\\[1pt] s,t,y\,\in\,[-Cn,Cn],\,\abs{x}\le Mn}}\,\Bigl|\,\log \She(t,y\viiva s,x)+\frac{t-s}{24}+\frac{(y-x)^2}{2(t-s)}\,\Bigr|=0
\end{align}
and
\begin{align}\label{Z-grid2}
\lim_{n\to\infty}n^{-1}\sup_{\substack{(s,x,t,y)\in\Sbset_n\times\Vbset_n\times\Tbset_n\times\R:t>s,\\ s,t,x\in[-Cn,Cn],\abs{y}\le Mn}}\,\Bigl|\,\log \She(t,y\viiva s,x)+\frac{t-s}{24}+\frac{(y-x)^2}{2(t-s)}\,\Bigr|=0.
\end{align}
\end{lemma}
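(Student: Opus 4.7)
The plan is to prove \eqref{Z-grid} via a Borel-Cantelli argument that discretizes the continuous variable $x$ and then union-bounds using strong tail estimates at each discretized point. The companion statement \eqref{Z-grid2} will follow by applying the time-reflection covariance \eqref{eq:cov.ref}: $\She(-s,x\viiva-t,y)\circ\refd_1 = \She(t,y\viiva s,x)$ interchanges the initial and terminal spatial variables, $\bbP$ is $\refd_1$-invariant, and the centering $(t-s)/24+(y-x)^2/(2(t-s))$ is invariant under this swap. Applying \eqref{Z-grid} to the reflected sets $-\Tbset_n,-\Sbset_n,\Vbset_n$ (in place of $\Sbset_n,\Tbset_n,\Vbset_n$) produces \eqref{Z-grid2}.

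First I would reduce to a distributional estimate at the origin. The stationarity of $y\mapsto\rnShe(t,y\viiva s,x)$ and $x\mapsto\rnShe(t,y\viiva s,x)$ from Corollary 2.4 of \cite{Alb-etal-22-spde-}, combined with the temporal shift invariance of $\bbP$, yields for each $(s,x,t,y)\in\varsets$ the distributional equality
\[
\log \She(t,y\viiva s,x) + \tfrac{t-s}{24} + \tfrac{(y-x)^2}{2(t-s)} \;\overset{d}=\; \log \She(\tau,0\viiva 0,0) + \tfrac{\tau}{24} + \tfrac{1}{2}\log(2\pi\tau),
\]
where $\tau=t-s>0$. The narrow-wedge KPZ tail estimates from \cite{Cor-Gho-20-ejp,Cor-Gho-Ham-21,Cor-Gho-20-duke} then give, at deviation scale $\epsilon n$ and for $\tau\in(0,2Cn]$, bounds of the form $\bbP(|\log \She(\tau,0\viiva 0,0)+\tau/24|\ge \epsilon n)\le C\exp(-c(\epsilon)\,n)$ with $c(\epsilon)>0$, with super-exponential improvements from the lower tail and whenever $\tau$ is bounded away from $n$. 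The constant $c_0$ in \eqref{decay} is taken small enough that this single-point tail dominates the exponential growth of the lattice count permitted by \eqref{decay}.

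Next I would discretize the continuous variable $x$ by covering $[-Mn,Mn]$ with $\lesssim n^{K+1}$ points spaced at distance $n^{-K}$, for a large integer $K$. The local H\"older $1/2-$ regularity of $\rnShe(t,y\viiva s,x)$ in $x$ (Theorem 2.2 and Corollary 3.10 of \cite{Alb-etal-22-spde-}), combined with the polynomial-in-$(|x|,|y|,T)$ control of the random H\"older constant recorded in the Remark of the Introduction, bounds the oscillation of $\log\She(t,y\viiva s,x)$ on each small interval by a polynomial in $n$ times $n^{-K/2+\delta}$, which is $o(n)$ once $K$ is large. The continuous supremum therefore lies within $o(n)$ of the discretized supremum, uniformly over the remaining parameters.

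Combining the pieces, a union bound shows that the probability of the discretized supremum exceeding $\epsilon n$ is at most a polynomial-in-$n$ factor (from the $x$-discretization) times $|\Sbset_n\times\Tbset_n\times\Vbset_n \cap [-Cn,Cn]^3|$ times the single-point tail, a product that is summable in $n$ under \eqref{decay} for each fixed $\epsilon>0$. Borel-Cantelli applied along a countable sequence $\epsilon_k\downarrow 0$ then yields \eqref{Z-grid}. The main obstacle is extracting from the integrable probability literature a single-point tail bound uniform enough in $\tau\in(0,2Cn]$ and strong enough at the deviation scale $\epsilon n$ to close the union bound against the lattice growth allowed by \eqref{decay}; the asymmetric KPZ tails $\sim e^{-cs^{3/2}/\tau^{1/2}}$ on the upper side and $\sim e^{-cs^{3}/\tau^{2}}$ on the lower side, proved in the cited Corwin-Ghosal papers, are what make the argument work.
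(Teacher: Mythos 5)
Your overall architecture (discretize in $x$, reduce by stationarity and shift/reflection covariance to the origin, apply Corwin--Ghosal tail bounds, union bound, Borel--Cantelli, and use $\refd_1$-invariance for \eqref{Z-grid2}) matches the paper's proof. However, there is a genuine gap in how you handle the oscillation over the $x$-variable, and this is the technical heart of the lemma.

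You discretize $[-Mn,Mn]$ on a fine $n^{-K}$ mesh and claim to control the oscillation of $\log\She(t,y\viiva s,x)$ over each small $x$-interval deterministically, citing ``the local H\"older $1/2-$ regularity of $\rnShe$ \dots combined with the polynomial-in-$(|x|,|y|,T)$ control of the random H\"older constant recorded in the Remark.'' That remark does not record any quantitative bound on the H\"older \emph{constant}; it only bounds the \emph{value} of $\rnShe$ by $C(\w,T)(1+|x|^4+|y|^4)$, with $C(\w,T)$ allowed to depend on $T$ in an uncontrolled way. The available quantitative controls (Corollaries 3.10--3.11 of the companion paper) involve the factor $\Mom{p}{T}$, which grows \emph{exponentially} in the time horizon $T \sim n$, not polynomially. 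So the oscillation bound of ``a polynomial in $n$ times $n^{-K/2+\delta}$'' is not justified, and for the Borel--Cantelli argument to close you would in effect need an almost sure modulus of continuity for $\log\rnShe$ on a box of side $\sim n$ that grows at most subexponentially in $n$ --- a nontrivial statement that the cited results do not supply. There is also a secondary problem: the heat-kernel contribution to the oscillation, $|\log\heat(t-s,y-x)-\log\heat(t-s,y-x')|$, is not controlled when $t-s$ is small, and nothing in \eqref{decay} keeps $t-s$ bounded away from zero on the lattice.

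The paper avoids both issues by using a \emph{probabilistic} oscillation estimate rather than an almost sure one: it discretizes at unit spacing and bounds
\[
\P\Bigl\{\sup_{0\le w\le 1}\Bigl|\,\log\She(t-s,w\viiva 0,0)+\tfrac{w^2}{2(t-s)}-\log\She(t-s,0\viiva 0,0)\,\Bigr|\ge\tfrac{\e n}{2}\Bigr\}
\]
by $C'e^{-c'n^{4/3}}$ using Proposition 4.3 of \cite{Cor-Gho-Ham-21}. That single distributional estimate replaces the entire H\"older-plus-fine-mesh construction, is uniform over all $t-s>0$, and decays fast enough that the union bound over a lattice of size governed by \eqref{decay} is immediately summable. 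If you want to repair your proof without introducing that ingredient, you would need to prove a subexponential (in $n$) a.s.\ modulus of continuity for $\log\rnShe$ over $[-Cn,Cn]$-boxes and handle the heat-kernel singularity at $t=s$, neither of which is straightforward. The clean fix is to adopt the paper's unit-mesh discretization and cite the Corwin--Ghosal--Hammond oscillation bound directly.
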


\begin{proof}
We prove \eqref{Z-grid} and \eqref{Z-grid2} comes similarly or by using the reflection invariance recorded as \eqref{eq:cov.ref}.

Recall the renormalized Green's function $\rnShe(t,y\viiva s,x) = \She(t,y\viiva s,x)/\heat(t-s,y-x)$ from \eqref{women}. As recorded in Proposition 1.4 in \cite{Ami-Cor-Qua-11} or Corollary 2.4 in \cite{Alb-etal-22-spde-}, the process $x\mapsto \rnShe(t,y\viiva s,x)$ is stationary. Using this in the first equality and then and the translation and reflection invariance in \eqref{eq:cov.shift} and \eqref{eq:cov.ref} as part of the second inequality, we have the following:
\begin{align*}
&\P\Bigl\{\sup_{\substack{(s,x,t,y)\,\in\,\Sbset_n\times\R\times\Tbset_n\times\Vbset_n:\,t>s,\\[1pt] s,t,y\in[-Cn,Cn],\,\abs{x}\le Mn}}\,\Bigl|\,\log \She(t,y\viiva s,x)+\frac{t-s}{24}+\frac{(y-x)^2}{2(t-s)}\,\Bigr|\ge\e n\Bigr\}\\
&\le\sum_{\abs{m}\le Mn+1}\sum_{\substack{\\[1pt] (s,t,y)\,\in\,\Sbset_n\times\Tbset_n\times\Vbset_n:\\[1pt] s<t,\, s,t,y\in[-Cn,Cn]}}\!\!\!\!\P\Bigl\{\sup_{m\le x\le m+1}\Bigl|\,\log\She(t,y\viiva s,x)+\frac{t-s}{24}+\frac{(y-x)^2}{2(t-s)}\,\Bigr|\ge\e n\Bigr\}\\
&= \sum_{\abs{m}\le Mn+1}\sum_{\substack{\\[1pt] (s,t,y)\,\in\,\Sbset_n\times\Tbset_n\times\Vbset_n:\\[1pt] s<t,\, s,t,y\in[-Cn,Cn]}}\!\!\!\!\P\Bigl\{\sup_{0\le x\le1}\Bigl|\,\log\rnShe(t,y\viiva s,x)+\frac{t-s}{24}+\log \heat(t-s,0)\,\Bigr|\ge\e n\Bigr\}\\
&\le2(Mn+1)\sum_{\substack{\\[1pt] (s,t,y)\,\in\,\Sbset_n\times\Tbset_n\times\Vbset_n:\\[1pt] s<t,\, s,t,y\in[-Cn,Cn]}}\!\!\!\!\biggr(\P\Bigl\{\Bigl|\,\log\She(t-s,0\viiva 0,0)+\frac{t-s}{24}\,\Bigr|\ge\e n/2\Bigr\}\\
&\qquad\qquad\qquad\qquad+
\P\Bigl\{\sup_{0\le w\le 1}\Bigl|\,\log\She(t-s,w\viiva 0,0)+\frac{w^2}{2(t-s)}-\log\She(t-s,0\viiva0,0)\,\Bigr|\ge\e n/2\Bigr\}\biggl).\\
&\le2(Mn+1)\sum_{\substack{\\[1pt] (s,t,y)\,\in\,\Sbset_n\times\Tbset_n\times\Vbset_n:\\[1pt] s<t,\, s,t,y\in[-Cn,Cn]}}\!\!\!\!\biggl(\P\Bigl\{\Bigl|\,\log\She(t-s,0\viiva 0,0)+\frac{t-s}{24}\,\Bigr|\ge\e n/2\Bigr\}\\
&\qquad\qquad\qquad\qquad+
\sum_{i=-1}^2\P\Bigl\{\Bigl|\,\log\She(t-s,0\viiva i,0)+\frac{t-s}{24}\,\Bigr|\ge\e n/8\Bigr\}\\
&\qquad\qquad\qquad\qquad+
\P\Bigl\{\sup_{0\le w\le 1}\Bigl|\,\log\She(t-s,w\viiva 0,0)+\frac{w^2}{2(t-s)}-\log\She(t-s,0\viiva0,0)\,\Bigr|\ge\e n/2,\\
&\qquad\qquad\qquad\qquad\qquad\qquad\qquad\qquad
\Bigl|\,\log \She(t-s,0\viiva i,0)+\frac{t-s}{24}\,\Bigr|<\e n/8 \text{ for } i\in\{-1,0,1,2\}\Bigr\}\biggr).
\end{align*}

Proposition 4.3 of \cite{Cor-Gho-Ham-21} says that the last probability on the right-hand side is bounded above by $C'e^{-c'n^{4/3}}$ for some finite strictly positive $C'$ and $c'$. 
By Theorems 1.11 of \cite{Cor-Gho-20-ejp} and Theorem 1.1 of \cite{Cor-Gho-20-duke}, the other  two probabilities on the final right-hand side are  bounded above by $C''e^{-c''n}$, for some finite strictly positive constants $C''$ and $c''$. Since we assume \eqref{decay}, claim  \eqref{Z-grid} follows from the above and the Borel-Cantelli lemma.
\end{proof}

\begin{proof}[Proof of Theorem \ref{thm:Z-cont}] At various points in this proof it is convenient to use \eqref{women} to switch between studying $\She$ and $\Shenorm$. 
We first prove the statement over bounded time differences:
\begin{align}\label{Z-cont1}
\lim_{n\to\infty}n^{-1}\sup_{\substack{(s,x,t,y)\in\R^4:0< t-s\le1\\s,x,t,y\in[-Cn,Cn]}}\,\Bigl|\,\log \She(t,y\viiva s,x)+\frac{t-s}{24}-\log\heat(t-s,y-x)\,\Bigr|=0.
\end{align}
Since $t-s\le1$ we can ignore the term $(t-s)/24$.
Take $\delta>0$ and write
\begin{align*}
&\P\Bigl\{\sup_{\substack{(s,x,t,y)\in\R^4:0\le t-s\le1\\s,x,t,y\in[-Cn,Cn]}}\abs{\log \Shenorm(t,y\viiva s,x)}\ge\delta n\Bigr\}\\
&\qquad\le
\sum_{m\in[-Cn-1,Cn]\cap\Z}\P\Bigl\{\sup_{\substack{m\le s\le m+1,s\le t\le s+1\\x,y\in[-Cn,Cn]}}\abs{\log \Shenorm(t,y\viiva s,x)}\ge\delta n\Bigr\}\\
&\qquad\le
C'n\P\Bigl\{\sup_{\substack{0\le s\le t\le 2\\x,y\in[-Cn,Cn]}}\abs{\log \Shenorm(t,y\viiva s,x)}\ge\delta n\Bigr\}\\
&\qquad\le
C'n\P\Bigl\{\sup_{\substack{0\le s\le t\le 2\\x,y\in[-Cn,Cn]}}\Shenorm(t,y\viiva s,x)\ge e^{\delta n}\Bigr\}
+C'n\P\Bigl\{\sup_{\substack{0\le s\le t\le 2\\x,y\in[-Cn,Cn]}}\Shenorm(t,y\viiva s,x)^{-1}\ge e^{\delta n}\Bigr\}\\
&\qquad\le C'' n^4 e^{-\delta n}.
\end{align*}
In the last inequality uses Corollary 3.10 in \cite{Alb-etal-22-spde-}. 
\eqref{Z-cont1} follows from this and the Borel-Cantelli lemma.

With \eqref{Z-cont1} at hand \eqref{Z-cont} follows if we show 
\begin{align}\label{Z-cont2}
\lim_{n\to\infty}n^{-1}\sup_{\substack{(s,x,t,y)\in\R^4:t-s>1\\s,x,t,y\in[-Cn,Cn]}}\,\Bigl|\,\log \She(t,y\viiva s,x)+\frac{t-s}{24}+\frac{(y-x)^2}{2(t-s)}\,\Bigr|=0.
\end{align}
Since the above event is monotone in $C$, it is enough to work with a fixed $C>0$. We prove the lower and upper bounds separately.

\medskip 

{\it Step 1:  Lower bound.} 
For $n\in\N$ let $\Sbset_n=\Tbset_n=n^{-2}\Z$ and $\Vbset_n=n^{-1}\Z$. Then \eqref{decay} is satisfied for any strictly positive $C$ and $c_0$.
Consequently, \eqref{Z-grid} and \eqref{Z-grid2} hold $\P$-almost surely, for any strictly positive $C$ and $M$.


For $s\in\R$ let $k=\ce{n^2s}+1$ and $s'=n^{-2}k$ and for $t\in\R$ let $\ell=\fl{n^2t}-1$ and $t'=n^{-2}\ell$. 
Note that if $t-s>1$ and $n^2\ge8$, then 
	\[ s<s'\le s+2n^{-2}<t-1+2n^{-2}\le t-2n^{-2}-1/2\le t'-1/2.\] 
For $x\in\R$ let $m=\fl{nx}$ and $x'=m/n$ and for $y\in\R$ let $m'=\fl{ny}$ and $y'=m'/n$. Then
	\be\label{aux569} \begin{aligned}
	&\She(t,y\viiva s,x) \\
	&=\int_{-\infty}^\infty\int_{-\infty}^\infty\Shenorm(t,y\viiva t',w)\heat(t-t',y-w)\She(t',w\viiva s',u)
	\heat(s'-s,u-x)\Shenorm(s',u\viiva s,x)\,dw\,du\\[3pt] 
	&\ge\int\limits_{x'}^{x'+n^{-1}}\int\limits_{y'}^{y'+n^{-1}}\Shenorm(t,y\viiva t',w)\heat(t-t',y-w)\She(t',w\viiva s',u)
	\heat(s'-s,u-x)\Shenorm(s',u\viiva s,x)\,dw\,du\\[5pt] 
	&\ge\Bigl\{ \inf_{\substack{y'\le v,w\le y'+n^{-1}\\ t'+n^{-2}\le r\le t'+2n^{-2}}}\Shenorm(r,v\viiva t',w)\Bigr\} \cdot  
	\Bigl\{ \inf_{\substack{x'\le u,v\le x'+n^{-1}\\ s'-2n^{-2}\le r\le s'-n^{-2}}}\Shenorm(s',u\viiva r,v)\Bigr\} \\[3pt] 
	&\qquad\qquad\cdot\;\int_{x'}^{x'+n^{-1}}\int_{y'}^{y'+n^{-1}}\heat(t-t',y-w)\She(t',w\viiva s',u)\heat(s'-s,u-x)\,dw\,du.
	\end{aligned}\ee

 By \cite[Corollary 3.11]{Alb-etal-22-spde-}, we have
	\begin{align}\label{tempo2234}
	\E\Bigr[\sup_{\substack{0\le u,v\le 1\\ 0\le r\le n^{-2}}}\Shenorm(2n^{-2},u\viiva r,v)^{-1}\Bigr]\le \E\Bigr[\sup_{\substack{0\le u,v\le 1\\ 0\le r\le s\le 2}}\Shenorm(s,u\viiva r,v)^{-1}\Bigr]=C<\infty.
	\end{align}
	
A simple calculus computation 
shows that 
\be\label{a689} 
\forall \alpha\ge 1 \;\exists c_\alpha<\infty: \;  \forall x\ge 0\;   (\log(1+x))^\alpha \le c_\alpha x.  
\ee
 Next, let $A$ be an arbitrary set and  $g:A\to(0,\infty)$.   Then  we have this bound $\forall p\ge q>0$: 
 \be\label{a704}  \begin{aligned}
 \bigl( \tsp [\inf_x \log g(x)]^- \tsp \bigr)^p &=  \bigl( \tsp \bigl[\sup_x \log \bigl(g(x)^{-1}\bigr)\bigr]^+ \tsp \bigr)^p   
\le  \bigl(  \sup_x \log \bigl(1+g(x)^{-1}\bigr) \tsp \bigr)^p\\
&=   \sup_x \bigl(  \log \bigl(1+g(x)^{-1}\bigr) \tsp \bigr)^p
\le   c_{p/q}^q \cdot  \sup_x  g(x)^{-q},
\end{aligned}\ee
where for the last inequality we used \eqref{a689} with $\alpha=p/q$.

By \eqref{tempo2234} and \eqref{a704} we have
	\[\E\Bigl[\Bigl(\Bigl[\inf_{\substack{0\le u,v\le n^{-1}\\ 0\le r\le n^{-2}}}\log\Shenorm(2n^{-2},u\viiva r,v)\Bigr]^-\Bigr)^p\Bigr]\le
	\E\Bigl[\Bigl(\Bigl[\inf_{\substack{0\le u,v\le 1\\ 0\le r\le n^{-2}}}\log\Shenorm(2n^{-2},u\viiva r,v)\Bigr]^-\Bigr)^p\Bigr]\le Cc_{p}\]
for any $1\le p<\infty$. 
Taking $p>6$ and using a union bound then the Borel-Cantelli lemma gives
	\begin{align}\label{tempo-foo}
	\lim_{n\to\infty} n^{-1}\sup_{\substack{\abs{m}\le Cn^2+1\\\abs{k}\le Cn^3+2}}\Bigl(\inf_{\substack{n^{-1}m\le u,v\le n^{-1}(m+1)\\ (k-2)n^{-2}\le r\le (k-1)n^{-2}}}\log\Shenorm(n^{-2}k,u\viiva r,v)\Bigr)^-=0 \quad\text{a.s.} 
	\end{align}
Similarly,
	\begin{align}\label{tempo-fifi}
	\lim_{n\to\infty} n^{-1}\sup_{\substack{\abs{m'}\le Cn^2+1\\\abs{\ell}\le Cn^3+2}}\Bigl(\inf_{\substack{n^{-1}m'\le v,w\le n^{-1}(m'+1)\\ (\ell+1)n^{-2}\le r\le (\ell+2)n^{-2}}}\log\Shenorm(r,v\viiva n^{-2}\ell,w)\Bigr)^-=0 \quad\text{a.s.}
	\end{align}
	
Next we treat the last double integral in \eqref{aux569}.  
Note that  $\abs{y-w}\vee\abs{u-x}\le n^{-1}$ and both $t-t'$ and $s'-s$ are between $n^{-2}$ and $2n^{-2}$.
Therefore, the double integral is bounded below by 
	\[(4\pi e)^{-1}n^2\int_{x'}^{x'+n^{-1}}\int_{y'}^{y'+n^{-1}}\She(t',w\viiva s',u)\,dw\,du\ge 
	\int_{x'}^{x'+n^{-1}}\int_{y'}^{y'+n^{-1}}\She(t',w\viiva s',u)\,dw\,du\]
for $n$ large enough. Apply \eqref{crossing} to write
	\[\She(t',w\viiva s',u)\ge\frac{\She(t',w\viiva s',x')\She(t',y'\viiva s',u)}{\She(t',y'\viiva s',x')}.\]
Factoring the above double integral gives 
	\begin{align*}
		&\log\int_{x'}^{x'+n^{-1}}\int_{y'}^{y'+n^{-1}}\She(t',w\viiva s',u)\,dw\,du\\
		&\quad\ge\log\int_{y'}^{y'+n^{-1}}\!\!\!\!\She(t',w\viiva s',x')\,dw
		+\log\int_{x'}^{x'+n^{-1}}\!\!\!\!\She(t',y'\viiva s',u)\,du-\log\She(t',y'\viiva s',x').
	\end{align*}
	
	To derive a lower bound, restrict $s,x,t,y$ to $[-Cn,Cn]$. Recall that $y'\le y\le y'+n^{-1}$ and $x'\le x\le x'+n^{-1}$ and,  as in the integrals above, consider $u\in[x',x'+n^{-1}]$ and $w\in[y', y'+n^{-1}]$.  
	In the first inequality below, for the cross term in the numerator use 
$\abs{w-u+y-x}\le 4Cn+2n^{-1}\le 4(Cn+1)$, $\abs{w-y}\le n^{-1}$ and $\abs{x-u}\le n^{-1}$. 
Recall also that $t'-s'\ge1/2$, $t-s>1$, 
$n^{-2}\le t-t'\le 2n^{-2}$, 
and $n^{-2}\le s'-s\le 2n^{-2}$.
	\be\label{aux561} \begin{aligned}
	&\frac{(w-x')^2+(y'-u)^2-(y'-x')^2}{2(t'-s')}\\
	&\qquad=\frac{2(u-x')(w-y')+(w-u+y-x)(w-u-y+x)+(y-x)^2}{2(t'-s')}\\
	&\qquad\le 2n^{-2}+8(Cn+1)n^{-1}+\frac{(y-x)^2}{2(t'-s')}
	\le2+8(C+1)+\frac{(y-x)^2}{2(t'-s')}\\
	&\qquad\le10+8C+\frac{4C^2n^2(\abs{t-t'}+\abs{s-s'})}{2(t-s)(t'-s')}+\frac{(y-x)^2}{2(t-s)}\\
	&\qquad\le10+8C+16C^2+\frac{(y-x)^2}{2(t-s)}\,.
	\end{aligned}\ee
	
	Return to \eqref{aux569} to collect  the bounds. Use $t'-s'=(\fl{n^2 t}-\ce{n^2s}-2)n^{-2}\le t-s$ and use \eqref{aux561} to bound $\frac{(y-x)^2}{2(t-s)}$ from below. 
\begin{align*}
&\inf_{\substack{(s,x,t,y)\in\R^4:t-s>1,\\ s,x,t,y\in[-Cn,Cn]}}\,\Bigl(\,\log \She(t,y\viiva s,x)+\frac{t-s}{24}+\frac{(y-x)^2}{2(t-s)}\,\Bigr)\\
&\qquad\ge
\inf_{\substack{\abs{m'}\le Cn^2+1\\ \abs{\ell}\le Cn^3+2}} \ \inf_{\substack{n^{-1}m'\le v,w\le n^{-1}(m'+1)\\ (\ell+1)n^{-2}\le r\le(\ell+2)n^{-2}}}\log\Shenorm(r,v\viiva n^{-2}\ell,w)\\
&\qquad\qquad +\inf_{\substack{\abs{m}\le Cn^2+1\\ \abs{k}\le Cn^3+2}} \ \inf_{\substack{n^{-1}m\le u,v\le n^{-1}(m+1)\\ (k-2)n^{-2}\le r\le(k-1)n^{-2}}}\log\Shenorm(n^{-2}k,u\viiva r,v)\\
&\qquad\qquad+\inf_{\substack{(s',x',t',w)\in\Sbset_n\times\Vbset_n\times\Tbset_n\times\R:t'>s',\\ s',x',t',w\in[-Cn-2, Cn+2]}}\Bigl(\log\She(t',w\viiva s',x')+\frac{t'-s'}{24}+\frac{(w-x')^2}{2(t'-s')}\Bigr)\\
&\qquad\qquad+\inf_{\substack{(s',u,t',y')\in\Sbset_n\times\R\times\Tbset_n\times\Vbset_n:t'>s',\\ s',u,t',y'\in[-Cn-2, Cn+2]}}\Bigl(\log\She(t',y'\viiva s',u)+\frac{t'-s'}{24}+\frac{(y'-u)^2}{2(t'-s')}\Bigr)\\
&\qquad\qquad-\sup_{\substack{(s',x',t',y')\in\Sbset_n\times\Vbset_n\times\Tbset_n\times\Vbset_n:t'>s',\\ s',x',t',y'\in[-Cn-2, Cn+2]}}\Bigl(\log\She(t',y'\viiva s',x')+\frac{t'-s'}{24}+\frac{(y'-x')^2}{2(t'-s')}\Bigr)\\
&\qquad\qquad-(10+8C)-16C^2.
\end{align*}
Then \eqref{tempo-foo}, \eqref{tempo-fifi}, \eqref{Z-grid}, and \eqref{Z-grid2} give
\be\label{aux580}
\varliminf_{n\to\infty}n^{-1}\inf_{\substack{(s,x,t,y)\in\R^4:t-s>1,\\ s,x,t,y\in[-Cn,Cn]}}\,\Bigl(\,\log \She(t,y\viiva s,x)+\frac{t-s}{24}+\frac{(y-x)^2}{2(t-s)}\,\Bigr)\ge0.\ee

\medskip 

{\it Step 2:  Upper bound.}   
Take $M>C>0$.  Decompose as 
\be\label{aux638} \begin{aligned}
\She(t,y\viiva s,x)
&= \iint_{\R^2\setminus[-Mn,Mn]^2}
	\!\!\!\!\!\!\!\!\!\!\She(t,y\viiva \ell/4,w)\She(\ell/4,w\viiva k/4,u)\She(k/4,u\viiva s,x)\,dw\,du\\
&\qquad\qquad+\iint_{[-Mn,Mn]^2}\She(t,y\viiva \ell/4,w)\She(\ell/4,w\viiva k/4,u)\She(k/4,u\viiva s,x)\,dw\,du. 
\end{aligned}\ee 

We address the first  integral on the right.  
  For any $\ell,k,m,m'\in\Z$ with $\ell>k$, $\abs{m}\vee\abs{m'}\le Cn+1$, and $(\ell-k)/4\le 2Cn$, we have
	\begin{align*}
	&\P\Biggl\{\sup_{\substack{m\le x\le m+1\\ (k-2)/4\le s\le(k-1)/4\\ m'\le y\le m'+1\\ (\ell+1)/4\le t\le(\ell+2)/4}}\log\!\!\!\!\!\iint_{\R^2\setminus[-Mn,Mn]^2}
	\!\!\!\!\!\!\!\!\!\!\She(t,y\viiva \ell/4,w)\She(\ell/4,w\viiva k/4,u)\She(k/4,u\viiva s,x)\,dw\,du\ge-a n^2\Biggr\}\\
	&\quad\le e^{an^2}\!\!\!\!\!\!\!
	\iint_{\R^2\setminus[-Mn,Mn]^2}\!\!\!\!\!\E\Biggl[\sup_{\substack{m'\le y\le m'+1\\ (\ell+1)/4\le t\le(\ell+2)/4}}\!\!\!\!\!\!\!\!\!\!\She(t,y\viiva \ell/4,w)\Biggr]\,
	\heat\bigl((\ell-k)/4,w-u\bigr)\\
	&\qquad\qquad\qquad\qquad\qquad
	\times
	\E\Biggl[\sup_{\substack{m\le x\le m+1\\ (k-2)/4\le s\le(k-1)/4}}\!\!\!\!\!\!\!\!\!\!\She(k/4,u\viiva s,x)\Biggr]\,dw\,du\\
	&\quad= e^{an^2}\!\!\!\!\!\!\!\iint_{\R^2\setminus[-Mn,Mn]^2}\!\!\!\!\!\E\Biggl[\sup_{\substack{m'-w\le z\le m'-w+1\\ 1/4\le r\le1/2}}\!\!\!\!\!\!\!\!\!\!\She(r,z\viiva 0,0)\Biggr]\,
	\heat\bigl((\ell-k)/4,w-u\bigr)
	\E\Biggl[\sup_{\substack{u-m-1\le z\le u-m\\ 1/4\le r\le 1/2}}\!\!\!\!\!\!\!\!\!\!\She(r,z\viiva 0,0)\Biggr]\,dw\,du\\
	&\quad\le C e^{an^2}\!\!\!\!\!\!\!\iint_{\R^2\setminus[-Mn,Mn]^2}\!\!\!\!\!\!\!\!(m'-w)^3(u-m)^3\!\!\sup_{\substack{m'-w\le z\le m'-w+1\\ 1/4\le r\le1/2}}\!\!\!\!\!\!\!\!\heat(r,z)\,\heat\bigl((\ell-k)/4,w-u\bigr)\!\!\!\sup_{\substack{u-m-1\le z\le u-m\\ 1/4\le r\le 1/2}}\!\!\!\!\!\!\!\!\!\!\heat(r,z)\,dw\,du\\
	&\quad\le C'e^{an^2}\iint_{\R^2\setminus[-Mn,Mn]^2}e^{-c(w-m')^2}e^{-\frac{(w-u)^2}{4Cn}}e^{-c(u-m)^2}\,dw\,du\\
	&\quad\le C'e^{an^2}\int_{-\infty}^\infty e^{-c(w-m')^2}\,dw\cdot\int_{\R\setminus[-Mn,Mn]}e^{-c(u-m)^2}\,du\\
	&\qquad\qquad\qquad+C'e^{an^2}\int_{-\infty}^\infty e^{-c(u-m)^2}\,du\cdot\int_{\R\setminus[-Mn,Mn]}e^{-c(w-m')^2}\,dw\\
	&\quad\le C''e^{an^2}\int_{(M-C)n-1}^\infty e^{-cv^2}\,dv.
	\end{align*}
For the equality we used shift invariance and reflection symmetry from \cite[Proposition 2.3]{Alb-etal-22-spde-}. 
For the second inequality we used Corollary 3.10 in \cite{Alb-etal-22-spde-} and for the third 
inequality we used the bounds  $1/4<(\ell-k)/4\le t-s\le 2Cn$.
For the last inequality we used $\abs{m}\vee\abs{m'}\le Cn+1$.    

The above bounds, a union bound, and the Borel-Cantelli lemma tell us that $\P$-almost surely, for any $a>0$, and  for any $M$ large enough relative to $a$ and $C$,
\begin{align}\label{aux0001}
\begin{split}
&\lim_{n\to\infty}n^{-2}\sup_{\substack{(s,x,t,y)\in\R^4:t-s>1,\\ s,x,t,y\in[-Cn,Cn]}}
\log\!\!\!\!\!\iint_{\R^2\setminus[-Mn,Mn]^2}
	\!\!\!\!\!\!\!\!\!\!\She\bigl(t,y\bviiva (\fl{4t}-1)/4,w\bigr)\\
	&
	\qquad
	\qquad
	\times\She\bigl((\fl{4t}-1)/4,w\bviiva (\ce{4s}+1)/4,u\bigr)
	\, \She\bigl((\ce{4s}+1)/4,u\bviiva s,x\bigr)\,dw\,du \; \le \; -a.
\end{split}
\end{align}
This takes care of the first integral in \eqref{aux638}.

\medskip 

For the second integral in \eqref{aux638} we start by recording bounds on the discrete lattice from 
 \eqref{Z-grid} and \eqref{Z-grid2}:  $\P$-almost surely, for any  $\delta>0$, if $n$ is large enough, then for any 
$(s,u,t,w)\in\R^4$ with $t-s>1$, $\abs{s}+\abs{t}\le Cn$, and $\abs{u}\vee\abs{w}\le Mn$, 
	\begin{align*}
	&\log \She\bigl(\fl{4t}-1)/4,w\bviiva (\ce{4s}+1)/4,\fl{u}+1\bigr)\le\delta n+\frac{1}{24}-\frac{t-s}{24}-\frac{2(w-\fl{u}-1)^2}{\bigl(\fl{4t}-\ce{4s}-2\bigr)}\,,\\
	&\log \She\bigl(\fl{4t}-1)/4,\fl{w}\bviiva (\ce{4s}+1)/4,u\bigr)\le\delta n+\frac{1}{24}-\frac{t-s}{24}-\frac{2(\fl{w}-u)^2}{\bigl(\fl{4t}-\ce{4s}-2\bigr)}\,,\\
	&\log \She\bigl(\fl{4t}-1)/4,\fl{w}\bviiva (\ce{4s}+1)/4,\fl{u}+1\bigr)\ge-\delta n+\frac{1}{48}-\frac{t-s}{24}-\frac{2(\fl{w}-\fl{u}-1)^2}{\bigl(\fl{4t}-\ce{4s}-2\bigr)}\,.
	\end{align*}
Take $m=\fl{x}$, $m'=\fl{y}$, $k=\ce{4s}+1$, and $\ell=\fl{4t}-1$. As we integrate over $[-Mn,Mn]^2$, apply first the comparison inequality \eqref{crossing} to the middle term.  Then bound the integral by the maximum of the integrals over squares $[i,i+1]\times[j,j+1]$ 
 and  use the above bounds on the middle ratio. 
\begin{align}
&\iint_{[-Mn,Mn]^2}\She(t,y\viiva \ell/4,w)\She(\ell/4,w\viiva k/4,u)\She(k/4,u\viiva s,x)\,dw\,du\notag\\
&\le\iint_{[-Mn,Mn]^2}\She(t,y\viiva \ell/4,w)\cdot\frac{\She(\ell/4,w\viiva k/4,\fl{u}+1)\She(\ell/4,\fl{w}\viiva k/4,u)}{\She(\ell/4,\fl{w}\viiva k/4,\fl{u}+1)}
\cdot\She(k/4,u\viiva s,x)\,dw\,du\notag\\
&\le (2Mn)^2e^{3\delta n+\frac{1}{16}-\frac{t-s}{24}}\max_{\abs{i}\vee\abs{j}\le Mn+1}\Biggl(\Biggl[\sup_{\substack{j\le w\le j+1\\ m'\le v\le m'+1\\(\ell+1)1/4\le r\le (\ell+2)/4}}\frac{\She(r,v\viiva \ell/4,w)}{\heat(r-\ell/4,v-w)}\Biggr]\notag\\
&\qquad\qquad\qquad
\times\Biggl[\sup_{\substack{i\le u\le i+1\\ m\le v\le m+1\\(k-2)/4\le r\le (k-1)/4}}\frac{\She(k/4,u\viiva r,v)}{\heat(k/4-r,u-v)}\Biggr]\notag\\
&\qquad\qquad\qquad
\times\int_i^{i+1}\int_{j}^{j+1}\heat(t-\ell/4,y-w)\heat(k/4-s,u-x)e^{-\frac{2[(w-i-1)^2+(j-u)^2-(j-i-1)^2]}{(\ell-k)}}\,dw\,du\Biggr)\notag\\
&\le \tfrac{8M^2n^2}{\pi}e^{3\delta n+\frac{1}{16}-\frac{t-s}{24}}\max_{\abs{i}\vee\abs{j}\le Mn+1}\Biggl(\Biggl[\sup_{\substack{j\le w\le j+1\\ m'\le v\le m'+1\\(\ell+1)/4\le r\le (\ell+2)/4}}\Shenorm(r,v\viiva \ell/4,w)\Biggr]\notag\\
&\qquad\qquad\qquad\qquad\qquad
\times\Biggl[\sup_{\substack{i\le u\le i+1\\ m\le v\le m+1\\(k-2)/4\le r\le (k-1)/4}}\Shenorm(k/4,u\viiva r,v)\Biggr]\notag\\
&\qquad\qquad\qquad\qquad\qquad
\times\int_i^{i+1}\int_{j}^{j+1} e^{-\frac{(y-w)^2}{2(t-\ell/4)}}e^{-\frac{(u-x)^2}{2(k/4-s)}}
e^{-\frac{2[(w-i-1)^2+(j-u)^2-(j-i-1)^2]}{(\ell-k)}}\,dw\,du\Biggr)\notag\\
&\le \tfrac{8M^2n^2}{\pi}e^{3\delta n+\frac{1}{16}+4-\frac{t-s}{24}-\frac{(y-x)^2}{2(t-s)}}
\max_{\abs{j}\le Mn+1}\sup_{\substack{j\le w\le j+1\\ m'\le v\le m'+1\\(\ell+1)/4\le r\le (\ell+2)/4}}\Shenorm(r,v\viiva \ell/4,w)\notag\\
&\qquad\qquad\qquad\qquad\qquad\qquad
\times\max_{\abs{i}\le Mn+1}\sup_{\substack{i\le u\le i+1\\ m\le v\le m+1\\(k-2)/4\le r\le (k-1)/4}}\Shenorm(k/4,u\viiva r,v)\,.\label{aux0003}
\end{align}
For the last inequality we used two facts. First, 
	\[\abs{(w-i-1)^2+(j-u)^2-(j-i-1)^2-(w-u)^2}\le 2\]
and second, for $s<a<b<t$ and any $x,y\in\R$, the minimum over $(u,w)$ of $\frac{(y-w)^2}{2(t-b)}+\frac{(u-x)^2}{2(a-s)}+\frac{(w-u)^2}{2(b-a)}$ 
equals $\frac{(y-x)^2}{2(t-s)}$. 

Next, use the same $\delta>0$ and write for $\abs{j}\le Mn+1$, $\abs{m'}\le Cn+1$, and $\abs{\ell}\le 4Cn$,
\begin{align*}
&\P\Biggl\{\sup_{\substack{j\le w\le j+1\\ m'\le v\le m'+1\\(\ell+1)/4\le r\le (\ell+2)/4}}\log\Shenorm(r,v\viiva \ell/4,w)\ge\delta n\Biggr\}
=\P\Biggl\{\sup_{\substack{0\le w\le 1\\ m'-j\le v\le m'-j+1\\ 1/4\le r\le 1/2}}\log\Shenorm(r,v\viiva 0,w)\ge\delta n\Biggr\}\\
&\qquad\le e^{-\delta n}\tspb\E\Biggl[\;\sup_{\substack{0\le w\le 1\\ m'-j\le v\le m'-j+1\\ 1/4\le r\le 1/2}}\Shenorm(r,v\viiva 0,w)\Biggr] \le C'n^3 e^{-\delta n}
\end{align*}
for a finite strictly positive constant $C'$. The last bound 
used \cite[Corollary 3.11]{Alb-etal-22-spde-}.
 A union bound and the Borel-Cantelli lemma imply then that 
\begin{align}\label{aux0004}
\varlimsup_{n\to\infty}n^{-1}\max_{\substack{\abs{j}\le Mn+1\\ \abs{m'}\le Cn+1\\ \abs{\ell}\le 4Cn}}
\sup_{\substack{j\le w\le j+1\\ m'\le v\le m'+1\\(\ell+1)/4\le r\le (\ell+2)/4}}\log\frac{\She(r,v\viiva \ell/4,w)}{\heat(r-\ell/4,v-w)}\le\delta.
\end{align}
Similarly,
\begin{align}\label{aux0006}
\varlimsup_{n\to\infty}n^{-1}\max_{\substack{\abs{i}\le Mn+1\\ \abs{m}\le Cn+1\\ \abs{k}\le 4Cn+2}}
\sup_{\substack{i\le u\le i+1\\ m\le v\le m+1\\(k-2)/4\le r\le (k-1)/4}}\log\frac{\She(k/4,u\viiva r,v)}{\heat(k/4-r,u-v)}\le\delta.
\end{align}
Return to \eqref{aux638}. Take $a>2C^2$ and $M$ large enough for \eqref{aux0001} to hold. The choice of $a$ and the $n^{-2}$ normalization in \eqref{aux0001} control the term $\frac{(y-x)^2}{2(t-s)}$ which is at most ${2C^2n^2}$.
Putting \eqref{aux0004}--\eqref{aux0006} and \eqref{aux0001}--\eqref{aux0003} together and taking $\delta\to0$ gives 
\[\varlimsup_{n\to\infty}n^{-1}\sup_{\substack{(s,x,t,y)\in\R^4:t-s>1,\\ s,x,t,y\in[-Cn,Cn]}}\,\Bigl(\,\log \She(t,y\viiva s,x)+\frac{t-s}{24}+\frac{(y-x)^2}{2(t-s)}\,\Bigr)\le0.\]
The theorem is proved.
\end{proof}

We turn to the shape theorem for the cocycles $\Bus^\lambda$. Recall that, for the moment, the Busemann process is defined on the extended probability space $(\Omhat,\cFhat,\bfP)$. 

\begin{proposition}\label{pr:b-shape}
Fix $\lambda\in\R$. The following holds on an event of  $\bfP$-probability one.  For all $C>0$
	\begin{align}\label{b-shapehat}
	\lim_{n\to\infty}n^{-1}\!\!\!\sup_{s,x,t,y\tspb\in\tspb[-Cn, Cn]}\,\Bigl|\Bus^\lambda(s,x,t,y)-\Bigl(\frac{\lambda^2}2-\frac1{24}\Bigr)(t-s)-\lambda(y-x)\Bigr|=0.
	\end{align}
\end{proposition}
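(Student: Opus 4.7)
The plan is to combine the shape theorem for the Green's function (Theorem \ref{thm:Z-cont}) with the integral identity \eqref{NEBus2hat} and the Brownian-with-drift marginal distribution of $z\mapsto\Bus^\lambda(r,0,r,z)$ from Proposition \ref{Bproc2}\eqref{Bproc.BMhat}, via a Laplace-type asymptotic computation. First, using the cocycle property \eqref{cocyclehat} to write $\Bus^\lambda(s,x,t,y)=\Bus^\lambda(0,0,t,y)-\Bus^\lambda(0,0,s,x)$, the task reduces to establishing that
\[
n^{-1}\sup_{(t,y)\in[-Cn,Cn]^2}\Bigl|\Bus^\lambda(0,0,t,y)-\Bigl(\tfrac{\lambda^2}2-\tfrac1{24}\Bigr)t-\lambda y\Bigr|\longrightarrow 0 \quad \bfP\text{-a.s.}
\]

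For the case $t>0$, I would apply \eqref{NEBus2hat} with $s=0$, $x=0$, $r=0$ to get
\[
\Bus^\lambda(0,0,t,y)=\log\int_{\R}\She(t,y\viiva 0,z)\,e^{\Bus^\lambda(0,0,0,z)}\,dz.
\]
Splitting the domain of integration into $|z|\le Mn$ (main term) and its complement (tail), I would invoke Theorem \ref{thm:Z-cont} to replace $\log\She(t,y\viiva 0,z)$ by $-t/24-\tfrac12\log(2\pi t)-(y-z)^2/(2t)+o(n)$ uniformly on the main range, and use the Brownian SLLN applied to the \emph{single} Brownian motion $z\mapsto\Bus^\lambda(0,0,0,z)$ to replace $\Bus^\lambda(0,0,0,z)$ by $\lambda z+o(|z|)$. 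A completion of squares, $-(y-z)^2/(2t)+\lambda z=-(z-y-\lambda t)^2/(2t)+\lambda y+\lambda^2 t/2$, reveals a Gaussian centered at $z^\ast=y+\lambda t$ whose normalization $\sqrt{2\pi t}$ exactly cancels the $-\tfrac12\log(2\pi t)$ factor. The tail $|z|>Mn$ is controlled by combining the polynomial envelope for $\Shenorm$ from Corollary 3.10 of \cite{Alb-etal-22-spde-} with the Gaussian decay of $\heat(t,y-z)$ and the sublinear growth of the BM, choosing $M$ large enough that this tail contributes only $e^{o(n)}$ to the integral.

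For $t<0$, use antisymmetry $\Bus^\lambda(0,0,t,y)=-\Bus^\lambda(t,y,0,0)$ and apply \eqref{NEBus2hat} on $[t,0]$:
\[
e^{\Bus^\lambda(t,y,0,0)}=\int_{\R}\She(0,0\viiva t,z)\,e^{\Bus^\lambda(t,y,t,z)}\,dz,
\]
decomposing $\Bus^\lambda(t,y,t,z)=\Bus^\lambda(t,0,t,z)-\Bus^\lambda(t,0,t,y)$ via the cocycle. By Proposition \ref{Bproc2}\eqref{Bproc.BMhat}, for each fixed $t$ the process $z\mapsto\Bus^\lambda(t,0,t,z)$ is Brownian motion with drift $\lambda$, but I now need $\Bus^\lambda(t,0,t,z)=\lambda z+o(|z|)$ \emph{uniformly} in $t\in[-Cn,Cn]$. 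I would obtain this via a grid-plus-continuity argument: apply the Brownian SLLN along a countable dense set of $t$'s (taking a countable union of null events), then interpolate to all $t$ via the joint $\what$-continuity of $\Bus^\lambda$ combined with moment tail bounds for the increments $\Bus^\lambda(t,0,t,z)-\Bus^\lambda(t',0,t',z)$ that follow from stationarity of the Busemann process under time shifts (Proposition \ref{Bproc1}(b)). Substituting back and repeating the Laplace computation of the previous step recovers the same asymptotic $\lambda^2 t/2 - t/24 + \lambda y +o(n)$.

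The main obstacle is precisely this uniform-in-$t$ Brownian SLLN in the $t<0$ case: a single application of the one-dimensional BM shape theorem does not suffice because the spatial Brownian motion $z\mapsto\Bus^\lambda(t,0,t,z)$ changes with $t$, and the $t$-dependence is not directly handled by the integral identity \eqref{NEBus2hat}. A secondary (but routine) technical point is bounding the tail contribution in the Laplace integration when $|z|$ grows faster than $n$, which requires carefully combining the Gaussian decay of the heat kernel, the polynomial sandwich bound on $\Shenorm$, and the sublinear growth rate of Brownian motion.
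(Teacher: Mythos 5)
Your proposal takes a genuinely different route from the paper.  The paper deduces Proposition \ref{pr:b-shape} from an abstract shape theorem for shift-covariant cocycles (Lemma \ref{lm:Bus54}, proved in Appendix B by reduction to a lattice ergodic theorem of \cite{Boi-Der-91} followed by Borel--Cantelli on unit-cube suprema), after verifying the moment hypotheses via \eqref{Eeb}--\eqref{Eebinv}; the centering constant $a_\lambda=\lambda^2/2-1/24$ is computed separately in Lemma \ref{a_mu} via shear invariance and the external input of \cite{Bor-etal-15}.  You instead propose a direct Laplace-type evaluation through \eqref{NEBus2hat} combined with the Green's function shape theorem (Theorem \ref{thm:Z-cont}) and the Brownian growth rate.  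Your computation of the centering constant is exactly the content of Remark \ref{rk:amu}, so the instinct is validated by the paper itself, and for the forward-time half ($s=0\le t$) your plan goes through: Theorem \ref{thm:Z-cont} provides a uniform-in-$(t,y,z)$ estimate on $\log\She(t,y\viiva 0,z)$, and the envelope $\Bus^\lambda(0,0,0,z)=\lambda z+o(|z|)$ is a single-path statement handled by the LIL.  Relative to the paper's argument, your route is more self-contained (one Laplace computation yields both uniformity and the constant) but correspondingly more technical.

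The gap you flag for $t<0$ is genuine and harder to close than your ``grid-plus-continuity'' sketch suggests.  Anchoring \eqref{NEBus2hat} at a time $r=r(n)\to-\infty$ brings in the spatial Brownian path $z\mapsto\Bus^\lambda(r,0,r,z)$, which is a different Brownian motion for each $r$.  Stationarity does give the uniform-in-$r$ tail bound
\[
\bfP\Bigl\{\sup_{|z|\le Mn}\bigl|\Bus^\lambda(r,0,r,z)-\lambda z\bigr|\ge\delta n\Bigr\}\le Ce^{-cn},
\]
so Borel--Cantelli handles any polynomial-size \emph{grid} of times.  The obstruction is the interpolation between grid points: by the cocycle property \eqref{cocyclehat},
\[
\Bus^\lambda(r,0,r,z)-\Bus^\lambda(r',0,r',z)=\Bus^\lambda(r,0,r',0)+\Bus^\lambda(r',z,r,z),
\]
and the second summand is a \emph{temporal} increment evaluated at a remote spatial point $z\in[-Mn,Mn]$.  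A uniform $o(n)$ bound on this two-parameter supremum (over $r,r'\in[-Cn,0]$ with $|r-r'|$ of grid-mesh size and $|z|\le Mn$) is not supplied by the continuity estimates you invoke and is precisely the kind of estimate Lemma \ref{lm:Bus54} is designed to package: the $L^d$ bound on $\sup_{\zevec\le x\le\onevec}|F(\zevec,x)|$ together with the lattice ergodic theorem replaces your heuristic interpolation.  So your Laplace route is a valid and illuminating alternative for the forward half and for the constant, but completing the negative-time half requires either the shift-covariant cocycle shape theorem or an equivalent two-parameter Borel--Cantelli scheme that your sketch names but does not construct.
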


Let $\cI_{1,0}$ be the $\sigma$-algebra of events that are invariant under the shift by one unit in time.
Let $\cI_{0,1}$ be the $\sigma$-algebra of events that are invariant under the shift by one unit in space.  

Recall that we do not know yet if the distribution of $\Bus^{\lambda}(\aabullet,\aabullet,\aabullet,\aabullet)$ under $\bfP$ is ergodic under shifts in the time direction.  For $\lambda\in\R$ define the random variable 
	\[a_\lambda=\bfE[\Bus^{\lambda}(0,0,1,0)\,|\,\cI_{1,0}].  \]
%
Since $x \mapsto \Bus^\lambda(0,0,0,x)$ has the same distribution, under $\bfP$, as a standard Brownian motion with  drift $\lambda$, we have
	\[\bfE[\Bus^\lambda(0,0,0,1)\,|\,\cI_{0,1}]=\lambda\quad\bfP\text{-a.s.}\]

\begin{lemma}\label{a_mu}
For all $\lambda\in\R$ we have with $\bfP$-probability one, $a_\lambda=\lambda^2/2-1/24$.
\end{lemma}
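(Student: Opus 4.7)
The plan is to identify $a_\lambda$ by computing the limit $N^{-1}\Bus^\lambda(0,0,N,0)$ along integer $N\to\infty$ in two independent ways, and then equating the results.

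\emph{Ergodic side.} The cocycle property \eqref{cocyclehat} telescopes as
\[
\Bus^\lambda(0,0,N,0) = \sum_{k=0}^{N-1} \Bus^\lambda(k,0,k+1,0).
\]
Proposition \ref{Bproc1}(b) supplies the invariance of $\bfP$ under the joint shift sending $\w\mapsto\shiftd{-1}{0}\w$ and shifting the time arguments of $\Bus^\lambda$ by $-1$. Combined with the telescoping, this identifies the summands as iterates of $\Bus^\lambda(0,0,1,0)$ under a $\bfP$-preserving shift, the invariant $\sigma$-algebra of which is (the relevant restriction of) $\cI_{1,0}$. After verifying integrability of $\Bus^\lambda(0,0,1,0)$, which follows from \eqref{NEBus2hat} together with the independence of $\Bus^\lambda(0,0,0,\aabullet)$ from $\{\She(1,0\viiva 0,\aabullet)\}$ granted by Proposition \ref{Bproc1}(c) and moment estimates on $\She$ and on the Gaussian process $\Bus^\lambda(0,0,0,\aabullet)$, Birkhoff's ergodic theorem yields
\[
\lim_{N\to\infty}\frac{\Bus^\lambda(0,0,N,0)}{N}=a_\lambda \qquad \bfP\text{-a.s.}
\]

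\emph{Laplace side.} Specialize \eqref{NEBus2hat} to $s=x=y=0$, $t=N$, $r=0$:
\[
\Bus^\lambda(0,0,N,0)=\log\int_{-\infty}^{\infty}\She(N,0\viiva 0,z)\,e^{\Bus^\lambda(0,0,0,z)}\,dz.
\]
By Proposition \ref{Bproc2}(b), $z\mapsto \Bus^\lambda(0,0,0,z)$ is distributed as a two-sided Brownian motion with drift $\lambda$, so the strong law gives $\Bus^\lambda(0,0,0,z)=\lambda z+o(|z|)$ as $|z|\to\infty$, $\bfP$-a.s. Theorem \ref{thm:Z-cont} provides
\[
\log\She(N,0\viiva 0,z)=-\frac{N}{24}-\frac{z^2}{2N}+o(N)
\]
uniformly on $|z|\le CN$. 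Changing variables $z=\xi N$ and performing a Varadhan-type Laplace analysis identifies the leading exponential rate as
\[
\lim_{N\to\infty}\frac{\Bus^\lambda(0,0,N,0)}{N}=\sup_{\xi\in\R}\!\left(\lambda\xi-\frac{\xi^2}{2}-\frac{1}{24}\right)=\frac{\lambda^2}{2}-\frac{1}{24}.
\]

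Equating the two limits gives $a_\lambda=\lambda^2/2-1/24$ $\bfP$-almost surely. The main obstacle is the rigorous justification of the Laplace asymptotics. The lower bound is the easier half: restrict the integral to a small neighborhood of the maximizer $\xi\approx\lambda$, where both the shape theorem and the Brownian asymptotics apply directly. For the upper bound we split the integral at $|z|=CN$ for $C$ large: on the inner region the shape theorem and the Brownian bound suffice, while the tail $|z|>CN$ is controlled by combining the Gaussian factor $\heat(N,-z)\le C_1 N^{-1/2}e^{-z^2/(2N)}$ with the (at most polynomial) upper bounds on $\rnShe(N,0\viiva 0,z)$ from \cite{Alb-etal-22-spde-}, against which the at most linear growth of $\Bus^\lambda(0,0,0,z)$ is negligible once $z^2/N$ dominates.
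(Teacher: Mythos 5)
Your proof is correct, and it takes a genuinely different route from the paper's. The paper's proof also begins with the Birkhoff identification $a_\lambda = \lim_{n\to\infty} n^{-1}\Bus^\lambda(0,0,n,0)$, but then applies the shear covariance \eqref{bus-shear} to write $\Bus^\lambda(0,0,n,0) \deq \Bus^0(0,0,n,\lambda n) + \lambda^2 n /2$, decomposes $\Bus^0(0,0,n,\lambda n)$ into a temporal piece $\Bus^0(0,0,n,0)$ and a spatial piece $\Bus^0(n,0,n,\lambda n)$, identifies the temporal limit as $-1/24$ by citing the narrow-wedge free-energy result of Theorem 1.2 in \cite{Bor-etal-15}, and shows the spatial piece is negligible. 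Your approach instead runs the cocycle update \eqref{NEBus2hat} once more to express $\Bus^\lambda(0,0,N,0)$ as a log-integral against $\She(N,0\viiva 0,\aabullet)$ and performs a Varadhan-type analysis using Theorem \ref{thm:Z-cont} and the a.s.\ linear growth of the Brownian initial condition. This is essentially the argument of Remark \ref{rk:amu}, except that you avoid the apparent circularity there by invoking Theorem \ref{thm:Z-cont} (proved before Lemma \ref{a_mu}) rather than Theorem \ref{int-shape} (whose proof uses \eqref{b-shapehat}, hence $a_\lambda$). Your version has the advantage of bypassing the shear covariance of the Busemann process, which in the paper is only established later (Theorem \ref{thm:bcov}).

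One step needs tightening: in the tail region $|z|>CN$ you propose to use an ``at most polynomial'' bound on $\rnShe(N,0\viiva s,x)$, but the pointwise polynomial bound recorded in the paper from \cite{Alb-etal-22-spde-} has a constant $C(\omega,T)$ depending on the time horizon $T$, so it does not hold uniformly in $N$. What does work — and what the paper itself uses in the proof of Theorem \ref{int-shape} — is the uniform-in-$n$ moment estimate $\Mom{4}{2(n+1)}^{1/4} \le e^{cn}$ from Lemma 3.1 of \cite{Alb-etal-22-spde-}, together with Corollary 3.10 there, giving $\E[\sup\rnShe^4]^{1/4} \lesssim e^{cN}|z|^3$ over the unit window at time distance $N$. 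Combined with the Gaussian factor $e^{-z^2/(2N)} \le e^{-C^2 N/2}$ for $|z|>CN$ and the linear growth of the Brownian integrand, this makes the tail of order $e^{(c - C^2/2 + O(C))N}$, which is negligible for $C$ large; a Borel--Cantelli argument along integer $N$ then upgrades the moment bound to an almost sure one. With that substitution, the rest of your Laplace upper and lower bounds go through as written.
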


\begin{proof}
The cocycle property of $\Bus^\lambda$, time-stationarity of $\bfP$, and   Birkhoff's ergodic theorem imply that $\bfP$-almost surely
\begin{align*}  a_\lambda=\lim_{n\to\infty}n^{-1}\Bus^\lambda(0,0,n,0).
	\end{align*} 
By the shear \eqref{bus-shear}, $\Bus^\lambda(0,0,n,0)$ has the same distribution as $\Bus^0(0,0,n,\lambda n)+\lambda^2/2$.
Birkhoff's ergodic theorem implies that 
$n^{-1}\Bus^0(0,0,0,\lambda n)$ converges to $0$, $\bfP$-almost surely, and 
 therefore $n^{-1}\Bus^0(n,0,n,\lambda n)$ (which has the same distribution as 
 $n^{-1}\Bus^0(n,0,n,\lambda n)$) converges to $0$ in $\bfP$-probability.
Theorem 1.2 in \cite{Bor-etal-15} implies that $n^{-1}\Bus^0(0,0,n,0)$ converges in $\bfP$-probability to $-1/24$. Therefore, 
	\[n^{-1}\Bus^0(0,0,n,\lambda n)=n^{-1}\Bus^0(0,0,n,0)+n^{-1}\Bus^0(n,0,n,\lambda n)\]
converges in $\bfP$-probability to $-1/24$. The claim follows.
%
\end{proof}

\begin{remark}\label{rk:shear}
The above lemma is another place where integrable probability is used. 
However, combining \eqref{bus-shear} with \eqref{b-shape} we get
$a_{\lambda+c}=a_\lambda-c\lambda+c^2/2$,
which gives $a_c=a_0+c^2/2$. This is more than enough for our results in this paper and knowing the exact value $a_0=-1/24$ is not necessary.
\end{remark}

\begin{proof}[Proof of Proposition \ref{pr:b-shape}]
By Lemma \ref{lm:Bus54} it suffices to prove that for some $p>2$
    \begin{align}\label{sup b tmp}
    \Bus^\lambda(0,0,0,1)\text{ and }\Bus^\lambda(0,0,1,0)\text{ are in }L^p(\bfP)\quad\text{and}\quad
    \sup_{\substack{1\le t\le 2\\ 0\le x\le1}}\abs{\Bus^\lambda(0,0,t,x)}\in L^2(\bfP).
    \end{align}
Since $\Bus^\lambda(0,0,0,1)$ is a normal random variable, all its moments are finite. 
That $\abs{\Bus^\lambda(0,0,1,0)}$ is in $L^p(\bfP)$ (and in fact has an exponential moment)
and the  last part of  \eqref{sup b tmp} come from verifying  that
	\begin{align}\label{Eeb}
	\bfE\Bigl[\,\sup_{\substack{1\le t\le2\\0\le y\le1}}e^{\Bus^\lambda(0,0,t,y)}\Bigr]
	<\infty
	\end{align}
	and 
	\begin{align}\label{Eebinv}
	\bfE\Bigl[\,\sup_{\substack{1\le t\le2\\0\le y\le1}}e^{-\Bus^\lambda(0,0,t,y)}\Bigr]<\infty.
	\end{align}

For \eqref{Eebinv},  use  \eqref{NEBus2}  to write
	\begin{align}\label{ebinv-comp}
	\begin{split}
	\bfE\Bigl[\,\sup_{\substack{1\le t\le2\\0\le y\le1}}e^{-\Bus^\lambda(0,0,t,y)}\Bigr]
	&=\bfE\Bigl[\,\sup_{\substack{1\le t\le2\\0\le y\le1}}\Bigl(\;\int_{-\infty}^\infty e^{\Bus^\lambda(0,0,0,x)}\She(t,y\viiva 0,x)\,dx\Bigr)^{-1}\Bigr]\\
	&\le\bfE\Bigl[\,\sup_{\substack{1\le t\le2\\0\le y\le1}}\Bigl(\;\int_{0}^1 e^{\Bus^\lambda(0,0,0,x)}\She(t,y\viiva 0,x)\,dx\Bigr)^{-1}\Bigr]\\
	&\le\bfE\Bigl[\,\sup_{0\le x\le 1}e^{-\Bus^\lambda(0,0,0,x)}\Bigr]\,\E\Bigl[\,\sup_{\substack{1\le t\le2\\0\le x,y\le1}}\She(t,y\viiva 0,x)^{-1}\Bigr].
	\end{split}
	\end{align}
The first expectation 
on the right-hand side is finite because $\Bus^\lambda(0,0,0,x)-\lambda x$ is a Brownian motion. 
The second expectation is also seen to be finite by applying \cite[Corollary 3.11]{Alb-etal-22-spde-}.

For \eqref{Eeb},  apply \eqref{NEBus2} again to write
\begin{align*}
\bfE\Bigl[\,\sup_{\substack{1\le t\le2\\0\le y\le1}}e^{\Bus^\lambda(0,0,t,y)}\Bigr]
&=\bfE\Bigl[\,\sup_{\substack{1\le t\le2\\0\le y\le1}}\int_{-\infty}^\infty \She(t,y\viiva 0,x)e^{\Bus^\lambda(0,0,0,x)}\,dx\Bigr]\\
&=\bfE\Bigl[\,\sup_{\substack{1\le t\le2\\0\le y\le1}}\int_{-\infty}^\infty \Shenorm(t,y\viiva 0,x)\heat(t,y-x)e^{\Bus^\lambda(0,0,0,x)}\,dx.\Bigr]\\
&\le\bfE\Bigl[\,\int_{-\infty}^\infty \sup_{\substack{1\le t\le2\\0\le y\le1}}\Shenorm(t,y\viiva 0,x)\cdot\sup_{\substack{1\le t\le2\\0\le y\le1}} \heat(t,y-x)\cdot e^{\Bus^\lambda(0,0,0,x)}\,dx\Bigr]\\
&=\int_{-\infty}^\infty \E\Bigl[\sup_{\substack{1\le t\le2\\0\le y\le1}}\Shenorm(t,y\viiva 0,x)\Bigr]\cdot\sup_{\substack{1\le t\le2\\0\le y\le1}} \heat(t,y-x)\cdot\bfE[e^{\Bus^\lambda(0,0,0,x)}]\,dx\\
&\le C\int_{-\infty}^\infty \abs{x}^3\cdot e^{-\frac{x^2-2\abs{x}}4}\cdot e^{\frac{x}2+\lambda x}\,dx<\infty.
\end{align*}
In the second inequality we applied Corollary 3.10 in \cite{Alb-etal-22-spde-} and used the facts that $\Bus^\lambda(0,0,0,x)-\lambda x$ is a two-sided standard Brownian motion and $\heat(t,y-x)\le \frac1{\sqrt{2\pi}}e^{-\frac{x^2-2\abs{x}}4}$ for all $t\in[1,2]$ and $y\in[0,1]$.
\end{proof}


The following theorem gives versions of \eqref{b-shapehat} that hold for all $\lambda$ simultaneously.  

\begin{theorem}\label{bus:exp}
The following holds $\bfP$-almost surely: for all $\lambda\in\R$ and $\sigg\in\{-,+\}$
	\begin{align}
	&\lim_{r\to-\infty}\abs{r}^{-1}\sup_{\abs{x}\le C\abs{r}}\abs{\Bus^{\lambda\sig}(r,0,r,x)-\lambda x}=0,\quad\text{for all $C>0$,}\label{bus:exp1}\\
	&\lim_{r\to-\infty}\sup_{x\in\R}\frac{\abs{\Bus^{\lambda\sig}(r,0,r,x)-\lambda x}}{\abs{r}+\abs{x}}=0,\quad\text{and}\label{bus:exp2}\\
	&\lim_{\abs{x}\to\infty}\abs{x}^{-1}\abs{\Bus^{\lambda\sig}(t,0,t,x)-\lambda x}=0\quad\text{for all $t\in\R$.}\label{bus:exp3}
	\end{align}
\end{theorem}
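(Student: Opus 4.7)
The plan is to deduce all three limits from Proposition \ref{pr:b-shape} applied to a countable family of slopes, then pass to arbitrary $\lambda\in\R$ and $\sigg\in\{\pm\}$ by monotonicity \eqref{Busmonohat}. The main subtlety lies only in \eqref{bus:exp2}, where the supremum is over all of $\R$ and the denominator $|r|+|x|$ couples the two variables.

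First, apply Proposition \ref{pr:b-shape} to each $\mu\in\Udense$ and intersect the resulting events to obtain a single $\bfP$-full-probability event $\Omhat_1$ on which, for every $\mu\in\Udense$ and every $C>0$,
\[
\lim_{n\to\infty} n^{-1}\!\!\!\sup_{s,x,t,y\in[-Cn,Cn]}\Bigl|\Bus^\mu(s,x,t,y)-\bigl(\tfrac{\mu^2}{2}-\tfrac{1}{24}\bigr)(t-s)-\mu(y-x)\Bigr|=0.
\]
Work on $\Omhat_1$ from now on. Fix $\lambda\in\R$, $\sigg\in\{\pm\}$, and $\e>0$, and choose $\mu_1,\mu_2\in\Udense$ with $\lambda-\e<\mu_1<\lambda<\mu_2<\lambda+\e$. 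The monotonicity \eqref{Busmonohat} together with the cocycle identity $\Bus^{\mu\sig}(r,0,r,x)=-\Bus^{\mu\sig}(r,x,r,0)$ gives, for $x\ge 0$,
\[
\Bus^{\mu_1}(r,0,r,x)\le \Bus^{\lambda\sig}(r,0,r,x)\le \Bus^{\mu_2}(r,0,r,x),
\]
and the inequalities reverse for $x\le 0$. In either case, subtracting $\lambda x$ and applying the triangle inequality yields the sandwich
\begin{equation}\label{plan:sand}
\bigl|\Bus^{\lambda\sig}(r,0,r,x)-\lambda x\bigr|\le \max_{j\in\{1,2\}}\bigl|\Bus^{\mu_j}(r,0,r,x)-\mu_j x\bigr|+\e|x|.
\end{equation}

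For \eqref{bus:exp1}, specialize the shape theorem to $s=t=r$ on the box $[-Cn,Cn]$ and set $n=|r|$: uniformly for $|x|\le C|r|$, $|\Bus^{\mu_j}(r,0,r,x)-\mu_j x|\le \e|r|$ once $|r|$ is large. Plugging this into \eqref{plan:sand} and dividing by $|r|$ gives $|r|^{-1}\sup_{|x|\le C|r|}|\Bus^{\lambda\sig}(r,0,r,x)-\lambda x|\le \e+\e C$, and since $\e$ was arbitrary, \eqref{bus:exp1} follows. For \eqref{bus:exp3}, fix $t$ and apply the shape theorem with $s=t$; for $|x|\ge |t|$, the choice $n=|x|$, $C=1$ gives $|x|^{-1}|\Bus^{\mu_j}(t,0,t,x)-\mu_j x|\le \e$. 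Sandwich \eqref{plan:sand} then yields $\limsup_{|x|\to\infty}|x|^{-1}|\Bus^{\lambda\sig}(t,0,t,x)-\lambda x|\le 2\e$, and sending $\e\searrow 0$ completes the proof.

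The harder piece is \eqref{bus:exp2}, which requires uniform control in $x\in\R$. The key observation is that by setting $n=|r|+|x|$ with $C=1$ in \eqref{b-shapehat} at $s=t=r$, we obtain
\[
\bigl|\Bus^{\mu_j}(r,0,r,x)-\mu_j x\bigr|\le \e(|r|+|x|)\qquad\text{whenever }|r|+|x|\ge N_\e,
\]
for some deterministic (on $\Omhat_1$) threshold $N_\e$. For fixed $r$ with $|r|\ge N_\e$ this holds for \emph{every} $x\in\R$, so
\[
\sup_{x\in\R}\frac{|\Bus^{\mu_j}(r,0,r,x)-\mu_j x|}{|r|+|x|}\le \e\qquad\text{for }|r|\ge N_\e.
\]
Dividing \eqref{plan:sand} by $|r|+|x|$ and using $|x|/(|r|+|x|)\le 1$ then gives $\sup_{x\in\R}\frac{|\Bus^{\lambda\sig}(r,0,r,x)-\lambda x|}{|r|+|x|}\le \e+\e$ for $|r|\ge N_\e$, and \eqref{bus:exp2} follows after letting $\e\searrow 0$. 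All three limits thus hold on $\Omhat_1$ simultaneously for every $\lambda\in\R$ and $\sigg\in\{\pm\}$, which is the claim.
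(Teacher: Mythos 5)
Your proof is correct, and for \eqref{bus:exp1} and \eqref{bus:exp3} it follows essentially the same route as the paper: intersect the full-probability events of \eqref{Busmonohat} and the shape theorem \eqref{b-shapehat} over rational slopes, then use the two-sided monotonicity sandwich with $\mu_1<\lambda<\mu_2$ from $\Udense$ and take the rationals to $\lambda$.

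The interesting divergence is in \eqref{bus:exp2}. The paper reduces to $\lambda\in\Udense$ via the same sandwich, but then proves the case $\lambda\in\Udense$ by a \emph{separate} Borel--Cantelli estimate: it bounds
\[
\bfP\Bigl\{\sup_{\substack{-n-1\le r\le -n\\ m\le x\le m+1}}\abs{\Bus^\lambda(r,0,r,x)-\lambda x}\ge\delta(n+\abs{m})\Bigr\}
\]
using the exponential moment bounds \eqref{Eeb} and \eqref{Eebinv} and the fact that $\Bus^\lambda(0,0,0,\cdot)-\lambda\cdot$ is a Brownian motion. You avoid this entirely by reusing Proposition \ref{pr:b-shape} with the choice $n=\abs{r}+\abs{x}$ (at $C=1$, $s=t=r$, spatial arguments $0$ and $x$): since $\max(\abs{r},\abs{x})\le\abs{r}+\abs{x}$, the point is always inside the box $[-n,n]^4$, and once $\abs r\ge N_\e$ the threshold $n\ge N_\e$ is automatic for every $x\in\R$, so the normalized shape theorem delivers exactly the bound $\e(\abs r+\abs x)$. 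This is a real simplification: it shows \eqref{bus:exp2} is already a corollary of the locally uniform shape theorem without any additional moment or tail estimates, while the paper re-derives these estimates. The one small wrinkle is your use of $C=1$ in proving \eqref{bus:exp1}: as written, the shape theorem's constraint $\abs r\le Cn$ with $n=\abs r$ needs $C\ge1$, so one should replace $C$ by $\max(C,1)$ there (which is harmless since the sup is increasing in $C$), or note as the paper does that monotonicity in $C$ reduces to a single $C$.
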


\begin{proof}
Due to the monotonicity in $C$, it is enough to work with a fixed value of $C$. 
Let $\Omhat_1$ be the intersection of the full $\bfP$-probability event on which 
\eqref{Busmonohat} holds with the full $\bfP$-probability event on which
\eqref{b-shapehat} holds simultaneously for all $\lambda\in\Udense$.

For any $\what\in\Omhat_1$, any $\kappa<\mu$ in $\Udense$, and any $\lambda\in(\kappa,\mu)$, $r<0$, and $x>0$, by \eqref{Busmonohat}, 
	\begin{align*}
	\Bus^{\kappa}(r,0,r,x)-\kappa x+(\kappa-\lambda)x
	\le \Bus^{\lambda\sig}(r,0,r,x)-\lambda x
	\le \Bus^{\mu}(r,0,r,x)-\mu x+(\mu-\lambda)x.
	\end{align*}
The same bounds with reversed inequalities hold for $x<0$.
Divide by $r$, take it to $-\infty$, and use \eqref{b-shapehat} to get that
	\[\varlimsup_{r\to-\infty}\abs{r}^{-1}\sup_{\abs{x}\le C\abs{r}}\abs{\Bus^{\lambda\sig}(r,0,r,x)-\lambda x}\le 
	C(\mu-\kappa).\]
Take $\mu$ down to $\lambda$ and $\kappa$ up to $\lambda$ to get the first limit. 

For the second limit one can again use the above bounds to reduce the problem to one with $\lambda\in\Udense$ and thus dispense with the $\sigg$. 
For $\lambda\in\Udense$ we have for any $m\in\Z$, $n\in\N$, and $\delta\in(0,1/2)$
	\begin{align*}
	&\bfP\Bigl\{\sup_{\substack{-n-1\le r\le -n\\ m\le x\le m+1}}\abs{\Bus^\lambda(r,0,r,x)-\lambda x}\ge\delta(n+\abs{m})\Bigr\}\\
	&\qquad=\bfP\Bigl\{\sup_{\substack{1\le r\le 2\\ m\le x\le m+1}}\abs{\Bus^\lambda(r,0,r,x)-\lambda x}\ge\delta(n+\abs{m})\Bigr\}\\
	&\qquad\le e^{-\delta^2(n+\abs{m})/3}\bfE\Bigl[\exp\Bigl\{\frac{\delta}3\sup_{\substack{1\le r\le 2\\ m\le x\le m+1}}\abs{\Bus^\lambda(r,0,r,x)-\lambda x}\Bigr\}\Bigr]\\
	&\qquad\le e^{-\delta^2(n+\abs{m})/3}\bfE\Bigl[\exp\Bigl\{\delta\sup_{1\le r\le 2}\abs{\Bus^\lambda(0,0,r,0)}\Bigr\}\Bigr]^{1/3}
	\bfE\Bigl[\exp\Bigl\{\delta\sup_{m\le x\le m+1}\abs{\Bus^\lambda(0,0,0,x)-\lambda x}\Bigr\}\Bigr]^{1/3}\\
	&\qquad\qquad\qquad\qquad\times\bfE\Bigl[\exp\Bigl\{\delta\sup_{\substack{1\le r\le 2\\ m\le x\le m+1}}\abs{\Bus^\lambda(0,x,r,x)}\Bigr\}\Bigr]^{1/3}\\
	&\qquad\le e^{-\delta^2(n+\abs{m})/3}
	\bfE\Bigl[\exp\Bigl\{\delta\sup_{m\le x\le m+1}\abs{\Bus^\lambda(0,0,0,x)-\lambda x}\Bigr\}\Bigr]^{1/3}
	\bfE\Bigl[\exp\Bigl\{\delta\sup_{\substack{1\le r\le 2\\ 0\le x\le 1}}\abs{\Bus^\lambda(0,x,r,x)}\Bigr\}\Bigr]^{2/3}\\
	&\qquad\le e^{-\delta^2(n+\abs{m})/3}
	\bfE\Bigl[\exp\Bigl\{\delta\sup_{m\le x\le m+1}\abs{\Bus^\lambda(0,0,0,x)-\lambda x}\Bigr\}\Bigr]^{1/3}
	\bfE\Bigl[\exp\Bigl\{2\delta\sup_{\substack{1\le r\le 2\\ 0\le x\le 1}}\abs{\Bus^\lambda(0,0,r,x)}\Bigr\}\Bigr]^{1/3}\\
	&\qquad\qquad\qquad\qquad\times\bfE\Bigl[\exp\Bigl\{2\delta\sup_{0\le x\le 1}\abs{\Bus^\lambda(0,0,0,x)}\Bigr\}\Bigr]^{1/3}\\
	&\qquad\le C e^{-\delta^2(n+\abs{m})/3}e^{\delta^2\abs{m}/6}.
	\end{align*}
We used here \eqref{Eeb} and \eqref{Eebinv} (since $2\delta<1$) and the fact
that  $\Bus^\lambda(0,0,0,x)-\lambda x$ is a standard Brownian motion.
The desired limit comes then by applying 
Borel-Cantelli lemma. The last limit is an easier version of the first two.
\end{proof}


The next result is a convex dual of \eqref{Z-cont}, which can be interpreted as a variant of Varadhan's theorem from the theory of large deviations.

\begin{lemma}\label{int-shape2}
The following holds $\P$-almost surely.
For all $\mu\in\R$ and all $-\infty\le \lambda_1<\lambda_2\le \infty$, for any $C>0$,
\begin{align}\label{dual-shape}
\lim_{r\to-\infty}\,\sup_{t,x\in[-C,C]}\,\Bigr|\,\frac1{\abs{r}}\log\int_{\lambda_1\abs{r}}^{\lambda_2\abs{r}}\She(t,x\viiva r,w)\,e^{\mu w}\,dw-\sup_{\lambda_1<\lambda< \lambda_2}\Bigl\{\mu\lambda-\frac{\lambda^2}2-\frac1{24}\Bigr\}\Bigr|=0.
\end{align}
\end{lemma}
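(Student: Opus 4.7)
The plan is to reduce the statement to a Laplace-type asymptotic driven by the shape theorem \eqref{Z-cont}. First I would treat the case of finite $\lambda_1,\lambda_2$ by changing variables $w=\lambda|r|$, $dw=|r|\,d\lambda$:
\[
\int_{\lambda_1|r|}^{\lambda_2|r|}\She(t,x\viiva r,w)\,e^{\mu w}\,dw \;=\; |r|\int_{\lambda_1}^{\lambda_2}\She(t,x\viiva r,\lambda|r|)\,e^{\mu\lambda|r|}\,d\lambda.
\]
Applying Theorem \ref{thm:Z-cont} with some $C'>\max(C,|\lambda_1|,|\lambda_2|)$ and combining with the explicit expansion $\log\heat(t-r,x-\lambda|r|) = -\tfrac12\log(2\pi(t-r)) - \tfrac{(x-\lambda|r|)^2}{2(t-r)}$, using $(t-r)/|r|=1+t/|r|\to 1$ as $r\to-\infty$ for $t\in[-C,C]$, yields
\[
\frac{1}{|r|}\log\bigl[\She(t,x\viiva r,\lambda|r|)\,e^{\mu\lambda|r|}\bigr] \;=\; \mu\lambda - \frac{\lambda^2}{2} - \frac{1}{24} + o(1),
\]
uniformly in $t,x\in[-C,C]$ and $\lambda\in[\lambda_1,\lambda_2]$.

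With this uniform convergence, standard Laplace-method estimates close the finite case: the upper bound follows from bounding the integrand by its supremum times $\lambda_2-\lambda_1$, and the lower bound from restricting to a small sub-interval around a near-maximizer of $\lambda\mapsto\mu\lambda-\lambda^2/2-1/24$ inside $(\lambda_1,\lambda_2)$. Strict concavity (with unique maximum at $\lambda=\mu$) ensures the suprema over the open interval $(\lambda_1,\lambda_2)$ and its closure agree.

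For the unbounded case $\lambda_2=+\infty$ (with $\lambda_1=-\infty$ being symmetric), the plan is to show that for $K$ sufficiently large the tail $\int_{K|r|}^\infty\She(t,x\viiva r,w)\,e^{\mu w}\,dw$ is negligible compared to the target exponential asymptotic. Using $\E[\She(t,x\viiva r,w)]=\heat(t-r,x-w)$ and completing the square,
\[
\E\!\left[\int_{K|r|}^\infty\She(t,x\viiva r,w)\,e^{\mu w}\,dw\right] \;\le\; C\exp\!\left(\tfrac{\mu^2}{2}|r| - \tfrac{(K-\mu)^2}{2}|r| + o(|r|)\right),
\]
uniformly in $t,x\in[-C,C]$. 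Choosing $K$ large enough that the exponent falls more than $1$ below the Varadhan constant $\sup_\lambda(\mu\lambda-\lambda^2/2-1/24)=\mu^2/2-1/24$, a combination of Markov's inequality, Borel--Cantelli along integer $r=-n$, and a discretization argument using the moment bounds of Corollary 3.11 of \cite{Alb-etal-22-spde-} (in the spirit of the proof of Theorem \ref{thm:Z-cont}) promotes this to an almost-sure bound uniform in $t,x\in[-C,C]$ and continuous in $r$.

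Finally, to obtain the limit on a single full-probability event simultaneously in $(\mu,\lambda_1,\lambda_2)$, I would first establish convergence on a countable dense set of parameter triples and then extend by monotonicity of the integral in $(\lambda_1,\lambda_2)$ together with continuity of the variational formula in all three parameters. I expect the hard part to be the almost-sure tail control uniform in $t,x$ and $r$ when a $\lambda_j$ is infinite; this appears to require the discretization-plus-moment-bound argument sketched above rather than purely soft estimates, since the first-moment bound alone controls only a single $(t,x,r)$.
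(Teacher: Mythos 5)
Your proposal is correct and matches the route the paper takes: the paper itself omits the proof of Lemma \ref{int-shape2}, stating that it is an easier version of the proof of Theorem \ref{int-shape}, and that proof uses exactly the ingredients you identify — the locally uniform shape theorem \eqref{Z-cont} to control the integrand for $\lambda$ in a bounded interval, a Laplace-method upper/lower bound for the finite case, moment bounds from \cite[Corollary 3.10/3.11]{Alb-etal-22-spde-} with the supremum over boxes in $(s,t,x,y)$ taken inside the expectation plus Borel–Cantelli to kill the tail when $\lambda_1$ or $\lambda_2$ is infinite, and a final approximation over a countable dense set of parameters to get one full-probability event. The only cosmetic difference is that the paper pushes the $\sup$ over the $(t,x,r)$-box directly inside an $L^4$-moment estimate rather than first bounding $\E[\She]=\heat$ and then discretizing, but the two are equivalent in effect (and, as you note, the first-moment route is slightly simpler here because there is no Busemann factor to control).
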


We will need a variant of this lemma
that links $\She$ and the Busemann process.
This is given in the next theorem. The proof of this theorem comes at the end of the section. 
The proof of Lemma \ref{int-shape2} is an easier version of that of Theorem \ref{int-shape} and is therefore omitted.

\begin{theorem}\label{int-shape}
The following holds $\bfP$-almost surely.
For all $\mu\in\R$, $\sigg\in\{-,+\}$, for all $-\infty\le \lambda_1<\lambda_2\le \infty$, 
and for any $C>0$,
\begin{align}\label{eq:Zb}
\lim_{r\to-\infty}\,\sup_{t,x\in[-C,C]}\,\Bigr|\,\frac1{\abs{r}}\log\int_{\lambda_1\abs{r}}^{\lambda_2\abs{r}}\She(t,x\,|\,r,w)e^{\Bus^{\mu\sig}(r,0,r,w)}\,dw-\sup_{\lambda_1<\lambda< \lambda_2}\Bigl\{\mu\lambda-\frac{\lambda^2}2-\frac1{24}\Bigr\}\Bigr|=0.
\end{align}
\end{theorem}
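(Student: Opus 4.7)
The plan is to combine the shape theorem for $\She$ (Theorem \ref{thm:Z-cont}) with the shape theorem for $\Bus$ (Theorem \ref{bus:exp}) to obtain a uniform pointwise asymptotic for the integrand, and then carry out a Laplace-type analysis. Work on the full $\bfP$-probability event on which both shape theorems hold, the latter simultaneously for all $\mu \in \R$ and $\sig \in \{-,+\}$. Set $\phi(\lambda) = \mu\lambda - \lambda^2/2 - 1/24$ and let $F_r(w) := \She(t,x\viiva r,w)\, e^{\Bus^{\mu\sig}(r,0,r,w)}$ denote the integrand (with the dependence on $t, x$ suppressed). For any fixed $K>0$, the two shape theorems combine to give, uniformly in $t, x \in [-C, C]$ and $w$ with $\abs{w} \le K\abs{r}$,
\[
\log F_r(w) \,=\, -\tfrac{t-r}{24} - \tfrac{(w-x)^2}{2(t-r)} - \tfrac{1}{2}\log\bigl(2\pi(t-r)\bigr) + \mu w + o(\abs{r}).
\]
Substituting $w = \lambda\abs{r}$ and using $t - r = \abs{r}(1 + o(1))$ for $t \in [-C,C]$, this becomes $\log F_r(\lambda\abs{r}) = \abs{r}\phi(\lambda) + o(\abs{r})$, with $o(\abs{r})$ uniform for $\lambda \in [-K, K]$.

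When $\lambda_1, \lambda_2$ are both finite, the theorem follows from standard Laplace asymptotics applied to the pointwise estimate above. The upper bound is immediate. For the lower bound, pick $\lambda^\ast \in (\lambda_1, \lambda_2)$ near a maximizer of $\phi$ on $(\lambda_1, \lambda_2)$ and a small $\delta > 0$ with $[\lambda^\ast - \delta, \lambda^\ast + \delta] \subset (\lambda_1, \lambda_2)$, restrict the integration to this $\delta$-window, and then send $\abs{r} \to \infty$, $\delta \to 0$, and $\e \to 0$. Uniformity in $t, x \in [-C, C]$ is inherited from the uniformity built into the two shape theorems.

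For the unbounded case, say $\lambda_2 = \infty$ (the case $\lambda_1 = -\infty$ is symmetric, and the doubly infinite case combines the two tails), split the range as $[\lambda_1\abs{r}, K\abs{r}] \cup [K\abs{r}, \infty)$ for a parameter $K$ to be chosen. The bounded piece is controlled by the previous paragraph. For the tail, apply the sublinear global control \eqref{bus:exp2}: for any $\e > 0$, eventually $\Bus^{\mu\sig}(r,0,r,w) \le \mu w + \e(\abs{r} + \abs{w})$ for all $w \in \R$, so on $w > 0$,
\[
F_r(w) \,\le\, e^{\e\abs{r}}\,\She(t,x\viiva r,w)\,e^{(\mu+\e)w}.
\]
Decomposing $[K\abs{r}, \infty)$ into unit-slope slabs $[j\abs{r}, (j+1)\abs{r}]$ for integer $j \ge K$ and applying the bounded-slope argument on each slab (namely Lemma \ref{int-shape2}, whose proof the paper notes is the easier $e^{\mu'w}$ analog of the above) bounds each slab contribution by $e^{\abs{r}(\psi_\e(j) + o(1))}$, where $\psi_\e(\lambda) = (\mu+\e)\lambda - \lambda^2/2 - 1/24$. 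Concavity of $\psi_\e$ makes the slab contributions summable and controlled by $e^{\abs{r}(\sup_{\lambda \ge K}\psi_\e + o(1))}$. When $\mu \in (\lambda_1, \infty)$, send $K \to \infty$ so the tail supremum drifts to $-\infty$ and is dominated by the bounded piece $\phi(\mu) = \sup_{\lambda > \lambda_1}\phi$; when $\mu \le \lambda_1$, instead take $K$ just above $\lambda_1$ so that $\psi_\e(K)$ matches $\phi(\lambda_1) = \sup_{\lambda > \lambda_1}\phi$ up to $O(\e)$. Sending $\e \to 0$ closes the argument.

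The principal technical obstacle is this tail estimate in the regime $\mu \notin (\lambda_1, \lambda_2)$: then the full-line asymptotic $\mu^2/2 - 1/24$ strictly exceeds the restricted supremum, and one must genuinely exclude the contribution from $w \sim \mu\abs{r}$ rather than absorb it into the full integral via \eqref{NEBus2}. The slab decomposition against the $\e$-perturbed slope $\mu + \e$ accomplishes this thanks to the strict concavity of $\phi$, which guarantees a positive gap between the restricted and unrestricted optima of the rate function.
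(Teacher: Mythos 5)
Your overall plan works in outline and is a genuinely different route from the paper's, but the slab decomposition of the tail has a real gap.

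The bounded case (finite $\lambda_1,\lambda_2$) is sound, and obtaining simultaneity in $(\mu,\sigg)$ by invoking Theorem \ref{bus:exp} directly is cleaner than the paper's route: the paper first proves the statement for a single slope, and at the very end recovers general $\mu$ and both signs by monotone sandwiching between rational slopes $\kappa_1<\mu<\kappa_2$. Your use of \eqref{bus:exp2} to reduce the Busemann factor to a deterministic exponential $e^{(\mu\pm\e)w}$ on the tail is also legitimate, since \eqref{bus:exp2} gives uniform-in-$w$ control.

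The problem is summing the slabs. You apply the bounded-interval version of Lemma \ref{int-shape2} on each slab $[j\abs{r},(j+1)\abs{r}]$ and claim the contribution is $e^{\abs{r}(\psi_\e(j)+o(1))}$ with $o(1)$ implicitly uniform over $j$. But shape-theorem type statements — \eqref{Z-cont} and Lemma \ref{int-shape2} alike — give almost-sure convergence whose rate depends on the compact scale parameter. Almost surely, for each fixed $j$, the slab asymptotic eventually holds; but for a fixed $r$, only finitely many $j$'s are controlled to any prescribed tolerance, and the infinitely many remaining slabs contribute an uncontrolled sum. The slab decomposition therefore does not deliver a valid tail bound. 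The fix is simple and does not require any new estimate: Lemma \ref{int-shape2} already allows $\lambda_2=\infty$ (or $\lambda_1=-\infty$). Apply it once, directly, with $\lambda_1=K$, $\lambda_2=\infty$, and slope $\mu+\e$; the tail is then bounded by $\e+\sup_{\lambda>K}\psi_\e(\lambda)+o(1)$ with no summation, and the strict-concavity gap you identify in your last paragraph closes the argument after sending $K$ and $\e$ appropriately. For contrast, the paper's own tail control avoids shape theorems entirely: it is a direct moment bound $\bfP\{\tfrac1n\log\int_{-\infty}^{\lambda_1'n}(\sup\cdots)\,dw\ge M\}\le\cdots$ obtained by pulling out $e^{-Mn}$, applying Cauchy--Schwarz, stationarity of $\Bus^\mu$, and the fourth-moment estimate on $\Shenorm$ from \cite[Corollary~3.10]{Alb-etal-22-spde-}, and then Borel--Cantelli. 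That route sidesteps the uniformity issue from the outset, at the cost of being less transparent than the Laplace picture you are using.
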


\begin{proof}
First, we prove the result for fixed $\mu\in\R$ and  $-\infty\le \lambda_1<\lambda_2\le \infty$. In this case, there is no need for the $\pm$ distinction.
We begin by treating the case of finite $\lambda_1$ and $\lambda_2$. In this case, \eqref{b-shapehat} and \eqref{Z-cont} imply that $\P$-almost surely, for  $\delta>0$, $C>0$, and $r<0$ with $\abs{r}$ large enough, we have for any 
$t,x\in[-C,C]$
	\begin{align*}
	&\frac1{\abs{r}}\log\int_{\lambda_1\abs{r}}^{\lambda_2\abs{r}}\She(t,x\viiva r,w)e^{\Bus^\mu(r,0,r,w)}\,dw
	\le \delta+\frac1{\abs{r}}\log\int_{\lambda_1\abs{r}}^{\lambda_2\abs{r}}e^{-\frac{t-r}{24}-\frac{(x-w)^2}{2(t-r)}+\mu w}\,dw\\
	&\qquad\le\delta+\frac{C}{24\abs{r}}-\frac1{24}+\frac{\log\abs{r}}{\abs{r}}+\frac{C\abs{\mu}}{\abs{r}}+\frac1{\abs{r}}\log\int_{\lambda_1-C/\abs{r}}^{\lambda_2+C/\abs{r}}e^{-\frac{\abs{r}^2 u^2}{2(C-r)}}e^{\mu\abs{r}u}\,du\\
	&\qquad\le\delta+\frac{C}{24\abs{r}}-\frac1{24}+\frac{\log\abs{r}}{\abs{r}}+\frac{C\abs{\mu}}{\abs{r}}+\frac{\log\bigl(\lambda_2-\lambda_1+2C/\abs{r}\bigr)}{\abs{r}}\\
	&\qquad\qquad+\sup_{\lambda_1-C/\abs{r}<\lambda<\lambda_2+C/\abs{r}}\Bigl\{-\frac{\abs{r}\lambda^2}{2(C+\abs{r})}+\mu\lambda\Bigr\}\\
	&\qquad\le\delta+\frac{C}{24\abs{r}}-\frac1{24}+\frac{\log\abs{r}}{\abs{r}}+\frac{C\abs{\mu}}{\abs{r}}+\frac{\log\bigl(\lambda_2-\lambda_1+2C/\abs{r}\bigr)}{\abs{r}}+\frac{C(\lambda_1^2\vee\lambda_2^2)}{2(C+\abs{r})}\\
	&\qquad\qquad+\sup_{\lambda_1-C/\abs{r}<\lambda<\lambda_2+C/\abs{r}}\Bigl\{-\frac{\lambda^2}{2}+\mu\lambda\Bigr\}\\
	&\qquad\le\delta+\frac{C}{24\abs{r}}+\frac{\log\abs{r}}{\abs{r}}+\frac{C\abs{\mu}}{\abs{r}}+\frac{\log\bigl(\lambda_2-\lambda_1+2C/\abs{r}\bigr)}{\abs{r}}+\frac{C(\lambda_1^2\vee\lambda_2^2)}{2(C+\abs{r})}\\
	&\qquad\qquad+\sup_{\lambda_1-C/\abs{r}<\lambda<\lambda_2+C/\abs{r}}\Bigl\{\frac{\mu^2}2-\frac1{24}-\frac{(\mu-\lambda)^2}2\Bigr\}.
	\end{align*}
The lower bound comes similarly. Taking $\delta\to0$ shows that
\[\lim_{r\to-\infty}\sup_{t,x\in[-C,C]}\Bigl|\frac1{\abs{r}}\log\int_{\lambda_1\abs{r}}^{\lambda_2\abs{r}}\She(t,x\,|\,r,w)e^{\Bus^\mu(r,0,r,w)}\,dw-\sup_{\lambda_1<\lambda<\lambda_2}\Bigl\{\mu\lambda-\frac{\lambda^2}2-\frac1{24}\Bigr\}\Bigr|=0.\]

Next, we treat the case where $\lambda_1=-\infty$ but $\lambda_2<\infty$. 
By \cite[Lemma 3.1]{Alb-etal-22-spde-},
there exists a $c>0$ such that 
$\Mom{4}{2(n+1)}^{1/4}n^3\le e^{cn}$ for all $n\in\N$. 
Fix
	\[M<\sup_{\lambda<\lambda_2}\Bigl\{\mu\lambda-\frac{\lambda^2}2-\frac1{24}\Bigr\}\] 
and $\lambda_1'<0\wedge\lambda_2$ such that 
	\begin{align}\label{lam1}
	(\mu+2)^2-\bigl(\lambda_1'-2(\mu+2)\bigr)/4\le M-c-1
	\end{align}
and 
	\begin{align}\label{sup=sup}
	\sup_{\lambda<\lambda_2}\Bigl\{\mu\lambda-\frac{\lambda^2}2-\frac1{24}\Bigr\}=\sup_{\lambda_1'<\lambda<\lambda_2}\Bigl\{\mu\lambda-\frac{\lambda^2}2-\frac1{24}\Bigr\}.
	\end{align}
Then for $n\in\N$ 
	\begin{align*}
	&\bfP\Bigl\{\frac1{n}\log\int_{-\infty}^{\lambda_1' n}\sup_{\substack{-n-1\le r\le-n\\ t,x\in[-C,C]}}\She(t,x\,|\,r,w)\sup_{-n-1\le s\le-n}e^{\Bus^\mu(s,0,s,w)}\,dw\ge M\Bigr\}\\
	&\qquad\le e^{-Mn}\int_{-\infty}^{\lambda_1'n}\E\Bigl[\,\sup_{\substack{-n-1\le r\le-n\\ t,x\in[-C,C]}}\She(t,x\,|\,r,w)^4\Bigr]^{1/4}\,\bfE\Bigl[\,\sup_{-n-1\le s\le -n}e^{4\Bus^\mu(s,0,-n-2,0)}\Bigr]^{1/4}\\
	&\qquad\qquad\qquad\qquad\qquad\times\bfE\bigl[e^{4\Bus^\mu(-n-2,0,-n-2,w)}\bigr]^{1/4}\,\bfE\Bigl[\,\sup_{-n-1\le s\le-n}e^{4\Bus^\mu(-n-2,w,s,w)}\Bigr]^{1/4}\,dw\\
	&= e^{-Mn}\bfE\Bigl[\,\sup_{1\le s< 2}e^{-4\Bus^\mu(0,0,s,0)}\Bigr]^{1/4}\,\bfE[\sup_{1\le s< 2}e^{4\Bus^\mu(0,0,s,0)}\Bigr]^{1/4}\\
	&\qquad\qquad\qquad\qquad\qquad\times\int_{-\infty}^{\lambda_1'n}\E\Bigl[\,\sup_{\substack{n\le r\le n+1\\ t,x\in[-C,C]}}\She(r,-w\,|\,t,x)^4\Bigr]^{1/4}\,\bfE[e^{4\Bus^\mu(0,0,0,w)}]^{1/4}\,dw\\
	&= e^{-Mn}\bfE\Bigl[\,\sup_{1\le s< 2}e^{-4\Bus^\mu(0,0,s,0)}\Bigr]^{1/4}\,\bfE\Bigl[\sup_{1\le s< 2}e^{4\Bus^\mu(0,0,s,0)}\Bigr]^{1/4}\\
	&\qquad\qquad\qquad\qquad\qquad\times\int_{-\infty}^{\lambda_1'n}\E\Bigl[\,\sup_{\substack{n\le r\le n+1\\ t,x\in[-C,C]}}\She(r,-w\,|\,t,x)^4\Bigr]^{1/4}e^{(\mu+2)w}\,dw.
	\end{align*}
	
By similar arguments to the ones giving \eqref{Eeb} and \eqref{ebinv-comp}, the expectations in front of the last integral are finite. Applying Corollary 3.10 in \cite{Alb-etal-22-spde-}, we have
\begin{align*}
\E\Bigl[\,\sup_{\substack{n\le r\le n+1\\ t,x\in[-C,C]}}\She(r,-w\,|\,t,x)^4\Bigr]^{1/4} &\le 
\E\Bigl[\,\sup_{\substack{n\le r\le n+1\\ t,x\in[-C,C]}}\Shenorm(r,-w\,|\,t,x)^4\Bigr]^{1/4}\cdot
\sup_{\substack{n\le r\le n+1\\ t,x\in[-C,C]}}\heat(r-t,w+x)\\
&\le C' \Mom{4}{2(n+1)}^{1/4}n^3 \abs{w}^3 e^{-\frac{w^2}{2n}},
\end{align*} 
for some constant $C'>0$.
Recall that $\Mom{4}{2(n+1)}^{1/4}\le e^{cn}$ for all $n\in\N$.
Therefore, we can continue the above bounds with:
	\begin{align*}
	&\le C'n^3e^{-(M-c)n}\int_{-\infty}^{\lambda_1' n}\abs{w}^3e^{-\frac{w^2}{2n}}e^{(\mu+2)w}\,dw\\	
	&= C'e^{-(M-c) n}n^5\int_{-\infty}^{\lambda_1' \sqrt{n}}\abs{u}^3e^{-\frac{u^2}{2}}e^{(\mu+2)u\sqrt{n}}\,du\\	
	&\le C''e^{-(M-c) n}n^5\int_{-\infty}^{\lambda_1' \sqrt{n}}e^{-\frac{u^2}{4}}e^{(\mu+2)u\sqrt{n}}\,du\\	
	&= C''e^{-(M-c) n}e^{(\mu+2)^2n}n^5\int_{-\infty}^{\lambda_1' \sqrt{n}}e^{-\frac{(u-2(\mu+2)\sqrt n\tspb)^2}{4}}\,du\\	
	&= 4C''e^{-(M-c) n}e^{(\mu+2)^2n}n^5\sqrt{\pi}\cdot\frac1{\sqrt{2\pi}}\int_{-\infty}^{(\lambda_1' -2(\mu+2))\sqrt{n/2}}e^{-\frac{v^2}{2}}\,dv\\	
	&\le 4C''e^{-(M-c) n}e^{(\mu+2)^2n-(\lambda_1' -2(\mu+2))^2 n/4}n^5\sqrt{\pi}.	
	\end{align*}
	By \eqref{lam1}, the right-hand side is bounded by $4C''e^{-n}n^5\sqrt{\pi}$.
Consequently, by the Borel-Cantelli lemma, $\bfP$-almost surely, for $n$ large enough,
	\[\frac1{n}\log\int_{-\infty}^{\lambda_1' n}\sup_{\substack{-n-1\le r\le -n\\ t,x\in[-C,C]}}\She(t,x\,|\,r,w)\sup_{-n-1\le s\le-n}e^{\Bus^\mu(s,0,s,w)}\,dw<\sup_{\lambda<\lambda_2}\Bigl\{\mu\lambda-\frac{\lambda^2}2-\frac1{24}\Bigr\}.\]
This implies that $\bfP$-almost surely, for $r<0$ with $\abs{r}$ large enough,  
	\begin{align}\label{exp-tight}
	\sup_{t,x\in[-C,C]}\frac1{\abs{r}}\log\int_{-\infty}^{\lambda_1' \abs{r}}\She(t,x\viiva r,w)e^{\Bus^\mu(r,0,r,w)}\,dw<\sup_{\lambda<\lambda_2}\Bigl\{\mu\lambda-\frac{\lambda^2}2-\frac1{24}\Bigr\}.
	\end{align}
	Recalling \eqref{sup=sup}, we by now know that 
	\begin{align*}
	\lim_{r\to-\infty}\sup_{t,x\in[-C,C]}\Bigl|\frac1{\abs{r}}\log\int_{\lambda_1' \abs{r}}^{\lambda_2\abs{r}}\She(t,x\viiva r,w)e^{\Bus^\mu(r,0,r,w)}\,dw
	&-\sup_{\lambda<\lambda_2}\Bigl\{\mu\lambda-\frac{\lambda^2}2-\frac1{24}\Bigr\}\Bigr|=0.
	\end{align*}
	Therefore, we have
	\[\lim_{r\to-\infty}\sup_{t,x\in[-C,C]}\Bigl|\frac1{\abs{r}}\log\int_{-\infty}^{\lambda_2 \abs{r}}\She(t,x\viiva r,w)e^{\Bus^\mu(r,0,r,w)}\,dw
	-\sup_{\lambda<\lambda_2}\Bigl\{\mu\lambda-\frac{\lambda^2}2-\frac1{24}\Bigr\}\Bigr|=0.\]
 The case where $\lambda_1>-\infty$ and $\lambda_2=\infty$ is similar and the case $\lambda_1=-\infty$ and $\lambda_2=\infty$ follows.
 
 To prove the claim on one full $\bfP$-probability event, simultaneously for all $\mu\in\R$, $\sigg\in\{-,+\}$, and $-\infty\le\lambda_1<\lambda_2\le\infty$, note that if $\lambda_1'<\lambda_2'\in\Q\cup\{-\infty,\infty\}$ and $\mu\in(\kappa_1,\kappa_2)$ with $\kappa_1<\kappa_2$ in $\Udense$, then one can bound
 \begin{align*}
 &\int_{\lambda_1\abs{r}}^{\lambda_2\abs{r}}\She(t,x\,|\,r,w)e^{\Bus^{\mu\sig}(r,0,r,w)}\,dw
 \le\int_{\lambda_1'\abs{r}}^{\lambda_2'\abs{r}}\She(t,x\,|\,r,w)e^{\Bus^{\mu\sig}(r,0,r,w)}\,dw\\
&\qquad \le \int_{(0\wedge\lambda_1')\abs{r}}^{(0\wedge\lambda_2')\abs{r}}\She(t,x\,|\,r,w)e^{\Bus^{\kappa_1}(r,0,r,w)}\,dw
 +\int_{(0\vee\lambda_1')\abs{r}}^{(0\vee\lambda_2')\abs{r}}\She(t,x\,|\,r,w)e^{\Bus^{\kappa_2}(r,0,r,w)}\,dw
\end{align*}
and applying the already proved result to get that 
\begin{align*}
&\varlimsup_{r\to-\infty}\,\sup_{t,x\in[-C,C]}\frac1{\abs{r}}\log\int_{\lambda_1\abs{r}}^{\lambda_2\abs{r}}\She(t,x\,|\,r,w)e^{\Bus^{\mu\sig}(r,0,r,w)}\,dw\\
&\qquad\le\sup_{0\wedge\lambda_1'<\lambda< 0\wedge\lambda_2'}\Bigl\{\kappa_1\lambda-\frac{\lambda^2}2-\frac1{24}\Bigr\}
+\sup_{0\vee\lambda_1'<\lambda< 0\vee\lambda_2'}\Bigl\{\kappa_2\lambda-\frac{\lambda^2}2-\frac1{24}\Bigr\}.
\end{align*}
Taking $\lambda_1'\to\lambda_1$, $\lambda_2'\to\lambda_2$, and both $\kappa_1$ and $\kappa_2$ to $\mu$ we get the desired upper bound. The lower bound comes similarly. 
 \end{proof}
 
 \begin{remark}\label{rk:amu}
The formula for $a_\mu$ that appears in Lemma \ref{a_mu} can also be seen from \eqref{eq:Zb}.
Namely, use 
\eqref{NEBus2hat} and \eqref{b-shapehat} to write
	\begin{align*}
	\frac1{\abs{r}}\log\int_{-\infty}^{\infty}\She(0,0\,|\,r,w)e^{\Bus^\mu(r,0,r,w)}\,dw
	=\frac{\Bus^\mu(r,0,0,0)}{\abs{r}}\mathop{\longrightarrow}_{r\to-\infty}a_\mu.
	\end{align*}
Consequently,
	\[a_\mu=\sup_\lambda\Bigl\{\mu\lambda-\frac{\lambda^2}2-\frac1{24}\Bigr\}=\frac{\mu^2}2-\frac1{24}.\]
\end{remark}

\section{Busemann limits}\label{sec:BusLim}

With the Busemann process constructed in Section \ref{sec:stat-coc} and the shape theorems proved in Section \ref{sec:shape}, we can prove the limits 
claimed \eqref{eq:buslim} and \eqref{eq:buslim2}.
However, we are still working on the  extended probability space $(\Omhat,\cFhat,\bfP)$ of \eqref{Omhat2}.
Define the set
	\begin{align}\label{Brunohat}
	\Brunohat=\{\lambda\in\R:\exists(s,x,t,y)\in\R^4\text{ with }\Bus^{\lambda-}(s,x,t,y)\ne\Bus^{\lambda+}(s,x,t,y)\}.
	\end{align}

\begin{remark}\label{rk:Bruno}
When we switch back to $(\Omega,\sF,\P)$, in Section \ref{sec:erg}, we will denote the above set by $\Bruno$. See \eqref{Bruno}.
\end{remark}

When $\lambda\not\in\Brunohat$ we write $\Bus^{\lambda}$ for $\Bus^{\lambda+}$ and $\Bus^{\lambda-}$.  This convention is consistent with the earlier use of  $\Bus^\lambda$ for  $\lambda\in\Udense$
because   by Lemma \ref{b-=b+} we can always include  the condition  $\Udense\subset\R\setminus\Brunohat$ in any  $\bfP$-full probability event we work on. 

The main goal of this section is to prove the following proposition.

\begin{proposition}\label{pr:buslim}
There exists an event $\Omhat_0\in\cFhat$ such that $\bfP(\Omhat_0)=1$ and we have the following for all $\w\in\Omhat_0$, $\lambda\in\R$, $C>0$, $\e>0$, and $\tau>0$.
\begin{enumerate}[label={\rm(\alph*)}, ref={\rm\alph*}]
\item\label{buslimhat1} 
There exist {\rm(}possibly random{\rm)} $R<0$ and deterministic $\delta>0$ 
such that for all  $r\le R$,  $z$ such that $\abs{\frac{z}r+\lambda}<\delta$, 
and for all $s,x,t,y\in[-C,C]$ with $t-s\ge\tau$, 
	\[\frac{\She(t,y\viiva r,z)}{\She(s,x\viiva r,z)}
	\le (1+\e)^2\int_x^\infty\She(t,y\viiva s,w)e^{\Bus^{\lambda+}(s,x,s,w)}\,dw+(1+\e)^2\int_{-\infty}^x\She(t,y\viiva s,w)e^{\Bus^{\lambda-}(s,x,s,w)}\,dw
	\]
and 
	\[\frac{\She(t,y\viiva r,z)}{\She(s,x\viiva r,z)}
	\ge (1+\e)^{-3}\int_x^\infty\She(t,y\viiva s,w)e^{\Bus^{\lambda-}(s,x,s,w)}\,dw+(1+\e)^{-3}\int_{-\infty}^x\She(t,y\viiva s,w)e^{\Bus^{\lambda+}(s,x,s,w)}\,dw.
	\]
\item\label{buslimhat2} For each $f\in\initF_\lambda$,
there exists {\rm(}possibly random{\rm)}   $R<0$ such that for all  $r\le R$, 
for all $s,x,t,y\in[-C,C]$ with $t-s\ge\tau$, we have
	\begin{align*}
	\frac{\int_\R\She(t,y\viiva r,z)\,f(r,z)\,dz}{\int_\R\She(s,x\viiva r,z)\,f(r,z)\,dz}
	&\le (1+\e)^3\int_x^\infty\She(t,y\viiva s,w)e^{\Bus^{\lambda+}(s,x,s,w)}\,dw\\
	&\qquad\qquad +(1+\e)^3\int_{-\infty}^x\She(t,y\viiva s,w)e^{\Bus^{\lambda-}(s,x,s,w)}\,dw
	\end{align*} 
and 
	\begin{align*}
	\frac{\int_\R\She(t,y\viiva r,z)\,f(r,z)\,dz}{\int_\R\She(s,x\viiva r,z)\,f(r,z)\,dz}
	&\ge (1+\e)^{-4}\int_x^\infty\She(t,y\viiva s,w)e^{\Bus^{\lambda-}(s,x,s,w)}\,dw\\
	&\qquad\qquad
	+(1+\e)^{-4}\int_{-\infty}^x\She(t,y\viiva s,w)e^{\Bus^{\lambda+}(s,x,s,w)}\,dw.
	\end{align*} 
\end{enumerate}
In particular, with $\P$-probability one, for any $\lambda\not\in\Bruno$ and any $f\in\initF_\lambda$, we have the limits
	\begin{align}
	&\lim_{\substack{r\to-\infty\\ z/r\to-\lambda}}\frac{\She(t,y\viiva r,z)}{\She(s,x\viiva r,z)}=e^{\Bus^\lambda(s,x,t,y)}\quad\text{and}	\label{eq:buslimhat}\\
	&\lim_{r\to-\infty}\frac{\int_\R\She(t,y\viiva r,z)\,f(r,z)\,dz}{\int_\R\She(s,x\viiva r,z)\,f(r,z)\,dz}=e^{\Bus^\lambda(s,x,t,y)},
	\label{eq:buslim2hat}
\end{align}
locally uniformly in $(s,x,t,y)\in\R^4$.
\end{proposition}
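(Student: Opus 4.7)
The plan is to prove part (a) first and then derive part (b) via a near/off-characteristic decomposition. For part (a), start from the Chapman-Kolmogorov identity \eqref{eq:CK} to write
\[\frac{\She(t,y\viiva r,z)}{\She(s,x\viiva r,z)} \;=\; \int_{\R}\She(t,y\viiva s,w)\,\frac{\She(s,w\viiva r,z)}{\She(s,x\viiva r,z)}\,dw.\]
Comparing with \eqref{NEBus2hat}, the bounds reduce to showing
\[\frac{\She(s,w\viiva r,z)}{\She(s,x\viiva r,z)} \;\approx\; e^{\Bus^{\lambda\sig}(s,x,s,w)},\qquad z/|r|\to\lambda,\ r\to-\infty,\]
with $\sigg=+$ for $w>x$ and $\sigg=-$ for $w<x$. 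The splitting of the integrals at $w=x$ in the statement reflects the monotonicity \eqref{Busmonohat}: on each side of $x$ the larger of $\Bus^{\lambda\pm}$ is the upper bound and the smaller is the lower bound.

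Fix $\kappa<\lambda<\mu$ in $\Udense$. Setting $(t,y)=(s,x)$ in \eqref{NEBus2hat} yields the normalization $1=\int \She(s,x\viiva r,z')e^{\Bus^\mu(s,x,r,z')}\,dz'$, and combining this with \eqref{NEBus2hat} gives the dual representation
\[e^{\Bus^\mu(s,x,s,w)} \;=\; \int_\R \frac{\She(s,w\viiva r,z')}{\She(s,x\viiva r,z')}\,d\nu_r^\mu(z'),\qquad d\nu_r^\mu(z')\propto\She(s,x\viiva r,z')\,e^{\Bus^\mu(s,x,r,z')}\,dz'.\]
By Theorem \ref{int-shape}, in the scaling $z'=\beta|r|$ the log-density of $\nu_r^\mu$ is $|r|(\mu\beta-\beta^2/2-1/24)+o(|r|)$, uniquely maximized at $\beta=\mu$, so $\nu_r^\mu$ is exponentially concentrated on $z'\approx -\mu r$. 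Stochastic monotonicity of the polymer in its backward endpoint (Proposition 2.18 of \cite{Alb-etal-22-spde-}, already used in Lemma \ref{lm:mono}) supplies the monotone likelihood ratio property: for $w>x$ the map $z'\mapsto \She(s,w\viiva r,z')/\She(s,x\viiva r,z')$ is increasing, and for $w<x$ it is decreasing. For $\mu>\lambda$ and $z$ with $z/|r|\to\lambda$, the support of $\nu_r^\mu$ lies to the right of $z$, so averaging the Dirac-at-$z$ ratio against $\nu_r^\mu$ bounds it from above when $w>x$ and from below when $w<x$. Letting $\mu\searrow\lambda$ along $\Udense$ and invoking the right-continuity \eqref{Busconthat} replaces $\Bus^\mu$ by $\Bus^{\lambda+}$; the opposite monotonicity with $\kappa\nearrow\lambda$ gives $\Bus^{\lambda-}$ on the $w<x$ portion. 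The lower bound in (a) follows by exchanging the roles of $\kappa$ and $\mu$. The factors $(1+\e)^{\pm k}$ absorb the polynomial errors in the concentration of $\nu_r^\mu$, the shape theorems, and the limits $\mu,\kappa\to\lambda$.

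For part (b), decompose
\[\int_\R \She(\aabullet\viiva r,z)\,f(r,z)\,dz \;=\; \Bigl(\int_{|z/r+\lambda|<\delta_0}+\int_{|z/r+\lambda|\ge\delta_0}\Bigr)\She(\aabullet\viiva r,z)\,f(r,z)\,dz.\]
On the characteristic window, part (a) applies uniformly in $z$ and produces the announced $\Bus^{\lambda\pm}$ expression as the main term. The off-characteristic contribution to the denominator is exponentially smaller: the growth conditions \eqref{W1}--\eqref{W''} defining $\initF_\lambda$, combined with Theorems \ref{thm:Z-cont} and \ref{int-shape}, show that the exponential growth rate of $\int \She(s,x\viiva r,z)f(r,z)\,dz$ restricted to $|z/r+\lambda|>\delta$ is strictly smaller than the rate over the characteristic window, where \eqref{W1} supplies the matching lower bound. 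The auxiliary slope $\mu$ in \eqref{W3} is precisely what is required to dominate the ``$\lambda x\le 0$'' side of the tail. An analogous estimate for the numerator produces the extra $(1+\e)$ factor separating the bounds in (b) from those in (a).

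I expect the main obstacle to be coordinating, uniformly across the compact parameter set $s,x,t,y\in[-C,C]$ with $t-s\ge\tau$, the sandwich slopes $\kappa,\mu$, the window width $\delta$, and the $w>x$/$w<x$ split that produces the two separate $\Bus^{\lambda\pm}$ terms in each bound. The small-time singularity of $\She$ forces the assumption $\tau>0$ in order to invoke Theorem \ref{thm:Z-cont}. The technical inputs that make sharp $(1+\e)^{\pm k}$ constants achievable simultaneously in all parameters are the locally uniform convergences in Proposition \ref{pr:b-shape}, Theorem \ref{thm:Z-cont}, and Theorem \ref{int-shape}, the $\lambda$-continuity statement \eqref{Busconthat}, and the stochastic monotonicity from \cite{Alb-etal-22-spde-}.
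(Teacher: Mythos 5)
Your proposal is correct and follows essentially the same route as the paper. The ``dual representation'' $e^{\Bus^\mu(s,x,s,w)}=\int\frac{\She(s,w\viiva r,z')}{\She(s,x\viiva r,z')}\,d\nu_r^\mu(z')$ together with concentration of $\nu_r^\mu$ and the monotone-likelihood-ratio property is a repackaging of the paper's Lemma~\ref{Bus-bnds}, whose proof writes the same bound as $e^{\Bus^\mu(t,x,t,y)}\cdot(1-\nu_r^\mu((-\infty,z)))^{-1}$ after applying \eqref{comp} and factoring; the propagation via Chapman--Kolmogorov, the $\kappa\nearrow\lambda$, $\mu\searrow\lambda$ sandwich via \eqref{Busconthat} with Dini's theorem (Lemmas~\ref{int-cont} and~\ref{Bus-bnds2}), and the near/off-characteristic split for part~(b) are all the paper's steps. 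One small point: when you derive the lower bound by ``exchanging the roles of $\kappa$ and $\mu$'', the clean way to make this rigorous (and what the paper does) is to take reciprocals of the upper bounds with $w$ and $x$ swapped, which sidesteps a tail estimate in the dual representation that would otherwise need separate control.
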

We begin with the following preliminary lemma.

\begin{lemma}\label{Bus-bnds}
The following holds with $\bfP$-probability one: for all $\kappa<\mu$ in $\Udense$, 
$\e>0$,  and $C>0$,
there exist 
{\rm(}possibly random{\rm)}
$R<-C$ and 
deterministic
$\delta>0$ such that for all $\lambda\in[\kappa+\e,\mu-\e]$ and $t,x\in[-C,C]$, for all $r\le R$, and for all
$z$ such that $\abs{\frac{z}r+\lambda}<\delta$, 
	\begin{align}\label{ratio-bnd1}
	\frac{\She(t,y\viiva r,z)}{\She(t,x\viiva r,z)}
	\le (1+\e)e^{\Bus^{\mu}(t,x,t,y)}\quad\text{for all $y\in(x,\infty)$}
	\end{align}
and
	\begin{align}\label{ratio-bnd2}
	\frac{\She(t,y\viiva r,z)}{\She(t,x\viiva r,z)}\le (1+\e)e^{\Bus^{\kappa}(t,x,t,y)}\quad\text{for all $y\in(-\infty,x)$}.
	\end{align}
\end{lemma}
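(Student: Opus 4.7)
The plan is to use the integral identity \eqref{NEBus2hat}, the FKG-type monotonicity of the Green's function (the crossing inequality \eqref{crossing}), and the dual shape theorem (Theorem \ref{int-shape}). Fix $\kappa<\mu$ in $\Udense$ and $\e,C>0$, and set $\delta=\e/2$. For $y>x$, writing $R(w)=\She(t,y\viiva r,w)/\She(t,x\viiva r,w)$ and $h_\mu(w)=\She(t,x\viiva r,w)\tspa e^{\Bus^\mu(r,0,r,w)}$, identity \eqref{NEBus2hat} combined with cocycle \eqref{cocyclehat} yields the key representation
\[
e^{\Bus^\mu(t,x,t,y)}=\frac{\int R(w)\tspa h_\mu(w)\,dw}{\int h_\mu(w)\,dw},
\]
while the crossing inequality shows that $R$ is nondecreasing in $w$ when $y>x$ and nonincreasing in $w$ when $y<x$.

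For the upper bound \eqref{ratio-bnd1} (the case $y>x$), monotonicity of $R$ gives $R(z)\int_z^\infty h_\mu\le\int_z^\infty R\tspa h_\mu\le\int R\tspa h_\mu$, hence
\[
R(z)\le e^{\Bus^\mu(t,x,t,y)}\cdot\frac{\int h_\mu}{\int_z^\infty h_\mu}.
\]
The reduction is thus to show that the second factor is at most $1+\e$ for $|r|$ large, uniformly in $\lambda\in[\kappa+\e,\mu-\e]$, $z$ with $|z/r+\lambda|<\delta$, and $t,x\in[-C,C]$. The choice $\delta=\e/2$ forces $z\le(\mu-\e/2)|r|$, so $\int_{-\infty}^z h_\mu\le\int_{-\infty}^{(\mu-\e/2)|r|}h_\mu$. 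Applying Theorem \ref{int-shape} with $(\lambda_1,\lambda_2)=(-\infty,\mu-\e/2)$ and with $(-\infty,\infty)$ gives respective log-rates $\mu(\mu-\e/2)-(\mu-\e/2)^2/2-1/24=a_\mu-\e^2/8$ and $a_\mu=\mu^2/2-1/24$, both uniform in $t,x\in[-C,C]$. Since the gap $\e^2/8$ is independent of $\lambda$, I obtain $\int_{-\infty}^z h_\mu\tspb/\tspb\int h_\mu\le e^{-|r|\e^2/8+o(|r|)}$, which beats any prescribed $\e$ once $|r|$ is sufficiently large.

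The lower bound \eqref{ratio-bnd2} is exactly dual. For $y<x$, $R$ is nonincreasing, so the analogous argument with $h_\kappa$ in place of $h_\mu$ and $\int_{-\infty}^z$ in place of $\int_z^\infty$ gives $R(z)\le e^{\Bus^\kappa(t,x,t,y)}\cdot\int h_\kappa/\int_{-\infty}^z h_\kappa$. Now $z\ge(\kappa+\e/2)|r|$, so $\int_z^\infty h_\kappa\le\int_{(\kappa+\e/2)|r|}^\infty h_\kappa$, and Theorem \ref{int-shape} with $(\lambda_1,\lambda_2)=(\kappa+\e/2,\infty)$ produces the matching log-rate $a_\kappa-\e^2/8<a_\kappa$. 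The single full $\bfP$-probability event is obtained by intersecting the shape events over the countable family indexed by pairs $(\kappa,\mu)\in\Udense^2$ with $\kappa<\mu$, rational $\e,C>0$, and the relevant choices of $(\lambda_1,\lambda_2)$.

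The main technical obstacle is achieving uniformity in $\lambda\in[\kappa+\e,\mu-\e]$ with a single deterministic $\delta$ and a single random $R$. This is precisely what the buffer $\delta=\e/2$ accomplishes: it pushes $z$ into the interval $[(\kappa+\e/2)|r|,(\mu-\e/2)|r|]$ for every admissible $\lambda$, ensuring that the Laplace-type concentration of $h_\mu$ (resp.\ $h_\kappa$) around $w=\mu|r|$ (resp.\ $w=\kappa|r|$) is separated from $z$ with a quadratic gap $\e^2/8$ that depends only on $\e$, not on $\lambda$ or on the realization of the noise.
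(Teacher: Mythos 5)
Your proof is correct and follows essentially the same route as the paper's: reduce \eqref{ratio-bnd1} to the bound $\int h_\mu\tspb/\tspb\int_z^\infty h_\mu \le 1+\e$, use the constraint $\abs{z/r+\lambda}<\delta$ with $\lambda\le\mu-\e$ to push $z$ below $(\mu-\e/2)\abs r$, and then invoke the Laplace-type asymptotics of Theorem \ref{int-shape} with exponent gap $\e^2/8$. Your derivation of the key algebraic reduction is slightly more streamlined than the paper's: you identify $e^{\Bus^\mu(t,x,t,y)}$ directly as $\int R\tspa h_\mu/\int h_\mu$ via \eqref{NEBus2hat} and the cocycle property and then use the monotonicity of $R(w)$ over the tail $[z,\infty)$ in one step, whereas the paper first invokes \eqref{comp} to get $R(z)\le\int_z^\infty R h_\mu/\int_z^\infty h_\mu$ and then factors through the full-line integrals before discarding a factor bounded by one — these are equivalent, and both produce the identical inequality $R(z)\le e^{\Bus^\mu(t,x,t,y)}\int h_\mu/\int_z^\infty h_\mu$. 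The choice $\delta=\e/2$ with endpoint $\mu-\e/2$ works just as well as the paper's $\delta<\e/4$, $\lambda_2\in(\mu-3\e/4,\mu-\e/4)\cap\Udense$; since Theorem \ref{int-shape} is already stated to hold on a single event of full probability simultaneously for all $\mu,\sigg,\lambda_1,\lambda_2$, your endpoints need not be drawn from $\Udense$ and your union over rational $\e$, integer $C$, and $(\kappa,\mu)\in\Udense^2$ is more than enough.
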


\begin{proof}
We prove \eqref{ratio-bnd1}, the other bound being similar.
Take the full $\bfP$-probability event to be the intersection of the events in Proposition \ref{Bproc2} and Lemmas \ref{b-=b+} and \ref{lm:comp}, with the event on which Theorem \ref{int-shape} holds for all
$\mu\in\Udense$ and $\lambda_1,\lambda_2\in\Udense\cup\{\pm\infty\}$. 
By \eqref{comp}, for all $y>x$, all $r<t$, and all $f$ as in that result,
	\begin{align*}
	\frac{\She(t,y\viiva r,z)}{\She(t,x\viiva r,z)}
	&\le 	\frac{\int_{z}^\infty\She(t,y\viiva r,w)f(w)\,dw}{\int_{z}^\infty\She(t,x\viiva r,w)f(w)\,dw}\\
	&=\frac{\int_{-\infty}^\infty\She(t,y\viiva r,w)f(w)\,dw}{\int_{-\infty}^\infty\She(t,x\viiva r,w)f(w)\,dw}\cdot\frac{\int_{-\infty}^\infty\She(t,x\viiva r,w)f(w)\,dw}{\int_{z}^\infty\She(t,x\viiva r,w)f(w)\,dw}
	\cdot\frac{\int_{z}^\infty\She(t,y\viiva r,w)f(w)\,dw}{\int_{-\infty}^\infty\She(t,y\viiva r,w)f(w)\,dw}\\
	&\le \frac{\int_{-\infty}^\infty\She(t,y\viiva r,w)f(w)\,dw}{\int_{-\infty}^\infty\She(t,x\viiva r,w)f(w)\,dw}
	\cdot\frac{\int_{-\infty}^\infty\She(t,x\viiva r,w)f(w)\,dw}{\int_{z}^\infty\She(t,x\viiva r,w)f(w)\,dw}\,.
	\end{align*}
Take  $\mu\in\Udense$ and use $f(w)=e^{\Bus^{\mu}(r,0,r,w)}$ to get
	\begin{align*}
	\frac{\She(t,y\viiva r,z)}{\She(t,x\viiva r,z)}
	&\le \frac{\int_{-\infty}^\infty \She(t,y\viiva r,w)e^{\Bus^{\mu}(r,0,r,w)}\,dw}{\int_{-\infty}^\infty \She(t,x\viiva r,w)e^{\Bus^{\mu}(r,0,r,w)}\,dw}
	\cdot\frac{\int_{-\infty}^\infty\She(t,x\viiva r,w)e^{\Bus^{\mu}(r,0,r,w)}\,dw}{\int_{z}^\infty\She(t,x\viiva r,w)e^{\Bus^{\mu}(r,0,r,w)}\,dw}\\	
	&=e^{\Bus^{\mu}(t,x,t,y)}
	\cdot\frac{\int_{-\infty}^\infty\She(t,x\viiva r,w)e^{\Bus^{\mu}(r,0,r,w)}\,dw}{\int_{z}^\infty\She(t,x\viiva r,w)e^{\Bus^{\mu}(r,0,r,w)}\,dw}\\
	&=e^{\Bus^{\mu}(t,x,t,y)}
	\cdot\Biggl(1-\frac{\int_{-\infty}^{z}\She(t,x\viiva r,w)e^{\Bus^{\mu}(r,0,r,w)}\,dw}{\int_{-\infty}^\infty\She(t,x\viiva r,w)e^{\Bus^{\mu}(r,0,r,w)}\,dw}\,\Biggr)^{\!-1}.
	\end{align*}
For the first equality we applied \eqref{NEBus2} and the cocycle property \eqref{cocycle}. Take 
\[\lambda_2\in(\mu-3\e/4,\mu-\e/4)\cap\Udense\quad\text{and}\quad\delta\in(0,\e/4).\] 
Then for all $r<0$ and $\lambda\le\mu-\e$, $\frac{z}r+\lambda\ge-\delta$ implies $z\le -\lambda_2 r$ 
and 
	\[\frac{\She(t,y\viiva r,z)}{\She(t,x\viiva r,z)}
	\le e^{\Bus^{\mu}(t,x,t,y)}
	\cdot\Biggl(1-\frac{\int_{-\infty}^{\lambda_2\abs{r}}\She(t,x\viiva r,w)e^{\Bus^\mu(r,0,r,w)}\,dw}{\int_{-\infty}^\infty\She(t,x\viiva r,w)e^{\Bus^\mu(r,0,r,w)}\,dw}\,\Biggr)^{\!-1}.\]
Since $\lambda_2<\mu$, Theorem \ref{int-shape} implies that the ratio of integrals converges to $0$ as $r\to-\infty$.
Indeed, take $\epsilon_0=(\mu-\lambda_2)^2/8$. Then there exists and $R_1<0$ depending on $\mu$ and $\lambda_2$, which in turn is determined by $\mu$ and $\e$, such that for $r\le R_1$ the bottom integral is at least $e^{(a_\mu-\epsilon_0)\abs{r}}$ while the top one is at most $e^{(a_\mu-(\mu-\lambda_2)^2/2+\epsilon_0)\abs{r}}$ and the ratio is at most $e^{-(\mu-\lambda_2)^2\abs{r}/4}\le e^{-\e^2\abs{r}/64}$, uniformly in $t,x\in\R^2$ with $\abs{t}+\abs{x}\le C$. Now choose $R\le R_1$ so that $e^{-\e^2\abs{R}/64}\le\e/(1+\e)$ and \eqref{ratio-bnd1} follows.
\end{proof}	

\begin{lemma}\label{int-cont}
Fix $M\in [0,\infty]$. Then with $\bfP$-probability one, for any $\lambda\in\R$ and $\sigg\in\{-,+\}$,
$\int_{x}^M \She(t,y\viiva s,w)e^{\Bus^{\lambda\sig}(s,x,s,w)}\,dw$ and 
$\int_{-M}^x \She(t,y\viiva s,w)e^{\Bus^{\lambda\sig}(s,x,s,w)}\,dw$ 
are jointly  continuous in $(s,x,t,y)$, with $t>s$.
\end{lemma}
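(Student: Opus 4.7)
The plan is to work on the full $\bfP$-probability event on which: $\She$ is jointly continuous on $\varsets$ with the polynomial envelopes on $\Shenorm$ recalled after Theorem \ref{thm:She1}; the Busemann process $\Bus^{\lambda\sig}$ is jointly continuous in $(s,x,t,y)$ for each $\lambda\in\R$ and $\sigg\in\{-,+\}$ by Theorem \ref{main:Bus}\eqref{Bproc.cont}; the shape estimate \eqref{bus:exp3} holds for all $(\lambda,\sigg)$; and the stationarity identity \eqref{NEBus2hat} holds for all $(\lambda,\sigg)$. On this event the integrand $F(s,x,t,y,w)=\She(t,y\viiva s,w)\,e^{\Bus^{\lambda\sig}(s,x,s,w)}$ is jointly continuous in $(s,x,t,y,w)$ on $\varsets\times\R$, and the total integral $\int_\R F\,dw=e^{\Bus^{\lambda\sig}(s,x,t,y)}$ is finite and continuous in $(s,x,t,y)$. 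The job is to exchange limit and integral in the partial integrals.

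Fix a compact $K\subset\varsets$ with $s,x,t,y\in[-C,C]$ and $t-s\ge\tau>0$. The main step is to build an integrable dominating function $g(w)$ with $F(s,x,t,y,w)\le g(w)$ uniformly for $(s,x,t,y)\in K$. Factoring $\She=\Shenorm\cdot\heat$, the polynomial bound $\Shenorm(t,y\viiva s,w)\le C_1(\omega)(1+|w|^4)$ and the uniform heat estimate $\heat(t-s,y-w)\le(2\pi\tau)^{-1/2}\exp(-(y-w)^2/(4C))$ (valid since $t-s\in[\tau,2C]$ and $|y|\le C$) give $\She(t,y\viiva s,w)\le C_2(\omega)(1+|w|^4)\exp(-w^2/(8C))$ for $|w|$ large. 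For the Busemann factor, use the cocycle to write $e^{\Bus^{\lambda\sig}(s,x,s,w)}=e^{\Bus^{\lambda\sig}(s,0,s,w)-\Bus^{\lambda\sig}(s,0,s,x)}$; joint continuity of $\Bus^{\lambda\sig}$ bounds the $x$-factor above and below on $K$. To control $e^{\Bus^{\lambda\sig}(s,0,s,w)}$ uniformly in $s\in[-C,C]$ and $w\in\R$, fix $r<-C-1$ and combine \eqref{NEBus2hat} with the cocycle to obtain
\[ e^{\Bus^{\lambda\sig}(s,0,s,w)}=e^{-\Bus^{\lambda\sig}(r,0,s,0)}\int_\R \She(s,w\viiva r,z)\,e^{\Bus^{\lambda\sig}(r,0,r,z)}\,dz. \]
Then \eqref{bus:exp3} at time $r$ yields $e^{\Bus^{\lambda\sig}(r,0,r,z)}\le C_3(\omega,\e)\exp(\lambda z+\e|z|)$ for any $\e>0$. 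Completing the square in the Gaussian integral (with $\e$ small compared to $1/(s-r)\in[1/(2C+1),1]$) gives $e^{\Bus^{\lambda\sig}(s,0,s,w)}\le C_4(\omega,\e)\exp(\lambda w+\e|w|)$ uniformly in $s\in[-C,C]$. Multiplying the two estimates produces $g(w)=C_5(\omega)(1+|w|^4)\exp(-w^2/(8C)+\lambda w+\e|w|)$, which is integrable once $\e$ is chosen small.

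With $g$ in hand, consider $(s_n,x_n,t_n,y_n)\to(s,x,t,y)$ in $K$, so that $F_n\to F$ pointwise. For finite $M$, decompose $\int_{-M}^{x_n}F_n\,dw=\int_{-M}^x F_n\,dw+\int_x^{x_n}F_n\,dw$: the first term converges to $\int_{-M}^x F\,dw$ by dominated convergence with dominator $g$, and the second is bounded by $|x_n-x|\sup_w g(w)\to0$. The symmetric decomposition handles $\int_x^M$. For $M=\infty$, the identity
\[ \int_{-\infty}^x F\,dw=e^{\Bus^{\lambda\sig}(s,x,t,y)}-\int_x^\infty F\,dw \]
from \eqref{NEBus2hat} reduces matters to continuity of the total integral (given by Theorem \ref{main:Bus}\eqref{Bproc.cont}) and of $\int_x^\infty F\,dw$, which is handled by the same dominated convergence argument applied to the tails with the same $g$. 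The main technical obstacle is the uniform-in-$s$ Gaussian-type envelope for the Busemann exponential over the noncompact $w$-axis; it is resolved by propagating the spatial shape theorem at a single backward time $r$ forward through the linear SHE semi-group via \eqref{NEBus2hat}.
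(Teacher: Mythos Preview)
Your proof is correct and takes a genuinely different route from the paper's. The paper dominates $e^{\Bus^{\lambda\sig}(s,x,s,w)}$ via monotonicity in $\lambda$ (Proposition \ref{Bproc2}\eqref{Bproc.ahat}), sandwiching it by $e^{\Bus^\kappa}$ and $e^{\Bus^\mu}$ with $\kappa<\lambda<\mu$ in the countable set $\Udense$, and then verifies $\int_{\R} \E\bigl[\sup_K \She\bigr]\,\bfE\bigl[\sup_K e^{\Bus^\mu}\bigr]\,dw<\infty$ using moment bounds on $\She$ (Corollary 3.10 of the companion paper) together with the Brownian-motion marginal of $\Bus^\mu$; Tonelli then yields an a.s.\ integrable dominator. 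You instead construct the dominator pathwise: the almost-sure polynomial envelope on $\Shenorm$ times the heat kernel controls $\She$, and for the Busemann factor you invoke the shape estimate \eqref{bus:exp3} at a fixed backward time $r$ and propagate it forward through \eqref{NEBus2hat} to obtain a uniform-in-$s$ exponential bound $e^{\Bus^{\lambda\sig}(s,0,s,w)}\le C(\omega)e^{\lambda w+\e|w|}$. Your route produces an explicit Gaussian-decaying $g(w)$ without computing any expectations, at the price of invoking the Section-6 shape theorem; the paper's route avoids the shape theorem for the domination step but needs $L^p$ estimates instead. One minor remark: you cite Theorem \ref{main:Bus}\eqref{Bproc.cont} for joint continuity of $\Bus^{\lambda\sig}$, but at this stage of the paper (on the extended space $\Omhat$) that continuity is most cleanly justified by exactly your own propagation-through-\eqref{NEBus2hat} argument combined with \eqref{bus:exp3}, as in the proof of Theorem \ref{b cont}; citing it that way avoids a forward reference.
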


\begin{proof}
First, recall that we have $\bfP$-almost surely, $\Bus^{\lambda\sig}(s,x,s,w)\le \Bus^{\mu}(s,x,s,w)$ for all $\sigg\in\{-,+\}$, $\lambda\in\R$, $\mu\in\Udense$ with $\mu>\lambda$, and all $x\le w$ and $s$ in $\R$. Similarly,
we have $\bfP$-almost surely, $\Bus^{\lambda\sig}(s,x,s,w)\le \Bus^{\kappa}(s,x,s,w)$ for all $\sigg\in\{-,+\}$, $\lambda\in\R$, $\kappa\in\Udense$ with $\kappa<\lambda$, and all $x\ge w$ and $s$ in $\R$. 
Thus, the claim follows from the dominated convergence theorem if we show that for each fixed $\lambda\in\R$ and each $C>0$ and $\e>0$,
	\begin{align}\label{dom-conv}
	\int_{-\infty}^\infty\E\Bigl[\sup_{\substack{s,t,y\in[-C,C]\\ t-s>\e}}\She(t,y\viiva s,w)\Bigr]\cdot\bfE\Bigl[\sup_{s,x\in[-C,C]}e^{\Bus^\lambda(s,x,s,w)}\Bigr]\,dw<\infty.
	\end{align}
By Corollary 3.10 in \cite{Alb-etal-22-spde-}, the first expectation is bounded by $C'e^{-cw^2}$ for some strictly positive finite constants $c$ and $C'$.
 	For the other expectation, write
		\begin{align*}
		\bfE\Bigl[\sup_{s,x\in[-C,C]}e^{\Bus^\lambda(s,x,s,w)}\Bigr]
		&=\bfE\Bigl[\sup_{\substack{1\le s\le 2C+1\\ \abs{x}\le C}}e^{\Bus^\lambda(s,x,s,w)}\Bigr]
		\le\bfE\Bigl[\sup_{\substack{1\le s\le 2C+1\\ \abs{x}\le C}}e^{-\Bus^\lambda(0,0,s,x)}
		\sup_{1\le s\le 2C+1}e^{\Bus^\lambda(0,0,s,w)}\Bigr]\\
		&\le\bfE\Bigl[\sup_{\substack{1\le s\le 2C+1\\ \abs{x}\le C}}e^{-2\Bus^\lambda(0,0,s,x)}\Bigr]^{1/2}
		\E\Bigl[\sup_{1\le s\le 2C+1}e^{2\Bus^\lambda(0,0,s,w)}\Bigr]^{1/2}.
		\end{align*}
	By a minor change to \eqref{ebinv-comp}, the first expectation on the right-hand side is bounded by a constant.
	Similarly, we get that the last expectation is bounded by $C''e^{c'\abs{w}}$. Here are the details:
	\begin{align*}
	&\bfE\Bigl[\sup_{1\le s\le K}e^{2\Bus^\lambda(0,0,s,w)}\Bigr]
	\le\bfE\Bigl[\Bigl(\;\int_{-\infty}^\infty e^{\Bus^\lambda(0,0,0,u)}\sup_{1\le s\le K}\She(s,w\viiva 0,u)\,du\Bigr)^2\Bigr]\\
	&\qquad=    \int_{\R^2}  
	\bfE\bigl[e^{\Bus^\lambda(0,0,0,u)}e^{\Bus^\lambda(0,0,0,v)}\bigr]\cdot
	\E\Bigl[\sup_{1\le s\le K}\She(s,w\viiva 0,u)\cdot\sup_{1\le s\le K}\She(s,w\viiva 0,v)\Bigr]\,du\,dv\\
	&\qquad\le \int_{\R^2}  
	\bfE\bigl[e^{2\Bus^\lambda(0,0,0,u)}\bigr]^{1/2}\cdot
	\bfE\bigl[e^{2\Bus^\lambda(0,0,0,v)}\bigr]^{1/2}\\
	&\qquad\qquad\qquad\qquad\times
	\E\Bigl[\Bigl(\sup_{1\le s\le K}\She(s,w\viiva 0,u)\Bigr)^2\Bigr]^{1/2}\cdot\E\Bigl[\Bigl(\sup_{1\le s\le K}\She(s,w\viiva 0,v)^2\Bigr]^{1/2}\,du\,dv\\
	&\qquad\le C'   \int_{\R^2} 
	e^{c'\abs{u}}e^{c'\abs{v}}e^{-c''(u-w)^2}e^{-c''(v-w)^2}\,du\,dv\\
	&\qquad= C'\Bigl(\;\int_{-\infty}^\infty e^{c'\abs{u}}e^{-c''(u-w)^2}\,du\Bigr)^2 \le C''e^{c'\abs{w}}.
	\end{align*}
In the second inequality, we used Corollary 3.10 in \cite{Alb-etal-22-spde-}  and the fact that $\Bus^\lambda(0,0,0,x)$ is a Gaussian random variable with mean $\lambda x$ and variance $\abs{x}$.
Thus \eqref{dom-conv} holds and the lemma is proved.
\end{proof}

\begin{lemma}\label{Bus-bnds2}
The following holds with $\bfP$-probability one. 
For all $\kappa<\mu$ in $\Udense$,  $\e>0$, $C>0$, and $\tau>0$, there exist {\rm(}possibly random{\rm)}
$R<-C$ and 
deterministic
$\delta>0$ such that for all $\lambda\in[\kappa+\e,\mu-\e]$ these statements hold. 
\begin{enumerate}[label={\rm(\alph*)}, ref={\rm\alph*}]
\item For all $s,x\in[-C,C]$, for all $t,y\in\R$ with $t>s$, for all $r\le R$, and for all $z$ such that 
$\abs{z/r+\lambda}<\delta$,
	\begin{align}\label{ratio-bnd3}
	\frac{\She(t,y\viiva r,z)}{\She(s,x\viiva r,z)}
	\le (1+\e)\Bigl(\;\int_x^\infty\She(t,y\viiva s,w)e^{\Bus^{\mu}(s,x,s,w)}\,dw+\int_{-\infty}^x\She(t,y\viiva s,w)e^{\Bus^{\kappa}(s,x,s,w)}\,dw\Bigr).
	\end{align}
\item For all $s,x,t,y\in[-C,C]$ with $t-s\ge\tau$, for all $r\le R$, and for all $z$ such that 
$\abs{z/r+\lambda}<\delta$,
	\begin{align}\label{ratio-bnd4}
	\frac{\She(t,y\viiva r,z)}{\She(s,x\viiva r,z)}
	\ge (1+\e)^{-2}\Bigl(\;\int_x^\infty\She(t,y\viiva s,w)e^{\Bus^{\kappa}(s,x,s,w)}\,dw+\int_{-\infty}^x\She(t,y\viiva s,w)e^{\Bus^{\mu}(s,x,s,w)}\,dw\Bigr).
	\end{align}
\end{enumerate}
\end{lemma}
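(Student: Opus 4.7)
The plan is to reduce both (a) and (b) to the pointwise ratio estimates from Lemma~\ref{Bus-bnds} by inserting an intermediate time slice via the Chapman--Kolmogorov identity \eqref{eq:CK}. Specifically, I would write
\begin{align*}
\frac{\She(t,y\viiva r,z)}{\She(s,x\viiva r,z)}
= \int_{\R} \She(t,y\viiva s,w)\, \frac{\She(s,w\viiva r,z)}{\She(s,x\viiva r,z)}\, dw,
\end{align*}
split the $dw$-integral at $w=x$, and apply Lemma~\ref{Bus-bnds} (used at ``time'' $s$, with arbitrary spatial point $w$) to the inner ratio on each half. This is legitimate because the hypothesis $s,x\in[-C,C]$ of Lemma~\ref{Bus-bnds2} matches the $t,x\in[-C,C]$ assumption of Lemma~\ref{Bus-bnds}, and the constraints on $(r,z,\lambda)$ required by Lemma~\ref{Bus-bnds} are exactly the ones I will impose.

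For part (a), I would apply \eqref{ratio-bnd1} and \eqref{ratio-bnd2} of Lemma~\ref{Bus-bnds} with a slightly smaller parameter $\e'>0$ chosen so that $1+\e'\le 1+\e$: for $w>x$ this gives $\She(s,w\viiva r,z)/\She(s,x\viiva r,z)\le (1+\e')\tspa e^{\Bus^\mu(s,x,s,w)}$, and for $w<x$ the analogous estimate with $\kappa$ in place of $\mu$. Substituting these bounds into the integrand and integrating in $w$ produces \eqref{ratio-bnd3} on the same random set of parameters delivered by Lemma~\ref{Bus-bnds}. No restriction on $t$ or $y$ beyond $t>s$ is needed, since the right-hand side is always finite by \eqref{NEBus2}.

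For part (b), I would use the same decomposition but now invert the role of $x$ and $w$ in Lemma~\ref{Bus-bnds} to obtain a \emph{lower} bound on the inner ratio. For $w>x$, applying \eqref{ratio-bnd2} with the ordered pair $(w,x)$ in place of $(x,y)$ gives $\She(s,x\viiva r,z)/\She(s,w\viiva r,z)\le (1+\e')\tspa e^{\Bus^\kappa(s,w,s,x)}$; reciprocating and using the cocycle identity \eqref{cocycle} yields $\She(s,w\viiva r,z)/\She(s,x\viiva r,z)\ge (1+\e')^{-1}\tspa e^{\Bus^\kappa(s,x,s,w)}$. For $w<x$, \eqref{ratio-bnd1} applied analogously produces the same kind of lower bound with $\Bus^\mu$. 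Choosing $\e'>0$ so that $(1+\e')^{-1}\ge (1+\e)^{-2}$ (which leaves one spare factor of $(1+\e)^{-1}$ to absorb any loss) and integrating against $\She(t,y\viiva s,w)\, dw$ gives \eqref{ratio-bnd4}. The restriction $s,x,t,y\in[-C,C]$ with $t-s\ge\tau$ is only cosmetic: it is required to make the right-hand side of \eqref{ratio-bnd4} a nontrivial quantity, but plays no role in the derivation.

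No serious obstacle is expected in this argument; the genuinely hard work has already been done in proving Lemma~\ref{Bus-bnds}, which controls the asymptotics of the backward heat ratio in terms of Busemann exponentials. Lemma~\ref{Bus-bnds2} is simply its Chapman--Kolmogorov-integrated version, and the only bookkeeping is selecting $\e'$ small enough at the outset so that the announced constants $1+\e$ and $(1+\e)^{-2}$ survive the single layer of manipulation.
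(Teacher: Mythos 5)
Your part (a) is correct and is exactly the paper's argument: insert the Chapman--Kolmogorov identity at time $s$, apply Lemma~\ref{Bus-bnds} (with $s,x\in[-C,C]$ in the base role and $w$ free) to the inner ratio, split at $w=x$, and integrate. No need for the auxiliary $\e'$ there; $\e$ works directly.

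Part (b), however, has a genuine gap. To reverse Lemma~\ref{Bus-bnds} and get a \emph{lower} bound on $\She(s,w\viiva r,z)/\She(s,x\viiva r,z)$, you propose applying \eqref{ratio-bnd1} or \eqref{ratio-bnd2} with $w$ in the ``base point'' role. But Lemma~\ref{Bus-bnds} only holds when the base point is in $[-C,C]$; the conclusion is uniform over the \emph{free} endpoint, not over the base. Since $w$ ranges over all of $\R$ in the integral, your reversed bound is only valid for $w$ in a bounded window. This is not a cosmetic issue that a spare factor of $(1+\e)^{-1}$ can absorb --- it is the heart of the lower-bound proof. The paper deals with it by first restricting $w\in[-M,M]$ for a large $M>C$, obtaining \eqref{aux-bnd-Bus} with truncated integrals, and then showing that the truncated integrals exceed $(1+\e)^{-1}$ times the full integrals for $M$ large. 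That last step is where the restriction $s,x,t,y\in[-C,C]$ with $t-s\ge\tau$ becomes essential: the ratios
\[
\frac{\int_x^M\She(t,y\viiva s,w)e^{\Bus^{\kappa}(s,x,s,w)}\,dw}{\int_x^\infty\She(t,y\viiva s,w)e^{\Bus^{\kappa}(s,x,s,w)}\,dw}
\quad\text{and}\quad
\frac{\int_{-M}^x\She(t,y\viiva s,w)e^{\Bus^{\mu}(s,x,s,w)}\,dw}{\int_{-\infty}^x\She(t,y\viiva s,w)e^{\Bus^{\mu}(s,x,s,w)}\,dw}
\]
increase to $1$ pointwise as $M\nearrow\infty$, and by Lemma~\ref{int-cont} they are continuous in $(s,x,t,y)$, so Dini's theorem gives \emph{uniform} convergence over the compact set $\{(s,x,t,y)\in[-C,C]^4 : t-s\ge\tau\}$, letting one choose $M$ once and for all. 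Without the compactness (hence without $t-s\ge\tau$) the uniformity fails as $t\searrow s$. Your assertion that the hypothesis is ``cosmetic'' and ``plays no role in the derivation'' is therefore the tell-tale sign of the missing step. You have correctly identified the right decomposition and the role of Lemma~\ref{Bus-bnds}, but the truncation-plus-Dini argument is a distinct idea you would need to supply.
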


\begin{proof}
Apply \eqref{ratio-bnd1} and \eqref{ratio-bnd2} with $s$ in place of $t$ and $w$ in place of $y$, multiply all sides by $\She(t,y\viiva s,w)$ and integrate 
the first inequality over $w\in(x,\infty)$ and the second one over $w\in(-\infty,x)$, then add the two inequalities to get \eqref{ratio-bnd3}.

For the other bound take an integer $M>C$. 
Apply \eqref{ratio-bnd1} and \eqref{ratio-bnd2} to get that there exist an $R<-M$ and $\delta>0$ such that for all $s\in[-C,C]$ and $w\in[-M,M]$, for all $r\le R$, and all $z$ such that $\abs{z/r+\lambda}<\delta$, 
	\[\frac{\She(s,w\viiva r,z)}{\She(s,x\viiva r,z)}
	\ge (1+\e)^{-1}e^{\Bus^{\mu}(s,x,s,w)}\quad\text{for all $x\in(w,\infty)$}\]
and
	\[\frac{\She(s,w\viiva r,z)}{\She(s,x\viiva r,z)}\ge (1+\e)^{-1}e^{\Bus^{\kappa}(s,x,s,w)}\quad\text{for all $x\in(-\infty,w)$}.\]
Multiply all sides by $\She(t,y\viiva s,w)$ and integrate over $w$ to get	
	\[\frac{\int_{-M}^x\She(t,y\viiva s,w)\She(s,w\viiva r,z)\,dw}{\She(s,x\viiva r,z)}
	\ge (1+\e)^{-1}\int_{-M}^x \She(t,y\viiva s,w)e^{\Bus^{\mu}(s,x,s,w)}\,dw\]
and
	\[\frac{\int_{x}^M\She(t,y\viiva s,w)\She(s,w\viiva r,z)\,dw}{\She(s,x\viiva r,z)}\ge (1+\e)^{-1}\int_x^M \She(t,y\viiva s,w)e^{\Bus^{\kappa}(s,x,s,w)}\,dw.\]
	Add the two and enlarge the integrals on the left-hand side to get
	\begin{align}\label{aux-bnd-Bus}
	\frac{\She(t,y\viiva r,z)}{\She(s,x\viiva r,z)}\ge (1+\e)^{-1}\Bigl(
	\int_x^M \She(t,y\viiva s,w)e^{\Bus^{\kappa}(s,x,s,w)}\,dw
	+\int_{-M}^x \She(t,y\viiva s,w)e^{\Bus^{\mu}(s,x,s,w)}\,dw\Bigr).
	\end{align}
Next, note that for $t>s$, $\int_{-\infty}^\infty\She(t,y\viiva s,w)e^{\Bus^{\kappa}(s,x,s,w)}\,dw= e^{\Bus^{\kappa}(s,x,t,y)}<\infty$, with a similar bound for $\Bus^{\mu}$. Therefore, 
	\begin{align}\label{integrals}
	\frac{\int_x^M\She(t,y\viiva s,w)e^{\Bus^{\kappa}(s,x,s,w)}\,dw}{\int_x^\infty\She(t,y\viiva s,w)e^{\Bus^{\kappa}(s,x,s,w)}\,dw}\quad\text{and}\quad
	\frac{\int_{-M}^x\She(t,y\viiva s,w)e^{\Bus^{\mu}(s,x,s,w)}\,dw}{\int_x^\infty\She(t,y\viiva s,w)e^{\Bus^{\mu}(s,x,s,w)}\,dw}
	\end{align}
both increase to $1$ as $M\nearrow\infty$.	
By Lemma \ref{int-cont}, these are continuous functions. Since they are monotonically converging (pointwise) to a 
continuous function, 
Dini's theorem \cite[Theorem 7.13]{Rud-76} implies the convergence is uniform on the compact set $K=\{(s,x,t,y)\in[-C,C]:t-s\ge\tau\}$.
Thus, for $M$ large enough and all $(s,x,t,y)\in K$,
	\[\int_x^M \She(t,y\viiva s,w)e^{\Bus^{\kappa}(s,x,s,w)}\,dw\ge 
	(1+\e)^{-1}\int_x^\infty \She(t,y\viiva s,w)e^{\Bus^{\kappa}(s,x,s,w)}\,dw\]
and 
	\[\int_{-M}^{x} \She(t,y\viiva s,w)e^{\Bus^{\mu}(s,x,s,w)}\,dw\ge
	(1+\e)^{-1}\int_{-\infty}^x \She(t,y\viiva s,w)e^{\Bus^{\mu}(s,x,s,w)}\,dw.\]
\eqref{ratio-bnd3} follows from this and \eqref{aux-bnd-Bus}.
\end{proof}


\begin{proof}[Proof of Proposition \ref{pr:buslim}]
{\it Part \eqref{buslimhat1}.}
We work on the full $\bfP$-probability event that is the intersection of the one on which Theorem 2.6 
in \cite{Alb-etal-22-spde-}, \eqref{pr:IC:1}, \eqref{pr:IC:ICMsh},
Theorem \ref{thm:Z-cont}, Proposition \ref{Bproc2}, Lemma \ref{b-=b+}, 
and Lemmas \ref{int-shape2}, \ref{Bus-bnds},
and \ref{Bus-bnds2} hold, with the event on which Lemma \ref{int-cont} holds for all $M\in\Z_+$. 

By monotone convergence, 
\[ \int_x^\infty\She(t,y\viiva s,w)e^{\Bus^{\kappa}(s,x,s,w)}\,dw \ \to\  
\int_x^\infty\She(t,y\viiva s,w)e^{\Bus^{\lambda-}(s,x,s,w)}\,dw  \quad\text{as } \ \Udense\ni\kappa \nearrow\lambda \]
and 
\[  \int_{-\infty}^x\She(t,y\viiva s,w)e^{\Bus^{\mu}(s,x,s,w)}\,dw\ \to \  
\int_{-\infty}^x\She(t,y\viiva s,w)e^{\Bus^{\lambda+}(s,x,s,w)}\,dw \quad\text{as } \ \Udense\ni\mu\searrow\lambda. 
\] 
%
By Lemma \ref{int-cont}, these are continuous functions that are monotonically converging (pointwise) to a continuous function.
Thus, Dini's theorem \cite[Theorem 7.13]{Rud-76} implies the convergence is uniform on the compact set $K=\{(s,x,t,y)\in[-C,C]:t-s\ge\tau\}$.
Consequently,  given $\e>0$ and $\lambda\in\R$, there exists an $\e_0>0$ such that if  $\kappa<\mu$ in $\Udense$ are such that $\mu-\kappa<\e_0$ and $\kappa<\lambda<\mu$, then for all $(s,x,t,y)\in K$
	\[\int_x^\infty\She(t,y\viiva s,w)e^{\Bus^{\kappa}(s,x,s,w)}\,dw\ge(1+\e)^{-1}\int_x^\infty\She(t,y\viiva s,w)e^{\Bus^{\lambda-}(s,x,s,w)}\,dw\]
and	
	\[\int_{-\infty}^x\She(t,y\viiva s,w)e^{\Bus^{\mu}(s,x,s,w)}\,dw\ge(1+\e)^{-1}\int_{-\infty}^x\She(t,y\viiva s,w)e^{\Bus^{\lambda+}(s,x,s,w)}\,dw.\]
Take $\e_1\in(0,\e\wedge(\mu-\lambda)\wedge(\lambda-\kappa))$ and take $R$ large enough (and negative), depending on $C$, $\tau$, $\delta$, $\kappa$, $\mu$, and $\e_1$ (and hence also depending on $\lambda$), 
so that  \eqref{ratio-bnd4} holds for any $\lambda'\in[\kappa+\e_1,\mu-\e_1]$ and $z$ such that $\abs{z/r+\lambda'}<\delta$, 
with $\e_1$ in place of $\e$. 
The second inequality of part \eqref{buslimhat1} now follows. 
The first inequality comes similarly. The limit \eqref{eq:buslimhat}, locally uniformly in $(s,x,t,y)\in\R^4$ with $t>s$ comes immediately. To extend the statement to all of $\R^4$ take any $C>0$ and take $s'<-C-1$. Then we have that 
	\[\lim_{\substack{r\to-\infty\\ z/r\to-\lambda}}\frac{\She(t,y\viiva r,z)}{\She(s',0\viiva r,z)}=e^{\Bus^\lambda(s',0,t,y)}\quad\text{and}\quad \lim_{\substack{r\to-\infty\\ z/r\to-\lambda}}\frac{\She(s,x\viiva r,z)}{\She(s',0\viiva r,z)}=e^{\Bus^\lambda(s',0,s,x)},\]
	uniformly in $(s,x,t,y)\in[-C,C]^4$. Take a ratio and use the cocycle property of $\Bus^\lambda$ to conclude.
\smallskip

{\it Part \eqref{buslimhat2}.}  
Consider first the case $\lambda>0$.
If $f\in\initF_\lambda$, then for any $\delta>0$ small enough there exists a $\mu>-\lambda$ such that for 
any $\epsilon>0$  there exists an $R_1<0$ such that for $r\le R_1$ we have for all $t>r$ and all $y$
\begin{align*}
\int_{z\ge0: \tspa\abs{\frac{z}{r}+\lambda}\ge\delta}\She(t,y\viiva r,z)\,f(r,z)\,dz 
&\le e^{\epsilon\abs{r}}\int_{z\ge0: \tspa\abs{\frac{z}{r}+\lambda}\ge\delta}\She(t,y\viiva r,z)\,e^{\lambda z+\epsilon z}\,dz,\\
\int_{-\infty}^0\She(t,y\viiva r,z)\,f(r,z)\,dz 
&\le e^{\epsilon\abs{r}}\int_{-\infty}^0\She(t,y\viiva r,z)\,e^{\mu z+\epsilon\abs{z}}\,dz,\quad\text{and}\\
\int_{\abs{\frac{z}{r}+\lambda}\le\delta}\She(t,y\viiva r,z)\,e^{\lambda z}\,dz 
&\le e^{\epsilon\abs{r}}\int_{\abs{\frac{z}{r}+\lambda}\le\delta}\She(t,y\viiva r,z)\,f(r,z)\,dz.
\end{align*}
Note that reducing $\mu$ makes the right-hand side in the second inequality larger. Therefore, one can assume $\mu\in(-\lambda,0]$.
Lemma \ref{int-shape2} implies that for any $\e>0$,  for any $\delta>0$ small enough, for $\epsilon\in(0,\delta)$ such that
$\epsilon<\min\bigl(\frac{\delta^2}{2(\lambda+\delta+4)},\frac{\lambda^2-\mu^2}{9-2\mu}\bigr)$,  for any $C>0$, 
there exists an $R_2\le R_1$ such that for any $t,y\in[-C,C]$, and any $r\le R_2$, 
\begin{align*}
&e^{\epsilon\abs{r}}\Bigl(\;\int_{(\lambda+\delta)\abs{r}}^\infty\She(t,y\viiva r,z)\,e^{(\lambda+\epsilon)z}\,dz
+ \int_0^{(\lambda-\delta)\abs{r}}\She(t,y\viiva r,z)\,e^{(\lambda+\epsilon)z}\,dz
+\int_{-\infty}^0\She(t,y\viiva r,z)\,e^{(\mu-\epsilon) z}\,dz\Bigr)\\
&\qquad\le e^{-\abs{r}/24}e^{2\epsilon\abs{r}}\Bigl(e^{(\lambda+\delta)(\lambda+2\epsilon-\delta)\abs{r}/2}+e^{(\lambda-\delta)(\lambda+2\epsilon+\delta)\abs{r}/2}
+e^{(\mu-\epsilon)^2\abs{r}/2}\Bigr)\\
&\qquad= e^{-\abs{r}/24}e^{-2\epsilon\abs{r}}e^{\lambda^2\abs{r}/2}\Bigl(e^{[-\delta^2+2\epsilon(\lambda+\delta+4)]\abs{r}/2}+e^{[-\delta^2+2\epsilon(\lambda-\delta+4)]\abs{r}/2}
+e^{[-(\lambda^2-\mu^2)-2\mu\epsilon+\epsilon^2+8\epsilon]\abs{r}/2}\Bigr)\\
&\qquad\le \e e^{-\abs{r}/24}e^{-2\epsilon\abs{r}}e^{\lambda^2\abs{r}/2}\\
&\qquad\le \e e^{-\epsilon\abs{r}}\int_{\abs{\frac{z}{r}+\lambda}\le\delta}\She(t,y\viiva r,z)\,e^{\lambda z}\,dz.
\end{align*}
For the penultimate inequality, after fixing $\epsilon$, $\abs r$ is increased further if necessary so that the sum of three exponentials is $\le\e$. The last inequality is another instance of Lemma \ref{int-shape2}.  

Together, the above bounds give 
\be\label{Zf-bnd1}
		\begin{aligned}
	\int_\R\She(t,y\viiva r,z)\,f(r,z)\,dz 
	&=\int_{\abs{\frac{z}{r}+\lambda}>\delta}\She(t,y\viiva r,z)\,f(r,z)\,dz +\int_{\abs{\frac{z}{r}+\lambda}\le\delta}\She(t,y\viiva r,z)\,f(r,z)\,dz\\
	&\le (1+\e) \int_{\abs{\frac{z}{r}+\lambda}\le\delta}\She(t,y\viiva r,z)\,f(r,z)\,dz.	\end{aligned}\ee
The already proved part \eqref{buslimhat1} says that for $\delta>0$ small enough there exists an $R\le R_2$ such that
for all $r\le R$ and $s,x,t,y\in[-C,C]$ with $t-s\ge\tau>0$
\begin{align*}
&\int_{\abs{\frac{z}{r}+\lambda}\le\delta}\She(t,y\viiva r,z)\,f(r,z)\,dz\\
&\quad\le \Bigl( (1+\e)^2\int_x^\infty\She(t,y\viiva s,w)e^{\Bus^{\lambda+}(s,x,s,w)}\,dw+(1+\e)^2\int_{-\infty}^x\She(t,y\viiva s,w)e^{\Bus^{\lambda-}(s,x,s,w)}\,dw\Bigr)\\
&\qquad\qquad\qquad\qquad\qquad\qquad\times\int_{\abs{\frac{z}{r}+\lambda}\le\delta}\She(s,x\viiva r,z)\,f(r,z)\,dz\\
&\le \Bigl( (1+\e)^2\int_x^\infty\She(t,y\viiva s,w)e^{\Bus^{\lambda+}(s,x,s,w)}\,dw+(1+\e)^2\int_{-\infty}^x\She(t,y\viiva s,w)e^{\Bus^{\lambda-}(s,x,s,w)}\,dw\Bigr)\\
&\qquad\qquad\qquad\qquad\qquad\qquad\times\int_\R\She(s,x\viiva r,z)\,f(r,z)\,dz.
\end{align*}
The upper bound of part \eqref{buslimhat2} follows. The lower  bound is similar, again using \eqref{Zf-bnd1}. The case $\lambda<0$ comes the same way. 

It remains to prove the case  $\lambda=0$. 
If $f\in\initF_0$, then for any $\delta>0$ small enough, for 
any $\epsilon>0$,  there exists an $R_1<0$ such that for $r\le R_1$ we have for all $t>r$ and all $y$
\[\int_{\abs{z}\ge\delta\abs{r}}\She(t,y\viiva r,z)\,f(r,z)\,dz 
\le e^{\epsilon\abs{r}}\int_{\abs{z}\ge\delta\abs{r}}^\infty\She(t,y\viiva r,z)\,e^{\epsilon\abs{z}}\,dz.\]
Lemma \ref{int-shape2} implies that for any $\delta>0$ small enough, for $\epsilon\in(0,\delta)$, 
for any $C>0$, 
there exists an $R_2\le R_1$ such that for any $t,y\in[-C,C]$, and any $r\le R_2$, 
\[\int_{\abs{z}\ge\delta\abs{r}}\She(t,y\viiva r,z)\,e^{\epsilon\abs{z}}\,dz\le 2e^{(\epsilon+\epsilon\delta-\frac{\delta^2}2-\frac1{24})\abs{r}}.\]
By Theorem \ref{thm:Z-cont} and condition \eqref{W''} there exists an $R_3<\min(R_2,-C)$ such that for all $s,x\in[-C,C]$ and $r\le R_3$, 
\begin{align*}
\int_{\abs{z}\le\delta\abs{r}}\She(s,x\viiva r,z)f(r,z)\,dz
&\ge \frac1{\sqrt{2\pi(s-r)}}e^{-\epsilon\abs{r}}e^{-\frac{s-r}{24}}\int_{\abs{z}\le c}e^{-\frac{(x-z)^2}{2(s-r)}}f(r,z)\,dz\\
&\ge \tfrac12\,e^{-\frac{C}{24}}e^{-2\epsilon\abs{r}}e^{-\frac{\abs{r}}{24}}\int_{\abs{z}\le c}f(r,z)\,dz\\
&\ge \tfrac12\,e^{-\frac{C}{24}}e^{-3\epsilon\abs{r}}e^{-\frac{\abs{r}}{24}}.
\end{align*}
This and the above two bounds give that for any $\e>0$, taking $\epsilon<\frac{\delta^2}{2(5+\delta)}$ we have for any $s,x,t,y\in[-C,C]$ and any $r$ large enough negative,
\begin{align}
\int_\R\She(t,y\viiva r,z)\,f(r,z)\,dz 
&=\int_{\abs{z}>\delta\abs{r}}\She(t,y\viiva r,z)\,f(r,z)\,dz+\int_{\abs{z}\le\delta\abs{r}}\She(t,y\viiva r,z)\,f(r,z)\,dz\notag\\
&\le 2e^{(2\epsilon+\epsilon\delta-\frac{\delta^2}2-\frac1{24})\abs{r}}
+\int_{\abs{z}\le\delta\abs{r}}\She(t,y\viiva r,z)\,f(r,z)\,dz\notag\\
&= e^{-3\epsilon\abs{r}}e^{-\frac{\abs{r}}{24}}\cdot e^{[-\frac{\delta^2}2+\epsilon(5+\delta)]\abs{r}}
+\int_{\abs{z}\le\delta\abs{r}}\She(t,y\viiva r,z)\,f(r,z)\,dz\notag\\
&\le \tfrac12\,e^{-\frac{C}{24}}e^{-3\epsilon\abs{r}}e^{-\frac{\abs{r}}{24}}\e+\int_{\abs{z}\le\delta\abs{r}}\She(t,y\viiva r,z)\,f(r,z)\,dz\notag\\
&\le (1+\e)\int_{\abs{z}\le\delta\abs{r}}\She(t,y\viiva r,z)\,f(r,z)\,dz.	\label{Zf-bnd2}
\end{align}
The claims of the theorem now follow as for the case $\lambda\ne0$.
\end{proof}

We close this section with a line-to-point version of the bounds \eqref{ratio-bnd1} and \eqref{ratio-bnd2}. The difference between these bounds and those of  Proposition \ref{pr:buslim}\eqref{buslimhat2} is that in   \eqref{l2p-bnd1} and \eqref{l2p-bnd2} below the terminal times are equal. 
 
\begin{lemma}\label{l2p-bnds}
The following holds $\bfP$-almost surely: for any $\kappa<\mu$ in $\Udense$, $\e>0$, $\lambda\in[\kappa+\e,\mu-\e]$, $f\in\initF_\lambda$, and $C>0$, 
there exists an $R<-C$ such that for all $t,x\in[-C,C]$ and all $r\le R$, 
	\begin{align}\label{l2p-bnd1}
	\frac{\int_\R\She(t,y\viiva r,z)\,f(r,z)\,dz}{\int_\R\She(t,x\viiva r,z)\,f(r,z)\,dz}
	\le (1+\e)^2e^{\Bus^{\mu}(t,x,t,y)}\quad\text{for all $y\in(x,\infty)$}
	\end{align}
and
	\begin{align}\label{l2p-bnd2}
	\frac{\int_\R\She(t,y\viiva r,z)\,f(r,z)\,dz}{\int_\R\She(t,x\viiva r,z)\,f(r,z)\,dz}\le (1+\e)^2e^{\Bus^{\kappa}(t,x,t,y)}\quad\text{for all $y\in(-\infty,x)$}.
	\end{align}
\end{lemma}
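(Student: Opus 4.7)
The plan is to combine two ingredients already in hand: the pointwise ratio bound of Lemma \ref{Bus-bnds}, which controls $\She(t,y\viiva r,z)/\She(t,x\viiva r,z)$ when $z$ is tilted in the direction $-\lambda$, and the mass-concentration estimate for $f\in\initF_\lambda$ established inside the proof of Proposition \ref{pr:buslim}\eqref{buslimhat2}. The idea is that the initial condition $f(r,\aabullet)$ puts essentially all the $\She$-weighted mass on the window $\{z:|z/r+\lambda|\le\delta\}$, and on that window the pointwise ratio estimate applies uniformly, so the ratio of integrals inherits the same bound up to a benign extra factor of $1+\e$.

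I will prove \eqref{l2p-bnd1}; inequality \eqref{l2p-bnd2} follows by the identical argument using the $y<x$ half of Lemma \ref{Bus-bnds}. First, the concentration estimates \eqref{Zf-bnd1} (when $\lambda\ne0$) and \eqref{Zf-bnd2} (when $\lambda=0$) supply a sufficiently small $\delta>0$ and an $R_1<-C$ such that for all $t,y\in[-C,C]$ and all $r\le R_1$,
\[
\int_\R\She(t,y\viiva r,z)\,f(r,z)\,dz \;\le\; (1+\e)\int_{J(r)}\She(t,y\viiva r,z)\,f(r,z)\,dz,
\]
where $J(r)=\{z:|z/r+\lambda|\le\delta\}$ if $\lambda\ne0$ and $J(r)=\{z:|z|\le\delta|r|\}$ if $\lambda=0$. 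The reverse direction, which is just the inclusion $J(r)\subset\R$, gives $\int_{J(r)}\She(t,x\viiva r,z)\,f(r,z)\,dz \le \int_\R\She(t,x\viiva r,z)\,f(r,z)\,dz$. Next, possibly shrinking $\delta$ and decreasing $R\le R_1$, Lemma \ref{Bus-bnds} yields the pointwise bound
\[
\She(t,y\viiva r,z) \;\le\; (1+\e)\,e^{\Bus^{\mu}(t,x,t,y)}\,\She(t,x\viiva r,z), \qquad y>x,\ z\in J(r),
\]
valid uniformly in $t,x\in[-C,C]$ and $r\le R$. Multiplying through by $f(r,z)$, integrating over $J(r)$, and chaining with the two displays above gives
\[
\frac{\int_\R\She(t,y\viiva r,z)\,f(r,z)\,dz}{\int_\R\She(t,x\viiva r,z)\,f(r,z)\,dz}
\;\le\; (1+\e)\cdot\frac{\int_{J(r)}\She(t,y\viiva r,z)\,f(r,z)\,dz}{\int_{J(r)}\She(t,x\viiva r,z)\,f(r,z)\,dz}
\;\le\; (1+\e)^2\,e^{\Bus^{\mu}(t,x,t,y)},
\]
which is \eqref{l2p-bnd1}.

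The only real issue is bookkeeping: $\delta$ must be taken small enough simultaneously for the concentration estimate (which requires smallness inequalities like $\epsilon(\lambda+\delta+4)<\delta^2/2$ in the proof of Proposition \ref{pr:buslim}\eqref{buslimhat2}) and for the pointwise ratio bound of Lemma \ref{Bus-bnds} to apply on $\lambda\in[\kappa+\e,\mu-\e]$. Since both inputs may be invoked as black boxes, one just takes the smaller $\delta$ and the more negative $R$ at each step, and no new analytic estimate is required.
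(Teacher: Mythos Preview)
Your strategy mirrors the paper's one-line proof, but your execution has a gap. The concentration estimates \eqref{Zf-bnd1} and \eqref{Zf-bnd2} were established (in the proof of Proposition~\ref{pr:buslim}\eqref{buslimhat2}) only when the spatial variable lies in $[-C,C]$, as you yourself state. You then invoke this on the \emph{numerator} $\int_\R\She(t,y\viiva r,z)f(r,z)\,dz$. But in Lemma~\ref{l2p-bnds} only $t$ and $x$ are confined to $[-C,C]$; the bound \eqref{l2p-bnd1} must hold for \emph{all} $y\in(x,\infty)$. Consequently your first inequality
\[
\frac{\int_\R\She(t,y\viiva r,z)\,f(r,z)\,dz}{\int_\R\She(t,x\viiva r,z)\,f(r,z)\,dz}
\;\le\; (1+\e)\cdot\frac{\int_{J(r)}\She(t,y\viiva r,z)\,f(r,z)\,dz}{\int_{J(r)}\She(t,x\viiva r,z)\,f(r,z)\,dz}
\]
is justified only for $y\in[-C,C]$, and you have established \eqref{l2p-bnd1} only for $y\in(x,C]$. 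The downstream application in Theorem~\ref{lm:sem-pol}\eqref{th:weak-cv} uses this bound as a dominating function in the spatial variable over all of $\R$, so the restriction is not cosmetic.

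To handle unbounded $y$ one must avoid using concentration on the $y$-integral. One route: by the monotonicity \eqref{crossing}, the pointwise ratio bound of Lemma~\ref{Bus-bnds} extends from the window $J(r)$ to the entire half-line $\{z\le z_0\}$ with $z_0$ the right endpoint of $J(r)$, and this holds for all $y>x$. Integrating over $(-\infty,z_0]$ handles that piece of the numerator uniformly in $y$. The remaining tail $\int_{z_0}^\infty\She(t,y\viiva r,z)f(r,z)\,dz$ must then be estimated separately, e.g.\ by bounding $f(r,z)$ via condition \eqref{W2} and comparing to $e^{\Bus^\mu(r,0,r,z)}$ so that the identity $\int_\R\She(t,y\viiva r,z)e^{\Bus^\mu(r,0,r,z)}\,dz=e^{\Bus^\mu(r,0,t,y)}$ absorbs the $y$-dependence.
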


\begin{proof}
Apply \eqref{Zf-bnd1}, then \eqref{Zf-bnd2}, and then Lemma \ref{Bus-bnds}.
\end{proof}
%

\section{Ergodicity and symmetries of the Busemann process}\label{sec:erg}


The almost sure limit \eqref{eq:buslimhat} of the previous section implies that the Busemann process can be defined  on the original probability space $(\Omega,\sF,\P)$ of the white noise, as stated in the next 
corollary. 

\begin{corollary}\label{cor:what->w}
The process $\bigl\{\Bus^{\lambda\sig}(s,x,t,y):s,x,t,y,\lambda\in\R,\sigg\in\{-,+\}\bigr\}$ is a measurable function of the Green's function  $\She(\aabullet,\aabullet\tsp\viiva\aabullet,\aabullet)$ and hence is $\fil$-measurable.
\end{corollary}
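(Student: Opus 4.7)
The plan is to use the $\bfP$-almost sure identification \eqref{eq:buslimhat} from Proposition \ref{pr:buslim} to realize the Busemann process as a deterministic measurable functional of $\She$, and then to transfer this representation from $(\Omhat,\cFhat,\bfP)$ back to the original space $(\Omega,\sF,\P)$.

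First, for each $\lambda\in\Udense$ I would fix a deterministic sequence $r_n\to-\infty$, $z_n/r_n\to-\lambda$ (for example $r_n=-n$ and $z_n=\lfloor\lambda n\rfloor$, with the obvious modification when $\lambda=0$). Since by Theorem \ref{thm:She1} the Green's function is a $\sC(\varsets,\R)$-valued $\fil$-measurable random element, the map
\[
L_n^\lambda(s,x,t,y):=\log\frac{\She(t,y\viiva r_n,z_n)}{\She(s,x\viiva r_n,z_n)}
\]
is jointly $\sB(\R^4)\otimes\fil$-measurable for each $n$. Define $\wt\Bus^\lambda(s,x,t,y)=\lim_n L_n^\lambda(s,x,t,y)$ whenever this limit exists, and $0$ otherwise. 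This is a Borel-measurable functional of $\She$, hence $\fil$-measurable. By Proposition \ref{pr:buslim} together with Lemma \ref{b-=b+} (which ensures that $\Udense\cap\Brunohat=\varnothing$ $\bfP$-almost surely), the random variables $\wt\Bus^\lambda$ and $\Bus^\lambda$ agree on a $\bfP$-full probability event, for every $\lambda\in\Udense$.

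Next I would extend to all $\lambda\in\R$ and both signs by the one-sided monotone limits
\[
\wt\Bus^{\lambda-}(s,x,t,y)=\lim_{\Udense\ni\mu\nearrow\lambda}\wt\Bus^\mu(s,x,t,y), \qquad \wt\Bus^{\lambda+}(s,x,t,y)=\lim_{\Udense\ni\mu\searrow\lambda}\wt\Bus^\mu(s,x,t,y),
\]
with undefined limits set to $0$. These are still $\fil$-measurable. By the definitions \eqref{foofifoo}, the monotonicity \eqref{Busmonohat}, and the continuity relation \eqref{Busconthat}, the extensions agree $\bfP$-almost surely with the $\Bus^{\lambda\pm}$ built in Proposition \ref{Bproc2}. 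Thus the entire Busemann process on $\Omhat$ equals a measurable functional of $\She$ up to $\bfP$-null sets.

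Finally, since $\P$ is the $\Omega$-marginal of $\bfP$ by Proposition \ref{Bproc1}(a) and $\She$ has the same distribution under both measures, the $\fil$-measurable process $\wt\Bus^{\lambda\pm}$ viewed on $(\Omega,\sF,\P)$ has the same joint distribution with $\She$ as the Busemann process has on $(\Omhat,\cFhat,\bfP)$. Redefining $\Bus^{\lambda\pm}:=\wt\Bus^{\lambda\pm}$ on $\Omega$ therefore produces an $\fil$-measurable version, and all properties in Theorem \ref{main:Bus} and Propositions \ref{Bproc1}--\ref{Bproc2} transfer since they depend only on the joint law. The main technical subtlety is to ensure that the countably many exceptional null events -- one per $\lambda\in\Udense$ for the identification $\wt\Bus^\lambda=\Bus^\lambda$, together with those controlling the monotone $\Udense$-limits -- combine into a single full-probability event, which is possible precisely because $\Udense$ is countable.
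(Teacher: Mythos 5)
Your proposal is correct and takes essentially the same approach as the paper: use the $\bfP$-almost sure Busemann limit \eqref{eq:buslimhat} to realize $\Bus^\lambda$ for $\lambda\in\Udense$ as a Borel functional of $\She$, extend to all $\lambda\in\R$ and $\sigg\in\{-,+\}$ via the one-sided monotone limits \eqref{Busconthat}, and then transfer to $(\Omega,\sF,\P)$ by equality in law. The paper states this compactly while you make the deterministic-sequence and transfer steps explicit, but the underlying argument is identical.
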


\begin{proof}
For  $\lambda\in\Udense$, 
\eqref{eq:buslimhat} implies the claim for the process
$\{\Bus^\lambda(s,x,t,y):s,x,t,y\in\R\}$
and \eqref{Busconthat} extends this to the whole process
$\bigl\{\Bus^{\lambda\sig}(s,x,t,y):s,x,t,y, \lambda\in\R,\sigg\in\{-,+\}\bigr\}$.
\end{proof}

\begin{proof}[Proofs of Theorems \ref{main:Bus}, \ref{thm:b-shape}, \ref{bus:exp'}, \ref{thm:buslim}, and \ref{l2p-buslim}]
By Corollary \ref{cor:what->w}, the properties of $\Bus^{\aabullet}$ under $\bfP$ in Propositions \ref{Bproc1} and \ref{Bproc2} are now properties under $\P$. All the claims of Theorem \ref{main:Bus} follow. Similarly, Theorem \ref{thm:b-shape} follows from Proposition \ref{pr:b-shape} and Theorems \ref{thm:buslim} and \ref{l2p-buslim} follow from Proposition \ref{pr:buslim}.
Also, the claim in Theorem \ref{bus:exp} holds $\P$-almost surely and then
Theorem \ref{bus:exp'} follows from \eqref{bus:exp3} since $\Glob^{\lambda\sig}(t,x)=e^{\Bus^{\lambda\sig}(0,0,t,0)}e^{\Bus^{\lambda\sig}(t,0,t,x)}$. 
\end{proof}

\begin{proof}[Proof of Theorem \ref{thm:bcov}]
All the properties are direct consequences of the limits \eqref{eq:buslim} and \eqref{Buscont} 
and the properties of $\She$ and $\Shenorm$ in Proposition 2.3 of \cite{Alb-etal-22-spde-}. We spell out the proof of the shear-covariance property. 
For this, first use the shear  property of $\rnShe$ to get that for any $r,a\in\R$ there exists an event $\Omega'_{r,c}$ with $\P(\Omega'_{r,c})=1$ such that for all $\w\in\Omega'_{r,c}$, for all $R\in\R$, $t\in(R,\infty)$, and all $r,y,z\in\R$, 
\[e^{-c(y-z)+\frac{c^2}2(t-R)}\She(t,y-c(t-r)\viiva R,z-c(R-r))\circ\sheard{r}{c}=\She(t,y\viiva R,z).\]
Next, for $\lambda\in\Udense$, take $R\to-\infty$, $z/R\to-\lambda$ and use  $\P(\lambda\notin\Bruno)=\P(\lambda+c\notin\Bruno)=1$ (see Proposition \ref{Bproc2}\eqref{Bproc.-=+hat}) and \eqref{eq:buslim} to deduce the 
existence of  
an event $\Omega''_{r,c}\subset\Omega'_{r,c}$ such that $\P(\Omega''_{r,c})=1$ and for all $\w\in\Omega''_{r,c}$, all $s,x,t,y\in\R$, and all $\lambda\in\Udense$,
\begin{align*}
e^{\Bus^{\lambda+c}(s,x-c(s-r),t,y-c(t-r);\sheard{r}{c}\w)}
&=\lim_{R\to-\infty}\frac{\She(t,y-c(t-r)\viiva R,-\lambda R-c(R-r))\circ\sheard{r}{c}}{\She(s,x-c(s-r)\viiva R,-\lambda R-c(R-r))\circ\sheard{r}{c}}\\
&=\lim_{R\to-\infty}\frac{e^{c(y+\lambda R)-\frac{c^2}2(t-R)}}{e^{c(x+\lambda R)-\frac{c^2}2(s-R)}}\cdot
\frac{\She(t,y\viiva R,-\lambda R)}{\She(s,x\viiva R,-\lambda R)}\\
&=e^{\Bus^\lambda(s,x,t,y;\w)+c(y-x)-\frac{c^2}2(t-s)}.
\end{align*}
The shear-covariance claim follows  from this and \eqref{Buscont}.  
\end{proof}

\begin{proof}[Proof of Theorem \ref{b cont}]
The  limit in \eqref{bus:exp3'} implies that for any $r$, $f(z)=e^{\Bus^{\lambda\sig}(r,0,r,z)}$ does not grow in $z$ faster than exponentially. Then \eqref{NEBus2} and \cite[Theorem 2.6]{Alb-etal-22-spde-} imply that $e^{\Bus^{\lambda\sig}(r,0,t,y)}$ is locally H\"older-continuous with the claimed exponents, in $t\in(r,\infty)$ and $y\in\R$. 
Consequently, $e^{\Bus^{\lambda\sig}(s,x,t,y)}$ and therefore also $\Bus^{\lambda\sig}(s,x,t,y)$  are 
locally H\"older-continuous in all four variables. 
\end{proof}

In the sequel   we no longer need to refer to the extended space  $(\Omhat,\widehat\sF,\bfP)$.  We also note that the claims in 
Theorem \ref{int-shape}  and Lemmas \ref{Bus-bnds}, \ref{int-cont}, \ref{Bus-bnds2}, and \ref{l2p-bnds} all hold $\P$-almost surely. 

We close this section with an immediate consequence of Corollary \ref{cor:what->w}, Theorem \ref{thm:bcov}\eqref{thm:bcov.i},
and the ergodicity of $(\Omega,\fil,\P)$ under non-trivial shifts.

\begin{theorem}\label{thm:tot-erg}
The process $\bigl\{\Bus^{\lambda\sig}(s,x,t,y):s,x,t,y,\lambda\in\R,\sigg\in\{-,+\}\bigr\}$ is totally ergodic under non-trivial shifts. 
Precisely, for any $(r,z)\ne(0,0)$ and any Borel set $A\subset \cC(\R^4,\R)^{\R\times\{-,+\}}$ that is invariant under the simultaneous shift of all spatial coordinates by $z$ and all temporal coordinates by $r$, 
	\[\P\Bigl(\bigl\{\Bus^{\lambda\sig}(s,x,t,y):s,x,t,y,\lambda\in\R,\sigg\in\{-,+\}\bigr\}\in  A\Bigr)\in\{0,1\}.\] 
\end{theorem}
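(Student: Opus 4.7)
The plan is to combine the three ingredients assembled immediately before the theorem: the $\fil$-measurability of the Busemann process (Corollary \ref{cor:what->w}), the shift-covariance of Theorem \ref{thm:bcov}\eqref{thm:bcov.i}, and the standing assumption that the space-time shifts $\shiftd{r}{z}$ are strongly mixing on $(\Omega,\fil,\P)$ whenever $(r,z)\ne(0,0)$. Fix such a pair $(r,z)$, and let $T_{r,z}$ denote the argument-shift on $\cC(\R^4,\R)^{\R\times\{-,+\}}$ given by $(T_{r,z}f)^{\lambda\sig}(s,x,t,y)=f^{\lambda\sig}(s+r,x+z,t+r,y+z)$; the hypothesis on $A$ is precisely that $T_{r,z}^{-1}A=A$. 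Setting $\Phi(\w)=\bigl(\Bus^{\lambda\sig}(\abullet\,;\w)\bigr)_{\lambda,\sig}$ and $E=\Phi^{-1}(A)$, Corollary \ref{cor:what->w} gives $E\in\fil$, and the theorem reduces to proving $\P(E)\in\{0,1\}$.

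The next step is to transfer the argument-shift on the image space to the measure-preserving shift on $\Omega$. Substituting $\shiftd{r}{z}\w$ for $\w$ in the identity furnished by Theorem \ref{thm:bcov}\eqref{thm:bcov.i} produces a full-probability event on which
\[\Bus^{\lambda\sig}(s+r,x+z,t+r,y+z;\w)=\Bus^{\lambda\sig}(s,x,t,y;\shiftd{r}{z}\w)\]
holds for all $\lambda,\sig,s,x,t,y$ simultaneously; equivalently, $T_{r,z}\Phi(\w)=\Phi(\shiftd{r}{z}\w)$ almost surely. Combined with $T_{r,z}^{-1}A=A$, this gives $\w\in E\iff\shiftd{r}{z}\w\in E$ on a full-probability event, i.e., $E$ is $\shiftd{r}{z}$-invariant modulo a $\P$-null set. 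Ergodicity of $\shiftd{r}{z}$, immediate from the assumed strong mixing, then forces $\P(E)\in\{0,1\}$.

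No genuine obstacle is anticipated; the only point requiring some care is the bookkeeping that translates the invariance hypothesis on $A$ (an invariance on the image space) into $\shiftd{r}{z}$-invariance of $E$ via the covariance relation $T_{r,z}\Phi=\Phi\circ\shiftd{r}{z}$. The same argument in fact shows that every $\fil$-measurable functional of the Busemann process which is invariant under some non-trivial joint shift of $\cC(\R^4,\R)^{\R\times\{-,+\}}$ is $\P$-almost surely constant.
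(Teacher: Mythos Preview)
Your proof is correct and matches the paper's approach exactly: the paper states this theorem as an immediate consequence of Corollary~\ref{cor:what->w}, Theorem~\ref{thm:bcov}\eqref{thm:bcov.i}, and the ergodicity of $(\Omega,\fil,\P)$ under non-trivial shifts, without spelling out the details you have supplied.
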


In particular, for any given $\lambda$, $\bigl\{\Bus^{\lambda}(s,x,t,y):s,x,t,y\bigr\}$ is ergodic under each  temporal shift. (Recall from Theorem \ref{main:Bus}\eqref{Bproc.-=+} that when $\lambda$ is fixed, the $\lambda\pm$ distinction disappears $\P$-almost surely.) 
This says that the known ratio-stationary solutions of \eqref{SHE} (see \cite{Fun-Qua-15,Ber-Gia-97,Gub-Per-17}) 
are ergodic under the time shift.
More details follow  in Section \ref{sec:1F1S}.

%

\section{Semi-infinite continuum polymer} 
\label{sec:poly} 



Our proof of Theorem \ref{thm:unique} relies on the analysis of a family of semi-infinite continuum directed polymer measures, which we discuss in this section. 

For each $(t,y)\in\R^2$, it is shown in Theorem 2.14 of \cite{Alb-etal-22-spde-}
that the polymer measures $\{\Poly_{(t,y),(s,x)}:t>s,y\in\R\}$ from Section \ref{sec:CDRP}
are consistent in the sense of Gibbs conditioning, or, equivalently, that they satisfy the domain Markov property. 
It is then natural to consider the question of existence of infinite length polymers, i.e., solutions to the Dobrushin–Lanford–Ruelle (DLR) equations.   A discussion of these equations in the context of planar lattice polymers appears in Sections 2.4 and 2.5 of 
\cite{Jan-Ras-18-arxiv}.  
Formulas \eqref{pi-rf} and \eqref{pi-rx} suggest   
that this question is tightly bound to 
the Busemann limits \eqref{eq:buslim2} and \eqref{eq:buslim} and that the limiting objects are tightly connected to the Busemann process $\Bus^{\aabullet\pm}$.  We thus now describe the limiting polymer measures.

For $t,y,\lambda\in\R$ and $\sigg\in\{-,+\}$, 
let $\Poly_{(t,y)}^{\lambda\sig}$ denote the distribution of the real-valued Markov process
$\{X_s:s\in (-\infty, t]\}$ that evolves backward in time from the initial point $X_t=y$ and 
whose move from $(s,x)$ to $(r,z)$ 
obeys the transition probability density  
	\begin{align}\label{pi}
	\pi^{\lambda\sig}(r,z\viiva s,x)=\She(s,x\viiva r,z)e^{\Bus^{\lambda\sig}(s,x,r,z)},\quad s>r\text{ and }x,z\in\R.
	\end{align}

These are indeed transition probability densities because \eqref{eq:CK} and the additivity \eqref{cocycle} 
 imply that they satisfy the Chapman-Kolmogorov equation and \eqref{NEBus2} implies that
	\[\int_\R \pi^{\lambda\sig}(r,z\viiva s,x)\,dz=\int_\R \She(s,x\viiva r,z)e^{\Bus^{\lambda\sig}(s,x,r,z)}\,dz
	=e^{\Bus^{\lambda\sig}(s,x,s,x)}=1.\]
One can immediately recognize in the expression above that this family of semi-infinite length continuum polymer measures arise by using the Busemann process to define a family of Doob transforms of the finite length measures.

Our first result in this section collects some basic properties of these semi-infinite polymers. 

\begin{theorem}\label{lm:sem-pol}
The following statements hold $\P$-almost surely.  
\begin{enumerate}  [label={\rm(\alph*)}, ref={\rm\alph*}]   \itemsep=3pt  
\item\label{sem-pol.a} \textup{(Existence)} For each $\lambda\in\R$, $\sigg\in\{-,+\}$, and initial point  $(t,y)\in\R^2$, 
the expression in \eqref{pi} defines a measure $\Poly_{(t,y)}^{\lambda\sig}$ on $\sC((-\infty,t],\R)$ with its Borel $\sigma$-algebra, under which the path is almost surely locally $\alpha$-H\"older-continuous for any $\alpha\in(0,1/2)$. 
 
\item \label{th:weak-cv} \textup{(Thermodynamic limits)}
For all $\lambda\not\in\Bruno$, $(t,y)\in\R^2$, terminal condition $f\in\initF_\lambda$, and  time-space paths  $\{(r,z_r) : r \tsp\in\tsp(-\infty,t]\}$ such that $z_r/r\to-\lambda$ as $r\to-\infty$, for any $s<t$, both $\Poly_{(t,y),(r,z_r)}(X_{s:t}\in\aabullet)$ and $\Poly_{(t,y),(r,f_r)}(X_{s:t}\in\aabullet)$ converge in the total variation distance to $\Poly_{(t,y)}^{\lambda}(X_{s:t}\in\aabullet)$ as $r\to-\infty$.

\item \label{Q-LLN}
\textup{(LLN)} For all $t,y,\lambda\in\R$ and $\sigg\in\{-,+\}$, 
\begin{align}\label{LLN}
\Poly_{(t,y)}^{\lambda\sig}\Bigl\{\lim_{r\to-\infty}\frac{X_r}r=-\lambda\Bigr\}=1.
\end{align}

\item\label{sem-pol.b}  
\textup{(Continuity)} Let  $t,y,\lambda\in\R$ and $\sigg\in\{-,+\}$. As $x\to y$,   $\Poly_{(t,x)}^{\lambda\sig}$ converges weakly to $\Poly_{(t,y)}^{\lambda\sig}$ as probability measures on $\cC((-\infty,t],\R)$. Furthermore, for any $s<t$, 
$\Poly_{(t,y)}^{\mu\sig}(X_{s:t}\in\aabullet)$ converges in the total variation distance to $\Poly_{(t,y)}^{\lambda-}(X_{s:t}\in\aabullet)$ as $\mu\nearrow\lambda$, and to $\Poly_{(t,y)}^{\lambda+}(X_{s:t}\in\aabullet)$ as $\mu\searrow\lambda$. 
 \end{enumerate} 
\end{theorem}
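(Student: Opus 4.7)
The plan is as follows. Part \eqref{sem-pol.a} combines the Chapman--Kolmogorov relation and unit mass (already verified immediately after \eqref{pi}) with Kolmogorov's extension theorem to construct $\Poly^{\lambda\sig}_{(t,y)}$ as a measure on the Borel $\sigma$-algebra of $\sC((-\infty,t],\R)$. For local H\"older continuity I will bound the increments: using $\She(s',x'\viiva s,x) = \Shenorm(s',x'\viiva s,x)\,\heat(s'-s, x'-x)$ together with the $\Shenorm$ moment bounds from \cite[Corollary 3.11]{Alb-etal-22-spde-} and the a.s.\ at-most-exponential growth of $e^{\Bus^{\lambda\sig}}$ from Theorems \ref{main:Bus}\eqref{Bproc.BM} and \ref{bus:exp'}, I will show that for any compact $J\subset(-\infty,t]$ and any integer $p\ge1$ there exists a finite $C_J$ with $\mathbb{E}^{\Poly^{\lambda\sig}_{(t,y)}}[|X_{s'}-X_s|^{2p}] \le C_J (s'-s)^p$ for $s,s'\in J$, so that Kolmogorov's continuity criterion yields any H\"older exponent below $1/2$.

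For part \eqref{th:weak-cv}, fix $s<t$. The point-to-point measure $\Poly_{(t,y),(r,z_r)}$ and the semi-infinite measure $\Poly^{\lambda}_{(t,y)}$ are both backward Markov with the \emph{same} conditional law of $X_{s:t}$ given $X_s=w$, namely $\Poly_{(t,y),(s,w)}$. Hence the total variation distance between the $X_{s:t}$-laws equals that between the $X_s$-marginals. The marginal densities are
\[ q_r(w) = \She(t,y\viiva s,w)\cdot\frac{\She(s,w\viiva r,z_r)}{\She(t,y\viiva r,z_r)}, \qquad q(w) = \She(t,y\viiva s,w)\,e^{\Bus^{\lambda}(t,y,s,w)}, \]
and by \eqref{eq:buslim} the ratio in $q_r(w)$ converges locally uniformly in $w$ to $e^{\Bus^{\lambda}(t,y,s,w)}$. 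Since $\int q_r = \int q = 1$, Scheff\'e's lemma upgrades the pointwise convergence $q_r\to q$ to $L^1$ convergence, i.e.\ total variation convergence. The line-to-point case is identical, with \eqref{eq:buslim2} in place of \eqref{eq:buslim}; both limits hold simultaneously for all $\lambda\not\in\Bruno$ and all admissible $(z_r)$, $f$ on a single $\P$-full-probability event.

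For the quenched LLN in \eqref{Q-LLN}, the cocycle \eqref{cocycle} gives $\Bus^{\lambda\sig}(t,y,r,z) = \Bus^{\lambda\sig}(r,0,r,z) - \Bus^{\lambda\sig}(r,0,t,y)$, so the marginal of $X_r$ under $\Poly^{\lambda\sig}_{(t,y)}$ has density $\She(t,y\viiva r,z)\,e^{\Bus^{\lambda\sig}(r,0,r,z)-\Bus^{\lambda\sig}(r,0,t,y)}$. Combining Theorem \ref{int-shape} (applied on the region $\{|z/r+\lambda|>\delta\}$, whose variational supremum is strictly below $\lambda^2/2-1/24$) with the Busemann shape theorem \ref{thm:b-shape} (which yields $\Bus^{\lambda\sig}(r,0,t,y) = (\lambda^2/2-1/24)|r| + o(|r|)$) produces a deterministic $c(\delta)>0$ with
\[ \Poly^{\lambda\sig}_{(t,y)}\bigl\{|X_r/r + \lambda| > \delta\bigr\} \le e^{-c(\delta)|r|} \qquad \text{for all sufficiently large } |r|, \]
$\P$-almost surely. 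Summing along $r=-n$ and applying Borel--Cantelli under $\Poly^{\lambda\sig}_{(t,y)}$ gives $X_{-n}/(-n)\to-\lambda$ a.s., and the H\"older continuity from \eqref{sem-pol.a} interpolates to continuous $r$. To obtain the statement on a single $\P$-full-probability event simultaneously in $(t,y,\lambda,\sigg)$, use the monotonicity \eqref{Busmono} to sandwich arbitrary $\lambda$ between nearby $\mu\in\Udense$ and the one-sided continuity \eqref{Buscont} to recover $\sigg=\pm$ at discontinuity points.

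For part \eqref{sem-pol.b}, the weak convergence $\Poly^{\lambda\sig}_{(t,x)} \Rightarrow \Poly^{\lambda\sig}_{(t,y)}$ as $x\to y$ follows from continuity of $\pi^{\lambda\sig}$ in the initial point (inherited from continuity of $\She$ and of $\Bus^{\lambda\sig}$ in Theorem \ref{main:Bus}\eqref{Bproc.cont}) together with the tightness provided by the uniform moment bounds of \eqref{sem-pol.a}. For $\mu\to\lambda\pm$, the $X_s$-marginal density under $\Poly^{\mu\sig}_{(t,y)}$ is $\She(t,y\viiva s,w)e^{\Bus^{\mu\sig}(t,y,s,w)}$, which by \eqref{Buscont} converges pointwise in $w$ to $\She(t,y\viiva s,w)e^{\Bus^{\lambda\pm}(t,y,s,w)}$; Scheff\'e plus the domain-Markov reduction of the second paragraph convert this to total variation convergence of the $X_{s:t}$-distributions. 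The hardest step will be the LLN in \eqref{Q-LLN}: the exponential tail bound on the $X_r$-marginal needs Theorem \ref{int-shape} uniformly in $\lambda$ and a careful monotone interpolation from $\Udense$, after which the Borel--Cantelli step under the quenched measure is routine.
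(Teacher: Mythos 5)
Your approach for parts (a), (b), and (d) is sound, and your treatment of part (b) is notably cleaner than the paper's. You observe that conditioned on $X_s=w$, the finite-length measures $\Poly_{(t,y),(r,z_r)}$, $\Poly_{(t,y),(r,f_r)}$ and the semi-infinite $\Poly^{\lambda}_{(t,y)}$ all induce the \emph{same} bridge law $\Poly_{(t,y),(s,w)}$ on $X_{s:t}$ (the Busemann factors cancel by the cocycle property, and the $(r,z_r)$- and $f_r$-dependence cancels in the conditioned transition kernels), so the total variation distance of the $X_{s:t}$-laws collapses to that of the $X_s$-marginals, which Scheff\'e's lemma settles from the locally uniform convergence in \eqref{eq:buslim} and \eqref{eq:buslim2}. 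The paper instead identifies the restricted measure with a finite-length polymer $\Poly_{(t,y),(s,g_r)}$ for a terminal function $g_r$, establishes domination and convergence of the $g_r$'s via Lemmas \ref{Bus-bnds} and \ref{l2p-bnds}, and invokes Theorem 2.16 of the companion paper; your argument is more self-contained. For (a), you are essentially redoing the content of the companion paper's Theorem 2.15 via moment bounds and Kolmogorov's criterion, whereas the paper identifies $\Poly^{\lambda\sig}_{(t,y)}\big|_{\Paths_{r:t}}=\Poly_{(t,y),(r,f^{\lambda\sig}_r)}$ with $f^{\lambda\sig}_r(z)=e^{\Bus^{\lambda\sig}(r,0,r,z)}\in\ICM$ (by \eqref{bus:exp3'}) and cites that theorem directly.

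Part (c), however, has a real gap, and it is not the ``uniformly in $\lambda$'' issue you flagged. Borel--Cantelli from the exponential tail bound does give $X_{-n}/(-n)\to-\lambda$ $\Poly^{\lambda\sig}_{(t,y)}$-a.s.\ along integers, as you say. But the interpolation to continuous $r$ does not follow from local H\"older continuity. H\"older continuity gives, for each $n$, a finite random constant $C_n$ with $\sup_{r\in[-n-1,-n]}|X_r-X_{-n}|\le C_n$, but says nothing about the growth of $C_n$ as $n\to\infty$. To conclude $\lim_{r\to-\infty}X_r/r=-\lambda$ you need $C_n/n\to 0$ $\Poly^{\lambda\sig}_{(t,y)}$-a.s., which requires a quantitative moment bound on the oscillation $\sup_{r\in[-n-1,-n]}|X_r-X_{-n}|$ that is at most polynomially growing in $n$, followed by a Borel--Cantelli argument for the supremum. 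That estimate is nontrivial (the path sits at position $\approx\lambda n$ so uniformity in the starting point must be argued) and is not supplied by the qualitative H\"older claim in part (a).

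The paper avoids the interpolation entirely via a martingale argument. It notes that $M_r^{s,x,t,y}=\She(s,x\viiva r,X_r)/\She(t,y\viiva r,X_r)$ is a backward $\Poly^{\lambda\sig}_{(t,y)}$-martingale, so $M_{-\infty}^{s,x,t,y}=\lim_{r\to-\infty}M_r^{s,x,t,y}$ exists a.s.\ by martingale convergence. If $r^{-1}X_r$ had two distinct sublimit points $-\kappa$ and $-\mu$ with positive probability, then \eqref{eq:buslim} along the respective subsequences would force $M_{-\infty}^{s,x,t,y}=e^{\Bus^{\kappa}(t,y,s,x)}=e^{\Bus^{\mu}(t,y,s,x)}$ for all rational $s<t$ and rational $x$, and hence $\Bus^{\kappa}\equiv\Bus^{\mu}$, contradicting the distinct asymptotic slopes in \eqref{bus:exp3'}. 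Thus $r^{-1}X_r$ has an a.s.\ limit in $[-\infty,\infty]$, and only then does the LDP/tail bound (which you have) identify its value as $-\lambda$. You should either add this martingale device or supply the missing oscillation estimate.
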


\begin{remark}
By Theorem \ref{lm:sem-pol}\eqref{Q-LLN}, for any $\lambda\ne\mu$ and $\sigg,\sigg'\in\{-,+\}$, the measures $\Poly_{(t,y)}^{\mu\sig}$ 
and $\Poly_{(t,y)}^{\lambda\sig'}$ are mutually singular.  Therefore their  total variation distance  is one and the total variation convergence of  Theorem \ref{lm:sem-pol}\eqref{sem-pol.b}  cannot hold on the semi-infinite  time interval $(-\infty,t]$. However, the total variation convergence   of  the projections onto bounded  time intervals does imply weak convergence  on the full time interval.
\end{remark}

Combining Theorem \ref{lm:sem-pol}\eqref{th:weak-cv} with Theorem \ref{main:Bus}\eqref{Bproc.-=+} gives the following statement for each fixed $\lambda$. 


\begin{corollary}
Fix $\lambda\in\R$. Then the following holds $\P$-almost surely.
For any $(t,y)\in\R^2$, $f\in\initF_\lambda$, and  time-space paths  $\{(r,z_r)\}_{r\tsp\in\tsp(-\infty,t]}$ such that $z_r/r\to-\lambda$ as $r\to-\infty$, for any $s<t$, both $\Poly_{(t,y),(r,z_r)}(X_{s:t}\in\aabullet)$ and $\Poly_{(t,y),(r,f_r)}(X_{s:t}\in\aabullet)$ converge in the total variation distance to $\Poly_{(t,y)}^{\lambda}(X_{s:t}\in\aabullet)$ as $r\to-\infty$.
\end{corollary}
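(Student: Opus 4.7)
The proof will be a direct combination of Theorem \ref{lm:sem-pol}\eqref{th:weak-cv} with the fact that for a fixed slope $\lambda$, the event $\{\lambda \in \Bruno\}$ is $\P$-null. Let $\Omega_1$ denote the $\P$-full probability event on which the conclusion of Theorem \ref{lm:sem-pol}\eqref{th:weak-cv} holds simultaneously for all $\mu \not\in \Bruno$, all $(t,y) \in \R^2$, all $f \in \initF_\mu$, all approximating sequences $\{(r,z_r)\}$ with $z_r/r \to -\mu$, and all $s < t$. By Theorem \ref{th:La}\eqref{th:La:b}, the event $\Omega_2 = \{\lambda \not\in \Bruno\}$ satisfies $\P(\Omega_2) = 1$. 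Set $\Omega_\lambda = \Omega_1 \cap \Omega_2$, so that $\P(\Omega_\lambda) = 1$.

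For each $\w \in \Omega_\lambda$, the slope $\lambda$ is not exceptional, so Theorem \ref{lm:sem-pol}\eqref{th:weak-cv} applies with $\mu = \lambda$: for every $(t,y) \in \R^2$, every $f \in \initF_\lambda$, every time-space sequence with $z_r/r \to -\lambda$, and every $s < t$, both $\Poly_{(t,y),(r,z_r)}(X_{s:t} \in \aabullet)$ and $\Poly_{(t,y),(r,f_r)}(X_{s:t} \in \aabullet)$ converge in total variation to $\Poly_{(t,y)}^{\lambda}(X_{s:t} \in \aabullet)$ as $r \to -\infty$. This gives exactly the statement of the corollary.

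There is no real obstacle: the work was done in Theorem \ref{lm:sem-pol}\eqref{th:weak-cv} (which constructs the uniform-in-$\lambda$ convergence on a single event using the Busemann limits of Theorems \ref{thm:buslim} and \ref{l2p-buslim}) and in Theorem \ref{th:La}\eqref{th:La:b} (which shows that any deterministic slope is continuity point of $\lambda \mapsto \Bus^{\lambda\pm}$ almost surely). The corollary is the fixed-$\lambda$ specialization obtained by trading the uniform-in-$\lambda$ statement (valid only off the random, possibly dense set $\Bruno$) for a full-probability statement at the cost of fixing $\lambda$ in advance.
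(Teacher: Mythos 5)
Your proof is correct and takes essentially the same approach as the paper, which combines Theorem \ref{lm:sem-pol}\eqref{th:weak-cv} with Theorem \ref{main:Bus}\eqref{Bproc.-=+}; your citation of Theorem \ref{th:La}\eqref{th:La:b} instead is immaterial, since that part is just a restatement of Theorem \ref{main:Bus}\eqref{Bproc.-=+} in terms of the set $\Bruno$.
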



Recall that in zero temperature or inviscid settings, semi-infinite polymer measures correspond to semi-infinite geodesics or characteristics. Fairly generally, one expects that when synchronization occurs (Section \ref{sub:1F1S}), these minimizing paths should coalesce either at a finite time or else asymptotically. This property is known as \emph{hyperbolicity}. Our next result shows that for   $\lambda\notin \Bruno$, the polymer measures with parameter $\lambda$ started from different initial conditions are hyperbolic in total variation norm. 

\begin{theorem}\label{thm:hyp}
The following statements hold $\P$-almost surely simultaneously  for all $\lambda\not\in\Bruno$.

\begin{enumerate} [label={\rm(\roman*)}, ref={\rm\roman*}]   \itemsep=3pt  
\item\label{hyp.i}    We have locally uniform total variation convergence:  for all $C<\infty$, 
\be\label{hyp59} 
\lim_{r\to-\infty} \; \sup_{s,x,t,y\in[-C,C]} \;  \bigl\lVert   \Poly_{(t,y)}^{\lambda}(X_{-\infty:r}\in \aabullet\tspb)  - \Poly_{(s,x)}^{\lambda}(X_{-\infty:r}\in \aabullet\tspb)  \bigr\rVert_{\rm TV}  =0.
\ee

\item\label{hyp.ii}   
The measures 
$\{\Poly_{(t,y)}^{\lambda}: (t,y)\in\R^2\}$ are equal and  trivial on the tail $\sigma$-algebra $\Paths_{\rm tail} \equiv\bigcap_{r<0}\Paths_{-\infty:r}$ of the path space. 

\item\label{hyp.iii}     Each $\Poly_{(t,y)}^{\lambda}$ is mixing in the following total variation sense:   for $B\in\Paths_{-\infty:t}$ such that   $\Poly_{(t,y)}^{\lambda}(B)>0$,  
\be\label{hyp61} 
\lim_{r\to-\infty} \;  \bigl\lVert   \Poly_{(t,y)}^{\lambda}(X_{-\infty:r}\in \aabullet\tspb)  - \Poly_{(t,y)}^{\lambda}(X_{-\infty:r}\in \aabullet\tspb\viiva B)  \bigr\rVert_{\rm TV}  =0.
\ee
\end{enumerate} 

\end{theorem}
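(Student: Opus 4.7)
The plan is to prove part \eqref{hyp.i} directly from the Busemann limits of Section \ref{sec:BusLim}, then to deduce part \eqref{hyp.ii} from part \eqref{hyp.i} together with the backward Markov structure, and finally to derive part \eqref{hyp.iii} from part \eqref{hyp.ii} via backward martingale convergence.

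For part \eqref{hyp.i}, I would first reduce the path-space TV distance to a TV distance of the time-$r$ marginals. Since, conditional on $X_r=z$, the law of $X_{-\infty:r}$ under $\Poly^\lambda_{(t,y)}$ is $\Poly^\lambda_{(r,z)}$ regardless of the initial point, the standard coupling argument yields
\[
\bigl\lVert\Poly^\lambda_{(t,y)}(X_{-\infty:r}\in\aabullet) - \Poly^\lambda_{(s,x)}(X_{-\infty:r}\in\aabullet)\bigr\rVert_{\rm TV} \;=\; \bigl\lVert\pi^\lambda(r,\aabullet\viiva t,y) - \pi^\lambda(r,\aabullet\viiva s,x)\bigr\rVert_{\rm TV}.
\]
Using the cocycle identity \eqref{cocycle} and \eqref{NEBus2}, the transition density admits the alternative form
\[
\pi^\lambda(r,z\viiva t,y) \;=\; \frac{\She(t,y\viiva r,z)\,e^{\Bus^\lambda(r,0,r,z)}}{\int_\R \She(t,y\viiva r,w)\,e^{\Bus^\lambda(r,0,r,w)}\,dw},
\]
so the density ratio equals $\She(t,y\viiva r,z)/\She(s,x\viiva r,z)\cdot e^{-\Bus^\lambda(s,x,t,y)}$. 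I would then split the integral at the concentration set $A_{\delta,r}=\{z:\abs{z/r+\lambda}<\delta\}$. The bound \eqref{Zf-bnd1} in the proof of Proposition \ref{pr:buslim}, applied with $f(r,z)=e^{\Bus^\lambda(r,0,r,z)}$ (which lies in $\initF_\lambda$ by the remark following \eqref{def:fla}), shows that the $\pi^\lambda(r,\aabullet\viiva t,y)$-mass outside $A_{\delta,r}$ is at most $\e$ for $\abs r$ large, uniformly over $(t,y)\in[-C,C]^2$. On $A_{\delta,r}$, Theorem \ref{thm:buslim} forces the displayed ratio to lie in $[1-\e,1+\e]$, uniformly over $s,x,t,y\in[-C,C]$ and $z\in A_{\delta,r}$. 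Combining these two estimates delivers \eqref{hyp59}.

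For part \eqref{hyp.ii}, equality of the measures on $\Paths_{\rm tail}$ is immediate from part \eqref{hyp.i}: any tail event $B$ lies in $\Paths_{-\infty:r}$ for every $r$, so $\abs{\Poly^\lambda_{(t,y)}(B)-\Poly^\lambda_{(s,x)}(B)}$ is bounded by the quantity in \eqref{hyp59}, which vanishes. Denote the common value by $p_B$. Triviality of $B$ will follow by induction on $n$: I would show that $\ind_B$ is independent of $(X_{r_1},\dots,X_{r_n})$ under $\Poly^\lambda_{(t,y)}$ for every $r_1<\cdots<r_n\le t$. The inductive step conditions on $\Paths_{r_1:t}$, under which the backward Markov property gives $X_{-\infty:r_1}$ the law $\Poly^\lambda_{(r_1,X_{r_1})}$, so $E[\ind_B\viiva\Paths_{r_1:t}]=\Poly^\lambda_{(r_1,X_{r_1})}(B)=p_B$ by the equality just proved. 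A standard $\pi$-$\lambda$ argument extends independence to all of $\Paths_{-\infty:t}$, so $B\in\Paths_{-\infty:t}$ is independent of itself and $p_B\in\{0,1\}$.

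For part \eqref{hyp.iii}, the Radon--Nikodym derivative of $\Poly^\lambda_{(t,y)}(\aabullet\viiva B)$ with respect to $\Poly^\lambda_{(t,y)}$ on $\Paths_{-\infty:r}$ is $\phi_r=\Poly^\lambda_{(t,y)}(B)^{-1}\,E^{\Poly^\lambda_{(t,y)}}[\ind_B\viiva\Paths_{-\infty:r}]$, so \eqref{hyp61} becomes the statement that $\tfrac12\,E^{\Poly^\lambda_{(t,y)}}[\abs{\phi_r-1}]\to 0$. As $r\searrow-\infty$ the decreasing $\sigma$-algebras $\Paths_{-\infty:r}$ shrink to $\Paths_{\rm tail}$, so the backward Levy martingale convergence theorem combined with part \eqref{hyp.ii} yields $\phi_r\to1$ a.s.\ and in $L^1$, giving \eqref{hyp61}. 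The main technical obstacle will be part \eqref{hyp.i}: securing joint uniformity in $(s,x,t,y)\in[-C,C]^4$ together with the a.s.\ simultaneous validity for all $\lambda\notin\Bruno$. Both are handled by restricting to the single full-probability event furnished by Section \ref{sec:BusLim}, on which Theorem \ref{thm:buslim} and the concentration bound \eqref{Zf-bnd1} hold uniformly for $\lambda$ outside the countable random set $\Bruno$.
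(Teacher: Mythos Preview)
Your proof is correct and follows the same overall strategy as the paper. For part \eqref{hyp.i}, both arguments reduce the path-space TV to the one-time marginal TV via the Markov property and then control the Radon--Nikodym derivative on a concentration set $\{z:\abs{z/r+\lambda}<\delta\}$ using Theorem \ref{thm:buslim}; the paper obtains the concentration bound from the LDP of Lemma \ref{Q-LDP} and must split into the cases $t-s\ge\tau$ and $0\le t-s<\tau$ (because the explicit $\e$-bounds in Theorem \ref{thm:buslim} carry the gap restriction $t-s\ge\tau$), whereas your appeal to the full locally uniform limit \eqref{eq:buslim} and to the concentration estimate \eqref{Zf-bnd1} with $f(r,z)=e^{\Bus^\lambda(r,0,r,z)}$ sidesteps both the LDP and the case split. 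Parts \eqref{hyp.ii}--\eqref{hyp.iii} are identical in spirit: the paper invokes two abstract results from Kallenberg (Theorems 25.25 and 26.10) where you spell out the backward-martingale computation directly.
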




Before turning to the proofs of these main results, we make two remarks concerning the forward-in-time version of the continuum polymers.

\begin{remark}
The invariance of $\P$ 
under temporal reflection $\refd_1$
 implies that the analogous results also hold
for the forward random polymer measure 
with terminal time $t$ and terminal condition $f$, which has
kernel
	\begin{align*}
	\pi^{t,f}(s',w'\viiva s,w)
	&=\frac{\She(t,f\viiva s',w')\She(s',w'\viiva s,w)}{\She(t,f\viiva s,w)} \\
	&=\frac{\She(s',w'\viiva s,w)\int_\R\She(t,z\viiva s',w')\,f(dz)}{\int_\R\She(t,z\viiva s,w)\,f(dz)},\quad \text{for } s<s'<r   \text{ and }  w,w'\in\R.
	\end{align*}
The forward semi-infinite polymer is then defined via the forward Busemann limits mentioned in Remark \ref{forwardBus}.
\end{remark}

\begin{remark}
The forward point-to-point and point-to-line polymer measures mentioned in the previous remark (with $f=\delta_y$ and $f(dz)=dz$, respectively) were originally introduced in \cite{Alb-Kha-Qua-14-jsp}.  It was shown in  that on an event of full probability depending on the initial and terminal conditions, the
finite-dimensional marginals of the path measures are absolutely continuous with respect to the corresponding finite-dimensional marginals of Brownian motion, but the two distributions are mutually singular at the process level.
\end{remark}



We begin with the proof of the parts of Theorem \ref{lm:sem-pol} other than \eqref{Q-LLN}. Part \eqref{sem-pol.a} shows existence and basic properties of the measures studied in this section,  part \eqref{th:weak-cv} shows that these measures arise as limits of finite volume measures, and part \eqref{sem-pol.b} shows that these measures satisfy basic continuity conditions. We return to prove part \eqref{Q-LLN} after proving a large deviation principle for the paths.

\label{page:lm:sem-pol}
\begin{proof}[Proof of Theorem \ref{lm:sem-pol} except part \eqref{Q-LLN}]
Initially, the probability measure $\Poly_{(t,y)}^{\lambda\sig}$ can be defined on the product space $\R^{(-\infty,t]}$, through Kolmogorov's extension.
But then one notes that for any $r<t$, $\Poly_{(t,y)}^{\lambda\sig}\big|_{\Paths_{r:t}}=\Poly_{(t,y),(r,f^{\lambda\sig}_r)}$, where $f^{\lambda\sig}_r(z)=e^{\Bus^{\lambda\sig}(r,0,r,z)}$. Indeed, for $s<s'$ in $(r,t]$,
	\[\pi^{\lambda\sig}(s,w\viiva s',w')=\frac{\She(s',w'\viiva s,w)e^{\Bus^{\lambda\sig}(r,0,s,w)}}{e^{\Bus^{\lambda\sig}(r,0,s',w')}}
	=\frac{\She(s',w'\viiva s,w)\She(s,w\viiva r,f^{\lambda\sig}_r)}{\She(s',w'\viiva r,f^{\lambda\sig}_r)}=\pi_{r,f^{\lambda\sig}_r}(s,w\viiva s',w').\]
By Theorem \ref{bus:exp'}, $f^{\lambda\sig}_r\in\ICM$ for any $r, \lambda\in\R$ and $\sigg\in\{-,+\}$. Existence and uniqueness of the claimed measure on $\sC([r,t],\R)$ and the claimed $\alpha$-H\"older-continuity on $[r,t]$ follow  from Theorem 2.15 in \cite{Alb-etal-22-spde-}. Consistency then shows furnishes a measure on $\sC((-\infty,t],\R)$, proving \eqref{sem-pol.a}.

Turning to part \eqref{th:weak-cv}, we begin with an observation similar to one in the proof of Theorem \ref{lm:sem-pol}. We have that if $r<s<t$, then $\Poly_{(t,y),(r,z_r)}\big|_{\Paths_{s:t}}=\Poly_{(t,y),(s,g_r)}$ and 
$\Poly_{(t,y),(r,f)}\big|_{\Paths_{s:t}}=\Poly_{(t,y),(s,h_r)}$, where the right-hand sides are the finite-length polymers \eqref{pi-rf} with terminal functions 
	\[g_r(x)=\frac{\She(s,x\viiva r,z_r)}{\She(s,0\viiva r,z_r)}\quad\text{and}\quad 
	h_r(x)=\frac{\She(s,x\viiva s,f(r,\aabullet))}{\She(s,0\viiva s,f(r,\aabullet))}\,.\]
The denominators can be cancelled. They are included for the next step. 

By Lemma \ref{Bus-bnds} (which we now know holds on the space $(\Omega,\sF,\P)$),  $\P$-almost surely: 
for any $\kappa<\mu$ in $\Udense$, $\e>0$, and $s\in\R$, there exist $R<s$ and $\delta>0$ such that
for any $r\le R$, $\lambda\in[\kappa-\e,\mu+\e]$, $z$ such that $\abs{\frac{z}r+\lambda}<\delta$, and all $x\in\R$,
	\[\frac{\She(s,x\viiva r,z)}{\She(s,0\viiva r,z)}
	\le (1+\e)\bigl(e^{\Bus^{\kappa}(s,0,s,x)}+e^{\Bus^{\mu}(s,0,s,x)}\bigr).\]

By Lemma \ref{l2p-bnds},  $\P$-almost surely: 
for any $\kappa<\mu$ in $\Udense$, $\e>0$, $s\in\R$, 
$\lambda\in[\kappa+\e,\mu-\e]$, and $f\in\initF_\lambda$, there exists an $R<s$ such that
for any $r\le R$ and $x\in\R$
	\[\frac{\int_\R\She(s,x\viiva r,z)\,f(r,z)\,dz}{\int_\R\She(s,0\viiva r,z)\,f(r,z)\,dz}
	\le (1+\e)^2\bigl( e^{\Bus^{\kappa}(s,0,s,x)}+e^{\Bus^{\mu}(s,0,s,x)}\bigr).\]

The claims now follow from Theorem \ref{bus:exp'}, the convergence in Theorem 2.16 of \cite{Alb-etal-22-spde-},
and the limits \eqref{eq:buslim} and \eqref{eq:buslim2}. This shows part \eqref{th:weak-cv}.

By the monotonicity \eqref{Busmono}, if $\mu\in[\eta-1,\eta+1]$, then $f^{\mu\sig}_r(z)\le f^{(\eta+1)+}_r(z)+f^{(\eta-1)-}_r(z)$, for all $z\in\R$. By \eqref{bus:exp3'},  
$\int_\R e^{-a^2z}(f^{(\eta+1)+}_r(z)+f^{(\eta-1)-}_r(z))\,dz<\infty$ for all $a>0$.  
 The convergence claims in \eqref{sem-pol.b} then follow from Theorem 2.16 \cite{Alb-etal-22-spde-} and the limits in \eqref{Buscont}. 
\end{proof}

Recall the notation $\Paths_{s:s'}$ for the $\sigma$-algebra on $\cC([t,t'],\R)$ generated by $X_{s:s'}$ from the notation section.

The law of large numbers in Theorem \ref{lm:sem-pol}\eqref{Q-LLN} arises in part through the following large deviation principle for the finite-dimensional distributions of the path.  This LDP has the same quadratic rate function as Brownian motion with drift $-\lambda$. Note that on every interval, the path measure should be expected to be singular with respect to a Brownian measure. See \cite[Section 4.4]{Alb-Kha-Qua-14-jsp}.

\begin{lemma}\label{Q-LDP}
The following holds $\P$-almost surely: for all $t,y,\lambda\in\R$, $\sigg\in\{-,+\}$, and $0=\tau_0<\tau_1<\cdots<\tau_k$,
the distribution of $(r^{-1}X_{\tau_1r},\dotsc,r^{-1}X_{\tau_kr})$ under $\Poly_{(t,y)}^{\lambda\sig}$ satisfies a large deviation principle, as $r\to-\infty$,
 with normalization $\abs r$ and rate function 
	\begin{align*}
	I^\lambda(u_1,\dotsc,u_k)
	&=\frac12\sum_{i=0}^{k-1}\Bigl(\frac{u_{i+1}-u_i}{\tau_{i+1}-\tau_i}+\lambda\Bigr)^2(\tau_{i+1}-\tau_i)
	\,,\quad u_0=0,\ (u_1,\dotsc,u_k)\in\R^k.
	\end{align*}
\end{lemma}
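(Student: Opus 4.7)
The plan is to read off the LDP from the exponential asymptotics of the joint density, which telescopes neatly thanks to the cocycle property. By the Markov property of $\Poly^{\lambda\sig}_{(t,y)}$ with transition density \eqref{pi} and the cocycle identity \eqref{cocycle}, the joint density of $(X_{\tau_1 r},\dots,X_{\tau_k r})$ at $(x_1,\dots,x_k)$ equals
\[
\rho_r(x_1,\dots,x_k) \;=\; \She(t,y\viiva \tau_1 r, x_1)\,\prod_{i=1}^{k-1} \She(\tau_i r, x_i\viiva \tau_{i+1} r, x_{i+1})\,\cdot\, e^{\Bus^{\lambda\sig}(t,\,y,\,\tau_k r,\, x_k)}.
\]
After the change of variables $x_i = r u_i$ (recall $r<0$), the density of the rescaled vector $(r^{-1}X_{\tau_1 r},\dots,r^{-1}X_{\tau_k r})$ is $\tilde\rho_r(u_1,\dots,u_k) = |r|^k\rho_r(ru_1,\dots,ru_k)$.

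I would then extract the rate $|r|^{-1}\log\tilde\rho_r(u)$ from the shape theorems. Theorem \ref{thm:Z-cont} yields, locally uniformly in $u\in\R^k$ on a single full-probability event,
\[
|r|^{-1}\log\She(\tau_i r, r u_i\viiva \tau_{i+1} r, r u_{i+1}) \;\longrightarrow\; -\frac{\tau_{i+1}-\tau_i}{24} - \frac{(u_{i+1}-u_i)^2}{2(\tau_{i+1}-\tau_i)},
\]
with the analogous limit for the boundary factor $\She(t,y\viiva \tau_1 r, r u_1)$ reducing to the $i=0$ case after absorbing the $o(|r|)$ correction from the fixed point $(t,y)$. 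For the Busemann factor, split via the cocycle as
\[
\Bus^{\lambda\sig}(t,y,\tau_k r, ru_k) \;=\; \Bus^{\lambda\sig}(t,y,0,0) \,+\, \Bus^{\lambda\sig}(0,0,\tau_k r,0) \,+\, \Bus^{\lambda\sig}(\tau_k r, 0, \tau_k r, ru_k).
\]
The first piece is $O(1)$, the third divided by $|r|$ tends to $-\lambda u_k$ on the $\lambda,\sig$-simultaneous full-probability event of Theorem \ref{bus:exp} (via \eqref{bus:exp1} applied at time $\tau_k r$), and for the middle piece I would rewrite $e^{\Bus^{\lambda\sig}(\tau_k r, 0, 0, 0)} = \int\She(0,0\viiva \tau_k r, z)\,e^{\Bus^{\lambda\sig}(\tau_k r, 0, \tau_k r, z)}\,dz$ via \eqref{NEBus2} and invoke Theorem \ref{int-shape} with $\lambda_1=-\infty,\lambda_2=+\infty$ (which is also $\lambda,\sig$-simultaneous) to obtain $|r|^{-1}\Bus^{\lambda\sig}(0,0,\tau_k r,0)\to -\tau_k(\lambda^2/2 - 1/24)$. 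Summing these contributions, the $\pm 1/24$ pieces cancel and expanding the square shows
\[
|r|^{-1}\log\tilde\rho_r(u) \;\longrightarrow\; -I^\lambda(u)
\]
locally uniformly in $u\in\R^k$ on a single $\P$-full-probability event valid for all $t,y,\lambda,\sig$.

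The LDP lower bound on open sets is then immediate from the locally uniform density convergence. For the upper bound on closed sets I need exponential tightness, which reduces to bounding the single-coordinate marginal $\Poly^{\lambda\sig}_{(t,y)}(X_{\tau_i r}\in dx) = \She(t,y\viiva \tau_i r, x)\,e^{\Bus^{\lambda\sig}(t,y,\tau_i r, x)}\,dx$. Writing $\Bus^{\lambda\sig}(t,y,\tau_i r, x) = \Bus^{\lambda\sig}(\tau_i r, 0, \tau_i r, x) - \Bus^{\lambda\sig}(\tau_i r, 0, t, y)$ by the cocycle and applying Theorem \ref{int-shape} with $\lambda_1 = M/\tau_i,\lambda_2 = +\infty$ (and its mirror for the other tail) shows that $|r|^{-1}\log\Poly^{\lambda\sig}_{(t,y)}(|r^{-1}X_{\tau_i r}| > M)$ is asymptotically bounded above by $-(M - |\lambda|\tau_i)^2/(2\tau_i)$, which tends to $-\infty$ as $M\to\infty$. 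Combined with the pointwise rate, this yields the full LDP with rate function $I^\lambda$. I expect the main technical step to be the simultaneous-in-$\lambda$ shape asymptotic for the Busemann process, since Theorem \ref{thm:b-shape} is only stated pointwise in $\lambda$; the cocycle decomposition above is exactly what reduces this to inputs (Theorems \ref{bus:exp} and \ref{int-shape}) that are already uniform in $\lambda$ and $\sig$. The remaining steps are standard Laplace-asymptotic/LDP arguments.
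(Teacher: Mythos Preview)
Your proof is correct and follows essentially the same strategy as the paper: compute the asymptotics of the joint density via the shape theorem for $\She$ (Theorem \ref{thm:Z-cont}), handle the Busemann factor separately, and finish with exponential tightness. The algebra checks out and the passage from locally uniform density asymptotics to the weak LDP, then to the full LDP via exponential tightness, is standard.

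The one genuine organizational difference is in how the simultaneity in $(\lambda,\sigg)$ is obtained for the spatial Busemann increment $\Bus^{\lambda\sig}(\tau_k r,0,\tau_k r,ru_k)$. The paper does \emph{not} invoke Theorem \ref{bus:exp} here; instead it sandwiches $\Bus^{\lambda\sig}(\tau_k r,y,\tau_k r,z_k)$ between $\Bus^\kappa$ and $\Bus^\mu$ for $\kappa<\lambda<\mu$ in the countable dense set $\Udense$ (splitting the integral according to $z_k\gtrless y$), applies the fixed-slope shape theorem (Theorem \ref{thm:b-shape}) for the countably many $\kappa,\mu$, and then lets $\kappa\nearrow\lambda$, $\mu\searrow\lambda$. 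You instead apply \eqref{bus:exp1} directly, which is already stated simultaneously for all $\lambda,\sigg$. Your route is shorter and avoids the case split; the paper's sandwich argument is more self-contained in that it does not rely on Theorem \ref{bus:exp}. Both use Theorem \ref{int-shape} (via \eqref{NEBus2}) for the temporal piece $\Bus^{\lambda\sig}(0,0,\tau_k r,0)$, and both derive exponential tightness from the same circle of ideas (the paper points to the argument behind \eqref{exp-tight}, which is precisely an instance of Theorem \ref{int-shape}).
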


\begin{remark}\label{LLN-probab}
A special case of the above is the large deviation principle for the distribution of $X_r/r$ under $\Poly_{(t,y)}^{\lambda\sig}$, which has the rate function $I^\lambda(-\mu)=\frac{(\mu-\lambda)^2}2$. In particular, for any $\e>0$, 
$\Poly_{(t,y)}^{\lambda\sig}(\abs{X_r+\lambda r}>\e\abs{r})$ decays exponentially fast as $r\to-\infty$.
\end{remark}

\begin{remark}
Lemma \ref{Q-LDP} suggests that as $r\to-\infty$,  the distribution of the scaled path $\{r^{-1}X_{\tau r}:\tau\ge0\}$ under $\Poly_{(t,y)}^{\lambda\sig}$ satisfies a large deviation principle with rate function
	\[I^\lambda(f)=\frac12\int_0^\infty (f'(\tau)+\lambda)^2\,d\tau,\]
where $f:[0,\infty)\to\R$ is such that $f'+\lambda$ is in $L^2(\R)$ and $f(0)=0$.
The above lemma immediately implies the weak large deviation upper bound. The full LDP is Open Problem \ref{prob:LDP}.
\end{remark}

\begin{proof}[Proof of Lemma \ref{Q-LDP}]
%
The weak large deviation principle follows directly from the shape theorems \eqref{b-shape} and \eqref{Z-cont}. 
More precisely, consider the full $\P$-probability event that is 
the intersection of the events on which parts \eqref{Bproc.b} and \eqref{Bproc.d} of Theorem \ref{main:Bus}, Theorem \ref{thm:b-shape}, Theorem \ref{thm:Z-cont}, Theorem \ref{int-shape} (which holds on $(\Omega,\sF,\P)$),  
and \eqref{exp-tight} are satisfied. 
Take $\w$ in this event. 
Then for any bounded Borel set $A\subset\R^k$
and any $(t,y)\in\R^2$, 
$\sigg\in\{-,+\}$, $\kappa<\mu$ in $\Udense$, and $\lambda\in(\kappa,\lambda)$, 
		\begin{align*}
	&\frac1{\abs{r}}\log\Poly_{(t,y)}^{\lambda\sig}\{(r^{-1}X_{\tau_1 r},\dotsc,r^{-1}X_{\tau_k r})\in A\}\\
	&=\frac1{\abs{r}}\log\int_{rA}   e^{\Bus^{\lambda\sig}(t,y,\tau_kr,z_k)}
	\She(t,y\viiva \tau_1 r,z_1)  \prod_{i=1}^{k-1} \She(\tau_i r,z_i\viiva \tau_{i+1} r,z_{i+1}) 
	  \,dz_{1:k} \\
	&\le\frac{\Bus^{\lambda\sig}(t,y,\tau_kr,y)}{\abs{r}}\\
	&+\frac1{\abs{r}}\log\Bigl[\;\int_{rA}\one\{z_k\ge y\}  e^{\Bus^\mu(\tau_k r,y,\tau_kr,z_k)}
	\She(t,y\viiva \tau_1 r,z_1) \prod_{i=1}^{k-1} \She(\tau_i r,z_i\viiva \tau_{i+1} r,z_{i+1}) \,dz_{1:k} \\
	&\qquad\qquad 
	+\int_{rA}\one\{z_k\le y\}   e^{\Bus^\kappa(\tau_k r,y,\tau_kr,z_k)}
	\She(t,y\viiva \tau_1 r,z_1)\prod_{i=1}^{k-1} \She(\tau_i r,z_i\viiva \tau_{i+1} r,z_{i+1}) \,dz_{1:k} \Bigr]\\
	&=\frac{\Bus^{\lambda\sig}(t,y,\tau_kr,y)}{\abs{r}} +  \frac1{\abs{r}}\log\abs{r}^k\\
	&+\frac1{\abs{r}}\log\Bigl[\;\int_{-A}\one\{v_k\ge y\abs{r}^{-1}\}  e^{\Bus^\mu(\tau_k r,y,\tau_kr,v_k\abs{r})}
	\She(t,y\viiva \tau_1 r,v_1\abs{r}) \prod_{i=1}^{k-1} \She(\tau_i r,v_i\abs{r}\viiva \tau_{i+1} r,v_{i+1}\abs{r}) \,dv_{1:k} \\
	&\qquad 
	+\int_{-A}\one\{v_k\le y\abs{r}^{-1}\}   e^{\Bus^\kappa(\tau_k r,y,\tau_kr,v_k\abs{r})}
	\She(t,y\viiva \tau_1 r,v_1\abs{r})\prod_{i=1}^{k-1} \She(\tau_i r,v_i\abs{r}\viiva \tau_{i+1} r,v_{i+1}\abs{r}) \,dv_{1:k} \Bigr].
	\end{align*}
By Theorems  \ref{thm:b-shape} and   \ref{thm:Z-cont},  
  the  ${\abs{r}}^{-1}\log[\dotsm]$  converges to the maximum of
	\begin{align*}
	\sup_{-(v_1,\cdots,v_k)\in A,v_k\ge0}
	\Bigl\{\mu v_k -\frac{\tau_1}{24}-\frac{v_1^2}{2\tau_1}+\sum_{i=1}^{k-1}\frac{\tau_i-\tau_{i+1}}{24}
	-\sum_{i=1}^{k-1}\frac{(v_{i+1}-v_i)^2}{2(\tau_{i+1}-\tau_i)}\Bigr\}
	\end{align*}
and
	\begin{align*}
	\sup_{-(v_1,\cdots,v_k)\in A,v_k\le0}
	\Bigl\{\kappa v_k-\frac{\tau_1}{24}-\frac{v_1^2}{2\tau_1}+\sum_{i=1}^{k-1}\frac{\tau_i-\tau_{i+1}}{24}
	-\sum_{i=1}^{k-1}\frac{(v_{i+1}-v_i)^2}{2(\tau_{i+1}-\tau_i)}\Bigr\}.
	\end{align*}
Taking $\kappa$ and $\mu$ to $\lambda$ gives the limit 	
	\begin{align*}
	\sup_{-(v_1,\cdots,v_k)\in A}
	\Bigl\{\lambda v_k-\frac{\tau_1}{24}-\frac{v_1^2}{2\tau_1}-\sum_{i=1}^{k-1}\frac{\tau_i-\tau_{i+1}}{24}
	-\sum_{i=1}^{k-1}\frac{(v_{i+1}-v_i)^2}{2(\tau_{i+1}-\tau_i)}\Bigr\}
	=\Bigl(\frac{\lambda^2}2-\frac1{24}\Bigr)\tau_k-\inf_A I^\lambda.
	\end{align*}
Next, apply \eqref{NEBus2} to get
	\[\Bus^{\lambda\sig}(t,y,\tau_kr,y)
	=-\log\int_{-\infty}^\infty\She(t,y\viiva \tau_k r,z)e^{\Bus^{\lambda\sig}( \tau_k r,y, \tau_k r,z)}\,dz.\]
Theorem \ref{int-shape} implies that after dividing the right-hand side by $\abs{r}$ and taking $r\to-\infty$ it  converges to 
	\[-\tau_k\sup_{\nu}\Bigl\{\lambda\nu-\frac{\nu^2}2-\frac1{24}\Bigr\}=-\Bigl(\frac{\lambda^2}2-\frac1{24}\Bigr)\tau_k.\]
Thus, we get that
	\[\varlimsup_{r\to-\infty}
	\frac1{\abs{r}}\log\Poly_{(t,y)}^{\lambda\sig}\{(r^{-1}X_{\tau_1 r}, \dotsc,r^{-1}X_{\tau_k r})\in A\}
	\le-\inf_A I^\lambda.\]
The matching lower bound comes by switching around $\kappa$ and $\mu$:
	\[\varliminf_{r\to-\infty}
	\frac1{\abs{r}}\log\Poly_{(t,y)}^{\lambda\sig}\{(r^{-1}X_{\tau_1 r}, \dotsc,r^{-1}X_{\tau_k r})\in A\}
	\ge-\inf_A I^\lambda.\]
	
The weak large deviation principle is proved. (That is, the upper bound holds for compact instead of all closed sets.)   The full large deviation principle follows from exponential tightness.
See Theorem 2.19 in \cite{Ras-Sep-15-ldp}.
 For this, it is enough to show that $\P$-almost surely, for any $t,y,\lambda\in\R$, $\sigg\in\{-,+\}$, and $\tau\in(0,1]$, 
\[\lim_{C\to\infty}\varlimsup_{r\to-\infty}\abs{r}^{-1}\log\Poly_{(t,y)}^{\lambda\sig}\{\abs{X_{\tau r}}\ge C\abs{r}\}=-\infty.\]
This comes with the exact same argument as for \eqref{exp-tight}.
\end{proof}
With these large deviation results in-hand, we turn to the proof of Theorem \ref{lm:sem-pol}\eqref{Q-LLN}.

\begin{proof}[Proof of Theorem \ref{lm:sem-pol}\eqref{Q-LLN}]

Consider the full $\P$-probability event that is \label{page:Q-LLN}
the intersection of the events on which parts \eqref{Bproc.b} and \eqref{Bproc.d} of Theorem \ref{main:Bus} and Theorems \ref{bus:exp'}, \ref{thm:buslim}, and \ref{lm:sem-pol} are satisfied. 
For $x\in\R$ and $r<s\le t$ define
	\[M_r^{s,x,t,y}=\frac{\She(s,x\viiva r,X_r)}{\She(t,y\viiva r,X_r)}\,.\]
Then $\{M_r^{s,x,t,y}:r<s\}$ is a $\Poly_{(t,y)}^{\lambda\sig}$-backward martingale with respect to the path filtration $\Paths_{-\infty:r}$, which was defined in the notation section \ref{sec:notation}. 
Indeed,
for $r'<r''<r$ let $F$ be a bounded  $\Paths_{r':r''}$-measurable function.
Then
	\begin{align*}
	&E^{\Poly_{(t,y)}^{\lambda\sig}}[M_r^{s,x,t,y}F(X_{r':r''})]\\
	&\qquad=\int_{\R^2} \She(t,y\viiva r,u)e^{\Bus^{\lambda\sig}(t,y,r,u)}\cdot\frac{\She(s,x\viiva r,u)}{\She(t,y\viiva r,u)}
	\cdot \She(r,u\viiva r'',v)e^{\Bus^{\lambda\sig}(r,u,r'',v)}E^{\Poly_{(r'',v)}^{\lambda\sig}}[F(X_{r':r''})]\,dv\,du\\
	&\qquad=\int_{\R^2}\She(s,x\viiva r,u)\She(r,u\viiva r'',v)e^{\Bus^{\lambda\sig}(t,y,r'',v)}
	\cdot E^{\Poly_{(r'',v)}^{\lambda\sig}}[F(X_{r':r''})]\,du\,dv\\
	&\qquad=\int_{\R}\She(s,x\viiva r'',v)e^{\Bus^{\lambda\sig}(t,y,r'',v)}
	\cdot E^{\Poly_{(r'',v)}^{\lambda\sig}}[F(X_{r':r''})]\,dv\\
	&\qquad=\int_{\R}\She(t,y\viiva r'',v)e^{\Bus^{\lambda\sig}(t,y,r'',v)}\cdot\frac{\She(s,x\viiva r'',v)}{\She(t,y\viiva r'',v)}
	\cdot E^{\Poly_{(r'',v)}^{\lambda\sig}}[F(X_{r':r''})]\,dv\\
	&\qquad=E^{\Poly_{(t,y)}^{\lambda\sig}}[M_{r''}^{s,x,t,y}F(X_{r':r''})].
	\end{align*}
By the martingale convergence theorem (see e.g.\ \cite[Theorem 3.15, page 17]{Kar-Shr-91}), the limit
	\be\label{eq:polyMG}M_{-\infty}^{s,x,t,y}=\lim_{r\to-\infty}\frac{\She(s,x\viiva r,X_r)}{\She(t,y\viiva r,X_r)}\,\ee
exists $\Poly_{(t,y)}^{\lambda\sig}$-almost surely. 

Suppose now that, with strictly positive $\Poly_{(t,y)}^{\lambda\sig}$-probability, $r^{-1}X_r$ does not have a limit as $r\to-\infty$. We can then find $\kappa<\mu$ in $\Udense$, the countable dense subset of $\R$ from Section \ref{sec:stat-coc}, such that with strictly positive $\Poly_{(t,y)}^{\lambda\sig}$-probability, we have
	\[\varliminf_{r\to-\infty} r^{-1}X_r \le -\mu < -\kappa \le \varlimsup_{r\to-\infty} r^{-1}X_r.\]
By path-continuity the above event equals 
$\{-\kappa\text{ and }-\mu\text{ are limit points of }r^{-1}X_r\}$.  
Limit \eqref{eq:buslim} of Theorem \ref{thm:buslim} implies that on the intersection of this event and the event on which \eqref{eq:polyMG} holds,
 	\begin{align*}
	&\int_{-\infty}^\infty\She(t,y\viiva s,w)e^{\Bus^{\kappa}(s,x,s,w)}\,dw
	=e^{\Bus^{\kappa}(s,x,t,y)}=\frac1{M_{-\infty}^{s,x,t,y}}
	=e^{\Bus^{\mu}(s,x,t,y)}=\int_{-\infty}^\infty\She(t,y\viiva s,w)e^{\Bus^{\mu}(s,x,s,w)}\,dw.
	\end{align*}
	The previous equalities then hold $\Poly_{(t,y)}^{\lambda\sig}$-almost surely for all rational $s<t$ and $x$ . 
	Taking $s\to t$ we get that $\Bus^\kappa(t,x, t,y)=\Bus^\mu(t,x,t,y)$ 
	for all rational $x$. This contradicts \eqref{bus:exp3'},
	 since $\kappa\ne\mu$. Consequently, we have shown that $r^{-1}X_r$ has a limit, $\Poly_{(t,y)}^{\lambda\sig}$-almost surely.
Then Lemma \ref{Q-LDP} (or, more precisely, Remark \ref{LLN-probab}) implies that this limit must equal $-\lambda$.
\end{proof}
We turn to prove the second main result of this section, Theorem \ref{thm:hyp} on hyperbolicity.

\begin{proof}[Proof of Theorem \ref{thm:hyp}]   
Fix $\lambda\not\in\Bruno$. 

\smallskip 

  Part \eqref{hyp.i}.   
By symmetry, we can assume $s\le t$.  
 We prove first total variation convergence of the one-time marginals, that is, 
\be\label{hyp67} 
\lim_{r\to-\infty}  \; \sup_{s,x,t,y\in[-C,C]} \;     \norm{ \, \Poly_{(t,y)}^{\lambda}(X_r\in \aabullet\tspb)  - \Poly_{(s,x)}^{\lambda}(X_r\in \aabullet\tspb) \,  }_{\rm TV}   =0.
\ee
Let  $\Poly_{(t,y);r}^\lambda$ denote  the distribution of $X_r$ under $\Poly_{(t,y)}^\lambda$.
We have for all $s,x,t,y\in\R$, $r<s\wedge t$, and $z\in\R$
	\[g_{r,s,x,t,y}^\lambda(z)=\frac{d\Poly_{(s,x);r}^\lambda}{d\Poly_{(t,y);r}^\lambda}(z)=\frac{\She(s,x\viiva r,z)}{\She(t,y\viiva r,z)}\cdot e^{\Bus^\lambda(s,x,t,y)}.\]
Fix $\tau>0$.  

\smallskip 

\textit{Case 1 of \eqref{hyp67}: $t-s\ge\tau$.}  
By Theorem \ref{thm:buslim}, for any $\lambda\not\in\Bruno$,
any $C>0$, $\e>0$, and $\tau>0$, there exist $R<0$ and $\delta>0$ such that for all  $r\le R$, $z$ 
such that $\abs{\frac{z}r+\lambda}<\delta$, and all $s,x,t,y\in[-C,C]$ with $t-s\ge\tau$, 
	\[(1+\e)^{-2}\le g_{r,s,x,t,y}^\lambda(z)\le (1+\e)^3.\]
By the LDP of Lemma \ref{Q-LDP}   and the above uniform bound on the Radon-Nikodym derivative
 there exist $c=c(\lambda,\delta)>0$ and  $R<0$ such that 
	\[\sup_{t,y\in[-C,C]}\Poly_{(t,y)}^\lambda\{\abs{X_r/r+\lambda}\ge\delta\}\le e^{-c\abs{r}}\qquad\text{for all $r\le R$}.\]
	Take  $\e>0$ small enough so that 
	\[1-(1+\e)^{-2}\le (1+\e)^3-1\le 4\e.\]
By Lemma \ref{lm:var-dist},  
	\begin{align*}  
	\frac{1}{2}\norm{\Poly_{(s,x);r}^\lambda - \Poly_{(t,y);r}^\lambda}_{\rm TV} 
	&\le E^{\Poly_{(t,y)}^\lambda}\bigl[\tspa \abs{g_{r,s,x,t,y}^\lambda(X_r)-1}\cdot \one\{\abs{\tfrac{X_r}r+\lambda}<\delta\}\tspa\bigr]
	+\Poly_{(t,y)}^\lambda\bigl\{\abs{\tfrac{X_r}r+\lambda}\ge\delta\bigr\} \\
	&\le 4\e+e^{-c\abs{r}}.
	\end{align*}  
The bound above is uniform over   $s,x,t,y\in[-C,C]$ such that  $t-s\ge\tau$ and $r\le R$.  
Take $r\to-\infty$ and then $\e\to0$. We have  proved \eqref{hyp67}  for    $t-s\ge\tau$.

\smallskip 

\textit{Case 2 of \eqref{hyp67}: $0\le t-s\le\tau$.}  
Pick $u\le -C-\tau$. By  the Markov property,  for $B\in\sB(\R)$ and $K\in(0,\infty)$, 
	\begin{align*} 
	&\Poly_{(t,y);r}^\lambda(B)-\Poly_{(s,x);r}^\lambda(B)
	=E^{\Poly_{(s,x)}^\lambda}\bigl[\Poly_{(t,y);r}^\lambda(B)-\Poly_{(u,X_u);r}^\lambda(B)\bigr] \\
	&=E^{\Poly_{(s,x)}^\lambda}\bigl[ \bigl( \Poly_{(t,y);r}^\lambda(B)-\Poly_{(u,X_u);r}^\lambda(B)\bigr) \tspa\ind_{\abs{X_u}\le  C+K}  \bigr]  + \Poly_{(s,x)}^\lambda\{\abs{X_u}>  C+K\}. 
	\end{align*} 
From this,  for $K\ge C+\abs u$, 
	\begin{align*}   
	\sup_{s,x,t,y\in[-C,C]} \;    \frac{1}{2}\norm{\Poly_{(t,y);r}^\lambda - \Poly_{(s,x);r}^\lambda}_{\rm TV}
	&\le  
	\sup_{v,z,t,y\in[-C-K,\tspc C+K]: \, t-v\ge\tau} \;   
	 \norm{\Poly_{(t,y);r}^\lambda - \Poly_{(v,z);r}^\lambda}_{\rm TV}\\
	&\qquad \qquad 
	+ \sup_{s,x\in[-C,C]}  \Poly_{(s,x)}^\lambda\{\abs{X_u}>  C+K\}. 
	\end{align*} 
For a fixed $K$, the first term on the right  converges to $0$ as $r\to-\infty$ by the case already proved.  
The last term  converges to $0$ as $K\to\infty$.   This completes the proof of \eqref{hyp67}. 

\smallskip 

The Markov property takes us from \eqref{hyp67} to \eqref{hyp59}:  for  $r<s\wedge t$,
\begin{align*}
&\sup_{A\tspa\in\tspa \sB(\sC((-\infty,r],\R))} \bigl\lvert    \Poly_{(t,y)}^{\lambda}(X_{-\infty:r}\in A)   - \Poly_{(s,x)}^{\lambda}(X_{-\infty:r}\in A)  \bigr\rvert  \\
&= \sup_{A\tspa\in\tspa \sB(\sC((-\infty,r],\R))} \;  \biggl\lvert  \; \int \bigl[  \Poly_{(t,y)}^{\lambda}(X_r\in dz)  - \Poly_{(s,x)}^{\lambda}(X_r\in dz)     \bigr]   \Poly_{(r,z)}^{\lambda}(X_{-\infty:r}\in A)  \;  \biggr\rvert  \\[3pt] 
&\le 2 \tspb  \norm{ \, \Poly_{(t,y)}^{\lambda}(X_r\in \aabullet)  - \Poly_{(s,x)}^{\lambda}(X_r\in \aabullet) \,  }_{\rm TV} . 
\end{align*} 

\medskip 

Part \eqref{hyp.ii}.  By  \eqref{hyp59}   and the implication (iii)$\Rightarrow$(i) of   Theorem 25.25 on p.~576    of  Kallenberg \cite{Kal-21},  $\Poly_{(t,y)}^{\lambda} = \Poly_{(s,x)}^{\lambda}  
$  on the tail $\sigma$-algebra $\Paths_{\rm tail}$ for all time-space points $(s,x),(t,y)\in\R^2$.     
Let $A\in\Paths_{\rm tail}$ and $c=\Poly_{(t,y)}^{\lambda}(A)$  the common probability value for all $(t,y)\in\R^2$. 
Then $\Poly_{(t,y)}^{\lambda}$-almost surely, for $s<t$, first by the Markov property and then by martingale convergence, 
\begin{align*}
c =  \Poly_{(s, X_s)}^{\lambda} (A)  = \Poly_{(t,y)}^{\lambda} (\tspa A \viiva \Paths_{s:t} \tspa)   
\underset{s\to-\infty}\longrightarrow  \ind_A. 
\end{align*} 
Hence the common value $c=0$ or $1$.

\medskip 

Part \eqref{hyp.iii}.  The implication from part \eqref{hyp.ii}  to part \eqref{hyp.iii} is the implication  (i)$\Rightarrow$(ii) of  Theorem 26.10 on p.~595 of Kallenberg \cite{Kal-21}. 
\end{proof}

We close this section with some a result about stochastic monotonicity. Planar directed polymer measures with nearest-neighbor random walk (on the lattice) or continuous sample paths are typically stochastically monotone in their initial and terminal conditions. This is a straightforward consequence of the Karlin-McGregor theorem. See the proof of Proposition 5.2 in \cite{Alb-etal-22-spde-}.

To make this precise in our setting, recall that measures on continuous functions come with a natural partial order, defined as follows. Given $s<t$, a function $F:\cC([s,t],\R)\to\R$ is nondecreasing if $F(X_{s:t})\le F(Y_{s:t})$ whenever $X_r\le Y_r$ for all $r\in[s,t]$.
Given two probability measures $Q_1$ and $Q_2$ on $\cC([s,t],\R)$, we say $Q_1$ is stochastically dominated by $Q_2$, and write $Q_1\std Q_2$, if $E^{Q_1}[F]\le E^{Q_2}[F]$ for all bounded measurable nondecreasing functions $F:\cC([s,t],\R)\to\R$. 
Then the polymer measures are stochastically ordered. Proposition 2.18 in \cite{Alb-etal-22-spde-} shows that  for all $s,x,t,y,u,v\in\R$ with $s<t$, $u\le x$, $v\le y$, and for all $f\in\ICM$, we have the stochastic ordering, \begin{align}\label{eq:stochmon}\Poly_{(t,v),(s,u)}\std\Poly_{(t,y),(s,x)}\qquad\text{ and }\qquad \Poly_{(t,v),(s,f)}\std\Poly_{(t,y),(s,f)}.\end{align}

Because the infinite length polymers arise as limits of finite length polymers, they inherit this monotonicity from the finite volume measures.

\begin{lemma}\label{lm:Q-mono}
The following holds $\P$-almost surely: for all $\sigg\in\{-,+\}$ and all $t,y,v,\lambda,\mu\in\R$ with $v\le y$ and $\lambda<\mu$, we have the stochastic ordering
 	\begin{align*}
	&\Poly_{(t,y)}^{\lambda-}\std\Poly_{(t,y)}^{\lambda+}\std\Poly_{(t,y)}^{\mu-}\std\Poly_{(t,y)}^{\mu+}\,,\quad
	\Poly_{(t,v)}^{\lambda+}\std\Poly_{(t,y)}^{\lambda+},\quad\text{and}\quad\Poly_{(t,v)}^{\lambda-}\std\Poly_{(t,y)}^{\lambda-}.
	\end{align*}
\end{lemma}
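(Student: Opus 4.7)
The plan is to derive both orderings as limits of the finite-volume stochastic monotonicity \eqref{eq:stochmon}, drawing the limits from Theorem \ref{lm:sem-pol}. Per the definition in the notation section, $\std$ on $\sC((-\infty,t],\R)$ is interpreted as $\std$ at the level of the projections onto $\sC([s,t],\R)$ for every $s<t$, and stochastic dominance on each such Polish space is preserved under total variation convergence. I would work throughout on the single full-$\bbP$ probability event on which \eqref{eq:stochmon}, Theorems \ref{main:Bus} and \ref{lm:sem-pol} all hold and on which $\Bruno$ is at most countable (which follows from the $\lambda$-monotonicity in \eqref{Busmono}).

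First I would establish both orderings at slopes in $\R\setminus\Bruno$ directly from \eqref{eq:stochmon}. Fix $s<t$. For the endpoint monotonicity $\Poly^{\lambda}_{(t,v)}\std\Poly^{\lambda}_{(t,y)}$ with $v\le y$ and $\lambda\in\R\setminus\Bruno$, take $r_n\to-\infty$ and $z_n=-\lambda r_n$, apply \eqref{eq:stochmon} with common terminal point $(r_n,z_n)$ to get $\Poly_{(t,v),(r_n,z_n)}\std\Poly_{(t,y),(r_n,z_n)}$ on $\sC([s,t],\R)$, and pass to the TV limit via Theorem \ref{lm:sem-pol}\eqref{th:weak-cv}. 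For the slope monotonicity $\Poly^{\lambda}_{(t,y)}\std\Poly^{\mu}_{(t,y)}$ with $\lambda<\mu$ both in $\R\setminus\Bruno$, take $r_n\to-\infty$, $z_n=-\lambda r_n$, and $w_n=-\mu r_n$; since $\lambda<\mu$ and $r_n<0$ we have $z_n<w_n$, so \eqref{eq:stochmon} with common starting point $(t,y)$ yields $\Poly_{(t,y),(r_n,z_n)}\std\Poly_{(t,y),(r_n,w_n)}$, and the TV limit gives the ordering.

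Finally, I would extend to arbitrary slopes with $\pm$ decorations via the TV continuity in Theorem \ref{lm:sem-pol}\eqref{sem-pol.b}. For $\lambda\le\mu$ in $\R$, choose approximating sequences $\lambda_n\searrow\lambda$ and $\mu_n\nearrow\mu$ inside $\R\setminus\Bruno$ (dense, since $\Bruno$ is countable) with $\lambda_n<\mu_n$. The ordering $\Poly^{\lambda_n}_{(t,y)}\std\Poly^{\mu_n}_{(t,y)}$ from the previous paragraph passes to the TV limit to give $\Poly^{\lambda+}_{(t,y)}\std\Poly^{\mu-}_{(t,y)}$ on $\sC([s,t],\R)$; specializing to $\lambda=\mu$ gives $\Poly^{\lambda-}_{(t,y)}\std\Poly^{\lambda+}_{(t,y)}$; chaining the three produces the full four-term monotonicity in $\lambda\sig$. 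The endpoint ordering at each sign $\sigg\in\{-,+\}$ follows by approximating with $\lambda_n\to\lambda^{\sigg}$ in $\R\setminus\Bruno$ and passing $\Poly^{\lambda_n}_{(t,v)}\std\Poly^{\lambda_n}_{(t,y)}$ to the TV limit. The only real care required is matching the hypotheses of Theorem \ref{lm:sem-pol}\eqref{th:weak-cv} to the chosen finite-volume sequences, which is routine bookkeeping; I do not foresee a serious obstacle, since once the ordering is known at a dense set of slopes in $\R\setminus\Bruno$, the monotonicity of the Busemann process and the TV continuity in Theorem \ref{lm:sem-pol}\eqref{sem-pol.b} do the rest.
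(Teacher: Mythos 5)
Your proof takes a genuinely different route from the paper. The paper's proof is direct: it reduces to one-time marginals by path continuity and an inductive application of the Markov property, then establishes the CDF ordering for $X_r$ by a short computation that combines the Busemann monotonicity \eqref{Busmono} with the additive cocycle property. For the endpoint monotonicity it invokes the comparison inequality \eqref{comp}. Your route instead derives everything by taking total-variation limits of the finite-volume orderings \eqref{eq:stochmon} via Theorem \ref{lm:sem-pol}\eqref{th:weak-cv}, and then extends to all slopes and signs via the TV continuity of Theorem \ref{lm:sem-pol}\eqref{sem-pol.b}. Both approaches are legitimate; the paper's is self-contained at the level of the Busemann process and avoids relying on the thermodynamic-limit machinery, while yours leverages results already proved and arguably makes more transparent why the orderings hold. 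Your observation that TV convergence preserves $\std$, and the reduction to projections on $\sC([s,t],\R)$, are both correct.

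There is, however, one genuine error in your final paragraph. You write ``specializing to $\lambda=\mu$ gives $\Poly^{\lambda-}_{(t,y)}\std\Poly^{\lambda+}_{(t,y)}$.'' The ordering you established in the preceding sentence was $\Poly^{\lambda+}_{(t,y)}\std\Poly^{\mu-}_{(t,y)}$ for $\lambda<\mu$, derived from sequences $\lambda_n\searrow\lambda$ and $\mu_n\nearrow\mu$ with $\lambda_n<\mu_n$. This derivation requires $\lambda<\mu$, since for $\lambda=\mu$ you cannot choose $\lambda_n\searrow\lambda$ and $\mu_n\nearrow\lambda$ with $\lambda_n<\mu_n$. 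Worse, even as a formal substitution it would give $\Poly^{\lambda+}_{(t,y)}\std\Poly^{\lambda-}_{(t,y)}$, the \emph{reverse} of what the lemma asserts. The fix is to choose the approximating sequences from opposite sides: take $\kappa_n\nearrow\lambda$ and $\nu_n\searrow\lambda$ with $\kappa_n,\nu_n\in\R\setminus\Bruno$ and $\kappa_n<\nu_n$; Step 1 gives $\Poly^{\kappa_n}_{(t,y)}\std\Poly^{\nu_n}_{(t,y)}$, and the TV limits from \eqref{sem-pol.b} yield $\Poly^{\lambda-}_{(t,y)}\std\Poly^{\lambda+}_{(t,y)}$. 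The same correction applies to the $\mu-\std\mu+$ link; once those two are in hand, chaining with $\Poly^{\lambda+}_{(t,y)}\std\Poly^{\mu-}_{(t,y)}$ completes the four-term chain. With this repair the argument is sound.
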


\begin{proof}
The continuity of the paths implies that it is enough to prove the stochastic ordering claims at the level of the finite marginal distributions. Then an induction, using the Chapman-Kolmogorov property of the point-to-point measures and the Markov property of the semi-infinite measures, reduces this to the one-dimensional marginal. See the proof of Proposition 2.18 in \cite{Alb-etal-22-spde-} for a similar induction argument.

%
Start by observing that for $r<t$
	\[\frac{\int_{-\infty}^a e^{\Bus^{\lambda+}(r,a,r,u)}\She(t,y\viiva r,u)\,du}{\int_{-\infty}^a e^{\Bus^{\lambda-}(r,a,r,u)}\She(t,y\viiva r,u)\,du}\le 1\le \frac{\int_a^\infty e^{\Bus^{\lambda+}(r,a,r,u)}\She(t,y\viiva r,u)\,du}{\int_a^\infty e^{\Bus^{\lambda-}(r,a,r,u)}\She(t,y\viiva r,u)\,du}.\]
From this we get
	\[\frac{1-\int_a^\infty e^{\Bus^{\lambda+}(t,y,r,u)}\She(t,y\viiva r,u)\,du}{1-\int_a^\infty e^{\Bus^{\lambda-}(t,y,r,u)}\She(t,y\viiva r,u)\,du}=
	\frac{\int_{-\infty}^a e^{\Bus^{\lambda+}(t,y,r,u)}\She(t,y\viiva r,u)\,du}{\int_{-\infty}^a e^{\Bus^{\lambda-}(t,y,r,u)}\She(t,y\viiva r,u)\,du}\le \frac{\int_a^\infty e^{\Bus^{\lambda+}(t,y,r,u)}\She(t,y\viiva r,u)\,du}{\int_a^\infty e^{\Bus^{\lambda-}(t,y,r,u)}\She(t,y\viiva r,u)\,du}.\]
	Rearranging, one gets 
	\begin{align}\label{buspoly-mono1}
	\Poly_{(t,y)}^{\lambda-}(X_r\ge a)\le\Poly_{(t,y)}^{\lambda+}(X_r\ge a).
	\end{align} 
	The next two inequalities come similarly.
	
 For the second-to-last inequality apply \eqref{comp} with the function $f(w)=e^{\Bus^{\lambda+}(t,0,r,w)}$ to get
 	\[\frac{\int_{-\infty}^a\She(t,y\viiva r,w)e^{\Bus^{\lambda+}(t,0,r,w)}\,dw}{\int_{-\infty}^a\She(t,v\viiva r,w)e^{\Bus^{\lambda+}(t,0,r,w)}\,dw}
	\le\frac{\int_a^\infty\She(t,y\viiva r,w)e^{\Bus^{\lambda+}(t,0,r,w)}\,dw}{\int_a^\infty\She(t,v\viiva r,w)e^{\Bus^{\lambda+}(t,0,r,w)}\,dw}.\]
	Multiply both sides by $e^{\Bus^{\lambda+}(t,y,t,0)}/e^{\Bus^{\lambda+}(t,v,t,0)}$ and rearrange to get
$\Poly_{(t,v)}^{\lambda+}(X_r\ge a)\le\Poly_{(t,y)}^{\lambda+}(X_r\ge a)$. The last inequality is similar.	
\end{proof}

\section{Exceptional slopes}
\label{sec:Bruno}

This section contains the proofs of the properties of the set $\Bruno$ of the discontinuities of the Busemann process, defined in \eqref{Bruno}. 

\begin{proof}[Proof of Theorem \ref{L-char}]
Suppose that $\Bus^{\lambda-}(t,x,t,y)=\Bus^{\lambda+}(t,x,t,y)$ for some $t\in\R$ and $x<y$. 
Take $r<t$
and for $\sigg\in\{-,+\}$ let 
	\[F_{r,\sig}(v)=\Poly_{(t,x)}^{\lambda\sig}(X_r\le v)
	=\int_{-\infty}^v \She(t,x\viiva r,z)e^{\Bus^{\lambda\sig}(t,x,r,z)}\tspb dz.\]
By Lemma \ref{lm:Q-mono} we have $F_{r,+}(v)\le F_{r,-}(v)$ for all $v\in\R$. 
Let $U$ be a uniform random variable  on $[0,1]$ with distribution $\Poly$ and independent of everything else.
Let 
$X_r^{\lambda\sig}=F_{r,\sig}^{-1}(U)$.  
Then $X_r^{\lambda-}\le X_r^{\PMPslopep}$. Also,
the distribution of $X_r^{\lambda\sig}$ under $\Poly$ is $\Poly_{(t,x)}^{\lambda\sig}$.
The  comparison inequality \eqref{crossing} tells us  that
	\begin{align}\label{cross-aux}
	\frac{\She(t,y\viiva r,X_r^{\lambda-})}{\She(t,x\viiva r,X_r^{\lambda-})}\le \frac{\She(t,y\viiva r,X_r^{\lambda+})}{\She(t,x\viiva r,X_r^{\lambda+})}\,,
	\end{align}
with a strict inequality if $X_r^{\lambda-}<X_r^{\lambda+}$.

By \eqref{NEBus2} we have for $s<t$ and $x,y,\lambda\in\R$,
\begin{align*}
e^{\Bus^{\lambda\sig}(t,x,t,y)}
=\int_{-\infty}^\infty\She(t,y\viiva s,u)e^{\Bus^{\lambda\sig}(t,x,s,u)}\,du
=E^{\Poly_{(t,x)}^{\lambda\sig}}\Bigl[\frac{\She(t,y\viiva s,X_s^{\lambda\sig})}{\She(t,x\viiva s,X_s^{\lambda\sig})}\Bigr].
\end{align*}
Since $\Bus^{\lambda-}(t,x,t,y)=\Bus^{\lambda+}(t,x,t,y)$ we get that equality holds in \eqref{cross-aux}
$\Poly$-almost surely and therefore we have $\Poly(X_r^{\lambda-}=X_r^{\lambda+})=1$.
Consequently, $F_{r,\pm}$ are equal, which implies that 
$\Bus^{\lambda-}(t,x,r,z)=\Bus^{\lambda+}(t,x,r,z)$ for all $z\in\R$
because the probability densities  $\partial_zF_{r,\sig}(z)=\She(t,x\viiva r,z)e^{\Bus^{\lambda\sig}(t,x,r,z)}$ are continuous.
Since $r<t$  was arbitrary, we get that $\Bus^{\lambda\pm}(r,u,s,v)$ match for all $r,s\in(-\infty,t)$ and all $u,v\in\R$. By continuity, this extends to $r,s\in(-\infty,t]$.

To summarize, we have shown that if for some $t\in\R$ and some $x<y$ we have $\Bus^{\lambda-}(t,x,t,y)=\Bus^{\lambda+}(t,x,t,y)$, then $\Bus^{\lambda\pm}$ match on $(-\infty,t]\times\R\times(-\infty,t]\times\R$. 
A similar argument shows that if for some $t\in\R$, $x<y$, and $\kappa<\mu$, we have $\Bus^{\kappa+}(t,x,t,y)=\Bus^{\mu-}(t,x,t,y)$, then $\Bus^{\kappa+}$ and $\Bus^{\mu-}$ match on $(-\infty,t]\times\R\times(-\infty,t]\times\R$. 
We are now ready to prove the theorem.
	
	

%

 Take $\lambda\in\Bruno$.  Then there exist $s,w,s',w'\in\R$ such that $\Bus^{\lambda-}(s,w,s',w')\ne\Bus^{\lambda+}(s,w,s',w')$. Then by \eqref{NEBus2} it must be that for any $r<\min(s,s')$, 
there exist $u<v$ such that 
$\Bus^{\lambda-}(r,u,r,v)\ne\Bus^{\lambda+}(r,u,r,v)$. 
By the above, we get that for any $t>r$ and any $x,y\in\R$, we must have 
$\Bus^{\lambda-}(t,x,t,y)\ne\Bus^{\lambda+}(t,x,t,y)$. Since $r<\min(s,s')$ is arbitrary, $t$ is also arbitrary and part \eqref{L-char:i} is proved.

If $\kappa<\mu$, then \eqref{bus:exp3'}
 implies that for any $r\in\R$, there exists a $v>0$ such that
$\Bus^{\kappa+}(r,0,r,v)\ne\Bus^{\mu-}(r,0,r,v)$. 
This implies that $\Bus^{\kappa+}(t,x,t,y)\ne\Bus^{\mu-}(t,x,t,y)$ for any $t\in\R$ and any $x,y\in\R$. 
Part \eqref{L-char:ii} follows since we already know that when $x<y$, $\Bus^{\kappa+}(t,x,t,y)\le\Bus^{\mu-}(t,x,t,y)$.
\end{proof}


\begin{proof}[Proof of Theorem \ref{th:La}]
%
%
The symmetry claims in part \eqref{th:La:sym} follow from the definition \eqref{Bruno} and the properties  in Theorem \ref{thm:bcov}.

From Corollary \ref{L-char2}
	\begin{align}\label{Bruno-simple}
	\Bruno=\{\lambda:\Bus^{\lambda-}(0,0,0,1)<\Bus^{\lambda+}(0,0,0,1)\}.
	\end{align}
This shows that for a given $\lambda\in\R$, $\{\lambda\in\Bruno\}$ is a measurable event
and part \eqref{th:La:b} follows from Theorem \ref{main:Bus}\eqref{Bproc.-=+}. 

For $\kappa<\mu$, 
$\{[\kappa,\mu]\cap\Bruno\ne\varnothing\}=\{\Bus^{\kappa-}(0,0,0,1)<\Bus^{\mu+}(0,0,0,1)\}\in\fil$. 
%
By part \eqref{th:La:sym}, $\{[\kappa,\mu]\cap\Bruno\ne\varnothing\}$ is invariant under each shift $\shiftd{t}{x}$. The ergodicity of $(\Omega,\fil,\P)$ under these shifts implies 
    \begin{align}\label{Bruno01}
    \P\{[\kappa,\mu]\cap\Bruno=\varnothing\}\in\{0,1\}.
    \end{align}

	Combining the statement for shear in part  \eqref{th:La:sym} with the shear-invariance of $\P$, we have   for any $t,c\in\R$,
    \begin{align}\label{Bruno-shear}
    \P\{[\kappa,\mu]\cap\Bruno=\varnothing\}=\P\{[\kappa+c,\mu+c]\cap\Lambda^{\sheard{t}{c}\w}=\varnothing\}=\P\{[\kappa+c,\mu+c]\cap\Bruno=\varnothing\}.
    \end{align}

Suppose now that $\P\{\Bruno\ne\varnothing\}>0$. 
Then for any $n\in\N$ there exists a $m\in\Z$ such that $\P\{[m/n,(m+1)/n]\cap\Bruno\ne\varnothing\}>0$.
Then \eqref{Bruno01} implies that $\P\{[m/n,(m+1)/n]\cap\Bruno\ne\varnothing\}=1$ and \eqref{Bruno-shear} implies that this is in fact true for all 
$m\in\Z$. Thus, 
	\[\P\bigl\{[m/n,(m+1)/n]\cap\Bruno\ne\varnothing\ \ \forall n\in\N,\ \forall m\in\Z\bigr\}=1.\]
Part \eqref{th:La:e} of Theorem \ref{lm:sem-pol} is proved.
\end{proof}

\section{Ergodic solutions}\label{sec:unique}
 
This section   proves  
  the results on ergodic invariant distributions in Section \ref{subsec:ergodic}.  
  Recall  the space $\Radon$ of equivalence classes 
defined in \eqref{Radon509}, and  the random evolution defined  on it:    
for $t\ge s$ and $f\in\radon$,    
	\begin{align}\label{Sst}
	\Sols_{s,t}\eqcl{f}=\begin{cases}
	\eqcl{\She(t,\aabullet\tsp\viiva s,f)}&\text{if }\She(t,\aabullet\tsp\viiva s,f)\in\MP 
	\quad\text{and}\\
	\eqcl{\infimeas}&\text{if not.}
	\end{cases}
	\end{align}
By \eqref{CK8}, these operators satisfy the cocycle property: for $r\le s\le t$, 
\be\label{eq:Solsco}\begin{aligned}\Sols_{s,t}\Sols_{r,s}=\Sols_{r,t}.
\end{aligned}\ee
Define the transition kernel  $\piMP(t, \aabullet\tsp\viiva s,\aabullet)$ on the quotient space $\Radon$ between times $s<t$ by 
	\begin{align}\label{SHE-kernel}
	\int_{\Radon}\Phi(\clg)\tspb\piMP(t,d\clg\viiva s,\clf)=\E[\Phi(\Sols_{s,t}\clf)]=\int_{\Omega}\Phi(\Sols_{s,t}^\w\clf)\bbP(d\w),
	\end{align}
for $\clf\in\Radon$  and a  bounded Borel function $\Phi:\Radon\to\R$.  This is a time-homogeneous transition kernel, that is,  $\piMP(t,d\clg\viiva s,\clf)=\piMP(t-s,d\clg\viiva 0,\clf)$.

Two relevant subspaces of $\Radon$ require mention.   Recall definition \eqref{eq:ICM} of $\ICM$. 
Let 
	\be\label{def:tICP} 
	\tICP=\{\tspa\eqcl{f}:f\in\ICM\cap\MP\}=\{\tspa\eqcl{f}\in\Radon:f\in\ICM\}\ee 
denote the subspace of equivalence classes of measures  that do not blow up under the evolution.  $\tICP$ is not a closed subspace of $\Radon$.  It can be given its own Polish quotient topology but we have no need for this.   Recall also the space $\CICM$ of  strictly positive continuous densities of measures in $\ICM$ defined in \eqref{CICM} and its quotient space $\CICP$.  Both are Polish.  Parts \eqref{lm:SHE-MC.iii} and \eqref{lm:SHE-MC.iv} of the next lemma follow because  the appropriately initialized  equivalence class process  $\Sols_{0,t}\clf$  
 possesses paths in the spaces $\cC(\R_+,\Radon)$ and $\cC(\R_+,\CICP)$, as mentioned in Section \ref{subsec:ergodic}.   


\begin{lemma}\label{lm:SHE-MC}
The following hold.
\begin{enumerate} [label={\rm(\roman*)}, ref={\rm\roman*}]   \itemsep=3pt  
\item\label{lm:SHE-MC.i} The transition kernel  \eqref{SHE-kernel}  satisfies the Chapman-Kolmogorov equations. 

\item\label{lm:SHE-MC.ii} 
Let $\PMPinit$ be a probability measure  on $\Radon$. Then $\{\Sols_{0,t}\clf\}_{t\ge0} $ under $\P(d\w)\otimes \PMPinit(d\clf)$ is a realization of the Markov process  with initial distribution $P$ and transition kernel \eqref{SHE-kernel}.
 
 \item\label{lm:SHE-MC.iii}  
If the initial distribution $\PMPinit$ satisfies $\PMPinit(\tICP)=1$, then  
the Markov process    with transition kernel \eqref{SHE-kernel}  can be realized on the path space $\cC(\R_+,\Radon)$. 


 \item\label{lm:SHE-MC.iv}   Suppose the initial distribution $\PMPinit$ satisfies $\PMPinit(\CICP)=1$.  Then  the Markov process with transition kernel \eqref{SHE-kernel} can be realized on the path space 
 $\cC(\R_+,\CICP)$.

\end{enumerate}
\end{lemma}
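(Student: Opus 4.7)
The plan is to assemble the four parts from the cocycle identity \eqref{eq:Solsco}, the independence structure of the white noise, the invariance of $\ICM$ under the evolution recorded in \eqref{pr:IC:1}, and the Feller/continuity results from Theorems 2.6 and 2.9 of \cite{Alb-etal-22-spde-} that were summarized in Section \ref{solve-SHE}. Because $\Sols_{s,t}$ respects equivalence classes (linearity of \eqref{SHE}) and is $\fil_{s:t}$-measurable (the Green's function $\She(t,\aabullet\viiva s,\aabullet)$ is $\fil_{s:t}$-measurable by Theorem \ref{thm:She1}), each individual claim follows by a short argument.

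For part \eqref{lm:SHE-MC.i} I would fix $r<s<t$ and a bounded Borel $\Phi:\Radon\to\R$ and compute, using the cocycle \eqref{eq:Solsco} and then the fact that $\Sols_{s,t}$ is $\fil_{s:t}$-measurable while $\Sols_{r,s}$ is $\fil_{r:s}$-measurable (hence independent of $\Sols_{s,t}$ under $\bbP$):
\begin{align*}
\int_\Radon \Phi(\clg)\tspb\piMP(t,d\clg\viiva r,\clf)
&=\E\bigl[\Phi(\Sols_{r,t}\clf)\bigr]
=\E\bigl[\Phi(\Sols_{s,t}\Sols_{r,s}\clf)\bigr]\\
&=\E\Bigl[\E\bigl[\Phi(\Sols_{s,t}\clh)\bigr]\big|_{\clh=\Sols_{r,s}\clf}\Bigr]
=\int_\Radon \Bigl(\int_\Radon \Phi(\clg)\tspb\piMP(t,d\clg\viiva s,\clh)\Bigr)\tspb\piMP(s,d\clh\viiva r,\clf).
\end{align*}
Time-homogeneity comes from the same identity together with the $\shiftd{r}{0}$-invariance of $\bbP$ and the covariance \eqref{eq:cov.shift}. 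Measurability of $\clf\mapsto\piMP(t,A\viiva s,\clf)$ follows from the joint measurability of $(\w,\clf)\mapsto \Sols^\w_{s,t}\clf$ and Fubini. Part \eqref{lm:SHE-MC.ii} is then essentially a restatement: the same conditioning argument applied to cylinder events $\{\Sols_{0,t_i}\clf\in A_i\}_{i=1}^n$ shows that the finite-dimensional distributions under $\P\otimes \PMPinit$ coincide with those of the Markov process generated by $\piMP$ with initial law $\PMPinit$.

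For part \eqref{lm:SHE-MC.iii}, assume $\PMPinit(\tICP)=1$ and fix a representative $f\in\ICM$ of $\clf$. By \eqref{pr:IC:1}, $\She(t,\aabullet\viiva 0,f)\in \ICM\cap\CICM$ for every $t>0$, so $\Sols_{0,t}\clf\in\tICP\setminus\{\eqcl\infimeas\}$, and in particular blowup never happens. Theorem 2.6 of \cite{Alb-etal-22-spde-} gives continuity of $(s,y,t)\mapsto \She(t,y\viiva s,f)$ on $\{s<t\}$ together with vague convergence $\She(t,dy\viiva 0,f)\to f(dy)$ as $t\searrow 0$; both statements project down via the continuous quotient map $\MP\to \Radon\setminus\{\eqcl\infimeas\}$ (Appendix \ref{sec:quotient}) to give continuity of $t\mapsto \Sols_{0,t}\clf$ into $\Radon$, including at $t=0$. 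Thus the process lives on $\cC(\R_+,\Radon)$. Part \eqref{lm:SHE-MC.iv} is the same argument in a finer topology: by Theorem 2.9 of \cite{Alb-etal-22-spde-}, the map $(f,s,t)\mapsto \She(t,\aabullet\viiva s,f)$ is jointly continuous on $\CICM\times\{s\le t\}$, so the induced equivalence-class map is continuous into $\CICP$, and hence paths reside in $\cC(\R_+,\CICP)$.

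The main obstacle, and really the only step that is not bookkeeping, is verifying that the quotient topology on $\Radon$ (and on $\CICP$) behaves well enough that continuity statements for the Green's function lift to continuity of equivalence-class-valued paths, and that the measurability needed for the Fubini step in part \eqref{lm:SHE-MC.i} holds. Both points are handled by the Polish quotient structure set up in Appendix \ref{sec:quotient}: the quotient projection $f\mapsto\eqcl{f}$ is continuous and open on $\MP$, so continuity of $t\mapsto \She(t,\aabullet\viiva s,f)$ in $\MP$ transfers to continuity of $t\mapsto \eqcl{\She(t,\aabullet\viiva s,f)}$ in $\Radon$, and $(\w,\clf)\mapsto \Sols^\w_{s,t}\clf$ is Borel measurable as a composition of Borel maps. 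Once this is in place the four parts follow essentially from \eqref{eq:Solsco} and the independence structure of $W$.
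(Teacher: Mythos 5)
Your proposal is correct and takes essentially the same route as the paper: part~(\ref{lm:SHE-MC.i}) rests on the cocycle identity \eqref{eq:Solsco} together with independence of the white noise over the disjoint intervals $(r,s]$ and $(s,t]$, part~(\ref{lm:SHE-MC.ii}) is the same conditioning/induction run over cylinder events, and parts~(\ref{lm:SHE-MC.iii})--(\ref{lm:SHE-MC.iv}) follow from the path-continuity results of Theorem 2.9 of \cite{Alb-etal-22-spde-} pushed through the continuous quotient maps onto $\Radon$ and $\CICP$. The only presentational differences are that you phrase the Chapman--Kolmogorov step as a conditioning $\E[\E[\Phi(\Sols_{s,t}\clh)]\big|_{\clh=\Sols_{r,s}\clf}]$ where the paper writes it out as a double integral over two independent copies $\P(d\w)\P(d\w')$, and your remark that the quotient projection is ``open'' is unnecessary (continuity of the quotient map is all that is used to transfer path continuity); neither affects correctness.
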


\begin{proof}
Let $r<s<t$ and pick a state $\clf_{\tspa 0}\in\Radon$:
	\begin{align*}
	&\int_{\Radon}\Bigl(\;\int_{\Radon}\Phi(\clf_2)\,\piMP(t,d\clf_2\viiva s,\clf_1)\Bigr)\piMP(s,d\clf_1\viiva r,\clf_{\tspa 0})
	=\int_{\Radon}\Bigl(\;\int_\Omega\Phi(\Sols_{s,t}^\w \clf_1)\P(d\w)\Bigr)\piMP(s,d\clf_1\viiva r,\clf_{\tspa 0})\\
	&\qquad=\int_\Omega\Bigl(\;\int_\Omega\Phi(\Sols_{s,t}^\w S^{\w'}_{r,s}\clf_{\tspa 0})\P(d\w)\Bigr)\P(d\w')
	= \int_\Omega\Phi(\Sols_{s,t}^\w S^\w_{r,s}\clf_{\tspa 0})\P(d\w)\\
	&\qquad= \int_\Omega\Phi(S^\w_{s,t}\clf_{\tspa 0})\P(d\w)
	=\int_{\Radon}\Phi(\clf_2)\tspb\piMP(t,d\clf_2\viiva r,\clf_{\tspa 0}).
	\end{align*}
In the third equality we used the fact that the time intervals $(r,s]$ and $(s,t]$ are disjoint and hence their white noises are independent.
Thus the two integrals  $\P(d\w)\P(d\w')$ combine into one. The fourth equality  used the cocycle property of \eqref{Sst}.
Part \eqref{lm:SHE-MC.i} is proved. 

The Markov process on $\Radon^{\R_+}$ is then obtained by an application of Kolmogorov's consistency theorem. 
To prove the distributional claim of part \eqref{lm:SHE-MC.ii} it is sufficient to check that the finite-dimensional marginals coincide. This is done inductively through repeated use of the definition \eqref{SHE-kernel}  and \eqref{CK8}.  For $0=t_0<t_1<\cdots<t_n$, 
	\begin{align*}
	&\int_{\Radon^{n+1}}\Phi(\clf_{\tspa 0},\dotsc,\clf_n)\,\prod_{i=0}^{n-1}\piMP(t_{i+1},d\clf_{i+1}\viiva t_i,\clf_i)\,\PMPinit(d\clf_{\tspa 0})\\
	&\quad=\int_{\Radon^n}\Bigl(\int_\Omega\Phi(\clf_{\tspa 0},\dotsc,\clf_{n-1},\Sols^\w_{t_{n-1},t_n}\clf_{n-1})\,\P(d\w)\Bigr)
	\,\prod_{i=0}^{n-2}\piMP(t_{i+1},d\clf_{i+1}\viiva t_i,\clf_i)\,\PMPinit(d\clf_{\tspa 0})\\
	&\quad=\!\int\limits_{\Radon^{n-1}}\Bigl(\int\limits_\Omega\Phi\bigl(\clf_{\tspa 0},\dotsc,\clf_{n-2},\Sols^\w_{t_{n-2},t_{n-1}}\clf_{n-2},\Sols^\w_{t_{n-2},t_n}\clf_{n-2})\,\P(d\w)\Bigr)\prod_{i=0}^{n-3}\piMP(t_{i+1},d\clf_{i+1}\viiva t_i,\clf_i)\,\PMPinit(d\clf_{\tspa 0})\\
	&\quad=\cdots
		=\int\limits_{\Radon}\int\limits_\Omega\Phi\bigl(\clf_{\tspa 0},\Sols^\w_{t_0,t_1}\clf_{\tspa 0},\dotsc,\Sols^\w_{t_0,t_n}\clf_{\tspa 0})\,\P(d\w)\,\PMPinit(d\clf_{\tspa 0})
		=E^{\P\otimes\PMPinit}[\Phi(\clf_{\tspa 0},\Sols_{t_0,t_1}\clf_{\tspa 0},\dotsc,\Sols_{t_0,t_n}\clf_{\tspa 0})].
	\end{align*}

From an initial state $\eta\in\ICM$, $t\mapsto\She(t,dx\viiva 0,\eta)$ is continuous  on $\R_+$ in the vague topology of $\sM_+(\R)$ by Theorem 2.9(i) in \cite{Alb-etal-22-spde-}. Note that the topology on $\ICM$ in that result is finer than the vague topology.
Consequently, if $\clf\in\tICP$, then the path $\{\Sols_{0,t}\clf:t\in\R_+\}$ lies in $\cC(\R_+,\tspc\Radon)$ and its distribution defines a probability measure on $\cC(\R_+,\tspc\Radon)$.
 This and part \eqref{lm:SHE-MC.ii} imply part \eqref{lm:SHE-MC.iii}.
 
 Part \eqref{lm:SHE-MC.iv}.    By  Theorem 2.9(iii) in \cite{Alb-etal-22-spde-}, the process $[s,\infty)\ni t\mapsto\She(t,\aabullet\viiva s,f)$ is continuous in the space $\CICM$.   Hence so is the process $t\mapsto\eqcl{\She(t,\aabullet\viiva s,f)}$     in the space $\CICP$.  
\end{proof}

\begin{remark}\label{rem:Feller}
One can put complete and separable metrics on $\ICM\cap\MP$
as well as on the space $\CICM$ of strictly positive continuous functions that are Radon-Nikodym derivatives of  measures in $\ICM$.  See Lemmas D.1 and D.2 in \cite{Alb-etal-22-spde-} and the proof of Lemma \ref{lem:subspace} in this paper.
Then Theorem 2.9 in \cite{Alb-etal-22-spde-} says that $\P$-almost surely, for any $t>s$, the mappings $f\mapsto\She(t,x\tsp\viiva s,f)\,dx$ on $\ICM$ and  $f\mapsto\She(t,\aabullet\tsp\viiva s,f)$ on $\CICM$ are continuous in the induced topologies. This remains true on the quotient spaces with their quotient topologies.  
In particular, by the bounded convergence theorem, 
the Markov process of Lemma \ref{lm:SHE-MC}  
is Feller continuous in the sense that the finite dimensional marginal distributions are continuous in the weak topology if the initial conditions converge in the topologies mentioned above. See Remark 2.12 in \cite{Alb-etal-22-spde-} for a similar argument.
\end{remark}

\begin{proof}[Proof of part \eqref{thm:unique.i} of Theorem \ref{thm:unique}]  
Fix $\lambda\in\R$ and  recall that $\BBus^\lambda$  has no $\lambda\pm$ distinction $\P$-a.s. Call $f_t(\aabullet)=e^{\BBus^\lambda(t,0,t,\aabullet)}$. By the propagation in \eqref{NEBus2} and the additivity in \eqref{cocycle}, for $t\ge s$, 
\be\label{mc780} 
\Sols_{s,t}\clf_s=\eqcl{\She(t,\aabullet\tsp\viiva s,f_s)}
	=\eqcl{e^{\BBus^{\lambda}(s,0,t,\aabullet)}}
	=\eqcl{e^{\BBus^{\lambda}(s,0,t,0)}\cdot e^{\BBus^{\lambda}(t,0,t,\aabullet)}}
	=\eqcl{e^{\BBus^{\lambda}(t,0,t,\aabullet)}}
	=\clf_{\tspa t}.\ee
	Now, let $\PMPinit \in \sM_1(\CICP)$ denote the distribution of $[e^{B(\abullet)+\lambda \abullet}]$, where $B$ is standard Brownian motion. Recall from Section \ref{subsec:ergodic} the notation $\PMP$  for the induced measure on the product space.
	By Theorem \ref{main:Bus}\eqref{Bproc.BM} and the computation above, the finite-dimensional distributions of the $\CICP$-valued process $(\clf_{\tspa t} : t \in \R)$ are given by $\PMP$. On the other hand, by Theorem \ref{thm:tot-erg}, $f_t(\aabullet)=e^{\BBus^\lambda(t,0,t,\aabullet)}$   is a totally ergodic  $\CICM$-valued process under time shifts.  The continuous mapping $f_t\mapsto\clf_{\tspa t}=\eqcl{f_t}\in\CICP$ to equivalence classes implies that  $\clf_{\tspa\bbullet}$   is a totally ergodic  $\CICP$-valued process under time shifts. It follows that $\PMP$ is totally ergodic under time shifts.
	\end{proof} 
	
	\begin{proof}[Proof of  part \eqref{thm:inv-n.i} of Theorem \ref{thm:inv-n}]
	The   $\nCICM$-valued process $t\mapsto(\tspb e^{\Bus^{\lambda_1}(t,0,t,\aabullet)}, \dotsc, e^{\Bus^{\lambda_n}(t,0,t,\aabullet)}\tspb) $   is stationary and  totally ergodic by Theorem \ref{thm:tot-erg}. Mapping the components to their equivalence classes gives the stationarity and  total ergodicity of the process  
  $t\mapsto\bigl(\tspb\eqcl{e^{\Bus^{\lambda_1}(t,0,t,\aabullet)}}, \dotsc, \eqcl{e^{\Bus^{\lambda_n}(t,0,t,\aabullet)}}\tspb\bigr) $.   The argument in \eqref{mc780} applies to each component.  
\end{proof}

\medskip

We turn to develop a series of auxiliary results towards proving part \eqref{thm:unique.ii} of Theorem \ref{thm:unique}.  Part \eqref{thm:inv-n.ii} of Theorem \ref{thm:inv-n} comes from a small extension of the proof.  

Let $\PMPinit\ne\delta_{\eqcl{\infimeas}}$ be a probability distribution on $\Radon$ that is invariant and ergodic for the Markov kernel \eqref{SHE-kernel}. As recorded in \eqref{pr:IC:ICMsh}, any initial condition which does not lie in $\tICP$ reaches $\eqcl{\infimeas}$ in finite time. Hence $\PMPinit(\tICP)=1$.   As is remarked between \eqref{pr:IC:1} and \eqref{pr:IC:ICMsh} and proven in Theorem 2.6 of \cite{Alb-etal-22-spde-}, any initial condition in $\tICP$ instantaneously becomes a measure in $\CICP$ and therefore $\PMPinit(\CICP)=1$. 
  
  \medskip 
  
  The  proof  constructs a new cocycle $\bbus$ from $\PMPinit$ and couples it with the fundamental solution $\She$ and the Busemann process $\Bus^{\lambda\sig}$.  The polymers defined from the cocycle $\bbus$ have asymptotic velocities by the same martingale argument as used in Section \ref{sec:poly}.  This gives  the spatial  asymptotics of  $\PMPinit$ claimed in Theorem \ref{thm:unique}\ref{P.iia}. With  these asymptotics we appeal to the Busemann limits of Corollary \ref{cor:Busp2l} to conclude the proof.  
  
 \medskip 
 
Put the product measure $\P(d\w)\otimes\PMPinit(d\clf)$ on the space $\Omega\times\CICP$.  On this space, define the process 
 $\{\Sols_{s,t}\clf: s\le t\}$  
  as in \eqref{Sst}. 
 We construct an ergodic dynamical system that couples 
  a globally defined  cocycle built from $\PMPinit$
  with the fundamental solution $\She$.  
  An explicit coupling is defined on each time interval $[S,\infty)$   and then  the joint distribution extended to all times.

For $S\in\R$, define the parameter domains 
\be\label{UUdef}\begin{aligned}   
\bbU_S&=\{(s,x,t,y): t>s\ge S; \, x,y\in\R\} \quad\,\text{ for strictly ordered times and }\\
\R^4_S&=\{ (s,x,t,y): s,t\ge S; \, x,y\in\R\}  \qquad\tsp\text{ for unordered times. } 
\end{aligned} \ee
The joint distributions will be constructed on the Polish  spaces  $\Gamma_S=\sC(\bbU_S,\R)\times\sC(\R^4_S,\R)$  
and  extended to the space     
$\Gamma=\sC(\varsets,\R)\times\sC(\R^4,\R)$   with unrestricted time coordinates.  
 On the space $\Omega\times\CICP$,  define the $\Gamma_S$-valued random pair $(\coShe_S, \bbus_S)$ as follows.  
$\coShe_S$ is the restriction of $\She$ to a $\sC(\bbU_S,\R)$-valued random variable: 
\[  \coShe^\w_S  = \{ \She^\w(t,y\viiva s,x):  t>s\ge S;\, x,y\in\R\}  . 
\]  
For any $f\in\clf\in\CICP$, $(t,x)\mapsto\She(t,x\tsp\viiva S,f)$ is strictly positive  and 
continuous on $[S,\infty)\times\R$.
 This follows from  Theorem 2.6 in \cite{Alb-etal-22-spde-}.  
Define the $\sC(\R^4_S,\R)$-valued random variable   $\bbus_S$  as a function of $(\coShe^\w_S,  \clf)$:   for $s,t\ge S$ and $x,y\in\R$,   for any representative $f\in\clf$,  
\be\label{b379}\begin{aligned}
\exp\{\bbus_S(s, x, t, y; \coShe^\w_S, \clf)\}  &= \frac{\She^\w(t,y\viiva S, f)}{\She^\w(s,x\viiva S, f)}.  
\end{aligned}\ee 
 Then  
\be\label{431}   
[S,\infty)\ni t\mapsto \eqcl{\exp\{\bbus_S(t, 0, t, \aabullet)\}} 
=   \bggeqcl{ \frac{\She(t,\aabullet\viiva S, f)}{\She(t,0\viiva S, f)}}
=   \eqcl{\She(t,\aabullet\viiva S,f)} = \Sols_{S,t}\clf 
 \ee
is the stationary $\CICP$-valued Markov process with marginal distribution $\PMPinit$.

Let   $\bfP_S$ be the distribution on   $\Gamma_S$ of  $(\coShe_S, b_S)$    under $\P(d\w)\otimes\PMPinit(d\clf)$.  
By Proposition \ref{pr:bfP3} in  Appendix \ref{app:extend},   there is a unique  probability measure $\bfP$ on $\Gamma$ whose projection to $\Gamma_S$ agrees with $\bfP_S$ for each $S\in\R$.   
The coordinate functions on $\Gamma$ are denoted by $(\bShe, \bbus)$. 
 Proposition \ref{pr:bfP3} gives also   the invariance and ergodicity of    $\bfP$ under the time translation group $\{\shiftp_u\}_{u\tsp\in\tsp\R}$ that acts on   $\Gamma$ by 
\be\label{th8} 
 (\shiftp_u\bShe)(t,y\viiva s,x) = \bShe(t+u,y\viiva s+u,x)
\quad\text{and}\quad 
(\shiftp_u\bbus)(s,x,t,y) = \bbus(s+u,x,t+u,y).
\ee
  



The marginal of $\bfP$ on $\sC(\varsets,\R)$ is the distribution of the fundamental solution $\She$ of SHE.  Hence all the properties of the fundamental solution  transfer to the coordinate function $\bShe$ on $\GaBbfP$. In particular, utilizing Corollary \ref{cor:what->w}, we can define the  Busemann process $\BBus^{\lambda\sig}$ on $\GaBbfP$ as a function of $\bShe$.

 The update rule \eqref{NEBus} and the cocycle property that $\bbus_S$ satisfies by its definition \eqref{b379} transfer to $\bbus$ and
so we have $\bfP$-almost surely:
	\begin{align}\label{NEBus-tmp}
	&e^{\bbus(s,x,t,y)}
	= \int_{-\infty}^\infty  \bShe(t,y\viiva r,z)e^{\bbus(s,x,r,z)}\, dz 
	\qquad \forall  \tspa t>r \text{  and  }  s,x,y\in \R   \qquad\text{and}  \\
	\label{NEBus245}   &  \bbus(r,x,s,y)+\bbus(s,y,t,z)=\bbus(r,x,t,z)
	\qquad \forall \tspa (r,x),(s,y),(t,z)\in\R^2. 
\end{align} 
 From \eqref{431} follows that $\eqcl{e^{\bbus(t,0,t,\aabullet)}}$ has the distribution of  the stationary Markov process with marginal distribution $\PMPinit$ and transition kernel \eqref{SHE-kernel}, now defined for all $t\in\R$.

 
Next we define quenched semi-infinite backward polymer distributions as functions of the environment $(\bShe, \bbus)$ on the probability space $(\Gamma, \cB_\Gamma, \bfP)$.   
For $(t,y)\in\R^2$ let $\Poly^{\bShe,\bbus}_{(t,y)}$ denote the distribution of the Markov process
$\{X_s:s\in(-\infty, t]\}$ with initial  point $X_t=y$ and the transition probability density from $(s,x)$ to $(r,z)$, $r<s\le t$, given by 
	\[\pi(r,z\viiva s,x)=\bShe(s,x\viiva r,z)e^{\bbus(s,x,r,z)}.\]
	That $\pi$ is a transition probability that satisfies the Chapman-Kolmogorov equations comes from \eqref{NEBus-tmp},  \eqref{NEBus245}, and identity \eqref{eq:CK} for $\bShe$. 
Since $\bfP\{  e^{\bbus(r,0,r,\aabullet)}\in\CICM \, \forall r\in\R\}=1$, 
the same proof as in 
Theorem \ref{lm:sem-pol} on page \pageref{page:lm:sem-pol} implies that $\bfP$-almost surely, $\Poly^{\bShe,\bbus}_{(t,y)}$ is supported on paths that are 
locally $\alpha$-H\"older for any $\alpha\in(0,1/2)$. 
Thus we consider  $\Poly^{\bShe,\bbus}_{(t,y)}$ as a probability distribution on the path space $\sC((-\infty, t],\R)$. We will compare measures $\Poly^{\bShe,\bbus}_{(t,y)}$ from different starting points $(t,y)$ even though they may a priori be  defined on different path spaces. We can always put them all on $\sC(\R,\R)$   by stipulating that $\Poly^{\bShe,\bbus}_{(t,y)}( X_r=y \; \forall r\in(t,\infty)) =1$. 
Without further comment,  we restrict consideration to the $\bfP$-almost every environment  $(\bShe,\bbus)$ under which the polymer measures are well-defined and all the further properties we use below are valid. 

By the proof of Theorem \ref{lm:sem-pol}\eqref{Q-LLN} on page \pageref{page:Q-LLN},   $\bfP$-almost surely, for any $s,x,t,y\in\R$, 
	\[M_r^{s,x,t,y}=\frac{\bShe(s,x\viiva r,X_r)}{\bShe(t,y\viiva r,X_r)},\quad r<s\wedge t,\]
 is a $\Poly^{\bShe,\bbus}_{(t,y)}$-backward martingale with respect to the filtration $\Paths_{-\infty:r}$. 
By martingale convergence, 
	\[M_{-\infty}^{s,x,t,y}=\lim_{r\to-\infty}\frac{\bShe(s,x\viiva r,X_r)}{\bShe(t,y\viiva r,X_r)}\,\]
exists $\Poly^{\bShe,\bbus}_{(t,y)}$-almost surely and in $L^1$. Next,  the proof of Theorem \ref{lm:sem-pol}\eqref{Q-LLN} gives that 
 \be\label{chi6}  \Xvelo=\lim_{s\to-\infty} s^{-1}X_s\;\in\;[-\infty,\infty]  \ee
 exists as  a  possibly random    
 limit   $\Poly^{\bShe,\bbus}_{(t,y)}$-almost surely, for $\bfP$-almost every $(\bShe,\bbus)$.   Lemma \ref{lm:lim<infty} below shows that $\Xvelo$  is  finite. We also use 
 \be\label{chi9} \Xsvelo=  \varlimsup_{s\to-\infty} s^{-1}{X_s}\quad\text{and}\quad     \Xivelo=  \varliminf_{s\to-\infty} s^{-1}{X_s} \ee
 to state events without reference to a particular $\Poly^{\bShe,\bbus}_{(t,y)}$.

For $\lambda\in\R$  define the events 
\be\label{defA} 
\underline A_\lambda = \{\Xivelo \ge - \lambda \}
\quad\text{ and }\quad 
  \underline\Gamma_\lambda=\bigl\{(\bShe,\bbus)\in\Gamma:  \Poly^{\bShe,\bbus}_{(0,0)}(\underline A_\lambda)>0\bigr\} . 
\ee
Note that $\underline A_\lambda$ can be viewed as a tail event on the  path space $\sC((-\infty, t],\R)$  for each $t\in\R$.
For any tail event $G\in\bigcap_{r<0}\Paths_{-\infty:r}$,  
  for all $s<t$ and $y\in\R$, 
	\begin{align}\label{harmonic}
	\Poly^{\bShe,\bbus}_{(t,y)}(G)=\int_\R\She(t,y\viiva s,x)\tspb e^{\bbus(t,y,s,x)}\Poly^{\bShe,\bbus}_{(s,x)}(G)\,dx.
	\end{align}
	Thus in a given environment $(\bShe,\bbus)$, all the polymer measures $\{\Poly^{\bShe,\bbus}_{(t,y)} : (t,y)\in\R^2\}$ are pairwise mutually absolutely continuous on the tail $\sigma$-algebra.   This applies in particular to $\underline A_\lambda$, and hence 
	 $\Poly^{\bShe,\bbus}_{(0,0)}(\underline A_\lambda)>0$ implies that  $\Poly^{\bShe,\bbus}_{(t,y)}(\underline A_\lambda)>0$ for all $(t,y)\in\R^2$.   Thereby 
	 $\shiftp^{-1}_t\underline\Gamma_\lambda=\underline\Gamma_\lambda$ $\forall t\in\R$ and by ergodicity $\bfP(\underline\Gamma_\lambda)\in\{0,1\}$. 

On the event $\underline A_\lambda$,    for any rational $\e>0$ and  large enough negative $r$, 
 $X_r \le -r(\lambda+\e)$.  Then the  comparison inequality \eqref{crossing} gives for each $x<0$, and all sufficiently large negative $r$,
	\[\frac{\bShe(t,x \viiva r,-(\lambda+\e) r)}{\bShe(t,0 \viiva r,-(\lambda+\e)r)} \cdot \ind_{\underline A_\lambda}
	\le
	\frac{\bShe(t,x\viiva r,X_r)}{\bShe(t,0\viiva r,X_r)} \cdot \ind_{\underline A_\lambda} = M_r^{t,x,t,0} \cdot \ind_{\underline A_\lambda}\le M_r^{t,x,t,0}.\]
From this and by a direct computation of the expectation,
\be\label{M7800} 
\frac{\She(t,x \viiva r,-(\lambda+\e) r)}{\She(t,0 \viiva r,-(\lambda+\e)r)}
\,\Poly_{(t,0)}^{\bShe,\bbus}(\underline A_\lambda)\le E^{\Poly^{\bShe,\bbus}}_{(t,0)}[M_r^{t,x,t,0}]=e^{\bbus(t,0,t,x)}.\ee
Consider now $\lambda$ fixed so that by Theorem \ref{main:Bus}\eqref{Bproc.-=+} there is no $\lambda\pm$ distinction. 
Take $r\to-\infty$ and apply \eqref{eq:buslim}, then
take $\e\to0$ and apply \eqref{Buscont}   to get
	\begin{align}\label{fofofo}
	e^{\BBus^{\lambda}(t,0,t,x)} \Poly_{(t,0)}^{\bShe,\bbus}(\underline A_\lambda) \le e^{\bbus(t,0,t,x)}\quad\text{for all }x<0.
	\end{align}
By the shape theorem \eqref{b-shape} for Busemann functions for a  fixed  $\lambda\in\R$,  $\bfP$-almost surely, 
\begin{align}\label{fififo}
&(\bShe,\bbus)\in\underline\Gamma_\lambda \ \Longrightarrow\forall \tspa  t\in\R:\Poly_{(t,0)}^{\bShe,\bbus}(\underline A_\lambda)>0\ \Longrightarrow \ \forall\tspa  t\in\R:\varlimsup_{x\to-\infty} x^{-1} \bbus(t,0,t,x)\le \lambda. 
\end{align}
 By the invariance of $\PMPinit$, we have established the following statement: 
\begin{align}\label{conf1}
\bfP\bigl\{\Poly_{(0,0)}^{\bShe,\bbus}(\tspb\Xvelo\ge - \lambda)>0 \bigr\} >0
\quad\text{implies}\quad
\PMPinit\Bigl\{ \clf: \varlimsup_{x\to-\infty} x^{-1} \log f(x) \le \lambda \Bigr\} = 1.
\end{align}
In the last event, $f$ can be any representative of $\clf\in\CICP$. 

 
For the next step, let $\overline A_\lambda=\{ \Xsvelo \le - \lambda \}$.
On this event, 
  for any rational $\e>0$ and $r$ large enough and negative,
$X_r\ge -(\lambda-\e) r$. 
The comparison lemma \eqref{crossing} gives for each $x<0$ and all sufficiently large negative $r,$
	\[M_r^{t,x,t,0} 
	\cdot \ind_{\overline A_\lambda}
	= \frac{\bShe(t,x\viiva r,X_r)}{\bShe(t,0\viiva r,X_r)}
	\cdot \ind_{\overline A_\lambda}
	\le \frac{\bShe(t,x \viiva r,-(\lambda-\e) r)}{\bShe(t,0 \viiva r,-(\lambda-\e) r)}
	\cdot \ind_{\overline A_\lambda}
	\,.\]
Taking $r\to-\infty$ then $\e\to0$, we see that $\Poly^{\bShe,\bbus}_{(t,0)}$-almost surely, 
	\[\lim_{r\to-\infty} M_r^{t,x,t,0} \cdot \ind_{\overline A_\lambda}
	\le e^{\BBus^{\lambda}(t,0,t,x)}\cdot \ind_{\overline A_\lambda}.\]
Compute
    \begin{align*}
    E^{\Poly^{\bShe,\bbus}_{(t,0)}}[M_r^{t,x,t,0}\tspb\ind_{\overline A_\lambda}] 
    &=E^{\Poly^{\bShe,\bbus}_{(t,0)}}\bigl[M_r^{t,x,t,0}\Poly^{\bShe,\bbus}_{(r,X_r)}(\overline A_\lambda)\bigr]\\ 
    &=\int\She(t,0\viiva r,z)e^{\bbus(t,0,r,z)}\cdot\frac{\She(t,x\viiva r,z)}{\She(t,0\viiva r,z)}\cdot\Poly^{\bShe,\bbus}_{(r,z)}(\overline A_\lambda)\,dz\\
    &=e^{\bbus(t,0,t,x)}\int \She(t,x\viiva r,z) e^{\bbus(t,x,r,z)}\Poly^{\bShe,\bbus}_{(r,z)}(\overline A_\lambda)\,dz\\
    &=e^{\bbus(t,0,t,x)}\Poly^{\bShe,\bbus}_{(t,x)}(\overline A_\lambda).
    \end{align*}
Therefore for all $x<0$,
\begin{align}\label{b<bla}
\begin{split}
e^{\bbus(t,0,t,x)} \Poly^{\bShe,\bbus}_{(t,x)}(\overline A_\lambda)
 &= \lim_{r\to-\infty} E^{\Poly^{\bShe,\bbus}_{(t,0)}}[M_r^{t,x,t,0}\tspb\ind_{\overline A_\lambda}] 
 =  E^{\Poly^{\bShe,\bbus}_{(t,0)}}\bigl[ \, \lim_{r\to-\infty} M_r^{t,x,t,0}\tspb\ind_{\overline A_\lambda}\bigr] \\
 &\le e^{\BBus^{\lambda}(t,0,t,x)}\Poly_{(t,0)}^{\bShe,\bbus}(\overline A_\lambda).
 \end{split}
\end{align}
If $\Poly^{\bShe,\bbus}_{(0,0)}(\overline A_\lambda)=1$, then   \eqref{harmonic} implies  $\Poly^{\bShe,\bbus}_{(t,x)}(\overline A_\lambda)=1$ for all $(t,x)\in\R^2$. Then, recalling that $\Xvelo=\lim_{s\to-\infty} X_s/s$ exists,  \eqref{b<bla} and ergodicity 
  lead to 
\begin{align}\label{conf2}
\bfP\bigl\{\Poly_{(0,0)}^{\bShe,\bbus}(\tspb\Xvelo\le - \lambda)=1 \bigr\}>0
\quad\text{implies}\quad\PMPinit\Bigl\{ \clf:  \varliminf_{x\to-\infty} x^{-1} \log f(x) \ge \lambda \Bigr\} = 1.
\end{align}

Repeating analogous reasoning twice more shows that 
\begin{align}\label{conf4}
\bfP\bigl\{\Poly_{(0,0)}^{\bShe,\bbus}(\tspb\Xvelo\le - \lambda)>0 \bigr\} >0
\quad\text{implies}\quad
\PMPinit\Bigl\{ \clf:  \varliminf_{x\to\infty} x^{-1} \log f(x) \ge \lambda \Bigr\} = 1
\end{align}
and that 
\begin{align}\label{conf5}
\bfP\bigl\{\Poly_{(0,0)}^{\bShe,\bbus}(\tspb\Xvelo\ge - \lambda)=1 \bigr\} >0
\quad\text{implies}\quad
\PMPinit\Bigl\{ \clf:  \varlimsup_{x\to\infty} x^{-1} \log f(x) \le \lambda \Bigr\} = 1.
\end{align}

The  left-hand sides in \eqref{conf4} and \eqref{conf5} cannot fail together.  This forces 
\[
\forall\lambda\in\R : \quad  \PMPinit\bigl\{ \clf:  \varliminf_{x\to\infty} x^{-1} \log f(x) \ge \lambda \bigr\} = 1\quad\text{or}\quad
\PMPinit\bigl\{ \clf:  \varlimsup_{x\to\infty} x^{-1} \log f(x) \le \lambda \bigr\} = 1.\]
The same works for \eqref{conf1} and \eqref{conf2}. 
Thus there exist deterministic constants     $\PMPslopep, \PMPslopem\in[-\infty,\infty]$ such that
\be\label{la-pm} \PMPinit\Bigl\{ \clf:  \lim_{x\to\infty} x^{-1} \log f(x) =\PMPslopep\Bigr\} =\PMPinit\Bigl\{ \clf:  \lim_{x\to-\infty} x^{-1} \log f(x) =\PMPslopem\Bigr\} = 1.\ee

From \eqref{conf1} and \eqref{conf4} we get that 
    \begin{align}\label{la<lim<la}
    \bfP\bigl\{\Poly_{(0,0)}^{\bShe,\bbus}(-\PMPslopep\le\Xvelo\le - \PMPslopem)=1 \bigr\} =1.
    \end{align}
In particular, $\PMPslopem\le\PMPslopep$. 

\begin{lemma}\label{lm:lim<infty}
We have
    \begin{align}\label{lim<inft}
    \bfP\bigl\{\forall (t,y)\in\R^2:\Poly_{(t,y)}^{\bShe,\bbus}(-\infty<\Xvelo<\infty)=1\bigr\}=1.
    \end{align}
\end{lemma}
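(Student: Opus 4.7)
The plan is to reduce Lemma~\ref{lm:lim<infty} to the $\Xi$-almost sure finiteness of $\Xvelo$ under the joint measure
\[
\Xi(d\bShe,d\bbus,dX_{\acbullet})=\bfP(d\bShe,d\bbus)\,\Poly^{\bShe,\bbus}_{(0,0)}(dX_{\acbullet})
\]
on $\Gamma\times\cC((-\infty,0],\R)$. Since $\Xvelo$ is measurable with respect to the tail $\sigma$-algebra $\bigcap_{r<0}\Paths_{-\infty:r}$, and the measures $\{\Poly^{\bShe,\bbus}_{(t,y)}\}_{(t,y)\in\R^2}$ are mutually absolutely continuous on this tail via \eqref{harmonic}, the $\bfP$-a.s.\ validity of $\Poly^{\bShe,\bbus}_{(0,0)}(|\Xvelo|<\infty)=1$ automatically propagates to every base point $(t,y)\in\R^2$ on a single full-probability event. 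So it suffices to work from the origin.

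The core of the argument is Birkhoff's ergodic theorem applied to a skew product. Define the transformation
\[
T(\bShe,\bbus,X_{\acbullet})=\bigl(\shiftd{-1}{X_{-1}}(\bShe,\bbus),\,X_{\acbullet-1}-X_{-1}\bigr).
\]
The invariance $\Xi\circ T^{-1}=\Xi$ is a consequence of two ingredients: (i) $\bfP$ is invariant under every deterministic translation $\shiftd{-1}{x}$ (by Proposition~\ref{pr:bfP3}), and (ii) the Markov property of the polymer at time $-1$ gives that, conditional on $X_{-1}=x$, the restriction of the path to $(-\infty,-1]$ has law $\Poly^{\bShe,\bbus}_{(-1,x)}$, which after applying the spatial-temporal shift $\shiftd{-1}{x}$ coincides with $\Poly^{\shiftd{-1}{x}(\bShe,\bbus)}_{(0,0)}$ (translated). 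Applying Birkhoff to the function $\Phi(\bShe,\bbus,X_{\acbullet})=-X_{-1}$ together with the telescoping identity
\[
-\frac{X_{-n}}{n}=\frac{1}{n}\sum_{k=0}^{n-1}\Phi\circ T^{k}
\]
then gives, $\Xi$-a.s.,
\[
\Xvelo=\lim_{n\to\infty}\frac{X_{-n}}{-n}=\bfE_{\Xi}[X_{-1}\mid\sI_{T}],
\]
a $\Xi$-a.s.\ \emph{finite} random variable provided $X_{-1}\in L^{1}(\Xi)$.

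The integrability of $X_{-1}$ under $\Xi$ is established using the conditional independence built into the coupling: $\bShe|_{[-1,0]}$ and $f_{-1}:=e^{\bbus(-1,0,-1,\acbullet)}$ are $\bfP$-independent, being measurable with respect to the white noise on the disjoint strips $(-1,0]\times\R$ and $(-\infty,-1]\times\R$, respectively. Writing
\[
E^{\Poly^{\bShe,\bbus}_{(0,0)}}[|X_{-1}|]=\frac{\int|z|\,\bShe(0,0\viiva-1,z)\,f_{-1}(z)\,dz}{\bShe(0,0\viiva-1,f_{-1})}
\]
and conditioning on $f_{-1}$, the numerator and denominator are controlled using the moment bounds on $\Shenorm$ and $\Shenorm^{-1}$ from Corollaries~3.10--3.11 of \cite{Alb-etal-22-spde-} together with the $\CICM$ tail condition $\int e^{-az^2}f_{-1}(z)\,dz<\infty$ for every $a>0$. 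The main obstacle is the lower bound on the denominator---equivalently, the control of the negative moments $\bfE[\bShe(0,0\viiva-1,f_{-1})^{-p}]$---which is handled by Cauchy--Schwarz coupled with the negative moment estimates for $\Shenorm$ and the $\CICM$ integrability of $f_{-1}$.
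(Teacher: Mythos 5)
Your proposal takes a genuinely different route from the paper, but it has a concrete gap at its central step: the claimed invariance of $\Xi$ under the skew-product transformation $T$.

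You state that "(i) $\bfP$ is invariant under every deterministic translation $\shiftd{-1}{x}$ (by Proposition~\ref{pr:bfP3})." This is not what Proposition~\ref{pr:bfP3} gives. That proposition constructs $\bfP$ from a Markov-invariant $\PMPinit$ on $\CICP$ and establishes \emph{only} invariance under the \emph{temporal} shift group $\{\shiftp_u\}_{u\in\R}$. For $\bfP$ to be invariant under a time-\emph{and}-space shift $\shiftd{-1}{x}$ (equivalently, for the distribution of $\bbus$ under $\bfP$ to be spatially homogeneous), one would need $\PMPinit$ to be spatially translation invariant. But no such assumption is made — and indeed cannot be made at this stage: spatial translation invariance of $\PMPinit$ is one of the conclusions that the argument of Section~\ref{sec:unique} is building toward, available only after one knows $\PMPslopem=\PMPslopep$, which in turn relies on Lemma~\ref{la<infty}, which relies on the very Lemma~\ref{lm:lim<infty} you are trying to prove. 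So the invariance your Birkhoff argument needs is circular at this point in the proof. (If $\bfP$ \emph{were} jointly space-time stationary, your change-of-variables computation does close up, using the fact that $\pi(0,z\viiva 1,0)$ integrates to $1$ in $z$; but that joint stationarity is precisely what is missing.) Your reduction to the base point $(0,0)$ via mutual absolute continuity on the tail $\sigma$-algebra \eqref{harmonic} is fine, and your formula for $E^{\Poly^{\bShe,\bbus}_{(0,0)}}[|X_{-1}|]$ together with the independence of $\bShe|_{[-1,0]}$ and $f_{-1}$ are also correct; those pieces are not the problem.

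The paper's actual argument avoids ergodic theory altogether. It runs by contradiction from the inequalities \eqref{fofofo} and \eqref{b<bla}, which were proved just before the lemma. If $\Poly^{\bShe,\bbus}_{(t,y)}(\Xvelo=-\infty)>0$ for some $(t,y)$, then by tail-equivalence $\Poly^{\bShe,\bbus}_{(0,-1)}(\overline A_\lambda)\ge\delta>0$ for \emph{every} $\lambda$, and \eqref{b<bla} yields $\bbus(0,0,0,-1)+\log\delta\le\Bus^\lambda(0,0,0,-1)$ for all $\lambda$. Since $\Bus^\lambda(0,0,0,-1)$ is Gaussian with mean $-\lambda$ and variance $1$, sending $\lambda\to\infty$ contradicts the finiteness of $\bbus(0,0,0,-1)$. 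The bound $\Xvelo>-\infty$ is handled symmetrically with \eqref{fofofo}. This is shorter, requires no integrability estimates, and — crucially — does not presuppose any spatial homogeneity of $\PMPinit$.
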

\begin{proof} Suppose $\Poly_{(t,y)}^{\bShe,\bbus}(\Xvelo=-\infty)>0$ for some $(t,y)\in\R^2$.   By the mutual  absolute continuity on the tail $\sigma$-algebra,   
    \[\Poly_{(0,-1)}^{\bShe,\bbus}(\overline A_\lambda)\ge\Poly_{(0,-1)}^{\bShe,\bbus}(\Xvelo=-\infty)=\delta>0,\]
for all $\lambda\in\R$. This and \eqref{b<bla} imply that 
    \[\bbus(0,0,0,-1)+\log\delta\le\BBus^\lambda(0,0,0,-1),\]
for all $\lambda\in\R$. Since $\BBus^\lambda(0,0,0,-1)$ is normally distributed   with mean $-\lambda$ and variance $1$, letting $\lambda\to\infty$ contradicts the finiteness of the random variable $\bbus(0,0,0,-1)$. The fact that the limit of $s^{-1}X_s$ cannot be $\infty$ comes similarly, using \eqref{fofofo}.
\end{proof}

\bigskip 
 
By  \eqref{la<lim<la} and  
  \eqref{lim<inft}  we know that  $\PMPslopem\le\PMPslopep$, $\PMPslopem <\infty$ and $\PMPslopep>-\infty$.  
  Next we show   in Lemma \ref{la<infty} that  $\PMPslopem>-\infty$ and $\PMPslopep<\infty$, after several auxiliary lemmas.   Recall this identity from \eqref{M7800}, valid for $r < s \wedge t$ and $x,y \in \R$: 
\be\label{Q132}
E^{\Poly^{\She,\Bus}_{(s,x)}} [\,  M_r^{t,y,s,x} ] =  e^{\Bus(s,x,t,y)}.  
\ee

\begin{lemma} \label{lm:bb555} 
For $\bfP$-almost every $(\bShe, \bbus)$, the following holds for all $t\in\R$ and $x<y$: 
 \be\label{bb555} 
  \int_{\R} e^{\Bus^{\lambda-}(t,x,t,y)}\tspb \Poly^{\bShe,\bbus}_{(t,x)}(-\Xvelo\in d\lambda)
 \; \le \; 
e^{\bbus(t,x,t,y)}
\;  \le \; \int_{\R} e^{\Bus^{\lambda+}(t,x,t,y)}\tspb \Poly^{\bShe,\bbus}_{(t,x)}(-\Xvelo\in d\lambda). 
\ee
Furthermore,   
\be\label{bb556}   \PMPslopep =  \Poly^{\bShe,\bbus}_{(0,0)}\text{-}\esssup (-\Xvelo)   
\quad\text{and}\quad 
  \PMPslopem=  \Poly^{\bShe,\bbus}_{(0,0)}\text{-}\essinf (-\Xvelo)  . 
\ee  
\end{lemma}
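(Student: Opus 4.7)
The plan is to extract both statements from the backward-martingale identity
\[ e^{\bbus(t,x,t,y)} \;=\; E^{\Poly^{\bShe,\bbus}_{(t,x)}}\bigl[M_r^{t,y,t,x}\bigr],\qquad r<t, \]
with $M_r^{t,y,t,x}=\bShe(t,y\viiva r,X_r)/\bShe(t,x\viiva r,X_r)$. The martingale property and the identity itself follow by the same computation as in the proof of Theorem \ref{lm:sem-pol}\eqref{Q-LLN}, now applied to the environment $(\bShe,\bbus)$ using \eqref{eq:CK} for $\bShe$, the cocycle identity \eqref{NEBus245}, and the propagation \eqref{NEBus-tmp}. Since $\{M_r^{t,y,t,x}\}_{r<t}$ is a backward martingale with respect to the decreasing filtration $\{\Paths_{-\infty:r}\}_{r\le t}$, it is uniformly integrable and converges $\Poly^{\bShe,\bbus}_{(t,x)}$-almost surely and in $L^1$ to a limit $L$ with $E^{\Poly^{\bShe,\bbus}_{(t,x)}}[L]=e^{\bbus(t,x,t,y)}$.

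To prove \eqref{bb555} I will show that on the event $\{-\Xvelo=\lambda\}$ and for $y>x$,
\[ e^{\Bus^{\lambda-}(t,x,t,y)}\;\le\;L\;\le\;e^{\Bus^{\lambda+}(t,x,t,y)}, \]
with the $\pm$ interchanged when $y<x$; disintegrating the identity $E^{\Poly^{\bShe,\bbus}_{(t,x)}}[L]=e^{\bbus(t,x,t,y)}$ over the distribution of $-\Xvelo$ then produces \eqref{bb555}. Fix $\lambda\in\R$ and $\kappa<\lambda<\mu$ in $\Udense$. Lemma \ref{Bus-bnds} supplies $\delta>0$ and $R<t$ such that whenever $r\le R$ and $|z/r+\lambda|<\delta$ one has $M_r^{t,y,t,x}\le(1+\e)e^{\Bus^\mu(t,x,t,y)}$ for $y>x$. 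On $\{-\Xvelo=\lambda\}$ the condition $|X_r/r+\lambda|<\delta$ holds for all sufficiently negative $r$; letting $r\to-\infty$, then $\mu\searrow\lambda$ and $\e\to 0$, and invoking the right-continuity in \eqref{Buscont}, yields $L\le e^{\Bus^{\lambda+}(t,x,t,y)}$. The matching lower bound comes from applying \eqref{ratio-bnd2} with the roles of $x$ and $y$ swapped: relabelling $(x_{\textup{lem}},y_{\textup{lem}})=(y,x)$ puts us in the $y_{\textup{lem}}<x_{\textup{lem}}$ case and gives $1/M_r^{t,y,t,x}\le(1+\e)e^{\Bus^\kappa(t,y,t,x)}$; inverting and using the cocycle identity $\Bus^\kappa(t,y,t,x)=-\Bus^\kappa(t,x,t,y)$ produces $M_r^{t,y,t,x}\ge(1+\e)^{-1}e^{\Bus^\kappa(t,x,t,y)}$, and letting $\kappa\nearrow\lambda$ together with the left-continuity in \eqref{Buscont} gives $L\ge e^{\Bus^{\lambda-}(t,x,t,y)}$.

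For \eqref{bb556}, the inequalities $\PMPslopem\le\essinf_{\Poly^{\bShe,\bbus}_{(0,0)}}(-\Xvelo)$ and $\esssup_{\Poly^{\bShe,\bbus}_{(0,0)}}(-\Xvelo)\le\PMPslopep$ follow directly from \eqref{la<lim<la}, which gives $\PMPslopem\le -\Xvelo\le\PMPslopep$ $\Poly^{\bShe,\bbus}_{(0,0)}$-almost surely for $\bfP$-a.e.\ environment. For the reverse inequalities, take $\lambda>\PMPslopem$ and argue by contrapositive on \eqref{conf2}: its conclusion $\PMPslopem\ge\lambda$ fails, so $\bfP\{\Poly^{\bShe,\bbus}_{(0,0)}(\Xvelo\le-\lambda)=1\}=0$, whence $\Poly^{\bShe,\bbus}_{(0,0)}(-\Xvelo<\lambda)>0$ for $\bfP$-a.e.\ environment, giving $\essinf(-\Xvelo)<\lambda$; letting $\lambda\searrow\PMPslopem$ yields $\essinf(-\Xvelo)\le\PMPslopem$. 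A symmetric argument applying the contrapositive of \eqref{conf5} for $\lambda<\PMPslopep$ gives $\esssup(-\Xvelo)\ge\PMPslopep$. The main technical point is the interplay between $\Bus^{\lambda\pm}$ and the sign of $y-x$ in the first step: the estimates in Lemma \ref{Bus-bnds} are organized precisely so that passing $\mu\searrow\lambda$ and $\kappa\nearrow\lambda$ through $\Udense$ selects the correct $+$ or $-$ label via \eqref{Buscont}, and the splitting at $\lambda\in\Bruno$ arises naturally rather than having to be forced by hand.
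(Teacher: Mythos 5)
Your proof is correct, but it travels a genuinely different route from the paper on both halves of the lemma. For \eqref{bb555}, the paper works directly from the comparison inequality \eqref{crossing}: it partitions the $X_r$-line at points $z^r_i=\lambda_i|r|$ (choosing the $\lambda_i$ to miss jumps of $\Bus^{\lambda\sig}$ and atoms of $\Xvelo$), bounds $M_r^{t,y,t,x}$ on each cell by the ratio at the cell endpoint, takes $r\to-\infty$, refines the partition, and then lets $\lambda_{\pm m}\to\pm\infty$ to land on $M_{-\infty}^{t,y,t,x}\le e^{\Bus^{(-\Xvelo)+}(t,x,t,y)}$ (and the matching lower bound). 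You instead import the ready-made ratio bounds of Lemma \ref{Bus-bnds}, which is a heavier tool (it uses \eqref{crossing} plus Theorem \ref{int-shape}) but collapses the partition argument to a single sandwich. The trade-off is that your approach needs the content of Theorem \ref{int-shape} whereas the paper's partition argument only needs \eqref{crossing} and the existence of $\Xvelo$, so the paper's version is more elementary; your version is shorter once Lemma \ref{Bus-bnds} is in hand. For \eqref{bb556}, the paper derives it from \eqref{bb555} (implicitly a Laplace-principle computation using the shape theorem \eqref{bus:exp3'} as $y\to\infty$), whereas you go back to \eqref{la<lim<la} and the contrapositives of \eqref{conf2} and \eqref{conf5}; this is a perfectly legitimate alternative and arguably cleaner, at the cost of bypassing \eqref{bb555} entirely.

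One phrasing issue worth tightening: as written, the claim ``on $\{-\Xvelo=\lambda\}$, $L\le e^{\Bus^{\lambda+}(t,x,t,y)}$, for each fixed $\lambda$'' is not by itself enough to integrate against $\Qbar(d\lambda)=\Poly^{\bShe,\bbus}_{(t,x)}(-\Xvelo\in d\lambda)$ when $\Qbar$ has a non-atomic part, because the $\lambda$-by-$\lambda$ null sets can a priori accumulate. Your argument in fact proves the stronger statement --- namely that for every pair $\kappa<\mu$ in $\Udense$ and rational $\e$, the bound $L\le(1+\e)e^{\Bus^{\mu}(t,x,t,y)}$ holds simultaneously on the whole event $\{-\Xvelo\in(\kappa+\e,\mu-\e)\}$, since Lemma \ref{Bus-bnds} is uniform over $\lambda$ in that interval --- which, after intersecting the countably many full-probability events and then letting $\mu\searrow-\Xvelo$ and $\e\to 0$, yields the pathwise inequality $L\le e^{\Bus^{(-\Xvelo)+}(t,x,t,y)}$ on $\{\Xvelo\in\R\}$. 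Stating it that way makes the step after the disintegration unimpeachable; the paper sidesteps the same delicacy by explicitly choosing partition points $\lambda_i$ that are not atoms of $\Xvelo$.
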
 

\begin{proof} 
   In a given environment $(\bShe, \bbus)$, the at most countable set of atoms of $\Xvelo$ is common to all polymer distributions $\Poly^{\bShe,\bbus}_{(t,z)}$ due to their mutual absolute continuity on the tail $\sigma$-algebra (recall \eqref{harmonic}).  
Let $t\in\R$ and $x<y$.   Fix $m\in\N$.   Choose  $\lambda_{-m}<\dotsm<\lambda_m$ so that no $\lambda_i$ is a jump of the Busemann process $\Bus^{\lambda\sig}$  and no $-\lambda_i$ is an atom of $\Xvelo$.  
 For $r<0$ let $z^r_i=-\lambda_i r=\lambda_i \abs r$. Use the comparison inequality \eqref{crossing} to write 
 \begin{align*}  
   &\ind\{ X_r<z^r_{m}\}   \tspb M_r^{t,y,t,x} 
\; = \;   \ind\{ X_r<z^r_{-m}\}   \tspb  \frac{\She(t,y\viiva r, X_r)}{\She(t,x\viiva r, X_r)} 
\   +   \sum_{i=-m+1}^m   \ind\{ z^r_{i-1} \le X_r<z^r_{i}\}   \tspb  
 \frac{\She(t,y\viiva r, X_r)}{\She(t,x\viiva r, X_r)}    \\[4pt] 
&\qquad
\le \; \ind\{ -r^{-1} X_r< \lambda_{-m}\}   \tspb  \frac{\She(t,y\viiva r, z^r_{-m})}{\She(t,x\viiva r, z^r_{-m})} \   +   \sum_{i=-m+1}^m   \ind\{ \lambda_{i-1} \le  -r^{-1}X_r<  \lambda_{i}\}   \tspb  
 \frac{\She(t,y\viiva r, z^r_{i})}{\She(t,x\viiva r, z^r_{i})}\,. 
\end{align*} 
 Under $\Poly^{\She,\Bus}_{(t,x)}$,   $X_r$ has a continuous distribution,  the finite limit $\Xvelo=\lim_{r\to-\infty}r^{-1}X_r$ exists, and no $-\lambda_i$ is an atom of the limit. Hence the indicators converge.  
Since $z^r_i/r=-\lambda_i$, we can take the limit as $r\to-\infty$ to get, for $\bfP$-almost every $(\bShe,\bbus)$ and   $\Poly^{\She,\Bus}_{(t,x)}$-almost surely,   
\begin{align*}
 \ind\{ -\Xvelo<  \lambda_{m}\}   \tspb   M_{-\infty}^{t,y,t,x}   
 \; &\le \;      \ind\{ -\Xvelo< \lambda_{-m}\}   \tspb  e^{\Bus^{\lambda_{-m}}(t,x,t,y)}  +   \sum_{i=-m+1}^m   \ind\{    \lambda_{i-1} \le - \Xvelo < \lambda_{i} \}   \tspb  
e^{\Bus^{\lambda_{i}}(t,x,t,y)} . 
 \end{align*} 
 Fix $\mu>0$.  Keep $\lambda_m=-\lambda_{-m}=\mu$  while letting  $m\nearrow\infty$ and refining  the partition so that  $\max_i(\lambda_i-\lambda_{i-1})\to0$.  For each value of $-\Xvelo\in[-\mu,\mu)$, as the partition refines, the unique $\lambda_i$ that satisfies $ \lambda_{i-1} \le - \Xvelo < \lambda_{i}$ converges:  $\lambda_i\searrow-\Xvelo$.    In the $m\nearrow\infty$ limit  we get 
 \begin{align*}
 \ind\{ -\Xvelo<  \mu\}   \tspb   M_{-\infty}^{t,y,t,x}   
 \; &\le \;      \ind\{ -\Xvelo< -\mu\}   \tspb  e^{\Bus^{-\mu}(t,x,t,y)}  +     \ind\{    -\mu \le - \Xvelo < \mu \}   \tspb    e^{\Bus^{(-\Xvelo)+}(t,x,t,y)}  \\
  \; &\le \;      \ind\{ -\Xvelo< -\mu\}   \tspb  e^{\Bus^{-\mu}(t,x,t,y)}  +     e^{\Bus^{(-\Xvelo)+}(t,x,t,y)}. 
 \end{align*} 
Let $\mu\nearrow\infty$.  For $x<y$, $  e^{\Bus^{-\mu}(t,x,t,y)}  \to 0$ as $-\mu\searrow-\infty$. 
In the limit we have 
  \be\label{bb549} 
 M_{-\infty}^{t,y,t,x} \le    e^{\Bus^{(-\Xvelo)+}(t,x,t,y)}.  
 \ee
 For the lower bound, begin with the following and take limits:
  \begin{align*}  
    M_r^{t,y,t,x} 
\; &\ge \;     \sum_{i=-m+1}^m   \ind\{ z^r_{i-1} < X_r \le z^r_{i}\}   \tspb  
 \frac{\She(t,y\viiva r, X_r)}{\She(t,x\viiva r, X_r)}    \\[4pt] 
&\ge \;    \sum_{i=-m+1}^m   \ind\{ \lambda_{i-1} <  -r^{-1}X_r \le  \lambda_{i}\}   \tspb  
 \frac{\She(t,y\viiva r, z^r_{i-1})}{\She(t,x\viiva r, z^r_{i-1})}\,. 
\end{align*} 
Let $r\to-\infty$, 
refine the partition, and  let $\lambda_{m}=-\lambda_{-m}\nearrow\infty$ to get the lower bound 
  \be\label{bb549.8} 
 M_{-\infty}^{t,y,t,x} \ge    e^{\Bus^{(-\Xvelo)-}(t,x,t,y)}.  
 \ee
The conclusion \eqref{bb555} comes from \eqref{Q132} and  by taking  $E^{\Poly^{\bShe,\bbus}_{(t,x)}}$ expectation over \eqref{bb549} and \eqref{bb549.8}.   
Identity  \eqref{bb556} follows from \eqref{bb555}.  
 \end{proof}

\begin{lemma} \label{lm:bb535}  There exists a $\bfP$-almost surely  finite random $\lambda_0>0$ such that $\P$-almost surely
\be\label{bb535}
\forall \lambda\ge \lambda_0,  \, \sigg\in\{-,+\}, \,   y\ge 0:  \; \abs{\tspb\Bus^{\lambda\sig}(0,0,0,y)-\lambda y\tspb}  \; \le \; 1+(\log\lambda + 1) y. 
\ee
\end{lemma}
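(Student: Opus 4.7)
The estimate will come from combining the monotonicity of the Busemann process in $\lambda$ (Theorem \ref{main:Bus}\eqref{Bproc.a}) with the Brownian marginal distribution of $\Bus^n(0,0,0,\aabullet)-n\aabullet$ for integer $n$ (Theorem \ref{main:Bus}\eqref{Bproc.BM}) and a standard Gaussian running-maximum tail bound. Without loss of generality (enlarging $\Udense$ if necessary, or appealing to Theorem \ref{main:Bus}\eqref{Bproc.-=+} applied separately at each integer), I assume $\P$-almost surely that for every integer $n$ the two signs agree, and write $B_n(y):=\Bus^n(0,0,0,y)-ny$. By Theorem \ref{main:Bus}\eqref{Bproc.BM}, for each fixed $n$ the process $\{B_n(y):y\ge0\}$ is a one-sided standard Brownian motion. (The joint law in $n$ is highly nontrivial, but only single-$n$ marginals are needed below.)

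\textbf{Gaussian step plus Borel--Cantelli.} For any $a,M>0$, the classical identity $\P(\sup_{y\ge0}(B(y)-ay)>M)=e^{-2aM}$, combined with the symmetric bound for $-B_n$, gives, for each integer $n\ge3$,
\[
\P\Bigl(\sup_{y\ge0}\bigl(|B_n(y)|-(\log n-1)y\bigr)>1\Bigr)\;\le\;2e^{-2(\log n-1)}\;=\;\frac{2e^{2}}{n^{2}}.
\]
Summing over $n\ge3$ yields a convergent series, so by the Borel--Cantelli lemma there exists a ($\P$-almost surely finite) random integer $n_0\ge3$ such that for all integers $n\ge n_0$ and all $y\ge0$,
\[
|B_n(y)|\;\le\;1+(\log n-1)\,y.
\]

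\textbf{Sandwich and bookkeeping.} Set $\lambda_0:=n_0+1$. Fix $\lambda\ge\lambda_0$, $\sigg\in\{-,+\}$, and $y\ge0$; put $n:=\lfloor\lambda\rfloor$, so $n,n+1\ge n_0$ and $\lambda\in[n,n+1]$. By the monotonicity \eqref{Busmono},
\[
B_n(y)+ny\;=\;\Bus^{n}(0,0,0,y)\;\le\;\Bus^{\lambda\sig}(0,0,0,y)\;\le\;\Bus^{n+1}(0,0,0,y)\;=\;B_{n+1}(y)+(n+1)y,
\]
from which $-|B_n(y)|-y\le\Bus^{\lambda\sig}(0,0,0,y)-\lambda y\le|B_{n+1}(y)|+y$. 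Inserting the Brownian bound,
\[
\Bus^{\lambda\sig}(0,0,0,y)-\lambda y\;\le\;1+(\log(n+1)-1)y+y\;=\;1+\log(n+1)\,y,
\]
and, since $\lambda\ge 4$ implies $n+1\le\lambda+1\le 2\lambda$, one has $\log(n+1)\le\log\lambda+\log 2<\log\lambda+1$. The symmetric lower bound uses $\log n\le\log\lambda\le\log\lambda+1$. Combining the two gives the desired inequality.

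\textbf{Main obstacle.} The only delicate point is the arithmetic: the drift parameter inside the Gaussian bound must be chosen slightly smaller than $\log n$ (here $\log n-1$) so that, after absorbing the worst-case slack $|n+1-\lambda|\,y\le y$ from replacing $\lambda$ by $n$ or $n+1$, and after upgrading $\log(n+1)$ to $\log\lambda$ at a cost of $\log 2$, the final coefficient of $y$ remains $\le\log\lambda+1$. Once that is set up, the rest is a single Brownian tail estimate, Borel--Cantelli, and the $\lambda$-monotonicity already recorded in Theorem \ref{main:Bus}.
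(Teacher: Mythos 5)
Your proof is correct and follows essentially the same route as the paper: fix an integer grid of slopes, use the Brownian-marginal of $\Bus^n(0,0,0,\aabullet)-n\aabullet$ together with the exact line-crossing probability and Borel--Cantelli, then sandwich a general $\lambda$ between consecutive grid points via the monotonicity \eqref{Busmono}. The only cosmetic difference is in the bookkeeping of the drift offset inside the Gaussian bound (you take $\log n - 1$ while the paper takes $\log(j-1)$), and both choices produce the stated coefficient $\log\lambda+1$.
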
 

\begin{proof} 
Pick slopes  $\lambda_j\nearrow\infty$ and constants $\alpha_j>0$.  Define standard Brownian motions $B_j(y)=\Bus^{\lambda_j}(0,0,0,y)-\lambda_j y$. 
Using the exact formula for the probability that Brownian motion ever reaches a line (Eq.~(5.13) on page 197 of  \cite{Kar-Shr-91}), 
\begin{align*} 
\bfP\{  \exists y\ge 0:  \abs{B_j(y)} \ge 1+\alpha_j y \} 
&\le  \bfP\{  \exists y\ge 0:   B_j(y) \ge 1+\alpha_j y \} 
 + \bfP\{  \exists y\ge 0:  B_j(y) \le -1-\alpha_j y \}  \\
  &= 2 e^{-2\alpha_j}. 
\end{align*} 
We conclude that 
\be\label{bb530.5} \begin{aligned} 
\sum_j e^{-2\alpha_j}<\infty  \ \implies\   &\exists \text{ random }  j_0<\infty \text{ such that } \\[-8pt]   &\forall j\ge j_0, y\ge 0:   \abs{\Bus^{\lambda_j}(0,0,0,y)-\lambda_j y} \le 1+\alpha_j y. 
\end{aligned} \ee
Then for  $j\ge j_0+1$, $\lambda\in(\lambda_{j-1},  \lambda_j)$,  $\sigg\in\{-,+\}$,  $y\ge0$:  
\begin{align*}
\lambda y -(\lambda_j-\lambda_{j-1})y -\alpha_{j-1}y -1  
&\le  \lambda_{j-1} y -\alpha_{j-1}y -1 \le  \Bus^{\lambda_{j-1}}(0,0,0,y) \\
& \le  \Bus^{\lambda\sig}(0,0,0,y) \le \Bus^{\lambda_j}(0,0,0,y) 
 \le  \lambda_j y +\alpha_j y + 1  \\
 &\le \lambda y +(\lambda_j-\lambda_{j-1}) y+\alpha_jy +1 
\end{align*} 
from which 
\be\label{bb532}
 \abs{\Bus^{\lambda\sig}(0,0,0,y)-\lambda y} \le 1+(\alpha_{j-1}\vee\alpha_j + \lambda_j-\lambda_{j-1}) y. 
\ee
To get  \eqref{bb535}   choose 
  $\lambda_j=j$ and  $\alpha_j=\log (j-1)=\log\lambda_{j-1}$ for $j\ge 2$.   
\end{proof}


\begin{lemma} \label{lm:bb559}
 Suppose    $\Poly^{\bShe,\bbus}_{(0,0)}(-\Xsvelo>\lambda)>0$ for all $\lambda<\infty$.   Let $\mu\in(0,\infty)$.   Then 
 \be\label{bb559} 
 \lim_{t\to\infty}  \; \inf_{x\tspb\in\tspb[ 0, \mu], \; y\tspb\in\tspb[2\mu,\infty) } \; \frac{\bbus(0,0,0,ty)-\bbus(0,0,0,tx)}t   = \infty.  
 \ee
\end{lemma}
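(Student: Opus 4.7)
By the additive cocycle property \eqref{NEBus245} for $\bbus$, we have $\bbus(0,0,0,ty)-\bbus(0,0,0,tx)=\bbus(0,tx,0,ty)$, so the claim reduces to showing that $\bbus(0,tx,0,ty)/t\to\infty$ uniformly over $x\in[0,\mu]$ and $y\in[2\mu,\infty)$. The core input is the lower bound of Lemma \ref{lm:bb555}: applied at the pair $(0,tx,0,ty)$, restricting the integral to $\{\lambda\ge M\}$ and using the monotonicity of $\lambda\mapsto\Bus^{\lambda-}(0,tx,0,ty)$ (valid since $ty>tx$) gives, for any $M\notin\Bruno$ with $M\ge\lambda_0$,
\[
\bbus(0,tx,0,ty)\;\ge\;\Bus^{M}(0,tx,0,ty)+\log\Poly^{\bShe,\bbus}_{(0,tx)}(-\Xvelo\ge M),
\]
where $\lambda_0$ is as in Lemma \ref{lm:bb535}.

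The Busemann term is controlled using the cocycle \eqref{cocycle} to split $\Bus^{M}(0,tx,0,ty)=\Bus^{M}(0,0,0,ty)-\Bus^{M}(0,0,0,tx)$ and applying Lemma \ref{lm:bb535} to each endpoint (both spatial arguments $\ge 0$). This yields
\[
\Bus^{M}(0,tx,0,ty)\;\ge\;M(y-x)t-2-(\log M+1)(y+x)t,
\]
and the constraints $y\ge 2\mu$, $x\in[0,\mu]$ force $y-x\ge\mu$ and $y+x\le 4(y-x)$, so that
\[
\Bus^{M}(0,tx,0,ty)\;\ge\;\bigl[M-4(\log M+1)\bigr]\mu\,t-2.
\]
The factor $M-4(\log M+1)$ tends to $+\infty$ as $M\to\infty$.

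The main obstacle is to bound the polymer probability term: specifically, to show that for each fixed $M$,
\[
\liminf_{t\to\infty}\;\inf_{x\in[0,\mu]}\,\frac{1}{t}\log\Poly^{\bShe,\bbus}_{(0,tx)}(-\Xvelo\ge M)\;\ge\;0.
\]
Since $\Xvelo$ exists $\Poly^{\bShe,\bbus}_{(s,z)}$-a.s.\ (cf.\ \eqref{chi6} and Lemma \ref{lm:lim<infty}), the hypothesis $\Poly^{\bShe,\bbus}_{(0,0)}(-\Xsvelo>M)>0$ yields $\Poly^{\bShe,\bbus}_{(0,0)}(-\Xvelo\ge M)>0$, which by the mutual absolute continuity of the polymer measures on the tail $\sigma$-algebra (a consequence of \eqref{harmonic}) upgrades to strict positivity for every starting point. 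The plan to promote this pointwise positivity to the uniform quantitative bound above is to combine \eqref{harmonic} (which expresses the probability from $(0,tx)$ as an integral of the probabilities from random terminal points $(s,X_s)$ under the $(0,tx)$-polymer) with the time-shift ergodicity of $\bfP$ underlying the construction of $(\bShe,\bbus)$, and a Markov-chain-type control of how fast the probability can decay. Once this obstacle is surmounted, combining the two lower bounds and letting $M\to\infty$ after $t\to\infty$ delivers the claimed divergence.
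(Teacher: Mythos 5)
The first half of your proposal matches the paper's: you correctly use the cocycle property, restrict the integral in Lemma \ref{lm:bb555} to $\{\lambda\ge M\}$ with monotonicity, and estimate the Busemann term with Lemma \ref{lm:bb535} exactly as the paper does via \eqref{bb735}. The gap is precisely the estimate you flag as ``the main obstacle'': you need
\[
\liminf_{t\to\infty}\;\inf_{x\in[0,\mu]}\,\frac{1}{t}\log\Poly^{\bShe,\bbus}_{(0,tx)}(-\Xvelo\ge M)\;\ge\;0,
\]
i.e.\ a \emph{quantitative} lower bound on a polymer-path probability whose \emph{starting point $(0,tx)$ moves off to infinity with $t$}. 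Mutual absolute continuity via \eqref{harmonic} gives strict positivity for each fixed starting point but carries no information about the decay rate as the starting point escapes to infinity, and ``time-shift ergodicity plus Markov-chain control'' is not an argument, just a hope: nothing in the developed machinery relates the constant in $\Poly^{\bShe,\bbus}_{(0,z)}(-\Xvelo\ge M)$ across different spatial locations $z$. Indeed there is no stationarity in the spatial variable for the coupled pair $(\bShe,\bbus)$, so an exponential decay $e^{-c(M)z}$ cannot be ruled out by the arguments you cite, and if $c(M)$ is not $o(M)$ the whole approach breaks.

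The paper avoids this obstacle entirely. Its proof never leaves the base point $(0,0)$: it applies the \emph{lower} bound of Lemma \ref{lm:bb555} at $(0,0,0,ty)$ to get $e^{\bbus(0,0,0,ty)}\ge\int e^{\Bus^{\lambda-}(0,0,0,ty)}\Qbar(d\lambda)$ with $\Qbar(d\lambda)=\Poly^{\bShe,\bbus}_{(0,0)}(-\Xsvelo\in d\lambda)$ fixed, then after the $\e^{\Bus^{\lambda+}(0,0,0,tx)}$ insertion and the Lemma \ref{lm:bb535} estimate (your step), it invokes the positive-correlation inequality \eqref{bb709} for the \emph{fixed} measure $\Qbar$: because both $\ind_{(\lambda_0,\infty)}(\lambda)$ and $\lambda\mapsto e^{\Bus^{\lambda+}(0,0,0,tx)}$ are nondecreasing, the restricted integral is bounded below by $\Qbar(\lambda_0,\infty)$ (a fixed strictly positive constant by hypothesis) times the unrestricted integral, which by the \emph{upper} bound of Lemma \ref{lm:bb555} at $(0,0,0,tx)$ is $\ge e^{\bbus(0,0,0,tx)}$. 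Thus the only probability that appears is $\Qbar(\lambda_0,\infty)>0$, independent of $t$ and $x$. You should rework your argument to use both inequalities of Lemma \ref{lm:bb555} at the origin together with the monotone-correlation step \eqref{bb709}, rather than pushing $\lambda$ to a constant and invoking a moving-start polymer probability.
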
 

\begin{proof}  
 Let $\lambda_0>0$ be large enough to satisfy Lemma \ref{lm:bb535} and also so that $\lambda\ge 4(\log\lambda+1)$ for $\lambda\ge\lambda_0$.   Then  for any $y\ge 2\mu > \mu \ge x> 0$ and $\lambda\ge\lambda_0$,  
\be\label{bb735}\begin{aligned}    \lambda (y-x)  -(x+y)(\log\lambda+1)  &\ge    \lambda (y-x) - \tfrac14  \lambda(x+y)  
=    \tfrac34   \lambda y  - \tfrac54    \lambda x \\
& \ge  \lambda(  \tfrac64  \mu   - \tfrac54  \mu) 
\ge  \tfrac14  \lambda_0 \mu. 
\end{aligned}\ee 
 Abbreviate  $\Qbar(d\lambda)= \Poly^{\bShe,\bbus}_{(0,0)}(-\Xsvelo\in d\lambda)$. 
  Since  $\lambda\mapsto\Bus^{\lambda+}(0,0,0,x)$  is increasing for  $x>0$, we have the positive correlation 
\be\label{bb709}  
\int_{(\lambda_0,\infty) } e^{\Bus^{\lambda+}(0,0,0,x) }\, \Qbar(d\lambda)   
\ge    \Qbar(\lambda_0,\infty)  \cdot  \int_{\R} e^{\Bus^{\lambda+}(0,0,0,x)}\, \Qbar(d\lambda)  . 
\ee
In the calculation below, begin and end with \eqref{bb555}.  
\begin{align*} 
e^{ \bbus(0,0,0,ty) }
&\ge    \int_{\R} e^{\Bus^{\lambda-}(0,0,0,ty)}\tspb \Qbar(d\lambda) \\[3pt] 
&\ge   \int_{(\lambda_0,\infty) } e^{\Bus^{\lambda+}(0,0,0,tx) + \Bus^{\lambda-}(0,0,0,ty)-\Bus^{\lambda+}(0,0,0,tx)}\tspb \Qbar(d\lambda)    \\[3pt] 
(\text{by \eqref{bb535}}) \qquad &\ge    \int_{(\lambda_0,\infty) } e^{\Bus^{\lambda+}(0,0,0,tx) + \lambda t(y-x) - 2 -t(x+y)(\log\lambda+1)  }\tspb \Qbar(d\lambda)     \\[3pt] 
(\text{by \eqref{bb735}}) \qquad &\ge     e^{-2 +  \frac14  \lambda_0 \mu t}  \int_{(\lambda_0,\infty) } e^{\Bus^{\lambda+}(0,0,0,tx)   }\tspb \Qbar(d\lambda)   \\[3pt] 
(\text{by \eqref{bb709}}) \qquad &\ge   e^{-2 +  \frac14  \lambda_0 \mu t}  \; \Qbar(\lambda_0,\infty)\,  \int_{\R} e^{\Bus^{\lambda+}(0,0,0,tx)}\tspb \Qbar(d\lambda)   \\[3pt] 
 &\ge   e^{-2 +  \frac14  \lambda_0 \mu t}  \; \Qbar(\lambda_0,\infty)\,   e^{\bbus(0,0,0,tx)}.  
\end{align*} 
Since $\lambda_0$ can be taken arbitrarily large, the conclusion follows. 
  \end{proof}

 \begin{lemma}\label{Qb-LDP}
The following large deviation bounds  hold $\bfP$-almost surely.
\begin{enumerate}[label={\rm(\alph*)}, ref={\rm\alph*}] \itemsep=3pt
\item\label{Qb-LDP1} If $\PMPslopep=\lambda$ and $\PMPslopem=-\lambda$ for $\lambda\in(0,\infty)$, then
for $\mu<\nu$ in $(-\lambda,\lambda)$
    \[\varlimsup_{t\to\infty}t^{-1}\log\Poly_{(t,0)}^{\bShe,\bbus}(\mu t\le X_0\le\nu t)<0,\]
and for $\nu'>\nu>\lambda$ and $\mu'<\mu<-\lambda$,  
    \begin{align*}
    \varlimsup_{t\to\infty}t^{-1}\log\Poly_{(t,0)}^{\bShe,\bbus}(\mu' t\le X_0\le\mu t)<0\quad\text{and}\quad
    \varlimsup_{t\to\infty}t^{-1}\log\Poly_{(t,0)}^{\bShe,\bbus}(\nu t\le X_0\le\nu' t)<0.
    \end{align*}
\item\label{Qb-LDP2} If $\PMPslopep=\infty$ and $\PMPslopem\in\R$, then 
for any $\mu<0$,
    \[\varlimsup_{t\to\infty}t^{-1}\log\Poly_{(t,0)}^{\bShe,\bbus}(\mu t\le X_0\le 0)<0.\]
    Similarly, if $\PMPslopem=-\infty$ and $\PMPslopep\in\R$, then for any $\nu>0$,
    \[\varlimsup_{t\to\infty}t^{-1}\log\Poly_{(t,0)}^{\bShe,\bbus}(0\le X_0\le \nu t)<0.\]
\item\label{Qb-LDP3} If $\PMPslopep=\infty$, then 
for any $\nu>0$,
    \[\varlimsup_{t\to\infty}t^{-1}\log\Poly_{(t,0)}^{\bShe,\bbus}(0\le X_0\le \nu t)=-\infty.\]
    Similarly, if $\PMPslopem=-\infty$, then for any $\mu<0$,
    \[\varlimsup_{t\to\infty}t^{-1}\log\Poly_{(t,0)}^{\bShe,\bbus}(\mu t\le X_0\le 0)=-\infty.\]
\end{enumerate}
\end{lemma}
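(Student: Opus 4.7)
The plan is to deduce all three parts from a pointwise Laplace-type asymptotic for the one-time density
\[\pi(0,x\viiva t,0)\;=\;\bShe(t,0\viiva 0,x)\,\frac{e^{\bbus(0,0,0,x)}}{e^{\bbus(0,0,t,0)}},\]
obtained from the cocycle identity \eqref{NEBus245}. Each of the three factors will be controlled on the scale $x=ut$ as $t\to\infty$: Theorem \ref{thm:Z-cont} gives $t^{-1}\log\bShe(t,0\viiva 0,ut)\to -\tfrac{1}{24}-u^2/2$; applying Lemma \ref{lm:bb555} at equal times $(0,0)$ and $(0,ut)$, combined with the Busemann shape estimate \eqref{bus:exp3'} and the fact that $\Qbar=\Poly_{(0,0)}^{\bShe,\bbus}(-\Xvelo\in\aabullet)$ has $\esssup=\PMPslopep$ and $\essinf=\PMPslopem$ by Lemma \ref{lm:bb555}, yields
\[t^{-1}\bbus(0,0,0,ut)\ \longrightarrow\ u\PMPslopep\ind_{u>0}+u\PMPslopem\ind_{u<0},\qquad u\neq 0;\]
and propagating Lemma \ref{lm:bb555} from equal times to the pair $(0,0),(t,0)$ via \eqref{NEBus-tmp} and \eqref{NEBus2hat} gives the two-sided bound $\int e^{\Bus^{\lambda-}(0,0,t,0)}\Qbar(d\lambda)\le e^{\bbus(0,0,t,0)}\le\int e^{\Bus^{\lambda+}(0,0,t,0)}\Qbar(d\lambda)$, which together with Theorem \ref{thm:b-shape} (which gives $t^{-1}\Bus^{\lambda}(0,0,t,0)\to\lambda^2/2-1/24$) implies
\[t^{-1}\bbus(0,0,t,0)\ \longrightarrow\ \tfrac{1}{2}(\PMPslopep^2\vee\PMPslopem^2)-\tfrac{1}{24},\]
the right-hand side being understood as $+\infty$ whenever $\PMPslopep=\infty$ or $\PMPslopem=-\infty$.

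For part \eqref{Qb-LDP1}, where $\PMPslopep=\lambda=-\PMPslopem$ with $\lambda>0$, combining the three limits gives $t^{-1}\log\pi(0,ut\viiva t,0)\to -(|u|-\lambda)^2/2$, locally uniformly in $u$, and a standard Laplace-type integration over $[\mu t,\nu t]$ converts this pointwise density asymptotic into an LDP upper bound with rate function $(|u|-\lambda)^2/2$, which vanishes only at $u=\pm\lambda$. Since each of the three intervals in \eqref{Qb-LDP1} avoids $\{\pm\lambda\}$, the infimum of the rate function is strictly positive and exponential decay follows. For parts \eqref{Qb-LDP2} and \eqref{Qb-LDP3}, the divergence $\PMPslopep=\infty$ forces $\bbus(0,0,t,0)$ to grow superlinearly: for any $L>0$, $e^{\bbus(0,0,t,0)}\ge\Qbar([L,\infty))\,\exp\!\bigl(t(L^2/2-1/24)(1+o(1))\bigr)$, and $\Qbar([L,\infty))>0$ for every $L$ since $\esssup(-\Xvelo)=\infty$. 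In part \eqref{Qb-LDP2}, the numerator $\int_{\mu t}^{0}\bShe(t,0\viiva 0,x)e^{\bbus(0,0,0,x)}\,dx$ is bounded by $e^{Ct}$ for some finite $C=C(\mu,\PMPslopem)$ (using Theorem \ref{thm:Z-cont} and the linear upper bound $\bbus(0,0,0,x)\le(|\PMPslopem|+o(1))|x|$ established above from the finite slope $\PMPslopem$), so dividing by the denominator yields super-exponential decay in $t$. In part \eqref{Qb-LDP3}, one analogously dominates $e^{\bbus(0,0,0,x)}$ for $x\in[0,\nu t]$ by $\int e^{\Bus^{\lambda+}(0,0,0,x)}\Qbar(d\lambda)$ and compares with the lower bound on the denominator via the elementary inequality $\lambda u\le\tfrac{1}{2}\lambda^2+\tfrac{1}{2}u^2$; after splitting the $\lambda$-integration at a level $L\ge 4\nu$, the contribution from $\lambda\in[\PMPslopem,L]$ is bounded by $\exp(-t(L-\nu)^2/2)$ and the contribution from $\lambda\in[L,\infty)$ by $\exp(-tL^2/4-t/48)$, so letting $L\to\infty$ after $t\to\infty$ drives the rate to $-\infty$.

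The principal technical obstacle will be making the pointwise-in-$\lambda$ Busemann shape theorem Theorem \ref{thm:b-shape} sufficiently uniform in $\lambda$ to evaluate the integrals $\int e^{\Bus^{\lambda\sig}(0,0,t,0)}\Qbar(d\lambda)$ and $\int e^{\Bus^{\lambda\sig}(0,0,0,ut)}\Qbar(d\lambda)$ appearing throughout. On compact $\lambda$-intervals this is handled by applying Theorem \ref{thm:b-shape} along a countable dense set of $\lambda$'s and then using the monotonicity \eqref{Busmono} together with the cocycle \eqref{cocycle} to interpolate; for the unbounded $\lambda$-ranges required in parts \eqref{Qb-LDP2}--\eqref{Qb-LDP3}, the needed uniform tail control comes from the quadratic growth of $\lambda\mapsto\lambda^2/2$ dominating any linear $\lambda u$, so that the supremum-type limit for the denominator is genuinely unbounded and robust to truncation errors.
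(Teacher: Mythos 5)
Your proposal is broadly in the same Laplace-asymptotics spirit as the paper, but it takes a genuinely different and ultimately gappier route because you insist on identifying the limit $t^{-1}\bbus(0,0,t,0)\to \tfrac12(\PMPslopep^2\vee\PMPslopem^2)-\tfrac1{24}$. The paper never needs this: it normalizes $\pi(0,x\viiva t,0)\,dx$ and writes
\[
\Poly_{(t,0)}^{\bShe,\bbus}(X_0\in A)
=\frac{\int_{A}\bShe(t,0\viiva 0,x)\,e^{\bbus(0,0,0,x)}\,dx}{\int_\R\bShe(t,0\viiva 0,x)\,e^{\bbus(0,0,0,x)}\,dx},
\]
so the two-time quantity $\bbus(0,0,t,0)$ cancels and only the \emph{equal-time} cocycle $\bbus(0,0,0,\cdot)$, whose linear asymptotics $\PMPslopem, \PMPslopep$ are already established in \eqref{la-pm}, together with the shape theorem \eqref{Z-cont} for $\bShe$, enter. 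Parts (a) and (b) are then direct integral comparisons (compare $\int_\nu^{\nu'}$ against $\int_\lambda^{\lambda+\delta}$ in (a), compare $\int_\mu^0$ against $\int_\gamma^{\gamma+1}$ in (b)), and part (c) is deduced from the quantitative difference estimate in Lemma \ref{lm:bb559}.

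The concrete gap in your argument is the interpolation step you rely on for the uniform-in-$\lambda$ two-time shape theorem. You invoke the monotonicity \eqref{Busmono} to sandwich $\Bus^\lambda(0,0,t,0)$ between Busemann functions with nearby slopes along a countable dense set. But \eqref{Busmono} concerns only equal-time spatial increments $\Bus^\lambda(t,x,t,y)$ with $x<y$; the quantity $\Bus^\lambda(0,0,t,0)$ is \emph{not} monotone in $\lambda$, and indeed by the shear covariance (Theorem \ref{thm:bcov}(iii)) its expected growth rate $\lambda^2/2-1/24$ is a convex function of $\lambda$, so no such interpolation can hold. One would have to go through $\Bus^\lambda(0,0,t,0)=\Bus^\lambda(0,0,0,y)+\Bus^\lambda(0,y,t,0)$ and then transfer the spatial monotonicity, which is the sort of extra work the paper's ratio representation avoids. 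A related issue surfaces in your part (c): when $\PMPslopep=\infty$ you want the lower bound $e^{\bbus(0,0,t,0)}\ge\Qbar([L,\infty))\inf_{\lambda\ge L}e^{\Bus^{\lambda-}(0,0,t,0)}$, but for the same reason this infimum is not controlled from below without more care, and your split at $L\ge 4\nu$ is not a substitute for the positive-correlation argument in Lemma \ref{lm:bb559} on which the paper's part (c) actually rests. In short: your plan is plausible in outline but requires an auxiliary uniform two-time Busemann shape theorem that you have not established and that the monotonicity you cite does not deliver; the paper's proof is structured precisely to never need it.
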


\begin{proof}
For $t>0$ and  Borel $A\subset\R$,
    \begin{align*}
    \Poly_{(t,0)}^{\bShe,\bbus}(X_0\in A)
    &=\int_A\bShe(t,0\viiva 0,x)e^{\bbus(t,0,0,x)}\,dx
    =e^{\bbus(t,0,0,0)}\int_A\bShe(t,0\viiva 0,x)e^{\bbus(0,0,0,x)}\,dx\\
    &=\frac{\int_A\bShe(t,0\viiva 0,x)e^{\bbus(0,0,0,x)}\,dx}{\int_\R\bShe(t,0\viiva 0,x)e^{\bbus(0,0,0,x)}\,dx}
    =\frac{\int_{t^{-1}A}\bShe(t,0\viiva 0,tx)e^{\bbus(0,0,0,tx)}\,dx}{\int_\R\bShe(t,0\viiva 0,tx)e^{\bbus(0,0,0,tx)}\,dx}\,.
    \end{align*}
Recall from \eqref{la-pm}  the limits of  $x^{-1}\bbus(0,0,0,x)$ as $x\to\pm\infty$.  
When these limits are finite, we utilize them in this form: for any $C>0$,
\[\lim_{t\to\infty}t^{-1}\sup_{0\le x\le Ct}\abs{\bbus(0,0,0,x)-\PMPslopep x}=\lim_{t\to\infty}t^{-1}\sup_{-Ct\le x\le 0}\abs{\bbus(0,0,0,x)-\PMPslopem x}=0.\]

We start with the case  $\PMPslopep=-\PMPslopem=\lambda\in(0,\infty)$. Take $\nu'>\nu>\lambda$, $\e>0$, and $0<\delta<\nu-\lambda$. Then for $t$ large enough, we have $\bbus(0,0,0,tx)\le \PMPslopep tx+\e t$ for all $x\in[\nu ,\nu']$ and $\bbus(0,0,0,tx)\ge\PMPslopep tx-\e t$ for all $x\in[\lambda,\lambda+\delta]$. From the shape theorem \eqref{Z-cont} we also have that for $t$ large enough, 
$\bShe(t,0\viiva 0,tx)\le e^{\e t-t/24-x^2t/2}$ for all 
$x\in[\nu ,\nu']$ and $\bShe(t,0\viiva 0,tx)\ge e^{-\e t-t/24-x^2t/2}$ for all $x\in[\lambda,\lambda+\delta]$.  Putting all this together gives 
\begin{align*}
\Poly_{(t,0)}^{\bShe,\bbus}(\nu t\le X_0\le\nu' t)
&\le \frac{e^{4\e t}\int_\nu^{\nu'}e^{-x^2t/2+\lambda xt}\,dx}{\int_\lambda^{\lambda+\delta}e^{-x^2t/2+\lambda xt}\,dx}\\
&\le\exp\Bigl\{4\e t-\frac{\nu^2t}2+\lambda\nu t+\frac{(\lambda+\delta)^2t}2-\lambda(\lambda+\delta)t\Bigr\}\cdot\delta^{-1}(\nu'-\nu).
\end{align*}
Take logarithms, divide by $t$ and take it to $\infty$, then take $\delta\to0$ then $\e\to0$ to get
    \[\varlimsup_{t\to\infty}
    t^{-1}\log\Poly_{(t,0)}^{\bShe,\bbus}(\nu t\le X_0\le\nu' t)\le -\tspb\frac{(\nu-\lambda)^2}2<0.\]
Similarly, if $\mu'<\mu<-\lambda$, then
    \[\varlimsup_{t\to\infty}
    t^{-1}\log\Poly_{(t,0)}^{\bShe,\bbus}(\mu' t\le X_0\le\mu t)\le -\tspb\frac{(\mu-\lambda)^2}2<0.\]
Similar arguments give that if $-\lambda\le\mu<\nu<\lambda$, then
    \begin{align*}
    &\varlimsup_{t\to\infty}
    t^{-1}\log\Poly_{(t,0)}^{\bShe,\bbus}(0\le X_0\le\nu t)\le -\tspb\frac{(\nu-\lambda)^2}2<0\quad\text{and}\\
    &\varlimsup_{t\to\infty}
    t^{-1}\log\Poly_{(t,0)}^{\bShe,\bbus}(\mu t\le X_0\le0)\le -\tspb\frac{(\mu-\lambda)^2}2<0.
    \end{align*}
Part \eqref{Qb-LDP1} is proved.

\smallskip 

Next, consider the case $\PMPslopep=\infty$ and $\PMPslopem\in\R$. Take $\gamma\ge1$ such that $\gamma^2-\PMPslopem^2>6$ and take $\e\in(0,1/3)$. Then,  for $t$ large enough, we have
    \[\sup_{\mu \le x\le 0}\abs{\bbus(0,0,0,xt)-\PMPslopem xt}\le\e t\quad\text{and}\quad \inf_{x\ge1}x^{-1}\bbus(0,0,0,xt)\ge \gamma t.\]
Below, replace the integrand in the numerator by its global maximum taken at $x=\PMPslopem$.
\begin{align*}
\Poly_{(t,0)}^{\bShe,\bbus}(\mu t\le X_0\le0)
&\le \frac{e^{3\e t}\int_\mu^0 e^{-x^2t/2+\PMPslopem xt}\,dx}{\int_\gamma^{\gamma+1}e^{-x^2t/2+\gamma xt}\,dx}\\
&\le\exp\Bigl\{3\e t+\frac{\PMPslopem^2t}2+\frac{(\gamma+1)^2t}2-\gamma(\gamma+1)t\Bigr\}\cdot(-\mu)
\le \abs{\mu}e^{-t}.
\end{align*}
 A similar reasoning works for the case $\PMPslopem=-\infty$ and $\PMPslopep\in\R$ and part \eqref{Qb-LDP2} is proved.

\smallskip 

To prove part \eqref{Qb-LDP3}, assume $\PMPslopep=\infty$ without any assumptions on $\PMPslopem$.
Let  $\nu>0$.  
    \begin{align*}
      &  t^{-1}\log  \Poly_{(t,0)}^{\bShe,\bbus}(0\le X_0\le \nu t)
\le   t^{-1}\log \frac{e^{2\e t}\int_0^\nu e^{-x^2t/2+\bbus(0,0,0,tx)}\,dx}{\int_{2\nu}^{2\nu+1}e^{-y^2t/2+\bbus(0,0,0,ty)}\,dy}\\
&\qquad  \le   2\e + \sup_{\substack{0\le x\le\nu\\ 2\nu\le y\le2\nu+1}}
        \Bigl( -\frac{x^2}2+t^{-1}\bbus(0,0,0,tx)+\frac{y^2}2-t^{-1}\bbus(0,0,0,ty)\Bigr) + t^{-1}\log \nu\\
        &\qquad \le   2\e +  \frac{(2\nu+1)^2}2-
        \inf_{\substack{0\le x\le\nu\\ 2\nu\le y\le2\nu+1}}
        \frac{\bbus(0,0,0,ty)-\bbus(0,0,0,tx)}t + t^{-1}\log \nu \; \underset{t\to\infty} \longrightarrow-\infty  .
    \end{align*}
   The last limit follows from  Lemma \ref{lm:bb559} because,  
  by Lemma \ref{lm:bb555},    $\PMPslopep=\infty$ implies that   $-\Xvelo$ is unbounded above.  
  
Again, a similar reasoning works when $\PMPslopem=-\infty$ and the lemma is proved.
\end{proof}


 \begin{lemma}\label{la<infty}
Both $\PMPslopem$ and $\PMPslopep$ are finite.
\end{lemma}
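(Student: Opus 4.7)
The plan is to exclude $\PMPslopep=\infty$ by contradiction; the case $\PMPslopem=-\infty$ will follow by the obvious symmetric argument using the other half of Lemma \ref{Qb-LDP}\eqref{Qb-LDP3}. Assume $\PMPslopep=\infty$. Then Lemma \ref{Qb-LDP}\eqref{Qb-LDP3} is available and yields, for every fixed $\nu>0$,
\[
\varlimsup_{t\to\infty} t^{-1}\log \Poly_{(t,0)}^{\bShe,\bbus}(0\le X_0\le \nu t)=-\infty \qquad \bfP\text{-a.s.}
\]
Since $\Poly_{(t,0)}^{\bShe,\bbus}(0\le X_0\le \nu t)\in[0,1]$, bounded convergence implies
\be\label{plan.e1}
\bfE\bigl[\Poly_{(t,0)}^{\bShe,\bbus}(0\le X_0\le \nu t)\bigr]\underset{t\to\infty}\longrightarrow 0.
\ee

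The next step is to rewrite \eqref{plan.e1} using the $\bfP$-invariance under time shifts $\shiftp_{-t}$ established in Proposition \ref{pr:bfP3}. By the definition \eqref{th8} of the shift and of the polymer kernel, the distribution of $X_0$ under $\Poly^{\shiftp_{-t}(\bShe,\bbus)}_{(t,0)}$ coincides with the distribution of $X_{-t}$ under $\Poly^{\bShe,\bbus}_{(0,0)}$. Combined with the $\bfP$-invariance of the law of $(\bShe,\bbus)$ this gives
\be\label{plan.e2}
\bfE\bigl[\Poly_{(t,0)}^{\bShe,\bbus}(0\le X_0\le \nu t)\bigr]=\bfE\bigl[\Poly_{(0,0)}^{\bShe,\bbus}(0\le X_{-t}\le \nu t)\bigr]=\bfE\bigl[\Poly_{(0,0)}^{\bShe,\bbus}(X_{-t}/(-t)\in[-\nu,0])\bigr].
\ee

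Now apply Fatou's lemma twice. On the event $\{\Xvelo\in(-\nu,0)\}$ the LLN $X_{-t}/(-t)\to\Xvelo$ from Theorem \ref{lm:sem-pol}\eqref{Q-LLN}---valid $\Poly^{\bShe,\bbus}_{(0,0)}$-almost surely and $\bfP$-almost surely for an almost surely finite limit by Lemma \ref{lm:lim<infty}---eventually forces $X_{-t}/(-t)\in[-\nu,0]$, so that
\[
\varliminf_{t\to\infty}\ind_{\{X_{-t}/(-t)\tspa\in\tspa[-\nu,0]\}}\ge \ind_{\{\Xvelo\tspa\in\tspa(-\nu,0)\}}.
\]
Fatou under $\Poly_{(0,0)}^{\bShe,\bbus}$ and then under $\bfE$, combined with \eqref{plan.e1}--\eqref{plan.e2}, gives
\[
0=\varliminf_{t\to\infty}\bfE\bigl[\Poly_{(0,0)}^{\bShe,\bbus}(X_{-t}/(-t)\in[-\nu,0])\bigr]\ge \bfE\bigl[\Poly_{(0,0)}^{\bShe,\bbus}(\Xvelo\in(-\nu,0))\bigr],
\]
so $\Poly_{(0,0)}^{\bShe,\bbus}(\Xvelo\in(-\nu,0))=0$ for $\bfP$-a.e.\ $(\bShe,\bbus)$. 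Letting $\nu\nearrow\infty$ by monotone convergence yields $-\Xvelo\le 0$ $\Poly_{(0,0)}^{\bShe,\bbus}$-a.s., and then the identity \eqref{bb556} forces $\PMPslopep=\Poly_{(0,0)}^{\bShe,\bbus}\text{-}\esssup(-\Xvelo)\le 0$, contradicting the assumption $\PMPslopep=\infty$.

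The key point, and the main thing to verify carefully, is the shift-covariance identity in \eqref{plan.e2}: one must check that the polymer kernel $\pi(r,z\viiva s,x)=\bShe(s,x\viiva r,z)e^{\bbus(s,x,r,z)}$, when built on the shifted environment $\shiftp_{-t}(\bShe,\bbus)$, produces the same path law (after reindexing time by $-t$) as the kernel on $(\bShe,\bbus)$ started at $(0,0)$. Once this is in hand, everything else is a routine application of Fatou together with the two ingredients already proved---the super-exponential decay from Lemma \ref{Qb-LDP}\eqref{Qb-LDP3} and the polymer LLN from Theorem \ref{lm:sem-pol}\eqref{Q-LLN} (with the a.s.\ finiteness of $\Xvelo$ supplied by Lemma \ref{lm:lim<infty}).
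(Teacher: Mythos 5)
Your proposal is correct, and it is a genuinely different route from the paper's. The paper uses both halves of Lemma \ref{Qb-LDP} (parts \eqref{Qb-LDP2} and \eqref{Qb-LDP3}) to get exponential decay of $\Poly_{(t,0)}^{\bShe,\bbus}(\mu t\le X_0\le\nu t)$ for every $\mu<0<\nu$; after the same shift-covariance and Fatou steps as yours, this shows $\Poly_{(0,0)}^{\bShe,\bbus}(\Xvelo\in(-\nu,-\mu))=0$ for all $\mu<0<\nu$, hence $\Xvelo\notin\R$ a.s., which contradicts Lemma \ref{lm:lim<infty} directly. You instead use only the one-sided super-exponential bound of part \eqref{Qb-LDP3}, deduce $\Xvelo\ge0$ a.s.\ (not that it escapes every bounded interval), and then invoke the identity $\PMPslopep=\esssup(-\Xvelo)$ from \eqref{bb556} to obtain $\PMPslopep\le0$, contradicting $\PMPslopep=\infty$. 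Both arguments are sound; yours is leaner in that it never touches the $(\mu t,0)$ window and so does not need Lemma \ref{Qb-LDP}\eqref{Qb-LDP2} at all (avoiding the small case-split over whether $\PMPslopem$ is finite or $-\infty$), but it relies on the additional input \eqref{bb556}, whereas the paper's argument gets by with just the finiteness of $\Xvelo$.

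Two small points of precision worth noting. First, your citation of the LLN should more accurately point to the discussion around \eqref{chi6}, which establishes existence of the limit $\Xvelo$ under $\Poly^{\bShe,\bbus}_{(t,y)}$ by the backward-martingale argument (Theorem \ref{lm:sem-pol}\eqref{Q-LLN} itself concerns the measures $\Poly_{(t,y)}^{\lambda\sig}$, not $\Poly^{\bShe,\bbus}_{(t,y)}$); the finiteness needed for your conclusion is indeed Lemma \ref{lm:lim<infty}. Second, the step ``letting $\nu\nearrow\infty$'' should be run over a countable sequence of $\nu$'s (intersecting the corresponding $\bfP$-full events) before invoking monotone convergence; this is routine but should be said. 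Neither affects correctness. The shift-covariance identity \eqref{plan.e2}, which you correctly flag as the thing to check carefully, is exactly the one used in the paper's proof (it is the equality $\bfE\,\Poly_{(0,0)}^{\bShe,\bbus}(-\mu r\le X_r\le-\nu r)=\bfE\,\Poly_{(t,0)}^{\bShe,\bbus}(\mu t\le X_0\le\nu t)$ with $r=-t$), and your justification via the shift \eqref{th8} acting on the kernel $\pi(r,z\viiva s,x)=\bShe(s,x\viiva r,z)e^{\bbus(s,x,r,z)}$ is the right one.
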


\begin{proof}
Recall that $\PMPslopem\le\PMPslopep$, $\PMPslopem<\infty$, and $\PMPslopep>-\infty$.
We prove that $\PMPslopep<\infty$, the proof of $\PMPslopem>-\infty$ being similar.
So assume  $\PMPslopep=\infty$. Lemma \ref{Qb-LDP}\eqref{Qb-LDP2}--\eqref{Qb-LDP3} give  $\bfP$-almost surely, for any $\mu<0<\nu$, 
    \[\varlimsup_{t\to\infty}t^{-1}\log\Poly_{(t,0)}^{\bShe,\bbus}(\mu t\le X_0\le \nu t)<0.\]
Use the temporal shift invariance of $\bfP$ to write, for  $\mu<0<\nu$,
    \begin{align*}
    &\bfE[\Poly_{(0,0)}^{\bShe,\bbus}(-\nu< \Xvelo<-\mu)]
    \le\bfE \oE^{\Poly_{(0,0)}^{\bShe,\bbus}}\Bigl[\,\varliminf_{r\to-\infty}\one\{-\mu r\le X_r\le-\nu r\}\Bigr]\\
    &\quad\le\varliminf_{r\to-\infty}\bfE \Poly_{(0,0)}^{\bShe,\bbus}(-\mu r\le X_r\le-\nu r)
    =\varliminf_{t\to\infty}\bfE \Poly_{(t,0)}^{\bShe,\bbus}(\mu t\le X_0\le \nu t)=0.
    \end{align*}
This contradicts the finiteness of the limiting velocity $\Xvelo$, proved in Lemma \ref{lm:lim<infty}.
\end{proof}

We are ready to complete the proofs of Theorems \ref{thm:unique} and \ref{thm:inv-n}. 

\begin{proof}[Proof of part \eqref{thm:unique.ii} of Theorem \ref{thm:unique}]  
The existence of left and right asymptotic slopes was established in \eqref{la-pm}.
The combination of \eqref{la<lim<la} and Lemmas 
  \ref{lm:lim<infty}  and \ref{la<infty} show that $-\infty<\PMPslopem\le\PMPslopep<\infty$.  
   Part \ref{P.iia} of  Theorem \ref{thm:unique} has been  proved. 
 
\medskip 


To prove   part \ref{P.iib} of  Theorem \ref{thm:unique}, assume that  either $\PMPslopep+
\PMPslopem\ne0$  or  $\PMPslopem=\PMPslopep=0$.   Define $\lambda\in\R$ as follows:
\begin{itemize} 
\item If $\PMPslopep+\PMPslopem>0$  and therefore $\PMPslopep>0$, 
 let $\lambda=\PMPslopep>0$.   
\item If $\PMPslopep+\PMPslopem<0$ and therefore  $\PMPslopem<0$, 
  let $\lambda=\PMPslopem<0$.
\item If $\PMPslopem=\PMPslopep=0$, let $\lambda=0$. 
\end{itemize}

 
In  these cases   all  $f\in\clf$ satisfy the conditions of Lemma \ref{specialW} and are thus in $\initF_\lambda$. 
Theorem \ref{l2p-buslim} implies that $\P\otimes\PMPinit$-almost surely, for any $f\in\clf$,
	\be\label{bb407}
	\lim_{r\to-\infty}\frac{\int_\R\She(t,y\viiva r,z)\,f(z)\,dz}{\int_\R\She(s,x\viiva r,z)\,f(z)\,dz}=e^{\BBus^\lambda(s,x,t,y)}
	\ee
locally uniformly in $(s,x,t,y)\in\R^4$. The above ratio does not depend on the choice of  $f\in\clf$.
Since the distribution of $f(\aabullet)/f(0)$ under $\PMPinit$ is the same as that of $e^{\bbus(r,0,r,\aabullet)}$ under $\bfP$ and 
$e^{\bbus(r,0,r,\aabullet)}$ is independent of $\{\bShe(t,\aabullet\viiva r,\aabullet): t>r\}$
we get that for any $m>0$, as $r\to-\infty$, 
	\be\label{bb409}
	\biggl\{\frac{\int_\R\bShe(t,y\viiva r,z)\,e^{\bbus(r,0,r,z)}\,dz}{\int_\R\bShe(s,x\viiva r,z)\,e^{\bbus(r,0,r,z)}\,dz}:s,x,t,y\in[-m,m]\biggr\}\ee
converges weakly under $\bfP$, on the space  $\sC([-m,m]^4,\R)$,  to the distribution of $\{e^{\BBus^\lambda(s,x,t,y)}:s,x,t,y\in[-m,m]\}$.
But \eqref{NEBus-tmp} 
implies that the ratio of integrals is actually equal to $e^{\bbus(s,x,t,y)}$. Consequently, the distribution of 
$e^{\bbus}$ under $\bfP$ is the same as that of $e^{\BBus^\lambda}$ under $\P$. In particular, $\PMPinit$ is   the distribution of $\eqcl{e^{\BBus^\lambda(0,0,0,\aabullet)}}$. Since $\lim_{\abs y\to\infty} y^{-1}\BBus^\lambda(0,0,0,y) =\lambda$,  we must have   $\PMPslopem=\PMPslopep=\lambda$. 
\end{proof}

\begin{proof}[Proof of part  \eqref{thm:inv-n.ii} of Theorem \ref{thm:inv-n}]
The multivariate case follows the same reasoning. 
Start with the product measure $\P(d\w)\otimes\PMPinit^{(n)}(d\clf^{1:n})$ on the space $\Omega\times\nCICP$ and construct the time-ergodic measure $\bfP^{(n)}$ on the space 
$\Gamma^{(n)}=\sC(\varsets,\R)\times\sC(\R^4,\R)^n$ with coordinate variables 
$(\bShe, \bbus^{1:n})=(\bShe, \bbus^1,\dotsc, b^n)$.   The previous arguments apply to each $(\bShe, \bbus^i)$ marginal. 

Part  \ref{inv-n.iia} of Theorem \ref{thm:inv-n} follows from  part \ref{P.iia} of  Theorem \ref{thm:unique}. 

For part  \ref{inv-n.iib}, 
the limit \eqref{bb407} becomes now 
$\P\otimes\PMPinit^{(n)}$-almost sure locally uniform  convergence of the $n$-tuple: 
	\be\label{bb407n}
	\lim_{r\to-\infty}\biggl\{\frac{\int_\R\She(t,y\viiva r,z)\,f^i(z)\,dz}{\int_\R\She(s,x\viiva r,z)\,f^i(z)\,dz}\biggr\}_{i=1}^n=\bigl\{e^{\BBus^{\lambda_i}(s,x,t,y)}\bigr\}_{i=1}^n
	\ee
	for any $f^i\in\clf^i$. 
In the next step, as in the proof of part \eqref{thm:unique.ii} of Theorem \ref{thm:unique}, we get distributional convergence of $n$-tuples of continuous functions: 
	\begin{align*}
	&\bigl\{e^{\bbus^i(s,x,t,y)}:s,x,t,y\in[-m,m]\bigr\}_{i=1}^n
	=  \biggl\{\frac{\int_\R\bShe(t,y\viiva r,z)\,e^{\bbus^i(r,0,r,z)}\,dz}{\int_\R\bShe(s,x\viiva r,z)\,e^{\bbus^i(r,0,r,z)}\,dz}:s,x,t,y\in[-m,m]\biggr\}_{i=1}^n \\
	&\qquad\qquad 
	\underset{r\to-\infty}{\overset{d}\longrightarrow}  \bigl\{e^{\BBus^{\lambda_i}(s,x,t,y)}:s,x,t,y\in[-m,m]\bigr\}_{i=1}^n. 
\end{align*} 
Since $\PMPinit^{(n)}$ is the distribution of 
 $\{e^{\bbus^i(0,0,0,\aabullet)}\}_{i=1}^n$, we have identified $\PMPinit^{(n)}$ as the distribution of 
 $\{e^{\BBus^{\lambda_i}(0,0,0,\aabullet)}\}_{i=1}^n$. 
\end{proof}

\medskip

\section{Synchronization and one force--one solution principle}\label{sec:1F1S}

\begin{proof}[Proof of Lemma \ref{lm:u-f}]
Given a $\varphi$-invariant random variable $\clf:\Omega\to\CICP$ let $f:\Omega\to\CICM$ be the function defined by $f^\w\in\clf^\w$ and 
$f^\w(0)=1$.   
Define $\Glob:\R^2\times\Omega\to\R$ as follows: for $t\in\R$ take a rational $s<\min(t,0)$ and let
    \begin{align}\label{f->u}
    \Glob^\w(t,x)=\frac{\She^\w(t,x\viiva s,f^{\shiftp_s\w})}{\She^\w(0,0\viiva s,f^{\shiftp_s\w})}\,.
    \end{align}

Observe that by a combination of a temporal translation  \eqref{CK8} and the $\varphi$-invariance \eqref{fi-inv} of $\clf$, the equalities 
   \be\label{u999} 
\begin{aligned}    
   \frac{\She^\w(t,x\viiva r,f^{\shiftp_r\w})}{\She^\w(0,0\viiva r,f^{\shiftp_r\w})}
    &=\frac{\She^\w\bigl(t,x\bviiva s,\She^\w(s,\aabullet\viiva r,f^{\shiftp_r\w})\bigr)}{\She^\w\bigl(0,0\bviiva s,\She^\w(s,\aabullet\viiva r,f^{\shiftp_r\w})\bigr)}
    =\frac{\She^\w\bigl(t,x\bviiva s,\She^{\shiftp_r\w}(s-r,\aabullet\viiva 0,f^{\shiftp_r\w})\bigr)}{\She^\w\bigl(0,0\bviiva s,\She^{\shiftp_r\w}(s-r,\aabullet\viiva 0,f^{\shiftp_r\w})\bigr)}\\
    &
    =
    \frac{\She^\w(t,x\viiva s,f^{\shiftp_s\w})}{\She^\w(0,0\viiva s,f^{\shiftp_s\w})}  
    \end{aligned}\ee
    hold simultaneously for all $(t,x)$, 
    on a full $\P$-probability event that depends on the pair $r<s$ in $(-\infty, t)$.  We apply \eqref{u999} only for rational $r<s$ so that the null events   do not accumulate.    
Then we can conclude that  $\Glob(t,x)$ is well-defined for all $(t,x)\in\R^2$ on a single event of full $\P$-probability and its  definition \eqref{f->u} is  independent of the choice of the rational  $s<\min(t,0)$.  

The definition \eqref{f->u} and the cocycle property \eqref{CK8} of $\She$ imply that, $\P$-almost surely, for all pairs $t> s$ in $\R^2$ and all $y\in\R$, 
$\She\bigl(t,y\viiva s,\Glob(s,\aabullet)\bigr)=\Glob(t,y)$.

Next we check the  shift-covariance claim that  for each $r\in\R$ there exists an event $\Omega_r$ such that $\P(\Omega_r)=1$ and   $\eqcl{\Glob^{\shiftp_r\w}(t,\aabullet)}=\eqcl{\Glob^{\w}(t+r,\aabullet)}$ for all $\w\in\Omega_r$ and $t\in\R$.
  No proof is needed for $r=0$ so let $r\ne 0$. For each $t\in\R$ pick and fix a rational $s<\min(t,0)$. Then 
\begin{align*}
\Glob^{\shiftp_r\w}(t,x)
= \frac{\She^{\shiftp_r\w}(t,x\viiva s,f^{\shiftp_{s+r}\w})}{\She^{\shiftp_r\w}(0,0\viiva s,f^{\shiftp_{s+r}\w})}
= \frac{\She^\w(t+r,x\viiva s+r,f^{\shiftp_{s+r}\w})}{\She^\w(r,0\viiva s+r,f^{\shiftp_{s+r}\w})}= \frac{\Glob^\w(t+r,x)}{\Glob^\w(r,0)}\,. 
\end{align*}
The first equality above is the definition \eqref{f->u}  of $\Glob$. The second equality is the shift-covariance \eqref{eq:cov.shift} that holds on an $r$-dependent full-probability event, simultaneously for all real $t$ and rational $s<\min(t,0)$.  The third equality is a combination of  \eqref{f->u} and \eqref{u999}.   \eqref{u999} is used here, at the expense of another $r$-dependent null event, in case $s+r$ is not rational. The properties of $\Glob$ claimed in Lemma  \ref{lm:u-f} have been checked. 

Lastly, we verify that the equivalence class of $\Glob^{\w}(0,\aabullet)$ recovers $\clf^\w$ $\P$-almost surely. Fix a rational $s<0$.
\begin{align*}
\eqcl{\Glob^{\w}(0,\aabullet)}  
=\eqcl{\She^\w(0,\aabullet\viiva s,f^{\shiftp_s\w})}
\overset{\eqref{eq:cov.shift}}=\eqcl{\She^{\shiftp_s\w}(-s,\aabullet\viiva 0,f^{\shiftp_s\w})}
=\varphi(-s,\shiftp_s\w,\clf^{\shiftp_s\w})= \clf^{\w},
\end{align*}
$\P$-almost surely.
The last step is from \eqref{fi-inv} and holds almost surely for a given $s$. 
 
 \medskip 

Conversely, let $\Glob$ be a $\sC(\R, \CICM)$-valued random variable on $(\Omega,\sF,\P)$ such that $\P$-almost surely $\She^\w\bigl(t,y\viiva s,\Glob^\w(s,\aabullet)\bigr)=\Glob^\w(t,y)$ for all $t>s$ and $y\in\R$, and for each given $r>0$, $\eqcl{\Glob^{\shiftp_r\w}(0,\aabullet)}=\eqcl{\Glob^{\w}(r,\aabullet)}$ $\P$-almost surely. Then  $\clf^\w=\eqcl{\Glob^\w(0,\aabullet)}$ satisfies 
\[   \varphi(t,\w,\clf^\w)=\eqcl{\She^\w(t,\aabullet\viiva 0,\Glob^\w(0,\aabullet))}
=\eqcl{\Glob^\w(t,\aabullet)}\overset{(*)}=\eqcl{\Glob^{\shiftp_t\w}(0,\aabullet)}
= \clf^{\shiftp_t\w}.\]
By the assumption on $\Glob$, equality $(*)$ above holds on a $t$-dependent full probability event. 
Thus $\clf^\w$ is   a $\varphi$-invariant random variable $\clf:\Omega\to\CICP$. 
\end{proof}

\begin{proof}[Proof of Theorem \ref{thm:1f1s}]
The measurability claim follows from Theorem \ref{main:Bus}\eqref{Bproc.indep}.
%
By Theorem  \ref{bus:exp}  there exists an event $\Omega_0$ such that $\P(\Omega_0)=1$ and for any 
$\w\in\Omega_0$, $\lambda\in\R$, and $\sigg\in\{-,+\}$, $(r,x)\mapsto\Bus^{\lambda\sig}(r,0,r,x)$ is in $\initF_\lambda$. 
By Theorem \ref{thm:bcov}\eqref{thm:bcov.i} there exists an event $\Omega_\ttset^1$ such that $\Bus^{\lambda\sig}(r,0,r,x)=\Bus^{\lambda\sig}(0,0,0,x)\circ\shiftp_r$, for all $\w\in\Omega_\ttset^1$, $r\in-\ttset$, 
$x,\lambda\in\R$, and $\sigg\in\{-,+\}$.
Consequently,
$\clf_{\lambda\sig}^\w\in\HH_{\lambda,\ttset}(\w)$, for $\w\in\Omega_0\cap\Omega_\ttset^1$.
Part \eqref{thm:1f1s.finH} is proved. Part \eqref{thm:1f1s.f>0} is immediate from the definition \eqref{f-la6} and Theorem \ref{main:Bus}\eqref{Bproc.cont}.
Next, note that, on a full $\P$-probability event, we have for $s\in\R$ and $t\ge s$
	\[\eqcl{\She\bigl(t,\aabullet\tsp\viiva s,e^{\Bus^{\lambda\sig}(s,0,s,\aabullet)}\bigr)}=\eqcl{e^{\Bus^{\lambda\sig}(s,0,t,\aabullet)}}=\eqcl{e^{\Bus^{\lambda\sig}(t,0,t,\aabullet)}}\] 
and the left-hand and right-hand sides equal, respectively, 
$\varphi(t-s,\shiftp_s\w,\clf^{\shiftp_s\w}_{\lambda\sig})$ and $\clf^{\shiftp_t\w}_{\lambda\sig}$
if $s$ and $t$ are in $\ttset$ and $\w\in\Omega_\ttset$.
This proves part \eqref{thm:1f1s.phi-inv}.


If $\clg\in\HH_{\lambda,\ttset}$, then there exists $g:\Omega\to\CICM$ such $\clg=\eqcl{g}$ and 
taking $f(r,x)=g^{\shiftp_r\w}(x)$ gives $f\in\initF_{\lambda,\ttset}$. 
The proof of Theorem \ref{l2p-buslim} works word for word if we restrict $r$ to $-\ttset$.
Then if $\lambda\not\in\Bruno$, 
applying \eqref{eq:buslim2} (with $r$ restricted to $-\ttset$) gives that $\frac{\She(0,x\viiva r,f(r,\aabullet))}{\She(0,0\viiva r,f(r,\aabullet))}$ converges,
 locally uniformly in $x$, to $e^{\Bus^\lambda(0,0,0,x)}$, as $r\to-\infty$ in $-\ttset$. 
 This 
 implies \eqref{1f1s-cv} and part \eqref{thm:1f1s.att} is proved. 
 Part \eqref{thm:1f1s.different} is in Theorem \ref{L-char}. 
 \end{proof}

 \begin{proof}[Proof of Theorem \ref{cor:1f1s}]
 Let $\clf:\Omega\to\CICP$ be a $\varphi$-invariant random variable such that for any $f\in\clf$ the distribution of $\{\log f^\w(x)-\log f^\w(0):x\in\R\}$ under $\P$ is that of a Brownian motion with drift $\lambda x$. This implies that the distribution of $\clf^\w$ under $\P$ is the same as that of $\clf^\w_\lambda$ under $\P$. We want to show that in fact the two are equal, $\P$-almost surely.
 
 For $s,x,t,y\in\R$ take any rational $r<\min(s,t)$ and any $f\in\clf$ and let
    \begin{align}\label{auxBusdef}
    \Bus(s,x,t,y)=\log\She(t,y\viiva r,f^{\shiftp_r\w}) -\log\She(s,x\viiva r,f^{\shiftp_r\w}).
    \end{align}
This definition does not depend on the choice of $f\in\clf$. It also does not depend on the choice of the rational $r$ because, similarly to \eqref{u999}, for any rational $r'<r<\min(s,t)$ we have, $\P$-almost surely, for any $s,x,t,y$
    \[\frac{\She(t,y\viiva r',f^{\shiftp_{r'}\w})}{\She(s,x\viiva r',f^{\shiftp_{r'}\w})}
    =\frac{\She(t,y\viiva r,f^{\shiftp_r\w})}{\She(s,x\viiva r,f^{\shiftp_r\w})}\,.\]
In the above computation we used \eqref{CK8} and the $\varphi$-invariance of $\clf$. 

Repeating the above construction with $\clf^\w_\lambda$ instead of $\clf$ gives $\Bus^\lambda$. 
Since $\clf^\w$ and $\clf^\w_\lambda$ have the same distribution, we get that the
 distribution of $\{\Bus(s,x,t,y):s,x,t,y\in\R\}$ under $\P$ is the same as that of $\Bus^\lambda(s,x,t,y):s,x,t,y\in\R\}$. As such, $\Bus$ satisfies the same shape theorem as $\Bus^\lambda$, namely \eqref{b-shapehat}. Then, \eqref{bus:exp1} and \eqref{bus:exp2} in Theorem \ref{bus:exp} imply that, on a full $\P$-probability event, $(r,x)\mapsto e^{\Bus(r,0,r,x)}$ is in $\initF_\lambda$. Applying \eqref{eq:buslim2} and \eqref{auxBusdef} we get that $\Bus\equiv\Bus^\lambda$, $\P$-almost surely. This implies that $\clf^\w=\clf^\w_\lambda$, for $\P$-almost every $\w$. The one force--one solution principle is thus proved.
 
 The synchronization claim follows from Theorem \ref{th:La}\eqref{th:La:b}, Lemma \ref{specialW}, and Theorem \ref{thm:1f1s}\eqref{thm:1f1s.att}.
\end{proof}

{\it Acknowledgments.} The authors thank Le Chen and Davar Khoshnevisan for valuable discussions.

\appendix 

\section{Probability space for white noise}
\label{sec:WNsetting}
The underlying assumption is that the complete Polish probability space $(\Omega,\sF,\bbP)$ supports a space-time white noise $W$ and a collection of measure-preserving automorphisms as described in Section \ref{sec:setting}.   
This section describes a standard example of a separable Hilbert space (a negative index Hermite-Sobolev space) satisfying the required  hypotheses. This is essentially Example 2 in \cite[Section 4]{Wal-86}. In this setting, verifying our hypotheses is relatively simple using the spectral theory of the quantum harmonic oscillator.  We follow \cite[Section 6.4]{Sim-15} and its notation to supply some of the details missing  from  \cite{Wal-86}.

Define the Hermite functions for $x \in \bbR$ by 
\begin{align*}
e_0(x) = \frac{1}{\pi^{1/4}}e^{-\frac{x^2}{2}}\quad \text{ and, for }n \in \bbN,\text{ by } \quad e_n(x) =  \frac{1}{2^{n/2} \sqrt{n!}}\bigg(x-\frac{d}{dx}\bigg)^n e_0(x).
\end{align*}
\cite[Theorem 6.4.3]{Sim-15} shows that the family $\{e_n \}_{n=0}^\infty$ forms an orthonormal basis for $L^2(\bbR)$. By  \cite[Theorem 4.8.11]{Sim-15}, the family $\{e_{m,n}(x,y) = e_m(x)e_n(y) : m,n \}_{m,n=0}^\infty$ is an orthonormal basis for $L^2(\bbR^2)$. Let $H$ be the two dimensional quantum harmonic oscillator Hamiltonian,
\[
H=-\Delta + |\bfx|^2 = -\frac{\partial^2}{\partial y^2} -\frac{\partial^2}{\partial x^2} + x^2 + y^2.
\]
It follows from (6.4.38) in \cite{Sim-15}, that for $f \in \sS(\R^2)$, 
\begin{align}
\langle Hf, e_{n,m}\rangle_{L^2(\R^2)}  = 2(1+n+m)\langle f, e_{n,m}\rangle_{L^2(\R^2)}.\label{eq:HermiteEV}
\end{align}
\cite[Theorem 6.4.7]{Sim-15} shows that if $f \in \sS(\bbR^2)$ is a Schwartz function, then for all $\ell \in \bbN$,
\begin{align*}
\sum_{m,n=0}^\infty (1+m+n)^{2\ell} \langle e_{m,n}\tsp,f \rangle_{L^2(\bbR^2)}^2<\infty.
\end{align*}
Note that there is a typo in the definition of the semi-norm in equation (6.4.45) of \cite{Sim-15}, which does not depend on $\ell$ as written. See equation (6.4.33) 
for the correct form of the definition.

Define a new Hilbert space $\sH$ by taking the closure of $\sS(\bbR^2)$ in the norm (with the inner product defined by polarization)
\begin{align}\label{eq:norm1}
\|f\|_{\sH}^2 &=  \sum_{m=0}^\infty \sum_{n=0}^\infty 16(1+m+n)^4 \langle f, e_{m,n} \rangle_{L^2(\bbR^2)}^2.
\end{align}
Parseval's identity and \eqref{eq:HermiteEV} combined with \eqref{eq:norm1} imply that for $f \in \sS(\R^2)$,
\begin{align}\label{eq:norm2}
\|f\|_{\sH}^2 = \int_{\bbR^2} \bigg|\bigg(-\frac{\partial^2}{\partial y^2} -\frac{\partial^2}{\partial x^2} + x^2 + y^2\bigg)^2 f(x,y)\bigg|^2 dxdy. 
\end{align}

\begin{lemma}
$(\sH,\|\cdot\|_{\sH})$   is separable. 
$\sH$ is a  dense subset of  $L^2(\bbR^2)$, and the inclusion $\iota : (\sH,\|\cdot\|_{\sH} ) \hookrightarrow (L^2(\bbR^2),\|\cdot\|_{L^2(\bbR^2)})$ is Hilbert-Schmidt.
\end{lemma}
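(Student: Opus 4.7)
The plan is to identify $\sH$ concretely with a weighted $\ell^2$ space built on the Hermite basis, after which all three assertions reduce to bookkeeping. Normalize by setting $\tilde e_{m,n} = (4(1+m+n)^2)^{-1} e_{m,n}$, so that by the polarization of \eqref{eq:norm1} the family $\{\tilde e_{m,n}\}_{m,n\ge 0}$ is $\sH$-orthonormal: indeed $\|\tilde e_{m,n}\|_{\sH}^2 = 16(1+m+n)^4 \cdot (4(1+m+n)^2)^{-2} = 1$ and cross inner products vanish by $L^2$-orthogonality of the $e_{m,n}$.

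First, I would show that $\{\tilde e_{m,n}\}$ is in fact an orthonormal basis of $\sH$, which will immediately yield separability. The key step is to verify that $\operatorname{span}\{e_{m,n}\}$ is $\|\cdot\|_{\sH}$-dense in $\sS(\bbR^2)$: for any Schwartz $f$, applying \cite[Theorem 6.4.7]{Sim-15} with $\ell = 2$ gives $\sum_{m,n}(1+m+n)^{4}\langle f, e_{m,n}\rangle_{L^2}^2 < \infty$, so the partial sums $\sum_{m+n\le N}\langle f, e_{m,n}\rangle_{L^2}\, e_{m,n}$ form a $\|\cdot\|_{\sH}$-Cauchy sequence with $L^2$-limit $f$; standard arguments then identify this limit in $\sH$ with $f$. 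Since $\sS(\bbR^2)$ is $\|\cdot\|_{\sH}$-dense in $\sH$ by construction, the $\tilde e_{m,n}$ are total, hence an orthonormal basis; separability follows.

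Density of $\sH$ in $L^2(\bbR^2)$ is then immediate: each $e_{m,n}$ lies in $\sS(\bbR^2) \subset \sH$, and $\operatorname{span}\{e_{m,n}\}$ is already $L^2$-dense because $\{e_{m,n}\}$ is an $L^2$-orthonormal basis. The Hilbert-Schmidt norm of $\iota$ is then a direct computation in the basis $\{\tilde e_{m,n}\}$:
\[
\sum_{m,n \ge 0}\|\iota \tilde e_{m,n}\|_{L^2(\bbR^2)}^2 \;=\; \sum_{m,n\ge 0}\frac{1}{16(1+m+n)^4} \;=\; \frac{1}{16}\sum_{k \ge 0}\frac{k+1}{(1+k)^4} \;=\; \frac{1}{16}\sum_{k\ge 1}k^{-3} \;<\; \infty,
\]
where the middle equality groups the $k+1$ pairs $(m,n)$ with $m+n=k$.

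The only nonroutine point is the $\|\cdot\|_{\sH}$-density of the Hermite span in $\sS(\bbR^2)$, and in turn the honest identification of $\sH$ with the weighted $\ell^2$ space. This requires a little care because $\sH$ is defined abstractly as a completion: one must check that the map $f \mapsto (\langle f, e_{m,n}\rangle_{L^2})_{m,n}$ extends from $\sS(\bbR^2)$ to an isometric isomorphism between $\sH$ and $\{a \in \bbR^{\bbN\times\bbN} : \sum 16(1+m+n)^4 a_{m,n}^2 < \infty\}$, so that elements of $\sH$ really are canonically given by $L^2$-functions. Once this identification is in place, the remaining arguments are the short bookkeeping described above.
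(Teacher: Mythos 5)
Your proposal is correct and takes essentially the same approach as the paper: both identify the orthonormal basis $\{4^{-1}(1+m+n)^{-2}e_{m,n}\}$ of $\sH$ and compute the Hilbert–Schmidt norm of $\iota$ in that basis. The one place you go further is in explicitly verifying totality of the orthonormal system via \cite[Theorem 6.4.7]{Sim-15} (which the paper asserts without proof), and in flagging that the abstract completion $\sH$ must be concretely identified inside $L^2(\bbR^2)$ — both are legitimate and fill in detail the paper elides, relying implicitly on the comparison $\|f\|_{\sH}\ge\|f\|_{L^2}$ and the diagonal structure of the norm in the Hermite basis.
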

\begin{proof}  $\sH$ has a countable  orthonormal basis $\{f_{m,n} : m,n\in\bbZ_{\geq 0}\}$ defined by   $f_{m,n}=4^{-1}(1+m+n)^{-2} e_{m,n}$.  Separability follows.
For $f \in \sS(\bbR^2)$, we have $\|f\|_{\sH} \geq \|f\|_{L^2(\bbR^2)}$. Therefore, $\sH \subset L^2(\bbR^2)$ and the natural inclusion map $\iota$ is continuous. Density of $\sH$ in  $L^2(\bbR^2)$ follows from the density of $\sS(\bbR^2)$ in   $L^2(\bbR^2)$.  To see that $\iota$ is Hilbert-Schmidt, it suffices to observe that by Parseval's identity and the orthonormality of $\{e_{m,n} : m,n\in\bbZ_{\geq 0}\}$ in $L^2(\bbR^2)$, we have
\[ \|\iota\|_{HS}^2 = \sum_{m,n=0}^\infty \|\iota f_{m,n} \|_{L^2(\bbR^2)}^2 = \sum_{m,n=0}^\infty\frac{1}{16(1+m+n)^4}<\infty.\qedhere\]
\end{proof}
Let $\sH^*$ denote the continuous dual of $\sH$, equipped with its norm topology. Denote by $W$ the canonical (i.e., identity) random variable on $(\sH^*,\sB(\sH^*))$ and by $\varphi$ a generic element of $\sH^*$.
\begin{lemma}
There exists a probability measure $\mu$ on $(\sH^*,\sB(\sH^*))$ under which $W$ is space-time white noise. That is, for $f \in \sH$,
\begin{align*}
\oE^{\mu}[e^{iW(f)}]= \int_{\sH^*} e^{i \varphi(f) }\mu(d\varphi) = e^{-\frac{1}{2}\|f\|_{L^2(\bbR^2)}}.
\end{align*}
\end{lemma}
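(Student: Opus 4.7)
The plan is to realize $\mu$ as the pushforward of a product Gaussian measure on $\R^{\Z_{\ge 0}^2}$ under an explicit series expansion in the Hermite basis, using the Hilbert--Schmidt character of $\iota$ as the key input that turns a merely cylindrical Gaussian measure on $L^2(\R^2)$ into a bona fide Borel probability measure on $\sH^*$. This is the standard Bochner--Minlos / Gross abstract-Wiener-space strategy specialized to this concrete negative-index Hermite--Sobolev setting.

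First, on an auxiliary complete probability space $(\Omega_0,\sF_0,P_0)$ I would construct a family $\{g_{m,n}\}_{m,n\ge 0}$ of independent standard normal random variables. Using the orthonormal basis $\{f_{m,n}=(4(1+m+n)^2)^{-1}e_{m,n}\}$ of $\sH$ identified in the lemma, one has, for $f\in\sH$, the identity $\langle f,f_{m,n}\rangle_{\sH}=4(1+m+n)^2\langle f,e_{m,n}\rangle_{L^2(\R^2)}$, so that $\|f\|_{\sH}^2=\sum_{m,n}16(1+m+n)^4\langle f,e_{m,n}\rangle_{L^2(\R^2)}^2$. I would then define, pointwise on $\Omega_0$, the linear functional
\[
W(f)=\sum_{m,n=0}^{\infty} g_{m,n}\,\langle f,e_{m,n}\rangle_{L^2(\R^2)},\qquad f\in\sH.
\]
The Cauchy--Schwarz inequality applied to the factorization $\langle f,e_{m,n}\rangle_{L^2}=\bigl(4(1+m+n)^2\langle f,e_{m,n}\rangle_{L^2}\bigr)\cdot(4(1+m+n)^2)^{-1}$ yields
\[
|W(f)|^2\;\le\;\Bigl(\sum_{m,n}\frac{g_{m,n}^2}{16(1+m+n)^4}\Bigr)\cdot\|f\|_{\sH}^2.
\]
Since $\oE\sum_{m,n}g_{m,n}^2/\bigl(16(1+m+n)^4\bigr)=\sum_{m,n}(16(1+m+n)^4)^{-1}<\infty$ (the same series whose finiteness makes $\iota$ Hilbert--Schmidt), the prefactor is finite $P_0$-a.s., so $W$ defines a continuous linear functional on $\sH$, i.e., an element of $\sH^*$, almost surely. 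A routine argument (truncate the series at $N$, bound the tail in $\sH^*$-norm by the same Hilbert--Schmidt constant, and apply Borel--Cantelli) upgrades this to almost-sure convergence of the partial sums in the norm topology of $\sH^*$; this ensures the limit is genuinely $\sB(\sH^*)$-measurable.

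Having constructed a measurable map $W:\Omega_0\to\sH^*$, I would take $\mu$ to be its pushforward under $P_0$. The characteristic-functional identity is then a one-line computation using independence: for each $f\in\sH$,
\[
\oE^\mu\bigl[e^{iW(f)}\bigr]=\prod_{m,n=0}^{\infty}\oE\bigl[\exp\bigl(ig_{m,n}\langle f,e_{m,n}\rangle_{L^2}\bigr)\bigr]=\exp\Bigl(-\tfrac{1}{2}\sum_{m,n}\langle f,e_{m,n}\rangle_{L^2}^2\Bigr)=e^{-\frac{1}{2}\|f\|_{L^2(\R^2)}^2},
\]
where the second equality uses the standard normal characteristic function and the third uses Parseval's identity for the $L^2(\R^2)$-ONB $\{e_{m,n}\}$.

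I expect the only real subtlety to be the almost-sure membership of $W$ in $\sH^*$ and the measurability of the limit, both of which rest on the Hilbert--Schmidt estimate above; everything else is bookkeeping with the Hermite basis and Parseval. In particular, no appeal to Bochner--Minlos on a nuclear space is needed, because the concrete basis $\{f_{m,n}\}$ of $\sH$ and the explicit weights $4(1+m+n)^2$ make all the estimates transparent.
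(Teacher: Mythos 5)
Your proof is correct, and it is a genuinely different (more self-contained) route than the paper's. The paper's own proof is a citation argument: it invokes Theorem~4.1 of Walsh's SPDE notes for the existence of the Gaussian measure on $\sH^*$, and then Theorem~2.2.4 of Bogachev's \emph{Gaussian Measures} for the characteristic-functional identity, before extending to $f \in L^2(\R^2)$ by density and the variance isometry. You instead construct $\mu$ by hand as the pushforward of a product Gaussian under the explicit series $W(f) = \sum_{m,n} g_{m,n}\langle f, e_{m,n}\rangle_{L^2}$, proving almost-sure convergence in the $\sH^*$-norm via the weighted Cauchy--Schwarz estimate, and then verify the characteristic functional by direct computation with independence and Parseval. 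Both proofs ultimately hinge on the same analytic input --- the Hilbert--Schmidt property of $\iota : \sH \hookrightarrow L^2(\R^2)$ established in the preceding lemma, which is precisely the hypothesis of Walsh's theorem --- but your argument is elementary and exposes the mechanism, whereas the paper's is shorter by delegating the Gaussian-measure machinery to references. Two small bookkeeping remarks: the tail estimate $\|W_N - W_M\|_{\sH^*}^2 = \sum_{\text{tail}} g_{m,n}^2 / (16(1+m+n)^4)$ (obtained via Riesz in $\sH$) makes the Borel--Cantelli step unnecessary, since almost-sure finiteness of the full nonnegative series already forces the tails to vanish a.s.; and the exponent in the lemma statement should read $\|f\|_{L^2(\R^2)}^2$, as you correctly have it.
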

\begin{proof}
Existence follows from \cite[Theorem 4.1]{Wal-86}. The characteristic function identity follows from the characterization of Hilbert-space valued Gaussian random variables by their characteristic functions; see \cite[Theorem 2.2.4]{Bog-98}. The extension to $f \in L^2(\bbR^2)$ follows from the variance isometry and the fact that $\sH$ is dense in $L^2(\bbR^2)$.
\end{proof}
To verify that $\sH^*$ satisfies our hypotheses, we need to construct the automorphisms in our setting. Before doing this, we note that in the sense of equivalence of norms, we have
\begin{align}
\|f\|_{\sH}^2 \simeq \sum_{|\alpha| \leq 4}\|D^{\alpha}f\|_{L^2(\R^2)}^2 + \int(x^8+y^8)f(x,y)^2dxdy \label{eq:SobWe}
\end{align}
where for $\alpha=(k,\ell)$, $D^{\alpha}=\partial_x^k \partial_y^\ell$ denotes the partial derivative with multi-index $\alpha$ and $|\alpha|=k+\ell.$ See \cite[Claim 9.8.7]{Gar-18-v2} for the details of the $\gtrsim$ bound. The $\lesssim$ bound is easier and follows from the triangle inequality, integration by parts,  and repeated applications of Cauchy-Schwarz.

For $\varphi \in \sH^*$ and $f \in \sH$,  $ \shiftd{s}{y}\varphi(f) = \varphi(\shiftf{-s}{-y}f)$, $\refd_1\varphi  (f) = \varphi(\reff_1 f)$, $ \refd_2\varphi(f) = \varphi(\reff_2 f)$, $\sheard{s}{\nu}\varphi(f) = \varphi(\shearf{s}{-\nu} f)$, $ \did{\alpha}{\lambda}\varphi (f) = \varphi(\dif{\alpha^{-1}}{\lambda^{-1}} f)$, and $\ned \varphi(f) = \varphi(\nef f)$.  We first claim that these are continuous operators from $\sH^*$ to itself. 
\begin{lemma}
For each $\sG \in \{\shiftd{s}{y}, \refd_1,\refd_2, \sheard{s}{\nu}, \did{\alpha}{\lambda}, \ned\}$ defined above, $\sG:\sH^*\to\sH^*$ is a continuous linear operator.
\end{lemma}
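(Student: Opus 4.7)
The plan is to use duality. Each operator $\sG:\sH^*\to\sH^*$ is defined by precomposition with a corresponding operator $G^*$ acting on test functions — for example, $\shiftd{s}{y}\varphi = \varphi \circ \shiftf{-s}{-y}$, and analogously for the others. Consequently, once I prove that the corresponding $G^*:\sH\to\sH$ is a bounded linear operator with norm at most some constant $C$, the bound $\|\sG\varphi\|_{\sH^*}\le C\|\varphi\|_{\sH^*}$ follows immediately. Each $G^*$ manifestly maps $\sS(\bbR^2)$ to itself, so it suffices to verify $\|G^*f\|_\sH\le C\|f\|_\sH$ for Schwartz $f$ and extend by density.

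For this estimate I would use the equivalent norm recorded in \eqref{eq:SobWe}, which expresses $\|f\|_\sH^2$ as a finite sum of $L^2$ norms of derivatives of $f$ of order at most four, plus the weighted $L^2$ integral $\int(t^8+x^8)|f(t,x)|^2\,dt\,dx$. The negation $\nef$ and the reflections $\reff_1,\reff_2$ are visibly isometries of both pieces. For a shift $\shiftf{-s}{-y}$, derivatives commute with translation, so the Sobolev part is unchanged, and the substitution $(u,v)=(t-s,x-y)$ turns the weighted part into $\int((u+s)^8+(v+y)^8)|f(u,v)|^2\,du\,dv\le C(s,y)\|f\|_\sH^2$. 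For a dilation $\dif{\alpha^{-1}}{\lambda^{-1}}f(t,x)=(\alpha\lambda)^{-1/2}f(\alpha^{-1}t,\lambda^{-1}x)$, the substitution $(u,v)=(\alpha^{-1}t,\lambda^{-1}x)$ combined with the prefactor preserves the $L^2$ norm; derivatives acquire prefactors $\alpha^{-1},\lambda^{-1}$ by the chain rule, and the weight transforms as $t^8=\alpha^8 u^8$ and $x^8=\lambda^8 v^8$, so each piece of the norm is bounded by a polynomial in $\alpha,\lambda,\alpha^{-1},\lambda^{-1}$ times $\|f\|_\sH^2$.

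The shear is the most delicate case because it couples the two variables. From $\shearf{s}{-\nu}f(t,x)=f(t,x-\nu(t-s))$ and the unit-Jacobian substitution $(u,v)=(t,x-\nu(t-s))$, $L^2$ norms are preserved. The chain rule expresses each mixed partial derivative of $\shearf{s}{-\nu}f$ of order at most four as a polynomial in $\nu$ of derivatives of $f$ of the same order evaluated at $(u,v)$, so the Sobolev part is controlled by $C(\nu)\|f\|_\sH^2$. After the change of variables the weight integral equals $\int(u^8+(v+\nu(u-s))^8)|f(u,v)|^2\,du\,dv$; expanding $(v+\nu(u-s))^8$ and estimating each cross term by $|u^j v^k|\lesssim 1+u^8+v^8$ for $j+k\le 8$ bounds it by $C(\nu,s)\|f\|_\sH^2$.

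No step of this program looks like a serious obstacle: the norm \eqref{eq:SobWe} controls derivatives up to order four and has a weight of degree eight, which leaves ample room for the polynomial perturbations introduced by the chain rule and by the various changes of variables. The only nontrivial bookkeeping is verifying that each cross-term generated by expansions such as $(v+\nu(u-s))^8$ is dominated by one of the two pieces of the equivalent norm, and this reduces to elementary polynomial inequalities on $\bbR^2$.
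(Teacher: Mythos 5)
Your proof is correct and follows the same route as the paper: reduce to boundedness of the dual maps $\oG$ on $\sH$, observe that $\reff_1,\reff_2,\nef$ are isometries (the paper reads this off from \eqref{eq:norm2}, you from \eqref{eq:SobWe}, which is equally valid), and control the shift, dilation, and shear using the equivalent Sobolev-weighted norm \eqref{eq:SobWe}. The paper's proof merely states that the remaining cases ``follow immediately from \eqref{eq:SobWe}''; your writeup supplies exactly the change-of-variables and polynomial-estimate bookkeeping behind that assertion.
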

\begin{proof}
Linearity follows from the definition, so it suffices to show norm boundedness. For this, it suffices to show norm boundedness of each linear map $\oG:\sH\to\sH$ where $\oG\in\{ \shiftf{-s}{-y}$, $\reff_1 \reff_2, \shearf{s}{-\nu} , \dif{\alpha^{-1}}{\lambda^{-1}},\nef \}$. 
Using \eqref{eq:norm2}, boundedness of $\reff_1,\reff_2,$ and $\nef$ are immediate because they are norm preserving. Boundedness of the remaining maps follow immediately from \eqref{eq:SobWe}.
\end{proof}
Call $\overline{\sB(\sH^*)}$ the completion of $\sB(\sH^*)$ with respect to $\mu$ and let $(\Omega,\sF,\bbP)=(\sH^*,\overline{\sB(\sH^*)},\mu)$. We will make use of the following sufficient condition for mixing. In the statement below, if $\sG:\sH^*\to\sH^*$ is a map, then $\sG$ acts on the identity random variable $W$ by $(\sG\!W)^{\varphi} = \sG\!\varphi$.
\begin{proposition}\label{prop:mixing}
Let $\sG: \sH^* \to \sH^*$ be an invertible, norm bounded, linear transformation and suppose that $(\Omega,\sF,\bbP,\sG)$ is a measure-preserving dynamical system. Then the condition
\[
\lim_{n\to\infty} \bbE[W(f)\,\sG^{n}\!W(g) ] = 0 \text{ for all }f,g\in\sH
\]
implies that $(\Omega,\sF,\bbP,\sG)$ is strongly mixing, i.e., for all $A,B\in\sF$
\[
\lim_{n\to\infty} \bbP(A \cap \sG^{-n} B) = \bbP(A)P(B).
\]
 \end{proposition}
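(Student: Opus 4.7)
The plan is to reduce the statement to a Gaussian covariance computation on cylinder sets and then use a standard measure-theoretic approximation argument. The starting point is that $\{W(f):f\in\sH\}$ is a centered Gaussian family under $\bbP$ with covariance $\bbE[W(f)W(g)]=\langle f,g\rangle_{L^2(\bbR^2)}$. Since $\sG$ is linear and $\bbP$-preserving, for any $f_1,\dotsc,f_k,g_1,\dotsc,g_\ell\in\sH$ and $n\in\bbN$ the vector $(W(f_1),\dotsc,W(f_k),\sG^n\!W(g_1),\dotsc,\sG^n\!W(g_\ell))$ is jointly centered Gaussian. Its diagonal covariance blocks are the Gram matrices of $(f_i)$ and $(g_j)$, and its off-diagonal block has entries $\bbE[W(f_i)\tspa\sG^n\!W(g_j)]$, which converge to $0$ as $n\to\infty$ by hypothesis.

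The first step is to exploit this to show convergence on cylinder events. For any real coefficients $s_1,\dotsc,s_k,t_1,\dotsc,t_\ell$, the joint characteristic function
\[
\bbE\Bigl[\exp\Bigl(i\sum_{j=1}^k s_jW(f_j)+i\sum_{j=1}^\ell t_j\sG^n\!W(g_j)\Bigr)\Bigr]
=\exp\Bigl(-\tfrac12 \sigma^2_n(s,t)\Bigr),
\]
with $\sigma^2_n(s,t)$ a quadratic form whose cross terms vanish in the limit. By L\'evy's continuity theorem the two blocks become asymptotically independent, so for any cylinder sets $A_0=\{(W(f_1),\dotsc,W(f_k))\in C\}$ and $B_0=\{(W(g_1),\dotsc,W(g_\ell))\in D\}$ one has $\bbP(A_0\cap\sG^{-n}B_0)\to\bbP(A_0)\bbP(B_0)$.

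The second step extends this to arbitrary $A,B\in\sF$. Let $\sA$ denote the algebra of cylinder events. Since $\sH$ is separable and the evaluation maps $\varphi\mapsto\varphi(f)$ separate points of $\sH^*$, the cylinder $\sigma$-algebra equals $\sB(\sH^*)$, and $\sA$ is dense in $\sF=\overline{\sB(\sH^*)}$ in the pseudometric $d(A,B)=\bbP(A\triangle B)$. Given $A,B\in\sF$ and $\epsilon>0$, choose $A_0,B_0\in\sA$ with $\bbP(A\triangle A_0)\vee\bbP(B\triangle B_0)<\epsilon$. Using $\bbP$-invariance of $\sG^{-n}$ and the triangle inequality,
\[
\bigl|\bbP(A\cap\sG^{-n}B)-\bbP(A)\bbP(B)\bigr|
\le 4\epsilon+\bigl|\bbP(A_0\cap\sG^{-n}B_0)-\bbP(A_0)\bbP(B_0)\bigr|.
\]
The right-hand side tends to $4\epsilon$ by the cylinder case, and $\epsilon>0$ is arbitrary, completing the proof.

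The main obstacle is essentially cosmetic: checking that the Gaussian covariance computation is uniform enough in the dimensions of the cylinder to plug into the approximation. This is handled by working one pair of cylinder events at a time, since the covariance hypothesis gives entrywise convergence for any fixed $f_i,g_j$ and the characteristic-function identity above is exact. All the analytic content of the proposition is carried by the scalar hypothesis $\bbE[W(f)\tspa\sG^n W(g)]\to 0$.
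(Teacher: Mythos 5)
Your overall strategy---reduce to cylinder events over a finite Gaussian family and pass to the limit, then approximate general $A,B\in\sF$ by cylinder events---is the same as the paper's, including the observation that $\sB(\sH^*)$ coincides with the cylinder $\sigma$-algebra. The difference is in how you pass to the limit on cylinder sets: you invoke L\'evy's continuity theorem, while the paper uses Scheff\'e's lemma. This difference matters, and the argument as you have written it has a gap. L\'evy's theorem only gives weak convergence of the joint law $\mu_n$ of $(W(f_1),\dotsc,W(f_k),\sG^n W(g_1),\dotsc,\sG^n W(g_\ell))$ to the product law $\mu$. Weak convergence does not imply $\mu_n(E)\to\mu(E)$ for arbitrary Borel $E$; it does so only when $E$ is a $\mu$-continuity set. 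So the claim that $\bbP(A_0\cap\sG^{-n}B_0)\to\bbP(A_0)\bbP(B_0)$ ``for any cylinder sets $A_0,B_0$'' does not follow directly from L\'evy.

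There are two standard repairs, both of which require a bit more than what you wrote. One is to restrict the approximating cylinders to products of rectangles with rational endpoints, verify that these are continuity sets of $\mu$ (which requires $\mu$ to be nondegenerate, i.e.\ the limiting covariance matrix nonsingular), and then observe that such sets still generate the cylinder $\sigma$-algebra and are dense in the $\bbP(\cdot\triangle\cdot)$-pseudometric. The other, which the paper takes, is to work with a fixed orthonormal basis $\{f_i\}$ so that the limiting covariance is a nondegenerate diagonal matrix, note that the Gaussian densities of $X_n=(W(f_1),\dotsc,W(f_k),\sG^n W(f_1),\dotsc,\sG^n W(f_k))$ exist for large $n$ (by continuity of the determinant) and converge pointwise, and then apply Scheff\'e's lemma to get convergence in total variation --- which handles all Borel cylinder sets at once. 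A related point: because you allow arbitrary $f_i,g_j$, the limiting Gaussian could be degenerate (e.g.\ if some $f_i$ are linearly dependent), in which case even rectangles need not be continuity sets; the paper sidesteps this entirely by working with a single orthonormal basis. Both issues are easily repaired, but as written the cylinder-set step is not complete.
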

\begin{proof}
We first note that $\sB(\sH^*) = \sigma(W(f) : f \in \sH)$ by the equivalence of the norm and weak Borel $\sigma$-algebras on $\sH^*$ \cite[Theorem 1.1]{Edg-77}. If $f_i, i=1,2,\dots$, is an orthonormal basis for $\sH$, then $\sigma(W(f) : f \in \sH)=\sigma(W(f_i) : i \in \bbN)$. By a standard approximation argument, it therefore suffices to check the mixing condition for sets $A,B \in \sigma(W(f_i) : i\leq k)$ for some $k$.

For each $n\in\bbN$, the vector $X_n = (W(f_1),\dots, W(f_k), \sG^n\!W(f_1),\dots, \sG^n\!W(f_k))$ is jointly Gaussian in $\bbR^{2k}$ with mean zero. The hypotheses and orthogonality of $f_1,\dots,f_k$ show that the covariance matrix converges to a diagonal matrix with entries $(\|f_1\|_{L^2(\bbR)}^2,\dots, \|f_k\|_{L^2(\bbR)}^2, \|f_1\|_{L^2(\bbR)}^2,\dots, \|f_k\|_{L^2(\bbR)}^2)$. Continuity of the determinant of the covariance matrices implies that for all sufficiently large $n$, therefore, $X_n$ has a density function. The hypotheses now imply that these density functions converge pointwise and therefore, by Scheffe's lemma, $X_n$ converges in total variation norm to a vector of independent Gaussians with mean zero and the above variances. The result follows.
\end{proof}
\begin{proposition}
With $\shiftd{s}{y}, \refd_1,\refd_2, \sheard{s}{\nu}, \did{\alpha}{\lambda},$ and $\ned$ as above, $(\Omega,\sF,\bbP)$ satisfies the hypotheses of section \ref{sec:prob-space}.
\end{proposition}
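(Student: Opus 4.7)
The plan is to verify, in turn, each of the four requirements imposed on $(\Omega,\sF,\bbP)$ in Section \ref{sec:prob-space}: that it is a complete Polish probability space, that $W$ is a space-time white noise on $L^2(\bbR^2)$, that the listed operators generate a group of continuous measure-preserving automorphisms acting on $W$ by \eqref{S9} and preserving $\fil$, and that the two mixing conditions hold.

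First I would observe that $\sH^*$ is a separable Hilbert space (since $\sH$ is separable, with orthonormal basis $\{f_{m,n}\}$), hence Polish in its norm topology, and $(\Omega,\sF,\bbP) = (\sH^*, \overline{\sB(\sH^*)}, \mu)$ is complete by construction. The evaluation map $W(f)(\varphi) = \varphi(f)$ is defined for $f\in\sH$, and the characteristic function identity together with $\|f\|_{\sH} \geq \|f\|_{L^2(\bbR^2)}$ gives the $L^2$-isometry $\bbE[W(f)^2] = \|f\|_{L^2(\bbR^2)}^2$, which extends $W$ by density to an $L^2(\bbR^2)$-indexed Gaussian process. I would then identify $\fil$ with $\sF$: evaluation maps $W(f)$, $f\in\sH$, are $\sB(\sH^*)$-measurable, while separability of $\sH$ lets the $\sH^*$-norm be realized as a countable supremum of $\{W(f): f\in D\}$ over a dense subset $D$ of the unit ball, so the norm-open balls lie in $\sigma(W(f): f\in\sH)$; the extension to $L^2(\bbR^2)$ is then automatic because elements of $L^2(\bbR^2)$ are $L^2(\bbP)$-limits of $W(f)$ for $f\in\sH$.

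Next I would verify measure preservation for each $\sG \in \{\shiftd{s}{y}, \refd_1, \refd_2, \sheard{s}{\nu}, \did{\alpha}{\lambda}, \ned\}$ by reducing to the dual map $\oG$ on $\sH$. The pushforward of $\mu$ under $\sG$ has $W$-characteristic functional $\bbE[e^{iW(\oG f)}] = e^{-\|\oG f\|_{L^2(\bbR^2)}^2/2}$, so it suffices to show each $\oG\in\{\shiftf{-s}{-y}, \reff_1, \reff_2, \shearf{s}{-\nu}, \dif{\alpha^{-1}}{\lambda^{-1}}, \nef\}$ is an $L^2(\bbR^2)$-isometry. These are all routine changes of variables, with the prefactor $\sqrt{\alpha\lambda}$ in the dilation exactly compensating the Jacobian. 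Continuity on $\sH^*$ was established in the preceding lemma, invertibility and the group relations are immediate from \eqref{S7}, and $\fil$-measurability of the automorphisms follows from the identification $\fil = \sF$.

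Finally, for the two mixing conditions I would apply Proposition \ref{prop:mixing}. For $f,g\in\sH$ and $\sG$ one of the required automorphisms,
\begin{align*}
\bbE[W(f)\,\sG^n\!W(g)] = \bbE[W(f)W(\oG^n g)] = \langle f, \oG^n g \rangle_{L^2(\bbR^2)}.
\end{align*}
For $\sG = \shiftd{s_0}{y_0}$ with $(s_0,y_0)\neq (0,0)$ and $f,g$ in a dense subset of compactly supported smooth functions, the integrand $f(t,x)g(t-ns_0,x-ny_0)$ eventually has disjoint support as $n\to\infty$, so the inner product vanishes. For $\sG = \sheard{r}{\nu}$ with $\nu\neq 0$, the integrand $f(t,x)g(t,x-n\nu(t-r))$ has disjoint support on $\{t\neq r\}$ for large $n$ (by compact support of $g$ in the $x$-variable and $\nu\neq 0$), and $\{t=r\}$ has Lebesgue measure zero, so again the inner product vanishes. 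The main obstacle I anticipate is passing from compactly supported $f,g$ to all of $\sH$: this is handled by the uniform bound $|\langle f, \oG^n g\rangle_{L^2(\bbR^2)}| \leq \|f\|_{L^2(\bbR^2)}\|g\|_{L^2(\bbR^2)}$, which gives equicontinuity of the bilinear form, combined with density of compactly supported smooth functions in $L^2(\bbR^2)$ (and hence in $\sH$ in the $L^2$ topology) to upgrade the vanishing to all $f,g \in \sH$. Proposition \ref{prop:mixing} then yields the required strong mixing on $(\Omega,\fil,\bbP)$.
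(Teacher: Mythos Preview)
Your proposal is correct and follows essentially the same route as the paper's proof: measure preservation via the $L^2$-isometry of the dual maps on $\sH$, and mixing via Proposition \ref{prop:mixing} after computing the covariance integrals and approximating by compactly supported functions. Your version spells out the identification $\fil = \sF$ and the density/equicontinuity step more explicitly than the paper, which records these as direct computations.
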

\begin{proof}
Since $\sH$ is a separable Hilbert space, so is $\sH^*$, and therefore $\sH^*$ is Polish. Direct computation checks that each of these transformations preserves the mean and covariance structure of $\mu$ and therefore, because the characteristic function is unchanged, each such map is measure preserving.  

Take $f,g \in \sH$. Then for $n \in \bbN$, denoting the $n$-fold composition of maps with a power superscript, we have
\begin{align*}
\bbE[\shiftd{-s}{-y}^n W(f)W(g)] &= \int_{\bbR^2} f(t+ns, x+ ny) g(t,x)dtdx \text{ and } \\
\bbE[\sheard{s}{-\nu}^n W(f) W(g)] &= \int_{\bbR^2} f(t, x+ n\nu(t-s)) g(t,x)dtdx.
\end{align*}
In each of the expressions on the right-hand side above, so long as  $(s,y) \neq (0,0)$ and $\nu \neq 0$,  respectively, the integrals can be seen to converge to zero as $n \to \infty$ for all $f,g \in L^2(\bbR^2) \supset \sH$ by  approximation by  compactly supported functions. Mixing follows from Proposition \ref{prop:mixing}.
\end{proof}

\section{Shape theorem for shift-covariant cocycles}
Suppose $\P$ is a probability measure on a Polish space $(\Omega,\sF)$. Let $d\in\N$.
Suppose $\{T_x:x\in\Z^d\}$ is a group of measurable bijections on $\Omega$: $T_x T_y=T_{x+y}$ and $T_0$ is the identity map. Let $\evec_1,\dotsc,\evec_d$ denote the canonical basis vectors of $\R^d$ and let $\zevec$ denote the zero vector and let $\onevec=\sum_{i=1}^d\evec_i$. 
Assume that for each $i\in\{1,\dotsc,d\}$,
$\P$ is invariant under the action of $T_{\evec_i}$. Let $\cI_i$ be the $\sigma$-algebra of $T_{\evec_i}$-invariant events. 

A measurable function $F:\Omega\times\R^d\times\R^d\to\R$ is called a \emph{cocycle} if 
there exists an event $\Omega_0$ such that $P(\Omega_0)=1$ and 
    \[F(\w,x,y)+F(\w,y,z)=F(\w,x,z)\quad\text{for all }x,y,z\in\R^d\text{ and all }\w\in\Omega_0.\]
The cocycle is said to be \emph{shift-covariant} if 
there exists an event $\Omega_0$ such that $\P(\Omega_0)=1$ and 
    \[F(\w,x+z,y+z)=F(T_z\w,x,y)\quad\text{for all }x,y\in\R^d,\ z\in\Z^d,\text{ and  }\w\in\Omega_z.\]

Let
    \[m(F)=\sum_{i=1}^d\E[F(0,\evec_i)\,|\,\cI_i]\,\evec_i.\]
For $x,y\in\R^d$, $x\le y$ is interpreted coordinatewise. Let $\abs{\aabullet}$ denote any norm on $\R^d$.

\begin{lemma}\label{lm:Bus54}
Let $F$ be a shift-covariant cocycle. 
Assume that for some $p>d$ and $k\in\Z^d$ we have
	\begin{align}\label{sup b}
	F(\zevec,\evec_i)\in L^p(\P)\ \ \forall i\in\{1,\dotsc,d\}\quad\text{and}\quad
	\sup_{x\tspa\in\tspa\R^d: \tspa  k\le x\le k+\onevec}\abs{F(\zevec,x)}\in L^d(\P).
	\end{align}
Then, with $\P$-probability one, for all $C>0$
	\[\lim_{n\to\infty}n^{-1}\sup_{\abs{x}\vee\abs{y}\tspa\le\tspa  Cn}\bigl|F(x,y)-m(F)\cdot(y-x)\bigr|=0.\]
\end{lemma}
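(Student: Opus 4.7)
The plan is to reduce the claim to a statement about $F(\zevec,x)$ alone and then split the argument into a lattice ergodic-theorem piece and a unit-cell fluctuation piece. Using the cocycle property $F(x,y)=F(\zevec,y)-F(\zevec,x)$, the desired bound follows once we prove that $n^{-1}\sup_{|x|\le Cn}|F(\zevec,x)-m(F)\cdot x|\to 0$ $\P$-almost surely, with $x$ now ranging over $\R^d$. Locally uniform convergence simultaneously for all $C>0$ then follows by intersecting the resulting full-probability events over rational $C$.

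For $x\in\Z^d$ with $x_i\ge 0$, I would write $F(\zevec,x)=\sum_{i=1}^{d}F(x^{(i-1)},x^{(i)})$, where $x^{(i)}=(x_1,\dots,x_i,0,\dots,0)$, and use shift-covariance to recognise each term as a one-dimensional Birkhoff sum
$$F(x^{(i-1)},x^{(i)})=\sum_{j=0}^{x_i-1} F(\zevec,\evec_i)\circ T_{x^{(i-1)}+j\evec_i};$$
the remaining orthants are handled by symmetry via $F(\zevec,-\evec_i)=-F(\zevec,\evec_i)\circ T_{-\evec_i}$. The pointwise limit $k^{-1}F(\zevec,k\evec_i)\to\E[F(\zevec,\evec_i)\mid\cI_i]$ is Birkhoff's theorem in the $i$-th direction, and because $F(\zevec,\evec_i)\in L^p(\P)$ with $p>d$, the Wiener-type multiparameter maximal inequality upgrades this to uniform convergence on growing boxes, producing $n^{-1}\max_{|x|\le Cn,\,x\in\Z^d}|F(\zevec,x)-m(F)\cdot x|\to 0$ $\P$-almost surely.

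To pass from lattice points to all of $\R^d$, for $k\in\Z^d$ set $Z_k=\sup_{k\le y\le k+\onevec}|F(\zevec,y)-F(\zevec,k)|$. By the cocycle property and shift-covariance, $Z_k=Z_\zevec\circ T_k$ with $Z_\zevec\in L^d(\P)$ by assumption. Since the number of lattice points at distance comparable to $r$ is of order $r^{d-1}$,
$$\sum_{k\in\Z^d}\P(Z_k>\varepsilon|k|)=\sum_{k\in\Z^d}\P(Z_\zevec>\varepsilon|k|)\;\asymp\;\int_0^\infty r^{d-1}\P(Z_\zevec>\varepsilon r)\,dr\;<\;\infty$$
because $\E[Z_\zevec^d]<\infty$. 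The first Borel-Cantelli lemma then yields $Z_k=o(|k|)$ $\P$-almost surely, so $n^{-1}\max_{|k|\le Cn}Z_k\to 0$. For arbitrary $x\in\R^d$ with $|x|\le Cn$, writing $k=\lfloor x\rfloor$ coordinatewise gives
$$|F(\zevec,x)-m(F)\cdot x|\le Z_k+|F(\zevec,k)-m(F)\cdot k|+|m(F)|\cdot|x-k|,$$
and each of the three terms on the right is $o(n)$ uniformly by the previous steps.

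The principal obstacle is the passage from the lattice to continuous $x$: the integrability $\sup_{k\le x\le k+\onevec}|F(\zevec,x)|\in L^d$ is exactly the borderline condition making the sum $\sum_{k\in\Z^d}\P(Z_\zevec>\varepsilon|k|)$ finite, which is what permits a direct Borel-Cantelli argument rather than a subsequence trick (and would fail under merely $L^{d-\delta}$ integrability). The $p>d$ condition on $F(\zevec,\evec_i)$ plays the parallel role in the multiparameter Wiener maximal inequality that underlies the uniform lattice convergence; matching these two thresholds is what the hypotheses are calibrated to achieve.
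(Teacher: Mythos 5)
Your overall structure matches the paper's: reduce to $F(\zevec,x)$ via the cocycle property, prove a lattice shape theorem for $x\in\Z^d$, and then control fluctuations within unit cells by Borel--Cantelli using the $L^d$ hypothesis. Your unit-cell step is essentially identical to the paper's; the paper writes $A=\sup_{\zevec\le x\le\onevec}|F(\zevec,x)|\in L^d$ and sums $\sum_{\ell\in\Z^d}\P\{A\ge\e|\ell|\}\le C+C\E[(A/\e)^d]<\infty$, which is your $\int_0^\infty r^{d-1}\P(Z_\zevec>\e r)\,dr<\infty$ in disguise.

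Where you diverge is the lattice shape theorem, and that is where you have a gap. The paper simply cites Theorem~1 of Boivin--Derriennic and Lemma~B.4 of Janjigian--Rassoul-Agha for the statement
\[\lim_{n\to\infty}n^{-1}\sup_{\ell\in\Z^d:\,|\ell|\le Cn}\bigl|F(\zevec,\ell)-m(F)\cdot\ell\bigr|=0,\]
whereas you try to prove it directly. Your telescoping decomposition $F(\zevec,x)=\sum_{i=1}^d F(x^{(i-1)},x^{(i)})$ and its expansion into one-dimensional Birkhoff sums with shifting base points $x^{(i-1)}$ is correct and is indeed the starting point of the Boivin--Derriennic argument. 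But the claim that ``the Wiener-type multiparameter maximal inequality upgrades this to uniform convergence on growing boxes'' does not hold as stated. Wiener's maximal inequality controls box averages $\frac{1}{n^d}\sum_{j\in[0,n)^d}f\circ T_j$ of a \emph{single} function; for $i\ge 2$ you instead have $O(n^{i-1})$ distinct one-dimensional Birkhoff sums of length up to $O(n)$, each with a different base point in the first $i-1$ coordinates, and you need them to converge \emph{simultaneously} at rate $o(n)$. Passing from directional Birkhoff limits to this uniform statement is precisely the nontrivial content of the cocycle shape theorem; it requires either a subadditive/maximal-function argument tailored to cocycles or a quantitative large-deviation estimate exploiting $L^p$ with $p>d$, neither of which is a direct corollary of the multiparameter maximal ergodic theorem. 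You have identified the right moment thresholds and decomposition, but the step from directional to uniform convergence needs to be either carried out in detail or replaced by a citation, as the paper does.
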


\begin{proof}
We derive the result from the corresponding lattice-indexed shape theorem from the literature.
By the cocycle property, $F(x,y)=F(\zevec,y)-F(\zevec,x)$ and therefore it is enough to prove that $\P$-almost surely
	\[\lim_{n\to\infty}n^{-1}\sup_{\abs{x}\le Cn}\bigl|F(\zevec,x)-m(F)\cdot x\bigr|=0.\]

By Theorem 1 of \cite{Boi-Der-91} and Lemma B.4 of \cite{Jan-Ras-18-arxiv} 
we have
	\[\lim_{n\to\infty}n^{-1}\sup_{\ell\in\Z^d:\abs{\ell}\le Cn}\bigl|F(\zevec,\ell)-m(F)\cdot \ell\bigr|=0.\]
By assumption,  $\sup_{k\le x\le k+\onevec}\abs{F(\zevec,x)}\in L^d(\P)$. 
By the shift-invariance of $\P$ this implies that 
$A=\sup_{\zevec\le x\le \onevec}\abs{F(\zevec,x)}\in L^d(\P)$.
Hence, for any $\e>0$
\[ \sum_{x\in\Z^d} \P\{A\ge\e\abs{x}\}\le C+C\E[(A/\e)^d]<\infty, \]
 and then by
the Borel-Cantelli lemma, we have $\P$-almost surely
	\[\lim_{n\to\infty}n^{-1}\max_{\ell\in\Z^d:\abs{\ell}\le Cn}\,\sup_{\zevec\le x-\ell\le\onevec} \abs{F(\ell,x)}=0.\]
The claim of the lemma follows: 
	\begin{align*}
	&\varlimsup_{n\to\infty}n^{-1}\sup_{\abs{x}\le Cn}\bigl|F(\zevec,x)-m(F)\cdot x\bigr|\\
	&\qquad\le\lim_{n\to\infty}n^{-1}\sup_{\ell\in\Z^d,\abs{\ell}\le Cn}\bigl|F(\zevec,\ell)-m(F)\cdot\ell\bigr|
	+\lim_{n\to\infty}n^{-1}\max_{\ell\in\Z^d:\abs{\ell}\le Cn}\,\sup_{\zevec\le x-\ell\le\onevec}\abs{F(\ell,x)}=0. 
	\qedhere
	\end{align*}
\end{proof}

\section{Auxiliary results}\label{app:misc}

%
 

\subsection{Comparison principle}

\begin{lemma}\label{lm:comp}
There exists an event $\Omega_0$ such that $\P(\Omega_0)=1$ and the following statements hold for all $\w\in\Omega_0$.
 For all real $x<y$,  $s<t$, and $v<w$, 
	\begin{align}\label{crossing}
	\frac{\She(t,y\viiva s,v)}{\She(t,x\viiva s,v)}<\frac{\She(t,y\viiva s,w)}{\She(t,x\viiva s,w)}. 
	\end{align}
For all real $x<y$,  $s<t$, and $z$, and any non-negative function $f$ for which the integrals below are nonzero and finite, 
	\begin{align}\label{comp}
	\frac{\int_{-\infty}^z\She(t,y\viiva s,w)f(w)\,dw}{\int_{-\infty}^z\She(t,x\viiva s,w)f(w)\,dw}
	<\frac{\She(t,y\viiva s,z)}{\She(t,x\viiva s,z)}
	<\frac{\int_z^\infty\She(t,y\viiva s,w)f(w)\,dw}{\int_z^\infty\She(t,x\viiva s,w)f(w)\,dw}.
	\end{align}
Similarly, for all real $v<z$,  $s<t$, and $x$, and any non-negative function $f$  for which the integrals below are nonzero and  finite, 
	\begin{align}\label{comp2}
	\frac{\int_{-\infty}^x\She(t,u\viiva s,z)f(u)\,du}{\int_{-\infty}^x\She(t,u\viiva s,v)f(u)\,du}
	<\frac{\She(t,x\viiva s,z)}{\She(t,x\viiva s,v)}
	<\frac{\int_x^\infty\She(t,u\viiva s,z)f(u)\,du}{\int_x^\infty\She(t,u\viiva s,v)f(u)\,du}.
	\end{align}
\end{lemma}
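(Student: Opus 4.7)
The plan is to treat \eqref{crossing} as the essential content of the lemma and then derive \eqref{comp} and \eqml{comp2} from it by weighted integration.  The inequality \eqref{crossing} is the continuum analogue of the classical Karlin--McGregor / FKG crossing inequality for the transition kernel, and it amounts to the log-supermodularity of $(x,v)\mapsto\log\She(t,y\viiva s,v)-\log\She(t,x\viiva s,v)$.  Equivalently, after cross-multiplying, \eqref{crossing} reads $\She(t,y\viiva s,v)\She(t,x\viiva s,w)<\She(t,y\viiva s,w)\She(t,x\viiva s,v)$ for $x<y$ and $v<w$, which is the statement that swapping the endpoints of two trajectories from $(s,v),(s,w)$ to $(t,x),(t,y)$ strictly increases the product of partition functions.

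For \eqref{crossing} itself, I would invoke the machinery already developed in the companion paper \cite{Alb-etal-22-spde-}.  Specifically, Proposition~2.18 of that paper provides the stochastic monotonicity of the continuum point-to-point polymer measures $\Poly_{(t,y),(s,v)}$ in the terminal and initial points; the inequality \eqref{crossing} is its infinitesimal form at the level of the partition functions.  Concretely, writing $\Poly_{(t,y),(s,v)}$ as a Markov process backward in time with transition densities from \eqref{pi-rx}, the ratio $\She(t,y\viiva s,v)/\She(t,x\viiva s,v)$ equals the Radon--Nikodym derivative $d\Poly_{(t,y),(s,v)}/d\Poly_{(t,x),(s,v)}$ at the initial endpoint (up to constants), and Proposition~2.18 says this Radon--Nikodym derivative is monotone nondecreasing in the initial coordinate $v$.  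Strictness comes from the $\P$-almost sure everywhere-strict-positivity and continuity of $\rnShe$ on $\varset$ recorded in the remark after \eqref{women}, which rules out equality on an interval.  Alternatively, one can prove \eqref{crossing} directly by approximating the white noise by smooth mollified noise, applying the classical Karlin--McGregor determinant identity (two continuous paths with endpoints in the ``swapped'' configuration must cross, so one may swap the post-crossing portions), and then passing to the limit using the convergence of mollified Green's functions from \cite{Alb-etal-22-spde-}.  I expect this is the main obstacle: the strictness and the handling of the distributional noise must be argued carefully, but only once, since once \eqref{crossing} is in hand the rest is elementary.

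Granted \eqref{crossing}, both \eqref{comp} and \eqref{comp2} follow by integrating against the nonnegative weight $f$.  For the left inequality of \eqref{comp}, apply \eqref{crossing} with $(v,w)=(v,z)$ for $v<z$ to get
\[
\She(t,y\viiva s,v)\,\She(t,x\viiva s,z) \;\le\; \She(t,x\viiva s,v)\,\She(t,y\viiva s,z),
\]
multiply by $f(v)\ge0$ and integrate $v\in(-\infty,z)$; rearranging and dividing by the (strictly positive) denominator yields the first inequality.  The right inequality of \eqref{comp} comes by applying \eqref{crossing} with $(v,w)=(z,w)$ for $w>z$ and integrating $w\in(z,\infty)$.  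Strictness is preserved because the integrands are continuous and strictly positive and $f$ is not identically zero on the integration regions (which is forced by the finiteness and nonvanishing of the integrals that we assumed).  The bounds \eqref{comp2} are proved identically: the crossing inequality with roles of $(x,y)$ and $(v,w)$ interchanged is equivalent to \eqref{crossing} by symmetry (or by composing with the spatial reflection $\refd_2$ from \eqref{eq:cov.ref}), and then the same integration argument against $f$ over $(-\infty,x)$ and $(x,\infty)$ in the $u$-variable yields the two inequalities.

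To package the almost-sure statement, take $\Omega_0$ to be the intersection of the full-probability event on which Proposition~2.18 of \cite{Alb-etal-22-spde-} provides \eqref{crossing} simultaneously for all rational $s<t$ and $v<w$ and $x<y$, with the event on which $\rnShe$ is jointly continuous and strictly positive on $\varset$.  Joint continuity then extends \eqref{crossing} from rationals to all real quadruples, and the integrated versions \eqref{comp}--\eqref{comp2} follow on the same event for all admissible $f$ by monotone/dominated convergence along approximations of $f$.
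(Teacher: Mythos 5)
Your integration step from \eqref{crossing} to \eqref{comp} and \eqref{comp2} is exactly what the paper does: take $w=z$ in \eqref{crossing}, multiply by $\She(t,x\viiva s,v)f(v)\ge0$, integrate over $v<z$ (respectively $v>z$), and rearrange; the other pair is symmetric.  So the second half of your proposal is sound and essentially the paper's argument.

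The gap is in how you obtain \eqref{crossing}.  The paper cites Theorem~2.15 of \cite{Alb-etal-22-spde-}, which asserts that $\She$ is \emph{strictly totally positive} almost surely, i.e.\ $\det[\She(t,y_j\viiva s,x_i)]_{i,j=1}^n>0$ for ordered tuples, simultaneously for all $s<t$.  The $2\times 2$ case of this determinant inequality is precisely \eqref{crossing}, strictness included; no further argument is needed.  Your primary route via Proposition~2.18 of \cite{Alb-etal-22-spde-} does not work as stated, for two reasons.  First, Proposition~2.18 gives only \emph{stochastic ordering} of the polymer measures, $\Poly_{(t,x),(s,f)}\std\Poly_{(t,y),(s,f)}$, whereas \eqref{crossing} is equivalent to the stronger \emph{likelihood-ratio} ordering of the time-$s$ marginals (i.e.\ $v\mapsto\She(t,y\viiva s,v)/\She(t,x\viiva s,v)$ nondecreasing); stochastic ordering does not imply likelihood-ratio ordering, so the implication you want goes the wrong way.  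Second, the two measures $\Poly_{(t,y),(s,v)}$ and $\Poly_{(t,x),(s,v)}$ you compare are pinned at \emph{distinct} points $y\ne x$ at time $t$, so they are mutually singular on path space and your proposed Radon--Nikodym derivative ``at the initial endpoint'' is not well defined.  Relatedly, your claim that strictness ``comes from continuity and strict positivity of $\rnShe$'' cannot be right: a continuous, strictly positive, \emph{nondecreasing} function can certainly fail to be strictly increasing, so a nonstrict monotonicity plus regularity does not upgrade to the strict inequality \eqref{crossing}.  Your secondary route---mollify the noise, apply the Karlin--McGregor determinant identity, pass to the limit---is the correct idea; indeed that is how strict total positivity is established in \cite{Alb-etal-22-spde-}, and simply quoting Theorem~2.15 packages exactly that step and supplies the strictness directly.
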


\begin{proof}
According to Theorem 2.15 in \cite{Alb-etal-22-spde-}, the fundamental solution $\She$ is strictly totally positive: that is, on a single event of full probability,  for all $s<t$, $x_1<\dotsm<x_n$ and $y_1<\dotsm<y_n$,  $\det[\She(t,y_j\viiva s,x_i)]_{i,j=1}^n>0$.  The $2\times2$ case gives \eqref{crossing}.
Next, take a non-negative Borel function $f$,
multiply both sides of \eqref{crossing} by $\She(t,x\viiva s,v)f(v)$, take $w=z$ and integrate over $v<z$.    This gives the first inequality in \eqref{comp}, provided the ratios are well-defined.
The other inequalities come similarly.
\end{proof}

\subsection{Total variation distance} 	
\begin{lemma}\label{lm:var-dist}
Let $P$ and $Q$ be two probability measures on a measurable space $(\Omega,\cF)$. Suppose $Q\ll P$
and let $f=\frac{dQ}{dP}$.  
Then for any event $B\in\cF$, 
	\[  
	\frac{1}{2}\norm{P-Q}_{\rm TV}\leq\sup_{A\in\cF}\bigl[P(A)-Q(A)\bigr]\le E^P[(1-f)^+\one_B]+P(B^c).\]
\end{lemma}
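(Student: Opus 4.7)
The plan is to recognize that both inequalities are straightforward consequences of standard manipulations with signed measures and the Radon-Nikodym derivative. I will handle them in order.

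For the left inequality, I will use the Jordan-Hahn decomposition. Writing $\mu = P - Q$, there is a Hahn decomposition $\Omega = \Omega_+ \sqcup \Omega_-$ with $\mu^+(E) = \mu(E \cap \Omega_+)$ and $\mu^-(E) = -\mu(E \cap \Omega_-)$. Since $P$ and $Q$ are both probability measures, $\mu(\Omega) = 0$, which forces $\mu^+(\Omega) = \mu^-(\Omega) = \tfrac12 \|P-Q\|_{\rm TV}$. Taking $A = \Omega_+$ yields $P(A) - Q(A) = \mu^+(\Omega) = \tfrac12 \|P-Q\|_{\rm TV}$, so $\tfrac12 \|P - Q\|_{\rm TV} \le \sup_{A \in \cF}[P(A) - Q(A)]$.

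For the right inequality, I will express the difference $P(A) - Q(A)$ in terms of the Radon-Nikodym derivative. For any $A \in \cF$,
\[
P(A) - Q(A) = \int_A (1 - f)\, dP \le \int_A (1 - f)^+\, dP.
\]
Splitting the integration domain between $B$ and $B^c$ and using the bound $(1-f)^+ \le 1$ on $B^c$, I obtain
\[
\int_A (1 - f)^+\, dP \le \int_B (1-f)^+ \one_A\, dP + P(A \cap B^c) \le E^P\bigl[(1-f)^+ \one_B\bigr] + P(B^c).
\]
Since this bound is uniform in $A$, taking the supremum over $A \in \cF$ yields the claimed inequality.

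I do not anticipate any obstacle: both steps are purely measure-theoretic and make no use of any structure particular to this paper. The only subtlety worth flagging is that the statement invokes the convention $\norm{\mu}_{\rm TV} = |\mu|(\Omega)$ (fixed in the notation section), which is why the factor of $1/2$ appears on the left. Under this convention, the first inequality is in fact an equality, but the proof only needs the $\le$ direction and that is what I will record.
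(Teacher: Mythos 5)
Your proof is correct. Both inequalities are handled soundly: the left one follows from the Hahn decomposition exactly as you say (and, as you note, is actually an equality since $\mu(\Omega)=0$ forces $\mu^+(\Omega)=\mu^-(\Omega)=\tfrac12\|\mu\|_{\mathrm{TV}}$ with the sup attained at $\Omega_+$), and the right one is established by the direct computation $P(A)-Q(A)=\int_A(1-f)\,dP\le\int_A(1-f)^+\,dP$ followed by splitting the integral over $B$ and $B^c$ and using $(1-f)^+\le 1$.

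Your route to the right inequality is genuinely different from the paper's and, I would say, cleaner. The paper argues entirely at the level of sets: it first records the two comparison inequalities $Q(A\cap\{f\le1\})\le P(A\cap\{f\le1\})$ and $Q(A\cap\{f>1\})\ge P(A\cap\{f>1\})$, then decomposes $P(A\cap B)-Q(A\cap B)$ along the partition $\{f\le1\}\sqcup\{f>1\}$, drops the nonpositive piece, re-expresses via $A^c$ to drop another nonpositive piece, and finally adds $P(B^c)$ to pass from $A\cap B$ to $A$. That chain of set-algebraic rearrangements ultimately reduces to the same observation you exploit directly, namely that $\int_{B\cap\{f\le1\}}(1-f)\,dP=E^P[(1-f)^+\one_B]$. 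By writing $P(A)-Q(A)$ as an integral against $P$ from the outset and immediately replacing $(1-f)$ by $(1-f)^+$, you bypass the case split and the intermediate manipulations entirely. The trade-off is negligible: the paper's argument is slightly more elementary in that it avoids writing an integral of a signed function, but your version is shorter and makes the role of the positive part transparent at a glance.
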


\begin{proof}
First, note that for any event $A\in\cF$ we have
	\[Q(A\cap\{f\le 1\})=E^P[\one_A\one\{f\le 1\}f]\le P(A\cap\{f\le1\})\]
and 
	\[Q(A\cap\{f>1\})=E^P[\one_A\one\{f>1\}f]\ge P(A\cap\{f>1\}).\]
Next, write
\begin{align*}
P(A\cap B)-Q(A\cap B)
&=P(A\cap B\cap\{f\le 1\})-Q(A\cap B\cap\{f\le 1\})\\
&\qquad+P(A\cap B\cap\{f>1\})-Q(A\cap B\cap\{f>1\})\\
&\le P(A\cap B\cap\{f\le 1\})-Q(A\cap B\cap\{f\le 1\})\\
&=P(B\cap\{f\le 1\})-Q(B\cap\{f\le 1\})\\
&\qquad-P(A^c\cap B\cap\{f\le 1\})+Q(A^c\cap B\cap\{f\le 1\})\\
&\le P(B\cap\{f\le 1\})-Q(B\cap\{f\le 1\})=E^P[(1-f)^+\one_B].
\end{align*}
Now
\[ 
P(A)-Q(A)\le P(A\cap B)-Q(A\cap B)+P(B^c)\le E^P[(1-f)^+\one_B]+P(B^c).
\qedhere \] 
\end{proof}


\subsection{Increments of the Busemann process} \label{sub:businc}

\begin{proof}[Proof of Claim \eqref{not-indep} in Remark \ref{rk:Bus-prop}]
We work out the case $\sigg=+$, the other case being similar.
Since the process has Gaussian marginals we know that 
the increments have a finite second moment. 
Since the process has stationary increments we know that if it had independent increments, then for $\lambda<\mu$, the variance of 
$\Bus^{\mu+}(t,x,t,y)-\Bus^{\lambda+}(t,x,t,y)$ would be equal to $\sigma^2(\mu-\lambda)$, where $\sigma^2$ is the variance of 
$\Bus^{1+}(t,x,t,y)-\Bus^{0+}(t,x,t,y)$. The central limit theorem would imply then that the increments are normally distributed and then we would conclude that $\lambda\mapsto\Bus^{\lambda+}(t,x,t,y)$ is a Brownian motion with a linear drift. This would contradict the fact that this process is nondecreasing in $\lambda$. 
\end{proof}

\subsection{Membership in $\initF_\lambda$}

\begin{proof}[Proof of Lemma \ref{specialW}]
Assume $f\in\initF_\lambda$.  Consider first the case $\lambda>0$.
Then taking $x=(\lambda+\delta_0)\abs{r}$ in \eqref{W1} and \eqref{W2} (with $\delta=\delta_0$)
and sending $r\to-\infty$ shows that $x^{-1}\log g(x)\to\lambda$ as $x\to\infty$.
Taking $x=r$ in \eqref{W3} and sending $r\to-\infty$ gives $\varliminf_{x\to-\infty} x^{-1}\log g(x)\ge\mu>-\lambda$.
The case $\lambda<0$ is similar.
 For $\lambda=0$, taking $x=\delta r$ and $x=-\delta r$ in the first condition in \eqref{W''} and sending $r\to-\infty$ gives \eqref{G3}.
 
 For the other direction consider the case $\lambda>0$ and assume \eqref{G1}. 
 Choose $\mu\in\R$ to satisfy 
 $\varlimsup_{x\to-\infty}\abs x^{-1}\log g(x)\le -\mu<\lambda$. 
Let $\e>0$ and $\delta_0\in(0,\lambda)$. Choose $x_0>0$ so that
\begin{align*}
\abs{\log g(x)-\lambda x}\le\e\abs{x} \quad &\text{for } \  x\ge x_0  \\
\text{and}\qquad   \log g(x)-\mu x\le\e\abs{x}    \quad &\text{for } \     x\le -x_0.  
\end{align*}  
Let $r\le-x_0/(\lambda-\delta_0)$. Then
if $\abs{\frac{x}r+\lambda}\le\delta_0$ we have $x\ge x_0$ and $\log g(x)-\lambda x\ge-\e x\ge-\e(\lambda+\delta_0)\abs{r}$. \eqref{W1} follows.
Next, 
let  
\[r\le \min\Bigl(-\tspa \e^{-1}\!\!\sup_{0\le y\le x_0}(\log g(y)-\lambda y),0\Bigr).\] 
Then 
\begin{align*}
  \log g(x)-\lambda x\le\e\abs{r}\le\e(\abs{x}+\abs{r})   \quad &\text{for } \   0\le x\le x_0 \\
\text{and}\qquad  \log g(x)-\lambda x\le\e\abs{x}\le\e(\abs{r}+\abs{x})  \quad &\text{for } \   x\ge x_0.  
\end{align*} 
%
\eqref{W2} follows.
Finally, let  \[r\le \min\Bigl(-\e^{-1}\sup_{-x_0\le y\le 0}(\log g(y)-\mu y),0\Bigr).\] 
Then  \eqref{W3} comes from 
\begin{align*}
\log g(x)-\mu x\le\e\abs{r}\le\e(\abs{r}+\abs{x})    \quad &\text{for } \   -x_0\le x\le 0\\
\text{and}\qquad    \log g(x)-\mu x\le\e\abs{x}\le\e(\abs{r}+\abs{x})  \quad &\text{for } \  x\le-x_0.  
\end{align*}  

The case $\lambda<0$ is similar and the case $\lambda=0$ is an easier version of these arguments. %
\end{proof}

\subsection{Extension of $\bfP_S$} 
\label{app:extend} 

We prove the extension of $\bfP_S$ to an ergodic measure $\bfP$ utilized in the proof of Theorem \ref{thm:unique} in Section \ref{sec:unique}.  The result comes in Proposition \ref{pr:bfP3} below after some preliminaries.   The setting developed in Section \ref{sec:unique} is assumed. 

The generic variables on $\Gamma_S$ and $\Gamma$ are denoted here by $(\zeta, \bbvar)$ to avoid confusion with the processes $\She$ and $\bbus_S$ defined on $\Omega\times\CICP$. 
The time translation mapping $(u,\zeta, \bbvar)\mapsto(\shiftp_u\zeta,\shiftp_u\bbvar)$ of \eqref{th8}  is jointly continuous from $\R\times\Gamma$ into $\Gamma$.  
Hence for any probability measure $Q$ on $\Gamma$ that is invariant under the group $\{\shiftp_u\}_{u\in\R}$, $(\Gamma, \cB_\Gamma, Q, \shiftp_\bbullet)$ is a continuous dynamical system in the sense of \cite{DaP-Zab-96}.     We use $S\!:\!T$ to denote evolutions  restricted to the  time interval $[S,T]$:  $\Sols_{S:T}\clf =  \{\Sols_{S, t}\clf:  t\in[S,T]\}$, 
$\bbus_{S:T}=\{\bbus_S(s, x, t, y): S\le s,t\le T; \, x,y\in\R\}$, and similarly for $\coShe_{S:T}$. 
Abbreviate $\bbus_S(\coShe_S, \clf)=\bbus_S(\aabullet, \aabullet, \aabullet, \aabullet; \coShe_S, \clf)$ when the time-space variables are not explicitly needed.    Throughout, $f\in\clf$ and the choice of representative is immaterial.  


We make the cocycle property of $\bbus_S$ explicit. Let  $s,t\ge T>S$. 
\be\label{b400} \begin{aligned} 
&\exp\{\bbus_S(s, x, t, y; \coShe_S, \clf)\} = \frac{\She(t,y\viiva S,f)}{\She(s,x\viiva S,f)}
 = \frac{\She\bigl(t,y\viiva T , \She(T, \aabullet\viiva S,f)\bigr)}{\She\bigl(s,x\viiva  T , \She(T, \aabullet\viiva S,f)\bigr)}  \\
&=\exp\bigl\{\bbus_{T}\bigl(s, x, t, y; \coShe_T,  \eqcl{\She(T, \aabullet\viiva S,f)}\bigl)\bigr\} 
=\exp\bigl\{\bbus_{T}\bigl(s, x, t, y; \coShe_T,  \Sols_{S,T}\clf\bigl)\bigr\}. 
\end{aligned}\ee 
In words, $\bbus_S$ can be calculated from time $T$ onward by letting $\clf$ evolve from $S$ to $T$ and then running SHE evolution from initial condition $\Sols_{S,T}\clf$.

 We make explicit the effect of time shift on $\bbus$. Let $s,t\ge S$,  $\tau>0$. Recall from \eqref{eq:cov.shift} that $\shiftd{\tau}{0}$ denotes temporal shift on the white noise probability space.  
\be\label{b403-old} \begin{aligned}
&\exp\{(\shiftp_\tau\bbus_S)(s, x, t, y; \coShe^\w_S, \clf)\}  =\exp\{\bbus_S(s+\tau, x, t+\tau, y; \coShe^\w_S, \clf)\} \\[3pt]  
&= \frac{\She^\w\bigl(t+\tau,y\viiva S+\tau , \She^\w(S+\tau, \aabullet\viiva S,f)\bigr)}{\She^\w\bigl(s+\tau,x\viiva S+ \tau , \She^\w(S+\tau, \aabullet\viiva S,f)\bigr)}  
=\exp\bigl\{\bbus_S\bigl(s, x, t, y; \coShe^{\shiftd{\tau}{0}\w}_S,  \eqcl{\She^\w(S+\tau, \aabullet\viiva S,f)}\bigl)\bigr\} \\[3pt]
&=\exp\bigl\{\bbus_S\bigl(s, x, t, y; \coShe^{\shiftd{\tau}{0}\w}_S,  \Sols^\w_{S, S+\tau}\clf \bigl)\bigr\} . 
\end{aligned}\ee 



\begin{lemma}  Let $\PMPinit(d\clf)$ be a probability distribution on $\CICP$.   Let $S\le T< S+\tau$.   Then the conditional distribution of $\shiftp_\tau\bbus_S$ under $\P\otimes\PMPinit$, given the evolution  $(\coShe_{S:T}, \Sols_{S:T}\clf)$ over the time interval $[S,T]$, is given by the following formula for bounded Borel functions $\Phi$ on $\sC(\bbU_S,\R)$:  
\be\label{b419} 
E^{\P\otimes\PMPinit}\bigl[  \Phi(\shiftp_\tau\bbus_S) \big\vert  \coShe^\w_{S:T},  \Sols^\w_{S:T}\clf   \bigr] 
=
\int \P(d\w') \int   \piMP(S+\tau ,d\clg\viiva T, \Sols^\w_{S,T}\clf)  \tspb \Phi\bigl(\bbus_S(\coShe^{\shiftd{\tau}{0}\w'}_{S},  \clg)\bigr). 
\ee
\end{lemma}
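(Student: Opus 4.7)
The plan is to combine the shift identity \eqref{b403-old} with the cocycle property \eqref{eq:Solsco} and then exploit the independence of the white noise on disjoint time intervals. First I would apply \eqref{b403-old} to rewrite
\[
\shiftp_\tau\bbus_S(\aabullet; \coShe^\w_S, \clf) = \bbus_S\bigl(\aabullet; \coShe^{\shiftd{\tau}{0}\w}_S, \Sols^\w_{S, S+\tau}\clf\bigr),
\]
and then apply the cocycle \eqref{eq:Solsco} at the intermediate time $T\in[S,S+\tau)$ to split $\Sols^\w_{S, S+\tau}\clf = \Sols^\w_{T, S+\tau}\bigl(\Sols^\w_{S, T}\clf\bigr)$. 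This expresses the left-hand side of \eqref{b419} in terms of three ingredients whose joint law is governed by independent blocks of the driving white noise.

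Next I would read off the measurability of each block. By the shift covariance \eqref{eq:cov.shift}, $\coShe^{\shiftd{\tau}{0}\w}_S = \{\She^\w(t+\tau, y \viiva s+\tau, x) : t>s\ge S, \, x,y\in\R\}$ is $\filhat_{S+\tau:\infty}$-measurable through its dependence on $\w$; the evolution $\Sols^\w_{T,S+\tau}(\aabullet)$ is a measurable functional of the white noise on $[T, S+\tau]$, hence $\filhat_{T:S+\tau}$-measurable; and the conditioning data $(\coShe^\w_{S:T}, \Sols^\w_{S:T}\clf)$ is $(\filhat_{S:T}\vee\sigma(\clf))$-measurable. Because $T<S+\tau$, the three $\sigma$-algebras $\filhat_{S:T}\vee\sigma(\clf)$, $\filhat_{T:S+\tau}$, and $\filhat_{S+\tau:\infty}$ are mutually independent under $\P\otimes\PMPinit$.

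Given the conditioning, then, $\Sols^\w_{S, T}\clf$ is frozen at its conditioned value, while the conditional law of $\Sols^\w_{T,S+\tau}\bigl(\Sols^\w_{S, T}\clf\bigr)$ coincides with the time-homogeneous Markov transition $\piMP(S+\tau, d\clg \viiva T, \Sols^\w_{S, T}\clf)$ by Lemma \ref{lm:SHE-MC}\eqref{lm:SHE-MC.ii} and definition \eqref{SHE-kernel}, and $\coShe^{\shiftd{\tau}{0}\w}_S$ keeps its unconditional distribution (which in law equals that of $\coShe^{\shiftd{\tau}{0}\w'}_S$ sampled against an independent copy $\w'$ of $\P$). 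These two pieces remain conditionally independent of each other because they depend on the disjoint noise blocks $\filhat_{T:S+\tau}$ and $\filhat_{S+\tau:\infty}$. A direct application of Fubini then turns the conditional expectation into the iterated integral on the right-hand side of \eqref{b419}.

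The main bookkeeping step is to check that the block $\coShe^{\shiftd{\tau}{0}\w}_S$ indeed lives in $\filhat_{S+\tau:\infty}$ (this is where the standing hypothesis $T<S+\tau$ is essential) and that each expression of the form $\bbus_S\bigl(\aabullet;\coShe^{\shiftd{\tau}{0}\w'}_S, \clg\bigr)$ inside the integrand is well-defined, i.e.\ that $\clg\in\CICP$ for $\piMP(S+\tau,\aabullet\viiva T, \Sols^\w_{S, T}\clf)$-a.e.\ $\clg$; this follows because the SHE semigroup preserves $\CICP$ by Theorem~2.9 of \cite{Alb-etal-22-spde-}. Beyond this verification, the argument is a standard Markov-property computation and I do not anticipate a substantive obstacle.
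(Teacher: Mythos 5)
Your proposal is correct and follows essentially the same route as the paper: apply the shift identity \eqref{b403-old}, split via the cocycle property at time $T$, and exploit independence of the white noise over the disjoint intervals $(S,T]$, $(T,S+\tau]$, $(S+\tau,\infty)$ to identify the Markov transition kernel and the law of the shifted Green's function. The paper carries this out as a direct weighted-integral computation with a test function $\Psi$ against the conditioning data, while you phrase it via conditional distributions on the three $\sigma$-blocks followed by Fubini, but the underlying argument is the same.
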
 

Note that on the right  $\w$ is inherited from the left while $\w'$ is integrated over.   

\begin{proof}  
By \eqref{b403-old},  
$  \shiftp_\tau\bbus_S(\coShe^\w_S, \clf)
= \bbus_S(\coShe^{\shiftd{\tau}{0}\w}_S,  \Sols^\w_{S, S+\tau}\clf) $. 
Note that  $\Sols^\w_{S, S+\tau}\clf$ depends only on the white noise on time interval $(S, S+\tau]$, while  $\coShe^{\shiftd{\tau}{0}\w}_S$ 
is a function of the white noise on time interval $(S+\tau, \infty)$. 
Use independence of white noise over disjoint intervals $(S,T]$, $(T, S+\tau]$ and $(S+\tau, \infty)$, the cocycle property  of $\Sols_{s,t}$ in \eqref{eq:Solsco} 
and the  transition probability in \eqref{SHE-kernel} to write
\begin{align*}
&\iint \P(d\w)  \PMPinit(d\clf) \,  \Psi(\coShe^\w_{S:T}, \Sols^\w_{S:T}\clf) \tspb   \Phi\bigl(\bbus_S(\coShe^{\shiftd{\tau}{0}\w}_S,
\Sols^\w_{S, S+\tau}\clf)\bigr) \\[3pt] 
&= \iint \P(d\w)  \PMPinit(d\clf) \,  \Psi(\coShe^\w_{S:T}, \Sols^\w_{S:T}\clf)
   \int \P(d\w'')   \int \P(d\w')    \,    \Phi\bigl(\bbus_S(\coShe^{\shiftd{\tau}{0}\w'}_{S},  
   \Sols^{\w''}_{T, S+\tau}\Sols^\w_{S,T}\clf)\bigr) \\
   &= \iint \P(d\w)  \PMPinit(d\clf) \,  \Psi(\coShe^\w_{S:T}, \Sols^\w_{S:T}\clf)
   \int   \piMP(S+\tau, d\clg\viiva T, \Sols^\w_{S,T}\clf)   \int \P(d\w')    \,    \Phi\bigl(\bbus_S(\coShe^{\shiftd{\tau}{0}\w'}_{S}, 
   \clg)\bigr)
\qedhere \end{align*}
\end{proof}  

\medskip 


 Recall that  $\bfP_S$ on   $\Gamma_S$  is the distribution of  $(\coShe_S, b_S)$    under $\P(d\w)\otimes\PMPinit(d\clf)$.

\begin{proposition}\label{pr:bfP3}  Let $\PMPinit \in\cM_1(\CICP)$ be an  invariant distribution for the Markov kernel  \eqref{SHE-kernel}.   

{\rm(a)} The  measures $\{\bfP_S: S\in\R\}$ are consistent under projections $\Gamma_S\to\Gamma_T$ for $S<T$.  There is a unique  probability measure $\bfP$ on $\Gamma$ whose projection to $\Gamma_S$ agrees with $\bfP_S$ for each $S\in\R$.  $\bfP$ is invariant   under the time translation group $\{\shiftp_u\}_{u\tsp\in\tsp\R}$.  

{\rm(b)} Assume further that $\PMPinit \in\cM_1(\CICP)$ is ergodic  for the Markov kernel \eqref{SHE-kernel}. Then  $\bfP$ is ergodic under the time translation group $\{\shiftp_u\}_{u\tsp\in\tsp\R}$.  

\end{proposition}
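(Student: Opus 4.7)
I will first verify consistency of the family $\{\bfP_S\}$. For $S < T$, projection from $\Gamma_S$ to $\Gamma_T$ simply restricts the two coordinate functions from $\bbU_S$, $\R^4_S$ to $\bbU_T$, $\R^4_T$. Under $\P(d\w)\otimes\PMPinit(d\clf)$, $\coShe^\w_S$ restricted to $\bbU_T$ equals $\coShe^\w_T$ pointwise, and identity \eqref{b400} gives that the restriction of $\bbus_S(\,\cdot\,;\coShe^\w_S,\clf)$ to $\R^4_T$ coincides with $\bbus_T(\,\cdot\,;\coShe^\w_T,\Sols^\w_{S,T}\clf)$. Now $\coShe^\w_T$ depends on the white noise only on $(T,\infty)$, while $\Sols^\w_{S,T}\clf$ depends on $\clf$ and the white noise on $(S,T]$; these two ingredients are therefore independent, and the invariance of $\PMPinit$ under the kernel \eqref{SHE-kernel} yields $\Sols^\w_{S,T}\clf \sim \PMPinit$. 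Hence the joint law of $(\coShe^\w_T,\bbus_T(\coShe^\w_T,\Sols^\w_{S,T}\clf))$ is $\bfP_T$, which is consistency. Existence of $\bfP$ on the Polish space $\Gamma$ with the prescribed projections then follows from the projective form of Kolmogorov's extension theorem (cf.\ Corollary 8.22 in \cite{Kal-21}), analogously to the proof of Lemma \ref{lm:tight}. For time invariance, take $\tau>0$ and check that $(\shiftp_\tau\bfP)\vert_{\Gamma_S}=\bfP_S$ for each $S$: by the white-noise translation covariance \eqref{eq:cov.shift} one has $\shiftp_\tau\coShe^\w_S=\coShe^{\shiftd{\tau}{0}\w}_S$ on $\bbU_S$, while \eqref{b403-old} gives $\shiftp_\tau\bbus_S(\coShe^\w_S,\clf)=\bbus_S(\coShe^{\shiftd{\tau}{0}\w}_S,\Sols^\w_{S,S+\tau}\clf)$. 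The same independence-plus-invariance argument as for consistency, combined with $\P$-invariance of $\shiftd{\tau}{0}$, delivers the result; the case $\tau<0$ is handled by applying the same argument on projections to $\Gamma_{S'}$ with $S'$ sufficiently negative.

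\textbf{Plan for Part (b).} The main obstacle. I aim to establish the Cesaro mixing property
\[
\lim_{U\to\infty}\frac{1}{U}\int_0^U\bfE[F\cdot(G\circ\shiftp_u)]\,du\;=\;\bfE[F]\,\bfE[G]
\]
for a class of $F,G$ dense in $L^2(\bfP)$. Combined with the continuous-time mean ergodic theorem, this yields triviality of shift-invariant events. By separability of $\Gamma$ and bounded convergence, it suffices to take $F$ and $G$ bounded Borel measurable functions of the restriction of $(\zeta,\bbvar)$ to a finite time window, and then approximate.

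The key structural observation is that under $\bfP$ the $\CICP$-valued process $M_t:=\eqcl{\exp\bbvar(t,0,t,\bullet)}$ is, by \eqref{431}, precisely the stationary version of the Markov process with invariant distribution $\PMPinit$ and kernel \eqref{SHE-kernel}; by hypothesis this process is ergodic under time shifts. Moreover, the propagation relation \eqref{NEBus-tmp} shows that for any $S<T$ the restriction $\bbvar\vert_{\R^4_{S:T}}$ is a deterministic function of the pair $(M_S,\zeta\vert_{\bbU_{S:T}})$, and $\zeta\vert_{\bbU_{S:T}}$ is a function of the white noise on $(S,T]$ which is independent of $\sigma(M_s:s\le S)$. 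Thus $F$ is of the form $\Phi(M_S,\zeta\vert_{\bbU_{S:T}})$ and similarly $G\circ\shiftp_u$ is of the form $\Psi(M_{S+u},\zeta\vert_{\bbU_{S+u:T+u}})$. For $u>T-S$, the disjointness of $(S,T]$ and $(S+u,T+u]$ lets me integrate out the second white-noise block independently, obtaining
\[
\bfE[F\cdot(G\circ\shiftp_u)]\;=\;\bfE\bigl[\Phi(M_S,\zeta\vert_{\bbU_{S:T}})\cdot\bar\Psi(M_{S+u})\bigr],
\]
where $\bar\Psi(m)=\int\Psi(m,\zeta\vert_{\bbU_{0:T-S}})\,d\bfP$, and note $\bfE[\bar\Psi(M_{S+u})]=\bfE[G]$. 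Then the Markov property relative to the $\sigma$-algebra $\sigma(M_S,\zeta\vert_{\bbU_{S:T}})$ (which determines $M_T=\Sols_{S,T}M_S$) yields $\bfE[\bar\Psi(M_{S+u})\mid \sigma(M_S,\zeta\vert_{\bbU_{S:T}})]=(\pi_{u-(T-S)}\bar\Psi)(M_T)$, where $\pi_v$ is the Markov semigroup.

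Taking Cesaro averages, the desired limit reduces to
\[
\bfE\Bigl[\Phi(M_S,\zeta\vert_{\bbU_{S:T}})\cdot\frac{1}{U}\int_0^U\pi_{u-(T-S)}\bar\Psi(M_T)\,du\Bigr]\;\longrightarrow\;\bfE[\Phi(M_S,\zeta\vert_{\bbU_{S:T}})]\cdot\bfE^\PMPinit[\bar\Psi]=\bfE[F]\bfE[G],
\]
where the inner Cesaro average converges in $L^2(\bfP)$ to $\bfE^\PMPinit[\bar\Psi]$ by the $L^2$ mean ergodic theorem applied to the stationary ergodic Markov process $(M_t)$ started from $\PMPinit$. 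The principal technical care required is the clean separation of the Markov state from the driving white noise (already formalized above via \eqref{NEBus-tmp} and the independent-increments structure of $W$), verifying that the rewritten functionals $\Phi,\Psi,\bar\Psi$ are genuinely Borel measurable on the relevant product spaces, and handling the small-$u$ boundary terms in the Cesaro average, all of which are routine given the setup.
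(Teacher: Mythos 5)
Your proposal is correct and follows essentially the same route as the paper: consistency via the cocycle identity \eqref{b400}, time invariance via \eqref{b403-old} plus shift-invariance of $\bbP$, and for ergodicity, the Cesàro mixing criterion combined with the decomposition of $\bbvar$ over a future window into the Markov state $\Sols_{S,T}\clf$ and the independent white noise, closed with the $L^2$ mean ergodic theorem for the Markov semigroup (the paper cites Theorem 3.2.4 of \cite{DaP-Zab-96} for this last step, via \eqref{b560}).
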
 

\begin{proof}
\textit{Step 1. Consistency. }    Let $T>S$ and let $\Phi$ be a function on $\Gamma_{T}$.    
\begin{align*}
\int_{\Gamma_S} \Phi \, d\bfP_S 
&=  \iint \P(d\w)  \PMPinit(d\clf) \, \Phi\bigl(\coShe_S\vert_{\bbU_{T}}, \bbus_S\vert_{\R^4_{T}}(\coShe_S, \clf)\bigr) \\
&\overset{\eqref{b400}}=  \iint \P(d\w)  \PMPinit(d\clf) \, \Phi\bigl(\coShe_{T}, \bbus_{T}(\coShe_{T}, \eqcl{\She(T, \aabullet\viiva S,f)})\bigr) \\
&=  \iint \P(d\w)  \PMPinit(d\clf) \, \Phi\bigl(\coShe_{T}, \bbus_{T}(\coShe_{T}, f)\bigr) 
=\int_{\Gamma_{T}} \Phi \,d\bfP_{T} .  
\end{align*}
The penultimate equality used the independence  
of $\coShe_{T}$ and $\coShe_{S:T}$ 
and the invariance of $\PMPinit$ which gives 
\[  \P\otimes\PMPinit\{ (\w,\clf):  \eqcl{\She^\w(T, \aabullet\viiva S,f)} \in B\} =   \PMPinit(B) \qquad \text{for  Borel }  B\subset\CICP.  \] 
The projection consistency implies  that there is a unique $\bfP\in\cM_1(\Gamma)$ whose restriction to $\Gamma_S$ agrees with $\bfP_S$ for each $S\in\R$ (Corollary 8.22 in \cite{Kal-21}).  
\medskip  

\textit{Step 2. Invariance under time shift. }  It is enough to check the invariance of $\bfP_S$ under positive time shifts.   Let  $\Phi$ be a bounded measurable function on $\Gamma_S$ and $\tau>0$.    Use  the independence 
of $
\coShe^{\shiftd{\tau}{0}\w}_S$ and $
\coShe^\w_{S:S+\tau}$,
the assumed invariance of $\PMPinit$, and the shift-invariance of $\P$ to write
\begin{align*}
\int_{\Gamma_S} \Phi \, d\bfP_S\circ\shiftp_\tau^{-1}  
&=  \iint \P(d\w)  \PMPinit(d\clf) \, \Phi\bigl(
\coShe^{\shiftd{\tau}{0}\w}_S, \shiftp_\tau\bbus_S(\coShe^\w_S, \clf)\bigr) \\ 
&\overset{\eqref{b403-old}}=  \iint \P(d\w)  \PMPinit(d\clf) \, \Phi\bigl(\coShe^{\shiftd{\tau}{0}\w}_S, \bbus_S\bigl(\coShe^{\shiftd{\tau}{0}\w}_S,  \eqcl{\She^\w(S+\tau, \aabullet\viiva S,f)}\bigl)\bigr) \\ 
&=  \iint \P(d\w)  \PMPinit(d\clf) \, \Phi\bigl(\coShe^{\shiftd{\tau}{0}\w}_S, \bbus_S(\coShe^{\shiftd{\tau}{0}\w}_S,  \clf)\bigr) \\ 
&=  \iint \P(d\w)  \PMPinit(d\clf) \, \Phi\bigl(\coShe^\w_S, \bbus_S(\coShe^\w_S,  \clf)\bigr) 
= \int_{\Gamma_S} \Phi \, d\bfP_S.  \\ 
\end{align*}


\textit{Step 3. Ergodicity under time shift.}   We use 
Definition (1.1.4)  of ergodicity from \cite{DaP-Zab-96}:
\be\label{dz-erg100} \lim_{T\to\infty} \frac1T\int_0^T \bfP(A\cap \shiftp_{-t}B) \,dt =  \bfP(A)\tspb\bfP(B) 
\qquad\text{for Borel sets }  A,B\subset\Gamma.  
\ee
By approximation, it suffices to consider sets $A$ and $B$ that depend only on $(\zeta_{S:T}, \bbvar_{S:T})$ for a bounded time interval  $[S,T]\subset\R$.  Then we can replace $\bfP$ with $\bfP_S$.  

  Begin with this auxiliary calculation for  $S\le T\le S+\tau$.  
\begin{align*}
&\int_{\Gamma_S} \Phi(\zeta_{S:T}, \bbvar_{S:T}) \tspb  \Psi(\shiftp_\tau\zeta_{S:T}, \shiftp_\tau \bbvar_{S:T}) \, d\bfP_S    
\overset{\eqref{b419}}=  \iint \P(d\w)  \PMPinit(d\clf)  \, \Phi\bigl(\coShe^\w_{S:T}, \bbus_{S:T}(\coShe^\w_{S:T},  \clf)\bigr)  \\    
&\qquad \qquad   \qquad \qquad  \times 
\int   \piMP(S+\tau, d\clg\viiva T,\Sols_{S,T}^\w\clf)   \int \P(d\w')   \,  \Psi\bigl(
\coShe^{\shiftd{\tau}{0}\w'}_{S:T}, \bbus_{S:T}(
\coShe^{\shiftd{\tau}{0}\w'}_{S:T}, \clg ) \bigr)  \\
&=  \iint \P(d\w)  \PMPinit(d\clf)  \, \Phi\bigl(\coShe^\w_{S:T}, \bbus_{S:T}(\coShe^\w_{S:T},  \clf)\bigr)  \\    
&\qquad \qquad  \qquad \qquad  \times
 \int   \piMP(S+\tau, d\clg\viiva T,\Sols_{S,T}^\w\clf)   \int \P(d\w')   \,  \Psi\bigl(\coShe^{\w'}_{S:T}, \bbus_{S:T}(\coShe^{\w'}_{S:T}, \clg ) \bigr) . 
\end{align*}
In the last step, we used the $\P(d\w')$-almost sure equality $\shiftp_\tau \coShe_{S:T}^{\w'} =\coShe_{S:T}^{ \shiftd{\tau}{0}\w'}$ recorded as \eqref{eq:cov.shift} above and the shift invariance of $\P(d\w')$. 


 We verify the limit in \eqref{dz-erg100}: 
 \begin{align}
\nn &\frac1N\int_0^N  d\tau\, \bfP_S   \{ (\zeta_{S:T}, \bbvar_{S:T}) \in A, \tspb   (\shiftp_\tau\zeta_{S:T}, \shiftp_\tau \bbvar_{S:T}) \in B\}  \\
\nn &=   O(\tfrac{T-S}N) \;  + \; \iint \P(d\w)  \PMPinit(d\clf)  \,  \ind_A(\coShe^\w_{S:T}, \bbus_{S:T})  \\    
\nn &\qquad \qquad  \times  \frac1N\int_{T-S}^N  d\tau\,    \int   \piMP(S+\tau, d\clg\viiva T,\Sols_{S,T}^\w\clf)   \int \P(d\w')   \,  \ind_B\bigl(\coShe^{\w'}_{S:T}, \bbus_{S:T}(\coShe^{\w'}_{S:T}, \clg ) \bigr)   \\
\label{aux68} &=   O(\tfrac{T-S}N) \;  + \; \iint \P(d\w)  \PMPinit(d\clf)  \,  \ind_A(\coShe^\w_{S:T}, \bbus_{S:T})  \\    
\nn &\qquad \qquad    \times  \frac1N\int_{0}^{N-S+T}  d\tau\,    \int   \piMP(\tau, d\clg\viiva 0,\Sols_{S,T}^\w\clf)   \int \P(d\w')   \,  \ind_B\bigl(\coShe^{\w'}_{S:T}, \bbus_{S:T}(\coShe^{\w'}_{S:T}, \clg ) \bigr)\\[3pt] 
\label{aux78}  &\underset{N\to\infty}\longrightarrow \  
\iint \P(d\w)  \PMPinit(d\clf)  \,  \ind_A(\coShe^\w_{S:T}, \bbus_{S:T})  
 \int \PMPinit(d\clg)   \int \P(d\w')   \,  \ind_B\bigl(\coShe^{\w'}_{S:T}, \bbus_{S:T}(\coShe^{\w'}_{S:T}, \clg ) \bigr)\\[4pt] 
\nn &=\bfP_S   \{ (\zeta_{S:T}, \bbvar_{S:T}) \in A \}  \tspb  \bfP_S   \{ (\zeta_{S:T},  \bbvar_{S:T}) \in B\}  .
\end{align} 
In the second equality above, time homogeneity of the  transition probability is used to shift  $T$  to zero and then   $S-T+\tau$   is renamed $\tau$.    
The limit \eqref{aux78}   is  justified as follows.  
By Theorem 3.2.4 on page 25 of \cite{DaP-Zab-96},    an ergodic  invariant distribution $\mu$ of a stochastically continuous Markov semigroup $P_t$ on a Polish space $S$ satisfies 
\be\label{dz-erg104} \lim_{T\to\infty} \frac1T\int_0^T P_t\varphi  \,dt =  \int\varphi\,d\mu  
\quad\text{in } L^2(\mu), 
\ \forall \varphi\in L^2(\mu). 
\ee
We apply  this to the transition probability $\piMP$ of \eqref{SHE-kernel} on the Polish state space $\CICP$.  By Lemma \ref{lm:SHE-MC}\eqref{lm:SHE-MC.iv} the paths are continuous and thereby the stochastic continuity assumption of Theorem 3.2.4 of \cite{DaP-Zab-96} is satisfied.  Thus for $\Psi\in L^2(\PMPinit(d\clf))$, 
\be\label{b560} 
\lim_{N\to\infty}  \frac1N\int_{0}^{N}  d\tau\,    \int   \piMP(\tau, d\clg\viiva 0,\clf) \, \Psi(\clg) 
= \int \Psi(\clf)\,\PMPinit(d\clf)  \quad\text{in } \ L^2(\PMPinit(d\clf)).  
\ee
We take the function $\Psi(\clg) =  \int \P(d\w')   \,  \ind_B(\coShe^{\w'}_{S:T}, \bbus_{S:T}(\coShe^{\w'}_{S:T}, \clg ) )$ above. 

Now the situation at \eqref{aux68}  can be abstracted as follows:   $f_N\to c$ in $L^2(\mu)$,  $(X,Y)$ are jointly defined random variables, $0\le Y\le 1$, and $X$ has distribution $\mu$. Then 
\begin{align*}
\abs{ \,E[ \tspb Y   f_N(X)\tspa] -  EY \cdot c\,} \le 
E\bigl[ Y \tspb \abs{f_N(X)-c}\tspb\bigr]   
\le    
\bigl\{ E[ \tspb \abs{f_N(X)-c}^2\tspb]\bigr\}^{1/2}  \longrightarrow 0. 
\end{align*}  
In the application to \eqref{aux78}, $f_N$ is the average on the second line of \eqref{aux68}, $X=\Sols_{S,T}\clf$ which has distribution $\mu=\PMPinit$ by the invariance of $\PMPinit$, and $Y= \ind_A(\coShe^\w_{S:T}, \bbus_{S:T})$.
This completes the proof of Proposition \ref{pr:bfP3}.  
\end{proof}

\section{Quotient topology on strictly positive measures}\label{sec:quotient}
Recall the space $\sM_+(\R)$ of positive Radon measures on $\R$ and the space 
\[\MP= \{\zeta \in \sM_+(\R) : \supp(\zeta) = \bbR\}\] 
of these measures whose support is the whole real line.
Fix a countable dense subset $\{\varphi_j : j \in \bbN\}$ of $\sC_c(\R,\R_+)$ such that no $\varphi_j$ is identically zero   and   each open interval of $\R$  
contains the support of some $\varphi_j$. Define  the metrics
\begin{align*}
&d_{\sM_+(\R)}(\zeta,\eta) = \sum_{j=1}^\infty 2^{-j} \min\Big\{\;\Big|\int_{\R}\varphi_j \,d\zeta - \int_{\R}\varphi_j \,d\eta \Big|\,,1\Big\} \quad \text{and}\\
&d_{\MP}(\zeta,\eta) = \sum_{j=1}^\infty 2^{-j} \min\Big\{\;\Big|\int_{\R}\varphi_j \,d\zeta - \int_{\R}\varphi_j \,d\eta \Big| + \Big|\frac{1}{\int_{\R}\varphi_j\, d\zeta} - \frac{1}{\int_{\R}\varphi_j\, d\eta} \Big|\,,1\Big\},
\end{align*}
both bounded by one. 
The topology induced by $d_{\sM_+(\R)}$ is the usual vague topology on $\sM_+(\R)$.

\begin{lemma}\label{lem:subspace}
$(\MP,d_{\MP})$ is complete and separable and $d_{\MP}$ generates the subspace topology on $\MP$, viewed as a subset of $\sM_+(\R)$.
\end{lemma}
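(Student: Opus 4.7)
The metric $d_{\MP}$ is designed to track two things at once: vague convergence (through the integrals $\int\varphi_j\,d\zeta$, exactly as in $d_{\sM_+(\R)}$) and non-escape of mass from any nonempty open interval (through the reciprocals $1/\int\varphi_j\,d\zeta$). My plan is to first verify that $d_{\MP}$ is a bona fide metric, then identify its topology with the subspace topology, then deduce separability from $(\sM_+(\R),d_{\sM_+(\R)})$, and finally prove completeness by leveraging completeness of $\sM_+(\R)$ plus the reciprocal terms to force the vague limit to retain full support. Throughout I will use as a black box the standard fact that $d_{\sM_+(\R)}$ is a complete separable metric generating the vague topology on $\sM_+(\R)$.

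For each $\zeta\in\MP$ and each $j$, the quantity $\int\varphi_j\,d\zeta$ is strictly positive: by the standing assumption on $\{\varphi_j\}$, $\varphi_j$ is continuous, nonnegative and not identically zero, hence strictly positive on some nonempty open set, which meets $\supp(\zeta)=\R$. Thus $d_{\MP}$ takes finite values on $\MP\times\MP$; the metric axioms follow from the triangle inequality for both $|a-b|$ and $|a^{-1}-b^{-1}|$ together with the fact that $\{\varphi_j\}$ is dense in $\sC_c(\R,\R_+)$ and therefore separates measures. For the coincidence of topologies, note first that $d_{\sM_+(\R)}\le d_{\MP}$ term by term, so $d_{\MP}$-convergence implies vague convergence. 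Conversely, if $\zeta_n\to\zeta$ vaguely with all $\zeta_n,\zeta\in\MP$, then $\int\varphi_j\,d\zeta_n\to\int\varphi_j\,d\zeta>0$, hence also $1/\int\varphi_j\,d\zeta_n\to 1/\int\varphi_j\,d\zeta$, and dominated convergence applied to the series defining $d_{\MP}$ (each term bounded by $2^{-j}$) gives $d_{\MP}(\zeta_n,\zeta)\to 0$.

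Separability is then immediate: $\MP$ with $d_{\MP}$ carries the subspace topology inherited from the separable metric space $(\sM_+(\R),d_{\sM_+(\R)})$, and subspaces of separable metric spaces are separable.

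The main work is completeness. Let $\{\zeta_n\}$ be $d_{\MP}$-Cauchy. Since $d_{\sM_+(\R)}\le d_{\MP}$, the sequence is also $d_{\sM_+(\R)}$-Cauchy and therefore converges vaguely to some $\zeta\in\sM_+(\R)$; the task is to show $\zeta\in\MP$ and that $d_{\MP}(\zeta_n,\zeta)\to 0$. Fix $j$. The key technical point is to bypass the truncation $\min\{\,\cdot\,,1\}$: whenever $d_{\MP}(\zeta_n,\zeta_m)<2^{-j-1}$ the $j$-th summand satisfies $\min\{x_{j,n,m},1\}<1/2<1$, so the truncation is inactive and the raw quantity
\[
\Bigl|\tspb\int\varphi_j\,d\zeta_n-\int\varphi_j\,d\zeta_m\Bigr|+\Bigl|\tfrac{1}{\int\varphi_j\,d\zeta_n}-\tfrac{1}{\int\varphi_j\,d\zeta_m}\Bigr|
\]
is itself a Cauchy sequence in $\R$. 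In particular $1/\int\varphi_j\,d\zeta_n$ converges to a finite limit $b_j$, which, combined with the vague limit $\int\varphi_j\,d\zeta_n\to\int\varphi_j\,d\zeta$, forces $\int\varphi_j\,d\zeta=1/b_j>0$. By the hypothesis that every open interval of $\R$ contains the support of some $\varphi_j$, positivity of $\int\varphi_j\,d\zeta$ for every $j$ implies $\zeta$ charges every nonempty open interval; hence $\supp(\zeta)=\R$ and $\zeta\in\MP$. The topology equivalence established in the second paragraph then upgrades vague convergence $\zeta_n\to\zeta$ to $d_{\MP}$-convergence, completing the proof. The main obstacle, as just indicated, is this interaction between the $\min\{\cdot,1\}$ truncation and the extraction of a genuinely Cauchy sequence of reciprocals; once that is handled, the full-support conclusion for $\zeta$ is an immediate consequence of the covering assumption on the supports of the $\varphi_j$.
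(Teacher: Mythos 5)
Your proof is correct and follows essentially the same route as the paper's: pass to the vague limit via $d_{\sM_+(\R)}\le d_{\MP}$, then use the Cauchy-ness of the reciprocals to force $\int\varphi_j\,d\zeta>0$ for all $j$, hence $\supp(\zeta)=\R$. You make two genuine improvements in polish: you explicitly address the $\min\{\cdot,1\}$ truncation (observing that for fixed $j$ and $n,m$ large the $j$-th summand falls below $2^{-j-1}$, so the truncation is inactive and the raw reciprocals are Cauchy), a point the paper asserts without comment; and you deduce separability abstractly from the topology-coincidence claim plus second-countability of metric subspaces, whereas the paper constructs a countable dense set explicitly by perturbing rational Dirac combinations with $\e$ times a Gaussian. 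Both separability arguments are valid; yours is shorter but depends on having proved the topology coincidence first, which you correctly do.
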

\begin{proof}
These topologies are metric and therefore sequential. Separability can be seen by adding $\e$ times a Gaussian measure to an approximation by a linear combination, with rational coefficients, of Dirac masses at rational points. 
To see the completeness, note first that a Cauchy sequence $\zeta_n$ in $(\MP,d_{\MP})$ is also Cauchy in $(\sM_+(\R), d_{\sM_+(\R)})$. Call its $d_{\sM_+(\R)}$-limit $\zeta$. Because $(\int \varphi_j\,d\zeta_n)^{-1}$ is Cauchy and  $\int \varphi_j \,d\zeta_n \to \int \varphi_j \,d\zeta$, these limits of integrals must be strictly positive  for all $j\in\N$. 
Therefore $\zeta \in \MP$. Finally,  if $\zeta_n \in \MP$ converges to $\zeta \in \MP$ vaguely, then for all $j$,
\begin{align*}
 \bigg|\int_{\R}\varphi_j d\zeta_n - \int_{\R}\varphi_j d\zeta \bigg| +  \bigg|\frac{1}{\int_{\R}\varphi_j d\zeta_n} - \frac{1}{\int_{\R}\varphi_j d\zeta} \bigg| \to 0
 \end{align*}
 and so   $d_{\MP}(\zeta_n,\zeta)\to0$. Conversely,   $d_{\MP}(\zeta_n,\zeta)\to0$ implies  $d_{\sM_+(\R)}(\zeta_n,\zeta)\to0$. Therefore, the topology induced by $d_{\MP}$ on $\MP$ is the subspace topology.
\end{proof}

Add to $\MP$ a cemetery state $\infimeas$. Call $\radon = \MP \cup\{\infimeas\}$ and define a metric on $\radon$ by setting 
$\dradon(\infimeas,\infimeas)=0$, $\dradon(\zeta,\infimeas) = 1$, and $\dradon(\zeta,\eta) = d_{\MP}(\zeta,\eta)$, for $\eta,\zeta\in\MP$. It is straightforward to see that because $\MP$ is complete and separable under $d_{\MP}$, $\radon$ is complete and separable under $\dradon$.

For $\eta,\zeta\in\MP$, recall the equivalence relation under which $\zeta\sim \eta$ if there exists a finite constant $c>0$ such that $\zeta=c\eta$, and the only element equivalent to $\infimeas$ is $\infimeas$ itself. Denote the equivalence class of $\zeta\in\radon$ by $\eqcl{\zeta}$. 
Denote the quotient space  $\radon/\!\!\sim$ by $\Radon$ and give it the quotient topology. 

\begin{remark}
If one defines the equivalence relation on $\sM_+(\R)$, then the   zero measure  $\zeromeas$  makes the quotient topology uninteresting. This is because $c \zeta \to \zeromeas$ as $c \searrow 0$ 
for all $\zeta \in \sM_+(\R)$. Since all the  measures $c \zeta$ are identified under $\sim$, the continuity of the quotient map implies that every neighborhood of $\eqcl\zeromeas$ contains the entire quotient space. 
\end{remark}

\begin{lemma}\label{lem:quotient}  The quotient topology on $\Radon$ is Polish.  There exists a homeomorphism $f$ from $\Radon$ onto a closed subspace of $\radon$ such that 
\begin{align}\label{dRadon}
	\dRadon(\eqcl{\eta},\eqcl{\zeta})=\dradon(f(\eqcl{\eta}),f(\eqcl{\zeta}))
	\end{align}
	defines a complete separable metric for the 
 quotient topology of $\Radon$.
\end{lemma}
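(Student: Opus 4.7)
My plan is to identify $\Radon$ with an explicit closed subspace of $\radon$ via a normalization map. Pick the function $\varphi_1$ from the countable dense family used to define $\dradon$; by hypothesis $\varphi_1$ is not identically zero, and every $\zeta\in\MP$ has full support, so $c(\zeta):=\int_\R\varphi_1\,d\zeta\in(0,\infty)$. Define $f:\Radon\to\radon$ by $f(\eqcl{\infimeas})=\infimeas$ and, for $\zeta\in\MP$,
\[
f(\eqcl{\zeta})=\zeta/c(\zeta).
\]
This is independent of the choice of representative and is injective, since $\zeta_1/c(\zeta_1)=\zeta_2/c(\zeta_2)$ forces $\zeta_1=(c(\zeta_1)/c(\zeta_2))\zeta_2$, i.e.\ $\eqcl{\zeta_1}=\eqcl{\zeta_2}$. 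The image $f(\Radon)=\{\eta\in\MP:c(\eta)=1\}\cup\{\infimeas\}$ is closed in $\radon$: $\{\infimeas\}$ is clopen, $\MP$ is consequently closed in $\radon$, and $c$ is continuous on $\MP$ (indeed continuous for the coarser vague topology, since $\varphi_1\in\sC_c(\R,\R_+)$), so its $1$-level set is closed in $\MP$.

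Next I would verify that $f$ is a homeomorphism onto $f(\Radon)$. Let $\pi:\radon\to\Radon$ denote the quotient map. The composition $f\circ\pi$ fixes $\infimeas$ and sends $\zeta\in\MP$ to $\zeta/c(\zeta)$. If $\zeta_n\to\zeta$ in $(\MP,d_{\MP})$, then for every $j$ both $\int\varphi_j\,d\zeta_n\to\int\varphi_j\,d\zeta$ and $(\int\varphi_j\,d\zeta_n)^{-1}\to(\int\varphi_j\,d\zeta)^{-1}$, the limits being finite and strictly positive since $\zeta\in\MP$; in particular $c(\zeta_n)\to c(\zeta)>0$, and each term in the series defining $d_{\MP}(\zeta_n/c(\zeta_n),\zeta/c(\zeta))$ tends to zero, so $f\circ\pi$ is continuous at every $\zeta\in\MP$. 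Since $\infimeas$ is isolated, $f\circ\pi$ is continuous on all of $\radon$, and the universal property of the quotient topology then delivers continuity of $f$. Conversely, $f^{-1}:f(\Radon)\to\Radon$ agrees with the restriction of the continuous map $\pi$ to $f(\Radon)$, so $f^{-1}$ is continuous as well.

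With $f$ a homeomorphism onto a closed subset of the complete separable metric space $(\radon,\dradon)$, the pulled-back metric $\dRadon$ defined by \eqref{dRadon} is automatically complete, separable, and generates the quotient topology on $\Radon$; in particular $\Radon$ is Polish. The only real subtlety—still a routine check—is the continuity of $f\circ\pi$ in the $d_{\MP}$ topology rather than merely the vague topology; this is exactly where one uses the two-term structure of $d_{\MP}$ engineered to control both $\int\varphi_j\,d\zeta$ and its reciprocal, which in turn is what rules out the degeneration that makes the analogous quotient of $\sM_+(\R)$ trivial (as noted in the remark preceding the lemma).
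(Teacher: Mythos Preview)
Your proof is correct and follows essentially the same approach as the paper: both normalize by $\int\varphi_1\,d\zeta$ to identify $\Radon$ with the closed set $\{\eta\in\MP:\int\varphi_1\,d\eta=1\}\cup\{\infimeas\}$, verify continuity of the normalization map, and then transfer the metric. Your argument is marginally more direct in one place: to show $f^{-1}$ is continuous you simply observe that $f^{-1}$ is the restriction of the quotient map $\pi$ to the normalized slice, whereas the paper instead invokes a general criterion (Munkres, Corollary~22.3) that $f$ is a homeomorphism iff $f\circ\pi$ is a quotient map, and then checks the quotient-map property.
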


\begin{proof}
Call $\X$ the space $\radon$ equipped with the topology generated by the metric $\dradon$. 
Let 
\begin{align*}
\Y = \big\{\zeta \in \radon\,\setminus\{\infimeas\}: 
\textstyle\int_{\R} \varphi_1\tspb d \zeta =1 \big\} \cup \{\infimeas\}
\end{align*}
inherit   the subspace topology   from $(\radon,\dradon)$.  $\Y$ is Polish, being a closed subset of a Polish space.

Define the surjection $g:\X \to \Y$ by $g(\infimeas) = \infimeas$ and $g(\zeta)= \frac{1}{\int_{\R} \varphi_1 \,d\zeta}\cdot \zeta$ for $\zeta \in \radon\,\setminus\{\infimeas\}$. 
     $g$ is the identity on $\Y$. 
     To check that $g$ is continuous, let $\zeta_n\to\zeta$ in $(\radon,\dradon)$.  If $\zeta = \infimeas$, then $\zeta_n = \infimeas$ for all sufficiently large $n$  because $\infimeas$ is isolated. If $\zeta \neq \infimeas$, then $\zeta_n \neq \infimeas$ for all sufficiently large $n$,  and for $j \in \bbN$,
\begin{align*}
\int_{\R} \varphi_j\, d\bigl(g(\zeta_n)\bigr) =  \frac{\int_{\R} \varphi_j \,d\zeta_n}{\int_{\R} \varphi_1 \,d\zeta_n} \underset{n\to\infty}\longrightarrow \frac{\int_{\R} \varphi_j\,d\zeta}{\int_{\R} \varphi_1 \,d\zeta} = \int_{\R} \varphi_j \,d\bigl(g(\zeta)\bigr).
\end{align*}
Thus in all cases,  $g(\zeta_n) \to g(\zeta)$. 

The quotient  map $p: \radon \to \Radon$ is $p(\zeta)=\eqcl{\zeta}$. Call $\X^*$ the topological space $\Radon$ equipped with the   quotient topology, i.e., the finest topology in which $p$ is continuous. Since $g$ is constant on each $\eqcl{\zeta}$,  \cite[Theorem 22.2]{Mun-00} implies that the map   $f:\X^* \to \Y$ defined by $f(\eqcl{\zeta})=g(\zeta)$  is continuous.  We have the following commuting diagram:
\[
\begin{tikzcd}
\X\arrow[d,"p" left] \arrow[rd,"g"] & \\
\X^*\arrow[r,"f" below] & \Y
\end{tikzcd}
\]
By \cite[Corollary 22.3]{Mun-00}, $f$ is a homeomorphism if and only if $g$ is a quotient map, meaning that $A \subset \Y$ is closed in the topology of $\Y$ if and only if $g^{-1}(A)$ is closed in the topology of $\X$. 

Continuity tells us that if $A$ is closed in the topology of $\Y$, then $g^{-1}(A)$ is closed in the topology of $\X$ and so it suffices to show the converse. Fix $A \subset \Y$ and suppose that $g^{-1}(A)$ is closed in $\X$. Because $\Y$ is sequential, it suffices to show that $A$ is sequentially closed. Take any sequence $\zeta_n \in A$ and suppose that $\zeta_n \to \zeta \in \Y$. Notice that $\zeta_n \in g^{-1}(\zeta_n)$ and so because $g^{-1}(A)$ is closed, we must have $\zeta \in g^{-1}(A)$. But since $\zeta \in \Y$, we have $g(\zeta)=\zeta$ and so $\zeta \in g(g^{-1}(A)) =A$.

Because $f$ is a homeomorphism,   the quotient topology of $\Radon$  is Polish. 
\end{proof}


The space $\CICM$ of \eqref{CICM} is given the metric defined for $f,g \in \CICM$ by
\be\begin{aligned}\label{d_CICM} 
d_{\CICM}(f,g) &= \sum_{m=1}^\infty 2^{-m} \bigg(1\wedge \sup_{-m \leq x \leq m}\bigg[|f(x)-g(x)| + \bigg|\frac{1}{f(x)} -\frac{1}{g(x)}\bigg|\,\bigg]\bigg)   \\
&\qquad +\sum_{m=1}^\infty  2^{-m} \biggl( 1\wedge \biggl\lvert\,\int_{\R} e^{-\tspb\frac1my^2} f(y)dy  - \int_{\R}e^{-\tspb\frac1my^2} g(y)dy \, \biggr\rvert \biggr) . 
\end{aligned}\ee
Lemma D.2 in \cite{Alb-etal-22-spde-} shows that this metric is complete and separable.

\begin{lemma}\label{lm:density} The set
	\be\label{mm8} \bigl\{\eta \in\sM_+(\R):\eta(dx) = f(x)dx \text{ for some strictly positive and continuous $f$}\bigr\}\ee 
is a Borel  subset of $\sM_+(\R)$.
\end{lemma}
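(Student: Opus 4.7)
The plan is to realize the set in \eqref{mm8} as the image of a Polish space under a continuous injection into the Polish space $(\sM_+(\R), d_{\sM_+(\R)})$, and then invoke the Lusin--Souslin theorem to conclude Borel measurability of the image.

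First I would equip $\sC(\R,\R)$ with its standard Polish topology of uniform convergence on compact sets and identify the subspace
\[
\sC(\R,(0,\infty)) = \bigcap_{m\in\N}\bigcup_{k\in\N}\Bigl\{f\in\sC(\R,\R):\inf_{|x|\le m}f(x)>\tfrac1k\Bigr\}
\]
as a $G_\delta$ subset, since each of the inner sets is open in $\sC(\R,\R)$. By Alexandrov's theorem, $G_\delta$ subsets of Polish spaces are Polish in the subspace topology, so $\sC(\R,(0,\infty))$ is Polish.

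Next, define the map
\[
\iota:\sC(\R,(0,\infty))\longrightarrow \sM_+(\R),\qquad \iota(f)(A)=\int_A f(x)\,dx.
\]
Injectivity of $\iota$ follows from continuity: if $\iota(f_1)=\iota(f_2)$ but $f_1(x_0)\ne f_2(x_0)$ for some $x_0$, then $f_1-f_2$ has a fixed sign on a nondegenerate neighborhood of $x_0$ and integrates to a nonzero value against some $\varphi\in\sC_c(\R,\R_+)$ supported in that neighborhood, a contradiction. For continuity, if $f_n\to f$ uniformly on compact sets in $\sC(\R,(0,\infty))$, then for any $\varphi\in\sC_c(\R,\R)$,
\[
\Bigl|\int\varphi(f_n-f)\,dx\Bigr|\le \lvert\supp\varphi\rvert\cdot\lVert\varphi\rVert_\infty\cdot\sup_{x\in\supp\varphi}|f_n(x)-f(x)|\longrightarrow 0,
\]
so $\iota(f_n)\to\iota(f)$ in the vague topology, hence in $d_{\sM_+(\R)}$.

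Finally, both $\sC(\R,(0,\infty))$ and $\sM_+(\R)$ are Polish spaces, and $\iota$ is a continuous injection between them. The Lusin--Souslin theorem (see, e.g., Theorem 15.1 in \cite{Kec-95}) asserts that the image of a Borel injection between standard Borel spaces is Borel; in particular, continuous injections between Polish spaces have Borel images. Applying this to $\iota$, the set $\iota(\sC(\R,(0,\infty)))$ is a Borel subset of $\sM_+(\R)$, and by the injectivity argument above this image is precisely the set in \eqref{mm8}.

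The main (mild) obstacle to watch is making sure the target topology really is the vague one used throughout the paper and verifying that Lusin--Souslin applies in the stated form; both are standard, so I expect no serious difficulty. An alternative route via explicit formulas of the type $f(q)=\lim_{n\to\infty}\int n\psi(n(\aabullet-q))\,d\eta$ for rational $q$ would yield the same conclusion but requires more bookkeeping to produce a jointly Borel representative density.
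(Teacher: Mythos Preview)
Your proof is correct, but it takes a genuinely different route from the paper's. The paper argues by writing down an explicit Borel description of the set: for each rational $q$ it checks that the limit $u(q)=\lim_{n\to\infty}\tfrac{n}{2}\,\eta[q-n^{-1},q+n^{-1}]$ exists in $\R_+$, that on each $[-k,k]\cap\Q$ the function $u$ is uniformly continuous and bounded away from zero, and that the continuous extension $f$ of $u$ to $\R$ satisfies $\int\varphi_j f\,dx=\int\varphi_j\,d\eta$ for all $j$. Each of these is a countable family of Borel conditions on $\eta$, so the set is Borel. This is precisely the ``alternative route'' you mention at the end of your proposal.

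Your approach via the Lusin--Souslin theorem is cleaner and shorter: once you have a continuous injection from the Polish space $\sC(\R,(0,\infty))$ into $\sM_+(\R)$, Borel measurability of the image drops out for free. The price is importing a nontrivial result from descriptive set theory. The paper's argument is more elementary and entirely self-contained, at the cost of the extra bookkeeping you anticipated. Either argument is fine; yours trades explicitness for brevity.
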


\begin{proof}  The set \eqref{mm8} can be represented as follows: 
\begin{align*} 
&\Bigl\{\eta\in\sM_+(\R):   \text{the limit } \, u(q)=\lim_{n\to\infty} \tfrac12n \tspc\eta[q-n^{-1}, q+n^{-1}] \, \text{ exists in $\R_+$ for each  $q\in\Q$,}  \\
&\quad \forall k\in\N \, \text{ the  function $u$  on $[-k,k]\cap\Q$ is uniformly continuous and bounded away from zero,} \\
&\quad \text{and the continuous extension $f$ of $u$  to $\R$  satisfies } \   \int_\R\varphi_n(x)\tspa f(x)\,dx= \int_\R\varphi_n\,d\eta \ \forall n\in\N \Bigr\}. 
\end{align*}
Note that  $\int\varphi_n\tspb f \tspb dx$ can be evaluated as a Riemann integral from the values of $u$ on the rationals.
%
\end{proof}

\section{Random dynamical systems}\label{app:RDS}

In this appendix we give proofs of some well-known results for which we could not find references in the RDS literature proving the precise statements we wanted. These results build up to a proof that with the definitions in Section \ref{sub:1F1S}, $\PMPinit$-synchronization implies that the 1F1S principle holds for $\PMPinit$. 

The setting in this section is that of Section \ref{sec:1F1S}. The independence of $\fil_{-\infty:0}$ and $\fil_{0:\infty}$ implies that the RDS $\varphi$ is of white noise type under Definition 2.11 in \cite{Cra-08}.
Recall the abbreviation $\shiftp_t = \shiftd{t}{0}$.
Define the \emph{skew-product flow} (or semigroup)  $(\Theta_t)_{t\ge0}$ acting on the spaces  $\bigl(\Omega\times\CICP, \sF\otimes\sB(\CICP)\bigr)$ and $\bigl(\Omega\times\CICP, \fil\otimes\sB(\CICP)\bigr)$ by 
$\Theta_t(\w,\clf)=\bigl(\shiftp_t\w,\varphi(t,\w,\clf)\bigr)$. 
A probability measure $\overline\PMPinit$ on $\bigl(\Omega\times\CICP, \sF\otimes\sB(\CICP)\bigr)$ 
is
 \textit{invariant for the RDS $\varphi$} if its marginal on $\Omega$ is equal to $\P$ and $\overline\PMPinit$ is invariant under the action of $\Theta_t$ for all $t>0$. 
 
  For a probability measure $\PMPinit$ on $\CICP$ and $t>0$ call $\varphi(t,\w)\PMPinit\in\sM_1(\CICP)$ the probability measure determined for $\Phi:\CICP\to\R$ bounded and Borel measurable by \[\oE^{\varphi(t,\w)\PMPinit}[\Phi]=\int\Phi(\clg)\, \varphi(t,\w)\PMPinit(d\clg)=\int\Phi(\varphi(t,\w,\clg))\,\PMPinit(d\clf) = \int\Phi(\Sols_{0:t}^\w\clg)\,\PMPinit(d\clf).\]
 
For a probability measure $\overline\PMPinit$ on $\bigl(\Omega\times\CICP, \sF\otimes\sB(\CICP)\bigr)$ with marginal $\P$ on $\Omega$, 
let $\overline\PMPinit_\w$ denote its regular conditional probability, given $\sF$. 
Then $\overline\PMPinit$ is invariant for $\varphi$ if, and only if, for all $t>0$, for $\P$-almost every $\w$,
$\varphi(t,\w)\overline\PMPinit_\w=\overline\PMPinit_{\shiftp_t\w}$; that is, for any measurable set $A\subset\CICP$, $\overline\PMPinit_\w\bigl\{\clf:\varphi(t,\w,\clf)\in A\bigr\}=\overline\PMPinit_{\shiftp_t\w}(A)$. See Section 2 in \cite{Cra-01}. 
Note that the Borel $\sigma$-algebra on $\CICP$ is countably generated and so it suffices to check this equality on a countable generating $\pi$-system.  
%
%
%
If $\w\mapsto\overline\PMPinit_\w$ is $\fil_{-\infty:0}$-measurable, then $\overline\PMPinit$ is said to be {\it Markovian}. 
See Definition 6.14 in \cite{Cra-02}. 


\begin{lemma}\label{1to1}
For each probability measure $\PMPinit$ on $\CICP$ that is invariant for the  Markov process with kernel \eqref{SHE-kernel} there exists a unique Markovian  probability measure on $\bigl(\Omega\times\CICP, \sF\otimes\sB(\CICP)\bigr)$ that is invariant for $\varphi$ and whose $\CICP$-marginal is $\PMPinit$. Conversely, the $\CICP$-marginal of any Markovian invariant measure for $\varphi$ is invariant for the  Markov process with kernel \eqref{SHE-kernel}.
\end{lemma}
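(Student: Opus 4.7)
The plan is to construct the unique Markovian invariant measure $\overline\PMPinit$ explicitly as the almost sure limit of pullback couplings and to identify it via a martingale identity that forces uniqueness. The converse direction is the easier one: if $\overline\PMPinit$ is Markovian and $\Theta_t$-invariant with $\CICP$-marginal $\PMPinit$, then for any bounded Borel $\Phi:\CICP\to\R$, invariance reads
\[\int\Phi\,d\PMPinit=\int\P(d\w)\int\Phi(\Sols_{0,t}^\w\clf)\,\overline\PMPinit_\w(d\clf).\]
Markovianity makes $\overline\PMPinit_\w$ an $\fil_{-\infty:0}$-measurable random measure, independent of the $\fil_{0:t}$-measurable map $\Sols_{0,t}^\w$. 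Integrating first over the noise in $\fil_{0:t}$ with $\clf$ frozen produces the transition kernel $\piMP(t,\aabullet\viiva\clf)$ of \eqref{SHE-kernel}, so the displayed identity becomes $\int\Phi\,d\PMPinit=\int\piMP(t,\Phi\viiva\clf)\,\PMPinit(d\clf)$, which is $\piMP$-invariance of $\PMPinit$.

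For existence, let $\mu^{(T)}$ ($T\ge 0$) be the law on $\Omega\times\CICP$ of the pair $(\w,\Sols_{-T,0}^\w\clg)$ where $\w\sim\P$ and $\clg\sim\PMPinit$ are independent. The shift-covariance \eqref{eq:cov.shift} and the cocycle \eqref{eq:Solsco} give $(\Theta_t)_*\mu^{(T)}=\mu^{(T+t)}$, and $\piMP$-invariance of $\PMPinit$ ensures each $\mu^{(T)}$ has marginals $\P$ and $\PMPinit$. For bounded Borel $\Phi$, define $X_T(\w)=\int\Phi(\Sols_{-T,0}^\w\clg)\,\PMPinit(d\clg)$. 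Splitting $\Sols_{-T',0}^\w=\Sols_{-T,0}^\w\circ\Sols_{-T',-T}^\w$ for $T'>T$, using independence of $\fil_{-T':-T}$ from $\fil_{-T:0}$, and applying $\piMP$-invariance of $\PMPinit$ to integrate out the inner factor yields $\E[X_{T'}\mid\fil_{-T:0}]=X_T$. Hence $(X_T)_{T\ge 0}$ is a bounded martingale for the increasing filtration $(\fil_{-T:0})_{T\ge 0}$, so by Doob's theorem it converges $\P$-a.s.\ to an $\fil_{-\infty:0}$-measurable limit $\overline\PMPinit_\w[\Phi]$. Choosing $\Phi$ in a countable convergence-determining subfamily of $C_b(\CICP)$ and combining this with almost sure tightness of $\{\mu^{(T)}_\w\}_T$ (obtained by Doob's maximal inequality applied to the nonnegative bounded martingales $\mu^{(T)}_\w(K_m^c)$ for a compact exhaustion $K_m\uparrow\CICP$ with $\PMPinit(K_m)\uparrow 1$) upgrades pointwise martingale convergence to a.s.\ weak convergence of $\mu^{(T)}_\w$ to an $\fil_{-\infty:0}$-measurable random probability measure $\overline\PMPinit_\w$. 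Passing to the limit in the identity $\varphi(t,\w)\mu^{(T)}_\w=\mu^{(T+t)}_{\shiftp_t\w}$ (from the cocycle and shift-covariance) then gives $\varphi(t,\w)\overline\PMPinit_\w=\overline\PMPinit_{\shiftp_t\w}$ a.s., establishing $\Theta_t$-invariance with the correct marginals.

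For uniqueness, any Markovian $\Theta_t$-invariant $\overline\PMPinit'$ with $\CICP$-marginal $\PMPinit$ satisfies
\[\overline\PMPinit'_\w[\Phi]=\int\Phi(\Sols_{-T,0}^\w\clf)\,\overline\PMPinit'_{\shiftp_{-T}\w}(d\clf)\qquad\P\text{-a.s.}\]
for each $T>0$, by iterating invariance backwards and using shift-covariance. The random measure $\overline\PMPinit'_{\shiftp_{-T}\w}$ is $\fil_{-\infty:-T}$-measurable (Markovianity composed with the shift $\shiftp_{-T}$), hence independent of $\fil_{-T:0}$, with expectation equal to the $\CICP$-marginal $\PMPinit$. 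Conditioning the displayed identity on $\fil_{-T:0}$ therefore gives $\E[\overline\PMPinit'_\w[\Phi]\mid\fil_{-T:0}]=X_T(\w)$, and Lévy's upward theorem (applied to the $\fil_{-\infty:0}$-measurable random variable $\overline\PMPinit'_\w[\Phi]$) forces $\overline\PMPinit'_\w[\Phi]=\lim_T X_T(\w)=\overline\PMPinit_\w[\Phi]$ a.s., so letting $\Phi$ range over the countable family yields $\overline\PMPinit'=\overline\PMPinit$. The principal technical delicacy will be the assembly step in the construction: promoting a.s.\ convergence of the scalar martingales $X_T(\w)$ for each $\Phi$ in a countable family into a.s.\ weak convergence of $\mu^{(T)}_\w$ to a bona fide random probability measure that is itself $\fil_{-\infty:0}$-measurable. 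Because $\CICP$ is Polish but not locally compact, this needs the a.s.\ tightness argument sketched above to rule out mass escape in the weak limit.
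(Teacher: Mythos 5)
Your proof is correct and takes a genuinely different (more self-contained) route from the paper's. The paper's existence and converse both outsource the key analytic step to Proposition~4.2 of \cite{Cra-08}, which asserts directly that the pullback conditional laws $\varphi(t,\shiftp_{-t}\w)\PMPinit$ converge weakly for $\P$-a.e.\ $\w$ to an $\fil_{-\infty:0}$-measurable random measure $\mu_\w$ satisfying the cocycle relation; the paper then defines $\overline\PMPinit(d\w,d\clf)=\mu_\w(d\clf)\P(d\w)$ and checks the remaining properties. Your construction reproves that convergence from first principles: you observe $\mu^{(T)}_\w=(\Sols_{-T,0}^\w)_*\PMPinit=\varphi(T,\shiftp_{-T}\w)\PMPinit$, show each scalar pairing $X_T(\w)=\int\Phi(\Sols_{-T,0}^\w\clg)\,\PMPinit(d\clg)$ is a bounded $(\fil_{-T:0})_T$-martingale via the cocycle split, independence of disjoint time intervals, and $\piMP$-invariance, then assemble the martingale limits into a genuine random probability measure using an a.s.\ tightness argument based on Doob's maximal inequality. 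This is precisely the content of the Cra-08 proposition; you have opened the black box. The tightness step is genuinely needed (and correctly identified) since $\CICP$ is Polish but not locally compact, so mass escape must be ruled out explicitly.

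For uniqueness, your Lévy-upward-theorem argument and the paper's direct computation are two packagings of the same idea: both use Markovianity of $\overline\PMPinit'$, independence of $\fil_{-\infty:-T}$ from $\fil_{-T:0}$, and the $\CICP$-marginal equalling $\PMPinit$ to conclude that $\E[\overline\PMPinit'_\w[\Phi]\mid\fil_{-T:0}]=X_T(\w)$, from which the conclusion follows by passage to $T\to\infty$. The paper phrases this by inserting a test function $F$ that is $\fil_{-t:\infty}$-measurable, reducing to the pullback quantity $\oE^{\varphi(t,\shiftp_{-t}\w)\PMPinit}[\Phi]$, invoking the Cra-08 weak limit, and finishing with the monotone class theorem (followed by an extension to $\sF$-measurable $F$ using Markovianity on both sides); your Lévy-upward formulation is somewhat cleaner and avoids the monotone class step. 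Your converse argument is also fine and is exactly the content the paper cites from part~(ii) of Cra-08's Proposition~4.2. Two small points worth being explicit about when writing this up: (i) the martingale identity must be checked for Borel, not merely continuous, test functions so that it applies to $\one_{K_m^c}$ in the tightness argument — your proof does cover this since it uses only Fubini, independence, and $\piMP$-invariance; (ii) the continuous-parameter martingale convergence requires either restriction to a countable subsequence (which suffices for the construction) or a remark that the limit is sequence-independent, since the filtration $(\fil_{-T:0})$ is indexed by $T\in[0,\infty)$.
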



\begin{proof}
Suppose that $\PMPinit\in \sM_1(\CICP)$ is invariant for the Markov process with kernel \eqref{SHE-kernel},
i.e.\ $\PMP$ (defined in Section \ref{subsec:ergodic}) is invariant under the shift $\shiftp_t$ for each $t>0$. Proposition 4.2 of \cite{Cra-08} implies that 
there exists a $\fil_{-\infty:0}$-measurable random measure $\w\mapsto\mu_\w$ taking values in $\sM_1(\CICP)$ and such that, for any sequence $t_k\to\infty$, 
\begin{align}\label{def-mu}
\text{for $\P$-almost every $\w$,
$\varphi(t_k,\shiftp_{-t_k}\w)\PMPinit\to\mu_\w$  in the weak topology on $\sM_1(\CICP)$} 
\end{align}
and
\begin{align}\label{Th-inv}
\text{for each $t>0$, for $\P$-almost every $\w$, $\varphi(t,\w)\mu_\w=\mu_{\shift_t\w}$.}
\end{align}

Let $\overline{P}(d\w,d\clf)=\mu_\w(d\clf)\P(d\w)$ be the measure on $\bigl(\Omega\times\CICP, \sF\otimes\sB(\CICP)\bigr)$ which satisfies for bounded measurable $\Psi:\Omega\times \CICP\to\R$,
\begin{align}\label{skew}
\int \Psi(\w,\clg)\,\overline\PMPinit(d\w,d\clg)=\iint\Psi(\w,\clg) \,\mu_\w(d\clg)\P(d\w).
\end{align}
This identifies that $\mu_{\w}=\overline{P}_\w$ is a version of the regular conditional distribution given $\sF$. 
 \eqref{Th-inv} implies that $\overline\PMPinit$ is invariant for the RDS $\varphi$.
It also follows that $\overline\PMPinit$ has $\Omega$-marginal $\P$ and is Markovian. 
By the invariance of $\PMPinit$ for the Markov process, 
$\int\varphi(t,\shift_{-t}\w)\PMPinit\,\P(d\w)=\PMPinit$. Combined with  \eqref{def-mu}, this implies that 
$\overline\PMPinit$ has $\CICP$-marginal $\PMPinit$.  

For the uniqueness claim
consider a probability measure $\overline\PMPinit'$
on $\bigl(\Omega\times\CICP, \sF\otimes\sB(\CICP)\bigr)$ satisfying
\begin{enumerate}[label={\rm(a.\roman*)},ref={\rm(a.\roman*)}] 
\item\label{E1.i}  the $\Omega$-marginal of $\overline\PMPinit'$ is $\P$, 
\item $\overline\PMPinit'$ is  invariant under $\Theta_t$ for each $t>0$, 
\item\label{E1.iii} the $\CICP$-marginal $\overline\PMPinit'_\w$ of its regular conditional probability given $\sF$ is such that $\int\Phi(\clf)\,\overline\PMPinit'_\w(d\clf)$ is $\fil_{-\infty:0}$-measurable for each  bounded measurable $\Phi:\CICP\to\R$, and 
\item\label{E1.iv} the $\CICP$-marginal of $\overline\PMPinit'$ is $\PMPinit$.
\end{enumerate}

Take $\Phi$ bounded  and take a bounded $F:\Omega\to\R$ that is $\fil_{-t:\infty}$-measurable for some $t>0$. Then 
\begin{align*}
&\int F(\w)\Phi(\clg)\,\overline\PMPinit'(d\w,d\clg)
=\iint F(\shiftp_{t}\w)\Phi(\varphi(t,\w,\clg))\,\overline\PMPinit'(d\w,d\clg)\\
&\qquad=\iint F(\shiftp_{t}\w)\Phi(\varphi(t,\w,\clg))\,\overline\PMPinit'_\w(d\clg)\,\P(d\w)
=\iiint F(\shiftp_{t}\w)\Phi(\varphi(t,\w,\clg))\,\overline\PMPinit'_{\w'}(d\clg)\,\P(d\w)\,\P(d\w')\\
&\qquad=\iint F(\shiftp_{t}\w)\Phi(\varphi(t,\w,\clg))\,\P(d\w)\PMPinit(d\clg)
=\iint F(\w)\Phi(\varphi(t,\shiftp_{-t} \w, \clf))\,\P(d\w)\PMPinit(d\clg)\\
&\qquad=\int F(\w)\oE^{\varphi(t,\shift_{-t}\w)\PMPinit}[\Phi]\,\P(d\w).
\end{align*}
The first equality is  $\Theta_{-r}$-invariance. The third equality used the facts that $F(\shiftp_{-r}\w)\Phi(\varphi(t,\w,\clg))$ is $\fil_{0:\infty}$-measurable, $\overline\PMPinit'$ is Markovian, and $\fil_{0:\infty}$ is independent of $\fil_{-\infty:0}$. The fourth equality used the fact that $\PMPinit$ is the $\CICP$-marginal of $\overline\PMPinit'$. 
The fifth equality used the $\shift_r$-invariance of $\P$. 

Applying \eqref{def-mu} gives
\[\int F(\w)\Phi(\clg)\,\overline\PMPinit'(d\w,d\clg)=\int F(\w)\Phi(\clg)\,\overline\PMPinit(d\w,d\clg).\]
By the monotone class theorem,
the above equality holds for any bounded measurable $\Phi:\CICP\to\R$ and any bounded $\fil_{-\infty:\infty}$-measurable  $F:\Omega\to\R$.   Lastly, we extend this to any bounded $\sF$-measurable $F$ by utilizing the Markovian property of $\overline\PMPinit$ and the Markovian assumption on    $\overline\PMPinit'$, which say that  $\int\Phi(\clg)\,\overline\PMPinit_\w(d\clg)$
and
$\int\Phi(\clg)\,\overline\PMPinit'_\w(d\clg)$ are $\fil_{-\infty:0}$-measurable. 
Then 
\begin{align*}
\int F(\w)\Phi(\clg)\,\overline\PMPinit'(d\w,d\clg)
&=\int \E[F\viiva\fil_{-\infty:\infty}]\Phi(\clg)\,\overline\PMPinit'(d\w,d\clg)\\
&=\int \E[F\viiva\fil_{-\infty:\infty}]\Phi(\clg)\,\overline\PMPinit(d\w,d\clg)
=\int F(\w)\Phi(\clg)\,\overline\PMPinit(d\w,d\clg).
\end{align*}
This implies that $\overline\PMPinit'=\overline\PMPinit$ and proves the uniqueness of $\overline\PMPinit$.
%
%
The last claim is in part (ii) of \cite[Proposition 4.2]{Cra-08}.
\end{proof}

The next lemma shows how $\varphi$-invariant random variables are related to ergodic probability measures for the Markov process.

\begin{lemma}\label{lm:attr2}
Let $\clf:(\Omega,\sF)\to(\CICP,\sB(\CICP))$ be a $\varphi$-invariant random variable and let $\overline\PMPinit$ on $\bigl(\Omega\times\CICP, \sF\otimes\sB(\CICP)\bigr)$
be the distribution  of $(\w,\clf^\w)$ under $\P$.  Let  $\PMPinit$ be the distribution of $\w\mapsto\clf^\w$ under $\P$,   equivalently,  the $\CICP$-marginal of $\overline\PMPinit$.  Then the following hold.

\begin{enumerate}[label={\rm(\roman*)},ref={\rm\roman*}] 
\item\label{lm:attr2.i} $\overline\PMPinit$  
is invariant under the action of  $\Theta_t$ for each $t>0$. If $\clf^\w$ is $\fil$-measurable, then $\bigl(\Omega\times\CICP, \fil\otimes\sB(\CICP),\overline\PMPinit\bigr)$
is ergodic under $\Theta_t$ for all $t>0$. If $\clf^\w$ is only $\sF$-measurable but we assume further that $(\Omega,\sF,\P)$ is ergodic under $\shift_t$ for a given $t>0$, then $\bigl(\Omega\times\CICP, \sF\otimes\sB(\CICP),\overline\PMPinit\bigr)$ is ergodic under the action of $\Theta_t$ for that $t$. 

\item\label{lm:attr2.ii} If $\clf_1:\Omega\to\CICP$ is a $\varphi$-invariant random variable such that the distribution of $(\w,\clf_1^\w)$ under $\P$ is $\overline\PMPinit$, then $\clf=\clf_1$ $\P$-almost surely. 

\item\label{lm:attr2.iii}  
Assume $\clf$ is Markovian. Then
$\PMPinit$  is invariant and totally ergodic under the Markov kernel  \eqref{SHE-kernel}. If $\clf_1:\Omega\to\CICP$ is a Markovian $\varphi$-invariant random variable such that the distribution of $\w\mapsto\clf_1^\w$ under $\P$ is $\PMPinit$, then $\clf=\clf_1$ $\P$-almost surely. 
\end{enumerate}
\end{lemma}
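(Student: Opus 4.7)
The plan is to handle the three parts in sequence, using as the central tool the $\varphi$-invariance identity $\varphi(t,\w,\clf^\w) = \clf^{\shiftp_t\w}$ holding $\P$-almost surely for each $t>0$, together with the time-shift covariance \eqref{eq:cov.shift} of $\She$.

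For part \eqref{lm:attr2.i}, invariance of $\overline\PMPinit$ under $\Theta_t$ is immediate because $\Theta_t(\w,\clf^\w) = (\shiftp_t\w,\clf^{\shiftp_t\w})$ and $\shiftp_t$ preserves $\P$, so the pushforward has the same joint law as $(\w,\clf^\w)$. For ergodicity I would lift a putative $\Theta_t$-invariant set $A$ to $A^* = \{\w : (\w,\clf^\w)\in A\}\subset\Omega$; the identity above turns $\Theta_t$-invariance of $A$ into $\shiftp_t$-invariance of $A^*$ modulo null sets, and $A^*$ is $\fil$-measurable (respectively $\sF$-measurable) according to the measurability hypothesis on $\clf$. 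Mixing of $\shiftd{t}{0}$ on $(\Omega,\fil,\P)$ for $t\ne 0$ (respectively the given ergodicity of $\shiftd{t}{0}$ on $(\Omega,\sF,\P)$) then forces $\P(A^*)\in\{0,1\}$, and hence $\overline\PMPinit(A)\in\{0,1\}$. Part \eqref{lm:attr2.ii} follows because $\clf$ is $\sF$-measurable, so the regular conditional distribution of $\clf^\w$ given $\sF$ is $\delta_{\clf^\w}$; if $(\w,\clf_1^\w)$ has the same joint law under $\P$, its conditional distribution given $\sF$ must also be Dirac and agree $\P$-almost surely with $\delta_{\clf^\w}$, forcing $\clf_1^\w = \clf^\w$.

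For part \eqref{lm:attr2.iii}, I would write the $\varphi$-invariance in shifted form $\clf^\w = \varphi(t,\shiftp_{-t}\w,\clf^{\shiftp_{-t}\w}) = \Sols^\w_{-t,0}\clf^{\shiftp_{-t}\w}$, where the second equality is the shift covariance of the Green's function. Under the Markovian hypothesis $\clf^{\shiftp_{-t}\w}$ is $\fil_{-\infty:-t}$-measurable (since $\shiftp_{-t}^{-1}\fil_{-\infty:0} = \fil_{-\infty:-t}$), while for fixed $\clg$ the random variable $\w\mapsto\Sols^\w_{-t,0}\clg$ is $\fil_{-t:0}$-measurable; these two $\sigma$-algebras are independent. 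Conditioning on $\fil_{-\infty:-t}$ and using that $\clf^{\shiftp_{-t}\w}$ has law $\PMPinit$ by $\P$-stationarity of $\shiftp_{-t}$ then yields, for any bounded Borel $\Phi:\CICP\to\R$,
\begin{align*}
\int\Phi\,d\PMPinit = \oE[\Phi(\clf^\w)] = \iint\Phi(\eta)\,\piMP(t,d\eta\viiva 0,\clg)\,\PMPinit(d\clg),
\end{align*}
which is exactly $\piMP$-invariance of $\PMPinit$. For total ergodicity I would push $\P$ forward by $\Psi:\Omega\to\CICP^{\R}$, $\Psi(\w) = (\clf^{\shiftp_s\w})_{s\in\R}$: this map is $\fil$-measurable because $\clf$ is, repeated use of $\varphi$-invariance identifies its finite-dimensional laws with those of $\PMP$, and $\Psi\circ\shiftp_t = \shift_t\circ\Psi$. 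Any $\shift_t$-invariant event in $\CICP^{\R}$ therefore pulls back to a $\shiftp_t$-invariant $\fil$-event, and the mixing of $\shiftd{t}{0}$ on $(\Omega,\fil,\P)$ for every $t\ne 0$ gives $\PMP$-measure in $\{0,1\}$. Uniqueness of $\clf_1$ is then automatic: the joint law of $(\w,\clf_1^\w)$ is a Markovian $\varphi$-invariant measure with $\CICP$-marginal $\PMPinit$, so by Lemma \ref{1to1} it agrees with $\overline\PMPinit$, and part \eqref{lm:attr2.ii} concludes $\clf=\clf_1$ $\P$-almost surely.

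The main technical care is the invariance calculation in part \eqref{lm:attr2.iii}, where one must separate the $\w$-dependence of $\clf^{\shiftp_{-t}\w}$ and of the solution semigroup $\Sols^\w_{-t,0}$ into pieces living on independent time intervals $(-\infty,-t]$ and $[-t,0]$, and recognize that the resulting averaged kernel is precisely $\piMP$. The remaining arguments reduce either to standard factor-map considerations in ergodic theory (the ergodicity claims in \eqref{lm:attr2.i} and in \eqref{lm:attr2.iii}) or to the point-mass structure of conditional distributions of measurable random variables (part \eqref{lm:attr2.ii}).
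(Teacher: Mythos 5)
Your parts \eqref{lm:attr2.i} and \eqref{lm:attr2.ii} coincide with the paper's proof (lifting $\Theta_t$-invariant sets to $\shiftp_t$-invariant $\Omega$-events; using that the $\sF$-conditional distribution of each $\clf_i^\w$ is a point mass, the paper spells this out via an integral identity and a countable separating family of test functions, but the content is identical). Part \eqref{lm:attr2.iii} is where you genuinely diverge, and both routes are valid. The paper establishes invariance of $\PMPinit$ by observing that $\overline\PMPinit_\w=\delta_{\clf^\w}$ is $\fil_{-\infty:0}$-measurable, so $\overline\PMPinit$ is Markovian in the sense of Lemma~\ref{1to1}, whose last clause then hands back invariance for the kernel; you instead split $\clf^\w=\Sols^\w_{-t,0}\clf^{\shiftp_{-t}\w}$ via shift covariance, use the independence of $\fil_{-\infty:-t}$ and $\fil_{-t:0}$, and compute the one-step marginal directly. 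For total ergodicity, the paper combines the ergodicity statement from part \eqref{lm:attr2.i} with the Da Prato--Zabczyk criterion (any set $A$ with $\pi(t,A\viiva0,\aabullet)=\ind_A$ $\PMPinit$-a.s.\ must pull back to a $\Theta_t$-invariant rectangle $\Omega\times A$), whereas you build the factor map $\Psi:\w\mapsto(\clf^{\shiftp_s\w})_{s\in\R}$, identify $\Psi_*\P$ with $\PMP$ using $\varphi$-invariance and the Markovianity/independence split, and transfer $\shift_t$-invariant path events to $\shiftp_t$-invariant $\fil$-events. Your factor-map argument is a bit more self-contained (it avoids citing the Markov semigroup criterion) at the price of having to verify the finite-dimensional identification carefully; the paper's route is shorter because it reuses Lemma~\ref{1to1} and part \eqref{lm:attr2.i} as black boxes. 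Both handle the ``almost-sure invariance'' caveat correctly thanks to the augmented filtration $\fil$. The uniqueness statement is handled identically in both: the law of $(\w,\clf_1^\w)$ is a Markovian $\varphi$-invariant measure with marginals $\P$ and $\PMPinit$, so Lemma~\ref{1to1} identifies it with $\overline\PMPinit$ and part \eqref{lm:attr2.ii} concludes.
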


\begin{proof} Part \eqref{lm:attr2.i}. 
Consider the event $\{\w:(\w,\clf^\w)\in A\}$ for a measurable $A\in\sF\otimes\sB(\CICP)$.
For each $t>0$ the $\varphi$-invariance of $\clf$
implies that $\P$-almost surely
\[   \shiftp_t^{-1}\{\w:(\w,\clf^\w)\in A\} =  \{\w:(\shiftp_t\w,\clf^{\shiftp_t\w})\in A\} =  \{\w:(\shiftp_t\w, \varphi(t,\w,\clf))\in A\} =\{\w:(\w,\clf^\w)\in \Theta_t^{-1}A\}. \]
Therefore, the invariance of $\P$ under $\shiftp_t$ implies the invariance of 
$\overline P$ under $\Theta_t$. 

For the first ergodicity claim, suppose that $A\in\fil\otimes\sB(\CICP)$ is invariant under $\Theta_t$ and $\clf^\w$ is $\fil$-measurable. Then $\{\w:(\w,\clf^\w)\in A\}\in\fil$  is invariant under $\shiftp_t$ and
the ergodicity of $(\Omega,\fil,\P)$ under $\shiftp_t$ implies that $\overline P(A)=\P\{\w:(\w,\clf^\w)\in A\}\in\{0,1\}$.
The second ergodicity claim follows similarly after replacing $\fil$ by $\sF$.

 Part \eqref{lm:attr2.ii}. The assumptions give  for any bounded measurable $\Phi:\CICP\to\R$ and $F:\Omega\to\R$, 
    \[ \int F(\w)\Phi(\clf_1^\w)\,\P(d\w)
    =\int F(\w)\Phi(\clg)\,\overline\PMPinit(d\w,d\clg)=\int F(\w)\Phi(\clf^\w)\,\P(d\w).\]
This implies that $\Phi(\clf^\w)=\Phi(\clf_1^\w)$ for $\P$-almost every $\w$. Since $\CICP$ is Polish, there exists a countable collection  of bounded measurable functions $\Phi$ that separates points. Thus  $\clf^\w=\clf_1^\w$, for $\P$-almost every $\w$.

Part \eqref{lm:attr2.iii}. 
Since $\overline\PMPinit$ is the distribution of $(\w,\clf^\w)$ under $\P$ we have that $\overline\PMPinit_\w=\delta_{\clf^\w}$, $\P$-almost surely.
If $\clf^\w$ is $\fil_{-\infty:0}$-measurable, then $\overline\PMPinit$ is Markovian and Lemma \ref{1to1} gives  the invariance of  its marginal $\PMPinit$ under  the Markov   kernel \eqref{SHE-kernel}. 

By Part \eqref{lm:attr2.i}, the distribution of $(\w,\clf_1^\w)$ under $\P$ is invariant under $\Theta_t$ for all $t>0$. It has marginals $\P$ and $\PMPinit$ and is Markovian. The uniqueness in Lemma \ref{1to1} implies then that this probability measure is $\overline\PMPinit$ and then Part \eqref{lm:attr2.ii} implies that $\clf_1^\w=\clf^\w$, $\P$-almost surely.

For the ergodicity claim we will use the ergodicity criterion in 
Theorem 3.2.4(iii) of \cite{DaP-Zab-96}.
Take any $t>0$ and 
consider a measurable set $A\subset\CICP$ such that 
$\pi(t,A\viiva 0,\clg)=\one_A(\clg)$, for $\PMPinit$-almost every $g\in\CICP$.  
This is the same as 
$\P(\Sols_{0,t}^\w \clg\in A)=\one_A(\clg)$ for $\PMPinit$-almost every $g\in\CICP$,
which in turn says that for $\P$-almost every $\w$, either  $\clf^\w\not\in A$ or 
$\int\one\{\Sols_{0,t}^{\w'}\clf^\w\in A\}\,\P(d\w')=1$.  
By the measurability condition on $\clf^\w$, this says that for $\P$-almost every $\w$, either $\clf^\w\not\in A$ or 
$\Sols_{0,t}^\w\clf^\w\in A$. But then we have
\[\overline\PMPinit\bigl\{(\Omega\times A)\setminus\Theta_t^{-1}(\Omega\times A)\bigr\}
=\P\bigl\{\clf^\w\in A\text{ and }\Sols_{0,t}^\w\clf^\w\not\in A\bigr\}=0.\]
This says that the set $\Omega\times A$ is $\overline P$-almost surely invariant 
under the action of $\Theta_t$. By Part \eqref{lm:attr2.i}, $\bigl(\Omega\times\CICP, \fil\otimes\sB(\CICP),\overline\PMPinit\bigr)$ is ergodic under this action and so 
$\PMPinit(A)=\overline\PMPinit(\Omega\times A)\in\{0,1\}$. 
\end{proof}


The last result shows that synchronization implies the 1F1S principle.

\begin{proposition}\label{syn=>uniq}
Let $\PMPinit$ be a probability measure on $\CICP$ that is invariant 
for the Markov process with kernel \eqref{SHE-kernel}.
Let $\clf:\Omega\to\CICP$ be a
$\varphi$-invariant random variable. 
%
Fix a countable subset $\ttset\subset[0,\infty)$ with $\sup\ttset=\infty$.
Suppose that there exist events $\wt\cC_0\subset\CICP$ and $\Omega_0\subset\Omega$ such that $\PMPinit(\wt\cC_0)=\P(\Omega_0)=1$, and for any $\clg\in\wt\cC_0$ and $\w\in\Omega_0$, 
\begin{align}\label{sync-aux}
\lim_{\ttset\ni t\to\infty}\dCICP\bigl(\varphi(t,\shiftp_{-t}\w,\clg),\clf^\w\bigr)=0.
\end{align}
Then $\PMPinit$ is the distribution of $\w\mapsto\clf^\w$ under $\P$ and $\clf$ is Markovian.
Furthermore, if $\clf_1:\Omega\to\CICP$ is a $\varphi$-invariant random variable such that the distribution of $\w\mapsto\clf_1^\w$ under $\P$ is $\PMPinit$, then $\clf_1^\w=\clf^\w$  for $\P$-almost every $\w$.
\end{proposition}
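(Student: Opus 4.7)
The proof has three stages: (a) identifying the distribution of $\clf^\w$ under $\P$ as $\PMPinit$, (b) showing $\clf$ is Markovian, and (c) uniqueness among $\varphi$-invariant random variables with marginal $\PMPinit$.

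For (a), fix any bounded continuous $\Phi:\CICP\to\R$ and $t\in\ttset$. By the shift-invariance of $\P$ and the Markov-invariance of $\PMPinit$ under the kernel \eqref{SHE-kernel},
\[
\iint\Phi\bigl(\varphi(t,\shiftp_{-t}\w,\clg)\bigr)\,\P(d\w)\,\PMPinit(d\clg)=\iint\Phi\bigl(\varphi(t,\w,\clg)\bigr)\,\P(d\w)\,\PMPinit(d\clg)=\int\Phi\,d\PMPinit.
\]
The synchronization hypothesis \eqref{sync-aux} holds on $\Omega_0\times\wt\cC_0$, which has full $(\P\otimes\PMPinit)$-measure, so dominated convergence along the countable set $\ttset$ yields $\int\Phi(\clf^\w)\,\P(d\w)=\int\Phi\,d\PMPinit$. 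Because bounded continuous functions separate Borel probability measures on the Polish space $\CICP$, the distribution of $\clf^\w$ under $\P$ is $\PMPinit$.

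For (b), pick any fixed $\clg_0\in\wt\cC_0$, which is nonempty since $\PMPinit(\wt\cC_0)=1$. For each $t>0$, the random element $\varphi(t,\shiftp_{-t}\w,\clg_0)=\Sols^\w_{-t,0}\clg_0$ is $\fil_{-t:0}$-measurable by the shift covariance \eqref{eq:cov.shift}. Since $\clf^\w$ is the pointwise limit along $\ttset$ of these random variables on $\Omega_0$, $\clf^\w$ is $\bigvee_{t\in\ttset}\fil_{-t:0}\subseteq\fil_{-\infty:0}$-measurable, so $\clf$ is Markovian.

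For (c), the plan is to show $\clf_1$ is Markovian: once this is in hand, Lemma \ref{1to1} identifies the joint distributions of $(\w,\clf_1^\w)$ and $(\w,\clf^\w)$ under $\P$ as the unique Markovian $\Theta$-invariant measure with marginal $\PMPinit$, and Lemma \ref{lm:attr2}\eqref{lm:attr2.ii} then forces $\clf_1=\clf$ $\P$-a.s. The starting point is that $\clf_1^\w\in\wt\cC_0$ $\P$-a.s.\ (by the marginal hypothesis), so that \eqref{sync-aux} with the deterministic-in-$\w$ initial condition $\clg=\clf_1^\w$ (valid because \eqref{sync-aux} holds on $\Omega_0$ simultaneously for \emph{all} $\clg\in\wt\cC_0$) gives
\[
\Sols^\w_{-t,0}\clf_1^\w\longrightarrow\clf^\w\quad\text{$\P$-a.s.\ along $\ttset$,}
\]
while the $\varphi$-invariance of $\clf_1$ gives the identity $\Sols^\w_{-t,0}\clf_1^{\shiftp_{-t}\w}=\clf_1^\w$ for every $t>0$. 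The hard point, which I expect to be the main obstacle, is passing from these two relations to the conclusion that the $t$-varying initial condition $\clf_1^{\shiftp_{-t}\w}$ is also pulled into $\clf^\w$ by $\Sols^\w_{-t,0}$. My plan is to exploit the tightness of $\PMPinit$ on the Polish space $\CICP$ to exhaust $\wt\cC_0$ by compact sets $K_\e$ with $\PMPinit(K_\e)>1-\e$, apply Birkhoff's ergodic theorem to $t\mapsto\ind_{K_\e}(\clf_1^{\shiftp_{-t}\w})$ (under $\P$, which is ergodic under each nontrivial shift) to find a subsequence $t_n\in\ttset$ along which $\clf_1^{\shiftp_{-t_n}\w}\in K_\e$, and then upgrade the pointwise synchronization on $K_\e$ to convergence along this subsequence through the continuity of $\clg\mapsto\Sols^\w_{-t,0}\clg$ on $\CICP$ and an equicontinuity/compactness argument; the concluding step then reads $\clf_1^\w=\lim_n\Sols^\w_{-t_n,0}\clf_1^{\shiftp_{-t_n}\w}=\clf^\w$. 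The equicontinuity in $t$ of the family $\{\Sols^\w_{-t,0}\}$ on $K_\e$ is the delicate ingredient and will likely need to be extracted from the strong attractiveness of $\clf^\w$ in conjunction with the Feller continuity of the SHE semigroup recorded in Remark \ref{rem:Feller}.
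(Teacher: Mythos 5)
Your parts (a) and (b) are sound and land in roughly the same place as the paper, though by a slightly different route: the paper obtains both the distributional identification and the Markovian property at once, by taking the unique Markovian $\Theta$-invariant measure $\overline\PMPinit$ from Lemma \ref{1to1}, defining $F(\w,\clg)=\dCICP(\clg,\clf^\w)$, using $\Theta_t$-invariance of $\overline\PMPinit$ together with \eqref{sync-aux} holding $\overline\PMPinit$-a.s.\ to deduce $F=0$ $\overline\PMPinit$-a.s., hence $\overline\PMPinit_\w=\delta_{\clf^\w}$; this delivers both the marginal and the $\fil_{-\infty:0}$-measurability in one stroke. Your direct dominated-convergence and pointwise-limit arguments reach the same conclusions and are fine, if marginally longer.

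Part (c) is where your proposal stops being a proof. The paper's own treatment does not pass through showing that $\clf_1$ is Markovian (and so does not invoke Lemma \ref{1to1}/Lemma \ref{lm:attr2} a second time, as your plan presupposes). Instead it observes that, by countability of $\ttset$, there is a single full-probability event on which $\clf_1^{\shiftp_{-t}\w}\in\wt\cC_0$ for all $t\in\ttset$ and $\varphi(t,\shiftp_{-t}\w,\clf_1^{\shiftp_{-t}\w})=\clf_1^\w$ for all $t\in\ttset$; on this event one writes $\dCICP(\clf_1^\w,\clf^\w)=\dCICP\bigl(\varphi(t,\shiftp_{-t}\w,\clf_1^{\shiftp_{-t}\w}),\clf^\w\bigr)$, notes the left side is constant in $t$, and sends $t\to\infty$ on the right using \eqref{sync-aux}. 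You have correctly diagnosed the delicate point in this step — \eqref{sync-aux} is a statement for each fixed $\clg\in\wt\cC_0$, while the argument substitutes the $t$-dependent $\clg_t=\clf_1^{\shiftp_{-t}\w}$ — but the Birkhoff/tightness/equicontinuity program you sketch does not close it: the equicontinuity of $\{\varphi(t,\shiftp_{-t}\w,\cdot)\}_{t\in\ttset}$ on compacts of $\CICP$ is never established, is not a consequence of the Feller continuity recorded in Remark \ref{rem:Feller} (that gives continuity in $\clg$ for each fixed $t$, not uniformity over $t$), and the appeal to Birkhoff for the process $t\mapsto\ind_{K_\e}(\clf_1^{\shiftp_{-t}\w})$ presumes an ergodicity for this process that one does not have before $\clf_1$ is identified with $\clf$. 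As written, your plan for (c) is a heuristic with a named missing lemma, not a proof; you should study the paper's shorter substitution argument alongside your own concern about it, rather than building an independent machine around tightness.
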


\begin{proof}
Let $\overline\PMPinit$ be the unique Markovian probability measure on $\bigl(\Omega\times\CICP, \sF\otimes\sB(\CICP),\overline\PMPinit\bigr)$,
from Lemma \ref{1to1}, with marginals $\P$ and $\PMPinit$. Define $F(\w,\clg)=\dCICP(\clg,\clf^\w)$.
The invariance of $\P$ under $\shiftp_t$, for all $t$, and 
the fact that \eqref{sync-aux} holds $\overline\PMPinit$-almost surely imply that 
$F\bigl(\Theta_t(\w,\clg)\bigr)\to0$ in $\overline\PMPinit$-probability, as $t\to\infty$ in $\ttset$.
Since $\overline\PMPinit$ is invariant under $\Theta_t$ for all $t>0$ we get that $\dCICP(\clg,f^\w)=F(\w,\clg)=0$, $\overline\PMPinit$-almost surely.
This implies that $\overline\PMPinit_\w=\delta_{\clf^\w}$, $\P$-almost surely. As a result, $\clf$ is Markovian and the marginal $\PMPinit$ is the distribution of $\w\mapsto\clf^\w$ under $\P$.

By assumption $\P(\clf_1^{\shiftp_{-t}\w}\in\wt\cC_0)=\PMPinit(\wt\cC_0)=1$ so there exists an event $\Omega_1\subset\Omega$ such that $\P(\Omega_1)=1$ and $\clf_1^{\shiftp_{-t}\w}\in\wt\cC_0$ for all $\w\in\Omega_1$ and $t\in\ttset$. 
Let $\Omega_2\subset\Omega$ be an event such that $\P(\Omega_2)=1$ and $\varphi(t,\shiftp_{-t}\w,\clf_1^{\shiftp_{-t}\w})=\clf_1^\w$, for all $\w\in\Omega_2$ and $t\in\ttset$.
Then for $\w\in\Omega_0\cap\Omega_1\cap\Omega_2$ we have that 
    \[\dCICP(\clf_1^\w,\clf^\w)=\dCICP\bigl(\varphi(t,\shiftp_{-t}\w,\clf_1^{\shiftp_{-t}\w}),\clf^\w\bigr)\to0\] 
as $t\to\infty$ in $\ttset$. The proposition is proved.
\end{proof}

\footnotesize

\bibliographystyle{aop-no-url}

\bibliography{KPZ-1F1S}

\end{document}